\documentclass[10pt,reqno]{amsart} 

\usepackage{float} 


\usepackage{epsfig,amssymb,amsmath,version}
\usepackage{amssymb,version,graphicx,fancybox,mathrsfs,labelfig}

\usepackage{url,hyperref}
\usepackage{subfigure}
\usepackage{color}
\usepackage{stmaryrd}
\usepackage{multirow}
\usepackage{booktabs,siunitx}
\usepackage{booktabs}
\usepackage{multicol,epstopdf}
\usepackage{xypic}
\usepackage[]{graphicx}
\usepackage[all]{xy}
\usepackage{tikz,ifthen}
\usetikzlibrary{positioning, math, decorations.markings, arrows.meta}
\usetikzlibrary{calc,shapes,cd}
\usetikzlibrary{knots} 
\usepgfmodule{decorations}
\allowdisplaybreaks
\setlength{\columnsep}{0.1cm}

\tikzset{knotarrow/.pic={ \draw[edge, <-] (0,0) -- +(-.001,0);}}

\tikzset{edge/.style={line width=0.8}}
\tikzset{wall/.style={very thick}}

\tikzset{->-/.style n args={2}{decoration={markings, mark=at position #1 with {\arrow{#2}}}, postaction={decorate}}} 

\ExplSyntaxOn
\cs_new_eq:NN \ifstreqF \str_if_eq:nnF
\cs_new_eq:NN \ifstreqTF \str_if_eq:nnTF
\ExplSyntaxOff

\tikzset{-o-/.code 2 args={\ifstreqF{#2}{} 
{\ifstreqTF{#2}{>}
   {\pgfkeysalso{decoration={markings,mark=at position #1 with {\arrow[scale=0.8]{#2}}}
                    ,postaction={decorate}}
    }
   {\ifstreqTF{#2}{<}
       {\pgfkeysalso{decoration={markings,mark=at position #1 with {\arrow[scale=0.8]{#2}}}
                    ,postaction={decorate}}
        }
       {\pgfkeysalso{decoration={markings,
                    mark=at position #1 with
                    {\draw[black, fill={#2}] circle[radius=2pt];}}
                    ,postaction={decorate}}
        }
     }
  }}}

\textwidth=15.4cm
\textheight=21.6cm
\setlength{\oddsidemargin}{0.9cm}
\setlength{\evensidemargin}{0.9cm}

\allowdisplaybreaks[4]

\newtheorem{theorem}{Theorem}[section]
\newtheorem{lemma}[theorem]{Lemma}
\newtheorem{definition}[theorem]{Definition}
\newtheorem{corollary}[theorem]{Corollary}
\newtheorem{proposition}[theorem]{Proposition}
\newtheorem{rem}[theorem]{Remark}
\newtheorem{conjecture}[theorem]{Conjecture}

\newcommand{\ca}{{\cev{a}  }}

\def\BZ{\mathbb Z}
\def\Id{\mathrm{Id}}
\def\Mat{\mathrm{Mat}}
\def\BN{\mathbb N}

\definecolor{ligreen}{rgb}{0.0, 0.3, 0.0}

\definecolor{darkblue}{rgb}{0.0, 0.0, 0.55}

\definecolor{anti-flashwhite}{rgb}{0.55, 0.57, 0.68}

\def\cF{\mathcal F}

\def\pr{\mathrm{pr}}
\def\cV{\mathcal V}
\def\ot{\otimes}
\def\buu{{\mathbf u}}


\newcommand{\cev}[1]{\reflectbox{\ensuremath{\vec{\reflectbox{\ensuremath{#1}}}}}}

\def\cE{\mathcal E}
\def\fB{\mathfrak B}
\def\cR{\mathcal R}
\def\cY{\mathcal Y}
\def\cS{\mathscr S}

\newcommand{\beq}{\begin{equation}}
\newcommand{\eeq}{\end{equation}}


\graphicspath{{./Figures/} } 

\begin{document}
\bibliographystyle{plain}

\title{On stated $SL(n)$-skein modules}
\author{Zhihao Wang}

\keywords{Skein theory, $SL(n)$, Classical limit, Splitting map,  Frobenius homomorphism, Unicity Theorem}

 \maketitle


\begin{abstract}
We mainly focus on Classical limit, Splitting map, and   Frobenius homomorphism for stated $SL(n)$-skein modules, and
Unicity Theorem for stated $SL(n)$-skein algebras.

Let $(M,N)$ be a marked three manifold. We use $S_n(M,N,v)$ to denote the stated $SL(n)$-skein module of $(M,N)$ where $v$ is a nonzero complex number.
We build a surjective algebra homomorphism from $S_n(M,N,1)$ to the coordinate ring of some algebraic set, and prove it's Kernal consists of all nilpotents. We prove the universal representation algebra of $\pi_1(M)$ is isomorphic to $S_n(M,N,1)$ when $N$ has only one component and $M$ is connected. Furthermore we  show $S_n(M,N^{'},1)$ is isomorphic to
$S_n(M,N,1)\otimes O(SLn)$, where $N\neq \emptyset$, $M$ is connected, and $N^{'}$ is obtained from $N$ by adding one extra marking.
 We also prove the splitting map is injective for any marked three manifold when $v=1$, and show that the splitting map is injective (for general $v$) if there exists at least one component of $N$ such that this component and the boundary of the splitting disk belong to the same component of $\partial M$.

We also establish the Frobenius homomorphism for $SL(n)$, which is map  from $S_n(M,N,1)$ to $S_n(M,N,v)$ when $v$ is a primitive $m$-th root of unity with $m$  being coprime with $2n$ and every component of $M$ contains at least one marking. 
We also show the commutativity between Frobenius homomorphism and splitting map. When $(M,N)$ is the thickening of an essentially bordered pb surface, we prove the Frobenius homomorphism is injective and it's image lives in the center. We prove the stated $SL(n)$-skein algebra $S_n(\Sigma,v)$ is
affine almost Azumaya when $\Sigma$ is an essentially bordered pb surface and $v$ is a primitive $m$-th root of unity with $m$  being coprime with $2n$, which implies the Unicity Theorem for $S_n(\Sigma,v)$.

\end{abstract}

\tableofcontents{}

\section{Introduction}

In this paper we will work with complex number $\mathbb{C}$ with a distinguished nonzero element $v=q^{\frac{1}{2n}}$ where $n$ is a positive integer. When we say algebra, we just mean an algebra over  $\mathbb{C}$. For any other  ring or algebra $A$, we will use $A$-algebra to mean an algebra over $A$. An algebra $A$ is called a {\bf domain}, if, for any two elements $a,b\in A$,
$ab=0$ indicates $a=0$ or $b=0$. For any commutative algebra $A$, we use MaxSpec$(A)$ to denote the set of maximal ideals of $A$.

 A {\bf marked three manifold} is  a pair $(M,N)$, where $M$ is a smooth oriented three manifold, and $N$ is a subset of $\partial M$ consisting of oriented open intervals such that for any two components $\alpha, \beta$ of $N$  there is no intersection between the closure of $\alpha$ and the closure of $\beta$. Note that we allow $N$ to be empty.

L{\^e} and Sikora defined the stated $SL(n)$-skein module for marked three manifolds \cite{le2021stated}, which is a generalization for the classical case when $n=2$ \cite{lestatedsurvery}. Let $(M,N)$ be a marked three manifold, we will use 
$S_n(M,N,v)$ to denote the  stated $SL(n)$-skein module of $(M,N)$.

If $(M,N)$ is the disjoint union of $(M_1,N_1)$ and $(M_2,N_2)$, we can easily get $S_n(M,N,v)=
S_n(M_1,N_1,v)\otimes S_n(M_2,N_2,v)$. Then we can reduce general marked three manifold to $(M,N)$ with $M$ being connected. So sometimes for convenience, we will assume $M$
 is connected (the corresponding results can be easily generalized to general marked three manifolds).

For the classical $SL(2)$-skein theory, Bullock stablished a surjective algebra homomorphism from $S_2(M,\emptyset,v^{2}=-1)$, which is isomorphic to $S_2(M,\emptyset,1)$ \cite{barrett1999skein}, to the coordinate ring of some algebraic set, and showed the Kernal of this map consists of all nilpotents \cite{BL1997rings}. This is an important development since it showed the connection between skein theory and character variety. Moreover it offered a way to interpret $S_2(M,\emptyset,1)$. It is also useful for understanding the  representation theory for $SL(2)$-skein algebra \cite{representation1,representation2,representation3}.
 
There  exists the Chebyshev-Frobenius homomorphism from $S_2(M,N,1)$ to $S_2(M,N,v)$ when $v$
is a root of unity of odd order \cite{bloomquist2020chebyshev,representation1,korinman2019classical}.
When $(M,N)$ is the thickening of a punctured bordered surface, it is injective and it's image lies in the center of the stated skein algebra.
The Chebyshev-Frobenius homomorphism makes $S_2(M,N,1)$  important to understand the center of stated skein algebra and it's representation theory \cite{representation1,representation2,representation3,unicity}.
 When 
  $M$ is the thickening of a closed surface and $N=\emptyset$, 
Bonahon, Wong and Yang introduced a new version of volume conjectuce using the fixed character in MaxSpec($S_2(M,\emptyset,1)$) \cite{bonahon2021asymptotics}. The author generalized this conjecture to periodic case and proved the conjecture for periodic case for once punctured torus \cite{wang2022kauffman}.


We generalize Bullock's work to stated $SL(n)$-skein module for marked three manifolds. 
That is to construct a surjective algebra homomorphism from $S_n(M,N,1)$ to the coordinate ring of some algebraic set and try to calculate the Kernal of this map. Costantino and L{\^e} have constructed this map when $n=2$ and $(M,N)$ is the thickening of an essentially bordered pb surface,  and proved it is an isomorphism \cite{CL2022stated}.
Motivated by Costantino and L{\^e}, we choose the algebraic set to be the homomorphism from the groupoid $\pi_1(M,N)$ to $SL(n,\mathbb{C})$, whose coordinate ring is denoted as $R_n(M,N)$.

\begin{theorem}
Let $(M,N)$ be a marked three manifold. Then
there exists a surjective algebra homomorphism 
$$\Phi^{(M,N)} : S_n(M,N,1)\rightarrow R_n(M,N).$$
\end{theorem}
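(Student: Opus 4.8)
\emph{The construction.}
The plan is to obtain $\Phi^{(M,N)}$ as the classical (that is, $v=1$) Reshetikhin--Turaev ``holonomy evaluation'' of stated $n$-webs, and then to check that it passes to the skein relations. Fix a generating set of morphisms $\{g_r\}$ of the groupoid $\pi_1(M,N)$; then $\mathrm{Hom}(\pi_1(M,N),SL(n,\mathbb{C}))$ is a closed subscheme of a product of copies of $SL(n,\mathbb{C})$, so $R_n(M,N)$ is generated as a $\mathbb{C}$-algebra by the tautological matrix-coefficient functions $x^{(r)}_{ij}\colon\rho\mapsto\rho(g_r)_{ij}$ (and, when $N=\emptyset$, by the trace functions $\rho\mapsto\mathrm{tr}(\rho(w))$ of words $w$, which generate that coordinate ring by classical invariant theory). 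To a stated $n$-web $D$ in $(M,N)$ I would attach the function on this scheme defined by: colouring each oriented edge of $D$ by $\mathbb{C}^n$ or its dual; letting a representation $\rho$ act along each strand by parallel transport; contracting at each $n$-valent source/sink vertex by the $SL(n,\mathbb{C})$-invariant (co)volume tensor $\varepsilon\in(\mathbb{C}^n)^{\otimes n}$ (the $v=1$ limit of the quantum vertex intertwiner); resolving each crossing by the $v=1$ $R$-matrix; and reading off at every endpoint on a marking the basis vector or covector selected by its state. For a fixed $D$ this is a polynomial in the entries of the finitely many matrices $\rho(g_r)$ that occur, hence an element of $R_n(M,N)$. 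For a thickened surface this is made explicit on diagrams as in Costantino--L\^{e}, and in general one works relative to a triangulation or a handle decomposition of $M$. Extending $\mathbb{C}$-linearly over isotopy classes of webs gives the candidate $\Phi^{(M,N)}$.

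\emph{Checking the relations.}
The main step is to verify that the value of $D$ does not depend on the auxiliary presentation chosen for it nor on its framed isotopy class --- immediate, since holonomy along a path and the tensors $\varepsilon$, $R$ and the basis (co)vectors are topological and $SL(n,\mathbb{C})$-equivariant --- and, above all, that $\Phi^{(M,N)}$ kills each defining relation of the stated $SL(n)$-skein module of L\^{e}--Sikora. At $v=1$ every such relation turns into an identity between $SL(n,\mathbb{C})$-invariant tensors: the crossing and web relations (crossings, $n$-valent vertices, and their isotopies) are exactly the identities satisfied by the corresponding intertwiners in the category of $SL(n,\mathbb{C})$-representations (so $\varepsilon$ contracted with its dual is the antisymmetriser / generalised Kronecker symbol, the trivial loop evaluates to $n=\dim\mathbb{C}^n$, and the framing twist is trivial); the relations that reorder heights of endpoints on a marking carry a power of $q$ which equals $1$ at $v=1$; and the genuinely ``stated'' relations at a marking reduce to the matrix identity $\sum_m\rho(g)_{im}\rho(h)_{mj}=\rho(gh)_{ij}$ (composition of paths) and to $\det\rho(g)=1$ (the relation expressing that $n$ suitably ordered endpoints on one marking evaluate to a scalar). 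All of these hold because $\rho$ takes values in $SL(n,\mathbb{C})$. Assembling the complete L\^{e}--Sikora list of relations and reducing each one to its classical counterpart is the bulk of the work.

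\emph{Algebra map and surjectivity.}
Multiplicativity is then formal. I would use the commutative $\mathbb{C}$-algebra structure on $S_n(M,N,1)$ --- at $v=1$ the product is superposition of disjoint webs, well defined because a positive crossing equals a negative crossing there, so one web can be slid past another --- and note that a disjoint union of webs evaluates to the tensor product of the two tensor contractions, hence to the product of the two functions, while the empty web maps to $1$. For surjectivity it is enough to hit the algebra generators of $R_n(M,N)$. If $N\neq\emptyset$, I would represent each generating morphism $g_r$ by an embedded framed arc with endpoints on markings (a loop based at a marking if $g_r$ is a loop), decorate its two endpoints with states $i$ and $j$, and observe that $\Phi^{(M,N)}$ sends this stated arc to a nonzero scalar multiple of $x^{(r)}_{ij}$; hence all generators lie in the image, and the image --- being a subalgebra --- is all of $R_n(M,N)$. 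If $N=\emptyset$, the same argument uses closed loops: a loop in the free homotopy class of a word $w$ realises the trace function $\mathrm{tr}(\rho(w))$, a disjoint union of loops realises a product of such functions, and these exhaust a generating set of $R_n(M,\emptyset)$.

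\emph{The main obstacle.}
The hard part is the well-definedness in the middle step: driving every defining relation of L\^{e}--Sikora --- especially the $n$-valent vertex relations and the relations governing states on the markings --- through the holonomy evaluation, which reduces to proving the corresponding multilinear-algebra identities for matrices in $SL(n,\mathbb{C})$, together with making the construction of $\Phi^{(M,N)}$ manifestly independent of how a web inside the abstract $3$-manifold $M$ is presented (for instance by first settling thickenings of essentially bordered pb surfaces \`{a} la Costantino--L\^{e} and then reducing a general marked three manifold to that case by a handle decomposition). Once the existence of the map is established, its multiplicativity and surjectivity are routine.
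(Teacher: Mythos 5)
The key gap in your proposal is that you assert ``the trivial loop evaluates to $n=\dim\mathbb{C}^n$, and the framing twist is trivial'' at $v=1$. This is false, and it is precisely the obstruction the paper is built to circumvent. At $v=1$ (so $q=1$), the twist constant is $t=(-1)^{n-1}$ and the unknot relation~\eqref{w.unknot} evaluates to $(-1)^{n-1}n$, not $n$; similarly the cap/cup constants $c_i=(-q)^{n-i}q^{(n-1)/(2n)}$ become $(-1)^{n-i}$, not $1$. A naive Reshetikhin--Turaev evaluation of a stated $n$-web by parallel transport of $\rho\in\mathrm{Hom}(\pi_1(M,N),SL(n,\mathbb{C}))$, contracted with the volume form at vertices and with (co)vectors at states, would assign trivial framing dependence and value $n$ to the trivial unknot; it therefore \emph{cannot} descend to $S_n(M,N,1)$, because it contradicts relations \eqref{w.twist}, \eqref{w.unknot}, \eqref{wzh.six}, \eqref{wzh.seven} whenever $n$ is even. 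Concretely, for $n=2$ the unknot in $S_2(M,N,1)$ is $-2$, while your evaluation gives $+2$.

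The paper resolves this by not working with $\mathrm{Hom}(\pi_1(M,N),SL(n,\mathbb{C}))$ directly but with its twisted version $\tilde\chi_n(M,N)$ consisting of homomorphisms from the fundamental groupoid of the unit tangent bundle $\pi_1(UM,\tilde N)$ to $SL(n,\mathbb{C})$ sending the fiber generator $\vartheta$ to $d_nI=(-1)^{n-1}I$, and identifying $\tilde\chi_n(M,N)\cong\chi_n(M,N)$ only after choosing a relative spin structure (Proposition~\ref{prop3.6}). The lift of a framed arc or knot to $UM$ carries the framing information, the sign $d_n$ picks up exactly the anomaly your construction drops, and the auxiliary matrix $A$ with $A^2=d_nI$ is used in the arc evaluation $[A\tilde\rho(\tilde\alpha)]_{\bar i,\bar j}$ to make the state relations \eqref{wzh.six}--\eqref{wzh.eight} close up correctly. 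Your outline also does not address well-definedness of the vertex resolution (the paper must prove independence of how sinks and sources are dragged to $N$ before applying \eqref{wzh.five}, which is a genuine computation with determinants and columns/rows of $\rho$-matrices in Subsection~\ref{sub4.2}). To salvage your approach you would have to reintroduce a spin or relative spin structure and twist the holonomy by it --- at which point you would essentially reproduce the paper's construction rather than simplify it.
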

 We can  show $\Phi^{(M,N)}$ is commutative with the splitting map, and our construction is compatible with  the construction by Costantino and L{\^e}  when $n=2$ and $(M,N)$ is the thickening of an essentially bordered pb surface.

In order to calculate Ker\,$\Phi^{(M,N)}$, we give explicit presentation for $S_n(M,N,1)$. We do this in two steps. First we consider the case when $N$ has only one component.

\begin{theorem}
Let $(M,N)$ be a marked three manifold with $M$ being connected and $N$ consisting of one component. We have $S_n(M,N,1)\simeq \Gamma_n(M)$ where $\Gamma_n(M)$ is the universal presentation algebra of $\pi_1(M)$ (Definition \ref{df3.4}).
\end{theorem}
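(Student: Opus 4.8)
The plan is to construct mutually inverse algebra homomorphisms between $\Gamma_n(M)$ and $S_n(M,N,1)$. Fix a point $p$ in the single interval $N$, so $\pi_1(M) = \pi_1(M,p)$, and recall that $\Gamma_n(M)$ is generated by the matrix entries $\rho(\gamma)_{ij}$, for $\gamma \in \pi_1(M)$ and $1\le i,j\le n$, subject to the relations making each $\rho(\gamma)$ lie in $SL(n)$, making $\rho$ multiplicative, and the universality relations from Definition \ref{df3.4}. To build $\Psi\colon \Gamma_n(M) \to S_n(M,N,1)$, I would send $\rho(\gamma)_{ij}$ to the skein class of the arc representing $\gamma$ pushed slightly off the marking, with its two endpoints stated by $i$ and $j$ and sliding along $N$. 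One then has to check that the defining relations of $\Gamma_n(M)$ are respected: the $SL(n)$ relations follow from the determinant (anti-symmetrizer) and counit skein relations near a marking in \cite{le2021stated}, multiplicativity $\rho(\gamma\delta)=\rho(\gamma)\rho(\delta)$ follows from stacking two such arcs and resolving the height-exchange at the marking, and the universality relations are designed precisely to match whatever additional skein identities survive at $v=1$.

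For the reverse map $\Phi\colon S_n(M,N,1)\to \Gamma_n(M)$, the idea is to use a presentation of the skein module itself. Since $v=1$, the skein module is commutative; every element is a $\mathbb{C}$-linear combination of framed stated tangles, and using isotopy one can push all endpoints to the single marking and cut every closed or open component into arcs that are homotopic to based loops at $p$. Thus $S_n(M,N,1)$ is generated as an algebra by stated arcs $\alpha_{ij}(\gamma)$, $\gamma\in\pi_1(M)$. I would define $\Phi(\alpha_{ij}(\gamma)) = \rho(\gamma)_{ij}$ and check that all skein relations (crossing/Kauffman-type at $v=1$, the $SL(n)$ web relations, the reordering relations at a marking, the state-sum relations) map to relations that already hold in $\Gamma_n(M)$ — this is exactly the content of Definition \ref{df3.4}, so the nontrivial point is verifying that the presentation of $\Gamma_n(M)$ contains \emph{enough} relations, not too few. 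Composing, $\Phi\circ\Psi$ and $\Psi\circ\Phi$ are the identity on generators, hence everywhere.

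The main obstacle I anticipate is the bookkeeping around the marking: proving that the height-ordering of strands passing through $N$ and the resolution of crossings produce exactly the matrix-multiplication relation and the $SL(n)$-determinant relation, with the correct signs and no leftover $q$-corrections once $v=1$. Equivalently, one must show that the natural map is well-defined independently of the choice of how arcs are cut into based loops and how their endpoints are ordered along $N$; this is where I expect to invoke the defining relations of $S_n$ carefully, and possibly an intermediate normal-form result for stated tangles in $(M,N)$ with $|N|=1$. A secondary subtlety is that Definition \ref{df3.4} likely involves the full groupoid $\pi_1(M,N)$ elsewhere in the paper but collapses to the group $\pi_1(M)$ here because $N$ is connected; I would make that reduction explicit at the start so the two presentations are literally on the same generators. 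Given these checks, surjectivity of $\Psi$ is immediate from the generation statement, injectivity follows from exhibiting $\Phi$ as a left inverse, and the isomorphism follows.
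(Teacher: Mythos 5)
Your overall strategy matches the paper's: construct mutually inverse algebra homomorphisms on arc generators, checking that the defining relations of $\Gamma_n(M)$ go to skein identities and vice versa. But the map $\Psi$ as you describe it — $[\gamma]_{ij}$ sent directly to the stated arc with endpoints stated $i$ and $j$ — does not satisfy the relation $Q_{[o]}=I$. By the skein relation \eqref{wzh.six}, a trivial arc touching the marking evaluates at $v=1$ to $\delta_{\bar j, i}\,(-1)^{n-i}$, i.e.\ a signed anti-diagonal matrix rather than the identity. The paper's map $F$ therefore twists by the anti-diagonal matrix $A$ (with $A_{ij}=(-1)^{i+1}\delta_{\bar i,j}$): $F([\gamma]_{ij}) = (AS^{[\gamma]})_{ij}$, so that $F(Q_{[o]}) = AS^{[o]} = A\cdot d_n A = I$. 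The same twist is what makes the multiplicativity relation go through: Proposition \ref{prop5.2}(a) shows $AS^{[\beta*\alpha]} = AS^{[\beta]}\,A\,S^{[\alpha]}$, where the middle $A$ absorbs exactly the signed anti-diagonal contraction produced by the concatenation relation \eqref{wzh.seven} at the marking. Without this state-relabeling twist your $\Psi$ is not well-defined, and the signs you flag as ``bookkeeping'' are symptoms of this structural issue, not corrections that can be patched in later.

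A second omission: the stated arc associated to $\gamma$ is not determined by $\gamma$ alone, because different choices of framing differ by kinks, each contributing a factor $d_n=(-1)^{n-1}$ even at $v=1$ (relation \eqref{w.twist}). The paper fixes a relative spin structure $h$ for $(M,N)$ and defines $S^{[\gamma]}_{ij}$ only using a framing with $h(\widetilde{\hat\gamma})=0$ (Remark \ref{rre5.1}); the reverse map $G$ then carries the compensating $d_n^{h(\tilde\alpha)+1}$ factor. Your proposal never introduces this normalization, so both $\Psi$ and $\Phi$ as written depend on uncontrolled framing choices. Finally, $S_n(M,N,1)$ is generated by stated $n$-webs, not just arcs; before you can reduce to arc generators you must kill all $n$-valent sinks and sources via \eqref{wzh.five}, and the paper's well-definedness argument for $G$ leans on the independence-of-how-you-kill analysis from Subsection \ref{sub4.2}. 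You gesture at cutting everything into arcs but don't mention the $n$-valent vertices or why the resulting assignment is independent of the chosen resolution.
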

Then we try to find out the effect on $SL(n)$-stated skein module by adding one extra marking. 

\begin{theorem}
Suppose $(M,N)$ is a marked three manifold with  $M$ being connected and $N\neq\emptyset$, and $N^{'}$ is obtained from $N$ by adding one extra marking. Then we have $S_n(M,N^{'},1)\simeq
S_n(M,N,1)\otimes O(SLn)$, where $ O(SLn)$ is the coordinate ring for $SL(n,\mathbb{C})$.
\end{theorem}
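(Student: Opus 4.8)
The plan is to establish the isomorphism $S_n(M,N',1)\simeq S_n(M,N,1)\otimes O(SL_n)$ by a geometric construction that separates the contribution of the extra marking. First I would choose a small half-ball (or bigon) neighborhood $B$ of a point on $\partial M$ that is disjoint from $N$, and let $N'=N\cup\{\beta\}$ where $\beta$ is the new oriented open interval sitting inside $\partial B\cap\partial M$. The idea is that $(M,N')$ is, up to isotopy, obtained from $(M,N)$ by gluing in a standard ball with one marking. A stated skein in $(M,N')$ can be pushed so that near $B$ it consists only of some strands running from $\beta$ into $M$ and back out of $\beta$, or equivalently, all endpoints on $\beta$ can be ``combed'' to the left by the state-sum and height-exchange relations. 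This should give a spanning set of $S_n(M,N',1)$ indexed by pairs (a skein in $(M,N)$, a basis element of $O(SL_n)$ recording the state data near $\beta$).

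The cleaner route, which I would actually pursue, is to use the results already in the excerpt rather than redo a presentation argument from scratch. By Theorem (the one just above, on $N$ with one component) and Theorem~1 on the surjection $\Phi$, plus the splitting map, I would argue as follows. Take the splitting of $(M,N')$ along a disk $D$ that cuts off a neighborhood of the new marking $\beta$, producing $(M,N)$ together with a standard piece $(\mathbb{B}^3,\{\beta\})$ whose stated $SL(n)$-skein module at $v=1$ is $O(SL_n)$ (this standard computation should already be available, being the $n$-analogue of the bigon/triangle computations; if not, it follows from the one-component theorem applied to a ball). The splitting map gives an algebra homomorphism $S_n(M,N',1)\to S_n(M,N,1)\otimes O(SL_n)$. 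I would then show this map is an isomorphism: injectivity follows from the injectivity of the splitting map when $v=1$ (stated explicitly in the abstract and presumably proved earlier), and surjectivity follows because the image contains $S_n(M,N,1)\otimes 1$ (skeins away from $\beta$) and $1\otimes O(SL_n)$ (the standard ball skeins localized near $\beta$), and these generate the tensor product as an algebra.

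The key steps in order: (1) set up the decomposition of $(M,N')$ as a splitting of $(M,N)$ glued to a standard marked ball along a disk, carefully checking the disk meets $\partial M$ in an arc disjoint from $N$ and from the core of $\beta$ appropriately; (2) identify $S_n(\mathbb{B}^3,\{\beta\},1)\simeq O(SL_n)$ as algebras — here the $n$ strands through $\beta$ with their states give the matrix coordinates $x_{ij}$, and the relation $\det=1$ comes from the determinant/antisymmetrizer skein relation in $S_n$; (3) invoke injectivity of the splitting map at $v=1$; (4) prove surjectivity by exhibiting generators. I would also verify compatibility with the algebra structure (the stacking product), which requires that skeins near $\beta$ and skeins in the rest of $M$ can be separated by an isotopy, using that $B$ is collar-like.

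The main obstacle I anticipate is step (2) combined with the surjectivity half of step (4): showing that every stated skein in $(M,N')$ can be isotoped and rewritten, using only the defining relations of the stated $SL(n)$-skein module, into a linear combination of skeins that are a ``product'' of something supported away from $\beta$ and something supported in the standard ball near $\beta$. This is essentially a normal-form/straightening argument for the endpoints on the new marking, and for general $n$ it uses the full strength of the height-exchange and state-transposition relations of \cite{le2021stated} rather than the simpler $n=2$ moves; controlling the interaction of many strands ending on $\beta$ is where the bookkeeping is delicate. If the splitting-map injectivity at $v=1$ is taken as given, then this straightening argument (or equivalently, showing the natural map $S_n(M,N,1)\otimes O(SL_n)\to S_n(M,N',1)$ is surjective) is really the whole content of the proof.
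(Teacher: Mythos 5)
Your geometric setup contains a concrete error that sinks the splitting argument. Splitting $(M,N')$ along a properly embedded disk $D$ that separates a ball containing the new marking $\beta$ introduces a marking on \emph{each} copy of $D$: the result is $(M,\,N\cup\{\alpha_1\})\sqcup(\mathbb{B}^3,\{\beta,\alpha_2\})$, not $(M,N)\sqcup(\mathbb{B}^3,\{\beta\})$. In particular the non-ball piece again carries $\sharp N+1$ markings, so the reduction is circular. Relatedly, $S_n(\mathbb{B}^3,\{\beta\},1)\simeq\Gamma_n(\mathbb{B}^3)\simeq\mathbb{C}$ by the one-component theorem (since $\pi_1(\mathbb{B}^3)$ is trivial), not $O(SLn)$; you need a ball with \emph{two} markings, i.e.\ a bigon, to see $O(SLn)$. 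Finally, your appeal to injectivity of the splitting map at $v=1$ is logically backwards in this paper's development: that injectivity (Corollary 6.8) is deduced from Corollary 5.13, which is itself a corollary of the very statement you are trying to prove.

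A correct geometric route close in spirit to yours does appear later, as Theorem 6.9, via the $QF$/$Cut$ machinery of Section 6.1: one realizes $(M,N')$ as $((M,N)\cup B)_{a\Delta b}$, fusing a pre-existing marking $a\in N$ to one marking $b$ of a two-marked ball $B$ through the thickened triangle, whence $S_n(M,N',v)\simeq S_n(M,N,v)\otimes O_q(SLn)$ for all $v$. But this spends an existing marking on the fusion and therefore requires the new marking to lie on a boundary component already meeting $N$ --- a stronger hypothesis than Theorem 5.11 makes. The paper's actual proof of Theorem 5.11 is algebraic: fix a path $\alpha$ from $N$ to the new marking $e$; define $\imath:S_n(M,N,1)\otimes O(SLn)\to S_n(M,N',1)$ by $1\otimes x_{i,j}\mapsto(AS^{[\alpha]})_{i,j}$ (Lemma 5.9); observe that the $\alpha_{i,j}$ generate $S_n(M,N',1)$ as an $S_n(M,N,1)$-algebra (Lemma 5.8); and construct an explicit inverse $\jmath$ by reattaching each endpoint on $e$ to $N$ along $\alpha^{\pm1}$ while recording the state data in the $O(SLn)$ factor, then verifying all the defining skein relations (Lemma 5.10). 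Your ``straightening'' intuition is aimed at $\jmath$, but injectivity comes from this explicit two-sided inverse, not from the splitting map.
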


Let $(M,N)$ be any marked three manifold with  $M$ being connected and $N\neq \emptyset$. Combining the above two theorems we have 
$$S_n(M,N,1)\simeq \Gamma_n(M)\otimes O(SLn)^{\otimes(\sharp N-1)}$$
where $\sharp N$ is the number of components of $N$.

For a commutative algebra $A$, we use $\sqrt{0}_A$ to denote the ideal consisting of all nilpotents. We can  omit the subscript for $\sqrt{0}_A$ when there is no confusion with the algebra $A$.
There is a  projection from $\Gamma_n(M)\otimes O(SLn)^{\otimes(\sharp N-1)}$ to $R_n(M,N)$, whose Kernal is $\sqrt{0}$. 
The isomorphism from $S_n(M,N,1)$ to $\Gamma_n(M)\otimes O(SLn)^{\otimes(\sharp N-1)}$ is compatible with $\Phi^{(M,N)}$, that is, the combination of this isomorphism and the projection from $\Gamma_n(M)\otimes O(SLn)^{\otimes(\sharp N-1)}$ to $R_n(M,N)$ is $\Phi^{(M,N)}$. Thus we have 
the following theorem.


\begin{theorem}
For any marked three manifold $(M,N)$, we have Ker\,$\Phi^{(M,N)} = \sqrt{0}$. 
\end{theorem}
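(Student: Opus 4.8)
The plan is to bootstrap from the structural results assembled above together with one soft lemma on nilradicals. One containment is free: since $R_n(M,N)$ is by construction the coordinate ring of an algebraic set over $\mathbb{C}$ it is reduced, and $\Phi^{(M,N)}$ is an algebra homomorphism, so it annihilates every nilpotent of $S_n(M,N,1)$; hence $\sqrt{0}\subseteq\mathrm{Ker}\,\Phi^{(M,N)}$ for all $(M,N)$, and only the reverse containment is in question. Next I would reduce to connected $M$. Writing $(M,N)=\bigsqcup_i(M_i,N_i)$ with the $M_i$ connected, one has $S_n(M,N,1)=\bigotimes_i S_n(M_i,N_i,1)$; since the groupoid $\pi_1(M,N)$ is the disjoint union of the $\pi_1(M_i,N_i)$, this also gives $R_n(M,N)=\bigotimes_i R_n(M_i,N_i)$ and $\Phi^{(M,N)}=\bigotimes_i\Phi^{(M_i,N_i)}$. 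So the case of $(M,N)$ will follow from those of the $(M_i,N_i)$ once we know: if $A_j\twoheadrightarrow B_j$ $(j=1,2)$ are surjections of $\mathbb{C}$-algebras with reduced targets and kernel $\sqrt{0}_{A_j}$, then $A_1\otimes_{\mathbb{C}}A_2\twoheadrightarrow B_1\otimes_{\mathbb{C}}B_2$ again has kernel $\sqrt{0}_{A_1\otimes A_2}$. This rests on two standard facts over the field $\mathbb{C}$: a tensor product of reduced $\mathbb{C}$-algebras is reduced, which makes $B_1\otimes B_2$ reduced and forces $\sqrt{0}_{A_1\otimes A_2}\subseteq\mathrm{Ker}$; and a finitely generated nil ideal is nilpotent, which makes each element of $\sqrt{0}_{A_1}\otimes A_2$ — lying in $J\otimes A_2$ for some finitely generated, hence nilpotent, subideal $J\subseteq\sqrt{0}_{A_1}$ — nilpotent, and likewise for $A_1\otimes\sqrt{0}_{A_2}$, so that $\mathrm{Ker}=\sqrt{0}_{A_1}\otimes A_2+A_1\otimes\sqrt{0}_{A_2}$ (by exactness of $-\otimes_{\mathbb{C}}-$) is contained in $\sqrt{0}_{A_1\otimes A_2}$.

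Now take $M$ connected. If $N\neq\emptyset$ the theorem is essentially already recorded: there is an isomorphism $S_n(M,N,1)\simeq\Gamma_n(M)\otimes O(SLn)^{\otimes(\sharp N-1)}$ intertwining $\Phi^{(M,N)}$ with the canonical projection of $\Gamma_n(M)\otimes O(SLn)^{\otimes(\sharp N-1)}$ onto $R_n(M,N)$, and that projection has kernel $\sqrt{0}$. An isomorphism of commutative algebras carries nilradical to nilradical, so transporting along this isomorphism yields $\mathrm{Ker}\,\Phi^{(M,N)}=\sqrt{0}$ at once. Combined with the reduction of the first paragraph, this already proves the theorem for every $(M,N)$ whose connected components each carry at least one marking.

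What remains is $M$ connected with $N=\emptyset$, and this is the step I expect to cost the most, precisely because the clean presentation of $S_n(M,N,1)$ used above was established only for $N$ nonempty. If $\partial M\neq\emptyset$ I would fix an auxiliary marking $e\subset\partial M$, use the single-marking case $S_n(M,\{e\},1)\simeq\Gamma_n(M)$, and compare $S_n(M,\emptyset,1)$ with $S_n(M,\{e\},1)$ through the natural map that forgets the marking, matching this on the classical side with the comparison between $R_n(M,\emptyset)$ and $R_n(M,\{e\})$ — adding $e$ amounts to rigidifying a frame at the basepoint, i.e.\ to passing between conjugation-invariant and ordinary functions on the relevant representation set — the point being to check that this comparison is precise enough to preserve the equality kernel $=$ nilradical. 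If $\partial M=\emptyset$ no marking can be added, and one must argue directly that $S_n(M,\emptyset,1)$ modulo its nilradical is $R_n(M,\emptyset)$; surjectivity of $\Phi^{(M,\emptyset)}$ is already given, so the content is the reverse containment $\mathrm{Ker}\,\Phi^{(M,\emptyset)}\subseteq\sqrt{0}$, which I would try to extract from a presentation of $S_n(M,\emptyset,1)$ in terms of $\pi_1(M)$ and conjugation-invariance, parallel to the $N\neq\emptyset$ argument. Once this markingless case is settled, the disjoint-union reduction of the first paragraph assembles all remaining cases and completes the proof.
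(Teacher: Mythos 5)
Your overall strategy matches the paper's: reduce to connected $M$ via a tensor-product argument for disjoint unions, settle $N\neq\emptyset$ from the presentation $S_n(M,N,1)\simeq\Gamma_n(M)\otimes O(SLn)^{\otimes(\sharp N-1)}$, and treat $N=\emptyset$ as a separate base case. Your disjoint-union step is in fact a bit more careful than what the paper writes: the paper asserts without elaboration that $\sqrt{0}_{A_1}\otimes A_2 + A_1\otimes\sqrt{0}_{A_2}\subseteq\sqrt{0}_{A_1\otimes A_2}$, while you supply the standard reduction to finitely generated nil ideals. For $N\neq\emptyset$ your description is also correct in substance; the paper reaches the same endpoint but assembles the compatibility inductively (first for a single marking via a commuting square over $\Gamma_n(M)$, then by adding one marking at a time), which is what actually backs up your phrase that the claim is ``essentially already recorded.''

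The genuine gap is the case $N=\emptyset$. You correctly diagnose what is needed, and even the right raw ingredient --- a presentation of $S_n(M,\emptyset,1)$ in terms of $\pi_1(M)$ and conjugation-invariance --- but you stop at a sketch, and the auxiliary-marking detour you propose for $\partial M\neq\emptyset$ does not by itself deliver the conclusion (you would still need the base input identifying $S_n(M,\emptyset,1)$ with invariant functions on a representation set, at which point the detour is moot). The missing piece is simply a citation: the paper settles this case by composing Sikora's classical isomorphism $S_n(M;\mathbb{C},1)\simeq G_n(M)$ (with $G_n(M)$ the conjugation-invariant subalgebra of $\Gamma_n(M)$) with L{\^e}--Sikora's $S_n(M,\emptyset,1)\simeq S_n(M;\mathbb{C},1)$, together with the standard invariant-theoretic fact that the surjection $G_n(M)\rightarrow R_n(M,\emptyset)$ has kernel $\sqrt{0}$. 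All three are recorded earlier in the paper (Subsections 3.3 and 4.1), so the fix is to cite them rather than to re-derive this case from scratch.
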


L{\^e} and Sikora also introduced the splitting map $\Theta$ for  stated $SL(n)$-skein module of marked three manifolds. They also conjectured the injectivity of the splitting map when $(M,N)$ is the thickening of punctured bordered surface. We will prove this conjecture  when $v=1$ in this paper. 
 We also find the connection between splitting map and adding marking map (Proposition \ref{prop6.3}). After we observed this connection, we found out
L{\^e} and Sikora used this connection to calculate the image of the splitting map when $(M,N)$ is the thickening of a punctured bordered surface \cite{le2021stated}. In this paper, we care more about general marked three manifolds, and will use this connection to prove the injectivity of splitting map for a large family  of marked three manifolds.

\begin{theorem}
Let $(M,N)$ be a marked three manifold. Let $D$ be a properly embedded disk with an oriented open interval $\beta\subset D$. Suppose $\partial D$ is contained in the component $Y$ of $\partial M$. Then we have  $\text{Ker\,}((M,N),Y,v) = \text{Ker}\,\Theta_{(D,\beta)}$. Especially $\text{Ker\,}((M,N),Y,v) = 0$ if and only if $\Theta_{(D,\beta)}$ is injective where $\text{Ker\,}((M,N),Y,v)$ is defined based on the Kernal of adding marking map
(Definition \ref{df6.5}).
\end{theorem}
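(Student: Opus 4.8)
The plan is to exploit the connection between the splitting map and the adding-marking map established in Proposition~\ref{prop6.3}. Recall the setup: we split $(M,N)$ along the disk $(D,\beta)$; if we think of this in reverse, gluing back along $D$ produces $(M,N)$ from a marked three manifold $(M',N')$ where the boundary interval $\beta$ has been doubled into two markings. The key observation, which the earlier proposition records, is that the composition of the splitting map $\Theta_{(D,\beta)}$ with an appropriate identification realizes (up to the relevant isomorphisms) the adding-marking map that inserts one extra marking on the component $Y$ containing $\partial D$. Concretely, cutting along $D$ and then viewing the two copies of $\beta$ as the ``old'' marking together with ``one new marking'' exhibits $\Theta_{(D,\beta)}$ as essentially the adding-marking homomorphism $S_n(M,N,v)\to S_n(M,N'',v)$ post-composed with an injection, so that the two maps have literally the same kernel.

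First I would carefully set up the commutative diagram from Proposition~\ref{prop6.3}, identifying which vertical arrows are isomorphisms (these come from the disjoint-union decomposition $S_n(M_1\sqcup M_2,\dots)=S_n(M_1,\dots)\otimes S_n(M_2,\dots)$ applied to the ball cut out by a bicollar of $D$, together with the fact that the stated skein module of a ball with the appropriate markings is well understood). Next I would recall the precise definition of $\text{Ker\,}((M,N),Y,v)$ from Definition~\ref{df6.5}: it is defined as the kernel of the adding-marking map associated to the component $Y$ — i.e.\ the map $S_n(M,N,v)\to S_n(M,N\cup\{\text{new interval in }Y\},v)$. The theorem then amounts to matching this kernel with $\text{Ker}\,\Theta_{(D,\beta)}$ through the diagram: a skein $x$ dies under $\Theta_{(D,\beta)}$ if and only if its image under the adding-marking map (which is the ``$D$-side'' factor of $\Theta$, the other factor being an injective coevaluation-type element) vanishes. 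Since tensoring with a nonzero element of an integral-type factor, or composing with an injective map, does not change the kernel, we get $\text{Ker\,}((M,N),Y,v)=\text{Ker}\,\Theta_{(D,\beta)}$, and hence the ``especially'' clause is immediate.

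The main obstacle I expect is the bookkeeping in Proposition~\ref{prop6.3}: making sure that the disk $D$ being \emph{properly embedded} (rather than sitting in the boundary) is handled correctly — one must choose a bicollar $D\times[-1,1]$ and check that the part of $\Theta_{(D,\beta)}$ landing on the ``extra'' glued piece is genuinely an injection, not merely a homomorphism, so that it contributes nothing to the kernel. A secondary subtlety is the hypothesis that $\partial D\subset Y$ for a \emph{single} component $Y$: this is what guarantees that the new marking created by cutting lies on that same component and therefore that the relevant adding-marking map is exactly the one appearing in Definition~\ref{df6.5}, rather than one that splits $Y$ into two boundary components or joins two distinct components (in which case the target module would be described differently). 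Once these compatibilities are nailed down, the equality of kernels is a formal diagram chase and the iff statement follows by taking $\text{Ker\,}((M,N),Y,v)=0$.
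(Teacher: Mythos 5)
Your proposal is essentially the paper's proof: Theorem~\ref{thm6.7} is stated in the paper to follow directly from Proposition~\ref{prop6.3}, which gives $QF\circ h_{\beta_2}\circ \Theta_{(D,\beta)} = \varphi_{*}\circ l_{ad}$ with $QF$, $h_{\beta_2}$, $\varphi_*$ all isomorphisms, so $\Theta_{(D,\beta)}$ and $l_{ad}$ have identical kernels, and since the new marking lands on $Y$ (as $\partial D\subset Y$), $\text{Ker}\,l_{ad}=\text{Ker}((M,N),Y,v)$ by Definition~\ref{df6.5} and Lemma~\ref{lmm6.4}. One small inaccuracy: the identifying isomorphisms in Proposition~\ref{prop6.3} do not come from a disjoint-union/ball decomposition as you sketch, but from gluing a thickened ideal triangle (the $QF$/$Cut$ pair of Section~\ref{sbbbbbbb6.1}) together with the half-twist isomorphism $h_{\beta_2}$; the logic of the kernel comparison is unaffected.
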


The above Theorem is useful to prove injectivity of the splitting map. Using this Theorem, we can prove the splitting map is  injective for any marked three manifold when $v=1$. We also have the following Corollary for general $v$.

\begin{corollary}
Let $(M,N)$ be a marked three manifold. Let $D$ be a properly embedded disk with an oriented open interval $\beta\subset D$. Suppose $\partial D$ is contained in the compotent $Y$ of $\partial M$, and 
$N\cap Y\neq \emptyset$. Then $\Theta_{(D,\beta)}$ is injective.
\end{corollary}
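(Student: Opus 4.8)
The Corollary follows by combining the preceding Theorem with a reduction to the case handled earlier for $v=1$. First I would invoke the Theorem: since $\partial D$ lies in the component $Y$ of $\partial M$, we have $\mathrm{Ker}\,\Theta_{(D,\beta)} = \mathrm{Ker}\,((M,N),Y,v)$, so it suffices to show that the adding-marking map associated to $Y$ is injective, i.e. $\mathrm{Ker}\,((M,N),Y,v)=0$. The key point is the hypothesis $N\cap Y\neq\emptyset$: there is already at least one marking $\alpha$ of $N$ on the same boundary component $Y$.

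\emph{Reduction via an existing marking.} The adding-marking map $S_n(M,N,v)\to S_n(M,N',v)$ adds a new interval to $Y$. When $Y$ already contains a marking $\alpha\in N$, I would argue that adding a second marking to $Y$ admits a one-sided inverse built out of the skein-theoretic operations that merge or slide markings along a common boundary component — concretely, using Proposition \ref{prop6.3} relating the splitting map to the adding-marking map, and the fact (underlying the isomorphism $S_n(M,N',1)\simeq S_n(M,N,1)\otimes O(SLn)$ in the $v=1$ case, but valid in the relevant local form for all $v$) that the new marking can be "absorbed" into $\alpha$ by a splitting along a small disk $D'$ whose boundary runs between the two markings on $Y$. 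This exhibits a retraction onto the image, forcing the adding-marking map to be injective; equivalently, $\mathrm{Ker}\,((M,N),Y,v)=0$.

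\emph{Alternative route.} If the above retraction argument is delicate to set up for general $v$, I would instead proceed by a direct filtration/basis argument: cutting $(M,N)$ along the disk $D$ and using the naturality of $\Theta$, express $S_n(M,N,v)$ as a summand of the split manifold's skein module, and use the presence of the marking $\alpha$ on $Y$ to produce a "height-ordering" of diagrams near the splitting disk that is compatible with the standard spanning sets from \cite{le2021stated}. The marking $\alpha$ provides the anchor needed to straighten strands crossing $D$ without creating new relations, so no cancellation occurs and $\Theta_{(D,\beta)}$ is injective on a spanning set, hence injective.

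\emph{Main obstacle.} The hard part is the passage from $v=1$ to general $v$: the structural isomorphism $S_n(M,N',1)\simeq S_n(M,N,1)\otimes O(SLn)$ genuinely uses the classical limit, so I cannot cite it verbatim. The real content is to show that the \emph{local} move — absorbing a new marking into an adjacent one on the same boundary component — is invertible over $\mathbb{C}$ for arbitrary nonzero $v$, which should follow from the explicit form of the stated skein relations near a boundary interval (the $O(SL_n)$-comodule/counit structure on strands ending on a marking), but requires care to check that the relevant "bigon" or "cap" maps remain isomorphisms without the $v=1$ specialization.
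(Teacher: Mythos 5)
Your reduction to $\mathrm{Ker}\,((M,N),Y,v)=0$ via Theorem \ref{thm6.7} is exactly right, and you correctly diagnose the remaining task: one must show the adding-marking map $l_{ad}$ is injective for \emph{general} $v$ when $Y$ already contains a marking of $N$, and the $v=1$ isomorphism $S_n(M,N',1)\simeq S_n(M,N,1)\otimes O(SLn)$ cannot be imported wholesale. But the proposal then leaves a genuine gap: the ``absorption'' retraction and the alternative filtration argument are both left as sketches and neither is carried out, and this is precisely where the hard content sits.

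The paper's actual mechanism (Theorem \ref{thm6.9}(a)) avoids any skein-theoretic ``absorption'' computation. Take the solid ball $B$ with two markings on its boundary, with $S_n(B,v)=O_q(SLn)$. Pick a marking $a\subset N$ with $a\subset Y$, form the disjoint union $(M,N)\sqcup B$, and glue $a$ to one marking $b$ of $B$ via the triangle-gluing construction of Section \ref{sbbbbbbb6.1}. The key observations are: (i) the result $((M,N)\sqcup B)_{a\Delta b}$ is isomorphic to $(M,N'')$ where $N''$ is $N$ with one extra marking on $Y$; (ii) the map $QF:S_n((M,N)\sqcup B,v)\to S_n(((M,N)\sqcup B)_{a\Delta b},v)$ is an isomorphism because $Cut$ is a two-sided inverse, a fact established for all $v$; (iii) the natural embedding $L_*:S_n(M,N,v)\to S_n(M,N,v)\otimes O_q(SLn)$ sending $x\mapsto x\otimes 1$ is obviously injective. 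The commutative square identifying $J_*\circ l_{ad}$ with $QF\circ L_*$ then yields injectivity of $l_{ad}$ at every nonzero $v$. So where you hoped to ``absorb'' the extra marking using the $O_q(SL_n)$-comodule structure, the paper instead \emph{produces} the extra marking by attaching a copy of $O_q(SL_n)$ (i.e., the ball $B$) to an existing marking, trading a delicate local skein computation for a clean global gluing isomorphism that works uniformly in $v$. Without this step your argument is incomplete.
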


We also construct the 
Frobenius homomorphism from $S_n(M,N,1)$ to $S_n(M,N,v)$ when $v$ is a primitive $m$-th root of unity with $m$  being coprime with $2n$ and every component of $M$ contains at least one marking. Before this paper, the Frobenius homomorphism was only  built for $n=2,3$.

\begin{theorem}($n=2$ and $(M,N)$ is the thickening of the closed surface \cite{representation1}, $n=2$ and $(M,N)$ is the thickening of the punctured  bordered surface \cite{korinman2019classical}, $n=2$ and general $(M,N)$ \cite{bloomquist2020chebyshev}, $n=3$ and $(M,N)$ is the thickening of the punctured  bordered  surface with at least one puncture at each component \cite{higgins})\;\;
Let $(M,N)$ be a marked three manifold, where every component of $M$ contains at least one marking, and $v$ be a primitive $m$-th root of unit with $m$ being coprime with $2n$. Then
there exists a unique linear map $\cF:S_n(M,N,1)\rightarrow S_n(M,N,v)$ such that  $\cF(l) = l^{(m)}$
for any stated $n$-web $l$ consisting of stated framed oriented boundary arcs, where $l^{(m)}$ is obtained from $l$ by taking $m$ parallel copies along the framing direction for each stated framed oriented boundary arc.
\end{theorem}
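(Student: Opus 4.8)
The statement asks for a linear map $\cF$ that sends any stated $n$-web $l$ whose components are all stated framed oriented boundary arcs to the web $l^{(m)}$ obtained by taking $m$ framing-parallel copies of each such arc. The first thing I would do is isolate what "stated framed oriented boundary arcs" span: a stated $n$-web all of whose components are boundary arcs is, up to isotopy and the stated skein relations near markings, a product (in the sense of disjoint union near the relevant markings) of elementary arcs with prescribed endpoint states. So the natural strategy is to \emph{define} $\cF$ on a spanning set of $S_n(M,N,1)$ and then check it is well defined, i.e. that it respects all defining relations of the skein module at $v=1$. The plan is therefore: (1) show that $S_n(M,N,1)$ is spanned by stated $n$-webs consisting only of boundary arcs, or reduce to such a spanning set; (2) define $\cF$ on this spanning set by $l \mapsto l^{(m)}$; (3) verify $\cF$ is independent of the choice of presentation, i.e. descends to the quotient defining $S_n(M,N,1)$; (4) conclude uniqueness, which is immediate once the spanning property in (1) is established since the formula pins $\cF$ down on a spanning set.

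For step (1), the key input should be the presentation results proved earlier in the paper: by Theorem~1.3 and Theorem~1.4 (the adding-a-marking theorem), $S_n(M,N,1)$ is isomorphic to $\Gamma_n(M)\otimes O(SLn)^{\otimes(\sharp N-1)}$, and more importantly the isomorphisms there are realized geometrically by boundary-arc webs. Concretely, pushing all of a web off into a collar of a single marking and using the state-sum relations expresses any class in terms of arcs running between markings with fixed states — and the algebra $O(SLn)$ is generated by matrix-coefficient functions, each of which corresponds to exactly such a stated boundary arc. Since every component of $M$ has a marking by hypothesis, this reduction applies on each component. So the spanning set in (1) is exactly the monomials in these elementary stated arcs.

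For step (3), the verification of well-definedness, I expect the real work and the main obstacle. One must check that $l \mapsto l^{(m)}$ is compatible with: the stated skein relations at a marking (the $SL(n)$ analogues of the height-exchange and state-sum relations), the relation that caps off a trivial arc, the crossing/braiding relations among arcs in a collar, and — most delicately — the relations that identify different boundary-arc presentations of the same element. The point is that at $v=1$ the braiding is symmetric and the $R$-matrix relations collapse, so taking $m$ parallel copies commutes with all local moves; but one has to see that an $m$-fold parallel copy of a state-sum relation is again a consequence of the $v$-deformed relations in $S_n(M,N,v)$. This is where the coprimality of $m$ with $2n$ enters: it guarantees that the relevant scalars ($q$-integers, determinant twists, the $[n]_q!$-type normalizations appearing in the $SL(n)$ antisymmetrizer) are invertible, so that the $m$-th parallel copy of the $n$-strand antisymmetrizer at $v=1$ matches the genuine antisymmetrizer at $v$. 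I would handle this by checking it locally in a ball for each type of defining relation, reducing to a computation in the stated skein algebra of a bigon or a triangle, where one can use the known structure of $S_n$ of small surfaces.

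Finally, for uniqueness, any linear map agreeing with $\cF$ on the boundary-arc spanning set from step~(1) must equal $\cF$; combined with the existence argument this gives the unique $\cF$ in the statement. I would also remark that once established, $\cF$ automatically satisfies multiplicativity-type and naturality properties, but those are not needed for this theorem. The expected length of the argument is dominated by step~(3): enumerating the defining relations of $S_n(M,N,v)$ and checking each survives the $m$-fold parallel-copy operation at $v=1$.
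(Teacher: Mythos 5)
Your plan has the right skeleton — spanning by boundary-arc webs gives uniqueness for free, and the weight of the argument is well-definedness — but it is missing the two devices the paper actually relies on, and without them your step (3) does not close. First, \emph{transparency}: before $l\mapsto l^{(m)}$ is even a reasonable assignment, you must control how $m$-parallel bundles of boundary arcs behave under crossing changes, positive/negative half-twists and kinks, returns at a marking, and sliding past other web components. The paper establishes a package of local identities (Lemmas~\ref{lmmm7.2}--\ref{lmmm7.7}, Corollaries~\ref{corr7.3}, \ref{corr7.6}, \ref{ccc7.20}, \ref{ccc7.21}, Lemma~\ref{lll7.20}) showing that at a primitive $m$-th root of unity with $\gcd(m,2n)=1$ such bundles are effectively transparent. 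Your sketch invokes coprimality only for "invertibility of scalars," but these identities are the real content of that hypothesis, and you never check them; in particular they are what make the operators $F_l$ defined below well defined on isotopy classes and mutually commuting.

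Second, and more structurally: you propose to check well-definedness "relation by relation." But the defining skein relations of $S_n(M,N,1)$ involve trivalent vertices and are not relations purely among boundary-arc webs, so you first need a presentation of $S_n(M,N,1)$ in terms of such webs. The paper supplies this via the $\pi_1$-presentation of Section~\ref{subb5} (generators $S^{[\alpha]}_{i,j}$, relations $\det S^{[\alpha]}=1$, $AS^{[\beta]}AS^{[\eta]}=AS^{[\beta*\eta]}$, $AS^{[o]}=I$). However these are relations in a \emph{commutative algebra}, while $S_n(M,N,v)$ for a general marked $3$-manifold is only a module — "checking the relations" in the target is not a well-posed task as you have set it up. The paper's resolution, which your proposal does not contemplate, is to route through $\mathrm{End}(S_n(M,N,v))$: define $F_l\in\mathrm{End}(S_n(M,N,v))$ by disjoint union with $l^{(m)}$, show these span a commutative subalgebra $\mathrm{End}(S_n(M,N,v))^{(m)}$, build an algebra map $\tilde{\cF}\colon S_n(M,N,1)\to\mathrm{End}(S_n(M,N,v))^{(m)}$ off the $\pi_1$-presentation (the relations are verified in Proposition~\ref{prop7.28}), prove $\tilde\cF(l)=F_l$ for all boundary-arc webs $l$ (Theorem~\ref{t888}), and only then evaluate at the empty web to obtain $\cF$. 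Without this module-structure detour, your "check the skein relations locally" step cannot be carried out, so the proposal as written has a genuine gap rather than an alternative route.
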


We can also show the Frobenius homomorphism  is commutative with the splitting map.

\begin{theorem}
Suppose $D$ is a  properly embedded disk in a marked three manifold $(M, N )$ and $D$  contains an oriented open interval $\beta$. Let $(M^{'},N^{'})$ be the result of splitting $(M, N )$ along $(D, \beta)$.
Then we have the following commutative diagram:
$$\begin{tikzcd}
S_n(M,N,1)  \arrow[r, "\Theta"]
\arrow[d, "\cF"]  
&  S_n(M^{'},N^{'},1)  \arrow[d, "\cF"] \\
 S_n(M,N,v)  \arrow[r, "\Theta"] 
&  S_n(M^{'},N^{'},v)\\
\end{tikzcd}.$$
\end{theorem}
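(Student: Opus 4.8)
The plan is to reduce the commutativity to a statement about the defining (basis-level) values of both maps, exploiting that $\Theta$ and $\cF$ are each determined on a spanning set. Recall from the construction of $\cF$ that it is the unique linear map sending a stated $n$-web $l$ made of stated framed oriented boundary arcs to $l^{(m)}$, its $m$-fold parallel push-off along the framing; and $\Theta$ is the splitting homomorphism, which on a stated $n$-web $l$ transverse to $D$ cuts $l$ along $D$ and sums over all compatible ways of stating the newly created endpoints on $(D,\beta)$. First I would fix a convenient spanning set of $S_n(M,N,1)$ adapted to $D$: by the usual isotopy argument one may assume every element is represented by a stated $n$-web $l$ that (i) meets $D$ transversally in a finite set of points lying on $\beta$, and (ii) near $D$ consists of small framed oriented arcs running straight through $D$ in the framing direction; moreover, by pushing web pieces toward the markings (the same maneuver used to reduce to stated arcs in the construction of $\cF$), one may take $l$ to be a disjoint union of stated framed oriented boundary arcs, so that $\cF(l)=l^{(m)}$ is literally given by taking $m$ parallels.

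The core computation is then local, in a bicollar $D\times(-1,1)$ of the splitting disk. Going one way around the square, $\Theta(l)$ replaces each transverse point $p\in l\cap D$ by a pair of endpoints with a sum over a "middle" state index $j\in\{1,\dots,n\}$ (with the appropriate coefficient coming from the definition of $\Theta$); applying $\cF$ afterwards pushes off $m$ parallel copies of the resulting web on $(M',N')$, which near the cut produces $m$ parallel strands on each side, each endpoint summed independently. Going the other way, $\cF(l)=l^{(m)}$ already has $m$ parallel strands through each $p$, and then $\Theta$ cuts all $m$ of them, introducing $m$ independent middle-index sums. So the two composites agree provided the local splitting data for a bundle of $m$ parallel framed arcs equals the $m$-th "parallel power" of the local splitting data for a single framed arc — i.e.\ that the coboundary/state-sum coefficients multiply correctly under taking parallels, with no extra framing, crossing, or height-exchange coefficients surviving because all strands are parallel and co-oriented. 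This is where I expect the only real work to be: one must check that the coefficients attached to the new stated endpoints in $\Theta$ are "grouplike" with respect to parallel push-off, so that $m$ parallels of a split arc equal the split of $m$ parallels. Concretely this is the identity $(\sum_j E_{p\to j}\otimes E_{j\to p})^{\otimes m}$-type factorization, and it should follow from the explicit formula for $\Theta$ on a single framed arc in \cite{le2021stated} together with the fact that parallel strands carry no relative crossing.

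Having established agreement on this spanning set, I would finish by noting that both $\Theta$ and $\cF$ are linear (indeed $\cF$ is defined as a linear map and $\Theta$ is an algebra homomorphism, hence linear), so the square commutes on all of $S_n(M,N,1)$. A small point to dispatch along the way: one must make sure the reduction to webs that are "parallel and straight near $D$" can be done simultaneously with the reduction to unions of stated boundary arcs used to pin down $\cF$; this is consistent because both reductions are performed by isotopies supported away from the other's region of interest (the markings $N$ versus the disk $D$), and if $D$ separates the markings one uses the defining relations of $S_n$ in a collar to move strands across $D$ without affecting the $\cF$-normal form. The main obstacle, to repeat, is the local coefficient bookkeeping in the bicollar of $D$; everything else is a packaging argument.
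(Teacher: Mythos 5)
Your overall strategy is the same as the paper's: reduce to stated $n$-webs made of stated framed oriented boundary arcs, isotope them transverse to $D$, and reduce the commutativity to a local statement about what splitting does to $m$ parallel strands. However, there is a genuine gap at what you yourself call "where the only real work is."

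You claim the local coefficient compatibility should follow "from the explicit formula for $\Theta$ on a single framed arc ... together with the fact that parallel strands carry no relative crossing," but this is not correct. When you split $m$ parallel copies of an arc $\alpha$ crossing $D$ once, $\Theta(\alpha^{(m)})$ is a sum over $n^m$ independent choices of state at the $m$ newly created endpoint pairs, whereas $\cF(\Theta(\alpha))=\sum_j (\alpha_j)^{(m)}$ is a sum over only $n$ terms (all $m$ new endpoints share the same state $j$). The equality of these two expressions is exactly the identity
\[
\Delta\bigl((u_{i,j})^{m}\bigr)=\sum_{k=1}^{n}(u_{i,k})^{m}\otimes(u_{k,j})^{m}
\]
in $O_q(SLn)$, which is the Parshall--Wang Frobenius comultiplication at a root of unity (Lemma \ref{mmm2.4}(b) in the paper, transported to the bigon skein algebra as Lemma \ref{llmm7.14}). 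This identity is \emph{false} for generic $q$, so no purely diagrammatic argument about parallel strands having no relative crossings can yield it; the root-of-unity hypothesis (and the arithmetic condition that the order of $v$ is coprime to $2n$) is used essentially. The term "grouplike" you use is also not quite right: $u_{i,j}^m$ is not grouplike, but the matrix $(u_{i,j}^m)_{i,j}$ is matrix-like, which is what the identity expresses. In addition, the paper does not go directly from "many transverse points" to the bigon: it first isolates each intersection point by subdividing $D$ (via Lemma \ref{llmm2.2}) to reduce to the single-intersection case (Corollary \ref{corr7.15}), and only then reduces to the bigon by functoriality. Your proposal glosses over this intermediate reduction, which matters because the bigon reduction literally only applies to a single transverse point.
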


We  show the image of $\cF$ is transparent, which indicates Im$\cF$ lives in the center when
$(M,N)$ is the thickening of an essentially bordered pb surface. We  also show $\cF$ is injective when $(M,N)$ is  the thickening of an essentially bordered pb surface.

\begin{theorem}
Let $\Sigma$ be an essentially bordered pb surface. Then 
 $\cF:S_n(\Sigma,1)\rightarrow S_n(\Sigma,v)$ is an injective algebra homomorphism. And Im$\cF$ lives in the center of $S_n(\Sigma,v)$.
\end{theorem}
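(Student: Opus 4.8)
The plan is to upgrade the linear map $\cF$, already constructed in the Frobenius theorem (which applies since $\Sigma\times[0,1]$ has a marking on each component when $\Sigma$ is essentially bordered), to an injective algebra homomorphism with central image. Centrality is in fact already available: the image of $\cF$ is transparent (as shown earlier for thickenings of essentially bordered pb surfaces), and a transparent element of the skein algebra of a thickened surface can be isotoped past any other web, hence commutes with everything. So the genuinely new points are multiplicativity and injectivity, and for these the strategy is to reduce to the case of an ideal triangle by iterating the splitting map and then to use the explicit quantum-torus model of the triangle skein algebra.

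Fix an ideal triangulation of $\Sigma$ with faces $\mathbb{T}_1,\dots,\mathbb{T}_k$. Thickening and cutting $\Sigma\times[0,1]$ successively along the disks $e\times[0,1]$, with $e$ running over the interior edges, produces an algebra homomorphism $\Theta\colon S_n(\Sigma,v)\to\bigotimes_{j=1}^{k}S_n(\mathbb{T}_j,v)$, and likewise at $v=1$. At each step the boundary of the splitting disk lies in one boundary component of the connected piece being cut; that piece is essentially bordered ($\Sigma$ is, and cutting along an ideal arc creates new boundary edges), so its thickening has connected boundary carrying at least one marking, and the Corollary applies. Hence every elementary splitting, and so $\Theta$ itself, is injective for all $v$. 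Applying the commutativity of $\cF$ with the splitting map (the preceding theorem) repeatedly, we get $\Theta\circ\cF_\Sigma=\bigl(\bigotimes_{j}\cF_{\mathbb{T}_j}\bigr)\circ\Theta$. Since $\Theta$ is an injective algebra homomorphism, a linear map $f$ with $\Theta\circ f$ an algebra homomorphism is itself one, a linear map $f$ with $\Theta\circ f$ injective is itself injective, and if $\Theta(f(x))$ is central for every $x$ then $f(x)$ is central; so it suffices to show that each $\cF_{\mathbb{T}_j}$ is an injective algebra homomorphism with central image.

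For the triangle $\mathbb{T}$ I would use the known identification of $S_n(\mathbb{T},v)$ with a subalgebra of a quantum torus $\mathcal{Z}_v$, compatible with the classical limit $\mathcal{Z}_1=\mathbb{C}[z_1^{\pm1},\dots,z_N^{\pm1}]$, under which each stated boundary arc corresponds to an explicit monomial. The key lemma is that under this identification the $m$-fold cabling $l\mapsto l^{(m)}$ sends the monomial of a stated boundary arc to its $m$-th power; equivalently, at a primitive $m$-th root of unity all lower-order terms produced when the $m$ parallel copies are forced to cross cancel. Granting this, $\cF_{\mathbb{T}}$ becomes the torus Frobenius $z_i\mapsto z_i^{m}$. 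On $\mathcal{Z}_1$ this is an injective algebra homomorphism, being induced by the injective map $\mathbb{Z}^N\to\mathbb{Z}^N$, $\alpha\mapsto m\alpha$, of exponent lattices; hence so is $\cF_{\mathbb{T}}$. Its image lies in the subalgebra generated by the $z_i^{m}$, and since the $z_i$ pairwise $q$-commute in $\mathcal{Z}_v$ while $v$ is a primitive $m$-th root of unity coprime to $2n$, these $m$-th powers are central in $\mathcal{Z}_v$; so $\mathrm{Im}\,\cF_{\mathbb{T}}$ is central (recovering the transparency-based argument). Transporting these statements back through $\Theta$ yields the theorem.

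I expect the main obstacle to be the compatibility lemma for the triangle---that geometric $m$-cabling of a stated boundary arc equals the $m$-th power of its monomial in $\mathcal{Z}_v$, with no surviving correction at an $m$-th root of unity---since this is exactly where the hypotheses ($m$ coprime to $2n$, $v$ primitive) are used; the rest is formal. An alternative to the triangle reduction is to run the same monomial computation inside a global Fock--Goncharov type quantum torus into which $S_n(\Sigma,v)$ embeds for the whole essentially bordered surface.
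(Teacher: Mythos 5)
Your framework — triangulate, iterate the splitting map, use commutativity of $\cF$ with $\Theta$ to push multiplicativity, injectivity, and centrality down to $\Theta\circ\cF_{\Sigma}=(\otimes_j\cF_{\fT_j})\circ\Theta$, and then transport them back through the injective algebra homomorphism $\Theta$ — is sound, and it matches the structure of the paper's proof. But your argument is incomplete precisely where you flag it: the claim that under a quantum torus embedding of $S_n(\fT,v)$, the geometric $m$-cabling of every stated boundary arc equals the exact $m$-th power of a quantum-torus monomial, with all lower-order terms cancelling at a primitive $m$-th root of unity. You state this as a lemma you ``would use'' without proving it, and that is exactly the heart of the matter, not a peripheral detail; nothing in your write-up reduces it to something already established.

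The paper's proof of injectivity for the triangle avoids quantum tori entirely. It uses the explicit linear isomorphism $QF\colon S_n(\fB,v)\otimes S_n(\fB,v)\to S_n(\fT,v)$ that fills the triangle with two families of parallel arcs (equation \eqref{QFF}). Since $S_n(\fB,v)$ has a basis of stacks of parallel stated arcs, both $QF$ (extend to the bottom edge) and $\cF$ (take $m$ parallel copies of each arc) act geometrically on this basis and visibly commute, so the square \eqref{comm} commutes and injectivity for $\fT$ drops to injectivity for $\fB$. For the bigon one has $S_n(\fB,v)\simeq O_q(SLn)$, and $\cF$ is identified with the Parshall--Wang Frobenius $F_n\colon O(SLn)\to O_q(SLn)$, $x_{i,j}\mapsto(u_{i,j})^{m}$, whose injectivity is immediate from the PBW-type basis in Proposition \ref{r.basisOq} (the map just multiplies exponent matrices by $m$). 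This is more elementary than, and independent of, Fock--Goncharov-type embeddings, and it is where the hypotheses on $m$ and $v$ actually enter in the paper: they are needed for the cabled bigon relations (Lemmas \ref{llll7.14}--\ref{llmm7.14}) that make $\cF$ on $\fB$ coincide with $F_n$. One further small remark: you do not need to argue that each elementary splitting is injective via the adding-marking corollary; the paper uses the composite splitting $\Theta\colon S_n(\Sigma,v)\hookrightarrow\otimes_j S_n(\fT_j,v)$, whose injectivity is Proposition \ref{inj}, and additionally feeds the exact sequence of Lemma \ref{exact} into the diagram so that it can characterize $\cF$ as the unique map $F_{\cE}$ making the square commute.
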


One reason for Frobenius map to be important is because it is very crucial for understanding representation theory for skein algebra. Another useful tool to study representation theory for skein algebra is Unicity Theorem, which was first proved by Frohman, Kania-Bartoszynska, L{\^e} for $SL(2)$-skein algebra \cite{unicity}. Korinman generalized Unicity Theorem to stated $SL(2)$-skein algebra \cite{korinman2021unicity}. The key to prove Unicity Theorem for skein algebra is to prove the skein algebra is affine almost Azumaya, see Section \ref{sss888} for definition.

\begin{theorem}
Let $\Sigma$ be an essentially bordered pb surface, and $v$ be a primitive $m$-th root of unit with $m$ being coprime with $2n$. We have  $S_n(\Sigma,v)$ is affine almost Azumaya.
\end{theorem}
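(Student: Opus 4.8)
The plan is to deduce this statement from the structural results established earlier in the paper about the Frobenius homomorphism $\cF \colon S_n(\Sigma,1)\to S_n(\Sigma,v)$ together with the classical-limit identification of $S_n(\Sigma,1)$. Recall that ``affine almost Azumaya'' means (see Section \ref{sss888}) that $S_n(\Sigma,v)$ is a finitely generated module over a central subalgebra which is itself an affine commutative domain (or becomes so after quotienting by its nilradical), and that the algebra is Azumaya over a Zariski-dense open subset of the spectrum of that central subalgebra. So the first task is to produce the candidate central subalgebra: I would take it to be $Z_0 := \mathrm{Im}\,\cF \subset S_n(\Sigma,v)$, which by the injectivity-and-centrality theorem above is isomorphic to $S_n(\Sigma,1)$ and sits inside the center of $S_n(\Sigma,v)$.

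First I would record that $S_n(\Sigma,1)$ is affine: by the presentation theorems in the excerpt, for an essentially bordered pb surface $S_n(\Sigma,1)\simeq \Gamma_n(M)\otimes O(SLn)^{\otimes(\sharp N-1)}$ (here $(M,N)$ is the thickening of $\Sigma$), and $\Gamma_n(M)$ is the universal representation algebra of a finitely generated group $\pi_1(M)$, hence a finitely generated commutative algebra; tensoring with finitely many copies of $O(SLn)$ keeps it finitely generated. Moreover $\mathrm{Ker}\,\Phi^{(M,N)}=\sqrt{0}$, so modulo nilpotents $S_n(\Sigma,1)\cong R_n(\Sigma)$ is the coordinate ring of an affine algebraic set, which after restricting to an irreducible component is a domain — this is the ``almost'' in almost Azumaya. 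So $Z_0$ is an affine central subalgebra whose reduction is, on each component, a finitely generated domain.

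Next I would prove the module-finiteness: $S_n(\Sigma,v)$ is a finitely generated $Z_0$-module. For an essentially bordered pb surface one has a quantum-torus / quantum-trace type embedding $S_n(\Sigma,v)\hookrightarrow \overline{\mathcal X}$ into (a tensor product of) quantum tori, and the Frobenius map is compatible with the $m$-th-power / Frobenius map on quantum tori, whose image is exactly the central sub-quantum-torus over which the big torus is free of finite rank $m^{\dim}$. Pulling this back along the embedding, and using a triangulation so that $S_n(\Sigma,v)$ is generated over the image of $\cF$ by the finitely many ``reduced'' monomials in the edge/face variables with exponents in $\{0,1,\dots,m-1\}$, gives finite generation as a $Z_0$-module. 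The commutativity of $\cF$ with the splitting map (the commutative-diagram theorem above) is what makes this triangulation argument consistent.

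Finally, the Azumaya locus: having exhibited $S_n(\Sigma,v)$ as a prime (on each component) affine PI algebra which is a finite module over its affine center-containing subalgebra $Z_0$, I would invoke the standard criterion that such an algebra is Azumaya over the smooth locus of $\mathrm{MaxSpec}$ of that subalgebra where the PI-degree is maximal, and check that this locus is nonempty and Zariski dense — equivalently, that the rank of $S_n(\Sigma,v)$ as a $Z_0$-module equals $(\mathrm{PI\text{-}deg})^2$ generically, which follows from the quantum-torus model where the fiber dimension is constantly $m^{\dim}$ and matches the square of the PI-degree of the ambient quantum torus. I expect the main obstacle to be the module-finiteness step: making the reduction from the quantum-torus embedding genuinely controlled — i.e. showing the finitely many reduced monomials actually span $S_n(\Sigma,v)$ over $\mathrm{Im}\,\cF$ and not merely over a localization — which requires care with the ``essentially bordered'' hypothesis to guarantee the embedding into quantum tori is injective and the edge coordinates behave well; once that is in place the Azumaya-locus conclusion is a formal application of the unicity machinery of \cite{unicity}.
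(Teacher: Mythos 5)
Your overall skeleton is sound --- identify the central subalgebra $Z_0=\mathrm{Im}\,\cF\cong S_n(\Sigma,1)$ (affine), then prove module-finiteness of $S_n(\Sigma,v)$ over $Z_0\subset C(S_n(\Sigma,v))$ --- and this matches the paper's strategy at a high level. But the route you take for the crucial module-finiteness step diverges from the paper's, and the route you propose has a real obstruction. You invoke a ``quantum-torus / quantum-trace type embedding $S_n(\Sigma,v)\hookrightarrow\overline{\mathcal X}$'' and want to pull back the free rank-$m^{\dim}$ structure from the quantum torus. For $n=2$ this is the Bonahon--Wong quantum trace and the argument is standard, but for general $n$ no such embedding compatible with the Frobenius map (in the form you need, namely an injective algebra map under which $\cF$ matches the $m$-th power Frobenius of the quantum torus, with a controlled monomial spanning set over $\mathrm{Im}\,\cF$ and not merely over a localization) is established in the paper or its references. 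The paper avoids this entirely: it uses the saturated system of arcs $B$ on $\Sigma$ (Theorem \ref{key}, from \cite{le2021stated}) to get a \emph{linear surjection} from $S_n(U(B),v)$, where $U(B)$ is a disjoint union of bigons, onto $S_n(\Sigma,v)$; it proves module-finiteness for a single bigon by the explicit PBW basis of $O_q(SLn)$ and the injective Frobenius $O(SLn)\to O_q(SLn)$ (Lemmas \ref{lmlm8.6}, \ref{lmlm8.7}); and it pushes module-finiteness forward along the surjection using the functoriality of the $S_n(\cdot,1)$-module structure (Lemmas \ref{mmm8.4}, \ref{lmlm8.5}). That argument is elementary and self-contained, whereas yours delegates the key estimate to a tool whose existence for $SL(n)$ is exactly what would need to be proved.

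A second, smaller point: you do not need, and should not appeal to, passing to irreducible components or to ``$S_n(\Sigma,1)$ modulo nilpotents'' to get primeness. In the paper's definition, ``affine almost Azumaya'' requires (1) finite generation as an algebra, (2) primeness, and (3) module-finiteness over the center. Both (1) and (2) for $S_n(\Sigma,v)$ are imported directly from \cite{leY} (Theorem 6.1: for essentially bordered $\Sigma$, $S_n(\Sigma,v)$ is a finitely generated \emph{domain}), so there is nothing to fix up by passing to components; in fact for essentially bordered $\Sigma$ the algebra $S_n(\Sigma,1)\simeq O(SLn)^{\otimes\ell}$ has no nilpotents either (Corollary \ref{cli}). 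Also, the entire discussion of the Azumaya locus in your final paragraph is superfluous at this stage: once (1)--(3) hold, the almost-Azumaya property is an immediate consequence of the cited theorem from \cite{unicity} (``a prime algebra finitely generated as a module over its center is almost Azumaya''), and the open dense Azumaya locus is part of the conclusion of the Unicity Theorem, not part of the proof of the statement at hand.
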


Because of the above Theorem, the Unicity Theorem applies for $S_n(\Sigma,v)$ when $\Sigma$ is an essentially bordered pb surface. Then we have the following Theorem.

\begin{theorem}
Let $\Sigma$ be an essentially bordered pb surface, and $v$ be a primitive $m$-th root of unit with $m$ being coprime with $2n$. Then $S_n(\Sigma,v)$ is finitely generated as a module over it's center.
 Suppose the rank of $S_n(\Sigma,v)$  over it's center is $K$. And  we use $C(S_n(\Sigma,v))$ to denote the center of $S_n(\Sigma,v)$, then we have

(1) any irreducible representation of $S_n(\Sigma,v)$ has dimension at most the square root of $K$;

 (2) the map $ \mathcal{X} :\text{Irrep}\rightarrow  \text{MaxSpec}(C(S_n(\Sigma,v)))$, defined in Remark \ref{rem7.48}, is surjective. 

(3) there exists a Zariski open dense subset $U \subset \text{MaxSpec}(C(S_n(\Sigma,v)))$ such that for any two irreducible representations $V_1$, $V_2$ of $S_n(\Sigma,v)$ with $\mathcal{X}(V_1) = \mathcal{X}(V_2)\in U$, then $V_1$ and $V_2$ are isomorphic and have  dimension the square root of $K$. Moreover any representation sending $C(S_n(\Sigma,v))$ to scalar operators and  whose induced character lies in $U$ is semi-simple.

\end{theorem}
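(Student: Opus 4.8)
The plan is to derive the whole statement formally from the preceding theorem, that $A:=S_n(\Sigma,v)$ is affine almost Azumaya, by feeding it into the standard representation-theoretic machinery attached to such algebras (the Unicity Theorem of \cite{unicity}, in the form recalled in Section~\ref{sss888}); essentially no further skein-theoretic input is needed here, and the work is purely in unwinding definitions. Recall that ``affine almost Azumaya'' means: $A$ is a finitely generated $\mathbb{C}$-algebra, $A$ is a domain, $A$ is module-finite over its center $Z:=C(S_n(\Sigma,v))$, and the Azumaya locus of $A$ inside $\text{MaxSpec}(Z)$ is nonempty (equivalently, $A[f^{-1}]$ is Azumaya over $Z[f^{-1}]$ for some nonzero $f\in Z$). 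The module-finiteness over $Z$ is exactly the unnumbered assertion of the theorem. By the Artin--Tate lemma $Z$ is itself an affine $\mathbb{C}$-algebra, and since $A$ is a domain so is $Z$, so $X:=\text{MaxSpec}(Z)$ is an irreducible affine variety; because $A$ is module-finite over the Noetherian ring $Z$ with nonempty Azumaya locus, that locus is a nonempty Zariski open, hence dense, subset $\mathcal{A}\subseteq X$. Finally $A$ is a PI-algebra (being module-finite over its center); writing $d$ for its PI-degree and comparing ranks over $\mathrm{Frac}(Z)$ on the Azumaya locus gives $K=d^2$, so $\sqrt{K}=d\in\mathbb{Z}$.

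For (1): any irreducible representation $V$ of $A$ is finite-dimensional (affine PI-algebras over the algebraically closed field $\mathbb{C}$ satisfy the Nullstellensatz), and $Z$ acts on $V$ by scalars, so $V$ is a simple module over the finite-dimensional quotient $A/\mathfrak{m}A$ with $\mathfrak{m}:=\mathcal{X}(V)$; by Kaplansky's theorem a simple module over a PI-algebra of PI-degree $d$ has $\mathbb{C}$-dimension at most $d=\sqrt{K}$. For (2): $Z\hookrightarrow A$ is an integral extension with $A\neq 0$, so $A/\mathfrak{m}A$ is a nonzero finite-dimensional $\mathbb{C}$-algebra for every $\mathfrak{m}\in X$; any simple $A/\mathfrak{m}A$-module is then an irreducible $A$-representation $V$ with $\mathcal{X}(V)=\mathfrak{m}$, so $\mathcal{X}$ is surjective.

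For (3): take $U:=\mathcal{A}$ (or any nonempty Zariski open subset of it), which is open dense by the first paragraph. For $\mathfrak{m}\in U$ the quotient $A/\mathfrak{m}A$ is an Azumaya algebra over the field $\mathbb{C}$, hence central simple, hence isomorphic to $M_d(\mathbb{C})$ with $d=\sqrt{K}$ (since $\mathbb{C}$ is algebraically closed); its unique simple module has dimension $d$. Thus any two irreducible representations $V_1,V_2$ with $\mathcal{X}(V_1)=\mathcal{X}(V_2)\in U$ both factor through this copy of $M_d(\mathbb{C})$, so they are isomorphic and of dimension $\sqrt{K}$; and since $M_d(\mathbb{C})$ is semisimple, any representation on which $Z$ acts by scalars through a point of $U$ is a semisimple $A/\mathfrak{m}A$-module, hence semisimple as an $A$-module.

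The only real obstacle is bookkeeping: checking that the hypotheses of the general Unicity Theorem are met, the crucial one being that the Azumaya locus is nonempty --- but this is precisely the ``almost'' half of ``affine almost Azumaya'' supplied by the previous theorem, which in turn rests on the injectivity and centrality of the Frobenius homomorphism established earlier in the paper. There is no genuinely hard step remaining at this stage; all the substance lies in those inputs, so I would keep this proof short and mostly a matter of citing Section~\ref{sss888} and \cite{unicity}.
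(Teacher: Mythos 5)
Your proposal is correct and follows the same route as the paper: the theorem is a direct application of the general Unicity Theorem for affine almost Azumaya algebras (the paper's Theorem~\ref{tmmm8.3}, quoted from~\cite{unicity,korinman2021unicity}) to $A=S_n(\Sigma,v)$, which qualifies by Theorem~\ref{tmmm8.11}; module-finiteness of $A$ over $C(A)$ is Lemma~\ref{lmlm8.10}. The paper's proof is literally just these three citations, whereas you additionally unwind the standard PI-theory inside the general Unicity Theorem (Artin--Tate for affineness of the center, Kaplansky's bound, Azumaya locus open dense, $A/\mathfrak{m}A\cong M_d(\mathbb{C})$ on the Azumaya locus); that extra unwinding is correct but not needed once one cites the black box.
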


Costantino and L{\^e} also defined the generalized marked three manifold by allowing oriented closed circle in $N$ \cite{CL2022TQFT}. We can obviously define the stated $SL(n)$-skein module for the generalized marked three manifold. For any generalized marked three manifold, we will define $\Gamma_n(M,N)$ as a quotient algebra of $\Gamma_n(M)$ (Definition \ref{df7.3}).

\begin{theorem}
Let $(M,N)$ be a generalized marked three manifold with  $M$ being connected and $N\neq \emptyset$.
Then $S_n(M,N,1)\simeq \Gamma_n(M,N)\otimes O(SLn)^{\otimes(\sharp N - 1)}$.
\end{theorem}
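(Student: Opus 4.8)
The plan is to reduce this "generalized" statement to the ordinary-marking case already established (Theorems for $S_n(M,N,1)$ with arcs only). First I would recall that a generalized marking is an oriented closed circle $C\subset\partial M$, and the effect of replacing such a circle by an ordinary open interval marking is governed by the algebra $\Gamma_n(M,N)$, which by Definition~\ref{df7.3} is the quotient of $\Gamma_n(M)$ obtained by imposing, for each closed-circle component of $N$, the relations that kill the discrepancy between "going around the circle" and the identity (concretely, the relation identifying the holonomy loop associated to $C$ with a trivial loop, since a closed marking forces the corresponding boundary matrix to be conjugate to a specified element). So the target $\Gamma_n(M,N)\otimes O(SLn)^{\otimes(\sharp N-1)}$ is exactly the target from the arc-only theorem, further quotiented along the closed-circle components.

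Next I would handle the closed circles one at a time, arguing by induction on the number of closed-circle components of $N$. For the inductive step, pick one closed circle $C$. The key geometric input is that $S_n(M,N,1)$, where $C$ is a closed marking, is obtained from $S_n(M,N',1)$, where $N'$ replaces $C$ by a nearby open interval $\beta\subset C$, by imposing precisely the skein relation that makes a small stated arc running parallel to $\beta$ "close up" — i.e. the closed-marking relation of \cite{CL2022TQFT}. On the classical ($v=1$) side this relation translates, under the isomorphism $S_n(M,N',1)\simeq S_n(M,N'',1)\otimes O(SLn)$ from Theorem~1.3 (the adding-a-marking theorem), into setting the extra $O(SLn)$ tensor factor associated to $\beta$ equal to a fixed point (the identity coset), which is the algebraic incarnation of quotienting $\Gamma_n$ by the loop-around-$C$ relation. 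Matching these two quotients — the skein-theoretic closed-marking relation and the algebraic relation defining $\Gamma_n(M,N)$ from $\Gamma_n(M)$ — is the heart of the argument; once it is done, the induction collapses the $\sharp N$ count correctly and produces the stated isomorphism. Throughout, I would keep track of the splitting/adding-marking compatibility (Proposition~\ref{prop6.3}) so that the isomorphisms are canonical and the tensor factors are indexed correctly.

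The main obstacle I expect is the precise comparison in the inductive step: proving that the ideal generated in $S_n(M,N',1)$ by the closed-marking skein relation corresponds, under the (already established) presentation isomorphism, exactly to the ideal defining the quotient $\Gamma_n(M,N)\subset$-quotient-of $\Gamma_n(M)$ — no bigger, no smaller. One direction (the relation maps into the defining ideal) is a direct computation with the map $\Phi^{(M,N)}$ and the explicit form of $\Gamma_n(M,N)$; the reverse containment is the delicate part, because one must show the closed-marking relation is strong enough to generate all of it, which amounts to checking that no "extra" classical relations survive. I would address this by using the fact, from Theorem~1.4 (and its generalized-marking analogue), that $\mathrm{Ker}\,\Phi = \sqrt{0}$ is already understood in the arc-only case, so that it suffices to compare the reduced coordinate rings of the associated affine schemes — the scheme of groupoid representations $\pi_1(M,N)\to SL(n,\mathbb C)$ with a closed marking is cut out from the one without exactly by the conjugacy condition on the $C$-loop — and then lift the identification to the nilpotent-thickened (non-reduced) level using the explicit $O(SLn)$-smash product structure. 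Finally, I would note that the case $\sharp N = 1$ with that single component a closed circle needs separate (easy) treatment, analogous to Theorem~1.2, giving $S_n(M,N,1)\simeq\Gamma_n(M,N)$ directly.
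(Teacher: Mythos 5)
Your high-level plan—cut each closed circle $C$ to a small open interval $\beta$, invoke the arc-only presentation $S_n(M,N',1)\simeq \Gamma_n(M)\otimes O(SLn)^{\otimes(\sharp N -1)}$, and then mod out by the slide-across-the-gap relation to recover the closed marking—is exactly the paper's strategy (Proposition~\ref{pro7.1} gives the slide relation, Lemmas~\ref{lmm7.4}--\ref{lmm7.6} identify the resulting quotient). However, you misidentify where the slide relation lands under that presentation. You assert that, via the adding-marking isomorphism $S_n(M,N',1)\simeq S_n(M,N'',1)\otimes O(SLn)$, imposing the relation amounts to ``setting the extra $O(SLn)$ tensor factor associated to $\beta$ equal to a fixed point (the identity coset).'' If that were the case, each closed circle would eliminate one $O(SLn)$ tensor factor, leaving only $\sharp N - 1 - k$ of them when $N$ contains $k$ circles, which contradicts the exponent $\sharp N - 1$ in the very statement you are proving. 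The $O(SLn)$ factor in the adding-marking theorem records the holonomy of a connecting arc $\alpha$ from the other markings to $\beta$, and nothing in the circle relation constrains that holonomy.

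The relation in fact lands entirely in the $\Gamma_n(M)$ factor. Sliding an endpoint of an arc across the gap changes its class in $\pi_1(M,N')$ by postcomposition with a loop going once around $C$; rewriting any arc ending on $\beta$ as (a loop in $\pi_1$) $*$ (the connecting arc $\alpha$) absorbs that discrepancy into the loop part, so the induced relation is precisely $Q_{[e_t^{\beta_t}]}=d_n^{h_s(\widetilde{e_t})}I$ inside $\Gamma_n(M)$, i.e.\ the defining ideal of $\Gamma_n(M,N)$ in Definition~\ref{df7.3}, with every $O(SLn)$ factor untouched. This is the content of Lemmas~\ref{lmm7.4} and~\ref{lmm7.5}, which verify directly that the candidate isomorphism and its candidate inverse both descend through the slide relation, and of Lemma~\ref{lmm7.6}, which checks they are mutually inverse. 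Your alternative plan for the ``delicate'' ideal-matching—compare reduced coordinate rings and then lift through the $\sqrt{0}$ thickening—does not close the gap either: the theorem is an isomorphism of the honest, possibly non-reduced algebras, and passing through $\sqrt{0}$ discards exactly the nilpotent data you would then have to reconstruct; I see no mechanism in the proposal that recovers it.
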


{\bf Acknowledgements}:
The author would like to thank my supervisors Andrew James Kricker and  Roland van der Veen, and my colleague Jeffrey Weenink Andre for helpful discussion. The research is supported by NTU  research scholarship.

\section{Preliminary}

In this section, we will recall some definitions and conclusions in \cite{le2021stated},
 and also introduce some conventions for this paper.

\subsection{Stated $SL(n)$-skein module}
In this paper, we follow the definition in \cite{le2021stated} for stated $SL(n)$-skein modules.  Here we briefly recall the definition.

\begin{definition}(\cite{le2021stated})
 An $n$-web $l$ in a marked three manifold $(M,N)$ is a disjoint union of oriented closed paths and a directed finite  graph properly embedded into $M$. We also have the following requirements:

(1) $l$ only contains $1$-valent or $n$-valent vertices. Each $n$-valent vertex is a source or a  sink. The set of one valent vertices is denoted as $\partial l$, which are  called endpoints of $l$.

 (2) Every edge of the graph is an embedded  closed interval  in $M$. 

(3) $l$ is equipped with a transversal framing. 

(4) The set of half-edges at each $n$-valent vertex is equipped with a  cyclic order. 

(5) $\partial l$ is contained in $N$ and the framing at these endpoints is the velocity  vector of $N$.
\end{definition}



For any two points $a,b\in\partial l$, we say $a$ is higher than $b$ if they belong to a same component $e$ of $N$ and the direction of $e$ is going from $b$ to $a$.

A state of an $n$-web $l$ is a map  $s: \partial l\rightarrow \{1,2,\dots,n\}$. If there is  such a map $s$ for $l$, we say $l$ is stated by $s$. And for any point $a\in\partial l$, we say $a$ is stated by $s(a)$.

Recall that our ground ring is $\mathbb{C}$ with the parameter $v\in\mathbb{C}^{*}$. We set 
$q^{\frac{1}{2n}} = v$, and define the following constants:
\begin{align}
c_i&= (-q)^{n-i} q^{\frac{n-1}{2n}},\quad
t= (-1)^{n-1} q^{\frac{n^2-1}{n}}, \ t^{n/2} =  (-1)^{\frac{(n-1)n}2} q^{\frac{n^2-1}{2}} \label{e.t}\\
a &=   q^{\frac{n+1-2n^2}{4}},\quad 
d_n = (-1)^{n-1}. 
\end{align}

Note that
\begin{equation}\label{e.prodc} 
\prod_{i=1}^n c_i  = t^{n/2}= (-1)^{\frac{(n-1)n}2 } q^\frac{n^2-1}{2} 
\ \text{and}\ c_i\cdot c_{\bar i}=t, \ \text{for}\ i=1,\dots, n.
\end{equation}

We will use $\mathbb{J}$ to denote the set $\{1,2,\dots,n\}$, and use $S_n$ to denote the permutation group on set $\mathbb{J}$.


The stated $SL(n)$-skein module of $(M,N)$, denoted as $S_n(M,N,v)$, is obtained in two steps. We first use all isotopy classes of stated 
$n$-webs in $(M,N)$ as basis to generate a vector space, then  quotient the following relations.

\beq\label{w.cross}
q^{\frac{1}{n}} 
\raisebox{-.20in}{

\begin{tikzpicture}
\tikzset{->-/.style=

{decoration={markings,mark=at position #1 with

{\arrow{latex}}},postaction={decorate}}}
\filldraw[draw=white,fill=gray!20] (-0,-0.2) rectangle (1, 1.2);
\draw [line width =1pt,decoration={markings, mark=at position 0.5 with {\arrow{>}}},postaction={decorate}](0.6,0.6)--(1,1);
\draw [line width =1pt,decoration={markings, mark=at position 0.5 with {\arrow{>}}},postaction={decorate}](0.6,0.4)--(1,0);
\draw[line width =1pt] (0,0)--(0.4,0.4);
\draw[line width =1pt] (0,1)--(0.4,0.6);
\draw[line width =1pt] (0.4,0.6)--(0.6,0.4);
\end{tikzpicture}
}
- q^{-\frac {1}{n}}
\raisebox{-.20in}{
\begin{tikzpicture}
\tikzset{->-/.style=

{decoration={markings,mark=at position #1 with

{\arrow{latex}}},postaction={decorate}}}
\filldraw[draw=white,fill=gray!20] (-0,-0.2) rectangle (1, 1.2);
\draw [line width =1pt,decoration={markings, mark=at position 0.5 with {\arrow{>}}},postaction={decorate}](0.6,0.6)--(1,1);
\draw [line width =1pt,decoration={markings, mark=at position 0.5 with {\arrow{>}}},postaction={decorate}](0.6,0.4)--(1,0);
\draw[line width =1pt] (0,0)--(0.4,0.4);
\draw[line width =1pt] (0,1)--(0.4,0.6);
\draw[line width =1pt] (0.6,0.6)--(0.4,0.4);
\end{tikzpicture}
}
= (q-q^{-1})
\raisebox{-.20in}{

\begin{tikzpicture}
\tikzset{->-/.style=

{decoration={markings,mark=at position #1 with

{\arrow{latex}}},postaction={decorate}}}
\filldraw[draw=white,fill=gray!20] (-0,-0.2) rectangle (1, 1.2);
\draw [line width =1pt,decoration={markings, mark=at position 0.5 with {\arrow{>}}},postaction={decorate}](0,0.8)--(1,0.8);
\draw [line width =1pt,decoration={markings, mark=at position 0.5 with {\arrow{>}}},postaction={decorate}](0,0.2)--(1,0.2);
\end{tikzpicture}
},
\eeq 
\beq\label{w.twist}
\raisebox{-.15in}{
\begin{tikzpicture}
\tikzset{->-/.style=
{decoration={markings,mark=at position #1 with
{\arrow{latex}}},postaction={decorate}}}
\filldraw[draw=white,fill=gray!20] (-1,-0.35) rectangle (0.6, 0.65);
\draw [line width =1pt,decoration={markings, mark=at position 0.5 with {\arrow{>}}},postaction={decorate}](-1,0)--(-0.25,0);
\draw [color = black, line width =1pt](0,0)--(0.6,0);
\draw [color = black, line width =1pt] (0.166 ,0.08) arc (-37:270:0.2);
\end{tikzpicture}}
= t
\raisebox{-.15in}{
\begin{tikzpicture}
\tikzset{->-/.style=
{decoration={markings,mark=at position #1 with
{\arrow{latex}}},postaction={decorate}}}
\filldraw[draw=white,fill=gray!20] (-1,-0.5) rectangle (0.6, 0.5);
\draw [line width =1pt,decoration={markings, mark=at position 0.5 with {\arrow{>}}},postaction={decorate}](-1,0)--(-0.25,0);
\draw [color = black, line width =1pt](-0.25,0)--(0.6,0);
\end{tikzpicture}}
,
\eeq
\beq\label{w.unknot}
\raisebox{-.20in}{
\begin{tikzpicture}
\tikzset{->-/.style=
{decoration={markings,mark=at position #1 with
{\arrow{latex}}},postaction={decorate}}}
\filldraw[draw=white,fill=gray!20] (0,0) rectangle (1,1);
\draw [line width =1pt,decoration={markings, mark=at position 0.5 with {\arrow{>}}},postaction={decorate}](0.45,0.8)--(0.55,0.8);
\draw[line width =1pt] (0.5 ,0.5) circle (0.3);
\end{tikzpicture}}
= (-1)^{n-1} [n]\ 
\raisebox{-.20in}{
\begin{tikzpicture}
\tikzset{->-/.style=
{decoration={markings,mark=at position #1 with
{\arrow{latex}}},postaction={decorate}}}
\filldraw[draw=white,fill=gray!20] (0,0) rectangle (1,1);
\end{tikzpicture}}
,\ \text{where}\ [n]=\frac{q^n-q^{-n}}{q-q^{-1}},
\eeq
\beq\label{wzh.four}
\raisebox{-.30in}{
\begin{tikzpicture}
\tikzset{->-/.style=
{decoration={markings,mark=at position #1 with
{\arrow{latex}}},postaction={decorate}}}
\filldraw[draw=white,fill=gray!20] (-1,-0.7) rectangle (1.2,1.3);
\draw [line width =1pt,decoration={markings, mark=at position 0.5 with {\arrow{>}}},postaction={decorate}](-1,1)--(0,0);
\draw [line width =1pt,decoration={markings, mark=at position 0.5 with {\arrow{>}}},postaction={decorate}](-1,0)--(0,0);
\draw [line width =1pt,decoration={markings, mark=at position 0.5 with {\arrow{>}}},postaction={decorate}](-1,-0.4)--(0,0);
\draw [line width =1pt,decoration={markings, mark=at position 0.5 with {\arrow{<}}},postaction={decorate}](1.2,1)  --(0.2,0);
\draw [line width =1pt,decoration={markings, mark=at position 0.5 with {\arrow{<}}},postaction={decorate}](1.2,0)  --(0.2,0);
\draw [line width =1pt,decoration={markings, mark=at position 0.5 with {\arrow{<}}},postaction={decorate}](1.2,-0.4)--(0.2,0);
\node  at(-0.8,0.5) {$\vdots$};
\node  at(1,0.5) {$\vdots$};
\end{tikzpicture}}=(-q)^{\frac{n(n-1)}{2}}\cdot \sum_{\sigma\in S_n}
(-q^{\frac{1-n}n})^{\ell(\sigma)} \raisebox{-.30in}{
\begin{tikzpicture}
\tikzset{->-/.style=
{decoration={markings,mark=at position #1 with
{\arrow{latex}}},postaction={decorate}}}
\filldraw[draw=white,fill=gray!20] (-1,-0.7) rectangle (1.2,1.3);
\draw [line width =1pt,decoration={markings, mark=at position 0.5 with {\arrow{>}}},postaction={decorate}](-1,1)--(0,0);
\draw [line width =1pt,decoration={markings, mark=at position 0.5 with {\arrow{>}}},postaction={decorate}](-1,0)--(0,0);
\draw [line width =1pt,decoration={markings, mark=at position 0.5 with {\arrow{>}}},postaction={decorate}](-1,-0.4)--(0,0);
\draw [line width =1pt,decoration={markings, mark=at position 0.5 with {\arrow{<}}},postaction={decorate}](1.2,1)  --(0.2,0);
\draw [line width =1pt,decoration={markings, mark=at position 0.5 with {\arrow{<}}},postaction={decorate}](1.2,0)  --(0.2,0);
\draw [line width =1pt,decoration={markings, mark=at position 0.5 with {\arrow{<}}},postaction={decorate}](1.2,-0.4)--(0.2,0);
\node  at(-0.8,0.5) {$\vdots$};
\node  at(1,0.5) {$\vdots$};
\filldraw[draw=black,fill=gray!20,line width =1pt]  (0.1,0.3) ellipse (0.4 and 0.7);
\node  at(0.1,0.3){$\sigma_{+}$};
\end{tikzpicture}},
\eeq
where the ellipse enclosing $\sigma_+$  is the minimum crossing positive braid representing a permutation $\sigma\in S_n$ and $\ell(\sigma)=\mid\{(i,j)\mid 1\leq i<j\leq n, \sigma(i)>\sigma(j)\}|$ is the length of $\sigma\in S_n$.

\beq\label{wzh.five}
   \raisebox{-.30in}{
\begin{tikzpicture}
\tikzset{->-/.style=
{decoration={markings,mark=at position #1 with
{\arrow{latex}}},postaction={decorate}}}
\filldraw[draw=white,fill=gray!20] (-1,-0.7) rectangle (0.2,1.3);
\draw [line width =1pt](-1,1)--(0,0);
\draw [line width =1pt](-1,0)--(0,0);
\draw [line width =1pt](-1,-0.4)--(0,0);
\draw [line width =1.5pt](0.2,1.3)--(0.2,-0.7);
\node  at(-0.8,0.5) {$\vdots$};
\filldraw[fill=white,line width =0.8pt] (-0.5 ,0.5) circle (0.07);
\filldraw[fill=white,line width =0.8pt] (-0.5 ,0) circle (0.07);
\filldraw[fill=white,line width =0.8pt] (-0.5 ,-0.2) circle (0.07);
\end{tikzpicture}}
   = 
   a \sum_{\sigma \in S_n} (-q)^{\ell(\sigma)}\,  \raisebox{-.30in}{
\begin{tikzpicture}
\tikzset{->-/.style=
{decoration={markings,mark=at position #1 with
{\arrow{latex}}},postaction={decorate}}}
\filldraw[draw=white,fill=gray!20] (-1,-0.7) rectangle (0.2,1.3);
\draw [line width =1pt](-1,1)--(0.2,1);
\draw [line width =1pt](-1,0)--(0.2,0);
\draw [line width =1pt](-1,-0.4)--(0.2,-0.4);
\draw [line width =1.5pt,decoration={markings, mark=at position 1 with {\arrow{>}}},postaction={decorate}](0.2,1.3)--(0.2,-0.7);
\node  at(-0.8,0.5) {$\vdots$};
\filldraw[fill=white,line width =0.8pt] (-0.5 ,1) circle (0.07);
\filldraw[fill=white,line width =0.8pt] (-0.5 ,0) circle (0.07);
\filldraw[fill=white,line width =0.8pt] (-0.5 ,-0.4) circle (0.07);
\node [right] at(0.2,1) {$\sigma(n)$};
\node [right] at(0.2,0) {$\sigma(2)$};
\node [right] at(0.2,-0.4){$\sigma(1)$};
\end{tikzpicture}},
\eeq
\beq \label{wzh.six}
\raisebox{-.20in}{
\begin{tikzpicture}
\tikzset{->-/.style=
{decoration={markings,mark=at position #1 with
{\arrow{latex}}},postaction={decorate}}}
\filldraw[draw=white,fill=gray!20] (-0.7,-0.7) rectangle (0,0.7);
\draw [line width =1.5pt,decoration={markings, mark=at position 1 with {\arrow{>}}},postaction={decorate}](0,0.7)--(0,-0.7);
\draw [color = black, line width =1pt] (0 ,0.3) arc (90:270:0.5 and 0.3);
\node [right]  at(0,0.3) {$i$};
\node [right] at(0,-0.3){$j$};
\filldraw[fill=white,line width =0.8pt] (-0.5 ,0) circle (0.07);
\end{tikzpicture}}   = \delta_{\bar j,i }\,  c_i\ \raisebox{-.20in}{
\begin{tikzpicture}
\tikzset{->-/.style=
{decoration={markings,mark=at position #1 with
{\arrow{latex}}},postaction={decorate}}}
\filldraw[draw=white,fill=gray!20] (-0.7,-0.7) rectangle (0,0.7);
\draw [line width =1.5pt](0,0.7)--(0,-0.7);
\end{tikzpicture}},
\eeq
\beq \label{wzh.seven}
\raisebox{-.20in}{
\begin{tikzpicture}
\tikzset{->-/.style=
{decoration={markings,mark=at position #1 with
{\arrow{latex}}},postaction={decorate}}}
\filldraw[draw=white,fill=gray!20] (-0.7,-0.7) rectangle (0,0.7);
\draw [line width =1.5pt](0,0.7)--(0,-0.7);
\draw [color = black, line width =1pt] (-0.7 ,-0.3) arc (-90:90:0.5 and 0.3);
\filldraw[fill=white,line width =0.8pt] (-0.55 ,0.26) circle (0.07);
\end{tikzpicture}}
= \sum_{i=1}^n  (c_{\bar i})^{-1}\, \raisebox{-.20in}{
\begin{tikzpicture}
\tikzset{->-/.style=
{decoration={markings,mark=at position #1 with
{\arrow{latex}}},postaction={decorate}}}
\filldraw[draw=white,fill=gray!20] (-0.7,-0.7) rectangle (0,0.7);
\draw [line width =1.5pt,decoration={markings, mark=at position 1 with {\arrow{>}}},postaction={decorate}](0,0.7)--(0,-0.7);
\draw [line width =1pt](-0.7,0.3)--(0,0.3);
\draw [line width =1pt](-0.7,-0.3)--(0,-0.3);
\filldraw[fill=white,line width =0.8pt] (-0.3 ,0.3) circle (0.07);
\filldraw[fill=black,line width =0.8pt] (-0.3 ,-0.3) circle (0.07);
\node [right]  at(0,0.3) {$i$};
\node [right]  at(0,-0.3) {$\bar{i}$};
\end{tikzpicture}},
\eeq
\beq\label{wzh.eight}
\raisebox{-.20in}{

\begin{tikzpicture}
\tikzset{->-/.style=

{decoration={markings,mark=at position #1 with

{\arrow{latex}}},postaction={decorate}}}
\filldraw[draw=white,fill=gray!20] (-0,-0.2) rectangle (1, 1.2);
\draw [line width =1.5pt,decoration={markings, mark=at position 1 with {\arrow{>}}},postaction={decorate}](1,1.2)--(1,-0.2);
\draw [line width =1pt](0.6,0.6)--(1,1);
\draw [line width =1pt](0.6,0.4)--(1,0);
\draw[line width =1pt] (0,0)--(0.4,0.4);
\draw[line width =1pt] (0,1)--(0.4,0.6);
\draw[line width =1pt] (0.4,0.6)--(0.6,0.4);
\filldraw[fill=white,line width =0.8pt] (0.2 ,0.2) circle (0.07);
\filldraw[fill=white,line width =0.8pt] (0.2 ,0.8) circle (0.07);
\node [right]  at(1,1) {$i$};
\node [right]  at(1,0) {$j$};
\end{tikzpicture}
} =q^{-\frac{1}{n}}\left(\delta_{{j<i} }(q-q^{-1})\raisebox{-.20in}{

\begin{tikzpicture}
\tikzset{->-/.style=

{decoration={markings,mark=at position #1 with

{\arrow{latex}}},postaction={decorate}}}
\filldraw[draw=white,fill=gray!20] (-0,-0.2) rectangle (1, 1.2);
\draw [line width =1.5pt,decoration={markings, mark=at position 1 with {\arrow{>}}},postaction={decorate}](1,1.2)--(1,-0.2);
\draw [line width =1pt](0,0.8)--(1,0.8);
\draw [line width =1pt](0,0.2)--(1,0.2);
\filldraw[fill=white,line width =0.8pt] (0.2 ,0.8) circle (0.07);
\filldraw[fill=white,line width =0.8pt] (0.2 ,0.2) circle (0.07);
\node [right]  at(1,0.8) {$i$};
\node [right]  at(1,0.2) {$j$};
\end{tikzpicture}
}+q^{\delta_{i,j}}\raisebox{-.20in}{

\begin{tikzpicture}
\tikzset{->-/.style=

{decoration={markings,mark=at position #1 with

{\arrow{latex}}},postaction={decorate}}}
\filldraw[draw=white,fill=gray!20] (-0,-0.2) rectangle (1, 1.2);
\draw [line width =1.5pt,decoration={markings, mark=at position 1 with {\arrow{>}}},postaction={decorate}](1,1.2)--(1,-0.2);
\draw [line width =1pt](0,0.8)--(1,0.8);
\draw [line width =1pt](0,0.2)--(1,0.2);
\filldraw[fill=white,line width =0.8pt] (0.2 ,0.8) circle (0.07);
\filldraw[fill=white,line width =0.8pt] (0.2 ,0.2) circle (0.07);
\node [right]  at(1,0.8) {$j$};
\node [right]  at(1,0.2) {$i$};
\end{tikzpicture}
}\right),
\eeq

where   
$\delta_{j<i}= \left \{
 \begin{array}{rr}
     1,                    & j<i\\
     0,                                 & \text{otherwise}
 \end{array}
 \right.,
\delta_{i,j}= \left \{
 \begin{array}{rr}
     1,                    & i=j\\
     0,                                 & \text{otherwise}
 \end{array}
 \right.$. Each shaded rectangle in the above relations is the projection of a cube in $M$. The lines contained in the shaded rectangle represent parts of stated $n$-webs with framing  pointing to  readers. The thick line in the edge of shaded rectangle is part of the marking. For detailed explanation for the above relations, please refer to \cite{le2021stated}.

Suppose $(M, N )$ is a marked 3-manifold. Let $Y$ be any subset of $\partial M-N$. Then
we have $(M-Y,N)$
has the same skein theory with $(M,N)$, see Example 4.3 in \cite{le2021stated}. We call such a subset $Y$  an unimportant part on $\partial M$.
 So sometimes we do not have to distinguish between $(M-Y,N)$ and $(M,N)$.

\subsection{Functoriality}

For any two marked three manifolds $(M,N),(M^{'}N^{'})$, if an orientation preserving  embedding $f:M\rightarrow M^{'}$ maps 
$N$ to $N^{'}$ and preserves  orientations between $N$ and $N^{'}$, we call $f$ an embedding from $(M,N)$ to $(M^{'},N^{'})$. Clearly $f$ induces a linear map $f_{*}:S_n(M,N,v)\rightarrow S_n(M,N,v)$ \cite{le2021stated}.

\subsection{Splitting map}

Let $(M,N)$ be any marked three manifold, and $D$ be a properly embedded disk in $M$ such  that there is no intersection between $D$ and the closure of $N$.
 After removing a collar open neighborhood of $D$, we get a new three manifold $M^{'}$. And $\partial M^{'}$ contains two copies $D_1$ and 
$D_2$ of $D$ such that we can get $M$ from $M^{'}$ by gluing $D_1$ and $D_2$. We use pr to denote the obvious projection from $M^{'}$ to $M$.

Let $\alpha\subset D$ be an oriented open interval. Suppose $\text{pr}^{-1}(\alpha) = \alpha_{1}\cup \alpha_2$ with
   $\alpha_1\in D_1$ and $\alpha_2\in D_2$. We cut $(M,N)$ along $(D,\alpha)$ to  obtain a new marked three manifold   $(M^{'}, N^{'})$, where $N^{'} = N\cup \alpha_1\cup \alpha_2$. We will  denote   $(M^{'}, N^{'})$ as  Cut$_{(D,\beta)}(M, N )$.
 It is easy to see that Cut$_{(D,\alpha)}(M,N)$ is defined up to isomorphism.
If $\alpha^{'}$ is another oriented open interval in $D$, obviously we have Cut$_{(D,\alpha)}(M,N)$ is isomorphic to Cut$_{(D,\alpha^{'})}(M,N)$.
There is a linear homomorphism, called splitting map, $\Theta_{(D,\alpha)}:S_n(M,N,v)\rightarrow S_n(M^{'},N^{'},v)$ \cite{le2021stated}.  When there is no confusion we can omit the subscript for $\Theta_{(D,\alpha)}$.

\subsection{Reversing orientations of $n$-webs}

Let $\cev{\alpha}$ denote an $n$-web $\alpha$ with its orientation reversed (and unchanged framing). 
\begin{corollary}\label{c.orient-rev}(\cite{le2021stated})
$$\cev {\,\cdot\,}: S_n(M,N,v)\to S_n(M,N,v)$$
is a well defined linear automorphism.
\end{corollary}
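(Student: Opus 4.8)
The plan is to show that reversing the orientation of $n$-webs descends to a well-defined linear map on $S_n(M,N,v)$ and that it is an involution (hence an automorphism). The map on the level of stated $n$-webs is clear: send each stated $n$-web $\alpha$ to $\cev{\alpha}$, the same underlying graph with all edge orientations reversed, the framing unchanged, and each $n$-valent source turned into a sink and vice versa, while keeping the states. First I would extend this by linearity to the free vector space spanned by isotopy classes of stated $n$-webs; isotopy is obviously respected since reversing orientation commutes with ambient isotopy. The real content is to check that this linear map sends the defining relations \eqref{w.cross}--\eqref{wzh.eight} into the span of defining relations, so that it passes to the quotient $S_n(M,N,v)$.

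The key step is therefore a relation-by-relation verification. For the crossing relation \eqref{w.cross} and the twist relation \eqref{w.twist}, reversing orientation of all strands turns a positive crossing into a positive crossing again (the sign of a crossing does not depend on a choice of orientation of the strands, only on the over/under data), and the coefficients $q^{\pm 1/n}$, $q-q^{-1}$, $t$ are untouched, so these relations map to themselves. The unknot relation \eqref{w.unknot} is symmetric under orientation reversal. For the $n$-valent vertex relations \eqref{wzh.four} and \eqref{wzh.five}, reversing orientation exchanges sources and sinks and the braid $\sigma_+$ is carried to its mirror/reverse, but one checks that the combinatorial data — the length $\ell(\sigma)$, the coefficients $(-q)^{n(n-1)/2}$, $(-q^{(1-n)/n})^{\ell(\sigma)}$, $a$ — are invariant, because reversing orientation corresponds to replacing $\sigma$ by an involution-conjugate permutation of the same length (or one uses $\ell(\sigma)=\ell(\sigma^{-1})=\ell(w_0\sigma w_0)$). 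For the state-sum relations \eqref{wzh.six}, \eqref{wzh.seven}, \eqref{wzh.eight} one uses that reversing the orientation of a boundary arc together with the identity $c_i c_{\bar i}=t$ from \eqref{e.prodc} and $\bar{\bar i}=i$ converts a cap into a cup and back with matching coefficients; the key algebraic input is precisely the symmetry $c_i\cdot c_{\bar i}=t$ and $\prod c_i = t^{n/2}$, together with $\delta_{\bar j,i}=\delta_{\bar i,j}$. In each case the image of a relator is, up to an invertible scalar, another relator, so the ideal generated by the relations is preserved.

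Once well-definedness on the quotient is established, involutivity is immediate: applying orientation reversal twice returns every edge orientation and every source/sink label to its original state, so $\cev{\cev{\alpha}}=\alpha$ on the nose (not merely up to isotopy), hence the composite is the identity linear map on $S_n(M,N,v)$. A linear map with a two-sided linear inverse is a linear automorphism, which is the claim.

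The main obstacle I expect is the bookkeeping in the $n$-valent vertex relations \eqref{wzh.four}--\eqref{wzh.five}: one must be careful that reversing all orientations genuinely sends the ``minimum crossing positive braid representing $\sigma$'' to the minimum crossing positive braid representing the appropriate permutation with the same length $\ell(\sigma)$, rather than to a negative braid or a braid of different length. Resolving this cleanly probably requires invoking the symmetry of the length function under $\sigma\mapsto w_0\sigma w_0$ (conjugation by the longest element), which corresponds geometrically to relabeling the strands $i\mapsto \bar i = n+1-i$, exactly the relabeling induced on states by the cyclic-order and source/sink swap at an $n$-valent vertex. All the other relations are comparatively routine sign checks. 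An alternative, slicker route — which I would mention as a remark — is to note that orientation reversal is an instance of the functoriality of $S_n$ under a suitable symmetry of the defining data (the Hopf-algebra antipode / the isomorphism $O(SL_n)\cong O(SL_n)$ induced by $g\mapsto (g^{-1})^{T}$ paired with $w_0$), which would give well-definedness for free, but since the paper develops everything diagrammatically I would carry out the direct check.
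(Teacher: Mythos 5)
You are reconstructing a proof the paper delegates entirely to \cite{le2021stated}, so there is no in-paper proof to compare with; the only question is whether your argument stands on its own. The overall strategy is the right one (check relations, then use involutivity), and your handling of \eqref{w.cross}--\eqref{wzh.five} is essentially sound, but there is a genuine gap in the way you dispose of the state-sum relations \eqref{wzh.six}--\eqref{wzh.eight}.

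The claim that ``in each case the image of a relator is, up to an invertible scalar, another relator'' is false for \eqref{wzh.six}--\eqref{wzh.eight}. These relations are drawn with a \emph{fixed} orientation of the arcs incident to the marking, and the marking itself carries an orientation that $\cev{\,\cdot\,}$ does not touch. When you reverse the orientation of the $n$-web, the cap of \eqref{wzh.six}, for example, becomes a cap with the opposite orientation relative to the marking; this is \emph{not} in the list of defining relators, nor is it a scalar multiple of one. It is a \emph{consequence} of the defining relations, but establishing it requires an actual computation (the sort the paper carries out in Proposition \ref{prop3.1} for the mixed-orientation exchange), not merely the numerics $c_i c_{\bar i}=t$, $\prod_i c_i=t^{n/2}$, $\bar{\bar i}=i$ that you invoke. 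The same applies to \eqref{wzh.seven} and \eqref{wzh.eight}. So the real content of the proof lives precisely where you assert invariance without verification: one must first derive the orientation-reversed versions of the boundary relations inside $S_n(M,N,v)$ and only then conclude that $\cev{\,\cdot\,}$ descends. Your instinct to anticipate trouble in \eqref{wzh.four}--\eqref{wzh.five} is not wrong, but that part actually works by a $180^\circ$ rotation plus reindexing the sum over $S_n$; the boundary relations are where the argument is incomplete.

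A small secondary point: the parenthetical ``the sign of a crossing does not depend on a choice of orientation of the strands, only on the over/under data'' is wrong as stated --- crossing sign \emph{is} an orientation-dependent notion. The conclusion you want is still correct, but for the different reason that you reverse \emph{all} strand orientations simultaneously, which preserves crossing sign. Finally, your remark that one could route the argument through the Hopf-algebraic symmetry of $O_q(SL_n)$ only directly handles the bigon, and propagating it to a general marked $3$-manifold still requires the diagrammatic check you are trying to avoid, so it is not the shortcut it appears to be.
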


\subsection{Punctured bordered surfaces and stated $SL(n)$-skein algebras}\label{subb2.4}

All the definitions in this subsection come from  \cite{le2021stated, leY}. 

A {\bf punctured bordered surface} $\Sigma$ is $\overline{\Sigma}-U$ where $\overline{\Sigma}$ is a compact oriented surface and $U$ is a finite set of $\overline{\Sigma}$ such that every component of $\partial \overline{\Sigma}$ intersects $U$.  
 For simplicity, we will call a punctured bordered surface a {\bf pb surface}.

The points in $U$ are called ideal points. An embedded smooth curve in $\overline{\Sigma}$ connecting the two points in $U$ (these two points could be the same point) is called an {\bf ideal arc}.  

An {\bf essentially bordered pb surface} is just a pb surface with nonempty boundary.

The stated $SL(n)$-skein algebra, denoted as $S_n(\Sigma,v)$, of a pb surface $\Sigma $ is defined as following: For every  component $c$ of $\partial \Sigma$, we choose a point $x_c$. Let $M = \Sigma\times[-1,1]$ and $N=\cup_{c}\, x_c \times (-1,1)$  where $c$ goes over all components of $\partial \Sigma$. Then we define $S_n(\Sigma,v) $ to be $S_n(M,N,v)$. We will call $(M,N)$  the thickening of the pb surface $\Sigma$.
Obviously $S_n(\Sigma,v) $ admits an algebra structure. For any two stated $n$-webs $l_1$ and $l_2$ in  the thickening of $\Sigma$, we define $l_1l_2\in S_n({\Sigma},v)$ to be the result of stacking $l_1$ above $l_2$.  With this algebra structure, $\cev {\,\cdot\,}: S_n(\Sigma,v)\rightarrow S_n(\Sigma,v)$ becomes an algebraic isomorphism. Any (stated) $n$-web in the thickening of $\Sigma$ can be represented by a diagram in $\Sigma$ \cite{le2021stated}.

Let $f:\Sigma_1\rightarrow \Sigma_2$ be a proper embedding for pb surfaces. Note that it is possible that $f$ maps
more than one boundary components of $\Sigma_1$ into one boundary component of $\Sigma_2$. For a boundary component $c$ of $\Sigma_2$, we can give a linear order on the set of boundary components of $\Sigma_1$ that are mapped into $b$ under $f$. We call such a linear order for $c$ a {\bf $c$-order}. If for each boundary component $c$ of $\Sigma_2$, there exists such a $c$-order, we call $f$ a  {\bf height ordered embedding}, which induces a linear map  
$f_*: S_n(\Sigma_1,v) \to S_n(\Sigma_2,v)$ \cite{le2021stated}.

\def\fT{{\mathfrak T}}

We call the closed triangle with it's three vertices removed 
  the {\bf standard ideal triangle}, denoted as  $\fT$.

An ideal {\bf triangulation} $\cE$ of a pb surface $\Sigma$ is
 (1) a  collection of  ideal arcs in ${\Sigma}$, (2) any arc in this collection does not bound a disk, and 
 any two arcs are pairwise non-isotopic and pairwise disjoint,
(3) this collection is maximal under condition (2). The ideal arcs in $\cE$ not isotopic to boundary components split ${\Sigma}$ into ideal triangles. We use $tri(\cE)$ to denote the set of these ideal triangles. Define $Int(\cE)=
\{e\in\cE\mid e\text{ is not isotopic to a boundary component}\}$.

\subsection{The splitting map for punctured bordered surface}\label{sub27}

Let $c$ be ideal arc of a pb surface $\Sigma$ such that it is contained in the interior of $\Sigma$ . After cutting $\Sigma$ along $c$, we get a new pb surface $\text{Cut}_c\,{\Sigma}$, which has two copies $c_1,c_2$ for $c$ such that 
${\Sigma}= \text{Cut}_c\,{\Sigma}/(c_1=c_2)$. We use $\pr$ to denote the projection from $\text{Cut}_c\,{\Sigma}$ to $\Sigma$.  Suppose $\alpha$ is a diagram for  a stated $n$-web in $\Sigma$, which is transverse to $c$.
Let $s$ be a map from $c\cap\alpha$ to $\mathbb{J}$, and $h$ is a linear order on $c\cap\alpha$. Then there is lift diagram $\alpha(h,s)$ for a stated $n$-web in $\text{Cut}_c\,{\Sigma}$. The hights of the newly created endpoints of $\alpha(h,s)$ are induced by $h$ (via $\pr$), and the states of the newly created endpoints of $\alpha(h,s)$ are induced by $s$ (via $\pr$).
Then the splitting map is defined by 
$$\Theta_c(\alpha) =\sum_{s: \alpha \cap c \to \{1,\dots, n\}} \alpha(h, s),$$
furthermore $\Theta_c$ is an algebra homomorphism \cite{le2021stated}. When there is no confusion we can omit the subscript for $\Theta_c$.

Suppose $c$ is an  ideal arc of a pb surface $\Sigma$ such that it is contained in the interior of $\Sigma$. Let $\cV\subset c$ be a subset of $c$ consisting of finite points, and let $\hat{\Sigma} = \Sigma\setminus \cV$.
  Then $c\setminus \cV =\cup_{i=1}^k c_i$ is the disjoint union of ideal arcs $c_i$ of $\hat{\Sigma}$. Let $\Sigma^{'}$ be the result of cutting $\Sigma$ along $c$, and $\hat{\Sigma}^{'}$ be the result of splitting $\hat{\Sigma}$ along all $c_i$. We have natural embeddings $\iota: \hat{\Sigma} \rightarrow \Sigma$ and $\iota: \hat{\Sigma}^{'} \rightarrow \Sigma^{'}$. 
A linear order $h$ on set $\{c_i\mid 1\leq i\leq k\}$ induces a height ordered embedding $\iota_h: \hat{\Sigma}^{'} \rightarrow \Sigma^{'}$. 

From the commutativity of the splitting maps, the composition of all  the splitting homomorphisms $\Theta_{c_i}$ can be taken in any order. We also denote this composition by $\Theta_c$.

\begin{lemma}(\cite{bloomquist2020chebyshev})\label{llmm2.2}
 With the above notations, we have the following commutative diagram:

\[\begin{tikzcd}
S_n(\Sigma,v)\arrow[d,"\Theta_c"'] & \arrow[l,"\iota_*"']S_n(\hat{\Sigma},v)\arrow[d,"\Theta_c"] \\
S_n(\Sigma^{'},v) & \arrow[l,"(\iota_h)_*"]S_n(\hat{\Sigma}^{'},v)\\
\end{tikzcd}\]

\end{lemma}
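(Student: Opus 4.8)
The statement in question is Lemma~\ref{llmm2.2} (the commutative square relating the splitting homomorphism with the functorial maps $\iota_*$ and $(\iota_h)_*$, essentially \cite{bloomquist2020chebyshev}). The plan is a diagram chase on generators, the only real work being the bookkeeping of states and heights through the cut-and-paste operations.

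First I would reduce to diagrams. Since $S_n(\hat\Sigma,v)$ is spanned by isotopy classes of stated $n$-webs, each representable by a diagram $\alpha$ in $\hat\Sigma$, and since a generic isotopy makes $\alpha$ transverse to every $c_i$ and disjoint from $\cV$, and since all four arrows are linear, it suffices to check
\[ \Theta_c\circ\iota_* = (\iota_h)_*\circ\Theta_c \colon\; S_n(\hat\Sigma,v)\longrightarrow S_n(\Sigma',v) \]
by evaluating both sides on such a diagram $\alpha$. Next I would unwind the ``up then down'' composite: the embedding $\iota\colon\hat\Sigma\hookrightarrow\Sigma$ is just the inclusion that fills in the punctures $\cV$, so $\iota_*(\alpha)$ is the same underlying diagram read in $\Sigma$, and in particular $\alpha\cap c=\bigsqcup_{i=1}^{k}(\alpha\cap c_i)$. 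Fix for each $i$ a linear order $h_i$ on $\alpha\cap c_i$, and let $h$ be the linear order on $\alpha\cap c$ that compares two points first by which $c_i$ they lie on (using the given order on $\{c_1,\dots,c_k\}$) and, within a common $c_i$, by $h_i$. Because the splitting map $\Theta_c$ on $\Sigma$ does not depend on the auxiliary linear order in its definition \cite{le2021stated}, we may compute with this particular $h$, obtaining $\Theta_c(\iota_*(\alpha))=\sum_{s\colon\alpha\cap c\to\mathbb J}\alpha(h,s)$, a sum of lift diagrams in $\Sigma'$.

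Then I would unwind the ``down then right'' composite. By definition, and using the commutativity of splitting along the pairwise disjoint arcs $c_i$ recalled just before the lemma, $\Theta_c$ on $\hat\Sigma$ equals $\Theta_{c_k}\circ\cdots\circ\Theta_{c_1}$; applied to $\alpha$ it gives $\sum_{s\colon\alpha\cap c\to\mathbb J}\widehat\alpha(s)$, where $\widehat\alpha(s)$ is obtained by cutting $\alpha$ along all the $c_i$ simultaneously, each new endpoint over $c_i$ getting its state from $s$ and its height from $h_i$. Applying $(\iota_h)_*$, the height-ordered embedding $\iota_h\colon\hat\Sigma'\hookrightarrow\Sigma'$ fills in the punctures coming from $\cV$ and merges the arcs $c_1^{(j)},\dots,c_k^{(j)}$ into the single boundary arc $c^{(j)}$ of $\Sigma'$ for $j=1,2$; by the defining property of a height-ordered embedding, the endpoints on $c_i^{(j)}$ are stacked above those on $c_{i'}^{(j)}$ exactly when $c_i$ precedes $c_{i'}$ in $h$, with the relative heights inside each $c_i^{(j)}$ unchanged — which is precisely the global order $h$ defined above. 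Hence $(\iota_h)_*(\widehat\alpha(s))=\alpha(h,s)$ for each $s$, and summing over $s$ gives $(\iota_h)_*(\Theta_c(\alpha))=\sum_s\alpha(h,s)$, matching the first composite.

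The main obstacle is exactly this last matching step: one must verify that the interleaving of endpoints produced by $\iota_h$ on the merged arcs $c^{(j)}$ agrees with the rule ``first by $c_i$, then by $h_i$'' used to build the lifts $\alpha(h,s)$, and that states are read off consistently on both sides. This is where one has to fix all the height/state conventions of \cite{le2021stated} and is the single place where a sign or an ordering could go wrong; the independence of $\Theta_c$ on its auxiliary order is what makes us free to choose the convenient $h$ in the first place. Everything else — transversality, linearity, the fact that $\iota_*$ leaves the underlying diagram unchanged, and the commutativity of the $\Theta_{c_i}$ — is routine.
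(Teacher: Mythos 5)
Your argument is correct. The paper itself does not prove Lemma~\ref{llmm2.2} --- it merely cites it as a mild generalization of Lemma~3.6 of \cite{bloomquist2020chebyshev} --- so there is no internal proof to compare against; your diagram chase is the natural argument one would write (and the one that generalizes the $n=2$ proof in that reference): reduce to a diagram $\alpha$ transverse to all $c_i$, use the $h$-independence of $\Theta_c$ to compute $\Theta_c\circ\iota_*$ with the ``concatenation'' order built from the lemma's $h$ and the individual orders $h_i$ on $\alpha\cap c_i$, and observe that $(\iota_h)_*\circ\Theta_c$ produces exactly the same lift diagrams $\alpha(h',s)$ because the height-ordered embedding $\iota_h$ stacks the merged arcs $c_i^{(j)}$ inside $c^{(j)}$ in the order prescribed by $h$ while preserving heights within each $c_i^{(j)}$. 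The only step requiring care is precisely the one you flag --- verifying that the stacking convention of $\iota_h$ matches the concatenation order --- and your reading of the conventions is right. One small notational remark: the lemma's $h$ is a linear order on the $k$-element set $\{c_1,\dots,c_k\}$, while you re-use the letter $h$ for the induced order on the (typically larger) set $\alpha\cap c$; the argument is unaffected, but introducing a distinct symbol for the induced order would prevent confusion with the object that actually appears in the statement and in the definition of $\iota_h$.
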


The above Lemma is a generalization for Lemma 3.6 in \cite{bloomquist2020chebyshev}, in which $h$ is the linear order induced by an orientation of $c$.

\subsection{Bigon and $O_q(SLn)$}

We refer to  \cite{KS,le2021stated,leY} for the definitions for Bigon and $O_q(SLn)$.

The {\bf  bigon} $\fB$ is obtained from a closed disk $D$ by removing two points in $\partial D$. We can label the two boundary components of a {\bf  bigon}  by $e_l$ and $e_r$. A bigon with this labeling is called  a {\bf directed bigon},
 see an example
$
\raisebox{-.20in}{

\begin{tikzpicture}
\tikzset{->-/.style=

{decoration={markings,mark=at position #1 with

{\arrow{latex}}},postaction={decorate}}}

\filldraw[draw=black,fill=gray!20] (0.5 ,0.5) circle (0.5);
\filldraw[draw=black,fill=white] (0.5,0) circle (0.05);
\filldraw[draw=black,fill=white] (0.5,1) circle (0.05);
\node [left] at(0,0.5) {$e_{l}$};
\node [right] at(1,0.5) {$e_{r}$};
\end{tikzpicture}
}
$. 
We can draw $\fB$ like $
\raisebox{-.20in}{

\begin{tikzpicture}
\tikzset{->-/.style=

{decoration={markings,mark=at position #1 with

{\arrow{latex}}},postaction={decorate}}}

\filldraw[draw=white,fill=gray!20] (0,0) rectangle (1, 1);
\draw[line width =1pt] (0,0)--(0,1);
\draw[line width =1pt] (1,0)--(1,1);
\end{tikzpicture}
}
$. 
We  use $a^i_j$
to denote 
$
\raisebox{-.20in}{

\begin{tikzpicture}
\tikzset{->-/.style=

{decoration={markings,mark=at position #1 with

{\arrow{latex}}},postaction={decorate}}}

\filldraw[draw=white,fill=gray!20] (0,0) rectangle (1, 1);
\draw [line width =1pt,decoration={markings, mark=at position 0.5 with {\arrow{>}}},postaction={decorate}](0,0.5)--(1,0.5);
\draw[line width =1pt] (0,0)--(0,1);
\draw[line width =1pt] (1,0)--(1,1);
\node [left] at(0,0.5) {$i$};
\node [right] at(1,0.5) {$j$};
\end{tikzpicture}
}
$, and use $\ca^i_j$ to denote $
\raisebox{-.20in}{

\begin{tikzpicture}
\tikzset{->-/.style=

{decoration={markings,mark=at position #1 with

{\arrow{latex}}},postaction={decorate}}}

\filldraw[draw=white,fill=gray!20] (0,0) rectangle (1, 1);
\draw [line width =1pt,decoration={markings, mark=at position 0.5 with {\arrow{<}}},postaction={decorate}](0,0.5)--(1,0.5);
\draw[line width =1pt] (0,0)--(0,1);
\draw[line width =1pt] (1,0)--(1,1);
\node [left] at(0,0.5) {$i$};
\node [right] at(1,0.5) {$j$};
\end{tikzpicture}
}
$.




We have the following coefficients
\beq
 \cR^{ij}_{lk} = q^{-\frac 1n} \left(    q^{ \delta_{i,j}} \delta_{j,k} \delta_{i,l} + (q-q^{-1})
    \delta_{j<i} \delta_{j,l} \delta_{i,k}\right),
 \label{R}
\eeq
where $\delta_{j<i}=1$ if $j<i$ and $\delta_{j<i}=0$ otherwise.

Let $O_q(M(n))$ be the associative algebra generated by  $u_{i,j}$,   $i,j\in\mathbb{J},$
subject to the relations 
\beq
(\buu \ot \buu) \cR = \cR (\buu \ot \buu),  
\eeq
where $\cR$ is the $n^2\times n^2$ matrix given by equation \eqref{R}, and $\buu \ot \buu$ is the $n^2\times n^2$ matrix with entries $(\buu \ot \buu)^{ik}_{jl} = u_{i,j} u_{k,l}$ for $i,j,k,l\in \mathbb{J}$. 
Define  the element 
$$ {\det}_q(\buu)\triangleq \sum_{\sigma\in S_n} (-q)^{\ell(\sigma)}u_{1,\sigma(1)}\cdots u_{n,\sigma(n)} = \sum_{\sigma\in S_n} (-q)^{\ell(\sigma)}u_{\sigma(1),1}\cdots u_{\sigma(n),n}.$$

Define $O_q(SLn)$ to be  $O_q(M(n))/(\det_q \buu-1).$ Then 
$O_q(SLn)$ is a Hopf algebra with the Hopf algebra structure given by
\begin{align*}
\Delta(u_{i,j}) & = \sum_{k=1}^n u_{i,k} \ot u_{k,j}, \quad  \epsilon(u_{i,j})= \delta_{i,j}.\label{eq.Deltave}\\
S({u}_{i,j} )&= (\buu^!)_{i,j} = (-q)^{i-j} {\det}_q(\buu^{j,i}).
\end{align*}
Here $\buu^{j,i}$ is the result of removing the $j$-th row and $i$-th column from $\buu$.

\begin{theorem}(\cite{le2021stated})\label{Hopf} 
(a)  $S_n(\fB,v)$ is a Hopf algebra over $\mathbb{C}$.

(b)We have a unique  Hopf algebra isomorphism  $g_{big}: O_q(SLn) \rightarrow S_n(\fB,v) $  defined by
  $ g_{big} (u_{i,j}) = a^i_j$.
\end{theorem}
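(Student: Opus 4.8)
The plan is to prove (b) essentially by hand and to deduce (a) from it, in the order: $g_{big}$ is a well-defined algebra homomorphism; $g_{big}$ is surjective; $g_{big}$ is injective (the main point); and finally the Hopf-algebra structure on $S_n(\fB,v)$ together with the compatibility of $g_{big}$ is obtained by transport, after identifying the structure maps geometrically. (Alternatively one could first equip $S_n(\fB,v)$ with a geometric Hopf structure and then check $g_{big}$ respects it; the two routes share all the real content.)

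For well-definedness, note that in $\fB$ the product $a^i_j\,a^k_l$ is two horizontal strands stacked one above the other, and the defining relation $(\buu\ot\buu)\cR=\cR(\buu\ot\buu)$ of $O_q(M(n))$, with $\cR$ as in \eqref{R}, is exactly the statement that a single positive crossing of these two strands yields the same skein element whether it is slid against the marked edge $e_l$ or against $e_r$: resolving the crossing near each edge by the stated crossing relation \eqref{wzh.eight} (which is precisely what produces the matrix $\cR$) and invoking isotopy invariance gives the identity. Hence $u_{i,j}\mapsto a^i_j$ extends to an algebra map $O_q(M(n))\to S_n(\fB,v)$, and under it $\det_q(\buu)$ is sent to the skein of an $n$-valent source vertex joined to an $n$-valent sink vertex by $n$ parallel strands, which the vertex relations \eqref{wzh.four}--\eqref{wzh.five} identify with the empty web; so $\det_q(a)=1$ and the map descends to $g_{big}:O_q(SLn)\to S_n(\fB,v)$. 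Surjectivity then amounts to the claim that $S_n(\fB,v)$ is generated as an algebra by the $a^i_j$: an arbitrary stated $n$-web in $\fB$ can be isotoped into a thin horizontal band, its crossings resolved by \eqref{w.cross}, its contractible components removed by \eqref{w.twist}--\eqref{w.unknot}, its interior $n$-valent vertices eliminated by \eqref{wzh.four}--\eqref{wzh.five}, and the resulting parallel strands pulled apart into a stack, i.e. a monomial in the $a^i_j$.

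Injectivity is the crux, and I would prove it by constructing the inverse. Using the $R$-matrix $\cR$, the $q$-(anti)symmetrizers, and the cofactor expansions already encoded in \eqref{wzh.six}--\eqref{wzh.seven} together with the constants of \eqref{e.t}--\eqref{e.prodc}, define a Reshetikhin--Turaev-type evaluation $F$ assigning to every stated $n$-web diagram in $\fB$ an element of $O_q(SLn)$ — a strand from $e_l$ to $e_r$ with states $i,j$ to $u_{i,j}$, a crossing to the action of $\cR$, an $n$-valent vertex to the (co)antisymmetrizer, a turnback to the corresponding combination of $S(u_{i,j})=(\buu^!)_{i,j}$. The work is to check that $F$ is invariant under isotopy and annihilates each skein relation \eqref{w.cross}--\eqref{wzh.eight}; each is a finite identity that holds in $O_q(SLn)$ thanks to its Hopf-algebra axioms and the explicit formulas for $\Delta,\epsilon,S$. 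Then $F$ descends to $F:S_n(\fB,v)\to O_q(SLn)$ with $F(a^i_j)=u_{i,j}$, so $F\circ g_{big}=\mathrm{id}$ on the generators and hence everywhere; combined with surjectivity this makes $g_{big}$ an algebra isomorphism. (A confluence/diamond-lemma argument producing a PBW-type basis of $S_n(\fB,v)$ matching a standard monomial basis of $O_q(SLn)$ is an alternative, but more laborious.)

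Finally, transport the Hopf structure of $O_q(SLn)$ along $g_{big}$; this proves (a), and (b) then holds by construction. To see that the transported maps are the expected geometric ones, evaluate on the generators $a^i_j$: the comultiplication is the splitting map $\Theta_c$ along the ideal arc $c$ joining the two ideal points of $\fB$ (which cuts $\fB$ into two bigons, identifying $S_n(\mathrm{Cut}_c\fB,v)$ with $S_n(\fB,v)^{\ot 2}$), since splitting one strand and summing over the state on $c$ gives $\Theta_c(a^i_j)=\sum_k a^i_k\ot a^k_j=\Delta(a^i_j)$; the antipode is orientation reversal (Corollary \ref{c.orient-rev}) followed by the reflection of $\fB$ exchanging $e_l$ and $e_r$, because turning a reversed strand around via the turnback relations \eqref{wzh.six}--\eqref{wzh.seven} reproduces $(\buu^!)_{i,j}$; and the counit is the character $a^i_j\mapsto\delta_{i,j}$ induced by the degeneration of $\fB$ to a once-marked disk, whose skein algebra is $\mathbb{C}$. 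Coassociativity, counitality and the antipode axiom are then, respectively, the commutativity and naturality of iterated splitting and the identities \eqref{wzh.six}--\eqref{wzh.seven}. I expect the injectivity step — verifying that the Reshetikhin--Turaev evaluation descends through every skein relation — to be the main obstacle; everything else is isotopy bookkeeping or a bounded check against the defining relations.
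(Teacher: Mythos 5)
This theorem is imported into the paper with the citation~\cite{le2021stated}; the paper does not prove it, so there is no in-paper argument to compare against. I will therefore comment on your sketch on its own merits.

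The scaffolding of your proposal is sound and corresponds to what a proof of this kind does look like: the relation $(\buu\ot\buu)\cR=\cR(\buu\ot\buu)$ is the height-exchange relation \eqref{wzh.eight} read near the two edges, $\det_q(\buu)$ maps to a theta-type web that the vertex relations \eqref{wzh.four}--\eqref{wzh.five} (together with the normalization of the constants in \eqref{e.t}--\eqref{e.prodc}) evaluate to $1$, and surjectivity follows from reduction of an arbitrary web in $\fB$ to a monomial in the $a^i_j$. However, the content of the theorem is injectivity, and here your proposal is only a plan. You correctly observe that one constructs a Reshetikhin--Turaev-type evaluation $F:S_n(\fB,v)\to O_q(SLn)$ and checks that it vanishes on each of \eqref{w.cross}--\eqref{wzh.eight}, but ``each is a finite identity that holds in $O_q(SLn)$ thanks to its Hopf-algebra axioms'' is precisely the claim one has to prove, not a reason it is true; without that check the argument is an outline, not a proof. (A basis comparison \`a la Proposition~\ref{r.basisOq} is another legitimate route, but it too requires real work to set up a normal form for stated webs in $\fB$.)

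One concrete inaccuracy in the final paragraph: the antipode of $S_n(\fB,v)$ is not ``orientation reversal followed by the reflection of $\fB$ exchanging $e_l$ and $e_r$.'' On generators the correct formula is $S(a^i_j) = (-q)^{\,i-j}\,\cev{a}^{\bar j}_{\bar i}$ (this is recorded in the proof of Lemma~\ref{bigon}), which involves the bar-involution $i\mapsto\bar i$ on states and the scalar $(-q)^{i-j}$, neither of which is produced by mere orientation reversal and reflection (that composite sends $a^i_j$ to $a^j_i$ up to sign conventions). Your parenthetical that the turnback relations \eqref{wzh.six}--\eqref{wzh.seven} ``reproduce $(\buu^!)_{i,j}$'' is pointing at the right computation, but the geometric description preceding it is wrong as stated and would mislead a reader trying to fill in the details.
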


\begin{lemma}\label{bigon}
The map $ \cev {\,\cdot\,}:S_n(\fB,v) \rightarrow S_n(\fB,v)$ is  a Hopf algebra isomorphism.  
\end{lemma}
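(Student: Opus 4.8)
The plan is to combine Corollary \ref{c.orient-rev}, which already gives that $\cev{\,\cdot\,}\colon S_n(\fB,v)\to S_n(\fB,v)$ is a well-defined linear automorphism, with the Hopf algebra structure of $S_n(\fB,v)$ from Theorem \ref{Hopf}. Since $S_n(\fB,v)$ carries an algebra structure (stacking in the bigon), first I would recall that $\cev{\,\cdot\,}$ is in fact an \emph{algebra} isomorphism on $S_n(\Sigma,v)$ for any pb surface $\Sigma$, as already noted in Subsection \ref{subb2.4}; applied to $\Sigma=\fB$ this handles multiplicativity and unitality. What remains is to check compatibility with the coproduct $\Delta$, the counit $\epsilon$, and the antipode $S$ of $S_n(\fB,v)$. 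Because a bialgebra map between Hopf algebras automatically commutes with the antipode, it suffices to verify that $\cev{\,\cdot\,}$ is a coalgebra morphism, i.e. $\Delta\circ\cev{\,\cdot\,} = (\cev{\,\cdot\,}\otimes\cev{\,\cdot\,})\circ\Delta$ and $\epsilon\circ\cev{\,\cdot\,}=\epsilon$.

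For the explicit check I would pass through the isomorphism $g_{big}\colon O_q(SLn)\to S_n(\fB,v)$ of Theorem \ref{Hopf}(b) and work on generators. The coproduct on $S_n(\fB,v)$ is obtained geometrically by splitting the bigon along an ideal arc into two bigons, and the counit by the empty-diagram evaluation; both descriptions are manifestly symmetric under reversing the orientation of a web, so the key point is purely computational: identify $\cev{a^i_j}$ inside $S_n(\fB,v)$. By the defining relations \eqref{wzh.six} and \eqref{wzh.seven} (the cup/cap relations with the constants $c_i$, $c_{\bar i}^{-1}$), reversing the orientation of the arc $a^i_j$ rewrites it, up to the scalars $c_i$, in terms of the reversed generators $\cev a{}^i_j$, and one knows from the bigon computations in \cite{le2021stated} that $\cev a{}^i_j$ is proportional to the antipode matrix entry $S(a^j_i)=(\buu^!)_{j,i}$. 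Concretely I expect a formula of the shape $\cev{a^i_j} = (\text{scalar depending on }i,j)\cdot S(a^{?}_{?})$, and then the statement follows because $S$ is a Hopf algebra \emph{anti}-isomorphism while reversing orientation also reverses the stacking order in a way that compensates, so the composite is an honest Hopf isomorphism. I would then check $\Delta(\cev{a^i_j})=\sum_k \cev{a^i_k}\otimes\cev{a^k_j}$ directly from $\Delta(u_{i,j})=\sum_k u_{i,k}\otimes u_{k,j}$ together with the scalar bookkeeping, and $\epsilon(\cev{a^i_j})=\delta_{i,j}$ from $\epsilon(u_{i,j})=\delta_{i,j}$.

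The main obstacle is the scalar bookkeeping: reversing orientations introduces the constants $c_i$ and $c_{\bar i}^{-1}$ from \eqref{wzh.six}–\eqref{wzh.seven} at the cups and caps used to turn an arc around, and one must verify these factors cancel correctly across $\Delta$ (where an index is summed) and against the antipode formula $S(u_{i,j})=(-q)^{i-j}\det_q(\buu^{j,i})$. The identity $c_i c_{\bar i}=t$ and $\prod_i c_i = t^{n/2}$ from \eqref{e.prodc} are exactly what make these cancellations work, so the proof reduces to a careful but routine application of those identities. Alternatively, and perhaps more cleanly, I would observe that $\cev{\,\cdot\,}$ is \emph{natural} with respect to height-ordered embeddings of bigons — in particular with respect to the splitting embedding $\fB\hookrightarrow \fB\sqcup\fB$ defining $\Delta$ and the embedding $\emptyset\hookrightarrow\fB$ defining $\epsilon$ — which gives the coalgebra compatibility without any computation, leaving only the already-known fact that $\cev{\,\cdot\,}$ is an algebra isomorphism; then being a bialgebra isomorphism it is automatically a Hopf algebra isomorphism.
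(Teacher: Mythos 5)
Your proposal is essentially correct, and it takes a genuinely (if mildly) different route from the paper's. The paper verifies all three structure maps on the generators $a^i_j$: it cites equations (63) and (65) of \cite{le2021stated} for compatibility with $\Delta$ and $\epsilon$, and then does a short direct computation to check $\cev{\,\cdot\,}\circ S = S\circ\cev{\,\cdot\,}$ on $a^i_j$ using the explicit antipode formula. You instead propose to verify only the bialgebra compatibility and then invoke the general fact that a bialgebra morphism between Hopf algebras automatically intertwines antipodes; that fact is standard and correct, so your route eliminates the antipode check entirely. That is a legitimate shortening.

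However, your first (``explicit'') path to the coalgebra check is confused and should be dropped. You do not need, and should not attempt, to re-express $\cev{a}^i_j$ in terms of the $a^u_v$'s via the cup/cap relations \eqref{wzh.six}--\eqref{wzh.seven}: $\cev{\,\cdot\,}(a^i_j)$ is \emph{by definition} $\cev{a}^i_j$, and the coproduct formula $\Delta(\cev{a}^i_j)=\sum_k \cev{a}^i_k\otimes\cev{a}^k_j$ together with $\epsilon(\cev{a}^i_j)=\delta_{i,j}$ is exactly what equations (63) and (65) of \cite{le2021stated} say (the paper simply quotes them). The scalar-bookkeeping digression through $S(u_{i,j})=(-q)^{i-j}\det_q(\buu^{j,i})$ and $c_i c_{\bar i}=t$ is a detour that risks sign errors and, as you yourself note, is the weakest part of the write-up. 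Your second (``naturality'') argument is the right one: the coproduct and counit on $S_n(\fB,v)$ are defined by the splitting map along an ideal arc and by the empty-diagram evaluation, both of which commute with orientation reversal of webs --- but be careful that $\Delta$ comes from \emph{splitting} $\fB$ along an interior ideal arc, not from a height-ordered embedding $\fB\hookrightarrow\fB\sqcup\fB$; the commutativity you want is precisely the content of the cited equations in \cite{le2021stated}. With that correction the naturality argument plus the bialgebra-implies-Hopf observation gives a clean proof; the paper's version is equally short but does one more explicit generator check in place of invoking the general Hopf-theoretic fact.
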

\begin{proof}
Since $\cev {\,\cdot\,}$ is already an algebra homomorphism. We only need to show $\cev {\,\cdot\,}$ respects comultiplication, counit, and antipode. We only need to check this for $a^{i}_j$ because $\cev {\,\cdot\,},\Delta, \epsilon$ are algebra homomorphisms and $S$ is an anti-algebra homomorphism.

From equations (63) and (65) in \cite{le2021stated}, we know 
$$\Delta\circ \cev {\,\cdot\,} =  \cev {\,\cdot\,}\circ \Delta,\;\epsilon\circ \cev {\,\cdot\,} =  \cev {\,\cdot\,}\circ \epsilon.$$

For any $i,j\in \mathbb{J}$, we have 
$$\cev {\,\cdot\,}(S(a^i_j)) = (-q)^{i-j} (\cev {\,\cdot\,}( \cev {\,a\,}^{\bar{j}}_{\bar{i}}))
= (-q)^{i-j}( a^{\bar{j}}_{\bar{i}}),\; 
S(\cev {\,\cdot\,}(a^i_j)) = S(\cev {\,a\,}^{i}_{j}) = (-q)^{i-j} ( a^{\bar{j}}_{\bar{i}}).$$
\end{proof}

For any $i,j\in\mathbb{J}$,
We use $b_{i,j}$ to denote $\cev {\,a\,}^{i}_{j}$. Combine Theorem \ref{Hopf}  and Lemma \ref{bigon} , we have the following Theorem. The reason why we prefer $b_{i,j}$ than $a^i_j$ is because $b_{i,j}$ coincides with our later notation, which requires the orientation of $b_{i,j}$ is from $j$ to $i$.

\begin{theorem}\label{t.Hopf} 

We have a unique Hopf algebra isomorphism  $f_{big}: O_q(SLn) \rightarrow S_n(\fB,v) $  defined by
  $ f_{big} (u_{i,j}) = b_{i,j}$.

\end{theorem}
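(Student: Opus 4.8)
The plan is to reduce the statement to Theorem \ref{Hopf}(b) together with Lemma \ref{bigon}, so that almost nothing new has to be checked. First I would record the elementary fact that $b_{i,j} = \cev{\,a\,}^i_j = \cev{\,\cdot\,}(a^i_j)$ by definition, and that $g_{big}(u_{i,j}) = a^i_j$ by Theorem \ref{Hopf}(b). Since $\cev{\,\cdot\,}$ is a Hopf algebra isomorphism of $S_n(\fB,v)$ by Lemma \ref{bigon}, the composition $f_{big} \triangleq \cev{\,\cdot\,}\circ g_{big}: O_q(SLn) \to S_n(\fB,v)$ is a Hopf algebra isomorphism as a composite of two Hopf algebra isomorphisms, and it satisfies $f_{big}(u_{i,j}) = \cev{\,\cdot\,}(g_{big}(u_{i,j})) = \cev{\,\cdot\,}(a^i_j) = b_{i,j}$. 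That establishes existence.

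For uniqueness, the key point is that the elements $u_{i,j}$ generate $O_q(SLn)$ as an algebra; hence any algebra homomorphism (in particular any Hopf algebra homomorphism) out of $O_q(SLn)$ is determined by its values on the $u_{i,j}$. So if $f'$ is another Hopf algebra isomorphism with $f'(u_{i,j}) = b_{i,j}$ for all $i,j$, then $f'$ and $f_{big}$ agree on a generating set, hence $f' = f_{big}$. I would state this in one or two sentences.

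I do not expect any real obstacle here: the content is entirely bundled into the previously proved Theorem \ref{Hopf} and Lemma \ref{bigon}, and the only thing to verify by hand is the identity $\cev{\,\cdot\,}(a^i_j) = b_{i,j}$, which holds by the definition of $b_{i,j}$ just introduced. The proof is therefore a two-line formal argument; the main care is simply to cite Lemma \ref{bigon} (so that $\cev{\,\cdot\,}$ is genuinely a Hopf, not merely algebra, isomorphism) and Theorem \ref{Hopf}(b) (so that $g_{big}$ is available) and to invoke the fact that the $u_{i,j}$ generate $O_q(SLn)$ for uniqueness.

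\begin{proof}
By Theorem \ref{Hopf}(b), $g_{big}: O_q(SLn) \to S_n(\fB,v)$, $g_{big}(u_{i,j}) = a^i_j$, is a Hopf algebra isomorphism, and by Lemma \ref{bigon}, $\cev{\,\cdot\,}: S_n(\fB,v) \to S_n(\fB,v)$ is a Hopf algebra isomorphism. Hence $f_{big} \triangleq \cev{\,\cdot\,}\circ g_{big}$ is a Hopf algebra isomorphism, and $f_{big}(u_{i,j}) = \cev{\,\cdot\,}(a^i_j) = \cev{\,a\,}^i_j = b_{i,j}$ by the definition of $b_{i,j}$. This proves existence. For uniqueness, note that the elements $u_{i,j}$, $i,j\in\mathbb{J}$, generate $O_q(SLn)$ as an algebra; therefore any algebra homomorphism out of $O_q(SLn)$ is determined by its values on the $u_{i,j}$, and in particular a Hopf algebra isomorphism $f$ with $f(u_{i,j}) = b_{i,j}$ for all $i,j$ must equal $f_{big}$.
\end{proof}
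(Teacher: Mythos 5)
Your proof is correct and matches the paper's approach exactly: the paper itself states that Theorem \ref{t.Hopf} follows by combining Theorem \ref{Hopf} and Lemma \ref{bigon}, which is precisely the composition $f_{big} = \cev{\,\cdot\,}\circ g_{big}$ you give, together with the observation that the $u_{i,j}$ generate $O_q(SLn)$.
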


\begin{lemma}(\cite{PW})\label{mmm2.4}
Suppose $q$ is a primitive $m$-th root of unity with $m$ being odd. Then in $O_q(SLn)$, for any $1\leq i,j\leq n$, we have

(a) 
\begin{align*}
&\sum_{\sigma\in S_n} (-1)^{\ell(\sigma)} (u_{1,\sigma(1)})^{m}(u_{2,\sigma(2)})^{m}\dots (u_{n,\sigma(n)})^{m}\\=&
\sum_{\sigma\in S_n} (-1)^{\ell(\sigma)} (u_{\sigma(1),1})^{m}(u_{\sigma(2),2})^{m}\dots (u_{\sigma(n),n})^{m} = 1,
\end{align*}

(b) $$\Delta((u_{i,j})^{m} )= \sum_{1\leq k\leq n} (u_{i,k})^{m}\otimes (u_{k,j})^{m},$$

(c)  $(u_{i,j})^{m}$ is in the center of $O_q(SLn)$.
\end{lemma}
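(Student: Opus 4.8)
The plan is to prove Lemma \ref{mmm2.4} by reducing all three statements to computations inside the quantum matrix bialgebra $O_q(M(n))$ and then pushing the information through the quotient to $O_q(SLn)$, exploiting the hypothesis that $q$ is a primitive $m$-th root of unity with $m$ odd. First I would recall the standard fact (see \cite{PW}) that in $O_q(M(n))$ the $R$-matrix relations $(\buu\ot\buu)\cR=\cR(\buu\ot\buu)$ are, entrywise, the familiar quantum commutation relations between the $u_{i,j}$: within a row or a column two generators $q$-commute, and the ``crossed'' pairs satisfy a relation with a $(q-q^{-1})$-correction term. The key observation is that when $q^m=1$ with $m$ odd, raising a generator to the $m$-th power kills the $(q-q^{-1})$-type corrections: for two elements $x,y$ with $xy=q^{\pm 1}yx$ one gets $x^my^m=q^{\pm m}y^mx^m=y^mx^m$, and the more complicated crossed relations, after the standard inductive computation of $x^my^m$, also collapse so that the $m$-th powers $u_{i,j}^m$ satisfy the \emph{undeformed} $GL(n)$ relations, i.e. they generate a central copy of the ordinary polynomial ring $O(M(n))$ inside $O_q(M(n))$. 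This is precisely part (c) in the $M(n)$ setting, and it is the technical heart; I would either cite it directly from \cite{PW} or sketch the two-generator $q$-binomial computation, noting that the ``quantum Frobenius for matrix bialgebras'' is classical.

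Granting that the $u_{i,j}^m$ span a central subalgebra isomorphic to $O(M(n))$ with undeformed multiplication, part (a) follows immediately: the quantum determinant $\det_q(\buu)=\sum_\sigma(-q)^{\ell(\sigma)}u_{1,\sigma(1)}\cdots u_{n,\sigma(n)}$ is central in $O_q(M(n))$, and applying the algebra map $O_q(M(n))\to O_q(M(n))$, $x\mapsto x$, one checks that $\det_q(\buu)^m$ equals $\sum_\sigma(-1)^{\ell(\sigma)}(u_{1,\sigma(1)})^m\cdots(u_{n,\sigma(n)})^m$ --- here one uses that $(-q)^{\ell(\sigma)m}=(-1)^{\ell(\sigma)m}=(-1)^{\ell(\sigma)}$ because $m$ is odd, together with the fact that inside the central classical subalgebra the cross terms in expanding $(\sum(-q)^{\ell(\sigma)}\prod u)^m$ vanish exactly as in the commutative computation of $(\det)^m$ being the $m$-th power of a single element (more carefully: $\det_q(\buu)$ is a normal/central element, so $\det_q(\buu)^m$ can be computed as the image of the classical determinant under the Frobenius-type map, giving the stated row expansion, and the column expansion follows by the transpose symmetry of the relations). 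Passing to the quotient $O_q(SLn)=O_q(M(n))/(\det_q\buu-1)$ sends $\det_q(\buu)\mapsto 1$, hence $\det_q(\buu)^m\mapsto 1$, which is exactly statement (a).

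For part (b) I would work with the comultiplication $\Delta(u_{i,j})=\sum_k u_{i,k}\ot u_{k,j}$ on $O_q(M(n))$, which descends to $O_q(SLn)$. Since $\Delta$ is an algebra homomorphism, $\Delta(u_{i,j}^m)=\big(\sum_k u_{i,k}\ot u_{k,j}\big)^m$, and I must show this equals $\sum_k u_{i,k}^m\ot u_{k,j}^m$. The point is that in the tensor product, the elements $u_{i,k}\ot u_{k,j}$ for varying $k$ (fixed $i,j$) satisfy, among themselves, $q$-commutation relations of the same shape as a single row/column of a quantum matrix --- this is the standard ``quantum multinomial'' fact used to prove the classical quantum Frobenius for $O_q(SLn)$ --- so the $q$-multinomial theorem applies: all mixed terms carry $q$-multinomial coefficients $\binom{m}{a_1,\dots,a_n}_q$ which vanish when $q$ is a primitive $m$-th root of unity (as $m$ is odd, hence the relevant Gaussian binomials $\binom{m}{a}_q$ vanish for $0<a<m$), leaving only the pure terms $u_{i,k}^m\ot u_{k,j}^m$. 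Finally, part (c), centrality of $u_{i,j}^m$ in $O_q(SLn)$, is inherited from centrality in $O_q(M(n))$ (already established above as the heart of the argument) because the quotient map is surjective and an algebra homomorphism. The main obstacle is genuinely the first step --- establishing that the $m$-th powers satisfy the undeformed relations and are central in $O_q(M(n))$ --- but since this is exactly the content cited from \cite{PW}, in the write-up I would state it as a quoted result and spend most of the remaining effort on the determinant and coproduct bookkeeping, being careful with the sign $(-q)^m=-1$ and with the $q$-multinomial vanishing.
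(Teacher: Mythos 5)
The paper does not prove this lemma; it is cited directly from \cite{PW}, so there is no internal proof to compare against. Your overall strategy (work in $O_q(M(n))$, establish that the $m$-th powers $u_{i,j}^m$ are central and satisfy undeformed relations, then descend to the quotient) is indeed the standard Parshall--Wang route, and your sketches of (b) and (c) are sound: for (b) the elements $u_{i,k}\ot u_{k,j}$ (fixed $i,j$, varying $k$) pairwise $q^2$-commute, and since $m$ is odd $q^2$ is again a primitive $m$-th root of unity, so the Gaussian multinomials vanish; (c) is inherited from centrality upstairs.

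However, your justification of (a) has a genuine gap. The step you need is the identity
\[
\textstyle\det_q(\buu)^m \;=\; \sum_{\sigma\in S_n}(-1)^{\ell(\sigma)}\,(u_{1,\sigma(1)})^m\cdots(u_{n,\sigma(n)})^m
\]
in $O_q(M(n))$, after which reduction mod $(\det_q\buu-1)$ gives $=1$. You argue this by saying the cross terms in expanding $\bigl(\sum_\sigma(-q)^{\ell(\sigma)}\prod_i u_{i,\sigma(i)}\bigr)^m$ ``vanish exactly as in the commutative computation of $(\det)^m$ being the $m$-th power of a single element.'' That appeal is vacuous: even for commuting variables, $\bigl(\sum_\sigma \mathrm{sgn}(\sigma)\prod_i a_{i,\sigma(i)}\bigr)^m$ is not the alternating sum of $m$-th powers of the products (try $n=m=2$: $(ad-bc)^2 = a^2d^2 - 2abcd + b^2c^2 \ne a^2d^2-b^2c^2$). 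So nothing in the commutative picture licenses dropping cross terms, and the phrase ``$\det_q^m$ can be computed as the image of the classical determinant under the Frobenius-type map'' assumes exactly the identity being asserted. To close the gap one needs an actual argument, e.g.: grant (b) and (c); then $z:=\sum_\sigma(-1)^{\ell(\sigma)}\prod_i u_{i,\sigma(i)}^m$ is central and, by (b) plus the classical grouplike property of the determinant, $\Delta(z)=z\ot z$. The grouplike elements of the bialgebra $O_q(M(n))$ are precisely the nonnegative powers of $\det_q(\buu)$; since $z$ is homogeneous of degree $mn$ this forces $z=\lambda\det_q(\buu)^m$, and $\lambda=1$ on applying the counit. (The column version follows by transposing the defining relations.) Alternatively one can run the Laplace/row-expansion induction of \cite{PW} directly. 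Either way, some such argument must replace the ``cross terms vanish'' step.
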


\subsection{The injectivity of the splitting map for pb surfaces}

\begin{proposition}(\cite{le2021stated})\label{inj}
Let $\Sigma$ be an essentially bordered pb surface, and   $c$ be  any  ideal arc in the interior of $\Sigma$. Then the splitting map $\Theta_c: S_n(\Sigma,v) \to S_n(\text{Cut}_c\,\Sigma,v)$ is injective.
\end{proposition}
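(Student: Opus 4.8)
The plan is to reduce, by means of an ideal triangulation, to the injectivity of the ``total'' splitting map onto a tensor product of triangle skein algebras, and to dispose of the degenerate configurations of $c$ by a direct argument.

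First I would handle the case in which $c$ is inessential, i.e.\ $c$ bounds a disk in $\Sigma$. Since a pb surface has no interior punctures, such a disk is a bigon, so $\text{Cut}_c\Sigma\cong\Sigma\sqcup\fB$, and after identifying $S_n(\text{Cut}_c\Sigma,v)$ with $S_n(\Sigma,v)\otimes S_n(\fB,v)$ the map $\Theta_c$ becomes the coaction of $S_n(\fB,v)\cong O_q(SLn)$ on $S_n(\Sigma,v)$ determined by a boundary arc. By the comodule counit axiom, $\mathrm{id}\otimes\epsilon$ is a left inverse of this coaction, so $\Theta_c$ is injective; the same remark covers the case where $c$ is isotopic to a boundary arc. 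Hence from now on I assume $c$ is an essential interior ideal arc. Because $\Sigma$ is essentially bordered, the singleton $\{c\}$ satisfies condition (2) in the definition of an ideal triangulation, so by maximality it extends to an ideal triangulation $\cE$ with $c\in Int(\cE)$. The arcs of $Int(\cE)\setminus\{c\}$ are disjoint from $c$, hence descend to ideal arcs of $\Sigma':=\text{Cut}_c\Sigma$; together with the two new boundary arcs $c_1,c_2$ they cut $\Sigma'$ into exactly the triangles $tri(\cE)$ into which $\cE$ cuts $\Sigma$. By commutativity/associativity of the splitting maps, the total splitting
$$\Theta_{\cE}\colon S_n(\Sigma,v)\longrightarrow \bigotimes_{\tau\in tri(\cE)} S_n(\fT,v)$$
factors as $\Theta_{\cE}=\Theta_{Int(\cE)\setminus\{c\}}\circ\Theta_c$. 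Therefore it suffices to prove that $\Theta_{\cE}$ is injective: then $\Theta_c$, being the first map in a composition that is injective, is injective as well.

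It remains to prove injectivity of $\Theta_{\cE}$, and this is the main obstacle. Here I would proceed as follows: put each stated $n$-web in generic position with respect to $\cE$ and apply the defining relations \eqref{w.cross}--\eqref{wzh.eight}, triangle by triangle, to exhibit a spanning set of $S_n(\Sigma,v)$ consisting of webs in an ``$\cE$-normal form'' recorded by non-negative integer intersection data with the edges of $\cE$ together with states along those edges. Under $\Theta_{\cE}$ such a normal-form web maps to a state-sum of pure tensors of elements of the known bases of the triangle algebras $S_n(\fT,v)$ from \cite{le2021stated}; using those triangle bases and the injectivity of the bigon splitting map --- which is the comultiplication of $O_q(SLn)$, injective because the counit is a one-sided inverse --- one checks by a leading-term argument on the intersection data that distinct normal-form webs have linearly independent images. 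The delicate point throughout is the bookkeeping of heights and states created along the cut edges, which is precisely what the compatibility statements such as Lemma \ref{llmm2.2} (and their iterations) are designed to control.
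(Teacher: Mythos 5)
The paper does not supply a proof of this proposition: it is stated with the citation \cite{le2021stated} and taken as an input from L\^e--Sikora, so there is no in-paper argument to compare against. I therefore evaluate your proposal on its own terms, and there is a genuine gap.

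The reduction you set up is sound in outline: if $c$ belongs to $Int(\cE)$ for some ideal triangulation $\cE$, then by commutativity of splitting maps $\Theta_{\cE}=\Theta_{Int(\cE)\setminus\{c\}}\circ\Theta_c$, so injectivity of $\Theta_{\cE}$ forces injectivity of $\Theta_c$. But the step you defer, injectivity of $\Theta_{\cE}$, is precisely the substance of the theorem. Indeed, $\Theta_{\cE}$ is the composite of the single-arc splittings $\Theta_{c'}$ over $c'\in Int(\cE)$, so ``$\Theta_{\cE}$ injective'' and ``every $\Theta_{c'}$, $c'\in Int(\cE)$, injective'' are equivalent statements; a proof that consists only of this reduction has restated the claim rather than established it. The ``leading-term argument on the intersection data'' you appeal to is exactly where the proof lives --- one has to produce an explicit basis of $S_n(\Sigma,v)$ adapted to $\cE$, specify a total order, identify the leading term of $\Theta_{\cE}$ of a basis element in the tensor product of triangle bases, and verify it is an injection of index sets --- and none of that is carried out. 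As written, the argument reduces the proposition to an unproved statement of the same depth.

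There is also a factual error in the degenerate case: ``a pb surface has no interior punctures'' is false. In the definition $\Sigma=\overline{\Sigma}-U$, the set $U$ is only required to meet every component of $\partial\overline{\Sigma}$; it may also contain interior points, which become interior punctures. Consequently, if $c$ bounds a disk, that disk may contain interior punctures, so $\text{Cut}_c\Sigma$ need not be $\Sigma\sqcup\fB$, and the $O_q(SLn)$-counit argument does not directly apply. Relatedly, the assertion that a maximal collection of arcs containing $c$ automatically satisfies condition (3) (i.e.\ actually triangulates $\Sigma$) needs justification and can fail for small surfaces; those small cases would have to be enumerated and handled separately, which your monogon/bigon remark only partially does.
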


Suppose $\Sigma$ is a pb surface, and $\cE$ is an ideal trangulation of $\Sigma$. After cutting  $\Sigma$  into a collection of ideal triangles using $\cE$, for each $e\in Int(\cE)$, $\text{Cut}_{\cE}(\Sigma)$ has a left coaction ${}_e\Delta$ and a right coaction $\Delta_e$ over the Hopf algebra $S_n(\fB,v)$, see subsection 7.1 in \cite{le2021stated}. Because of the commutativity of splitting maps, we can combine   ${}_e\Delta$ (respectively  $ \Delta_e)$ for all $e\in Int(\cE)$ in any order, which we denote as ${}_{Int(\cE)}\Delta$ (respectively  $\Delta_{( Int(\cE))}$).

We use $fl$ to denote the transposition
$$fl: (\otimes_{e\in Int(\cE)}S_n(\fB,v))\otimes ( \otimes_{\fT\in tri(\cE)} S_n(\fT,v))   \rightarrow ( \otimes_{\fT\in tri(\cE)} S_n(\fT,v))\otimes (\otimes_{e\in Int(\cE)}S_n(\fB,v))$$
defined by $fl(a\otimes b) = b\otimes a$ where $a\in \otimes_{e\in Int(\cE)}S_n(\fB,v)$ and $b\in \otimes_{\fT\in tri(\cE)} S_n(\fT,v)$.

\begin{lemma}($n=2$ \cite{korinman2019classical}, $n=3$ \cite{higgins})\label{exact}
Suppose $\Sigma$ is an essentially bordered pb surface, and $\cE$ is an ideal trangulation of $\Sigma$. Then we have the following exact sequence:
$$\begin{tikzcd}
S_n(\Sigma,v) \arrow[r, "\Theta",tail] & \otimes_{\fT\in tri(\cE)} S_n(\fT,v)
\arrow[r,"T_{\cE}" ] &( \otimes_{\fT\in tri(\cE)} S_n(\fT,v))\otimes (\otimes_{e\in Int(\cE)}S_n(\fB,v))
\end{tikzcd}$$
where $T_{\cE}= \Delta_{Int(\cE)} - fl\circ {}_{Int(\cE)}\Delta$. The arrow with a tail means the corresponding map is injective.
\end{lemma}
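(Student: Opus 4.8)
The statement packages two assertions: that $\Theta$ is injective, and that $\ker T_{\cE}=\operatorname{im}\Theta$. Injectivity is the easy half. The map $\Theta$ is the composition of the single‑arc splitting maps $\Theta_c$, $c\in Int(\cE)$, performed one arc at a time (the order being immaterial by commutativity of splitting maps), and every intermediate surface is again an essentially bordered pb surface, since the two new copies of the arc just cut are boundary arcs and hence the boundary stays non‑empty on every component. So Proposition \ref{inj} applies at each step and $\Theta$ is injective.

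The inclusion $\operatorname{im}\Theta\subseteq\ker T_{\cE}$ is also soft. As $\Delta_{Int(\cE)}$ and ${}_{Int(\cE)}\Delta$ are built up one edge at a time and the coactions attached to distinct interior arcs commute, it suffices to check, for a single $e\in Int(\cE)$, that $\Delta_e\circ\Theta=fl\circ{}_e\Delta\circ\Theta$. Now each of $\Delta_e$ and ${}_e\Delta$ is itself a splitting map: it cuts off a bigon neighbourhood of $e$ from the triangle lying on the corresponding side, which is exactly how these coactions are defined in \cite{le2021stated}. Hence $\Delta_e\circ\Theta$ and ${}_e\Delta\circ\Theta$ both compute the splitting of $S_n(\Sigma,v)$ along $e$ and along a parallel push‑off of $e$; they differ only in which of the two sides records the intervening bigon, and commutativity of splitting maps (Lemma \ref{llmm2.2}) together with the height‑order bookkeeping responsible for $fl$ yields the equality. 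Equivalently, this is the familiar observation that the splitting map lands in the ``matching states'' subspace.

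The real content is $\ker T_{\cE}\subseteq\operatorname{im}\Theta$. By the same commutativity it is enough to treat a single interior ideal arc $c$ of an essentially bordered pb surface $\Sigma$ and prove exactness of
$$0\longrightarrow S_n(\Sigma,v)\xrightarrow{\ \Theta_c\ }S_n(\text{Cut}_c\Sigma,v)\xrightarrow{\ \Delta_{c_2}-fl\circ{}_{c_1}\Delta\ }S_n(\text{Cut}_c\Sigma,v)\otimes S_n(\fB,v),$$
where $c_1,c_2$ are the two new boundary arcs; the general case then follows by induction on $|Int(\cE)|$, checking at each stage that the iterated equalizer coincides with the simultaneous one. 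The kernel of the second map is the cotensor product of $S_n(\text{Cut}_c\Sigma,v)$ with itself over the Hopf algebra $S_n(\fB,v)\cong O_q(SLn)$ (Theorem \ref{t.Hopf}), taken along the coactions at $c_1$ and $c_2$. To show $\Theta_c$ surjects onto it, the route I would take is to prove that $S_n(\text{Cut}_c\Sigma,v)$ is cofree — hence injective — as a comodule over $S_n(\fB,v)$ along $c_2$; via the triangular decomposition and the explicit webs basis of \cite{le2021stated} this reduces to the same statement for the triangle algebra $S_n(\fT,v)$ over the bigon attached to one of its edges, after which exactness of the cotensor product is formal, obtained by splitting off the $O_q(SLn)$ factor using its counit. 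A more hands‑on alternative is to fix a basis of $S_n(\Sigma,v)$ by $n$-webs in standard position relative to $c$, expand an element of the equalizer in the induced basis of the triangles, and reconstruct a global web by gluing the pieces along $c$ — the matching‑states conditions being exactly what makes the glued object a well‑defined stated $n$-web.

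The hard part, and the reason the general $n$ case goes beyond the known cases $n=2,3$, is precisely this surjectivity onto the equalizer: controlling the full $O_q(SLn)$-comodule structure of $S_n(\fT,v)$, in particular the contribution of the $n$-valent sinks and sources and of the relations \eqref{wzh.four}--\eqref{wzh.eight} (with their coefficients $c_i$), and establishing the cofreeness — equivalently, the gluing‑of‑webs statement. I expect essentially all of the technical effort to concentrate there, the remainder being formal manipulation of splitting maps and Hopf‑algebraic coinvariants.
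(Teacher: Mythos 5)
Your decomposition of the problem is the right one and matches the paper's implicit structure: injectivity of $\Theta$ comes from Proposition \ref{inj} applied one interior edge at a time (and your observation that each intermediate cut surface is again essentially bordered, because the new boundary arcs ensure every component retains nonempty boundary, is correct), and the soft inclusion $\operatorname{im}\Theta\subseteq\ker T_{\cE}$ follows from the fact that each coaction $\Delta_e$, ${}_e\Delta$ is itself a splitting map.

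However, the paper's own proof of Lemma \ref{exact} is a one-line citation to Theorem 8.6 of \cite{le2021stated} together with Proposition \ref{inj}; it does not re-prove the equalizer statement at all. You, being blind to that, attempt to re-derive the hard inclusion $\ker T_{\cE}\subseteq\operatorname{im}\Theta$ from scratch, and while you correctly isolate it as the crux, you do not carry it out: both of your proposed routes (the cofreeness/cotensor argument, and the hands-on web-gluing argument) are stated only as strategies, and you explicitly concede that essentially all of the technical effort concentrates there. That is an accurate diagnosis, but it leaves a genuine gap: the surjectivity of $\Theta$ onto the equalizer, which is the sole nontrivial content of the lemma, is asserted rather than established. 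To close it you would need either to execute one of the two sketches (the web-gluing route, using the explicit basis of $S_n(\fT,v)$ from \cite{le2021stated} and verifying that the matching-states condition lets the local pieces reassemble into a global stated $n$-web, is closest to what is actually done in the source), or to recognize that this is precisely Theorem 8.6 of \cite{le2021stated} and cite it, which is what the paper does. A secondary point worth flagging: your reduction to a single interior arc followed by induction on $|Int(\cE)|$ requires that the iterated equalizer coincide with the simultaneous one; this is true, but needs the commutativity of splitting maps along disjoint edges together with the injectivity already established at each stage, and the paper sidesteps the issue entirely by quoting the all-at-once statement.
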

\begin{proof}
Theorem 8.6 in \cite{le2021stated}, and Proposition \ref{inj}.
\end{proof}

\subsection{Conventions}

For any topological space $X$, we  use $cl(X)$ to denote the closure of $X$, and use $int(X)$ to denote the interior of $X$, and use $\sharp X$ to denote the number of components of $X$.

An {\bf oriented arc} $\alpha$ in $(M,N)$ is a smooth embedding from $[0,1]$ to $M$ with the orientation given by the positive direction of $[0,1]$ such that $\alpha\cap\partial M
= \{\alpha(0),\alpha(1)\}\cap N$. A {\bf  framed oriented arc} is an oriented arc with transversal framing
such that the framings at $\{\alpha(0),\alpha(1)\}\cap N$, if not empty, are given by the velocity  vectors of $N$.
 An {\bf  oriented circle} in $(M,N)$ is a smooth embedding  $\beta: S^{1}\rightarrow M$ with a chosen orientation such that $\beta\subset int(M)$. A {\bf  framed oriented knot} is an oriented circle with transversal framing.

If both two ends of a framed oriented arc lie in $N$ 
we call it a  framed oriented boundary arc of $(M,N)$, or just {\bf framed oriented boundary arc} when there is no confusion with $(M,N)$. If the two ends of a framed oriented boundary arc are both stated, we call it a {\bf stated framed oriented boundary arc}.

 For a framed oriented boundary arc $\alpha$, we use $\alpha_{i,j}$ to denote $\alpha$ with two ends stated by $s(\alpha(0 )) = j$ and $s(\alpha(1)) = i$. Also for a stated framed oriented boundary arc $\alpha$, suppose 
$s(\alpha(0 )) = j$ and $s(\alpha(1)) = i$, we can also use $\alpha_{i,j}$ to denote $\alpha$ to indicate the information that $s(\alpha(0 )) = j$ and $s(\alpha(1)) = i$.
 
For two framed oriented arcs $\alpha,\beta$, we say $\alpha*\beta$ is well-defined if (1) $\alpha\cap \beta=\{\alpha(0) \} = \{\beta(1)\}$  and they have the same framing and velocity vector at point $\beta(1) = \alpha(0)$, or (2) $\alpha\cap \beta=\{\alpha(0),\alpha(1) \} = \{\beta(0),\beta(1)\}$, where $\alpha(0) = \beta(1)$ and
$\alpha(1) = \beta(0)$, and they have the same framings and velocity vectors at their intersecting points. And we use $\alpha*\beta$ to denote the new framed oriented arc (or framed oriented knot) obtained by connecting $\alpha$ and $\beta$ at their intersecting points. Note that it is possible that $\alpha*\beta$ is not a well-defined framed oriented arc (or framed oriented knot)
 because the intersecting points could be contained in $N$. If this happens, we just isotopicly push  the parts nearby 
intersecting points  to the inside of $M$. Then we obtain a well-defined framed oriented arc (or framed oriented knot), which is still denoted as $\alpha*\beta$.
Note that for case (2), we have both $\alpha*\beta$ and $\beta*\alpha$ are well-defined and they represent the same framed oriented
knot.

Suppose $R_1,R_2$ are two algebras, and $f$ is map from $R_1$ to $R_2$. Let $A=(a_{i,j})$ be a $k_1$ by $k_2$ matrix in $R_1$ where $k_1,k_2$ are two positive integers. We define $f(A)$ to be a $k_1$ by $k_2$ matrix in $R_2$
with $[f(A)]_{i,j} = f(a_{i,j})$. If $f$ is an algebra homomorphism, we have 
$f(I)= I, f(A_1 A_2) = f(A_1)f(A_2), f(A_3+A_4) = f(A_3) + f(A_4), f(A_5^{-1}) = (f(A_5))^{-1}$
where $I$ is the identity matrix in any size and $A_t,1\leq t\leq 5,$ are matrices in $R_1$ such that the above operations for $A_t$ make sense.

In this paper, when we talk about three manifold, we always mean a three manifold with a chosen orientation and a chosen
 Riemannian metric.

\section{Algebra structure for $S_n(M,N,1)$ and the coordinate ring}\label{sec3}

In this section we will give an algebra structure to $S_n(M,N,1)$, and introduce an algebraic set related to $(M,N)$. The main goal of this section is to construct a surjective algebra homomorphism from $S_n(M,N,1)$ to the coordinate ring of this algebraic set. In next section, we will prove the well-definedness and surjectivity of this algebra homomorphism.

Recall that for any positive integer $k$, we define 
$$[k]=\frac{q^k-q^{-k}}{q-q^{-1}}.$$

\begin{lemma}
For any positive integer $k$, we have 
$$\sum_{\sigma\in S_k} (q^2)^{\ell(\sigma)} = [k]! q^{\frac{k(k-1)}{2}}$$
where $[k]! =\prod_{1\leq i\leq k}[i]$.
\end{lemma}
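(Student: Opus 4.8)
The plan is to identify the left-hand side with the classical inversion generating function of $S_k$ specialized at $x=q^2$, and then translate between the balanced $q$-integer $[k]=\frac{q^k-q^{-k}}{q-q^{-1}}$ used in this paper and the unbalanced one $1+x+\cdots+x^{k-1}$.

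First I would establish, by induction on $k$, the polynomial identity $\sum_{\sigma\in S_k}x^{\ell(\sigma)}=\prod_{i=1}^{k}\bigl(1+x+\cdots+x^{i-1}\bigr)$ in $\BZ[x]$. The base case $k=1$ is immediate. For the inductive step I use the bijection $S_k\cong S_{k-1}\times\{1,\dots,k\}$ that inserts the value $k$ at a chosen position $p$: if $\sigma(p)=k$, then no pair $(i,p)$ with $i<p$ is an inversion of $\sigma$ while every pair $(p,j)$ with $j>p$ is, and deleting the value $k$ identifies the remaining inversions of $\sigma$ with those of the resulting $\sigma'\in S_{k-1}$; hence $\ell(\sigma)=\ell(\sigma')+(k-p)$. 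Summing over $\sigma$ therefore factors as $\bigl(\sum_{\sigma'\in S_{k-1}}x^{\ell(\sigma')}\bigr)\bigl(1+x+\cdots+x^{k-1}\bigr)$, which closes the induction. (Alternatively, this factorization is exactly the Lehmer-code bijection $S_k\cong\prod_{i=1}^k\{0,1,\dots,k-i\}$, under which $\ell$ becomes the sum of the coordinates.)

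Then I would set $x=q^2$ and use, for each $i$,
$$1+q^2+\cdots+q^{2(i-1)}=\frac{q^{2i}-1}{q^2-1}=q^{\,i-1}\,\frac{q^{i}-q^{-i}}{q-q^{-1}}=q^{\,i-1}[i].$$
Taking the product over $i=1,\dots,k$ and collecting the power of $q$ via $\sum_{i=1}^{k}(i-1)=\tfrac{k(k-1)}{2}$ gives
$$\sum_{\sigma\in S_k}(q^2)^{\ell(\sigma)}=\prod_{i=1}^{k}q^{\,i-1}[i]=q^{\frac{k(k-1)}{2}}\,[k]!,$$
which is the assertion. There is no genuine obstacle here; the only points requiring a little care are the inversion bookkeeping in the inductive step (which pairs become inversions when the value $k$ is inserted) and the elementary exponent sum, both entirely routine.
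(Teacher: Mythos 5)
Your proof is correct and follows essentially the same route as the paper: both arguments run the standard induction on $k$ by stripping off the largest value and observing that it contributes a factor $1+q^2+\cdots+q^{2(k-1)}=q^{k-1}[k]$ to the inversion generating function. The only cosmetic difference is that you first prove the polynomial identity in a variable $x$ and then specialize to $x=q^2$, whereas the paper carries out the induction directly with $q^2$ and the balanced $q$-integers.
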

\begin{proof}
We prove this by using mathmatical induction on $k$. Obviously it is true for $k=1$. 

Suppose we have $\sum_{\sigma\in S_k} (q^2)^{\ell(\sigma)} = [k]! q^{\frac{k(k-1)}{2}}.$ Then 
\begin{align*}
&\sum_{\sigma\in S_{k+1}} (q^2)^{\ell(\sigma)} = \sum_{1\leq i\leq k+1}\left (
\sum_{\sigma\in S_{k+1},\sigma(k+1) 
=i} q^{2\ell(\sigma)}\right ) \\
=&(\sum_{1\leq i\leq k+1} q^{2(k+1-i)}) [k]! q^{\frac{k(k-1)}{2}}=
[k+1]! q^{\frac{(k+1)(k)}{2}}.
\end{align*}
\end{proof}

\subsection{Hight exchange relations for boundary arcs with opposite orientations}  In this subsection, we try to derive the relation between $\raisebox{-.20in}{

\begin{tikzpicture}
\tikzset{->-/.style=

{decoration={markings,mark=at position #1 with

{\arrow{latex}}},postaction={decorate}}}
\filldraw[draw=white,fill=gray!20] (-0,-0.2) rectangle (1, 1.2);
\draw [line width =1.5pt,decoration={markings, mark=at position 1 with {\arrow{>}}},postaction={decorate}](1,-0.2)--(1,1.2);
\draw [line width =1pt,decoration={markings, mark=at position 0.5 with {\arrow{>}}},postaction={decorate}](0,0.8)--(1,0.8);
\draw [line width =1pt,decoration={markings, mark=at position 0.5 with {\arrow{<}}},postaction={decorate}](0,0.2)--(1,0.2);
\node [right]  at(1,0.8) {$j$};
\node [right]  at(1,0.2) {$i$};
\end{tikzpicture}
}$ and $\raisebox{-.20in}{

\begin{tikzpicture}
\tikzset{->-/.style=

{decoration={markings,mark=at position #1 with

{\arrow{latex}}},postaction={decorate}}}
\filldraw[draw=white,fill=gray!20] (-0,-0.2) rectangle (1, 1.2);
\draw [line width =1.5pt,decoration={markings, mark=at position 1 with {\arrow{>}}},postaction={decorate}](1,1.2)--(1,-0.2);
\draw [line width =1pt,decoration={markings, mark=at position 0.5 with {\arrow{>}}},postaction={decorate}](0,0.8)--(1,0.8);
\draw [line width =1pt,decoration={markings, mark=at position 0.5 with {\arrow{<}}},postaction={decorate}](0,0.2)--(1,0.2);
\node [right]  at(1,0.8) {$j$};
\node [right]  at(1,0.2) {$i$};
\end{tikzpicture}
}$
(between $\raisebox{-.20in}{

\begin{tikzpicture}
\tikzset{->-/.style=

{decoration={markings,mark=at position #1 with

{\arrow{latex}}},postaction={decorate}}}
\filldraw[draw=white,fill=gray!20] (-0,-0.2) rectangle (1, 1.2);
\draw [line width =1.5pt,decoration={markings, mark=at position 1 with {\arrow{>}}},postaction={decorate}](1,-0.2)--(1,1.2);
\draw [line width =1pt,decoration={markings, mark=at position 0.5 with {\arrow{<}}},postaction={decorate}](0,0.8)--(1,0.8);
\draw [line width =1pt,decoration={markings, mark=at position 0.5 with {\arrow{>}}},postaction={decorate}](0,0.2)--(1,0.2);
\node [right]  at(1,0.8) {$j$};
\node [right]  at(1,0.2) {$i$};
\end{tikzpicture}
}$ and $\raisebox{-.20in}{

\begin{tikzpicture}
\tikzset{->-/.style=

{decoration={markings,mark=at position #1 with

{\arrow{latex}}},postaction={decorate}}}
\filldraw[draw=white,fill=gray!20] (-0,-0.2) rectangle (1, 1.2);
\draw [line width =1.5pt,decoration={markings, mark=at position 1 with {\arrow{>}}},postaction={decorate}](1,1.2)--(1,-0.2);
\draw [line width =1pt,decoration={markings, mark=at position 0.5 with {\arrow{<}}},postaction={decorate}](0,0.8)--(1,0.8);
\draw [line width =1pt,decoration={markings, mark=at position 0.5 with {\arrow{>}}},postaction={decorate}](0,0.2)--(1,0.2);
\node [right]  at(1,0.8) {$j$};
\node [right]  at(1,0.2) {$i$};
\end{tikzpicture}
}$).

\begin{proposition}\label{prop3.1}
Let $(M,N)$ be any marked three manifold with $N\neq \emptyset$. In $S_n(M,N,v)$, we have 

\begin{align*}
\raisebox{-.20in}{
\begin{tikzpicture}
\tikzset{->-/.style=
{decoration={markings,mark=at position #1 with
{\arrow{latex}}},postaction={decorate}}}
\filldraw[draw=white,fill=gray!20] (-0,-0.2) rectangle (1, 1.2);
\draw [line width =1.5pt,decoration={markings, mark=at position 1 with {\arrow{>}}},postaction={decorate}](1,-0.2)--(1,1.2);
\draw [line width =1pt,decoration={markings, mark=at position 0.5 with {\arrow{>}}},postaction={decorate}](0,0.8)--(1,0.8);
\draw [line width =1pt,decoration={markings, mark=at position 0.5 with {\arrow{<}}},postaction={decorate}](0,0.2)--(1,0.2);
\node [right]  at(1,0.8) {$j$};
\node [right]  at(1,0.2) {$i$};
\end{tikzpicture}}
=
\raisebox{-.20in}{
\begin{tikzpicture}
\tikzset{->-/.style=
{decoration={markings,mark=at position #1 with
{\arrow{latex}}},postaction={decorate}}}
\filldraw[draw=white,fill=gray!20] (-0,-0.2) rectangle (1, 1.2);
\draw [line width =1.5pt,decoration={markings, mark=at position 1 with {\arrow{>}}},postaction={decorate}](1,1.2)--(1,-0.2);
\draw [line width =1pt](0.6,0.6)--(1,1);
\draw [line width =1pt](0.6,0.4)--(1,0);
\draw[line width =1pt,decoration={markings, mark=at position 0.5 with {\arrow{<}}},postaction={decorate}] (0,0)--(0.4,0.4);
\draw[line width =1pt,decoration={markings, mark=at position 0.5 with {\arrow{>}}},postaction={decorate}] (0,1)--(0.4,0.6);
\draw[line width =1pt] (0.4,0.6)--(0.6,0.4);
\node [right]  at(1,1) {$i$};
\node [right]  at(1,0) {$j$};
\end{tikzpicture}}
&= \left \{
 \begin{array}{ll}
     q^{\frac{1-n}{n}}
\left(
\raisebox{-.20in}{
\begin{tikzpicture}
\tikzset{->-/.style=
{decoration={markings,mark=at position #1 with
{\arrow{latex}}},postaction={decorate}}}
\filldraw[draw=white,fill=gray!20] (-0,-0.2) rectangle (1, 1.2);
\draw [line width =1.5pt,decoration={markings, mark=at position 1 with {\arrow{>}}},postaction={decorate}](1,1.2)--(1,-0.2);
\draw [line width =1pt,decoration={markings, mark=at position 0.5 with {\arrow{>}}},postaction={decorate}](0,0.8)--(1,0.8);
\draw [line width =1pt,decoration={markings, mark=at position 0.5 with {\arrow{<}}},postaction={decorate}](0,0.2)--(1,0.2);
\node [right]  at(1,0.8) {$j$};
\node [right]  at(1,0.2) {$i$};
\end{tikzpicture}}
+c_i (1-q^2)\sum_{j<k\leq n}
c_{\bar{k}}^{-1}
\raisebox{-.20in}{
\begin{tikzpicture}
\tikzset{->-/.style=
{decoration={markings,mark=at position #1 with
{\arrow{latex}}},postaction={decorate}}}
\filldraw[draw=white,fill=gray!20] (-0,-0.2) rectangle (1, 1.2);
\draw [line width =1.5pt,decoration={markings, mark=at position 1 with {\arrow{>}}},postaction={decorate}](1,1.2)--(1,-0.2);
\draw [line width =1pt,decoration={markings, mark=at position 0.5 with {\arrow{>}}},postaction={decorate}](0,0.8)--(1,0.8);
\draw [line width =1pt,decoration={markings, mark=at position 0.5 with {\arrow{<}}},postaction={decorate}](0,0.2)--(1,0.2);
\node [right]  at(1,0.8) {$k$};
\node [right]  at(1,0.2) {$\bar{k}$};
\end{tikzpicture}}
\right),                    & \text{if }j=\bar{i},\\
     q^{\frac{1}{n}}
\raisebox{-.20in}{
\begin{tikzpicture}
\tikzset{->-/.style=
{decoration={markings,mark=at position #1 with
{\arrow{latex}}},postaction={decorate}}}
\filldraw[draw=white,fill=gray!20] (-0,-0.2) rectangle (1, 1.2);
\draw [line width =1.5pt,decoration={markings, mark=at position 1 with {\arrow{>}}},postaction={decorate}](1,1.2)--(1,-0.2);
\draw [line width =1pt,decoration={markings, mark=at position 0.5 with {\arrow{>}}},postaction={decorate}](0,0.8)--(1,0.8);
\draw [line width =1pt,decoration={markings, mark=at position 0.5 with {\arrow{<}}},postaction={decorate}](0,0.2)--(1,0.2);
\node [right]  at(1,0.8) {$j$};
\node [right]  at(1,0.2) {$i$};
\end{tikzpicture}}
,     & \text{if }j\neq\bar{i},\\
 \end{array}
 \right.\\
\raisebox{-.20in}{
\begin{tikzpicture}
\tikzset{->-/.style=
{decoration={markings,mark=at position #1 with
{\arrow{latex}}},postaction={decorate}}}
\filldraw[draw=white,fill=gray!20] (-0,-0.2) rectangle (1, 1.2);
\draw [line width =1.5pt,decoration={markings, mark=at position 1 with {\arrow{>}}},postaction={decorate}](1,-0.2)--(1,1.2);
\draw [line width =1pt,decoration={markings, mark=at position 0.5 with {\arrow{<}}},postaction={decorate}](0,0.8)--(1,0.8);
\draw [line width =1pt,decoration={markings, mark=at position 0.5 with {\arrow{>}}},postaction={decorate}](0,0.2)--(1,0.2);
\node [right]  at(1,0.8) {$j$};
\node [right]  at(1,0.2) {$i$};
\end{tikzpicture}}
=
\raisebox{-.20in}{
\begin{tikzpicture}
\tikzset{->-/.style=
{decoration={markings,mark=at position #1 with
{\arrow{latex}}},postaction={decorate}}}
\filldraw[draw=white,fill=gray!20] (-0,-0.2) rectangle (1, 1.2);
\draw [line width =1.5pt,decoration={markings, mark=at position 1 with {\arrow{>}}},postaction={decorate}](1,1.2)--(1,-0.2);
\draw [line width =1pt](0.6,0.6)--(1,1);
\draw [line width =1pt](0.6,0.4)--(1,0);
\draw[line width =1pt,decoration={markings, mark=at position 0.5 with {\arrow{>}}},postaction={decorate}] (0,0)--(0.4,0.4);
\draw[line width =1pt,decoration={markings, mark=at position 0.5 with {\arrow{<}}},postaction={decorate}] (0,1)--(0.4,0.6);
\draw[line width =1pt] (0.4,0.6)--(0.6,0.4);
\node [right]  at(1,1) {$i$};
\node [right]  at(1,0) {$j$};
\end{tikzpicture}}
&= \left \{
 \begin{array}{ll}
     q^{\frac{1-n}{n}}
\left(
\raisebox{-.20in}{
\begin{tikzpicture}
\tikzset{->-/.style=
{decoration={markings,mark=at position #1 with
{\arrow{latex}}},postaction={decorate}}}
\filldraw[draw=white,fill=gray!20] (-0,-0.2) rectangle (1, 1.2);
\draw [line width =1.5pt,decoration={markings, mark=at position 1 with {\arrow{>}}},postaction={decorate}](1,1.2)--(1,-0.2);
\draw [line width =1pt,decoration={markings, mark=at position 0.5 with {\arrow{<}}},postaction={decorate}](0,0.8)--(1,0.8);
\draw [line width =1pt,decoration={markings, mark=at position 0.5 with {\arrow{>}}},postaction={decorate}](0,0.2)--(1,0.2);
\node [right]  at(1,0.8) {$j$};
\node [right]  at(1,0.2) {$i$};
\end{tikzpicture}}
+c_i (1-q^2)\sum_{j<k\leq n}
c_{\bar{k}}^{-1}
\raisebox{-.20in}{
\begin{tikzpicture}
\tikzset{->-/.style=
{decoration={markings,mark=at position #1 with
{\arrow{latex}}},postaction={decorate}}}
\filldraw[draw=white,fill=gray!20] (-0,-0.2) rectangle (1, 1.2);
\draw [line width =1.5pt,decoration={markings, mark=at position 1 with {\arrow{>}}},postaction={decorate}](1,1.2)--(1,-0.2);
\draw [line width =1pt,decoration={markings, mark=at position 0.5 with {\arrow{<}}},postaction={decorate}](0,0.8)--(1,0.8);
\draw [line width =1pt,decoration={markings, mark=at position 0.5 with {\arrow{>}}},postaction={decorate}](0,0.2)--(1,0.2);
\node [right]  at(1,0.8) {$k$};
\node [right]  at(1,0.2) {$\bar{k}$};
\end{tikzpicture}}
\right),                    & \text{if }j=\bar{i},\\
     q^{\frac{1}{n}}
\raisebox{-.20in}{
\begin{tikzpicture}
\tikzset{->-/.style=
{decoration={markings,mark=at position #1 with
{\arrow{latex}}},postaction={decorate}}}
\filldraw[draw=white,fill=gray!20] (-0,-0.2) rectangle (1, 1.2);
\draw [line width =1.5pt,decoration={markings, mark=at position 1 with {\arrow{>}}},postaction={decorate}](1,1.2)--(1,-0.2);
\draw [line width =1pt,decoration={markings, mark=at position 0.5 with {\arrow{<}}},postaction={decorate}](0,0.8)--(1,0.8);
\draw [line width =1pt,decoration={markings, mark=at position 0.5 with {\arrow{>}}},postaction={decorate}](0,0.2)--(1,0.2);
\node [right]  at(1,0.8) {$j$};
\node [right]  at(1,0.2) {$i$};
\end{tikzpicture}}
,     & \text{if }j\neq\bar{i},\\
 \end{array}
 \right.\\
\end{align*}

\end{proposition}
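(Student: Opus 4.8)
The plan is to derive these height-exchange relations by combining the known exchange relations for same-orientation boundary arcs (equation \eqref{wzh.eight}) with the cap and cup relations \eqref{wzh.six} and \eqref{wzh.seven}, which let us convert an arc with one orientation into an arc with the opposite orientation at the cost of summing over states and introducing the constants $c_i$. Concretely, for each of the two claimed identities I would start from the left-hand picture (two boundary arcs of opposite orientation meeting the marking, to be slid past each other), use \eqref{wzh.seven} to replace the lower (or upper) incoming strand of opposite orientation by a cup paired with an outgoing strand, so that the local picture involves only same-orientation strands; then apply \eqref{wzh.eight} to exchange the heights of the two same-orientation endpoints; finally re-absorb the cup using \eqref{wzh.six} (which forces the $\delta_{\bar j, i}$ constraint and produces $c_i$) to return to a diagram with one strand reversed. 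The resulting sum over the dummy index then needs to be reorganized into the two cases $j = \bar i$ and $j \neq \bar i$.

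In more detail, the first step is to fix notation for the four local diagrams appearing in the statement and to observe (via Corollary \ref{c.orient-rev}, the orientation-reversal automorphism, together with \eqref{w.twist}) that it suffices to prove one of the two displayed identities, since the second follows by applying $\cev{\,\cdot\,}$ and tracking how it acts on states and on the constants. Then, starting from $\raisebox{-.20in}{\begin{tikzpicture}[scale=0.5]\draw (0,0) rectangle (1,1);\end{tikzpicture}}$-type crossing of opposite-orientation strands, I would insert a "cup-cap" pair on the downward strand using \eqref{wzh.seven} written in the form $\Id = \sum_i (c_{\bar i})^{-1}(\text{cup}_{i,\bar i})(\text{cap})$, slide the genuine crossing (now between co-oriented strands) past the marking with \eqref{wzh.eight}, and contract the cap with \eqref{wzh.six}. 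The bookkeeping is: \eqref{wzh.eight} contributes the overall $q^{1/n}$ together with the $\delta_{j<i}(q-q^{-1})$ and $q^{\delta_{i,j}}$ terms; \eqref{wzh.six} contributes $c_i \delta_{\bar j, i}$ and kills all but one term of the internal sum; the net effect, after simplifying $c_i c_{\bar i}^{-1}$-type products using \eqref{e.prodc} (namely $c_i c_{\bar i} = t$), should collapse to the stated right-hand sides. The appearance of $q^{(1-n)/n}$ in the $j=\bar i$ case versus $q^{1/n}$ otherwise is exactly the signature of whether the contracted cap/cup "wraps around" a full set of $n$ states or not.

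I expect the main obstacle to be the combinatorial reorganization of the double sum — the sum over the internal state from the cup-cap insertion, nested with the $\delta_{j<i}$ term from \eqref{wzh.eight} — into the clean single sum $\sum_{j<k\le n} c_{\bar k}^{-1}(\cdots)$ that appears in the $j = \bar i$ branch, while simultaneously verifying that in the $j \neq \bar i$ branch everything degenerates to a single term with coefficient $q^{1/n}$. Getting the constants right (the factor $c_i(1-q^2)$, the ranges of summation, and the placement of bars on indices) requires care with the conventions in \eqref{wzh.six}–\eqref{wzh.eight} for which endpoint is "higher" and which state sits on top. A secondary technical point is that \eqref{wzh.eight} as stated applies to a specific local configuration relative to the marking, so before applying it I would need to isotope the opposite-orientation picture into precisely that configuration, possibly picking up a factor from \eqref{w.twist} or \eqref{w.cross}; I would handle this by first reducing, via isotopy and \eqref{w.twist}, to a standard position and then applying the three relations in the order above. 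Once the single identity is established, the second is immediate from the orientation-reversal symmetry noted in the first step.
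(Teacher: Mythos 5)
Your plan takes a genuinely different route from the paper, but it has a gap that I think is fatal as written.

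The paper's proof does not use a cup--cap insertion at all. Instead it redraws the left-hand side as a crossing between the in- and out-strands near the wall and then invokes the ``parallel of equation (51)'' from \cite{le2021stated}, which resolves an opposite-orientation crossing at a wall directly into a cup term $\delta_{\bar j,i}\,c_i q^{(1-n)/n}\,(\text{returning arc})$ plus a sink--source term with coefficient $-(-1)^{n(n-1)/2}([n-2]!)^{-1}q^{1/n}$. The sink--source picture is then expanded using relation \eqref{wzh.five} (each vertex gives a sum over $S_n$), and the heart of the proof is a combinatorial analysis of the resulting double sum over $\sigma,\tau\in S_n$, split into the two cases $i\neq\bar j$ and $i=\bar j$, with the $q$-factorial Lemma $\sum_{\sigma\in S_k}(q^2)^{\ell(\sigma)}=[k]!\,q^{k(k-1)/2}$ doing the numerical bookkeeping. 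So the key external input is a crossing-resolution identity, not a combination of \eqref{wzh.six}--\eqref{wzh.eight}.

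The gap in your approach: the ``cup--cap conversion'' step does not actually put the local picture into the situation covered by \eqref{wzh.eight}. When you insert a U-turn on the outgoing $i$-strand and resolve it with \eqref{wzh.seven}, you replace the single out-endpoint not by an in-endpoint, but by the \emph{original} out-endpoint together with two \emph{new} endpoints, one of which is again outgoing. (Indeed, ``insert a U-turn then contract the resulting cap with \eqref{wzh.six}'' is an identity, so by itself it produces nothing; and if you postpone the \eqref{wzh.six} contraction, you are still left with at least one out-strand sitting between the $j$-strand and the place you want to move it to.) Sliding the $j$-strand past that out-strand is precisely the opposite-orientation height exchange you are trying to prove, so the argument is circular. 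A secondary issue, which you flag yourself, is that \eqref{wzh.eight} is stated for a very specific local configuration, and getting there requires extra crossings handled by \eqref{w.cross}/\eqref{w.twist}; once those crossings appear near the wall you are back to needing a crossing-resolution identity of exactly the kind the paper imports from \cite{le2021stated}. Your observation that the second identity follows from the first via the orientation-reversal automorphism $\cev{\,\cdot\,}$ is correct and matches the paper; the problem is only in the proof of the first identity.

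If you want to salvage the cup--cap idea, you would first need to establish, as a separate lemma, a genuine ``single-endpoint orientation flip'' (an out-$i$ endpoint equals a specific $\mathbb{C}$-combination of in-endpoints at a shifted height) that is \emph{not} a tautological cup--cap pair; such a lemma exists in the L\^e--Sikora framework but its proof is of the same order of difficulty as the proposition itself, and in this paper the author sidesteps it by citing equation (51) of \cite{le2021stated} directly.
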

\begin{proof}
We only prove the first equation. The second equation can be obtained by reversing all the orientations of $n$-webs in the first equation.

We have
\begin{equation}\label{eq2.1}
\raisebox{-.20in}{
\begin{tikzpicture}
\tikzset{->-/.style=
{decoration={markings,mark=at position #1 with
{\arrow{latex}}},postaction={decorate}}}
\filldraw[draw=white,fill=gray!20] (-0,-0.2) rectangle (1, 1.2);
\draw [line width =1.5pt,decoration={markings, mark=at position 1 with {\arrow{>}}},postaction={decorate}](1,-0.2)--(1,1.2);
\draw [line width =1pt,decoration={markings, mark=at position 0.5 with {\arrow{>}}},postaction={decorate}](0,0.8)--(1,0.8);
\draw [line width =1pt,decoration={markings, mark=at position 0.5 with {\arrow{<}}},postaction={decorate}](0,0.2)--(1,0.2);
\node [right]  at(1,0.8) {$j$};
\node [right]  at(1,0.2) {$i$};
\end{tikzpicture}}
=
\raisebox{-.20in}{
\begin{tikzpicture}
\tikzset{->-/.style=
{decoration={markings,mark=at position #1 with
{\arrow{latex}}},postaction={decorate}}}
\filldraw[draw=white,fill=gray!20] (-0,-0.2) rectangle (1, 1.2);
\draw [line width =1.5pt,decoration={markings, mark=at position 1 with {\arrow{>}}},postaction={decorate}](1,1.2)--(1,-0.2);
\draw [line width =1pt](0.6,0.6)--(1,1);
\draw [line width =1pt](0.6,0.4)--(1,0);
\draw[line width =1pt,decoration={markings, mark=at position 0.5 with {\arrow{<}}},postaction={decorate}] (0,0)--(0.4,0.4);
\draw[line width =1pt,decoration={markings, mark=at position 0.5 with {\arrow{>}}},postaction={decorate}] (0,1)--(0.4,0.6);
\draw[line width =1pt] (0.4,0.6)--(0.6,0.4);
\node [right]  at(1,1) {$i$};
\node [right]  at(1,0) {$j$};
\end{tikzpicture}}
=  \delta_{\bar j,i } c_i q^{\frac{1-n}{n}} \; 
\raisebox{-.20in}{
\begin{tikzpicture}
\tikzset{->-/.style=
{decoration={markings,mark=at position #1 with
{\arrow{latex}}},postaction={decorate}}}
\filldraw[draw=white,fill=gray!20] (-0.7,-0.7) rectangle (0,0.7);
\draw [line width =1.5pt](0,0.7)--(0,-0.7);
\draw [color = black, line width =1pt] (-0.7 ,-0.3) arc (-90:90:0.5 and 0.3);
\draw [line width =1pt,decoration={markings, mark=at position 0.5 with {\arrow{<}}},postaction={decorate}](-0.2,-0.02)--(-0.2,0.02);
\end{tikzpicture}}
-(-1)^{\frac{n(n-1)}{2}} ([n-2]!)^{-1}q^{\frac{1}{n}}
\raisebox{-.60in}{

\begin{tikzpicture}
\tikzset{->-/.style=

{decoration={markings,mark=at position #1 with

{\arrow{latex}}},postaction={decorate}}}

\filldraw[draw=white,fill=gray!20] (0,0) rectangle (2.6, 4);
\draw[line width =2pt,decoration={markings, mark=at position 1.0 with {\arrow{>}}},postaction={decorate}](2.6,4) --(2.6,0);
\draw [line width =0.8pt,decoration={markings, mark=at position 0.5 with {\arrow{>}}},postaction={decorate}](1.3,1)--(0,0.5);
\draw[line width =0.8pt,decoration={markings, mark=at position 0.5 with {\arrow{>}}},postaction={decorate}] (1.3,1)--(2.6,0.5)node[right]{{$j$}};
\draw [line width =0.8pt,decoration={markings, mark=at position 1.0 with {\arrow{>}}},postaction={decorate}](1.3,1)--(1.3,2) node[right]{{$n-2$}};
\draw [line width =0.8pt](1.3,2)--(1.3,3);
\draw [line width =0.8pt,decoration={markings, mark=at position 0.5 with {\arrow{>}}},postaction={decorate}](0,3.5)--(1.3,3);
\draw[line width =0.8pt,decoration={markings, mark=at position 0.5 with {\arrow{<}}},postaction={decorate}] (1.3,3)--(2.6,3.5) node[right]{{$i$}};
\end{tikzpicture}
}\hspace*{-.1in}.
\end{equation}
The second equality is because of the parallel equation of equation (51) in \cite{le2021stated}.
We have
$$
\raisebox{-.60in}{

\begin{tikzpicture}
\tikzset{->-/.style=

{decoration={markings,mark=at position #1 with

{\arrow{latex}}},postaction={decorate}}}

\filldraw[draw=white,fill=gray!20] (0,0) rectangle (2.6, 4);
\draw[line width =2pt,decoration={markings, mark=at position 1.0 with {\arrow{>}}},postaction={decorate}](2.6,4) --(2.6,0);
\draw [line width =0.8pt,decoration={markings, mark=at position 0.5 with {\arrow{>}}},postaction={decorate}](1.3,1)--(0,0.5);
\draw[line width =0.8pt,decoration={markings, mark=at position 0.5 with {\arrow{>}}},postaction={decorate}] (1.3,1)--(2.6,0.5)node[right]{{$j$}};
\draw [line width =0.8pt,decoration={markings, mark=at position 1.0 with {\arrow{>}}},postaction={decorate}](1.3,1)--(1.3,2) node[right]{{$n-2$}};
\draw [line width =0.8pt](1.3,2)--(1.3,3);
\draw [line width =0.8pt,decoration={markings, mark=at position 0.5 with {\arrow{>}}},postaction={decorate}](0,3.5)--(1.3,3);
\draw[line width =0.8pt,decoration={markings, mark=at position 0.5 with {\arrow{<}}},postaction={decorate}] (1.3,3)--(2.6,3.5) node[right]{{$i$}};
\end{tikzpicture}
}\hspace*{-.1in}=a^2
\sum_{\tau,\sigma\in S_n}  (-q)^{\ell(\tau)}(-q)^{\ell(\sigma)}
\raisebox{-.99in}{

\begin{tikzpicture}
\tikzset{->-/.style=

{decoration={markings,mark=at position #1 with

{\arrow{latex}}},postaction={decorate}}}

\filldraw[draw=white,fill=gray!20] (0,0.4) rectangle (2, 7.6);
\draw[line width =2pt,decoration={markings, mark=at position 1.0 with {\arrow{>}}},postaction={decorate}](2,7.6) --(2,0.4);
\draw [line width =0.8pt,decoration={markings, mark=at position 0.8 with {\arrow{<}}},postaction={decorate}](2,4.4) arc (90:270:0.8 and 0.4) node[right]{\small{$\tau(n)$}};
\draw [line width =0.8pt,decoration={markings, mark=at position 0.8 with {\arrow{<}}},postaction={decorate}](2,5.2) arc (90:270:1.2 and 1.2) node[right]{\small{$\tau(4)$}};
\draw [line width =0.8pt,decoration={markings, mark=at position 0.8 with {\arrow{<}}},postaction={decorate}](2,5.6) arc (90:270:1.4 and 1.6) node[right]{\small{$\tau(3)$}};
\draw [line width =0.8pt,decoration={markings, mark=at position 0.5 with {\arrow{<}}},postaction={decorate}](0,2)--(2,2)node[right]{\small{$\tau(2)$}};
\draw [line width =0.8pt,decoration={markings, mark=at position 0.5 with {\arrow{>}}},postaction={decorate}](0,6)--(2,6);
\draw [line width =0.8pt,decoration={markings, mark=at position 0.2 with {\arrow{>}}},postaction={decorate}](2,1.6) arc (90:270:0.8 and 0.4) node[right]{\small{$j$}};
\draw [line width =0.8pt,decoration={markings, mark=at position 0.8 with {\arrow{>}}},postaction={decorate}](2,7.2) arc (90:270:0.8 and 0.4) node[right]{\small{$\sigma(n)$}};
\node at(2.5,4.4) {\small $\sigma(1)$};
\node[right] at(2,5.2) {\small $\sigma(n-3)$};
\node [right] at(2,5.6) {\small $\sigma(n-2)$};
\node [right] at(2,6) {\small $\sigma(n-1)$};
\node [right] at(2,7.2) {\small $i$};
\node at(2.5,1.6) {\small $\tau(1)$};

\node at(1.7,4.8) {\vdots};
\end{tikzpicture}
}\hspace*{-.1in}$$
$$
=a^2 t^{\frac{n}{2}}
\sum_{\substack{\tau(1)=\bar{j},\sigma(n)=\bar{i}\\ \overline{\sigma(1)}=\tau(n),\dots,\overline{\sigma(n-2)}=\tau(3),\sigma,\tau\in S_n   }}(-q)^{\ell(\sigma)+\ell(\tau)} c_i c_{\tau(1)}c_{\sigma(n-1)}^{-1}c_{\sigma(n)}^{-1}
\;
\raisebox{-.20in}{

\begin{tikzpicture}
\tikzset{->-/.style=

{decoration={markings,mark=at position #1 with

{\arrow{latex}}},postaction={decorate}}}
\filldraw[draw=white,fill=gray!20] (-0,-0.2) rectangle (1, 1.2);
\draw [line width =1.5pt,decoration={markings, mark=at position 1 with {\arrow{>}}},postaction={decorate}](1,1.2)--(1,-0.2);
\draw [line width =1pt,decoration={markings, mark=at position 0.5 with {\arrow{>}}},postaction={decorate}](0,0.8)--(1,0.8);
\draw [line width =1pt,decoration={markings, mark=at position 0.5 with {\arrow{<}}},postaction={decorate}](0,0.2)--(1,0.2);
\node [right]  at(1,0.8) {$\sigma(n-1)$};
\node [right]  at(1,0.2) {$\tau(2)$};
\end{tikzpicture}
}
$$ where the first equality comes from relation \eqref{wzh.five}.
Then $j\in \{\sigma(n-1), \sigma(n)\}, i\in\{\tau(1),\tau(2)\},$\\$\{\tau(1),\tau(2)\}=\{\overline{\sigma(n-1)},\overline{\sigma(n)}\}$.

Case 1 when $i\neq \overline{j}$. Then $\tau(1) = \overline{j} \neq i =\overline{\sigma(n)}$, furthermore we have 
$\tau(1) = \overline{\sigma(n-1)}=\overline{j}$ and $\tau(2) = \overline{\sigma(n)} = i$.
Then we have 
\begin{align*}
&a^2 t^{\frac{n}{2}}
\sum_{\substack{\tau(1)=\bar{j},\sigma(n)=\bar{i}\\ \overline{\sigma(1)}=\tau(n),\dots,\overline{\sigma(n-2)}=\tau(3),\sigma,\tau\in S_n   }}(-q)^{\ell(\sigma)+\ell(\tau)} c_i c_{\tau(1)}c_{\sigma(n-1)}^{-1}c_{\sigma(n)}^{-1}
\;
\raisebox{-.20in}{
\begin{tikzpicture}
\tikzset{->-/.style=
{decoration={markings,mark=at position #1 with
{\arrow{latex}}},postaction={decorate}}}
\filldraw[draw=white,fill=gray!20] (-0,-0.2) rectangle (1, 1.2);
\draw [line width =1.5pt,decoration={markings, mark=at position 1 with {\arrow{>}}},postaction={decorate}](1,1.2)--(1,-0.2);
\draw [line width =1pt,decoration={markings, mark=at position 0.5 with {\arrow{>}}},postaction={decorate}](0,0.8)--(1,0.8);
\draw [line width =1pt,decoration={markings, mark=at position 0.5 with {\arrow{<}}},postaction={decorate}](0,0.2)--(1,0.2);
\node [right]  at(1,0.8) {$\sigma(n-1)$};
\node [right]  at(1,0.2) {$\tau(2)$};
\end{tikzpicture}
}
\\
=&a^2 t^{\frac{n}{2}}c_i^{2}c_j^{-2}
\sum_{\substack{\tau(1) = \overline{\sigma(n-1)}=\overline{j},\tau(2) = \overline{\sigma(n)} = i\\ \overline{\sigma(1)}=\tau(n),\dots,\overline{\sigma(n-2)}=\tau(3),\sigma,\tau\in S_n   }}(-q)^{l(\sigma)+l(\tau)} 
\;
\raisebox{-.20in}{
\begin{tikzpicture}
\tikzset{->-/.style=
{decoration={markings,mark=at position #1 with
{\arrow{latex}}},postaction={decorate}}}
\filldraw[draw=white,fill=gray!20] (-0,-0.2) rectangle (1, 1.2);
\draw [line width =1.5pt,decoration={markings, mark=at position 1 with {\arrow{>}}},postaction={decorate}](1,1.2)--(1,-0.2);
\draw [line width =1pt,decoration={markings, mark=at position 0.5 with {\arrow{>}}},postaction={decorate}](0,0.8)--(1,0.8);
\draw [line width =1pt,decoration={markings, mark=at position 0.5 with {\arrow{<}}},postaction={decorate}](0,0.2)--(1,0.2);
\node [right]  at(1,0.8) {$j$};
\node [right]  at(1,0.2) {$i$};
\end{tikzpicture}
}
\\
=&a^2 t^{\frac{n}{2}}c_i^{2}c_j^{-2}
(-q)^{2n + 2i -2j -3}[n-2]!q^{\frac{(n-2)(n-3)}{2}}
\;\raisebox{-.20in}{
\begin{tikzpicture}
\tikzset{->-/.style=
{decoration={markings,mark=at position #1 with
{\arrow{latex}}},postaction={decorate}}}
\filldraw[draw=white,fill=gray!20] (-0,-0.2) rectangle (1, 1.2);
\draw [line width =1.5pt,decoration={markings, mark=at position 1 with {\arrow{>}}},postaction={decorate}](1,1.2)--(1,-0.2);
\draw [line width =1pt,decoration={markings, mark=at position 0.5 with {\arrow{>}}},postaction={decorate}](0,0.8)--(1,0.8);
\draw [line width =1pt,decoration={markings, mark=at position 0.5 with {\arrow{<}}},postaction={decorate}](0,0.2)--(1,0.2);
\node [right]  at(1,0.8) {$j$};
\node [right]  at(1,0.2) {$i$};
\end{tikzpicture}
}\\
=&-(-1)^{\frac{n(n-1)}{2}}[n-2]!
\;
\raisebox{-.20in}{
\begin{tikzpicture}
\tikzset{->-/.style=
{decoration={markings,mark=at position #1 with
{\arrow{latex}}},postaction={decorate}}}
\filldraw[draw=white,fill=gray!20] (-0,-0.2) rectangle (1, 1.2);
\draw [line width =1.5pt,decoration={markings, mark=at position 1 with {\arrow{>}}},postaction={decorate}](1,1.2)--(1,-0.2);
\draw [line width =1pt,decoration={markings, mark=at position 0.5 with {\arrow{>}}},postaction={decorate}](0,0.8)--(1,0.8);
\draw [line width =1pt,decoration={markings, mark=at position 0.5 with {\arrow{<}}},postaction={decorate}](0,0.2)--(1,0.2);
\node [right]  at(1,0.8) {$j$};
\node [right]  at(1,0.2) {$i$};
\end{tikzpicture}
}.
\end{align*}
From equation (\ref{eq2.1}),we have 
$\raisebox{-.20in}{
\begin{tikzpicture}
\tikzset{->-/.style=
{decoration={markings,mark=at position #1 with
{\arrow{latex}}},postaction={decorate}}}
\filldraw[draw=white,fill=gray!20] (-0,-0.2) rectangle (1, 1.2);
\draw [line width =1.5pt,decoration={markings, mark=at position 1 with {\arrow{>}}},postaction={decorate}](1,-0.2)--(1,1.2);
\draw [line width =1pt,decoration={markings, mark=at position 0.5 with {\arrow{>}}},postaction={decorate}](0,0.8)--(1,0.8);
\draw [line width =1pt,decoration={markings, mark=at position 0.5 with {\arrow{<}}},postaction={decorate}](0,0.2)--(1,0.2);
\node [right]  at(1,0.8) {$j$};
\node [right]  at(1,0.2) {$i$};
\end{tikzpicture}}
=
\raisebox{-.20in}{
\begin{tikzpicture}
\tikzset{->-/.style=
{decoration={markings,mark=at position #1 with
{\arrow{latex}}},postaction={decorate}}}
\filldraw[draw=white,fill=gray!20] (-0,-0.2) rectangle (1, 1.2);
\draw [line width =1.5pt,decoration={markings, mark=at position 1 with {\arrow{>}}},postaction={decorate}](1,1.2)--(1,-0.2);
\draw [line width =1pt](0.6,0.6)--(1,1);
\draw [line width =1pt](0.6,0.4)--(1,0);
\draw[line width =1pt,decoration={markings, mark=at position 0.5 with {\arrow{<}}},postaction={decorate}] (0,0)--(0.4,0.4);
\draw[line width =1pt,decoration={markings, mark=at position 0.5 with {\arrow{>}}},postaction={decorate}] (0,1)--(0.4,0.6);
\draw[line width =1pt] (0.4,0.6)--(0.6,0.4);
\node [right]  at(1,1) {$i$};
\node [right]  at(1,0) {$j$};
\end{tikzpicture}}=  q^{\frac{1}{n}}\raisebox{-.20in}{
\begin{tikzpicture}
\tikzset{->-/.style=
{decoration={markings,mark=at position #1 with
{\arrow{latex}}},postaction={decorate}}}
\filldraw[draw=white,fill=gray!20] (-0,-0.2) rectangle (1, 1.2);
\draw [line width =1.5pt,decoration={markings, mark=at position 1 with {\arrow{>}}},postaction={decorate}](1,1.2)--(1,-0.2);
\draw [line width =1pt,decoration={markings, mark=at position 0.5 with {\arrow{>}}},postaction={decorate}](0,0.8)--(1,0.8);
\draw [line width =1pt,decoration={markings, mark=at position 0.5 with {\arrow{<}}},postaction={decorate}](0,0.2)--(1,0.2);
\node [right]  at(1,0.8) {$j$};
\node [right]  at(1,0.2) {$i$};
\end{tikzpicture}
}$.

Case 2 when $i=\overline{j}$. We have $\tau(1) =\overline{j} = i =\overline{\sigma(n)}, 
\tau(2) = \overline{\sigma(n-1)}$, futhermore $\tau(k) = \overline{\sigma(n+1-k)}, 1\leq k\leq n$.
Then we have $\ell(\tau) = \ell(\sigma)$. 
Thus 
\begin{align*}
&a^2 t^{\frac{n}{2}}
\sum_{\substack{\tau(1)=\bar{j},\sigma(n)=\bar{i}\\ \overline{\sigma(1)}=\tau(n),\dots,\overline{\sigma(n-2)}=\tau(3),\sigma,\tau\in S_n   }}(-q)^{\ell(\sigma)+\ell(\tau)} c_i c_{\tau(1)}c_{\sigma(n-1)}^{-1}c_{\sigma(n)}^{-1}
\;
\raisebox{-.20in}{
\begin{tikzpicture}
\tikzset{->-/.style=
{decoration={markings,mark=at position #1 with
{\arrow{latex}}},postaction={decorate}}}
\filldraw[draw=white,fill=gray!20] (-0,-0.2) rectangle (1, 1.2);
\draw [line width =1.5pt,decoration={markings, mark=at position 1 with {\arrow{>}}},postaction={decorate}](1,1.2)--(1,-0.2);
\draw [line width =1pt,decoration={markings, mark=at position 0.5 with {\arrow{>}}},postaction={decorate}](0,0.8)--(1,0.8);
\draw [line width =1pt,decoration={markings, mark=at position 0.5 with {\arrow{<}}},postaction={decorate}](0,0.2)--(1,0.2);
\node [right]  at(1,0.8) {$\sigma(n-1)$};
\node [right]  at(1,0.2) {$\tau(2)$};
\end{tikzpicture}
}
\\
=&a^2 t^{\frac{n}{2}} t^{-1} c_i^{3}
\sum_{\sigma\in S_n,\sigma(n) = \bar{i}}(-q)^{2\ell(\sigma)}c_{\sigma(n-1)}^{-1}
\;
\raisebox{-.20in}{
\begin{tikzpicture}
\tikzset{->-/.style=
{decoration={markings,mark=at position #1 with
{\arrow{latex}}},postaction={decorate}}}
\filldraw[draw=white,fill=gray!20] (-0,-0.2) rectangle (1, 1.2);
\draw [line width =1.5pt,decoration={markings, mark=at position 1 with {\arrow{>}}},postaction={decorate}](1,1.2)--(1,-0.2);
\draw [line width =1pt,decoration={markings, mark=at position 0.5 with {\arrow{>}}},postaction={decorate}](0,0.8)--(1,0.8);
\draw [line width =1pt,decoration={markings, mark=at position 0.5 with {\arrow{<}}},postaction={decorate}](0,0.2)--(1,0.2);
\node [right]  at(1,0.8) {$\sigma(n-1)$};
\node [right]  at(1,0.2) {$\overline{\sigma(n-1)}$};
\end{tikzpicture}
}
\\
=&a^2 t^{\frac{n}{2}} t^{-1} c_i^{3}\sum_{1\leq k\leq n,k\neq \bar{i}}\left (
\sum_{\sigma\in S_n,\sigma(n-1) = k,\sigma(n) = \bar{i}}(-q)^{2\ell(\sigma)}c_{k}^{-1}
\;
\raisebox{-.20in}{
\begin{tikzpicture}
\tikzset{->-/.style=
{decoration={markings,mark=at position #1 with
{\arrow{latex}}},postaction={decorate}}}
\filldraw[draw=white,fill=gray!20] (-0,-0.2) rectangle (1, 1.2);
\draw [line width =1.5pt,decoration={markings, mark=at position 1 with {\arrow{>}}},postaction={decorate}](1,1.2)--(1,-0.2);
\draw [line width =1pt,decoration={markings, mark=at position 0.5 with {\arrow{>}}},postaction={decorate}](0,0.8)--(1,0.8);
\draw [line width =1pt,decoration={markings, mark=at position 0.5 with {\arrow{<}}},postaction={decorate}](0,0.2)--(1,0.2);
\node [right]  at(1,0.8) {$k$};
\node [right]  at(1,0.2) {$\bar{k}$};
\end{tikzpicture}
}
 \right )\\
=&a^2 t^{\frac{n}{2}} t^{-1} c_i^{3}  \sum_{1\leq k\leq n,k\neq \bar{i}} c_{k}^{-1}
\;
\raisebox{-.20in}{
\begin{tikzpicture}
\tikzset{->-/.style=
{decoration={markings,mark=at position #1 with
{\arrow{latex}}},postaction={decorate}}}
\filldraw[draw=white,fill=gray!20] (-0,-0.2) rectangle (1, 1.2);
\draw [line width =1.5pt,decoration={markings, mark=at position 1 with {\arrow{>}}},postaction={decorate}](1,1.2)--(1,-0.2);
\draw [line width =1pt,decoration={markings, mark=at position 0.5 with {\arrow{>}}},postaction={decorate}](0,0.8)--(1,0.8);
\draw [line width =1pt,decoration={markings, mark=at position 0.5 with {\arrow{<}}},postaction={decorate}](0,0.2)--(1,0.2);
\node [right]  at(1,0.8) {$k$};
\node [right]  at(1,0.2) {$\bar{k}$};
\end{tikzpicture}
}
 \left (
\sum_{\sigma\in S_n,\sigma(n-1) = k,\sigma(n) = \bar{i}}(-q)^{2\ell(\sigma)} \right )  \\
=&a^2 t^{\frac{n}{2}} t^{-1} c_i^{3}  \sum_{1\leq k\leq n,k\neq \bar{i}} c_{k}^{-1} [n-2]! q^{\frac{(n-2)(n-3)}{2}} q^{4n -2\bar{i} - 2} q^{-2k + 2\delta_{k>\bar{i}}}
\;
\raisebox{-.20in}{
\begin{tikzpicture}
\tikzset{->-/.style=
{decoration={markings,mark=at position #1 with
{\arrow{latex}}},postaction={decorate}}}
\filldraw[draw=white,fill=gray!20] (-0,-0.2) rectangle (1, 1.2);
\draw [line width =1.5pt,decoration={markings, mark=at position 1 with {\arrow{>}}},postaction={decorate}](1,1.2)--(1,-0.2);
\draw [line width =1pt,decoration={markings, mark=at position 0.5 with {\arrow{>}}},postaction={decorate}](0,0.8)--(1,0.8);
\draw [line width =1pt,decoration={markings, mark=at position 0.5 with {\arrow{<}}},postaction={decorate}](0,0.2)--(1,0.2);
\node [right]  at(1,0.8) {$k$};
\node [right]  at(1,0.2) {$\bar{k}$};
\end{tikzpicture}
}
  \\
=&a^2 t^{\frac{n}{2}} t^{-2} c_i^{3} [n-2]!  q^{\frac{(n-2)(n-3)}{2}} q^{4n -2\bar{i} - 2}\sum_{1\leq k\leq n,k\neq \bar{i}} c_{\bar{k}}  q^{-2k + 2\delta_{k>\bar{i}}}
\;
\raisebox{-.20in}{
\begin{tikzpicture}
\tikzset{->-/.style=
{decoration={markings,mark=at position #1 with
{\arrow{latex}}},postaction={decorate}}}
\filldraw[draw=white,fill=gray!20] (-0,-0.2) rectangle (1, 1.2);
\draw [line width =1.5pt,decoration={markings, mark=at position 1 with {\arrow{>}}},postaction={decorate}](1,1.2)--(1,-0.2);
\draw [line width =1pt,decoration={markings, mark=at position 0.5 with {\arrow{>}}},postaction={decorate}](0,0.8)--(1,0.8);
\draw [line width =1pt,decoration={markings, mark=at position 0.5 with {\arrow{<}}},postaction={decorate}](0,0.2)--(1,0.2);
\node [right]  at(1,0.8) {$k$};
\node [right]  at(1,0.2) {$\bar{k}$};
\end{tikzpicture}
}
   \\
=&a^2 t^{\frac{n}{2}} t^{-2} c_i^{3} [n-2]!  q^{\frac{(n-2)(n-3)}{2}} q^{4n -2\bar{i} - 2}
q^{-1-\frac{1}{n}}
\sum_{1\leq k\leq n,k\neq \bar{i}} c_{\bar{k}}^{-1}  q^{ 2\delta_{k>\bar{i}}}
\;
\raisebox{-.20in}{
\begin{tikzpicture}
\tikzset{->-/.style=
{decoration={markings,mark=at position #1 with
{\arrow{latex}}},postaction={decorate}}}
\filldraw[draw=white,fill=gray!20] (-0,-0.2) rectangle (1, 1.2);
\draw [line width =1.5pt,decoration={markings, mark=at position 1 with {\arrow{>}}},postaction={decorate}](1,1.2)--(1,-0.2);
\draw [line width =1pt,decoration={markings, mark=at position 0.5 with {\arrow{>}}},postaction={decorate}](0,0.8)--(1,0.8);
\draw [line width =1pt,decoration={markings, mark=at position 0.5 with {\arrow{<}}},postaction={decorate}](0,0.2)--(1,0.2);
\node [right]  at(1,0.8) {$k$};
\node [right]  at(1,0.2) {$\bar{k}$};
\end{tikzpicture}
}
  \\
=&(-1)^{\frac{n(n-1)}{2}}c_i [n-2]! q^{-1}
\sum_{1\leq k\leq n,k\neq \bar{i}} c_{\bar{k}}^{-1}  q^{ 2\delta_{k>\bar{i}}}
\;\raisebox{-.20in}{
\begin{tikzpicture}
\tikzset{->-/.style=
{decoration={markings,mark=at position #1 with
{\arrow{latex}}},postaction={decorate}}}
\filldraw[draw=white,fill=gray!20] (-0,-0.2) rectangle (1, 1.2);
\draw [line width =1.5pt,decoration={markings, mark=at position 1 with {\arrow{>}}},postaction={decorate}](1,1.2)--(1,-0.2);
\draw [line width =1pt,decoration={markings, mark=at position 0.5 with {\arrow{>}}},postaction={decorate}](0,0.8)--(1,0.8);
\draw [line width =1pt,decoration={markings, mark=at position 0.5 with {\arrow{<}}},postaction={decorate}](0,0.2)--(1,0.2);
\node [right]  at(1,0.8) {$k$};
\node [right]  at(1,0.2) {$\bar{k}$};
\end{tikzpicture}
}   .\\
\end{align*}
From equation (\ref{eq2.1}) and relation (\ref{wzh.seven}),we have 
$$\raisebox{-.20in}{
\begin{tikzpicture}
\tikzset{->-/.style=
{decoration={markings,mark=at position #1 with
{\arrow{latex}}},postaction={decorate}}}
\filldraw[draw=white,fill=gray!20] (-0,-0.2) rectangle (1, 1.2);
\draw [line width =1.5pt,decoration={markings, mark=at position 1 with {\arrow{>}}},postaction={decorate}](1,-0.2)--(1,1.2);
\draw [line width =1pt,decoration={markings, mark=at position 0.5 with {\arrow{>}}},postaction={decorate}](0,0.8)--(1,0.8);
\draw [line width =1pt,decoration={markings, mark=at position 0.5 with {\arrow{<}}},postaction={decorate}](0,0.2)--(1,0.2);
\node [right]  at(1,0.8) {$j$};
\node [right]  at(1,0.2) {$i$};
\end{tikzpicture}}
=
\raisebox{-.20in}{
\begin{tikzpicture}
\tikzset{->-/.style=
{decoration={markings,mark=at position #1 with
{\arrow{latex}}},postaction={decorate}}}
\filldraw[draw=white,fill=gray!20] (-0,-0.2) rectangle (1, 1.2);
\draw [line width =1.5pt,decoration={markings, mark=at position 1 with {\arrow{>}}},postaction={decorate}](1,1.2)--(1,-0.2);
\draw [line width =1pt](0.6,0.6)--(1,1);
\draw [line width =1pt](0.6,0.4)--(1,0);
\draw[line width =1pt,decoration={markings, mark=at position 0.5 with {\arrow{<}}},postaction={decorate}] (0,0)--(0.4,0.4);
\draw[line width =1pt,decoration={markings, mark=at position 0.5 with {\arrow{>}}},postaction={decorate}] (0,1)--(0.4,0.6);
\draw[line width =1pt] (0.4,0.6)--(0.6,0.4);
\node [right]  at(1,1) {$i$};
\node [right]  at(1,0) {$j$};
\end{tikzpicture}}=  q^{\frac{1-n}{n}}\left(\raisebox{-.20in}{
\begin{tikzpicture}
\tikzset{->-/.style=
{decoration={markings,mark=at position #1 with
{\arrow{latex}}},postaction={decorate}}}
\filldraw[draw=white,fill=gray!20] (-0,-0.2) rectangle (1, 1.2);
\draw [line width =1.5pt,decoration={markings, mark=at position 1 with {\arrow{>}}},postaction={decorate}](1,1.2)--(1,-0.2);
\draw [line width =1pt,decoration={markings, mark=at position 0.5 with {\arrow{>}}},postaction={decorate}](0,0.8)--(1,0.8);
\draw [line width =1pt,decoration={markings, mark=at position 0.5 with {\arrow{<}}},postaction={decorate}](0,0.2)--(1,0.2);
\node [right]  at(1,0.8) {$j$};
\node [right]  at(1,0.2) {$i$};
\end{tikzpicture}
}+c_i (1-q^2)\sum_{j<k\leq n}
c_{\bar{k}}^{-1}
\raisebox{-.20in}{
\begin{tikzpicture}
\tikzset{->-/.style=
{decoration={markings,mark=at position #1 with
{\arrow{latex}}},postaction={decorate}}}
\filldraw[draw=white,fill=gray!20] (-0,-0.2) rectangle (1, 1.2);
\draw [line width =1.5pt,decoration={markings, mark=at position 1 with {\arrow{>}}},postaction={decorate}](1,1.2)--(1,-0.2);
\draw [line width =1pt,decoration={markings, mark=at position 0.5 with {\arrow{>}}},postaction={decorate}](0,0.8)--(1,0.8);
\draw [line width =1pt,decoration={markings, mark=at position 0.5 with {\arrow{<}}},postaction={decorate}](0,0.2)--(1,0.2);
\node [right]  at(1,0.8) {$k$};
\node [right]  at(1,0.2) {$\bar{k}$};
\end{tikzpicture}
}\right).$$

\end{proof}

\subsection{Algebra structure for $S_n(M,N,1)$} In this subsection, we will give an algebra structure for $S_n(M,N,1)$.
To do so, first we define the multiplication of two stated $n$-webs as the disjoint union, that is, we first isotope two webs such that they have no intersection, then take their union as the product.  To prove this multiplication is well defined, we have to show the product is independent of how we take the union of these two webs. 

\begin{corollary}\label{cccc3.2}
For any marked three manifold, we have $S_n(M,N,1)$ is a commutative algebra under the above defined multiplication.
\end{corollary}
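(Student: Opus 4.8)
The statement has two ingredients: that the disjoint‑union operation descends to a well‑defined bilinear map $S_n(M,N,1)\times S_n(M,N,1)\to S_n(M,N,1)$, and that this operation is associative, unital and commutative. Everything follows from how the defining relations collapse at $q=v^{2n}=1$. First I would record the needed specializations. In \eqref{w.cross} one has $q-q^{-1}=0$ and $q^{\pm 1/n}=1$, so a positive crossing equals the corresponding negative crossing; hence in $S_n(M,N,1)$ a crossing change between any two strands leaves the class unchanged. In \eqref{wzh.eight}, since $\delta_{j<i}(q-q^{-1})=0$, $q^{\delta_{i,j}}=1$ and $q^{-1/n}=1$, sliding two equally‑oriented endpoints past each other along a marking merely transposes their states, and by Proposition \ref{prop3.1} (where $q^{(1-n)/n}=1$ and $c_i(1-q^2)=0$) the same holds for two oppositely‑oriented endpoints. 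So at $q=1$ neither the over/under datum at a crossing of two distinct strands nor the relative height of two endpoints on a common component of $N$ is remembered by $S_n(M,N,1)$.

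Given stated $n$‑webs $l_1,l_2$, choose disjoint representatives and set $l_1\cdot l_2=[\,l_1\sqcup l_2\,]$. To see this depends only on the isotopy classes of $l_1$ and $l_2$ individually, take a second disjoint pair $l_1',l_2'$ with $l_i'$ isotopic to $l_i$, choose isotopies $l_i(t)$, and put them in general position so that $l_1(t)\sqcup l_2(t)$ is a legitimate stated $n$‑web for all but finitely many $t$; at the exceptional times the union changes, up to isotopy of the whole web, either by a crossing change between a component of $l_1$ and a component of $l_2$ (when the two submanifolds pass through each other in $\mathrm{int}(M)$, including near $\partial M$) or by an exchange of height of an endpoint of $l_1$ with an endpoint of $l_2$ on some component of $N$ (when these endpoints would otherwise collide). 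By the previous paragraph each such elementary change is an equality in $S_n(M,N,1)$, so $[\,l_1\sqcup l_2\,]=[\,l_1'\sqcup l_2'\,]$. Next, multiplication by a fixed web passes to the quotient by the skein relations: each of the defining relations \eqref{w.cross}--\eqref{wzh.eight} is supported inside an embedded cube, and any web being multiplied can be isotoped off that cube, so applying a relation commutes with forming the disjoint union. Hence $\cdot$ is a well‑defined bilinear operation on $S_n(M,N,1)$.

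Finally the algebra axioms. The empty $n$‑web is a two‑sided unit, and associativity holds because $(l_1\cdot l_2)\cdot l_3$ and $l_1\cdot(l_2\cdot l_3)$ are both represented by $l_1\sqcup l_2\sqcup l_3$, whose class is well defined by the same general‑position argument applied to the three pairs of components. Commutativity is then immediate: $l_1\cdot l_2$ and $l_2\cdot l_1$ are two disjoint unions of the same pair $l_1,l_2$, so they agree in $S_n(M,N,1)$. The main point to be careful about is the boundary move in the general‑position argument — that passing two endpoints on a common marking past each other is exactly a height‑exchange relation — which is precisely why Proposition \ref{prop3.1} is established first, to handle the oppositely‑oriented case not covered by \eqref{wzh.eight}.
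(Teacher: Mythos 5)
Your proof is correct and takes essentially the same approach as the paper: at $q=1$ relation \eqref{w.cross} makes crossing changes invisible, relation \eqref{wzh.eight} makes same-orientation height exchanges invisible, and Proposition \ref{prop3.1} supplies the opposite-orientation case, which together render the disjoint-union product independent of how the two webs are made disjoint. The paper states this more tersely (just citing the three ingredients and the four height-exchange identities they imply), whereas you spell out the general-position argument that reduces well-definedness to these elementary moves; that extra detail is sound but not a different method.
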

\begin{proof}
Because of relations \eqref{w.cross} and \eqref{wzh.eight}, it suffices to show  for any two states $i,j$, we have 
$$\raisebox{-.20in}{
\begin{tikzpicture}
\tikzset{->-/.style=
{decoration={markings,mark=at position #1 with
{\arrow{latex}}},postaction={decorate}}}
\filldraw[draw=white,fill=gray!20] (-0,-0.2) rectangle (1, 1.2);
\draw [line width =1.5pt,decoration={markings, mark=at position 1 with {\arrow{>}}},postaction={decorate}](1,-0.2)--(1,1.2);
\draw [line width =1pt,decoration={markings, mark=at position 0.5 with {\arrow{>}}},postaction={decorate}](0,0.8)--(1,0.8);
\draw [line width =1pt,decoration={markings, mark=at position 0.5 with {\arrow{<}}},postaction={decorate}](0,0.2)--(1,0.2);
\node [right]  at(1,0.8) {$j$};
\node [right]  at(1,0.2) {$i$};
\end{tikzpicture}
}= \raisebox{-.20in}{
\begin{tikzpicture}
\tikzset{->-/.style=
{decoration={markings,mark=at position #1 with
{\arrow{latex}}},postaction={decorate}}}
\filldraw[draw=white,fill=gray!20] (-0,-0.2) rectangle (1, 1.2);
\draw [line width =1.5pt,decoration={markings, mark=at position 1 with {\arrow{>}}},postaction={decorate}](1,1.2)--(1,-0.2);
\draw [line width =1pt,decoration={markings, mark=at position 0.5 with {\arrow{>}}},postaction={decorate}](0,0.8)--(1,0.8);
\draw [line width =1pt,decoration={markings, mark=at position 0.5 with {\arrow{<}}},postaction={decorate}](0,0.2)--(1,0.2);
\node [right]  at(1,0.8) {$j$};
\node [right]  at(1,0.2) {$i$};
\end{tikzpicture}
},\;
\raisebox{-.20in}{
\begin{tikzpicture}
\tikzset{->-/.style=
{decoration={markings,mark=at position #1 with
{\arrow{latex}}},postaction={decorate}}}
\filldraw[draw=white,fill=gray!20] (-0,-0.2) rectangle (1, 1.2);
\draw [line width =1.5pt,decoration={markings, mark=at position 1 with {\arrow{>}}},postaction={decorate}](1,-0.2)--(1,1.2);
\draw [line width =1pt,decoration={markings, mark=at position 0.5 with {\arrow{<}}},postaction={decorate}](0,0.8)--(1,0.8);
\draw [line width =1pt,decoration={markings, mark=at position 0.5 with {\arrow{>}}},postaction={decorate}](0,0.2)--(1,0.2);
\node [right]  at(1,0.8) {$j$};
\node [right]  at(1,0.2) {$i$};
\end{tikzpicture}
}=\raisebox{-.20in}{
\begin{tikzpicture}
\tikzset{->-/.style=
{decoration={markings,mark=at position #1 with
{\arrow{latex}}},postaction={decorate}}}
\filldraw[draw=white,fill=gray!20] (-0,-0.2) rectangle (1, 1.2);
\draw [line width =1.5pt,decoration={markings, mark=at position 1 with {\arrow{>}}},postaction={decorate}](1,1.2)--(1,-0.2);
\draw [line width =1pt,decoration={markings, mark=at position 0.5 with {\arrow{<}}},postaction={decorate}](0,0.8)--(1,0.8);
\draw [line width =1pt,decoration={markings, mark=at position 0.5 with {\arrow{>}}},postaction={decorate}](0,0.2)--(1,0.2);
\node [right]  at(1,0.8) {$j$};
\node [right]  at(1,0.2) {$i$};
\end{tikzpicture}
}$$
 when $v=1$, which can be eaily derived from Proposition \ref{prop3.1}.

\end{proof}

\subsection{Relative spin structure and character variety}\label{sss3.3}
Let $(M,N)$ be any marked three manifold, and $\zeta: UM\rightarrow M$ be the unit tangent bundle. We know the fiber of this bundle is $SO(3)$, whose fundamental group
is $\mathbb{Z}_{2}$. For any point $P\in M$, we use $\vartheta_P$ to denote the nontrivial element in the fundamental group of $\zeta^{-1}(P)$. We have $\vartheta_P$ is homotopic to $\vartheta_Q$ for any two points $P,Q$ in a same component of $M$. For a component $Y$ of $M$, we use $\vartheta_Y$ to denote this homotopy type. When $M$ is connected, we will use $\vartheta$ to denote this unique homotopy type.

For any component $e\in N$, $e$ has  a unique lift $\tilde{e}$ in $UM$. For any point $x\in e$, let $u_x$ be the unit velocity vector at point $x$, let $w_x$ be the unit tangent vector at $x$ such that $w_x$
is orthogonal to $\partial M$ and pointing inside of $M$. Then the orientation of $M$ determines the second unit tangent vector $v_x$ such that $(u_x, v_x, w_x)$ is the orientation of $M$. Obviously 
$\tilde{e} =\{(x,u_x,v_x,w_x)\mid x\in e\}$ is a smooth path in $UM$, which is diffeomorphic to $(0,1)$. Let $\tilde{N} =\cup_{e}\tilde{e}$ where the union takes over all components $e$ of $N$. Note that $\tilde{N}
=\emptyset$ when $N=\emptyset$.

From now on, for a topological space $X$, we will use $Com(X)$ (respectively $PCom(X)$) to denote the set of components of $X$ (the set of path connected components of $X$).

\begin{definition}
A {\bf relative spin structure} of $(M,N)$ is defined to be  a group homomorphism $h:H_1(UM,\tilde{N})\rightarrow \mathbb{Z}_2$ such that $h(\vartheta_Y)= 1$ for $Y\in Com(M)$.
\end{definition}

Note that when $N=\emptyset$, $h$ is just the usual spin structure.

Suppose $(M,N)$ is the disjoint union of $(M_1,N_1)$ and $(M_2,N_2)$. Then we have 
$H_1(UM,\tilde{N}) = H_1(UM_1,\widetilde{N_1})\oplus H_1(UM_2,\widetilde{N_2})$. For each $i=1,2$, let $h_i$
be a relative spin structure for $(M_i,N_i)$. Then $(h_1,h_2): H_1(UM,\tilde{N})\rightarrow \mathbb{Z}_2$, defined by $(h_1,h_2)(x_1,x_2) = h_1(x_1)+h_2(x_2)$, is a relative spin structure for $(M,N)$. Clearly every relative spin structure of $(M,N)$ is of  this form.

\begin{rem}{
Let $X$ be any  path  connected topological space, and $P$ be a set of finite points in $X$. We suppose 
$P = \{p_0,p_1,\dots,p_{m-1}\}$ where $m$ is a positive integer. For each $1\leq i\leq m-1$, let $\alpha_{i}$ be a path connecting $p_{0}$ and $p_{i}$. Then 
$H_1(X,P) = H_1(P)\oplus \mathbb{Z}([\alpha_1])\oplus\dots\oplus \mathbb{Z}([\alpha_{n-1}])$.

Suppose $M$ is connected.
When $N$ has only one component, we have $H_1(UM,\tilde{N}) = H_1(UM)$. Thus in this case the relative spin structure for $(M,N)$ is just the usual spin structure for $M$. When $\sharp N>1$, suppose the set of components of $N$ is
$ \{e_0,e_1,\dots, e_{m-1}\}$. For each $1\leq i\leq m-1$, let $\alpha_{i}$ be a path connecting $\widetilde{e_0}$ and $\widetilde{e_i}$. Then we have 
$H_1(UM,\tilde{N}) = H_1(UM)\oplus \mathbb{Z}([\alpha_1])\oplus\dots\oplus \mathbb{Z}([\alpha_{m-1}])$.
For any spin structure $h$ for $M$, we can extend $h$ to a relative spin structure for $(M,N)$ by defining
$h([\alpha_i]) = r_i\in \mathbb{Z}_{2}, 1\leq i\leq m-1,$ where $r_i,1\leq i\leq m-1,$ are $m-1$ arbitrary elements in $\mathbb{Z}_2$. Reversely, for any relative spin structure, we can restrict $h$ to $H_1(UM)$ to obtain a spin structure for $M$.

Suppose $N^{'}$ is obtained from $N$ ($N\neq \emptyset$) by adding one extra marking $e$ such that $cl(e)\cap cl(N) = \emptyset$. Let $\alpha$ be a path connecting $\tilde{N}$ and $\tilde{e}$ such that
$\alpha(0)\in \tilde{N}$ and $\alpha(1)\in\tilde{e}$. Then $H_1(UM,\widetilde{N^{'}}) = H_1(UM,\tilde{N})\oplus
\mathbb{Z}([\alpha])$.
Any relative spin structure $h$ for $(M,N)$ can be extended to a relative spin structure for $(M,N^{'})$ by defining $h([\alpha]) = r$ where $r$ is an arbitrary element in $\mathbb{Z}_2$.
Reversely any relative spin structure $h$  for $(M,N^{'})$ can be restricted to $H_1(UM,\tilde{N})$ bo obtain a relative spin structure for $(M,N)$.
}\end{rem}

For a path connected topological space $X$, we use
  $\pi_1(X)$ to denote the fundamental group for $X$. For  $[\alpha],[\beta] \in \pi_1(X)$, $[\alpha][\beta] = [\alpha*\beta]$ where $\alpha*\beta$ is obtained by first going through $\beta$, then going through $\alpha$. Note that here $\alpha*\beta$ is different with conventional definition.

\begin{definition}\label{df3.4}
For any path connected toplogical space $X$,
define 
$$\Gamma_n(X) = \mathbb{C}[[\alpha]_{i,j}\mid [\alpha]\in \pi_1(X),1\leq i,j\leq n]/(Q_{[\alpha]}Q_{[\beta]}
= Q_{[\alpha *\beta]}, det(Q_{[\alpha]}) = 1, Q_{[o]}=I)$$
where $[\alpha],[\beta]$ go through all elements in $\pi_1(X)$, $[o]$ is the trivial loop in $\pi_1(X)$
, $Q_{[\eta]} = ([\eta]_{i,j})_{1\leq i,j\leq n}$ for any element $[\eta]\in \pi_1(X)$.

Note that $\pi_1(X)$  has an action on $\Gamma_n(X)$, defined by 
$[\alpha]([\beta]_{i,j}) = [\alpha*\beta*\alpha^{-1}]_{i,j}$ for any $[\alpha],[\beta],1\leq i,j\leq n$.
We use $G_n(X)$ to denote the subalgebra of $\Gamma_n(X)$ fixed by this action.
\end{definition}

Note that Trace($Q_{[\alpha]})\in G_n(X)$ for any $[\alpha]\in \pi_1(X)$, and
 $G_n(X)$ is generated by Trace$(Q_{[\alpha]})$, $[\alpha]\in \pi_1(X)$ as an algebra \cite{S2001SLn}. 

\begin{rem}
We can generalize Definition \ref{df3.4} to general topological space. Let $X$ be a topological space. Suppose $PCom(X) = \{X_1,\dots,X_m\}$, then define 
$$\Gamma_n(X) = \Gamma_n(X_1)\otimes \dots \otimes \Gamma_n(X_m),\;G_n(X) = G_n(X_1)\otimes \dots \otimes G_n(X_m).$$
But $\Gamma_n(X), G_n(X)$ are only well-defined up to isomorphism, since different  order of $X_i$ give different algebras.
\end{rem}

\begin{definition}(\cite{CL2022stated})\label{bf1}
Let $X$ be a  topological space and $\{E_j\}_{j\in J}$ be disjoint contractible subspaces of $X$. The fundamental groupoid $\pi_1(X, \cup_{j\in J}E_j)$  is the groupoid (i.e. a category with invertible morphisms) whose objects are $\{E_j\}_{j\in J}$  and whose morphisms are the homotopy classes of oriented paths in $X$ with end points  in $\cup_{j\in J}E_j$. A morphism of groupoids is a functor of the corresponding categories. We can regard the group as the groupoid consisting of only one object and all group elements being all morphisms.
\end{definition}

Let $\mathcal{A},\mathcal{B}$ be two categories. We try to define a new category   $\mathcal{A}\cup \mathcal{B}$. The objects of $\mathcal{A}\cup \mathcal{B}$ is the union of objects in $\mathcal{A}$ and objects in $\mathcal{B}$.
Let $U,V$ be any two objects in $\mathcal{A}\cup \mathcal{B}$. If $U,V$ both belong to $\mathcal{A}$
(respectively $\mathcal{B}$), then we define Hom${}_{\mathcal{A}\cup \mathcal{B}}(U,V)$ to be
Hom${}_{\mathcal{A}}(U,V)$ (respectively Hom${}_{\mathcal{B}}(U,V)$). Otherwise we define Hom${}_{\mathcal{A}\cup \mathcal{B}}(U,V) = \emptyset$.

Let $X$ be a  topological space. Suppose $PCom(X) = \{X_1,\dots,X_m\}$. For each $1\leq t\leq m$, let $\{E_j\}_{j\in J_{t}}$ be disjoint contractible subspaces of $X_t$.
Obviously we have 
$$\pi_1(X,\cup_{1\leq t\leq m}(\cup_{j\in J}E_j))= \pi_1(X_1, \cup_{j\in J_1}E_j)\cup \dots\cup
\pi_1(X_m, \cup_{j\in J_m}E_j) .$$

\begin{definition}
For any marked three manifold $(M,N)$ with every component of $M$ containing at least one marking,
define $$\chi_n(M,N) = Hom(\pi_1(M,N), SL(n,\mathbb{C})),$$ and 
$$\tilde{\chi}_n(M,N) = \{\tilde{\rho}\in Hom(\pi_1(UM,\tilde{N}), SL(n,\mathbb{C}))\mid\tilde{\rho}
(\vartheta_Y) = d_n I,\;\text{for all }Y\in Com(M)\}$$ where $I$ is the identity matrix.
\end{definition}

Suppose $(M,N)$ is the disjoint union of $(M_1,N_1)$ and $(M_2,N_2)$, then we have 
$$\chi_n(M,N)\simeq \chi_n(M_1,N_1)\times \chi_n(M_2,N_2),\;\tilde{\chi}_n(M,N)
\simeq \tilde{\chi}_n(M_1,N_1)\times \tilde{\chi}_n(M_2,N_2).$$
From Lemma 8.1 in \cite{CL2022stated}, if $M$ is connected we have 
$$\chi_n(M,N)\simeq Hom(\pi_1(M), SL(n,\mathbb{C}))\times SL(n,\mathbb{C})^{\sharp N-1}.$$
 Then $\chi_n(M,N)$  is an affine algebraic set, whose 
coordinate ring is denoted as $R_n(M,N)$.

\begin{definition}\label{rrr}
When $M$ is connected and $N$ is empty, we define $\chi_n(M,N)$ to be the 
$SL(n,\mathbb{C})$-character variety of $M$.
That is $$\chi_n(M,N)=Hom(\pi_1(M), SL(n, \mathbb{C}))/\simeq$$ where $\rho\simeq \rho^{'}\in
Hom(\pi_1(M), SL(n, \mathbb{C}))$ if and only if Trace$\rho([\alpha])$=
Trace$\rho^{'}([\alpha])$ for all $[\alpha] \in Hom(\pi_1(M), SL(n, \mathbb{C}))$.

Similarly we define 
$$\tilde{\chi}_n(M,\emptyset)=\{\tilde{\rho}\in Hom(\pi_1(UM), SL(n, \mathbb{C}))\mid \tilde{\rho}(\vartheta)
= d_n I\}/\simeq$$
where the definition for $\simeq$ is the same as above (that is two elements are considered the same if and only if they have the same Trace).
\end{definition}

\begin{rem}\label{rem01}
From \cite{S2001SLn}, we know $\chi_n(M,\emptyset)$ is an affine algebraic set. We also denote it's  coordinate ring 
as $R_n(M,\emptyset)$. 
 There is a
surjective algebra homomorphism $\cY: G_n(M)\rightarrow R_n(M,\emptyset)$ defined by
$$\cY(\text{Trace}(Q_{[\alpha]}))(\rho) = \text{Trace}(\rho([\alpha])) \;\text{where}
\; [\alpha]\in \pi_1(M), \rho\in \chi_n(M,\emptyset),$$
and Ker$\cY = \sqrt{0}$.
\end{rem}

We can simply generalize  definitions for $\chi_n(M,N),\tilde{\chi}_n(M,N), R_n(M,N)$
to all marked three manifolds by taking product (or tensor product) for disjoint union. 

\begin{proposition} \label{prop3.6}
For any marked three manifold  $(M,N)$, we have $\chi_n(M,N)\simeq \tilde{\chi}_n(M,N)$.
\end{proposition}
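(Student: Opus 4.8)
The plan is to produce an explicit bijection, twisted by a relative spin structure, and to check it is a morphism of algebraic sets. First I would reduce to the case $M$ connected: both $\chi_n$ and $\tilde\chi_n$ are defined componentwise on disjoint unions, and by the discussion in Subsection~\ref{sss3.3} a relative spin structure of $(M,N)$ is a tuple of relative spin structures on the components of $M$; so it suffices to construct a componentwise isomorphism, and for each component either $N\neq\emptyset$ or $N=\emptyset$, which I would treat in parallel.

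The geometric input is a short lemma. The principal $SO(3)$-bundle $\zeta\colon UM\to M$ (of oriented orthonormal frames for the chosen metric) induces a groupoid epimorphism $\zeta_*\colon \pi_1(UM,\tilde N)\to\pi_1(M,N)$ which is a bijection on objects, since each component $\tilde e$ of $\tilde N$ maps homeomorphically onto the corresponding marking; moreover $\zeta_*$ identifies $\pi_1(M,N)$ with the quotient groupoid $\pi_1(UM,\tilde N)/\langle\vartheta_Y\mid Y\in Com(M)\rangle$. The two facts behind this are that the image of $\pi_1(SO(3))\cong\mathbb Z_2$ in $\pi_1(UM)$, generated by $\vartheta$, is central (true for any principal bundle with connected structure group) and nontrivial (it is already nontrivial in $H_1(UM)$, since spin structures exist on the orientable $3$-manifold $M$), so that two lifts of a path in $M$ with prescribed endpoints in $\tilde N$ differ exactly by an element of $\ker(\pi_1(UM)\to\pi_1(M))=\langle\vartheta\rangle\simeq\mathbb Z_2$; when $N=\emptyset$ this is the usual $\pi_1(M)\simeq\pi_1(UM)/\langle\vartheta\rangle$.

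Next, fix a relative spin structure $h$ of $(M,N)$ (a spin structure of $M$ when $N=\emptyset$); it exists by the construction recalled in Subsection~\ref{sss3.3}. Sending a morphism $\gamma$ of $\pi_1(UM,\tilde N)$ to its relative homology class $[\gamma]\in H_1(UM,\tilde N)$ and composing with $h$ yields a map $\mathbf h$ from the morphisms of $\pi_1(UM,\tilde N)$ to $\mathbb Z_2$ which is additive under composition, vanishes on identities, and satisfies $\mathbf h(\vartheta_Y)=1$. Since $d_n=(-1)^{n-1}$ satisfies $d_n^{\,n}=1$ and $d_n^{\,2}=1$, the scalar matrix $d_nI$ lies in $SL(n,\mathbb C)$, is central, and is its own inverse. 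Hence for $\rho\in Hom(\pi_1(M,N),SL(n,\mathbb C))$ the rule $\Psi_h(\rho)(\gamma)=d_n^{\,\mathbf h(\gamma)}\,\rho(\zeta_*(\gamma))$ defines a functor $\pi_1(UM,\tilde N)\to SL(n,\mathbb C)$ (centrality of $d_nI$ makes it respect composition) with $\Psi_h(\rho)(\vartheta_Y)=d_nI$, so $\Psi_h(\rho)\in\tilde\chi_n(M,N)$; conversely, given $\tilde\rho\in\tilde\chi_n(M,N)$, the functor $\gamma\mapsto d_n^{\,\mathbf h(\gamma)}\tilde\rho(\gamma)$ sends every $\vartheta_Y$ to $I$, hence descends through $\zeta_*$ to an element $\Phi_h(\tilde\rho)\in Hom(\pi_1(M,N),SL(n,\mathbb C))$. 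Because $d_n^{\,2}=1$ and $\zeta_*$ is surjective, $\Psi_h$ and $\Phi_h$ are mutually inverse, and each is entrywise multiplication by $\pm1$, hence a morphism of algebraic sets; this proves the claim when $N\neq\emptyset$. For $N=\emptyset$ one has the extra trace-equivalence quotients in Definition~\ref{rrr}, so I would additionally note that $Trace(\Psi_h(\rho)(\gamma))=d_n^{\,\mathbf h(\gamma)}Trace(\rho(\zeta_*\gamma))$ depends only on the character of $\rho$, and that every $\alpha\in\pi_1(M)$ admits a lift $\gamma$ with $\mathbf h(\gamma)=0$ (replace $\gamma$ by $\gamma\vartheta$ if needed), so that $\Psi_h$ respects trace equivalence in both directions and descends to a bijection $\chi_n(M,\emptyset)\simeq\tilde\chi_n(M,\emptyset)$. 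Reassembling the components then finishes the proof. The one genuinely delicate step is the lemma of the second paragraph — that over the contractible markings the frame bundle behaves exactly like a $\mathbb Z_2$-central extension at the level of fundamental groupoids; once that is in place, the remainder is bookkeeping with the central scalar $d_nI$.
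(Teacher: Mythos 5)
Your proof is correct and follows essentially the same route as the paper: fix a relative spin structure $h$, twist by $d_n^{h(\cdot)}$ to define mutually inverse maps $\chi_n(M,N)\leftrightarrow\tilde\chi_n(M,N)$, and check they are well defined using $\tilde\rho(\vartheta_Y)=d_nI$ and $h(\vartheta_Y)=1$. Where you differ is only in level of detail: you spell out the $\mathbb Z_2$-central-extension lemma for $\zeta_*\colon\pi_1(UM,\tilde N)\to\pi_1(M,N)$ (which the paper takes for granted), you handle the $N=\emptyset$ trace-equivalence quotient explicitly (the paper says ``by the same technique''), and you add the observation that the bijection is given by scalar multiplication entrywise and hence is a morphism of algebraic sets — all of which are correct clarifications rather than a different argument.
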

\begin{proof}
We can assume $M$ is connected.
Here we only consider the case when $N\neq \emptyset$ since we can prove the case when $N=\emptyset$
by using the same technique.

Let $h$ be any relative spin structure for $(M,N)$, then we use $h$ to establish an isomorphism $f_h:
\chi_n(M,N)\rightarrow \tilde{\chi}_n(M,N)$. For any $\rho \in \chi_n(M,N)$, define 
$$f_h(\rho)(\tilde{\alpha})
= d_n^{h(\tilde{\alpha})}\rho(\alpha)\text{\;where\;}\tilde{\alpha} \in \pi_1(UM,\tilde{N}),\;\alpha = \zeta(\tilde{\alpha}).$$ 
Clearly $f_h(\rho)$ is a homomorphism from $\pi_1(UM,\tilde{N})$ to $SL(n,\mathbb{C})$, and $f_h(\rho)(\vartheta) = d_n^{h(\vartheta)}\rho(\zeta(\vartheta)) = d_n I$, thus $f_h\in \tilde{\chi}_n(M,N)$. 

Then we try to define 
$g_h : \tilde{\chi}_n(M,N) \rightarrow \chi_n(M,N)$, for any $\tilde{\rho}\in \tilde{\chi}_n(M,N)$ and 
$\alpha\in \pi_1(M,N)$
$$g_h(\tilde{\rho})(\alpha) = d_n^{h(\tilde{\alpha})} \tilde{\rho}(\tilde{\alpha})
\;\text{where} \;\tilde{\alpha}\in \pi_1(UM, \tilde{N})\;\text{such that}\; \zeta(\tilde{\alpha}) = \alpha.$$
Suppose $\zeta(\tilde{\alpha}) = \zeta(\tilde{\beta}) = \alpha$. Since 
$\tilde{\rho}(\vartheta) = d_n I$ and $h(\vartheta) = 1$, we have 
$$\tilde{\rho}(\tilde{\alpha}^{-1}\tilde{\beta}) = d_n^{h(\tilde{\alpha}^{-1}) +h(\tilde{\beta})}I
= d_n^{h(\tilde{\alpha})+ h(\tilde{\beta})}I.$$
Then $d_n^{h(\tilde{\beta})} \tilde{\rho}(\tilde{\beta}) = d_n^{h(\tilde{\alpha})} \tilde{\rho}(\tilde{\alpha})$, thus $g_h(\tilde{\rho})$ is well-defined. Obviously we have $g_h(\tilde{\rho}) \in \chi(M,N)$.

We have 
$$f_h(g_h(\tilde{\rho}))(\tilde{\alpha}) = d_n^{h(\tilde{\alpha})} g_h(\tilde{\rho})(\alpha)
= d_n^{h(\tilde{\alpha})}d_n^{h(\tilde{\alpha})} \tilde{\rho}(\tilde{\alpha}) =\tilde{\rho}(\tilde{\alpha}),$$
$$g_h(f_h(\rho))(\alpha) = d_n^{h(\tilde{\alpha})}f_h(\rho)(\tilde{\alpha}) =
d_n^{h(\tilde{\alpha})}d_n^{h(\tilde{\alpha})}\rho(\alpha) = \rho(\alpha),$$
thus $g_h$ and $f_h$ are inverse to each other.
\end{proof}

Thus when there is a fixed relative spin structure, we don't have to distinguish between 
$\chi_n(M,N)$ and $\tilde{\chi}_n(M,N)$, and we also regard $R_n(M,N)$ as the coordinate ring 
for $\tilde{\chi}_n(M,N)$ using Proposition \ref{prop3.6}. 

\begin{rem}
For any two relative spin structures $h_1,h_2$, we have $F_{h_2 - h_1}\circ f_{h_1} = f_{h_2}$
where $F_{h_2 - h_1}$ is an isomorphism from $\tilde{\chi}_n(M,N)$ to $\tilde{\chi}_n(M,N)$ defined as
$$F_{h_2-h_1}(\tilde{\rho})(\tilde{\alpha}) = d_n^{h_2(\tilde{\alpha}) - h_1(\tilde{\alpha})}\tilde{\rho}(\tilde{\alpha}).$$
\end{rem}

\begin{rem}\label{rem3.14}
From Remark \ref{rem01}, we know there is a surjective algebra homomorphism 
$\cY: G_n(M) \rightarrow R_n(M,\emptyset)$, and Ker$\cY =\sqrt{ 0}$. When there is a spin structure $h$, we can regard $R_n(M,\emptyset)$ as the coordinate ring for 
$\tilde{\chi}_n(M,\emptyset)$ using Proposition \ref{prop3.6}. Then $\cY$ is given by
\begin{equation}\label{eee3.14}
\cY(\text{Trace}(Q_{[\alpha]}))(\tilde{\rho}) = d_n^{h([\tilde{\alpha}])} \text{Trace}( \tilde{\rho}([\tilde{\alpha}]))
\end{equation}
where $[\alpha]\in \pi_1(M)$ and $[\tilde{\alpha}]\in \pi_1(UM)$ is any lift of $[\alpha]$.
Since  the definition of $\cY$ in equation (\ref{eee3.14}) is related to $h$, we will use $\cY_h$, instead of $\cY$, to denote the map defined by equation (\ref{eee3.14}).
\end{rem}

\subsection{Surjective algebra homomorphism from $S_n(M,N,1)$ to the coordinate ring}\label{subb3.4}
For any marked three manifold $(M,N)$, we are trying to define a surjective algebra homomorphism $\Phi_h^{(M,N)} :S_n(M,N,1)\rightarrow R_n(M,N)$ (here we regard $R_n(M,N)$ as the coordinate ring 
for $\tilde{\chi}_n(M,N)$).

 Recall that $\tilde{N}$ is lifted by $N$. For any component $e\in N$, we have 
$\tilde{e} = \{(x, u_x, v_x, w_x) \mid x\in e\}$, where $u_x$ is the unit velocity vector at $x$, $w_x$ is the unit tangent vector at $x$ orthogonal to $\partial M$ pointing into $M$ and  $(u_x, v_x, w_x)$ is the orientation of $M$.

For any $n$-web $l$ and a component $e$ of $N$, we  can isotope $l$ such that the velocity vector of $l$ at each it's end point $x$  contained in $e$ is parallel to $v_x$. Then we say $l$ is in {\bf good position with respect to $e$}. If $l$ is in good position with respect to every component of $N$, we say it is in good position with respect to $(M,N)$, or just $l$ is in {\bf good position} when there is no confusion with $(M,N)$.

Let $\alpha$ be any stated framed oriented boundary  arc in $S_n(M,N)$,  then we can lift $\alpha$ to an element $\tilde{\alpha}$ in $\pi_1(UM,\tilde{N})$. We first isotope $\alpha$ such that $\alpha$ is in good position and the framing is normal everywhere. Then $\alpha$ lifts to $\tilde{\alpha}$  where
the first vector is the framing, the second vector is the velocity vector, and the third vector is determined by the orientation of $M$.  
Suppose $s(\alpha(0)) = j$ and $s(\alpha(1)) = i$. 
Then for any
$\tilde{\rho}\in\tilde{\chi}_n(M,N)$, define 
$$tr_{\alpha}(\tilde{\rho}) = [A\tilde{\rho}(\tilde{\alpha})]_{\overline{i},\overline{j}},\;\text{where}\; A_{i,j} = (-1)^{i+1}\delta_{\overline{i},j}, 1\leq i,j\leq n.$$
Note that det$ A =1$ and $A^2 = d_n I$.

Let $\alpha$ be any framed oriented  knot in $S_n(M,N)$, then we lift $\alpha$ to a closed path $\tilde{\alpha}$  in $UM$ as above (first isotope $\alpha$ such that the framing is normal everywhere, then use framing as the first vector and use velocity vector as the second vector). We use a
path to connect $\tilde{N}$ (respectively the base point for $\pi_1(UM)$) and $\tilde{\alpha}$ when
$N\neq \emptyset$ (respectively $N=\emptyset$), this gives an element in $\pi_1(UM,\tilde{N})$ or $\pi_1(UM)$, which is still denoted as $\tilde{\alpha}$. For any $\tilde{\rho}\in\tilde{\chi}_n(M,N)$ define 
$$tr_{\alpha}(\tilde{\rho}) =  \text{Trace}(\tilde{\rho}(\tilde{\alpha})).$$
%
Since Trace is invariant under the same conjugacy class, we have $tr_{\alpha}(\tilde{\rho})$ is well-defined.

\begin{theorem}\label{thm3.11}

Let $(M,N)$ be a marked three manifold with $M$ being connected. 
Then
there exists a surjective algebra homomorphism 
$\Phi^{(M,N)} : S_n(M,N,1)\rightarrow R_n(M,N)$ defined as following: For any stated $n$-web $l$ in $(M,N)$, we use relation (\ref{wzh.five}) to kill all the sinks and sources to obtain $l^{'}$ consisting of acrs and knots if $N\neq\emptyset$ (we use relation (\ref{wzh.four}) to kill all the sinks and sources to obtain $l^{'}$ consisting of knots if $N=\emptyset$). Suppose $l^{'} = \cup_{\alpha}\alpha$ where each $\alpha$ is a stated framed oriented boundary arc or a framed oriented knot, then define
$$\Phi^{(M,N)}(l)(\tilde{\rho}) = \prod_{\alpha} tr_{\alpha}(\tilde{\rho})$$
where $\tilde{\rho}\in \tilde{\chi}_n(M,N)$.

\end{theorem}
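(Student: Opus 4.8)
# Proof Proposal for Theorem \ref{thm3.11}

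The plan is to show that the assignment $l \mapsto \Phi^{(M,N)}(l)$ is (i) well-defined as a function on isotopy classes of stated $n$-webs, (ii) respects the defining skein relations \eqref{w.cross}--\eqref{wzh.eight}, hence descends to $S_n(M,N,1)$, (iii) is an algebra homomorphism for the commutative product from Corollary \ref{cccc3.2}, and (iv) is surjective onto $R_n(M,N)$. The key conceptual point is that after using \eqref{wzh.five} (resp.\ \eqref{wzh.four} when $N=\emptyset$) to resolve all $n$-valent vertices, a stated $n$-web becomes a disjoint union of stated framed oriented boundary arcs and framed oriented knots, each of which defines an element of the groupoid $\pi_1(UM,\tilde N)$ via the framing-plus-velocity lift described before the theorem; evaluating a representation $\tilde\rho$ and taking matrix entries (twisted by $A$) or traces gives a regular function on $\tilde\chi_n(M,N)\simeq\chi_n(M,N)$.

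First I would check well-definedness of $tr_\alpha$ and of the whole product. For a knot, $tr_\alpha(\tilde\rho)=\mathrm{Trace}(\tilde\rho(\tilde\alpha))$ is independent of the connecting path and of the base point because trace is a conjugacy invariant; one must check the lift $\tilde\alpha$ is well-defined up to the relations defining $\tilde\chi_n$, using that $\tilde\rho(\vartheta_Y)=d_n I$ absorbs the framing ambiguity (a $\pm1$ framing change multiplies $\tilde\alpha$ by $\vartheta_Y$, hence $\tilde\rho(\tilde\alpha)$ by $d_n I$, and since we eventually want compatibility with \eqref{w.twist} this is exactly the factor $t=d_n$ at $v=1$). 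For an arc $\alpha_{i,j}$, the matrix entry $[A\tilde\rho(\tilde\alpha)]_{\bar i,\bar j}$ depends on the endpoints only through the chosen states, and the properties $\det A=1$, $A^2=d_n I$ are what make it consistent with the framing/$\vartheta$-ambiguity and with orientation reversal (Corollary \ref{c.orient-rev}). The main work is step (ii): I would verify each skein relation. Relation \eqref{wzh.five} and \eqref{wzh.four} are definitional (they are how $l'$ is produced, so the function is computed the same way on both sides — one must check consistency, i.e.\ that resolving vertices in two ways gives the same product of traces/entries, which reduces to an identity for $\det_q$ specialized at $q=1$, namely $\sum_\sigma (-1)^{\ell(\sigma)}$ acting as the antisymmetrizer, combined with $A$ playing the role of the $SL(n)$ "epsilon" tensor). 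Relations \eqref{w.cross}, \eqref{wzh.eight} at $v=1$ become the statement that the two resolutions of a crossing sum to the identity-on-strands configuration with no sign — precisely $R|_{q=1}=\mathrm{Id}$ — so both sides give the same product. Relation \eqref{w.twist} at $v=1$ gives factor $t=d_n=(-1)^{n-1}$, matched by $\tilde\rho(\vartheta_Y)=d_n I$. Relation \eqref{w.unknot} gives $(-1)^{n-1}[n]|_{q=1}=(-1)^{n-1}n$, matched by $\mathrm{Trace}(d_n I)=d_n\cdot n$. Relations \eqref{wzh.six}, \eqref{wzh.seven} are the cup/cap relations; at $q=1$, $c_i|_{q=1}=(-1)^{n-i}$ up to the common factor, and these encode exactly $A A^{-1}=I$ and the expansion of $I$ against the pairing $A$, i.e.\ $\sum_i (c_{\bar i})^{-1}(\cdots)$ corresponds to inserting $\sum_i A_{i,\bar i}^{-1}$-weighted basis elements. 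I would organize this as: each relation is a local statement in a ball meeting $N$ in at most the marking strip, so it suffices to verify the corresponding linear-algebra identity for matrices in $SL(n,\mathbb C)$ and the fixed tensor $A$.

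Next, step (iii): since the product in $S_n(M,N,1)$ is disjoint union (Corollary \ref{cccc3.2}) and $\Phi^{(M,N)}$ of a disjoint union is by definition the product of the $tr_\alpha$ factors, multiplicativity is immediate once well-definedness is established; I would also note $\Phi^{(M,N)}(\emptyset)=1$. For step (iv), surjectivity, I would use the isomorphism $\chi_n(M,N)\simeq \mathrm{Hom}(\pi_1(M),SL(n,\mathbb C))\times SL(n,\mathbb C)^{\sharp N-1}$ from Lemma 8.1 of \cite{CL2022stated} and argue that the image of $\Phi^{(M,N)}$ separates points and contains enough coordinate functions: the boundary arcs running along and between the markings realize the $SL(n,\mathbb C)^{\sharp N-1}$-coordinates $u_{i,j}$ (via the bigon identification, Theorem \ref{t.Hopf}), while knots (or arcs based at a single marking forming loops) realize the matrix entries $[A\tilde\rho(\tilde\gamma)]_{\bar i,\bar j}$ for all $[\gamma]\in\pi_1(M)$; since $A$ is invertible these generate the full coordinate ring $R_n(M,\emptyset)$ of $\mathrm{Hom}(\pi_1(M),SL(n,\mathbb C))$ — here I would invoke that the matrix coefficients of the tautological representation generate the coordinate ring of the representation scheme. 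Combining, the image is all of $R_n(M,N)$.

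The main obstacle I anticipate is step (ii) for relations \eqref{wzh.five}/\eqref{wzh.four} and \eqref{wzh.six}/\eqref{wzh.seven}: proving that the vertex-resolution prescription is \emph{consistent} (independent of which vertices are resolved in which order, and compatible with cups/caps) amounts to showing that the specialization at $q=1$ of L\^e--Sikora's relations is exactly reproduced by the classical invariant theory of $SL(n,\mathbb C)$ — i.e.\ that the antisymmetric tensor $\sum_\sigma (-1)^{\ell(\sigma)} e_{\sigma(1)}\otimes\cdots\otimes e_{\sigma(n)}$ together with the pairing $A$ satisfies the same local moves. This is where one must be careful with the scalars $a$, $c_i$, $t$, $d_n$ at $q=1$ and with the interplay between the groupoid lift (framing/velocity/orientation) and the sign conventions in $A$; I expect this to require a short but delicate bookkeeping lemma, after which the remaining relations \eqref{w.cross}, \eqref{w.twist}, \eqref{w.unknot}, \eqref{wzh.eight} follow from $R|_{q=1}=\mathrm{Id}$ and $\tilde\rho(\vartheta)=d_nI$ essentially immediately.
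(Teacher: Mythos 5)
Your plan for the case $N\neq\emptyset$ matches the paper's actual proof: Subsection \ref{sub4.2} of the paper is precisely the consistency "bookkeeping lemma" you anticipate (it shows, via an explicit determinant computation, that the resolution of a sink or source is independent of the drag path, using that the columns of a matrix can be simultaneously multiplied by a common $SL(n)$ matrix without changing the determinant), and Subsection \ref{sub4.3} verifies relations \eqref{w.cross}--\eqref{wzh.eight} in the same spirit as your sketch. Your surjectivity argument for $N\neq\emptyset$ (matrix entries of the tautological representation, obtained from arcs based at a marking, generate the coordinate ring of the representation scheme; boundary arcs between markings supply the $O(SLn)$ factors) is also the paper's argument. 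One minor correction: only one of \eqref{wzh.four}, \eqref{wzh.five} is definitional in a given case — \eqref{wzh.five} when $N\neq\emptyset$, \eqref{wzh.four} when $N=\emptyset$ — the other must be checked, and the paper does carry out the check for \eqref{wzh.four} when $N\neq\emptyset$. Also, $R_n(M,\emptyset)$ in the paper's notation is the coordinate ring of the \emph{character variety} $\chi_n(M,\emptyset)$, not of $\mathrm{Hom}(\pi_1(M),SL(n,\mathbb C))$; the object whose coordinate ring is generated by matrix entries is the representation scheme, which is the first factor of $\chi_n(M,N)$ only when $N\neq\emptyset$. This mislabeling does not damage your argument for $N\neq\emptyset$, but it would if you tried to run it for $N=\emptyset$.

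Where your route differs materially from the paper is the $N=\emptyset$ case. The paper does \emph{not} verify the relations directly there: it cites Sikora's result $S_n(M;\mathbb C,1)\simeq G_n(M)$, L\^e--Sikora's $S_n(M,\emptyset,1)\simeq S_n(M;\mathbb C,1)$, and the surjection $\cY_h:G_n(M)\to R_n(M,\emptyset)$ with kernel $\sqrt 0$ from Remark \ref{rem3.14}, and checks only that the composite is $\Phi$. This shortcut buys, for free, both surjectivity and the later claim $\mathrm{Ker}\,\Phi=\sqrt 0$ when $N=\emptyset$. Your direct-verification route would also work in principle, but surjectivity onto $R_n(M,\emptyset)$ then requires a different argument: with no markings there are no arcs, knots produce only \emph{traces}, and one must invoke that traces generate the coordinate ring of the character variety (which is exactly the content of $G_n(M)\twoheadrightarrow R_n(M,\emptyset)$ and is nontrivial classical invariant theory). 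So your argument is fixable, but you would end up re-proving what the paper is able to cite.
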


Although we assume $M$ is connected in Theorem \ref{thm3.11} for simplicity, we can easily generalize  Theorem \ref{thm3.11} to general marked three manifolds. 

 When there is no confusion with the marked three manifold $(M,N)$, we can  omit the superscript for $\Phi^{(M,N)}$.
We will prove Theorem \ref{thm3.11} in next section. 

\subsection{Compatibility with the construction by Costantino and L{\^e} for essentially bordered pb
 surface when $n=2$.
}

Let $\Sigma$ be an essentailly bordered pb surface, and $(M,N) = \Sigma\times [-1,1]$. 
Recall that for every boundary component $e$, we select a point $x_{e}\in e$, and set $N=\cup_{e} (\{x_e\}\times (-1,1))$ where $e$ is taken over all the boundary components of $\Sigma$.
Then orientation and  Riemannian matric of $M$ are the product orientation and  product Riemannian matric respectively. For simplicity, we can assume $\Sigma$ is connected.

If we regard the state $"2"$ as the state $"+"$, and the state $"1"$ as the state $"-"$. Then 
$S_2(\Sigma,1)$ is the same as the commutative stated skein algebra $\cS_{1}(\Sigma)$, mentioned in Section 8 in \cite{CL2022stated}, as shown in \cite{le2021stated}.
 When $n=2$, Costantino and L{\^e} established an isomorphism $tr : S_2(\Sigma,1)\rightarrow \chi(\Sigma)$ where $\chi(\Sigma)$ is the coordinate ring of the so called flat twisted $SL(2,\mathbb{C})$-bundle.
Here we briefly recall the definition of the  flat twisted $SL(2,\mathbb{C})$-bundle,
 please refer to Section 8 in \cite{CL2022stated} for more details. We follow their notation, and use $\pi_1(U\Sigma,\widetilde{\partial\Sigma})$ to denote their groupoid, where $U\Sigma$ is the  unit tangent bundle over
$\Sigma$ and $\widetilde{\partial\Sigma}$ is the lift of $\partial \Sigma$.
 Then every point in $U\Sigma$ is a pair $(x,v_x)$ where $x\in\Sigma$ and $v_x$ is a unit tangent vector at point $x$.
 For a point $x\in\Sigma$ the fiber $\mathbb{O}$ is a circle, and we orient it according to the  orientation of $\Sigma$. 
Then the flat twisted $SL(2,\mathbb{C})$-bundle is defined to be 
$$\{\rho\in Hom( \pi_1(U\Sigma,\widetilde{\partial\Sigma}), SL(2,\mathbb{C})  )\mid \rho(\mathbb{O}) = -I\}.$$

Let $pr: M\rightarrow \Sigma$ be the projection given by $pr(x,t) = x$ for all $x\in \Sigma,t\in [-1,1]$, and $l:\Sigma \rightarrow M$ be the embedding  given by $l(x) = (x,0)$.
We use $pr_*$ (respectievly $l_*$) to denote the induced map from 
$T(M)$ to $ T(\Sigma)$ (respectively from $ T(\Sigma)$ to $T(M)$). 
For every point $y\in M$, we use $u_y$ to denote the unit vertical tangent vector at $y$ such that $u_y$ points from $-1$ to $1$. Then $l$ induces an embedding 
\begin{align*}
l_{\sharp}: U\Sigma&\rightarrow UM\\
(x,v_x) &\mapsto (l(x), u_{l(x)}, l_*(v_x), w_{l(x)})
\end{align*}
where  $w_{l(x)}$ is determined by the orientation of $M$. Let $VM = \{(y,a_y,b_y,c_y)\in UM\mid a_y = u_y\}$ be a submanifold of $UM$ with one dimension less. Then $pr$ induces a projection
\begin{align*}
 pr_{\sharp}: VM&\rightarrow \Sigma\\
(y, a_y, b_y, c_y)&\mapsto (pr(y), pr_*(b_y)).
\end{align*}
Clearly Im$l_{\sharp}\subset VM$, and $pr_{\sharp}\circ l_{\sharp} = Id_{U\Sigma}$.

 For each boundary component $e$ of $\Sigma$, we use $\widetilde{x_e}$ to denote a point in $\tilde{e}$ whose projection on $e$ is the point $x_e$.

Define
\begin{align*}
f_{l} :\pi_1(U\Sigma,\widetilde{\partial \Sigma}) &\rightarrow \pi_1(UM,\tilde{N})\\
[\alpha]&\mapsto [l_{\sharp}\circ \alpha]
\end{align*}
where $\alpha$ is a representative of $[\alpha]$ such that the two endpoints of $\alpha$ belong to
$\cup_{e}\{\widetilde{x_e}\}$.
And define 
\begin{align*}
f_{pr}:\pi_1(UM,\tilde{N})&\rightarrow \pi_1(\Sigma,\widetilde{\partial\Sigma})\\
[\beta]&\mapsto [pr_{\sharp}\circ \beta]
\end{align*}
where $\beta$ is a representative of $[\beta]$ such that Im$\beta\subset VM$. It is easy to show $f_{l}$ and 
$f_{pr}$ are inverse to each other, and 
 $f_{l}(\mathbb{O})= \vartheta$. 
Then $f_l$ induces isomorphism from $\tilde{\chi}_2(M,N)$ to $\{\rho\in Hom( \pi_1(U\Sigma,\widetilde{\partial\Sigma}), SL(2,\mathbb{C})  )\mid \rho(\mathbb{O}) = -I\}$, which further induces an isomorphism $f_* :\chi(\Sigma)\rightarrow R_2(M,N)$. Then it is a trivial check that 
$$f_* \circ tr = \Phi .$$

\subsection{Compatibility with the splitting map}\label{sub3.4}
In this subsection we discuss the splitting maps for both $R_n(M,N)$  and $S_n(M,N,1)$ and the commutativity between the splitting map and $\Phi$. 

Recall that when $D$ is a properly embedded disk in $M$ and $\beta$ is an oriented open interval contained in $D$, there exists a splitting map $\Theta_{(D,\beta)}:S_n(M,N,1)\rightarrow S_n(\text{Cut}_{(D,\beta)}(M,N),1).$

\begin{lemma}
The above linear map $\Theta_{(D,\beta)}$ is an algebra homomorphism.
\end{lemma}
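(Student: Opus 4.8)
The plan is to show that $\Theta_{(D,\beta)}$ respects the disjoint-union multiplication defined on $S_n(M,N,1)$ in the previous subsection. Since the multiplication on both $S_n(M,N,1)$ and $S_n(\text{Cut}_{(D,\beta)}(M,N),1)$ is given by taking disjoint unions of stated $n$-webs (after isotoping them apart), it suffices to check that the splitting map is compatible with this operation on generators, i.e.\ that $\Theta_{(D,\beta)}(l_1 \cdot l_2) = \Theta_{(D,\beta)}(l_1)\cdot \Theta_{(D,\beta)}(l_2)$ for any two stated $n$-webs $l_1, l_2$ in $(M,N)$, and that $\Theta_{(D,\beta)}$ sends the empty web (the unit) to the empty web.

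First I would recall the local description of $\Theta_{(D,\beta)}$: given a stated $n$-web $l$ transverse to $D$, one sums over all states $s$ on $l\cap D$ of the lifted web $l(h,s)$ in $\text{Cut}_{(D,\beta)}(M,N)$, where $h$ is the height order on $l\cap D$ induced by the orientation of $\beta$. The key observation is that disjointness is preserved under cutting: if $l_1$ and $l_2$ are disjoint in $M$ and both transverse to $D$, then $l_1\cap D$ and $l_2\cap D$ are disjoint subsets of $D$, and one can choose the isotopy realizing disjointness so that it is supported away from $D$ — concretely, isotope $l_1$ and $l_2$ to be disjoint while keeping each transverse to $D$, which is possible since generically two disjoint submanifolds intersecting a separating disk can be made to intersect it in disjoint point sets. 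Then the lift $(l_1 \sqcup l_2)(h,s)$ is exactly $l_1(h_1,s_1)\sqcup l_2(h_2,s_2)$ where $s = (s_1,s_2)$ and the height orders $h_1, h_2$ are the restrictions of $h$. Summing over $s = (s_1, s_2)$ factors as a product of the two independent sums, giving $\Theta(l_1 \cdot l_2) = \Theta(l_1)\cdot\Theta(l_2)$ in $S_n(\text{Cut}_{(D,\beta)}(M,N),1)$.

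The main subtlety I anticipate is the interplay between the height order on $l\cap D$ (used to define the lift) and the disjoint-union product: a priori, when $l_1\cap D$ and $l_2\cap D$ are interleaved along $\beta$, the lifted endpoints of $l_1$ and $l_2$ on the two copies $D_1, D_2$ of $D$ are interleaved in height, so $\Theta(l_1)\cdot\Theta(l_2)$ — computed as a disjoint union in the cut manifold — might not obviously coincide with the lift of $l_1\sqcup l_2$. However, this is precisely where commutativity of $S_n(\text{Cut}_{(D,\beta)}(M,N),1)$ (Corollary \ref{cccc3.2}, which applies since $\text{Cut}_{(D,\beta)}(M,N)$ is again a marked three manifold, non-empty markings coming from $\beta_1,\beta_2$) saves the day: any two reorderings of the heights of endpoints on a common marking of the cut manifold yield the same element, so the disjoint-union product is unambiguous and equals the lift regardless of the interleaving. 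Thus the argument reduces, modulo invoking Corollary \ref{cccc3.2}, to the topological fact that cutting preserves disjointness, which is straightforward. The unit is preserved since the empty web has empty intersection with $D$ and lifts to the empty web. Hence $\Theta_{(D,\beta)}$ is an algebra homomorphism.
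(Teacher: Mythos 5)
Your proof is correct and fills in exactly the reasoning the paper compresses into ``it follows easily from the definition of $\Theta_{(D,\beta)}$'': both arguments reduce to the observation that splitting commutes with disjoint union, with commutativity of $S_n(\text{Cut}_{(D,\beta)}(M,N),1)$ (Corollary \ref{cccc3.2}) absorbing the potential mismatch of heights of newly created endpoints along $\beta_1,\beta_2$. The paper's proof is a one-liner appealing to the same facts; yours simply spells them out.
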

\begin{proof}
It followes easily from the definition of $\Theta_{(D,\beta)}$.
\end{proof}

We will use $(M^{'}, N^{'})$ to denote $\text{Cut}_{(D,\beta)}(M,N)$. Then there is a projection
pr $:(M^{'}, N^{'})\rightarrow (M,N).$
If we  orient $\partial D$, the orientations of $\partial D$ and $M$ give a way to distinguish between $D_1$ and $D_2$ such that the orientation pointing from $D_2$ to $D_1$ and the orientation of $\partial D$ coincide with the orientaion of $M$, see Figure \ref{fig:1}.
\begin{figure}[!h]
\centering
\includegraphics[scale=0.6]{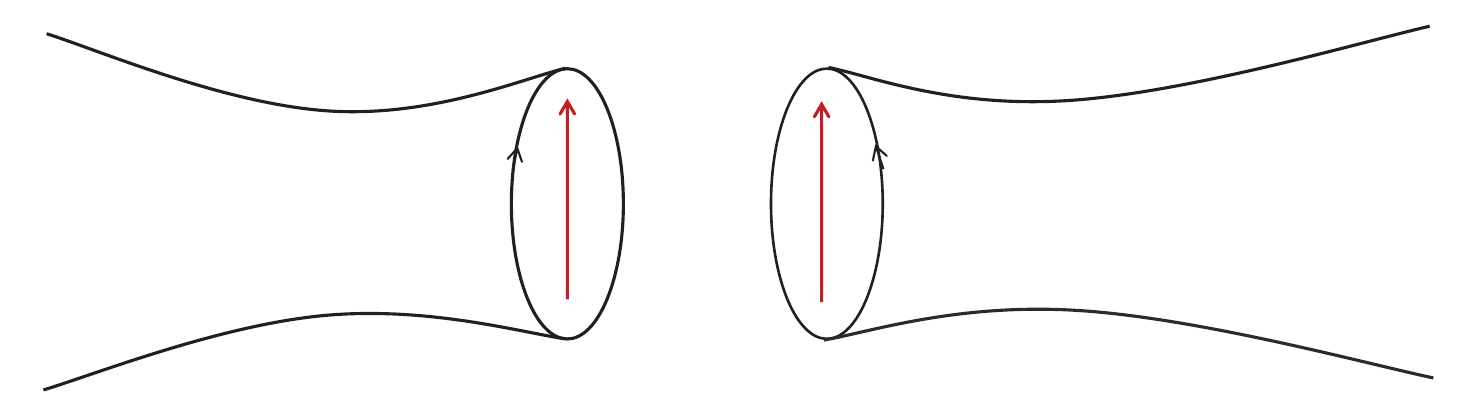}
\caption{The orientation of $D$ is indicated by the arrow, the orientation of $M$ is right handed.
The left (respectively right) disk copy is $D_1$ (respectively $D_2$). The left (respectively right) red arrow is $\beta_1$ (respectively $\beta_2$).}
\label{fig:1}
\end{figure}


In the following discussion, we fix an orientation for $\partial D$. Note that $\beta\in D$ lifts to an element
in $UM$. For every point $P$ in $\beta$, the velocity vector gives the first unit tangent vector, the orientation of $\partial D$ and the orientation of $M$ give the third unit tangent vector (since the orientation of $\partial D$ and the orientation of $M$ give an orientation of $D$, which determines a unit tangent vector at $P$), then the orientation of $M$ gives the second unit tangent vector. We use $\tilde{\beta}$ to denote this lift. Recall that $\zeta : UM\rightarrow M$ is the projection, we define $\tilde{D} = \zeta^{-1}(D)$. The projection pr $:(M^{'}, N^{'})\rightarrow (M,N)$ induces a projection $\tilde{\text{pr}}: UM^{'}\rightarrow UM$. Then 
$\tilde{\text{pr}}^{-1}(\tilde{\beta}) = \tilde{\beta_1}^{'}\cup \tilde{\beta_2}^{'}$
where $\tilde{\beta_1}^{'} \in \widetilde{D_1}, \tilde{\beta_2}^{'} \in \widetilde{D_2}$ ($ \widetilde{D_1}$
and $ \widetilde{D_2}$ are defined in the same way with $\tilde{D}$).  Note that 
$\widetilde{\beta_1} =  \tilde{\beta_1}^{'}$ and $\widetilde{\beta_2} \neq  \tilde{\beta_2}^{'}$. The orientation of $\beta$ determines a path $a_{\beta}$ from $\widetilde{\beta_2}$ to $\tilde{\beta_2}^{'}$ and a path $b_{\beta}$
from $\tilde{\beta_2}^{'}$ to $\widetilde{\beta_2}$ such that both $a_{\beta} *  b_{\beta}$ and 
$b_{\beta}* a_{\beta}$ are in the same homotopy type with $\vartheta$.

\begin{rem}
According to Lemma 8.1 in \cite{CL2022stated},  any $\rho^{'}\in \tilde{\chi}_n(M^{'},N^{'})$ can be extended to a homomorpism $\rho^{''}:\pi_1(UM^{'},\widetilde{N^{'}}\cup \{\tilde{\beta_2}^{'}\})\rightarrow SL(n,\mathbb{C})$ by setting 
$\rho^{''}(a_{\beta}) = d_n A$.

For any $\alpha\in \pi_1(UM,\tilde{N})$, we can isotope $\alpha$ such that $\alpha \cap \tilde{D}
= \alpha\cap \tilde{\beta}$ and $\alpha\cap \tilde{\beta}$ consists of finite points. Then 
$\alpha = \alpha_k* \alpha_{k-1}*\dots*\alpha_1$ where each
$\alpha_i\in \pi_1(UM, \tilde{N}\cup\{\tilde{\beta}\})$  intersects $\tilde{D}$ at most in it's endpoints and exactly along $\tilde{\beta}$. For any $\rho^{'}\in \tilde{\chi}_n(M^{'},N^{'})$, define
 $$\nu^{*}(\rho^{'})(\alpha) = \rho^{''}(\alpha_k^{'})\rho^{''}(\alpha_{k-1}^{'})\dots
\rho^{''}(\alpha_1^{'})$$
where $\alpha_i^{'} = \tilde{\text{pr}}^{-1}(\alpha_i)\in \pi_1(UM^{'}, \widetilde{N^{'}}\cup \{\tilde{\beta_2}^{'}\})$.
\end{rem}


\begin{proposition}
Let $(M,N)$ be any marked three manifold, $(D,\beta)$ be any properly embedded disk with an open oriented interval $\beta\in D$. Then there is a surjective homomorphism $\nu^{*}:\tilde{\chi}_n(M^{'},N^{'})\rightarrow 
\tilde{\chi}_n(M,N)$ where $(M^{'}, N^{'}) =$ Cut$_{(D,\beta)}(M, N )$. Especially $\nu^{*}$ induces
an injective algebra homomorphism $\nu: R_n(M,N)\rightarrow R_n(M^{'},N^{'})$.
\end{proposition}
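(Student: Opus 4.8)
The plan is to check, in turn, that the formula of the preceding Remark gives a well-defined map $\nu^{*}\colon \tilde{\chi}_n(M^{'},N^{'})\to \tilde{\chi}_n(M,N)$, that it is a surjective morphism of affine algebraic sets, and that this last fact forces the pullback $\nu$ to be an injective algebra homomorphism. Since cutting along $(D,\beta)$ touches only the component of $M$ containing $D$, and everything is built componentwise, I may assume $M$ connected; the case $N=\emptyset$ goes exactly as $N\neq\emptyset$, with the sole change that $\tilde{\chi}_n(M,\emptyset)$ carries the trace equivalence, so $\nu^{*}(\rho^{'})$ only needs to be well defined up to conjugacy — which it is, since changing the auxiliary path used to reach $\tilde\beta$ conjugates the result.

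The heart of the argument is the well-definedness of $\nu^{*}(\rho^{'})$ as a homomorphism $\pi_1(UM,\tilde N)\to SL(n,\mathbb{C})$, independent of the decomposition $\alpha=\alpha_k*\cdots*\alpha_1$ and of the representative of $\alpha$. Conceptually, since $\tilde D=\zeta^{-1}(D)$ is a connected codimension-one subset of $UM$ with $\pi_1(\tilde D)=\mathbb{Z}/2=\langle\vartheta\rangle$, the groupoid $\pi_1(UM,\tilde N)$ is the colimit of $\pi_1(UM^{'},\widetilde{N^{'}})$ across the cut (an amalgam over $\langle\vartheta\rangle$ if $D$ separates, an HNN extension otherwise), the two copies $\widetilde{D_1},\widetilde{D_2}$ of $\tilde D$ being glued and the path $a_\beta$ carrying the gluing/stable datum. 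Hence a homomorphism out of $\pi_1(UM,\tilde N)$ is the same as a homomorphism $\rho^{'}$ out of $\pi_1(UM^{'},\widetilde{N^{'}})$ together with a matrix for $a_\beta$ compatible with the gluing, and the choice $\rho^{''}(a_\beta)=d_nA$ of the Remark is admissible: the compatibility conditions are $\det(d_nA)=1$ and $(d_nA)^{2}=d_nI$, both immediate from $\det A=1$, $A^{2}=d_nI$ and $\rho^{'}(\vartheta)=d_nI$. In hands-on terms, passing between decompositions of $\alpha$ amounts to sliding crossing points along $\tilde\beta$ (continuous, harmless), merging two consecutive pieces whose common endpoint leaves $\tilde D$ (transparent on the $\widetilde{D_1}$-side, where $\widetilde{\beta_1}=\tilde\beta_1^{'}$, and on the $\widetilde{D_2}$-side, where a round trip at $\tilde\beta_2^{'}$ is null-homotopic in $UM^{'}$ and hence sent to $I$), and inserting or deleting a pair of crossings of $\tilde D$, whose net contribution telescopes to $I$ exactly because of these two identities; the same bookkeeping shows $\nu^{*}(\rho^{'})$ respects composition. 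It then lands in $\tilde{\chi}_n(M,N)$: every factor, and so the product, lies in $SL(n,\mathbb{C})$, while on a fibre loop $\vartheta$ based at a point off $D$ the formula reduces to $\rho^{'}$ evaluated on the obvious lift in $UM^{'}$, giving $d_nI$. Finally, for a fixed decomposition the entries of $\nu^{*}(\rho^{'})(\alpha)$ are polynomial in those of $\rho^{'}$, so $\nu^{*}$ is a morphism of affine algebraic sets.

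For surjectivity, given $\tilde\rho\in\tilde{\chi}_n(M,N)$ I would extend it to a homomorphism $\hat\rho\colon\pi_1(UM,\tilde N\cup\{\tilde\beta\})\to SL(n,\mathbb{C})$; this is legitimate (it is one free choice of $\hat\rho$ on a path from $\tilde N$ to $\tilde\beta$, $\tilde\beta$ being contractible and disjoint from $\tilde N$; cf. Lemma 8.1 in \cite{CL2022stated}). Transporting $\hat\rho$ back through the gluing above — using $a_\beta$ to move between $\widetilde{\beta_2}$ and $\tilde\beta_2^{'}$ — produces $\rho^{'}\in\tilde{\chi}_n(M^{'},N^{'})$ with canonical extension $\rho^{''}=\hat\rho\circ\tilde{\text{pr}}$, and then $\nu^{*}(\rho^{'})(\alpha)=\hat\rho(\alpha_k)\cdots\hat\rho(\alpha_1)=\hat\rho(\alpha)=\tilde\rho(\alpha)$ for every $\alpha$, so $\nu^{*}(\rho^{'})=\tilde\rho$; alternatively the colimit picture exhibits $\tilde{\chi}_n(M^{'},N^{'})\cong\tilde{\chi}_n(M,N)\times SL(n,\mathbb{C})$ with $\nu^{*}$ the projection. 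The pullback $\nu\colon f\mapsto f\circ\nu^{*}$ is then injective: if $\nu(f)=0$, $f$ vanishes on $\mathrm{Im}\,\nu^{*}=\tilde{\chi}_n(M,N)$, hence $f=0$ since $R_n(M,N)$ is the (reduced) coordinate ring of an affine algebraic set over $\mathbb{C}$. The main obstacle is the well-definedness step — fixing the precise, possibly twisted, colimit description of $\pi_1(UM,\tilde N)$ across the cut and confirming that $d_nA$ is the gluing matrix dictated by the lift conventions for $\tilde\beta$ versus $\widetilde{\beta_2}$; once that is settled, membership in $\tilde{\chi}_n$, regularity, surjectivity, and injectivity of $\nu$ all follow routinely.
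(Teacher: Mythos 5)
Your overall structure mirrors the paper's: reduce to $M$ connected, establish well-definedness of $\nu^{*}(\rho')$ by checking a generating set of moves across the cutting disk, verify the defining properties of $\tilde{\chi}_n(M,N)$, prove surjectivity by freely extending to new basepoints, and deduce injectivity of $\nu$ from surjectivity of $\nu^{*}$ onto an affine set. The van Kampen/HNN framing is a reasonable conceptual overlay on the paper's explicit checking of moves (\ref{e111})--(\ref{e222}). Two points, however, need correction.

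The claim that inserting or deleting a pair of crossings ``telescopes to $I$ exactly because of'' $\det(d_nA)=1$ and $(d_nA)^2=d_nI$ misattributes the mechanism. Both identities hold---$\det(d_nA)=(-1)^{n(n-1)}\det A=1$ and $(d_nA)^2=d_n^2A^2=d_nI$---but they play no role in the telescoping: the excursion created by such a move lies entirely on one side of $D$, so its lift is a contractible loop at a point of $\widetilde{\beta_1}$ or $\tilde{\beta_2}^{'}$, hence the identity morphism of that object, and any groupoid morphism sends it to $I$. In the HNN picture the only compatibility constraint is that conjugation by $\rho''(a_\beta)$ fix $\rho'(\vartheta)=d_nI$, which is automatic since $d_nI$ is central; $\nu^{*}$ is therefore well defined for \emph{any} choice of $\rho''(a_\beta)\in SL(n,\mathbb{C})$. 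What forces the particular value $d_nA$ is not well-definedness but the commutation $\Phi^{(M',N')}\circ\Theta_{(D,\beta)}=\nu\circ\Phi^{(M,N)}$ proved in the theorem that follows. Separately, in the surjectivity argument the formula $\rho''=\hat\rho\circ\tilde{\text{pr}}$ does not type-check: $\tilde{\text{pr}}$ sends the object $\widetilde{\beta_2}$ to $-\tilde\beta$, which is not among the objects $\tilde N\cup\{\tilde\beta\}$ on which $\hat\rho$ is defined, so $\hat\rho\circ\tilde{\text{pr}}$ exists only on a proper subgroupoid. The paper sidesteps this by extending $\rho$ to $\pi_1(UM,\tilde N\cup\{\tilde\beta,-\tilde\beta\})$---both projected lifts of $\beta_2$---with the free choice $\rho''(\overline{a_{\beta}})=d_nA$; uniqueness of extensions then identifies $\rho''\circ\text{pr}_*$ with the canonical extension appearing in the definition of $\nu^{*}$, after which $\nu^{*}(\rho')=\rho$ is immediate. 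Your ``transport via $a_\beta$'' gestures at this repair but should be made explicit; as written, the displayed chain $\nu^{*}(\rho')(\alpha)=\hat\rho(\alpha_k)\cdots\hat\rho(\alpha_1)$ quietly presupposes the extension to $-\tilde\beta$.
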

\begin{proof}
First we show $\nu^{*}(\rho^{'})\in \tilde{\chi}_n(M,N)$. Clearly we have $ \rho^{''}( \vartheta_Y) = d_n I$ for  any component 
$Y$ of $M^{'}$.
Since $\rho^{''}$ preserves height exchange and crossing exchange, to show $\nu^{*}(\rho^{'})$ is well-defined,  it suffices to show 
$\nu^{*}(\rho^{'})$ preserves the following two moves:
\begin{equation}\label{e111}
\raisebox{-.30in}{
\begin{tikzpicture}
\tikzset{->-/.style=
{decoration={markings,mark=at position #1 with
{\arrow{latex}}},postaction={decorate}}}
\draw [line width =1pt,color=red,decoration={markings, mark=at position 1 with {\arrow{>}}},postaction={decorate}](0.8,-0.5)--(0.8,1.5);
\draw[line width =1pt] (0,0) arc (-90:90:0.5);
\draw [line width =1pt](-1,0)--(0,0);
\draw [line width =1pt](-1,1)--(0,1);
\filldraw[draw=black,fill=white] (-0.5,0) circle (0.07);
\node [right] at(0.9,1.3) {$\beta$};
\end{tikzpicture}
}
\longleftrightarrow
\raisebox{-.30in}{
\begin{tikzpicture}
\tikzset{->-/.style=
{decoration={markings,mark=at position #1 with
{\arrow{latex}}},postaction={decorate}}}
\draw [line width =1pt,color=red,decoration={markings, mark=at position 1 with {\arrow{>}}},postaction={decorate}](0,-0.5)--(0,1.5);
\draw[line width =1pt] (0,0) arc (-90:90:0.5);
\draw [line width =1pt](-1,0)--(0,0);
\draw [line width =1pt](-1,1)--(0,1);
\filldraw[draw=black,fill=white] (-0.5,0) circle (0.07);
\node [right] at(0.1,1.3) {$\beta$};
\end{tikzpicture}
}
\end{equation}
\begin{equation}\label{e222}
\raisebox{-.30in}{
\begin{tikzpicture}
\tikzset{->-/.style=
{decoration={markings,mark=at position #1 with
{\arrow{latex}}},postaction={decorate}}}
\draw [line width =1pt,color=red,decoration={markings, mark=at position 1 with {\arrow{>}}},postaction={decorate}](-0.8,-0.5)--(-0.8,1.5);
\draw[line width =1pt] (0,1) arc (90:270:0.5);
\draw [line width =1pt](1,0)--(0,0);
\draw [line width =1pt](1,1)--(0,1);
\filldraw[draw=black,fill=white] (0.5,0) circle (0.07);
\node [left] at(-0.9,1.3) {$\beta$};
\end{tikzpicture}
}
\longleftrightarrow
\raisebox{-.30in}{
\begin{tikzpicture}
\tikzset{->-/.style=
{decoration={markings,mark=at position #1 with
{\arrow{latex}}},postaction={decorate}}}
\draw [line width =1pt,color=red,decoration={markings, mark=at position 1 with {\arrow{>}}},postaction={decorate}](-0,-0.5)--(-0,1.5);
\draw[line width =1pt] (0,1) arc (90:270:0.5);
\draw [line width =1pt](1,0)--(0,0);
\draw [line width =1pt](1,1)--(0,1);
\filldraw[draw=black,fill=white] (0.5,0) circle (0.07);
\node [left] at(-0.1,1.3) {$\beta$};
\end{tikzpicture}
}
\end{equation}
 The red arrow in equations (\ref{e111})  and  (\ref{e222}) is the projection of $\beta$, to get the original  $\beta$, we just rotate the
red arrow in equations (\ref{e111})  and  (\ref{e222}) 90 degrees such that it points towards readers. The black line represents part of path in $UM$, and the white dot represents the direction of the path. The first unit tangent vector of the path is the one pointing towards readers  and the second one is given by the velocity vector of the black line.

Here we only prove $\nu^{*}(\rho^{'})$ preserves
$$
\raisebox{-.30in}{
\begin{tikzpicture}
\tikzset{->-/.style=
{decoration={markings,mark=at position #1 with
{\arrow{latex}}},postaction={decorate}}}
\draw [line width =1pt,color=red,decoration={markings, mark=at position 1 with {\arrow{>}}},postaction={decorate}](0.8,-0.5)--(0.8,1.5);
\draw[line width =1pt] (0,0) arc (-90:90:0.5);
\draw [line width =1pt,decoration={markings, mark=at position 0.5 with {\arrow{>}}},postaction={decorate}](-1,0)--(0,0);
\draw [line width =1pt](-1,1)--(0,1);
\node [right] at(0.9,1.3) {$\beta$};
\end{tikzpicture}
}
\longleftrightarrow
\raisebox{-.30in}{
\begin{tikzpicture}
\tikzset{->-/.style=
{decoration={markings,mark=at position #1 with
{\arrow{latex}}},postaction={decorate}}}
\draw [line width =1pt,color=red,decoration={markings, mark=at position 1 with {\arrow{>}}},postaction={decorate}](0,-0.5)--(0,1.5);
\draw[line width =1pt] (0,0) arc (-90:90:0.5);
\draw [line width =1pt,decoration={markings, mark=at position 0.5 with {\arrow{>}}},postaction={decorate}](-1,0)--(0,0);
\draw [line width =1pt](-1,1)--(0,1);
\node [right] at(0.1,1.3) {$\beta$};
\end{tikzpicture}
}.
$$
The same proving technique applies for other three cases.

We isotope 
$\raisebox{-.30in}{
\begin{tikzpicture}
\tikzset{->-/.style=
{decoration={markings,mark=at position #1 with
{\arrow{latex}}},postaction={decorate}}}
\draw [line width =1pt,color=red,decoration={markings, mark=at position 1 with {\arrow{>}}},postaction={decorate}](0,-0.5)--(0,1.5);
\draw[line width =1pt] (0,0) arc (-90:90:0.5);
\draw [line width =1pt,decoration={markings, mark=at position 0.5 with {\arrow{>}}},postaction={decorate}](-1,0)--(0,0);
\draw [line width =1pt](-1,1)--(0,1);
\node [right] at(0.1,1.3) {$\beta$};
\end{tikzpicture}
}$ to 
$\raisebox{-.30in}{
\begin{tikzpicture}
\tikzset{->-/.style=
{decoration={markings,mark=at position #1 with
{\arrow{latex}}},postaction={decorate}}}
%
\draw [line width =1pt,color=red,decoration={markings, mark=at position 1 with {\arrow{>}}},postaction={decorate}](0.2,-0.5)--(0.2,1.5);
\draw [line width =1pt,decoration={markings, mark=at position 0.5 with {\arrow{>}}},postaction={decorate}](-1,0)--(0,0);
\draw [line width =1pt](-1,1)--(0,1);
\draw[line width =1pt] (0,0) arc (-90:0:0.2);
\draw[line width =1pt] (0.2,0.8) arc (0:90:0.2);
\draw[line width =1pt] (0.6,0.2) arc (0:180:0.2);
\draw[line width =1pt] (0.2,0.8) arc (-180:0:0.2);
\draw[line width =1pt] (0.6,0.2) arc (-180:0:0.3);
\draw[line width =1pt] (1.2,0.8) arc (0:180:0.3);
\draw[line width =1pt] (1.2,0.2)--(1.2,0.8);
\node [right] at(0.3,1.3) {$\beta$};
\end{tikzpicture}
}.$ 
From the definition of $\nu^{*}(\rho^{'})$, we know
\begin{align*}
\nu^{*}(\rho^{'})(\raisebox{-.30in}{
\begin{tikzpicture}
\tikzset{->-/.style=
{decoration={markings,mark=at position #1 with
{\arrow{latex}}},postaction={decorate}}}
\draw [line width =1pt,color=red,decoration={markings, mark=at position 1 with {\arrow{>}}},postaction={decorate}](0,-0.5)--(0,1.5);
\draw[line width =1pt] (0,0) arc (-90:90:0.5);
\draw [line width =1pt,decoration={markings, mark=at position 0.5 with {\arrow{>}}},postaction={decorate}](-1,0)--(0,0);
\draw [line width =1pt](-1,1)--(0,1);
\node [right] at(0.1,1.3) {$\beta$};
\end{tikzpicture}
}) =& \rho^{''}(\raisebox{-.30in}{
\begin{tikzpicture}
\tikzset{->-/.style=
{decoration={markings,mark=at position #1 with
{\arrow{latex}}},postaction={decorate}}}
%
\draw [line width =1pt,color=red,decoration={markings, mark=at position 1 with {\arrow{>}}},postaction={decorate}](0.2,-0.5)--(0.2,1.5);
\draw [line width =1pt,decoration={markings, mark=at position 0.5 with {\arrow{<}}},postaction={decorate}](-1,1)--(0,1);
\draw[line width =1pt] (0.2,0.8) arc (0:90:0.2);
\node [right] at(0.3,1.3) {$\beta$};
\end{tikzpicture}
})
\rho^{''}(\raisebox{-.30in}{
\begin{tikzpicture}
\tikzset{->-/.style=
{decoration={markings,mark=at position #1 with
{\arrow{latex}}},postaction={decorate}}}
%
\draw [line width =1pt,color=red,decoration={markings, mark=at position 1 with {\arrow{>}}},postaction={decorate}](0.2,-0.5)--(0.2,1.5);
\draw[line width =1pt] (0.6,0.2) arc (0:180:0.2);
\draw[line width =1pt] (0.2,0.8) arc (-180:0:0.2);
\draw[line width =1pt] (0.6,0.2) arc (-180:0:0.3);
\draw[line width =1pt] (1.2,0.8) arc (0:180:0.3);
\draw[line width =1pt,decoration={markings, mark=at position 0.5 with {\arrow{>}}},postaction={decorate}] (1.2,0.2)--(1.2,0.8);
\node [right] at(0.3,1.3) {$\beta$};
\end{tikzpicture}
})
\rho^{''}(\raisebox{-.30in}{
\begin{tikzpicture}
\tikzset{->-/.style=
{decoration={markings,mark=at position #1 with
{\arrow{latex}}},postaction={decorate}}}
%
\draw [line width =1pt,color=red,decoration={markings, mark=at position 1 with {\arrow{>}}},postaction={decorate}](0.2,-0.5)--(0.2,1.5);
\draw [line width =1pt,decoration={markings, mark=at position 0.5 with {\arrow{>}}},postaction={decorate}](-1,0)--(0,0);
\draw[line width =1pt] (0,0) arc (-90:0:0.2);
\node [right] at(0.3,1.3) {$\beta$};
\end{tikzpicture}
})
\\
 =& \rho^{''}(\raisebox{-.30in}{
\begin{tikzpicture}
\tikzset{->-/.style=
{decoration={markings,mark=at position #1 with
{\arrow{latex}}},postaction={decorate}}}
%
\draw [line width =1pt,color=red,decoration={markings, mark=at position 1 with {\arrow{>}}},postaction={decorate}](0.2,-0.5)--(0.2,1.5);
\draw [line width =1pt,decoration={markings, mark=at position 0.5 with {\arrow{<}}},postaction={decorate}](-1,1)--(0,1);
\draw[line width =1pt] (0.2,0.8) arc (0:90:0.2);
\node [right] at(0.3,1.3) {$\beta$};
\end{tikzpicture}
})
\rho^{''}(\raisebox{-.30in}{
\begin{tikzpicture}
\tikzset{->-/.style=
{decoration={markings,mark=at position #1 with
{\arrow{latex}}},postaction={decorate}}}
%
\draw [line width =1pt,color=red,decoration={markings, mark=at position 1 with {\arrow{>}}},postaction={decorate}](0.2,-0.5)--(0.2,1.5);
\draw [line width =1pt,decoration={markings, mark=at position 0.5 with {\arrow{>}}},postaction={decorate}](-1,0)--(0,0);
\draw[line width =1pt] (0,0) arc (-90:0:0.2);
\node [right] at(0.3,1.3) {$\beta$};
\end{tikzpicture}
}) =
\rho^{''}(\raisebox{-.30in}{
\begin{tikzpicture}
\tikzset{->-/.style=
{decoration={markings,mark=at position #1 with
{\arrow{latex}}},postaction={decorate}}}
\draw [line width =1pt,color=red,decoration={markings, mark=at position 1 with {\arrow{>}}},postaction={decorate}](0.8,-0.5)--(0.8,1.5);
\draw[line width =1pt] (0,0) arc (-90:90:0.5);
\draw [line width =1pt,decoration={markings, mark=at position 0.5 with {\arrow{>}}},postaction={decorate}](-1,0)--(0,0);
\draw [line width =1pt](-1,1)--(0,1);
\node [right] at(0.9,1.3) {$\beta$};
\end{tikzpicture}
})
\\
=& \nu^{*}(\rho^{'})(\raisebox{-.30in}{
\begin{tikzpicture}
\tikzset{->-/.style=
{decoration={markings,mark=at position #1 with
{\arrow{latex}}},postaction={decorate}}}
\draw [line width =1pt,color=red,decoration={markings, mark=at position 1 with {\arrow{>}}},postaction={decorate}](0.8,-0.5)--(0.8,1.5);
\draw[line width =1pt] (0,0) arc (-90:90:0.5);
\draw [line width =1pt,decoration={markings, mark=at position 0.5 with {\arrow{>}}},postaction={decorate}](-1,0)--(0,0);
\draw [line width =1pt](-1,1)--(0,1);
\node [right] at(0.9,1.3) {$\beta$};
\end{tikzpicture}
}).
\end{align*}
Then, trivially we have $\nu^{*}(\rho^{'})\in \tilde{\chi}_n(M,N)$.

Then we want to show $\nu^{*}$ is surjective. We use $-\tilde{\beta}$ to denote $\tilde{\text{pr}}(\widetilde{\beta_2})$, and use
$\overline{a_{\beta}}$ to denote $\tilde{\text{pr}}(a_{\beta})$. Then $\overline{a_{\beta}}$ is a path from $-\tilde{\beta}$ to
$\tilde{\beta}$. For any $\rho\in \tilde{\chi}_n(M,N)$, we use Lemma 8.1 in \cite{CL2022stated} to extend $\rho$ to
$\rho^{''}:\pi_1(UM,\tilde{N}\cup\{\tilde{\beta},-\tilde{\beta}\})$ setting in particular $\rho^{''}(\overline{a_{\beta}})
= d_n A$. The projection $\tilde{\text{pr}}:UM^{'}\rightarrow UM$ induces a homomorphism $\text{pr}_* :
\pi_1(UM^{'},\widetilde{N^{'}}\cup \{\tilde{\beta_2}^{'}\}) \rightarrow \pi_1(UM,\tilde{N}\cup\{\tilde{\beta},-\tilde{\beta}\})$. Then 
$\rho^{''}\circ \text{pr}_*$ is a homomorphism from $\pi_1(UM^{'},\widetilde{N^{'}}\cup \{\tilde{\beta_2}^{'}\})$ to $SL(n,\mathbb{C})$. Set $\rho^{'} $ to be the restriction of $\rho^{''}\circ \text{pr}_*$ on $\pi_1(UM^{'},\widetilde{N^{'}})$. Then it is east to show we have
$\rho^{'}\in \tilde{\chi}_n(M^{'},N^{'})$ and $\nu^{*}(\rho^{'}) = \rho$.

\end{proof}

\begin{theorem}
Let $(M,N)$ be any marked three manifold, $(D,\beta)$ be any properly embedded disk with an open oriented interval $\beta\in D$. Then we have 
$$\Phi^{(M^{'},N^{'})}\circ \Theta_{(D,\beta)} = \nu\circ \Phi^{(M,N)}.$$
\end{theorem}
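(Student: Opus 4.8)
The plan is to reduce the identity to a single local model and a careful tracking of framings near $D$. First, by the lemma just above, $\Theta_{(D,\beta)}$ is an algebra homomorphism; by Theorem~\ref{thm3.11} so are $\Phi^{(M,N)}$ and $\Phi^{(M^{'},N^{'})}$; and by the proposition just above, $\nu$ is an algebra homomorphism. Hence both $\Phi^{(M^{'},N^{'})}\circ\Theta_{(D,\beta)}$ and $\nu\circ\Phi^{(M,N)}$ are algebra homomorphisms $S_n(M,N,1)\to R_n(M^{'},N^{'})$. By Corollary~\ref{cccc3.2} the product on $S_n(M,N,1)$ is disjoint union, and by the reduction used in Theorem~\ref{thm3.11} every stated $n$-web equals, modulo relations~\eqref{wzh.five} (or \eqref{wzh.four} when $N=\emptyset$), a linear combination of disjoint unions of stated framed oriented boundary arcs and framed oriented knots. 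Therefore it suffices to prove $\Phi^{(M^{'},N^{'})}(\Theta_{(D,\beta)}(\ell))=\nu(\Phi^{(M,N)}(\ell))$ when $\ell$ is a single such arc or knot, and we may assume $M$ connected.

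Fix $\ell$ and isotope it into good position with $\ell\cap D=\ell\cap\beta=\{p_1,\dots,p_k\}$, the points labelled in the order met along $\ell$. When $k=0$ the identity is immediate: $\Theta(\ell)=\ell$, the lift of $\ell$ into $UM^{'}$ agrees with its lift into $UM$, and $\nu^{*}(\rho^{'})(\widetilde{\ell})=\rho^{'}(\widetilde{\ell})$ straight from the definition of $\nu^{*}$. For $k\geq1$ choose a height order $h$; then $\Theta_{(D,\beta)}(\ell)=\sum_{s}\ell(h,s)$, where $s$ runs over the states on $\{p_1,\dots,p_k\}$ and $\ell(h,s)$ is the disjoint union in $(M^{'},N^{'})$ of the sub-arcs obtained by cutting $\ell$ at the $p_t$, stated by $s$ at the new endpoints and by the inherited states at the endpoints on $N$; writing these sub-arcs as $\ell_0,\dots,\ell_k$ (an arc cut into $k+1$ pieces) or $\ell_1,\dots,\ell_k$ (a knot cut cyclically into $k$ pieces), each is a stated framed oriented boundary arc of $(M^{'},N^{'})$. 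Evaluating $\Phi^{(M^{'},N^{'})}$ by the $tr_{\alpha}$-formula of Theorem~\ref{thm3.11}, $\Phi^{(M^{'},N^{'})}(\ell(h,s))(\rho^{'})=\prod_i tr_{\ell_i}(\rho^{'})=\prod_i[A\,\rho^{'}(\widetilde{\ell_i})]_{\bullet\bullet}$; summing over $s$ and using that $i\mapsto\bar i$ is a bijection of $\mathbb{J}$, the sum over the state at each $p_t$ contracts the neighbouring matrix indices, so the product telescopes to $[A\rho^{'}(\widetilde{\ell_k})\,A\rho^{'}(\widetilde{\ell_{k-1}})\cdots A\rho^{'}(\widetilde{\ell_0})]_{\bar i,\bar j}$ if $\ell$ is an arc with end states $j=s(\ell(0))$, $i=s(\ell(1))$, and to $\text{Trace}\big(A\rho^{'}(\widetilde{\ell_k})\cdots A\rho^{'}(\widetilde{\ell_1})\big)$ if $\ell$ is a knot.

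For the other side, $\nu(\Phi^{(M,N)}(\ell))(\rho^{'})=\Phi^{(M,N)}(\ell)(\nu^{*}(\rho^{'}))$, which by Theorem~\ref{thm3.11} equals $[A\,\nu^{*}(\rho^{'})(\widetilde{\ell})]_{\bar i,\bar j}$ in the arc case and $\text{Trace}(\nu^{*}(\rho^{'})(\widetilde{\ell}))$ in the knot case, $\widetilde{\ell}$ being the lift of $\ell$. Now I would unwind the definition of $\nu^{*}$: the good-position isotopy furnishes a decomposition $\widetilde{\ell}=\alpha_{r}*\cdots*\alpha_1$ (with $r=k$ for a knot, $r=k+1$ for an arc) in which each $\alpha_i$ meets $\tilde D$ only along $\tilde\beta$ at its endpoints, and each $\alpha_i$ is, up to isotopy, the lift of the corresponding sub-arc; hence $\nu^{*}(\rho^{'})(\widetilde{\ell})=\rho^{''}(\alpha_{r}^{'})\cdots\rho^{''}(\alpha_1^{'})$ with $\alpha_i^{'}=\tilde{\text{pr}}^{-1}(\alpha_i)$. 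It then remains to compare, crossing by crossing, the factor $\rho^{''}(\alpha_i^{'})$ with $A\,\rho^{'}(\widetilde{\ell_{i-1}})$: the canonical lift of a sub-arc in $UM^{'}$ uses the marking framing of $N^{'}$ at an endpoint landing on $\beta_1\subset D_1$ or on $\beta_2\subset D_2$, whereas the corresponding endpoint of $\alpha_i^{'}$ sits on $\widetilde{\beta_1}=\widetilde{\beta_1}^{'}$ in the first case but on $\widetilde{\beta_2}^{'}\neq\widetilde{\beta_2}$ in the second, so that the discrepancy at each $D_2$-endpoint is exactly the path $a_{\beta}$ (or $b_{\beta}$), on which $\rho^{''}$ takes the value $d_n A$. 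Substituting these corrections, using $A^2=d_nI$ and $\det A=1$, and keeping track of which copy $D_1$ or $D_2$ each sub-arc endpoint meets and of the framing flip there, one checks that the $A$-factors supplied by the corrections fill in precisely the $A$'s appearing between the $\rho^{'}(\widetilde{\ell_i})$ on the splitting side, that the factors of $d_n$ cancel, and that the explicit outer $A$ is the $A$ of the $tr_{\alpha}$-formula; taking the appropriate matrix entry or trace then yields the asserted equality.

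The step I expect to be the main obstacle is exactly this last reconciliation of the normalizing factors: confirming that the matrix $A$ in the definition of $tr_{\alpha}$ is literally the matrix $A$ with $\rho^{''}(a_{\beta})=d_nA$, deciding at each transverse point of $\ell\cap\beta$ which copy of $D$ the two lifted endpoints land on and how the fixed orientation of $\partial D$ selects $a_{\beta}$ versus $b_{\beta}$, and verifying that all the resulting signs $d_n$ cancel globally. I would organize it by first establishing the local identity for $\ell$ meeting $\beta$ in a single point (so that both sides reduce to a one-step comparison of $\rho^{''}\circ\tilde{\text{pr}}^{-1}$ with the marking-framed lift across $\beta$), and then recovering the general case by the telescoping already described; the $N=\emptyset$ case is handled identically, with $\Phi^{(M,\emptyset)}$ a product of traces obtained via \eqref{wzh.four}.
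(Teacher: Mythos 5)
Your proposal follows the same route as the paper's proof: reduce to a single stated framed oriented boundary arc or framed oriented knot using the algebra homomorphism property, decompose $\ell$ at its transverse intersections with $\beta$, telescope the state sum on the $\Theta$ side into a product of matrices $A\rho'(\widetilde{\ell_i})$, and compare with the factorization of $\nu^*(\rho')(\widetilde\ell)$ through $\rho''$ and the correction paths. The one detail you flag as uncertain is exactly what the paper nails down: the corrections inserted between consecutive $\rho''(\widetilde{\alpha_i'})$ are $\gamma_t=a_\beta^{-1}$ or $b_\beta^{-1}$ (not $a_\beta$ itself), and since $\rho''(a_\beta)=d_nA$ with $A^2=d_nI$ and $d_n^2=1$, one gets $\rho''(a_\beta^{-1})=\rho''(b_\beta^{-1})=A$ directly, so no leftover $d_n$'s need to cancel; the $A$'s supplied by the corrections then match the $A$'s on the splitting side immediately. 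Modulo making that substitution explicit and handling the knot case by the same telescoping under a trace with a cut-point $\eta$, your argument is the paper's.
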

\begin{proof}
We can assume $M$ is connected. Note that $M^{'}$ may not be connected.

Since both $\Phi^{(M^{'},N^{'})}\circ \Theta_{(D,\beta)}$ and $\nu\circ \Phi^{(M,N)}$ are algebra homomorphisms, it suffices to show $\Phi^{(M^{'},N^{'})}(\Theta_{(D,\beta)} (\alpha))= \nu( \Phi^{(M,N)}(\alpha))$ for any framed oriented knot or stated framed oriented boundary arc $\alpha$. If there is no intersection between $\alpha$ and $D$, it is obvious. Then we look at the case when $\alpha$ intersects $D$. We isotope $\alpha$ such that $\alpha$ is transverse to $D$, $\alpha\cap D\subset \beta$, and the framing at each point of $\alpha\cap \beta$ is the velocity vector of $\beta$.

If $\alpha$ is a stated framed oriented boundary arc with $s(\alpha(0)) = i$ and $s(\alpha(1)) = j$. Then $\alpha = \alpha_m * \alpha_{m-1}* \dots * \alpha_1$ where all $\alpha_i$ are framed oriented arcs and are parts of $\alpha$ such that each $\alpha_t$ has two ends on $\beta$ for $2\leq t\leq m-1$ and
$\alpha_1(1), \alpha_m(0)\in\beta$ and the interior of each $\alpha_t$ has no intersection with $D$. Let $\alpha_t^{'} = \text{pr}^{-1}(\alpha_t), 1\leq t\leq m$, then
$$\Theta_{(D,\beta)}(\alpha) = \sum_{1\leq k_1,\dots,k_{m-1}\leq n} (\alpha_m^{'})_{j,k_{m-1}}
(\alpha_{m-1}^{'})_{k_{m-1},k_{m-2}}\dots (\alpha_1^{'})_{k_1,i}.$$

For any $\rho^{'}\in \tilde{\chi}_n(M^{'},N^{'})$, we have 
\begin{equation*}
\begin{split}
\Phi^{(M^{'},N^{'})}(\Theta_{(D,\beta)} (\alpha))(\rho^{'})
&= \sum_{1\leq k_1,\dots,k_{m-1}\leq n} (A\rho^{'}(\widetilde{\alpha^{'}_{m}}))_{\overline{j},\overline{k_{m-1}}}
 (A\rho^{'}(\widetilde{\alpha^{'}_{m-1}}))_{\overline{k_{m-1}},\overline{k_{m-2}}}\dots(A\rho^{'}(\widetilde{\alpha^{'}_{1}}))_{\overline{k_1},\overline{i}}\\
&=(A\rho^{'}(\widetilde{\alpha^{'}_{m}})A\rho^{'}(\widetilde{\alpha^{'}_{m-1}})\dots A\rho^{'}(\widetilde{\alpha^{'}_{1}}))_{\overline{j},\overline{i}}
\end{split}
\end{equation*}
and
\begin{equation*}
\nu( \Phi^{(M,N)}(\alpha))(\rho^{'}) =  \Phi^{(M,N)}(\alpha)(\nu^{*}(\rho^{'})) =(A\;\nu^{*}(\rho^{'})(\tilde{\alpha}))_{\overline{j},\overline{i}}.
\end{equation*}
According to the definition of $\nu^{*}$, we know 
$$\nu^{*}(\rho^{'})(\tilde{\alpha}) = \rho^{''}(\tilde{\alpha}_m^{'})\rho^{''}(\tilde{\alpha}_{k-1}^{'})\dots
\rho^{''}(\tilde{\alpha}_1^{'}).$$
It is easy to see 
$$\rho^{''}(\tilde{\alpha}_m^{'})\rho^{''}(\tilde{\alpha}_{k-1}^{'})\dots
\rho^{''}(\tilde{\alpha}_1^{'})
=\rho^{''}(\widetilde{\alpha^{'}_{m}}) \rho^{''}(\gamma_{m-1}) \rho^{''}(\widetilde{\alpha^{'}_{m-1}})
\rho^{''}(\gamma_{m-2})\dots \rho^{''}(\gamma_1) \rho^{''}(\widetilde{\alpha^{'}_{1}})$$
where $\gamma_t = a_{\beta}^{-1}$ or $b_{\beta}^{-1}$ for $1\leq t\leq m-1$. Since 
$\rho^{''}( a_{\beta}^{-1}) =  \rho^{''}( b_{\beta}^{-1}) = A$, we have 
\begin{align*}
\nu^{*}(\rho^{'})(\tilde{\alpha}) &= \rho^{''}( \widetilde{\alpha^{'}_{m}}) \rho^{''}(\gamma_{m-1}) \rho^{''}(\widetilde{\alpha^{'}_{m-1}})
\rho^{''}(\gamma_{m-2})\dots \rho^{''}(\gamma_1) \rho^{''}(\widetilde{\alpha^{'}_{1}})\\
&=  \rho^{'}( \widetilde{\alpha^{'}_{m}}) A \rho^{'}(\widetilde{\alpha^{'}_{m-1}})
A\dots A\rho^{'}(\widetilde{\alpha^{'}_{1}}).
\end{align*}
Thus we get
$$\nu( \Phi^{(M,N)}(\alpha))(\rho^{'}) =(A\rho^{'}(\widetilde{\alpha^{'}_{m}})A\rho^{'}(\widetilde{\alpha^{'}_{m-1}})\dots A\rho^{'}(\widetilde{\alpha^{'}_{1}}))_{\overline{j},\overline{i}} = \Phi^{(M^{'},N^{'})}(\Theta_{(D,\beta)} (\alpha))(\rho^{'}).$$

If $\alpha$ is a framed knot. Let $\eta$ be a path in $UM$ connecting the base point of $\pi_1(UM)$
to $\tilde{\alpha}$ when $N=\emptyset$ or a path connecting $\tilde{N}$ to $\tilde{\alpha}$ when
$N\neq \emptyset$ such that $\eta\cap \tilde{D} = \emptyset$. Similarly suppose  $\alpha = \alpha_m * \alpha_{m-1}* \dots * \alpha_1$ where all $\alpha_i$ are framed oriented arcs and parts of $\alpha$ such that each $\alpha_t$ has two ends on $\beta$ and does not intersect with $D$ on it's interior. 
Let $\alpha_t^{'} = \text{pr}^{-1}(\alpha_t), 1\leq t\leq m$, then
$$\Theta_{(D,\beta)}(\alpha) = \sum_{1\leq i,k_1,\dots,k_{m-1}\leq n} (\alpha_m^{'})_{i,k_{m-1}}
(\alpha_{m-1}^{'})_{k_{m-1},k_{m-2}}\dots (\alpha_1^{'})_{k_1,i}.$$

For any $\rho^{'}\in \tilde{\chi}_n(M^{'},N^{'})$, we have 
\begin{equation*}
\begin{split}
\Phi^{(M^{'},N^{'})}(\Theta_{(D,\beta)} (\alpha))(\rho^{'})
&= \sum_{1\leq i,k_1,\dots,k_{m-1}\leq n} (A\rho^{'}(\widetilde{\alpha^{'}_{m}}))_{\overline{i},\overline{k_{m-1}}}
 (A\rho^{'}(\widetilde{\alpha^{'}_{m-1}}))_{\overline{k_{m-1}},\overline{k_{m-2}}}\dots(A\rho^{'}(\widetilde{\alpha^{'}_{1}}))_{\overline{k_1},\overline{i}}\\
&=\text{Trace}(A\rho^{'}(\widetilde{\alpha^{'}_{m}})A\rho^{'}(\widetilde{\alpha^{'}_{m-1}})\dots A\rho^{'}(\widetilde{\alpha^{'}_{1}}))
\end{split}
\end{equation*}
and
\begin{equation*}
\nu( \Phi^{(M,N)}(\alpha))(\rho^{'}) =  \Phi^{(M,N)}(\alpha)(\nu^{*}(\rho^{'})) =\text{Trace}(\nu^{*}(\rho^{'})(\eta^{-1}*\tilde{\alpha} *\eta)).
\end{equation*}
We assume $\eta(1)\in \widetilde{\alpha^{'}_{1}}$, otherwise we can relabel $\alpha_i$ to make this happpen, and $\eta(1)$ divides $\widetilde{\alpha^{'}_{1}}$
into two parts $\widetilde{\alpha^{'}_{1}}^{'}, \widetilde{\alpha^{'}_{1}}^{''}$
such that $\widetilde{\alpha^{'}_{1}} = \widetilde{\alpha^{'}_{1}}^{'}* \widetilde{\alpha^{'}_{1}}^{''}$.
Using the same technique as $\alpha$ being an arc, we get
\begin{align*}
\nu^{*}(\rho^{'})(\eta^{-1}*\tilde{\alpha}*\eta) = \rho^{''}(\eta^{-1} *\widetilde{\alpha^{'}_{1}}^{''})
 A\rho^{''}( \widetilde{\alpha^{'}_{m}}) A \rho^{''}(\widetilde{\alpha^{'}_{m-1}})
A\dots A\rho^{''}(\widetilde{\alpha^{'}_{1}}^{'} * \eta).
\end{align*}
Then we have 
\begin{align*}
\nu( \Phi^{(M,N)}(\alpha))(\rho^{'})
&=\text{Trace}( \rho^{''}(\eta^{-1} *\widetilde{\alpha^{'}_{1}}^{''})
 A\rho^{''}( \widetilde{\alpha^{'}_{m}}) A \rho^{''}(\widetilde{\alpha^{'}_{m-1}})
A\dots A\rho^{''}(\widetilde{\alpha^{'}_{1}}^{'} * \eta))\\
&= \text{Trace}( 
 A\rho^{''}( \widetilde{\alpha^{'}_{m}}) A \rho^{''}(\widetilde{\alpha^{'}_{m-1}})
A\dots A\rho^{''}(\widetilde{\alpha^{'}_{1}}^{'} * \eta) \rho^{''}(\eta^{-1} *\widetilde{\alpha^{'}_{1}}^{''}))\\
&= \text{Trace}( 
 A\rho^{''}( \widetilde{\alpha^{'}_{m}}) A \rho^{''}(\widetilde{\alpha^{'}_{m-1}})
A\dots A\rho^{''}(\widetilde{\alpha^{'}_{1}}^{'} *\widetilde{\alpha^{'}_{1}}^{''}))\\
&= \text{Trace}( 
 A\rho^{''}( \widetilde{\alpha^{'}_{m}}) A \rho^{''}(\widetilde{\alpha^{'}_{m-1}})
A\dots A\rho^{''}(\widetilde{\alpha^{'}_{1}}))\\
&= \text{Trace}( 
 A\rho^{'}( \widetilde{\alpha^{'}_{m}}) A \rho^{'}(\widetilde{\alpha^{'}_{m-1}})
A\dots A\rho^{'}(\widetilde{\alpha^{'}_{1}})) \\
&= \Phi^{(M^{'},N^{'})}(\Theta_{(D,\beta)} (\alpha))(\rho^{'}).
\end{align*}

\end{proof}

\section{Proof for Theorem \ref{thm3.11}}
For any $m$ by $k$ matrix $A$, $C_t(A),1\leq t\leq k,$ denotes the $t$-th column of $A$, 
$R_t(A),1\leq t\leq m$, denotes the $t$-th row of $A$.

\subsection{The case when $N=\emptyset$}\label{sub4.1}

Sikora proved $S_n(M;\mathbb{C},1)\simeq G_n(M)$ \cite{sikora2005skein}.
 L{\^e} and Sikora proved $S_n(M,\emptyset,1)\simeq S_n(M;\mathbb{C},1)$, which is related to the spin structure $h$ \cite{le2021stated}.
Then it is easy to show the combination of 
$S_n(M,\emptyset,1)\simeq S_n(M;\mathbb{C},1)\simeq G_n(M)\rightarrow R_n(M,\emptyset)$ is $\Phi$
,where the third map $G_n(M)\rightarrow R_n(M,\emptyset)$ is $\cY_h$ in Reamrk \ref{rem3.14}. Thus $\Phi$ is a well-defined surjective algebra homomorphism. Especially Ker$\Phi = \sqrt{0}$ since Ker$\cY_h = \sqrt{0}$.

\subsection{Independence of how to kill sinks and souces ($N\neq \emptyset$)}\label{sub4.2}
When we try to use  relation (\ref{wzh.five}) to kill all the sinks and sources, we first drag all the sinks and sources close enough to some component of $N$, then use relation (\ref{wzh.five}). In this subsection, we want to show $\Phi$ is independent of how to kill sinks and sources, that is, to show $\Phi$ is independent of how we drag sinks and sources close to $N$.

Let $l$ be a stated $n$-web. Suppose $l^{'}$ and $l^{''}$ are obtained from $l$ by killing all the sinks and sources. Note that for every sink or source of $l$, we may use different ways to kill this sink or source to get $l^{'}$ and $l^{''}$. First assume we kill all the sinks and sources in the same way to obtain $l^{'}$ and $l^{''}$ except one source or sink, which is denoted as $\mathfrak{S}$. Let $l_1$ be obtained from $l$ by first  killing all the sinks and sources except $\mathfrak{S}$ using the same way as $l^{'}$ and $l^{''}$ then eliminating the component containing $\mathfrak{S}$.

Suppose $\mathfrak{S}$ is a sink.
Then
\begin{align*}
l^{'} = [\sum_{\sigma\in S_n}(-1)^{\ell(\sigma)}(\eta_n*\alpha_n)_{\sigma(n),u_n}(\eta_{n-1}*\alpha_{n-1})_{\sigma(n-1),u_{n-1}}\dots (\eta_{1}*\alpha_{1})_{\sigma(1),u_1}]\;l_1\\
l^{''} = [\sum_{\sigma\in S_n}(-1)^{\ell(\sigma)}(\gamma_n*\alpha_n)_{\sigma(n),u_n}(\gamma_{n-1}*\alpha_{n-1})_{\sigma(n-1),u_{n-1}}\dots (\gamma_{1}*\alpha_{1})_{\sigma(1),u_1}]\;l_1
\end{align*}
where $\alpha_t,\eta_t,\gamma_t,1\leq t\leq n, $ are framed oriented  arcs such that $\eta_t*\alpha_t,
\gamma_t*\alpha_t,1\leq t\leq n$, are well-defined framed oriented  boundary arcs in $(M,N)$. Note that
$\eta_t(1),1\leq t\leq n$, belong to  a same component of $N$ (the same with $\gamma_t(1)$), and $\eta_t,1\leq t\leq n$ are isotopic to each other (the same with $\gamma_t$).

For any element $\rho\in\tilde{\chi}_n(M,N)$, we have 
\begin{align*}
&(\sum_{\sigma\in S_n}(-1)^{\ell(\sigma)}tr_{(\eta_n*\alpha_n)_{\sigma(n),u_n}}
tr_{(\eta_{n-1}*\alpha_{n-1})_{\sigma(n-1),u_{n-1}}}\dots tr_{(\eta_1*\alpha_1)_{\sigma(1),u_1}})(\rho)\\
= &\sum_{\sigma\in S_n}(-1)^{\ell(\sigma)}[A\rho(\widetilde{\eta_n*\alpha_n})]_{\overline{\sigma(n)},\overline{u_n}}
[A\rho(\widetilde{\eta_{n-1}*\alpha_{n-1}})]_{\overline{\sigma(n-1)},\overline{u_{n-1}}}\dots
[A\rho(\widetilde{\eta_1*\alpha_1})]_{\overline{\sigma(1)},\overline{u_1}}\\
= &(-1)^{\frac{n(n-1)}{2}}\sum_{\sigma\in S_n}(-1)^{\ell(\sigma)}[A\rho(\widetilde{\eta_n}*\widetilde{\alpha_n})]_{\sigma(n),\overline{u_n}}
[A\rho(\widetilde{\eta_{n-1}}*\widetilde{\alpha_{n-1}})]_{\sigma(n-1),\overline{u_{n-1}}}\dots
[A\rho(\widetilde{\eta_1}*\widetilde{\alpha_1})]_{\sigma(1),\overline{u_1}}\\
= &(-1)^{\frac{n(n-1)}{2}}\text{det}
\begin{pmatrix}
C_{\overline{u_1}}(A\rho(\widetilde{\eta_1}*\widetilde{\alpha_1}))&
\dots& C_{\overline{u_{n-1}}}(A\rho(\widetilde{\eta_{n-1}}*\widetilde{\alpha_{n-1}}))&
C_{\overline{u_n}}(A\rho(\widetilde{\eta_n}*\widetilde{\alpha_n}))
\end{pmatrix}\\
= &(-1)^{\frac{n(n-1)}{2}}\text{det}(A) \text{det}
\begin{pmatrix}
C_{\overline{u_1}}(\rho(\widetilde{\eta_1}*\widetilde{\alpha_1}))&\dots&
C_{\overline{u_{n-1}}}(\rho(\widetilde{\eta_{n-1}}*\widetilde{\alpha_{n-1}}))&
C_{\overline{u_n}}(\rho(\widetilde{\eta_n}*\widetilde{\alpha_n}))
\end{pmatrix}\\
= &(-1)^{\frac{n(n-1)}{2}} \text{det}
\begin{pmatrix}
C_{\overline{u_1}}(\rho(\widetilde{\eta_1}*\widetilde{\alpha_1}))&\dots &
C_{\overline{u_{n-1}}}(\rho(\widetilde{\eta_{n-1}}*\widetilde{\alpha_{n-1}})) &
C_{\overline{u_n}}(\rho(\widetilde{\eta_n}*\widetilde{\alpha_n}))
\end{pmatrix}.
\end{align*}
Similarly we have 
\begin{align*}
&(\sum_{\sigma\in S_n}(-1)^{\ell(\sigma)}tr_{(\gamma_n*\alpha_n)_{\sigma(n),u_n}}
tr_{(\gamma_{n-1}*\alpha_{n-1})_{\sigma(n-1),u_{n-1}}}\dots tr_{(\gamma_1*\alpha_1)_{\sigma(1),u_1}})(\rho)\\
= &(-1)^{\frac{n(n-1)}{2}} \text{det}
\begin{pmatrix}
C_{\overline{u_1}}(\rho(\widetilde{\gamma_1}*\widetilde{\alpha_1}))&\dots&
C_{\overline{u_{n-1}}}(\rho(\widetilde{\gamma_{n-1}}*\widetilde{\alpha_{n-1}}))&
C_{\overline{u_n}}(\rho(\widetilde{\gamma_n}*\widetilde{\alpha_n}))
\end{pmatrix}\\
= &(-1)^{\frac{n(n-1)}{2}} \text{det}
\begin{pmatrix}
C_{\overline{u_1}}(\rho(\widetilde{\gamma_1}*\widetilde{\eta_1}^{-1})\rho(\widetilde{\eta_1}*\widetilde{\alpha_1}))&\dots&
C_{\overline{u_n}}(\rho(\widetilde{\gamma_n}*\widetilde{\eta_n}^{-1})\rho(\widetilde{\eta_n}*\widetilde{\alpha_n}))
\end{pmatrix}\\
= &(-1)^{\frac{n(n-1)}{2}} \text{det}
\begin{pmatrix}
C_{\overline{u_1}}(\rho(\widetilde{\eta_1}*\widetilde{\alpha_1}))&\dots&
C_{\overline{u_{n-1}}}(\rho(\widetilde{\eta_{n-1}}*\widetilde{\alpha_{n-1}}))&
C_{\overline{u_n}}(\rho(\widetilde{\eta_n}*\widetilde{\alpha_n}))
\end{pmatrix}.
\end{align*}
The last equality is because $\rho(\widetilde{\gamma_1}*\widetilde{\eta_1}^{-1})
=\dots = \rho(\widetilde{\gamma_n}*\widetilde{\eta_n}^{-1})\in SL(n,\mathbb{C})$.

Suppose $\mathfrak{S}$ is a source. Similarly we have 
\begin{align*}
l^{'} = [\sum_{\sigma\in S_n}(-1)^{\ell(\sigma)}(\beta_n*\epsilon_n)_{v_n, \sigma(n)}(\beta_{n-1}*\epsilon_{n-1})_{v_{n-1}, \sigma(n-1)}\dots (\beta_1*\epsilon_1)_{v_1, \sigma(1)}]\;l_1\\
l^{''} = [\sum_{\sigma\in S_n}(-1)^{\ell(\sigma)}(\beta_n*\zeta_n)_{v_n, \sigma(n)}(\beta_{n-1}*\zeta_{n-1})_{v_{n-1}, \sigma(n-1)}\dots (\beta_1*\zeta_1)_{v_1, \sigma(1)}]\;l_1\\
\end{align*}
where $\beta_t,\epsilon_t,\zeta_t,1\leq t\leq n$, are framed oriented  arcs such that $\beta_t*\epsilon_t,
\beta_t*\zeta_t,1\leq t\leq n$, are well-defined framed oriented  boundary arcs in $(M,N)$. 

For any element $\rho\in\tilde{\chi}_n(M,N)$, similarly we can get 
\begin{align*}
&(\sum_{\sigma\in S_n}(-1)^{\ell(\sigma)}tr_{(\beta_n*\epsilon_n)_{v_n, \sigma(n)}}
tr_{((\beta_{n-1}*\epsilon_{n-1})_{v_{n-1}, \sigma(n-1)}}\dots tr_{(\beta_1*\epsilon_1)_{v_1, \sigma(1)}})(\rho)\\
= &(-1)^{\frac{n(n-1)}{2}} \text{det}
\begin{pmatrix}
R_{\overline{v_1}}(A\rho(\widetilde{\beta_1}*\widetilde{\epsilon_1}))\\
\vdots\\
R_{\overline{v_{n-1}}}(\rho(\widetilde{\beta_{n-1}}*\widetilde{\epsilon_{n-1}}))\\
R_{\overline{v_n}}(\rho(\widetilde{\beta_n}*\widetilde{\epsilon_n}))
\end{pmatrix}.
\end{align*}
Then we have 
\begin{align*}
&(\sum_{\sigma\in S_n}(-1)^{\ell(\sigma)}tr_{(\beta_n*\zeta_n)_{v_n, \sigma(n)}}
tr_{((\beta_{n-1}*\zeta_{n-1})_{v_{n-1}, \sigma(n-1)}}\dots tr_{(\beta_1*\zeta_1)_{v_1, \sigma(1)}})(\rho)\\
= &(-1)^{\frac{n(n-1)}{2}} \text{det}
\begin{pmatrix}
R_{\overline{v_1}}(A\rho(\widetilde{\beta_1}*\widetilde{\zeta_1}))\\
\vdots\\
R_{\overline{v_{n-1}}}(\rho(\widetilde{\beta_{n-1}}*\widetilde{\zeta_{n-1}}))\\
R_{\overline{v_n}}(\rho(\widetilde{\beta_n}*\widetilde{\zeta_n}))
\end{pmatrix}\\&=
(-1)^{\frac{n(n-1)}{2}} \text{det}
\begin{pmatrix}
R_{\overline{v_1}}(A\rho(\widetilde{\beta_1}*\widetilde{\epsilon_1})\rho(\widetilde{\epsilon_1}^{-1} * \widetilde{\zeta_1}) )\\
\vdots\\
R_{\overline{v_{n-1}}}(\rho(\widetilde{\beta_{n-1}}*\widetilde{\epsilon_{n-1}})\rho(\widetilde{\epsilon_{n-1}}^{-1} * \widetilde{\zeta_{n-1}}))\\
R_{\overline{v_n}}(\rho(\widetilde{\beta_n}*\widetilde{\epsilon_n})\rho(\widetilde{\epsilon_n}^{-1} * \widetilde{\zeta_n}) )\\
\end{pmatrix}\\
&=(-1)^{\frac{n(n-1)}{2}} \text{det}
\begin{pmatrix}
R_{\overline{v_1}}(A\rho(\widetilde{\beta_1}*\widetilde{\epsilon_1}))\\
\vdots\\
R_{\overline{v_{n-1}}}(\rho(\widetilde{\beta_{n-1}}*\widetilde{\epsilon_{n-1}}))\\
R_{\overline{v_n}}(\rho(\widetilde{\beta_n}*\widetilde{\epsilon_n}))
\end{pmatrix}.
\end{align*}

Thus we have $\Phi(l^{'}) = \Phi(l^{''})$. For general case, we have a sequence 
$l^{'} = l^{(1)},l^{(2)},\dots, l^{(k)}= l^{''}$ such that $l^{(t)},1\leq t\leq k,$ are obtained from $l$
by kill all sources and sinks using relation (\ref{wzh.five}) and $l^{(t)}, l^{(t+1)},1\leq t\leq k-1,$ are obtained from $l$ in only one different sink or source. Then $\Phi(l^{'}) = \Phi(l^{(1)})=\Phi(l^{(2)})=\dots=
\Phi( l^{(k)})= \Phi(l^{''})$.
Then $\Phi$ is well-defined on the set of framed $n$-webs.

Suppose the stated $n$-webs $l_1$ and $l_2$ are isotopic. From the definition of $\Phi$, we first use relation relation (\ref{wzh.five}) to kill all the sinks and sources to obtain $(l_1)^{'}$. According to the isotopy between $l_1$ and $l_2$ and how we skill sinks and sources in $l_1$,
 we can pick a way to kill all the sinks and sources in $l_2$ to obtain $(l_2)^{'}$ such that  $(l_1)^{'}$ and 
 $(l_2)^{'}$ are homopotic relative to $N$. Thus $\Phi(l_1) = \Phi(l_2)$, that is, $\Phi$ is well-defined on the set of isotopy classes of stated $n$-webs.

\subsection{Checking for relations ($N\neq \emptyset$)}\label{sub4.3}
We use $l^{'}$ (respectively $l^{''}$) to denote the stated $n$-webs on the left (repectively right) hand side of "$=$" in relations (\ref{w.cross})-(\ref{wzh.eight}).

It is obvious that $\Phi$ preserves relation (\ref{w.cross}).

Since, for any $\rho \in \tilde{\chi}_n(M,N)$, $\rho(\vartheta) = d_n I$, then $\Phi$ respects relations
(\ref{w.twist})
and
(\ref{w.unknot}).

Then we want to show $\Phi$ preserves relation  (\ref{wzh.four}). From the definition of $\Phi$ and Subsection \ref{sub4.2}, we can suppose the parts outside of the box are $2n$ framed oriented arcs connected to the box. We label the framed oriented arcs connected to the box on the left edge as $\alpha_n, \dots, \alpha_2,\alpha_1$ from top to bottom, 
label the framed oriented arcs connected to the box on the right edge as $\beta_n, \dots, \beta_2,\beta_1$ from top to bottom. Suppose $s(\alpha_t(0)) = u_t, s(\beta_t(1))=v_t,1\leq t\leq n$. To kill the sink and source in $l^{'}$ using relation (\ref{wzh.five}), we use the same path to drag them close to $N$. 

For $\rho\in\tilde{\chi}_n(M,N)$, we have
\begin{align*}
\Phi(l^{'})(\rho) =& \text{det}
\begin{pmatrix}
R_{\overline{v_n}}(A\rho(\widetilde{\beta_n}*\widetilde{\epsilon_n}))\\
\vdots\\
R_{\overline{v_1}}(A\rho(\widetilde{\beta_1}*\widetilde{\epsilon_1}))
\end{pmatrix}\text{det}
\begin{pmatrix}
C_{\overline{u_1}}(\rho(\widetilde{\eta_1}*\widetilde{\alpha_1}))&\dots &
C_{\overline{u_n}}(\rho(\widetilde{\eta_n}*\widetilde{\alpha_n}))
\end{pmatrix}\\
=&\text{det}
\begin{pmatrix}
R_{\overline{v_n}}(A\rho(\widetilde{\beta_n}*\widetilde{\epsilon_n}))C_{\overline{u_1}}(\rho(\widetilde{\eta_1}*\widetilde{\alpha_1}))&\dots&
R_{\overline{v_n}}(A\rho(\widetilde{\beta_n}*\widetilde{\epsilon_n}))C_{\overline{u_n}}(\rho(\widetilde{\eta_n}*\widetilde{\alpha_n}))\\
\vdots& &\vdots\\
R_{\overline{v_1}}(A\rho(\widetilde{\beta_1}*\widetilde{\epsilon_1}))C_{\overline{u_1}}(\rho(\widetilde{\eta_1}*\widetilde{\alpha_1}))&\dots&
R_{\overline{v_1}}(A\rho(\widetilde{\beta_1}*\widetilde{\epsilon_1}))C_{\overline{u_n}}(\rho(\widetilde{\eta_n}*\widetilde{\alpha_n}))\\
\end{pmatrix}\\
=&\text{det}
\begin{pmatrix}
[A\rho(\widetilde{\beta_n}*\widetilde{\epsilon_n})\rho(\widetilde{\eta_1}*\widetilde{\alpha_1})]_{\overline{v_n},\overline{u_1}}&\dots&
[A\rho(\widetilde{\beta_n}*\widetilde{\epsilon_n})\rho(\widetilde{\eta_n}*\widetilde{\alpha_n})]_{\overline{v_n},\overline{u_n}}\\
\vdots& &\vdots\\
[A\rho(\widetilde{\beta_1}*\widetilde{\epsilon_1})\rho(\widetilde{\eta_1}*\widetilde{\alpha_1})]_{\overline{v_1},\overline{u_1}}&\dots&
[A\rho(\widetilde{\beta_1}*\widetilde{\epsilon_1})\rho(\widetilde{\eta_n}*\widetilde{\alpha_n})]_{\overline{v_1},\overline{u_n}}\\
\end{pmatrix}\\
=&(-1)^{\frac{n(n-1)}{2}}\text{det}
\begin{pmatrix}
[A\rho(\widetilde{\beta_1}*\widetilde{\epsilon_1}*\widetilde{\eta_1}*\widetilde{\alpha_1})]_{\overline{v_1},\overline{u_1}}&\dots&
[A\rho(\widetilde{\beta_1}*\widetilde{\epsilon_1}*\widetilde{\eta_n}*\widetilde{\alpha_n})]_{\overline{v_1},\overline{u_n}}\\
\vdots& &\vdots\\
[A\rho(\widetilde{\beta_n}*\widetilde{\epsilon_n}*\widetilde{\eta_1}*\widetilde{\alpha_1})]_{\overline{v_n},\overline{u_1}}&\dots&
[A\rho(\widetilde{\beta_n}*\widetilde{\epsilon_n}*\widetilde{\eta_n}*\widetilde{\alpha_n})]_{\overline{v_n},\overline{u_n}}\\
\end{pmatrix}\\
\end{align*}

For each pair $1\leq i,j\leq n$, we use $a_{i,j}$ to denote the oriented straight line in the shaded box (it's framing is the one pointing to readers)
connecting  $\alpha_j(1)$  and $\beta_i(0)$  such that $\beta_i * a_{i,j} * \alpha_j$ is a well-defined stated framed oriented boundary arc.
Because we use the same path to drag the source and the sink,
then $$\rho(\widetilde{\beta_i}*\widetilde{\epsilon_i}*\widetilde{\eta_j}*\widetilde{\alpha_j})
= d_n \rho(\widetilde{\beta_i}*\widetilde{a_{i,j}}*\widetilde{\alpha_j})$$ for all $1\leq i,j\leq n$,
or 
$$\rho(\widetilde{\beta_i}*\widetilde{\epsilon_i}*\widetilde{\eta_j}*\widetilde{\alpha_j})
=  \rho(\widetilde{\beta_i}*\widetilde{a_{i,j}}*\widetilde{\alpha_j})$$ for all $1\leq i,j\leq n$.

For any $\rho\in\tilde{\chi}_n(M,N)$, since $(d_n)^{n} = 1$, we have 
\begin{align*}
&\Phi(l^{''})(\rho) \\= &(-1)^{\frac{n(n-1)}{2}}
\sum_{\sigma\in S_n}(-1)^{l(\sigma)}[A\rho(\widetilde{\beta_{\sigma(1)}}*\widetilde{\epsilon_{\sigma(1)}}*\widetilde{\eta_1}*\widetilde{\alpha_1})]_{\overline{v_{\sigma(1)}},\overline{u_1}}\dots
[A\rho(\widetilde{\beta_{\sigma(n)}}*\widetilde{\epsilon_{\sigma(n)}}*\widetilde{\eta_n}*\widetilde{\alpha_n})]_{\overline{v_{\sigma(n)}},\overline{u_n}}\\
=&(-1)^{\frac{n(n-1)}{2}}\text{det}
\begin{pmatrix}
[A\rho(\widetilde{\beta_1}*\widetilde{\epsilon_1}*\widetilde{\eta_1}*\widetilde{\alpha_1})]_{\overline{v_1},\overline{u_1}}&\dots&
[A\rho(\widetilde{\beta_1}*\widetilde{\epsilon_1}*\widetilde{\eta_n}*\widetilde{\alpha_n})]_{\overline{v_1},\overline{u_n}}\\
\vdots& &\vdots\\
[A\rho(\widetilde{\beta_n}*\widetilde{\epsilon_n}*\widetilde{\eta_1}*\widetilde{\alpha_1})]_{\overline{v_n},\overline{u_1}}&\dots&
[A\rho(\widetilde{\beta_n}*\widetilde{\epsilon_n}*\widetilde{\eta_n}*\widetilde{\alpha_n})]_{\overline{v_n},\overline{u_n}}\\
\end{pmatrix}\\
\end{align*}

Thus $\Phi(l^{'}) = \Phi(l^{''})$.

From the definition of $\Phi$, we know $\Phi$ respects relation
(\ref{wzh.five}).

Then we want to show $\Phi$ preserves relation (\ref{wzh.six}), that is, to show
$$\Phi(\raisebox{-.20in}{
\begin{tikzpicture}
\tikzset{->-/.style=
{decoration={markings,mark=at position #1 with
{\arrow{latex}}},postaction={decorate}}}
\filldraw[draw=white,fill=gray!20] (-0.7,-0.7) rectangle (0,0.7);
\draw [line width =1.5pt,decoration={markings, mark=at position 1 with {\arrow{>}}},postaction={decorate}](0,0.7)--(0,-0.7);
\draw [color = black, line width =1pt] (0 ,0.3) arc (90:270:0.5 and 0.3);
\node [right]  at(0,0.3) {$i$};
\node [right] at(0,-0.3){$j$};
\draw [line width =1pt,decoration={markings, mark=at position 0.5 with {\arrow{>}}},postaction={decorate}](-0.5,0.02)--(-0.5,-0.02);
\end{tikzpicture}} ) = \Phi( \raisebox{-.20in}{
\begin{tikzpicture}
\tikzset{->-/.style=
{decoration={markings,mark=at position #1 with
{\arrow{latex}}},postaction={decorate}}}
\filldraw[draw=white,fill=gray!20] (-0.7,-0.7) rectangle (0,0.7);
\draw [line width =1.5pt,decoration={markings, mark=at position 1 with {\arrow{>}}},postaction={decorate}](0,0.7)--(0,-0.7);
\draw [color = black, line width =1pt] (0 ,0.3) arc (90:270:0.5 and 0.3);
\node [right]  at(0,0.3) {$i$};
\node [right] at(0,-0.3){$j$};
\draw [line width =1pt,decoration={markings, mark=at position 0.5 with {\arrow{<}}},postaction={decorate}](-0.5,0.02)--(-0.5,-0.02);
\end{tikzpicture}} ) = \delta_{\bar j,i }\,  (-1)^{n-i}.$$
We use $\alpha_1$ to denote $\raisebox{-.20in}{
\begin{tikzpicture}
\tikzset{->-/.style=
{decoration={markings,mark=at position #1 with
{\arrow{latex}}},postaction={decorate}}}
\filldraw[draw=white,fill=gray!20] (-0.7,-0.7) rectangle (0,0.7);
\draw [line width =1.5pt,decoration={markings, mark=at position 1 with {\arrow{>}}},postaction={decorate}](0,0.7)--(0,-0.7);
\draw [color = black, line width =1pt] (0 ,0.3) arc (90:270:0.5 and 0.3);
\node [right]  at(0,0.3) {$i$};
\node [right] at(0,-0.3){$j$};
\draw [line width =1pt,decoration={markings, mark=at position 0.5 with {\arrow{>}}},postaction={decorate}](-0.5,0.02)--(-0.5,-0.02);
\end{tikzpicture}} $ and $\alpha_2$ to denote 
$\raisebox{-.20in}{
\begin{tikzpicture}
\tikzset{->-/.style=
{decoration={markings,mark=at position #1 with
{\arrow{latex}}},postaction={decorate}}}
\filldraw[draw=white,fill=gray!20] (-0.7,-0.7) rectangle (0,0.7);
\draw [line width =1.5pt,decoration={markings, mark=at position 1 with {\arrow{>}}},postaction={decorate}](0,0.7)--(0,-0.7);
\draw [color = black, line width =1pt] (0 ,0.3) arc (90:270:0.5 and 0.3);
\node [right]  at(0,0.3) {$i$};
\node [right] at(0,-0.3){$j$};
\draw [line width =1pt,decoration={markings, mark=at position 0.5 with {\arrow{<}}},postaction={decorate}](-0.5,0.02)--(-0.5,-0.02);
\end{tikzpicture}} $, we have 
$$\Phi(\alpha_1)(\rho) = [A\rho(\widetilde{\alpha_1})]_{\bar{j},\bar{i}}
= d_n A_{\bar{j},\bar{i}} =  \delta_{\bar j,i }\,  (-1)^{n-i},\Phi(\alpha_2)(\rho) = [A\rho(\widetilde{\alpha_2})]_{\bar{i},\bar{j}}
= A_{\bar{i},\bar{j}} =  \delta_{\bar j,i }\,  (-1)^{n-i}.$$

For relation (\ref{wzh.seven}), we only prove the case when the white dot represents an arrow going from right to left.
From the definition of $\Phi$ and Subsection \ref{sub4.2}, we only have two cases to consider: (1) the left hand side of "$=$" is a knot, (2)
the left hand side of "$=$" is an arc. 

When the left hand side of "$=$" is a knot, the right hand side of "$=$" is a framed oriented boundary arc, which is denoted as $\alpha$. 
Then for $\rho\in\tilde{\chi}_n(M,N)$, we have 
\begin{align*}
\Phi(l^{''})(\rho) = \sum_{1\leq i\leq n}  (-1)^{i+1}[A\rho(\tilde{\alpha})]_{i,\bar{i}}=
\sum_{1\leq i\leq n}  [\rho(\tilde{\alpha})]_{\bar{i},\bar{i}} = \text{Trace}(\rho(\tilde{\alpha}))
= \Phi(l^{'})(\rho).
\end{align*}

When the left hand side of "$=$" is an arc, the right hand side of "$=$" consists two framed oriented boundary arcs, which are denoted as $\gamma_2$ and $\gamma_1$ such that $\gamma_2$ is above $\gamma_1$ is the box. 
Suppose $s(\gamma_1(0)) = v, s(\gamma_2(1)) = u$.
Then for $\rho\in\tilde{\chi}_n(M,N)$, we have 
\begin{align*}
\Phi(l^{''})(\rho) &= \sum_{1\leq i\leq n}  (-1)^{i+1}[A\rho(\widetilde{\gamma_2})]_{\bar{u},\bar{i}}
[A\rho(\widetilde{\gamma_1})]_{{i},\bar{v}}=
 \sum_{1\leq i\leq n}  [A\rho(\widetilde{\gamma_2})]_{\bar{u},\bar{i}}
[\rho(\widetilde{\gamma_1})]_{\bar{i},\bar{v}}\\
&=[A\rho(\widetilde{\gamma_2})\rho(\widetilde{\gamma_1})]_{\bar{u},\bar{v}}
=[A\rho(\widetilde{\gamma_2}*\widetilde{\gamma_1})]_{\bar{u},\bar{v}}
= \Phi(l^{'})(\rho).
\end{align*}

From the definition of $\Phi$, we know $\Phi$ respects relation
 (\ref{wzh.eight}).

\subsection{Algebra homomorphism and surjectivity}
The definition of $\Phi$ indicates it is an algebra homormophism. 

When $N$ is empty, we already proved $\Phi$ is surjective.
If $N\neq \emptyset$.
 For any element $[\alpha]\in \pi_1(M,N)$, we choose a representative $\alpha$ for $[\alpha]$ such that $\alpha$ has no self-intersection and only intersects $\partial M$ at it's endpoints. Then we can give a framing to $\alpha$ to make $\alpha$ a framed oriented boundary arc for $(M,N)$. Then $R_n(M,N)$ is generated by $\Phi(\alpha_{j,i}),1\leq j,i\leq n, [\alpha]
\in\pi_1(M,N)$, as an algebra. Thus $\Phi$ is surjective.

In Section \ref{subb5}, we will give a unique way to lift  $[\alpha]$ to a framed oriented boundary arc.

\section{Classical limit  and Ker$\Phi$}\label{subb5}

In this section we try to understand the classical limit of stated $SL(n)$-skein module of three manifold. Then we will use the classical limit to show the Kernal of $\Phi$ is $\sqrt{0}$. We can use the  Lemma \ref{6688} to reduce the general three manifold to connected three manifold.

\begin{lemma}(\cite{blyth2018module,przytycki1998fundamentals})\label{8866}
Suppose $0\rightarrow A_1\rightarrow B_1 \rightarrow C_1\rightarrow 0,$ and 
$0\rightarrow A_2\rightarrow B_2 \rightarrow C_2\rightarrow 0$ are two short exact sequences, then 
$$0\rightarrow A_1\otimes B_2 + B_1\otimes A_2\rightarrow B_1\otimes B_2\rightarrow C_1\otimes C_2\rightarrow 0$$ is an exact sequence. All $A_i,B_i,C_i$ are vector spaces over $\mathbb{C}$ and all the involved maps are linear maps.

\end{lemma}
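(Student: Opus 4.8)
The plan is to prove this by repeated use of the fact that every $\mathbb{C}$-vector space is free, hence flat, so that tensoring any of the two given sequences with a fixed vector space preserves exactness. Write the two short exact sequences as $0\to A_1\xrightarrow{f_1} B_1\xrightarrow{g_1} C_1\to 0$ and $0\to A_2\xrightarrow{f_2} B_2\xrightarrow{g_2} C_2\to 0$. The first observation is that, since $\otimes$ over $\mathbb{C}$ is exact, the maps $f_1\otimes\mathrm{id}_{B_2}$ and $\mathrm{id}_{B_1}\otimes f_2$ are injective, so $A_1\otimes B_2$ and $B_1\otimes A_2$ are genuinely subspaces of $B_1\otimes B_2$ and the notation $A_1\otimes B_2+B_1\otimes A_2$ denotes an honest subspace. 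The first arrow in the asserted sequence is then the inclusion of this subspace, which is injective by construction, giving exactness at the left.

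Next I would identify the third arrow with $g_1\otimes g_2$ and check surjectivity: a tensor product of two surjective linear maps is surjective, since pure tensors span and every pure tensor $c_1\otimes c_2$ is hit by any $b_1\otimes b_2$ with $g_i(b_i)=c_i$. It is also immediate that $A_1\otimes B_2+B_1\otimes A_2\subseteq\ker(g_1\otimes g_2)$, because $g_1\circ f_1=0$ and $g_2\circ f_2=0$. So the only point requiring a genuine argument is the reverse inclusion $\ker(g_1\otimes g_2)\subseteq A_1\otimes B_2+B_1\otimes A_2$, which is exactness at $B_1\otimes B_2$.

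For this I would factor $g_1\otimes g_2$ as $(\mathrm{id}_{C_1}\otimes g_2)\circ(g_1\otimes\mathrm{id}_{B_2})$. By flatness, $g_1\otimes\mathrm{id}_{B_2}\colon B_1\otimes B_2\to C_1\otimes B_2$ is surjective with kernel (the image of) $A_1\otimes B_2$, and $\mathrm{id}_{C_1}\otimes g_2\colon C_1\otimes B_2\to C_1\otimes C_2$ has kernel (the image of) $C_1\otimes A_2$. Hence $\ker(g_1\otimes g_2)$ is the preimage under $g_1\otimes\mathrm{id}_{B_2}$ of $C_1\otimes A_2$, and since $g_1\otimes\mathrm{id}_{B_2}$ restricted to $B_1\otimes A_2$ already surjects onto $C_1\otimes A_2$, this preimage equals $(\,\text{kernel of }g_1\otimes\mathrm{id}_{B_2}\,)+B_1\otimes A_2=A_1\otimes B_2+B_1\otimes A_2$, as desired. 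An alternative, more hands-on route is to choose linear splittings $B_i\cong A_i\oplus C_i$, distribute the tensor product over the four summands $A_1\otimes A_2,\ A_1\otimes C_2,\ C_1\otimes A_2,\ C_1\otimes C_2$, and observe that $g_1\otimes g_2$ becomes the projection onto the last summand, whose kernel is visibly $A_1\otimes B_2+B_1\otimes A_2$. There is no serious obstacle here; the content is routine homological algebra, and the one thing to keep track of carefully is the various natural injections, so that the sum of subspaces inside $B_1\otimes B_2$ is unambiguous.
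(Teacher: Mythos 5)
Your argument is correct and complete. The paper itself does not prove this lemma; it simply cites \cite{blyth2018module,przytycki1998fundamentals}, so there is no ``paper's proof'' to compare against, and your write-up supplies a self-contained justification for a fact the paper takes on faith. Both of your routes work: the factorization $g_1\otimes g_2=(\mathrm{id}_{C_1}\otimes g_2)\circ(g_1\otimes\mathrm{id}_{B_2})$ together with the elementary identity $\phi^{-1}(W')=\ker\phi+U$ whenever $\phi(U)\supseteq W'$, and the more hands-on splitting $B_i\cong A_i\oplus C_i$. Over $\mathbb{C}$ the splitting route is arguably cleaner, since it reduces everything to a direct-sum decomposition into four blocks; the factorization route has the advantage of generalizing verbatim to flat modules over an arbitrary commutative ring, which is relevant given the paper's follow-up remark that the lemma holds in that generality. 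One small point worth being explicit about if you wanted to polish this further: the remark in the paper about general commutative rings requires the subspace sum $A_1\otimes B_2+B_1\otimes A_2$ to be read as a sum of \emph{images} inside $B_1\otimes B_2$, since without flatness the canonical maps $A_i\otimes B_j\to B_1\otimes B_2$ need not be injective; your proof correctly sidesteps this over $\mathbb{C}$ by invoking flatness up front.
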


Actually
Lemma \ref{8866}  is true with general commutative ring $R$ with unit and general $R$-modules.

\begin{lemma}\label{6688}
Suppose $(M,N)$ is the disjoint union of $(M_1,N_1)$ and $(M_2,N_2)$. If Ker$\,\Phi^{(M_i,N_i)}
=\sqrt{0}_{S_n(M_i,N_i,1)}$ for $i=1,2$, then we have Ker$\,\Phi^{(M,N)}
=\sqrt{0}_{S_n(M,N,1)}$.
\end{lemma}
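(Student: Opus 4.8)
The plan is to reduce everything to a formal computation with the tensor-product decomposition plus one standard fact from commutative algebra. Write $B_i=S_n(M_i,N_i,1)$, $C_i=R_n(M_i,N_i)$ and $A_i=\mathrm{Ker}\,\Phi^{(M_i,N_i)}$, so that by hypothesis $A_i=\sqrt{0}_{B_i}$. First I would record the three compatibilities with disjoint unions: $S_n(M,N,1)\simeq B_1\otimes B_2$ (noted in the introduction), $R_n(M,N)\simeq C_1\otimes C_2$ (the definition of $R_n$ on disjoint unions), and, under these identifications, $\Phi^{(M,N)}=\Phi^{(M_1,N_1)}\otimes\Phi^{(M_2,N_2)}$. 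Only the last needs a comment: a stated $n$-web in $(M,N)$ is a disjoint union of one in $(M_1,N_1)$ and one in $(M_2,N_2)$; the arcs and knots obtained by killing sinks and sources via relation \eqref{wzh.five} inherit this splitting; $\tilde{\chi}_n(M,N)=\tilde{\chi}_n(M_1,N_1)\times\tilde{\chi}_n(M_2,N_2)$; and each factor $tr_\alpha$ in the definition in Theorem \ref{thm3.11} only depends on the component of $M$ containing $\alpha$. Hence $\Phi^{(M,N)}$ is the tensor product of the two maps.

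Next I would apply Lemma \ref{8866} to the short exact sequences $0\to A_i\to B_i\to C_i\to 0$, obtaining the exact sequence
\[
0\to A_1\otimes B_2+B_1\otimes A_2\to B_1\otimes B_2\to C_1\otimes C_2\to 0 ,
\]
so that $\mathrm{Ker}\,\Phi^{(M,N)}=A_1\otimes B_2+B_1\otimes A_2$ inside $S_n(M,N,1)=B_1\otimes B_2$. It then remains to prove $A_1\otimes B_2+B_1\otimes A_2=\sqrt{0}_{B_1\otimes B_2}$. For the inclusion $\subseteq$: the ring $B_1\otimes B_2$ is commutative by Corollary \ref{cccc3.2}, so its set of nilpotents is an ideal; since $A_1=\sqrt{0}_{B_1}$ and $A_2=\sqrt{0}_{B_2}$, every elementary tensor $a\otimes b$ with $a\in A_1$ or $b\in A_2$ is nilpotent, hence so is every finite sum of such, which gives the inclusion. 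For the inclusion $\supseteq$: by the exact sequence above the quotient $(B_1\otimes B_2)/(A_1\otimes B_2+B_1\otimes A_2)$ is isomorphic to $C_1\otimes_{\mathbb{C}}C_2$; the rings $C_1,C_2$ are reduced, being coordinate rings of affine algebraic sets over $\mathbb{C}$, and the tensor product over the perfect field $\mathbb{C}$ of two reduced algebras is reduced. Therefore every nilpotent element of $B_1\otimes B_2$ is sent to $0$ in $C_1\otimes C_2$, i.e. it lies in $A_1\otimes B_2+B_1\otimes A_2$. Combining the two inclusions yields $\mathrm{Ker}\,\Phi^{(M,N)}=\sqrt{0}_{S_n(M,N,1)}$.

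The only non-formal input is the classical fact that the tensor product of two reduced $\mathbb{C}$-algebras is again reduced (it holds because over a perfect field a reduced algebra is geometrically reduced); I would cite this rather than reprove it. Everything else is bookkeeping with Lemma \ref{8866} and the nilradical, so I do not anticipate a genuine obstacle — the one thing to be careful about is making the identification $\Phi^{(M,N)}=\Phi^{(M_1,N_1)}\otimes\Phi^{(M_2,N_2)}$ precise enough to legitimately feed it into Lemma \ref{8866}.
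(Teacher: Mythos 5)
Your proposal is correct and takes essentially the same route as the paper: both apply Lemma~\ref{8866} to the tensor decompositions $S_n(M,N,1)\simeq S_n(M_1,N_1,1)\otimes S_n(M_2,N_2,1)$ and $R_n(M,N)\simeq R_n(M_1,N_1)\otimes R_n(M_2,N_2)$, identify $\mathrm{Ker}\,\Phi^{(M,N)}$ with $\sqrt{0}_{B_1}\otimes B_2+B_1\otimes\sqrt{0}_{B_2}$, and then verify the two inclusions against $\sqrt{0}_{B_1\otimes B_2}$ (the paper uses the fact that $R_n(M,N)$ is reduced, which is the same reducedness-of-tensor-product fact you invoke). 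You make explicit two points the paper treats as implicit — that $\Phi^{(M,N)}=\Phi^{(M_1,N_1)}\otimes\Phi^{(M_2,N_2)}$ under the identifications, and that the tensor product of reduced $\mathbb{C}$-algebras is reduced — but these are faithful elaborations, not a different argument.
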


\begin{proof}
Since $R_n(M,N)$ contains no nonzero nilpotents, we have  $\sqrt{0}_{S_n(M,N,1)}\subset$\,Ker$\,\Phi^{(M,N)}.$
We have $S_n(M,N,1) = S_n(M_1,N_1,1)\otimes S_n(M_2,N_2,1),\;R_n(M,N) = R_n(M_1,N_1) \otimes R_n(M_2,N_2)$. From the assumption, we have the following two exact sequences:
\begin{align*}
0\rightarrow \sqrt{0}_{S_n(M_i,N_i,1)}\rightarrow S_n(M_i,N_i,1) \rightarrow R_n(M_i,N_i)\rightarrow 0
\end{align*}
for $i=1,2$.
Then from Lemma \ref{8866}, we get the following exact sequence:
\begin{align*}0\rightarrow \sqrt{0}_{S_n(M_1,N_1,1)}\otimes S_n(M_2,N_2,1) + S_n(M_1,N_1,1)\otimes\sqrt{0}_{S_n(M_2,N_2,1)}\rightarrow\\ S_n(M_1,N_1,1)\otimes S_n(M_2,N_2,1)\rightarrow R_n(M_1,N_1)\otimes R_n(M_2,N_2)\rightarrow 0.
\end{align*}
Thus we have 
$$\text{Ker}\Phi^{(M,N)} = \sqrt{0}_{S_n(M_1,N_1,1)}\otimes S_n(M_2,N_2,1) + S_n(M_1,N_1,1)\otimes\sqrt{0}_{S_n(M_2,N_2,1)}\subset \sqrt{0}_{S_n(M,N,1)}.$$
Thus we have 
$$\text{Ker}\Phi^{(M,N)} = \sqrt{0}_{S_n(M_1,N_1,1)}\otimes S_n(M_2,N_2,1) + S_n(M_1,N_1,1)\otimes\sqrt{0}_{S_n(M_2,N_2,1)}= \sqrt{0}_{S_n(M,N,1)}.$$

\end{proof}

Then in the remaining of this section, we will assume all the marked three manifolds involved are connected.
We also fix a relative spin structure $h$ for $(M,N)$. For any framed oriented  boundary arc $\alpha$ in $(M,N)$, we consider $[\alpha]$ as an element in $\pi_1(M,N)$ by forgetting the framing of $\alpha$.

\begin{rem}\label{rre5.1}
An element $[\alpha] \in \pi_1(M,N)$ and $1\leq i,j\leq n$ uniquely determine an element in $S_n(M,N,1)$ in the following way: We choose a good representative $\alpha$ such that $\alpha(1)$ is higher than $\alpha(0)$ if $\alpha(0)$ and $\alpha(1)$ belong to the same component of $N$,  $\alpha$ only intersects $\partial M$ at it's endpoints, and $\alpha$ does not intersect itself. Then we give a framing to $\alpha$ respecting $N$, that is, the framing at endpoints are given by the velocity vectors of $N$. We denote this framed oriented boundary arc as $\hat{\alpha}$. We choose the framing such that $h(\widetilde{\hat{\alpha}}) = 0$, then we obtain an element $\hat{\alpha}_{i,j}
\in S_n(M,N,1)$. Suppose we choose a different good representative $\alpha^{'}$. We have $[\alpha] = [\alpha^{'}]\in \pi_1(M,N)$ and $h(\widetilde{\hat{\alpha^{'}}}) = h(\widetilde{\hat{\alpha}}) = 0$. Then $\hat{\alpha^{'}}_{i,j} = \hat{\alpha}_{i,j}$ because of relations (\ref{w.cross}), (\ref{w.twist}) , (\ref{wzh.eight}) and Corollary \ref{cccc3.2}.

 We use 
$S^{[\alpha]}_{i,j}$ to denote $\hat{\alpha}_{i,j}$, and $S^{[\alpha]}$ to denote an $n$ by $n$  matrix in $S_n(M,N,1)$ such that $(S^{[\alpha]})_{i,j} = S^{[\alpha]}_{i,j},1\leq i,j\leq n$.

For any two stated oriented framed boundary arcs $\alpha_1,\alpha_2$, suppose $s(\alpha_1(0))
= s(\alpha_2(0))$ and $s(\alpha_1(1))
= s(\alpha_2(1))$. If $h(\widetilde{\alpha_1}) = h(\widetilde{\alpha_2}) $ and $[\alpha_1] = [\alpha_2]\in
\pi_1(M,N)$, then $\alpha_1 =\alpha_2 \in S_n(M,N,1)$ because of relations (\ref{w.cross}), (\ref{w.twist}), (\ref{wzh.eight}) and Corollary \ref{cccc3.2}.

The for any stated oriented framed boundary arcs $\alpha_{i,j}$, we have $$\alpha_{i,j} =
d_n^{h(\tilde{\alpha})} S^{[\alpha]}_{i,j}\in S_n(M,N,1).$$
\end{rem}

\begin{proposition}\label{prop5.2}
(a) For any two elements $[\alpha],[\beta]\in \pi_1(M,N)$, if $[\beta][\alpha]$ makes sense, then 
$A S^{[\beta*\alpha]} = A S^{[\beta]} A S^{[\alpha]}$.

(b) For any $[\eta]\in \pi_1(M,N)$, we have 
det$(S^{[\eta]}) = 1$. Especially det$(A S^{[\eta]}) = 1$.

(c) Suppose $[o]\in \pi_1(M,N)$ is the identity morphism for an object, then $S^{[o]} = d_n A$.
Especially $A S^{[o]} = I$.
\end{proposition}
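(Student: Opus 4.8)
The plan is to verify the three identities by unwinding the definition of $S^{[\alpha]}$ from Remark \ref{rre5.1} and the definition of $tr_\alpha$ together with the skein relations, chiefly the ``rotation'' relation \eqref{wzh.seven} and the turnback relation \eqref{wzh.six}. For part (c), I would take a short arc $\alpha$ representing the identity morphism $[o]$, i.e. a small arc both of whose endpoints lie on the same component of $N$ with $\alpha(1)$ higher than $\alpha(0)$, which can be isotoped into a ``turnback'' (half-moon) diagram. The state-$i,j$ version of such a turnback is exactly the left-hand side of relation \eqref{wzh.six}, which evaluates to $\delta_{\bar\jmath,i}c_i$ times the empty skein. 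Specializing to $v=1$ gives $c_i=(-1)^{n-i}$, so the turnback with states $i,j$ equals $\delta_{\bar\jmath,i}(-1)^{n-i}$; comparing with the matrix $A$ defined by $A_{i,j}=(-1)^{i+1}\delta_{\bar\imath,j}$ shows $S^{[o]}_{i,j}=d_n A_{i,j}$ after accounting for the sign bookkeeping $(-1)^{n-i}$ vs $(-1)^{i+1}$ (their ratio is $(-1)^{n-1}=d_n$), hence $S^{[o]}=d_n A$; then $A S^{[o]} = d_n A^2 = d_n\cdot d_n I = I$ since $A^2 = d_n I$ (noted right after the definition of $tr_\alpha$).

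For part (b), the determinant statement should follow by applying the state-sum/antisymmetrizer relation \eqref{wzh.five} to a bundle of $n$ parallel copies of a representative of $[\eta]$, capped off on one side. Concretely, $\det(S^{[\eta]}) = \sum_{\sigma\in S_n}(-1)^{\ell(\sigma)} S^{[\eta]}_{\sigma(1),1}\cdots S^{[\eta]}_{\sigma(n),n}$ (at $v=1$, $q=1$, so $(-q)^{\ell(\sigma)} = (-1)^{\ell(\sigma)}$), and this is the evaluation of $n$ parallel copies of $\eta$ with a coassociator/antisymmetrizer attached at each end; relations \eqref{wzh.four} and \eqref{wzh.five} collapse this to the empty skein, giving $1$. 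Alternatively, and perhaps more cleanly, I would deduce (b) directly from (a): since $\Phi$ intertwines everything with the algebraic-set side where $A\tilde\rho(\tilde\eta)\in SL(n,\mathbb C)\cdot A$ has determinant $1$, and since $\Phi$ is injective on the relevant subalgebra modulo nilpotents — but that risks circularity, so I prefer the direct skein computation. Once $\det(S^{[\eta]})=1$ is known, $\det(A S^{[\eta]}) = \det(A)\det(S^{[\eta]}) = 1\cdot 1 = 1$ because $\det A = 1$ (also noted after the definition of $tr_\alpha$).

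For part (a), I would choose good representatives $\beta,\alpha$ with $\beta(0)=\alpha(1)$ (so $[\beta][\alpha]=[\beta*\alpha]$ makes sense as a groupoid composition) and realize $S^{[\beta*\alpha]}$ by stacking the arc for $\beta$ on top of the arc for $\alpha$, then resolving the resulting interior endpoint. The key local move is relation \eqref{wzh.seven}: when the endpoint of $\alpha$ meets the starting point of $\beta$ in the interior, the ``cup'' created there is resolved as a sum $\sum_i (c_{\bar\imath})^{-1}(\cdots)$, which at $v=1$ becomes a sum over an inserted index with coefficient matrix $A^{-1}$ (up to the $d_n$ normalization), precisely producing the matrix identity $S^{[\beta*\alpha]} = S^{[\beta]}A^{-1}S^{[\alpha]}$ in matrix form, equivalently $A S^{[\beta*\alpha]} = A S^{[\beta]}A^{-1}\cdot A\cdot A^{-1}S^{[\alpha]}$ — I need to be careful to get the right power of $A$, using $A^2=d_n I$ and the relative-spin-structure normalization $h(\widetilde{\hat\alpha})=0$ to absorb the $d_n$'s. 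Writing it out, the claim $A S^{[\beta*\alpha]} = A S^{[\beta]}\, A S^{[\alpha]}$ is the statement that $S^{[\beta*\alpha]} = S^{[\beta]} A S^{[\alpha]}$, and this is exactly what \eqref{wzh.seven} at $v=1$ yields once one checks $\sum_i (c_{\bar\imath})^{-1}$-resolution equals multiplication by $A$ in the middle.

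The main obstacle I anticipate is the sign and normalization bookkeeping: keeping track of the factors $c_i$, $d_n$, $(-1)^{n-i}$ versus $(-1)^{i+1}$, the bar involution $i\mapsto\bar\imath = n+1-i$, and the relative-spin-structure correction $d_n^{h(\tilde\alpha)}$ that relates $\alpha_{i,j}$ to $S^{[\alpha]}_{i,j}$. In particular one must verify that the specializations at $v=1$ (hence $q=1$, $t=(-1)^{n-1}=d_n$, $a = \pm 1$, $c_i = (-1)^{n-i}$) are consistent with the identities $A^2 = d_n I$ and $\det A = 1$, and that the ``higher/lower'' endpoint convention used to define $S^{[\alpha]}$ does not introduce extra crossing factors — this is where Proposition \ref{prop3.1} and Corollary \ref{cccc3.2} (commutativity at $v=1$) are needed to guarantee well-definedness of the stacking. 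I would organize the proof as: (c) first (it is essentially relation \eqref{wzh.six}), then (a) (relation \eqref{wzh.seven}), then (b) as a consequence of the $n$-fold antisymmetrizer relation, checking all normalizations against the $v=1$ specialization at each step.
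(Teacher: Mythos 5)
Your proposal takes essentially the same route as the paper's own proof: for (a) you resolve the interior joining point by the cup relation \eqref{wzh.seven}, whose $v=1$ coefficients $(c_{\bar k})^{-1}=(-1)^{k+1}$ are exactly the entries of $A$; for (b) you antisymmetrize one end via \eqref{wzh.five} and evaluate the resulting $n$-valent vertex diagram; and for (c) you evaluate the identity arc as a turnback, with $\delta_{\bar\jmath,i}\,c_i=\delta_{\bar\jmath,i}(-1)^{n-i}=d_nA_{i,j}$ at $v=1$, then $A S^{[o]}=d_n A^2=I$. The transient $A^{-1}$ in your discussion of (a) is correctly resolved to $A$ by the end, so the argument is sound and matches the paper's proof step for step.
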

\begin{proof}
(a) We have 
$$(S^{[\beta]} A S^{[\alpha]})_{i,j} = \sum_{1\leq k\leq n} (-1)^{k+1}S^{[\beta]}_{i,k} S^{[\alpha]}_{\bar{k}, j}
= S^{[\beta*\alpha]}_{i,j} = (S^{[\beta*\alpha]})_{i,j}$$
where the second equality is because of relation (\ref{wzh.seven}).
Thus $A S^{[\beta*\alpha]} = A S^{[\beta]} A S^{[\alpha]}$.

%

(b)We have 
$$\text{det}(S^{[\eta]}) = \sum_{\sigma\in S_n} (-1)^{l(\sigma)} S^{[\eta]}_{1,\sigma(1)}
 S^{[\eta]}_{2,\sigma(2)} \dots  S^{[\eta]}_{n,\sigma(n)}
= \raisebox{-.30in}{
\begin{tikzpicture}
\tikzset{->-/.style=
{decoration={markings,mark=at position #1 with
{\arrow{latex}}},postaction={decorate}}}
\filldraw[draw=white,fill=gray!20] (0,-0.7) rectangle (1.2,1.3);
\draw [line width =1.5pt,decoration={markings, mark=at position 1 with {\arrow{>}}},postaction={decorate}](1.2,-0.7)--(1.2,1.3);
\draw [line width =1pt,decoration={markings, mark=at position 0.5 with {\arrow{<}}},postaction={decorate}](1.2,1)  --(0.2,0);
\draw [line width =1pt,decoration={markings, mark=at position 0.5 with {\arrow{<}}},postaction={decorate}](1.2,0)  --(0.2,0);
\draw [line width =1pt,decoration={markings, mark=at position 0.5 with {\arrow{<}}},postaction={decorate}](1.2,-0.4)--(0.2,0);
\node  at(1,0.5) {$\vdots$};
\node [right] at(1.2,1) {$1$};
\node [right] at(1.2,0) {$n-1$};
\node [right] at(1.2,-0.4) {$n$};
\end{tikzpicture}}
 = 1$$
where the second equality is from relation (\ref{wzh.five}) and 
the last equality is because of equation (54) in  \cite{le2021stated}.

(c) For $1\leq i,j\leq n$, we have 
$$S^{[o]}_{i,j} = \raisebox{-.20in}{
\begin{tikzpicture}
\tikzset{->-/.style=
{decoration={markings,mark=at position #1 with
{\arrow{latex}}},postaction={decorate}}}
\filldraw[draw=white,fill=gray!20] (-0.7,-0.7) rectangle (0,0.7);
\draw [line width =1.5pt,decoration={markings, mark=at position 1 with {\arrow{>}}},postaction={decorate}](0,-0.7)--(0,0.7);
\draw [color = black, line width =1pt] (0 ,0.3) arc (90:270:0.5 and 0.3);
\node [right]  at(0,0.3) {$i$};
\node [right] at(0,-0.3){$j$};
\draw [line width =1pt,decoration={markings, mark=at position 0.5 with {\arrow{<}}},postaction={decorate}](-0.5,0.02)--(-0.5,-0.02);
\end{tikzpicture}} = d_n A_{i,j}.$$
Thus $ S^{[o]} = d_n A$.

\end{proof}

\subsection{Isomomorphism between $S_n(M,N,1)$ and $\Gamma_n(M)$ when $N$ has one component}

In this subsection, $N$ always containes one component unless specified.
If $N$ has only one component, then $\pi_1(M,N) = \pi_1(M)$ (we choose the base point for $\pi_1(M)$ to be a point in $N$).

\begin{lemma}\label{lmm5.3}

Let $(M,N)$ be a marked three manifold with $N$ consisting of one component. There exists an algebra homomorphism
$F:\Gamma_n(M)\rightarrow S_n(M,N,1)$ defined by 
$$F([\alpha]_{i,j}) = F((Q_{[\alpha]})_{i,j}) = (A S^{[\alpha]})_{i,j}$$
where $[\alpha]\in \pi_1(M,N),1\leq i,j\leq n.$

\end{lemma}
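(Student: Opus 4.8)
The plan is to verify that the assignment $F([\alpha]_{i,j}) = (AS^{[\alpha]})_{i,j}$ respects the three families of defining relations of $\Gamma_n(M)$ from Definition \ref{df3.4}, namely $Q_{[\alpha]}Q_{[\beta]} = Q_{[\alpha*\beta]}$, $\det(Q_{[\alpha]}) = 1$, and $Q_{[o]} = I$. The crucial observation is that the three parts of Proposition \ref{prop5.2} are exactly tailored for this: part (a) gives $AS^{[\beta*\alpha]} = AS^{[\beta]}AS^{[\alpha]}$, part (b) gives $\det(AS^{[\eta]}) = 1$, and part (c) gives $AS^{[o]} = I$. So the map $[\alpha] \mapsto AS^{[\alpha]}$ on matrices is a morphism from $\pi_1(M)$ into $\mathrm{Mat}_n(S_n(M,N,1))$ landing in $SL(n)$, and the universal property of $\Gamma_n(M)$ as the representation algebra then produces the algebra homomorphism $F$.

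First I would recall that, since $N$ has a single component, $\pi_1(M,N) = \pi_1(M)$ with base point chosen in $N$, so for each $[\alpha] \in \pi_1(M)$ and each pair $i,j$ the element $S^{[\alpha]}_{i,j} \in S_n(M,N,1)$ is well-defined by Remark \ref{rre5.1} (using a good representative with $h(\widetilde{\hat\alpha}) = 0$, and the well-definedness that follows from relations \eqref{w.cross}, \eqref{w.twist}, \eqref{wzh.eight} together with Corollary \ref{cccc3.2}). Then I would define the candidate map on generators by $F([\alpha]_{i,j}) = (AS^{[\alpha]})_{i,j}$ and check it kills each defining relation. For the composition relation: the convention in Definition \ref{df3.4} is $[\alpha][\beta] = [\alpha*\beta]$ where $\alpha*\beta$ means first $\beta$ then $\alpha$; applying $F$ to $Q_{[\alpha]}Q_{[\beta]} - Q_{[\alpha*\beta]}$ entrywise and matching against Proposition \ref{prop5.2}(a) (with the roles of $\alpha,\beta$ renamed to match the groupoid product convention) shows the image is zero. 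For the determinant relation, $\det(F(Q_{[\alpha]})) = \det(AS^{[\alpha]}) = 1$ by Proposition \ref{prop5.2}(b). For $Q_{[o]} = I$, Proposition \ref{prop5.2}(c) gives $F(Q_{[o]}) = AS^{[o]} = I$. Since $\Gamma_n(M)$ is the quotient of the free commutative algebra on the symbols $[\alpha]_{i,j}$ by exactly these relations, $F$ descends to a well-defined algebra homomorphism $\Gamma_n(M) \to S_n(M,N,1)$.

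The only real subtlety — and the step I would treat most carefully — is bookkeeping the orientation and conjugation/product conventions consistently: the groupoid product in Definition \ref{df3.4} is $[\alpha*\beta]$ with $\alpha*\beta$ traversing $\beta$ first, while Proposition \ref{prop5.2}(a) is stated for $AS^{[\beta*\alpha]} = AS^{[\beta]}AS^{[\alpha]}$, so one must align the indices so that matrix multiplication order matches path concatenation order. I would also double-check that $S_n(M,N,1)$ being commutative (Corollary \ref{cccc3.2}) is what allows the domain to be the \emph{commutative} representation algebra $\Gamma_n(M)$ rather than a noncommutative version, and that the $h$-normalization $h(\widetilde{\hat\alpha}) = 0$ used to single out $S^{[\alpha]}_{i,j}$ is compatible with the products appearing in Proposition \ref{prop5.2}(a) (this is already implicit in the statement of that proposition, whose proof uses relation \eqref{wzh.seven}). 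Beyond that the argument is purely formal: once the three relations are checked, the universal property does the rest.
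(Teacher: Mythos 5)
Your argument is correct and is exactly the paper's proof: the paper simply cites Proposition~\ref{prop5.2} as verifying the three defining relations of $\Gamma_n(M)$, which is what you spell out. The extra care you take about the product convention and the $h$-normalization is sound bookkeeping but does not change the route.
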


\begin{proof}

Lemma \ref{prop5.2} shows $F$ respects all the relations defined for $\Gamma_n(M)$, thus
$F$ is a well-defined algebra homomorphism.
\end{proof}

Let $\alpha$ be a framed oriented arc in $S_n(M,N,1)$. Then $[\alpha]$ is an element in $\pi_1(M,N)$ by forgetting  the framing of $\alpha$. We define $G(\alpha_{i,j}) = d_n^{h(\tilde{\alpha})+1}(-1)^{i+1}[\alpha]_{\bar{i},j}\in \Gamma_n(M)$.
For a  framed oriented  knot $\alpha$, first we forget the framing of $\alpha$ and then we use a path $\beta$ to connect $\alpha$ and $N$. Then we obtain an elemnt in $\pi_1(M,N)$, which is denoted as $[\alpha_{\beta}]$. 
We define $G(\alpha) = d_n^{h(\tilde{\alpha})}\text{Trace}(Q_{[\alpha_{\beta}]})\in \Gamma_n(M)$. It  is easy to show $G(\alpha)$ is independent of the choice of $\beta$.

For any stated $n$-web $l$, we use relation (\ref{wzh.five}) to kill all the sinks and sources  to obtain a new stated $n$-web $l^{'}$. Suppose $l^{'} = \cup_{\alpha}\alpha$ where each $\alpha$ is a stated framed oriented boundary arc or a framed oriented knot, define $G(l) = \Pi_{\alpha} G(\alpha)$.

\begin{lemma}\label{lmm5.4}
The above map $G:S_n(M,N,1)\rightarrow \Gamma_n(M)$ is a well-defined algebra homomorphism.
\end{lemma}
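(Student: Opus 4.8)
The plan is to show that $G$ descends to a well-defined linear map on $S_n(M,N,1)$ (that is, it respects the defining relations of the stated skein module and is independent of all the choices made in its construction) and then that it is multiplicative. First I would establish \emph{independence of the auxiliary choices}. The construction of $G(l)$ involves three kinds of choices: how the sinks and sources of $l$ are killed via relation \eqref{wzh.five}; for each framed oriented knot component $\alpha$, the path $\beta$ used to connect $\alpha$ to $N$; and implicitly, the framings entering the factors $d_n^{h(\tilde\alpha)}$. Independence of $\beta$ is already noted in the text and follows because $\mathrm{Trace}(Q_{[\alpha_\beta]})$ lies in $G_n(M)$, hence is fixed under the $\pi_1$-conjugation action that changing $\beta$ induces. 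Independence of the way sinks and sources are killed is the analogue of Subsection \ref{sub4.2}: I would run the same argument, replacing the identity $\Phi(l') = \Phi(l'')$ with $G(l') = G(l'')$, using that the relevant computation is now a determinant identity in $\Gamma_n(M)$ — the key inputs are part (a) of Proposition \ref{prop5.2} (composition of arcs corresponds to the product $A S^{[\beta]} A S^{[\alpha]} = A S^{[\beta*\alpha]}$, equivalently $Q_{[\beta]}Q_{[\alpha]} = Q_{[\beta*\alpha]}$) together with $\det Q_{[\eta]} = 1$, which lets one absorb a change of ``dragging path'' $\gamma_t\gamma_s^{-1}$ into a common $SL(n)$ factor exactly as in the $\Phi$ computation. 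The framing/spin bookkeeping is handled by the factors $d_n^{h(\tilde\alpha)}$: a change of framing multiplies $S^{[\alpha]}_{i,j}$ (via relation \eqref{w.twist}) by a power of $d_n$ matching the change in $h(\tilde\alpha)$, so the product $d_n^{h(\tilde\alpha)}(\cdots)$ is unchanged.

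Next I would \emph{check the defining relations} \eqref{w.cross}--\eqref{wzh.eight} of $S_n(M,N,1)$. This parallels Subsection \ref{sub4.3}. Relations \eqref{w.twist} and \eqref{w.unknot} are absorbed into the normalization (the $\vartheta$-loop maps to $d_n I$ under $Q$, and the unknot gives $\mathrm{Trace}(d_n I) = d_n n$, matching $(-1)^{n-1}[n]$ at $q=1$, i.e. $v=1$). Relation \eqref{w.cross} and the crossing-state relation \eqref{wzh.eight} become trivial at $v=1$ because then $G$ lands in the commutative algebra $\Gamma_n(M)$ and all the $q$-coefficients collapse, using Corollary \ref{cccc3.2} and Proposition \ref{prop3.1} at $v=1$ on the skein side. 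Relations \eqref{wzh.four} and \eqref{wzh.five} (killing sinks/sources) hold essentially by construction of $G$, since $G$ is defined after applying \eqref{wzh.five}; one must check that the two ways of reading \eqref{wzh.four} give the same determinant in $\Gamma_n(M)$, which is the same determinant-expansion argument as for $\Phi$. Relations \eqref{wzh.six} and \eqref{wzh.seven} are the ``cap'' and ``cup'' relations: under $G$ the cap becomes the matrix entry $d_n A_{i,j} = \delta_{\bar j, i}(-1)^{n-i}$ and the cup becomes $\sum_i (c_{\bar i})^{-1}(\cdots)$, which at $v=1$ matches because $A S^{[o]} = I$ and $A^2 = d_n I$ (Proposition \ref{prop5.2}(c)), exactly as the corresponding check for $\Phi$. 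Once the relations are verified, $G$ is a well-defined \emph{linear} map $S_n(M,N,1)\to\Gamma_n(M)$.

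Finally, \emph{multiplicativity}: since the product on $S_n(M,N,1)$ is disjoint union and the product on $\Gamma_n(M)$ is the algebra product, I would take two stated $n$-webs $l_1, l_2$, isotope them apart, kill sinks and sources independently in each, so that $(l_1 l_2)' = l_1' \sqcup l_2'$ as a disjoint union of arcs and knots; then $G(l_1 l_2) = \prod_{\alpha\subset l_1'} G(\alpha)\cdot\prod_{\alpha\subset l_2'} G(\alpha) = G(l_1) G(l_2)$ directly from the definition $G(l) = \prod_\alpha G(\alpha)$. One subtlety: an arc of $l_1'$ and an arc of $l_2'$ that end on the same component of $N$ must be separated in height, but since the target is commutative ($\Gamma_n(M)$ with $v=1$) and $G$ of a single arc only records $[\alpha]\in\pi_1(M,N)$, the relative heights do not matter, and well-definedness already absorbed this via Remark \ref{rre5.1}.

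The main obstacle I anticipate is the bookkeeping in the determinant arguments for relations \eqref{wzh.four}, \eqref{wzh.five} and for independence of how sinks/sources are killed: one must carefully track the bars ($i\mapsto\bar i$), the sign $(-1)^{i+1}$ coming from the matrix $A$, and the factors $d_n^{h(\tilde\alpha)}$ across each reconnection, and verify these conspire — using $A S^{[o]} = I$, $\det(A S^{[\eta]}) = 1$, and $(d_n)^n = 1$ — to give an honest identity in $\Gamma_n(M)$ rather than one off by a sign or a power of $d_n$. This is routine given Proposition \ref{prop5.2}, but it is where essentially all the real content of the proof sits; the rest is a transcription of Sections \ref{sub4.2}--\ref{sub4.3} with $R_n(M,N)$ replaced by $\Gamma_n(M)$ and $tr_\alpha$ replaced by the matrix entries of $A S^{[\alpha]}$.
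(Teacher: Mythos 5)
Your proposal is correct and matches the paper's approach: the paper's own proof consists of the single observation that the two-step argument from Subsections~\ref{sub4.2}--\ref{sub4.3} (independence of how sinks and sources are killed, then checking relations \eqref{w.cross}--\eqref{wzh.eight}) carries over verbatim with $R_n(M,N)$ replaced by $\Gamma_n(M)$, and explicitly omits all the details you have written out.
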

\begin{proof}
We prove $G$ is well-defined in two steps. First we prove the definition of $G$ is independent of the choice of how we kill sinks and sources, then we prove $G$ respects all the relations defined for $S_n(M,N,1)$. Note that these two steps appeared when we tried to prove Theorem \ref{thm3.11}. Actually the proving techniques here are the same with the techniques used in Subsections \ref{sub4.2} and \ref{sub4.3}. So here we  omit all the details. 
\end{proof}

\begin{theorem}\label{thm5.5}
Let $(M,N)$ be a marked three manifold with $N$ consisting of one component. There exist  algebra homomorphisms
$F:\Gamma_n(M)\rightarrow S_n(M,N,1), G:S_n(M,N,1)\rightarrow \Gamma_n(M)$ such that 
$F\circ G = Id_{S_n(M,N,1)}$ and $G\circ F = Id_{\Gamma_n(M)}$. Especially $\Gamma_n(M)\simeq S_n(M,N,1)$.
\end{theorem}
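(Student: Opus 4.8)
The plan is to verify that the algebra homomorphisms $F$ and $G$ constructed in Lemmas \ref{lmm5.3} and \ref{lmm5.4} are mutually inverse, which immediately gives the isomorphism $\Gamma_n(M)\simeq S_n(M,N,1)$. Since $F$ and $G$ are both algebra homomorphisms, it suffices to check the two composition identities on algebra generators. First I would check $G\circ F = \mathrm{Id}_{\Gamma_n(M)}$: the algebra $\Gamma_n(M)$ is generated by the entries $[\alpha]_{i,j} = (Q_{[\alpha]})_{i,j}$ for $[\alpha]\in\pi_1(M,N)$ and $1\le i,j\le n$, so I only need to compute $G(F([\alpha]_{i,j})) = G((AS^{[\alpha]})_{i,j})$. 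Here I would pick a good representative $\hat\alpha$ of $[\alpha]$ with $h(\widetilde{\hat\alpha})=0$ as in Remark \ref{rre5.1}, so that $(AS^{[\alpha]})_{i,j} = \sum_k (-1)^{k+1}\,\hat\alpha_{\bar k,j}$ (using $A_{i,k}=(-1)^{i+1}\delta_{\bar i,k}$ and $(S^{[\alpha]})_{i,j}=\hat\alpha_{i,j}$). Applying $G$ to each boundary arc $\hat\alpha_{\bar k,j}$ and using $G(\hat\alpha_{i,j}) = d_n^{h(\widetilde{\hat\alpha})+1}(-1)^{i+1}[\alpha]_{\bar i,j} = d_n(-1)^{i+1}[\alpha]_{\bar i,j}$, a short index manipulation (with $\bar{\bar k}=k$ and $d_n^2=1$) should collapse the sum to $[\alpha]_{i,j}$.

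Next I would check $F\circ G = \mathrm{Id}_{S_n(M,N,1)}$. The algebra $S_n(M,N,1)$ is generated (as a commutative algebra, by Corollary \ref{cccc3.2}) by stated framed oriented boundary arcs $\alpha_{i,j}$ and framed oriented knots, so I only need to verify the identity on such elements. For a boundary arc, by Remark \ref{rre5.1} we have $\alpha_{i,j} = d_n^{h(\tilde\alpha)}S^{[\alpha]}_{i,j}$, and $F(G(\alpha_{i,j})) = F(d_n^{h(\tilde\alpha)+1}(-1)^{i+1}[\alpha]_{\bar i,j}) = d_n^{h(\tilde\alpha)+1}(-1)^{i+1}(AS^{[\alpha]})_{\bar i,j}$; expanding $(AS^{[\alpha]})_{\bar i,j} = \sum_k(-1)^{\bar i+1}\delta_{\bar{\bar i},k}S^{[\alpha]}_{k,j}=(-1)^{\bar i+1}S^{[\alpha]}_{i,j}$ and using $(-1)^{i+1}(-1)^{\bar i+1}=(-1)^{i+\bar i}=(-1)^{n+1}=d_n$, this should return $d_n^{h(\tilde\alpha)}S^{[\alpha]}_{i,j} = \alpha_{i,j}$. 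For a knot $\alpha$ connected to $N$ by a path $\beta$, one has $G(\alpha) = d_n^{h(\tilde\alpha)}\mathrm{Trace}(Q_{[\alpha_\beta]})$ and $F(\mathrm{Trace}(Q_{[\alpha_\beta]})) = \mathrm{Trace}(AS^{[\alpha_\beta]})$, which by definition of $\Phi$ and relation \eqref{wzh.seven} equals the class of the knot $\alpha$ back in $S_n(M,N,1)$ (after the path-closure, which is an isotopy in the 3-manifold); so $F(G(\alpha))=\alpha$.

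The main obstacle I anticipate is bookkeeping rather than conceptual: one must be careful that the formulas for $F$ and $G$ in Lemmas \ref{lmm5.3} and \ref{lmm5.4} are stated on the right classes of generators (entries of $Q_{[\alpha]}$ on one side, stated boundary arcs and knots on the other), that the spin-structure normalization factors $d_n^{h(\tilde\alpha)}$ cancel correctly against the $d_n$ coming from $(-1)^{i+\bar i}=d_n$, and that the choice of good representative and framing in Remark \ref{rre5.1} makes $\hat\alpha_{i,j}$ genuinely well-defined (this is exactly where relations \eqref{w.cross}, \eqref{w.twist}, \eqref{wzh.eight} and commutativity get used). Once these normalizations are pinned down, both composites act as the identity on generators, and since $F,G$ are algebra homomorphisms the identities extend to all of $\Gamma_n(M)$ and $S_n(M,N,1)$ respectively, completing the proof.
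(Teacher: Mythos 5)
Your proposal follows the paper's proof exactly: establish $F$ and $G$ via Lemmas \ref{lmm5.3} and \ref{lmm5.4}, then verify both composites on algebra generators using the normalizations in Remark \ref{rre5.1}. (Two small remarks: your displayed $(AS^{[\alpha]})_{i,j}=\sum_k(-1)^{k+1}\hat\alpha_{\bar k,j}$ should read $(-1)^{i+1}\hat\alpha_{\bar i,j}$, since the Kronecker delta in $A_{i,k}=(-1)^{i+1}\delta_{\bar i,k}$ kills the sum — you clearly intend this and the sign bookkeeping then closes as you describe; and the separate knot check for $F\circ G$ is superfluous, since relation \eqref{wzh.seven} already expresses any knot as a $\mathbb{C}$-linear combination of stated boundary arcs, so checking on boundary arcs suffices, which is what the paper does.)
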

\begin{proof}
Lemmas \ref{lmm5.3} and \ref{lmm5.4} show the existence of $F$  and $G$. It remains to show they are inverse to each other. 

For any $[\alpha]\in \pi_1(M,N), 1\leq i,j\leq n$, we have 
$$G(F([\alpha]_{i,j})) = G ((-1)^{i+1} S^{[\alpha]}_{\bar{i}, j})
= (-1)^{i+1}d_n (-1)^{\bar{i}+1}[\alpha]_{i,j} = [\alpha]_{i,j}.$$
Thus $G\circ F = Id_{\Gamma_n(M)}$ since $[\alpha]_{i,j}$,  $[\alpha]\in \pi_1(M,N), 1\leq i,j\leq n$,  generate $\Gamma_n(M)$ as an algebra.

Obviously all the stated framed oriented  boundary arcs generate $S_n(M,N,1)$ as an algebra. For any stated oriented framed boundary arc $\alpha_{i,j}\in S_n(M,N,1)$, we have
$$F(G(\alpha_{i,j})) = d_n^{h(\tilde{\alpha})+1}(-1)^{i+1}F([\alpha]_{\bar{i},j})
= d_n^{h(\tilde{\alpha})+1}(-1)^{i+1} (-1)^{\bar{i}+1}S^{[\alpha]}_{i,j} = d_n^{h(\tilde{\alpha})}S^{[\alpha]}_{i,j} = \alpha_{i,j}. $$
Thus $F\circ G = Id_{S_n(M,N,1)}.$

\end{proof}

\begin{rem}
Korinman and Murakami proved the isomorphism between $S_2(M,N,1)$ and $\Gamma_2(M)$ by using a  different technique
\cite{korinman2022relating}.
\end{rem}

\subsection{Adding one extra marking to marked three manifold}

In this subsection, we will investigate the effects on $S_n(M,N,1)$ when we put one extra marking on $\partial M$. 

\begin{definition}
Let $(M,N)$ be a marked three manifold. We say that $N^{'}$  is obtained from $N$ by adding one extra marking if
$N^{'}= N\cup e$ where $e$ is an oriented open interval in $\partial M$ such that $cl(e)\cap cl(N) =\emptyset$. We call the linear map 
 $S_n(M,N,v)\rightarrow S_n(M,N^{'},v)$, induced by embedding $(M,N)\rightarrow (M,N^{'})$, adding marking map.
Obviously this map is an algebra homomorphism when $v=1$. We will use $l_{ad}^{e}$ to denote the adding marking map. We can omit the superscript when there is no confusion with marking $e$.
\end{definition}

\begin{rem}\label{rem5.8}
Suppose $(M,N)$ is a marked three manifold with $N\neq\emptyset$, and $N^{'}$ is obtained from $N$ by adding one extra marking $e$. Let $\alpha$ be a path connecting $N$ and $e$. We require $\alpha$ does not intersect itself, $\alpha$ only intersects $\partial M$ at it's endpoints, and $\alpha(0)$ belongs to a component $e_1\subset N$, and $\alpha(1)\in e$. Then we give a framing to $\alpha$ to obtain a  framed oriented boundary arc in $(M,N^{'})$, which is still denoted as $\alpha$, such that $h(\tilde{\alpha}) = 0$. Similarly we give a framing to $\alpha^{-1}$ such that
$h(\widetilde{\alpha^{-1}}) = 0$. Then we have $\alpha_{i,j} = S^{[\alpha]}_{i,j}, \alpha^{-1}_{i,j}
= S^{[\alpha^{-1}]}_{i,j},1\leq i,j\leq n.$ From Proposition \ref{prop5.2}, we know $AS^{[\alpha]}AS^{[\alpha^{-1}]} = AS^{[\alpha^{-1}]}AS^{[\alpha]} = AS^{[o]} = I.$
\end{rem}

We can regard $S_n(M,N^{'},1)$ as an $S_n(M,N,1)$-algebra because of the adding marking map. 

\begin{lemma}\label{lmm5.8}
Suppose $(M,N)$ is a marked three manifold with $N\neq\emptyset$, and $N^{'}$ is obtained from $N$ by adding one extra marking $e$. Then as an $S_n(M,N,1)$-algebra, $S_n(M,N^{'},1)$ is generated by $\alpha_{i,j},1\leq i,j\leq n$.
\end{lemma}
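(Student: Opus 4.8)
The plan is to show that every stated $n$-web in $(M,N')$ can be rewritten, modulo the subalgebra generated by $S_n(M,N,1)$ and the elements $\alpha_{i,j}$, as a stated $n$-web in $(M,N)$. Let $l$ be a stated $n$-web in $(M,N')$. First I would use relation \eqref{wzh.five} to remove all sinks and sources, reducing $l$ to a disjoint union of framed oriented knots and stated framed oriented boundary arcs; the knots already live in $(M,N)$, so the content is in the arcs whose endpoints touch the new marking $e$. I would then isotope such an arc $\beta$ so that it meets $e$ only near its endpoints and is otherwise generic; the goal is to ``absorb'' the endpoints on $e$ by splicing in copies of the fixed arc $\alpha$ from Remark \ref{rem5.8} (which connects the component $e_1\subset N$ to $e$) and its inverse $\alpha^{-1}$.

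The key algebraic input is Remark \ref{rem5.8}: $AS^{[\alpha]}AS^{[\alpha^{-1}]} = AS^{[\alpha^{-1}]}AS^{[\alpha]} = I$ in $S_n(M,N',1)$, together with Proposition \ref{prop5.2}(a), $AS^{[\gamma*\delta]} = AS^{[\gamma]}AS^{[\delta]}$, and relation \eqref{wzh.seven}, which is precisely the identity that lets one insert a ``resolution of the identity'' $\sum_i (c_{\bar i})^{-1}(\cdots)$ along a strand. Concretely, if $\beta$ has an endpoint on $e$, I would write $\beta$ up to the skein relations as $\beta' * \alpha$ (or $\alpha^{-1} * \beta'$, or $\alpha^{-1} * \beta'' * \alpha$ when both endpoints are on $e$), where $\beta'$ is a framed oriented boundary arc whose endpoints now lie on $N$ rather than on $e$. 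Using relation \eqref{wzh.seven} to split $\beta$ at the splice point and Proposition \ref{prop5.2}(a), the state-sum bookkeeping shows that $\beta_{i,j}$ equals a linear combination, with coefficients involving the $c_i$ and powers of $d_n$, of products of the form $(\text{an entry of }\alpha^{-1})\cdot(\text{an entry of }\beta')\cdot(\text{an entry of }\alpha)$. Here $\beta'_{k,l}$, having both endpoints on $N$, is (up to $d_n^{h}$ via Remark \ref{rre5.1}) the image under the adding-marking map of an element of $S_n(M,N,1)$, hence lies in the scalar subring; and the $\alpha^{\pm 1}$ entries are exactly the generators $\alpha_{i,j}$ we are allowed to use.

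The main obstacle I expect is the careful orientation and state bookkeeping when a component of $l$ visits $e$ several times, or when both endpoints of a single arc land on $e$: one must check that the various splice insertions of $\alpha$ and $\alpha^{-1}$ genuinely cancel, i.e. that the ``holonomy'' contributions telescope, and that the framing/height choices (governed by the relative spin structure $h$ as in Remark \ref{rre5.1} and Corollary \ref{cccc3.2}) are consistent so that no extra sign obstruction survives. This is where Remark \ref{rem5.8} does the real work: the relations $AS^{[\alpha]}AS^{[\alpha^{-1}]} = I$ guarantee the cancellation at the level of $2$-by-$2$ block products. Once one arc is handled, the general case follows by induction on the number of endpoints of $l$ lying on $e$, since each absorption step strictly decreases that count while only introducing factors from $S_n(M,N,1)$ and from the $\alpha_{i,j}$. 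Finally, since stated framed oriented boundary arcs generate $S_n(M,N',1)$ as an algebra (as already used in the proof of Theorem \ref{thm5.5}), this proves the claim.
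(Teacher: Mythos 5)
Your approach matches the paper's in its essential steps: reduce to stated boundary arcs via relation~\eqref{wzh.five}, use Proposition~\ref{prop5.2}(a) to factor $A S^{[\beta]}$ as $AS^{[\alpha]}\, AS^{[\alpha^{-1}*\beta*\alpha]}\, AS^{[\alpha^{-1}]}$ with the middle factor coming from $S_n(M,N,1)$, and invoke Remark~\ref{rem5.8}.

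There is, however, one imprecision you should fix: you say the entries of $\alpha^{-1}$ ``are exactly the generators $\alpha_{i,j}$,'' but they are not --- only the $\alpha_{i,j}$ themselves are the allowed $S_n(M,N,1)$-algebra generators, and you must \emph{prove} that $\alpha^{-1}_{i,j}$ lies in the subalgebra $T$ they generate. The paper does this explicitly: since $\det(S^{[\alpha]})=1$, the entries of $(S^{[\alpha]})^{-1}$ are (adjugate) polynomials in the $\alpha_{i,j}$, hence lie in $T$; and $S^{[\alpha^{-1}]}=A^{-1}(S^{[\alpha]})^{-1}A^{-1}$, so $\alpha^{-1}_{i,j}=S^{[\alpha^{-1}]}_{i,j}\in T$. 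Once that is in place, your ``splice and cancel'' picture and the induction on endpoints touching $e$ are a correct (if slightly more topological) gloss on the paper's matrix computation; no separate telescoping argument for ``holonomy'' is actually needed, since after resolving sinks and sources each generator arc hits $e$ in at most two points and Proposition~\ref{prop5.2}(a) handles each case in one step.
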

\begin{proof}
Let $T$ be the $S_n(M,N,1)$-subalgebra of $S_n(M,N^{'},1)$ generated by $\alpha_{i,j},1\leq i,j\leq n$. 
Since det$(S^{[\alpha]}) = 1\in S_n(M,N^{'},1)$, we have $(S^{[\alpha]})^{-1}$ is well-defined and 
$[(S^{[\alpha]})^{-1}]_{i,j}, 1\leq i,j\leq n,$  are polynomials in $\alpha_{i,j},1\leq i,j\leq n$. Especially we have $[(S^{[\alpha]})^{-1}]_{i,j}\in T , 1\leq i,j\leq n$.
We also have $S^{[\alpha^{-1}]}= A^{-1} (S^{[\alpha]})^{-1} A^{-1} $, thus 
$\alpha^{-1}_{i,j}
= S^{[\alpha^{-1}]}_{i,j}\in T,1\leq i,j\leq n.$

From relation (\ref{wzh.five}), we know, as an $S_n(M,N,1)$-algebra, $S_n(M,N^{'},1)$ is generated by stated framed oriented boundary arcs with at least one end point in $e$. Suppose $\beta_{i,j}$ is  such  an arc in $S_n(M,N^{'},1)$. Recall that $\beta_{i,j} = d_n^{h(\tilde{\beta})}S^{[\beta]}_{i,j}$.

For the case when  $\beta(0),\beta(1)\in e$, we have 
$$A S^{[\beta]} = AS^{[\alpha]} AS^{[\alpha^{-1}*\beta*\alpha]} AS^{[\alpha^{-1}]}$$
where $[\alpha^{-1}*\beta*\alpha]$ is a path with two end points in $N$. Especially we get
$$S^{[\beta]} = S^{[\alpha]} AS^{[\alpha^{-1}*\beta*\alpha]} AS^{[\alpha^{-1}]}.$$ Then 
$S^{[\beta]}_{i,j}\in T,1\leq i,j\leq n$, because $S^{[\alpha]}_{i,j}, S^{[\alpha^{-1}*\beta*\alpha]}_{i,j},
S^{[\alpha^{-1}]}_{i,j}\in T ,1\leq i,j\leq n.$ Thus $\beta_{i,j} = d_n^{h(\tilde{\beta})}S^{[\beta]}_{i,j}\in T$.

For the other cases, we can use the same way to show $\beta_{i,j} = d_n^{h(\tilde{\beta})}S^{[\beta]}_{i,j}\in T$.
Thus $T = S_n(M,N^{'},1).$

\end{proof}

We use $O(SLn)$ to denote the algebra by setting $q=1$ for $O_q(SLn)$. To distinguish the generators for
$O(SLn)$ and $O_q(SLn)$, we use $x_{i,j}$, instead of $u_{i,j}$, to denote the generators of $O(SLn)$. Then 
$$O(SLn)=\mathbb{C}[x_{i,j}\mid 1\leq i,j\leq n]/(\text{det}(X) = 1)$$
where $X$ is an $n$ by $n$ matrix such that $X_{i,j} = x_{i,j},1\leq i,j\leq n.$ 
We have $X^{-1}$ makes sense and is an $n$ be $n$ matrix in $O(SLn)$ because det$(X) = 1$. For $1\leq i,j\leq n$, We use $x^{-1}_{i,j}$ to denote $(X^{-1})_{i,j}$.
Obviously $S_n(M,N,1)\otimes O(SLn)$ is an $S_n(M,N,1)$-algebra, and as an $S_n(M,N,1)$-algebra we have
$$S_n(M,N,1)\otimes O(SLn) = S_n(M,N,1)[x_{i,j}\mid 1\leq i,j\leq n]/(\text{det}(X) = 1)$$ 
by regarding $1\otimes x_{i,j}$ as $x_{i,j}$.

\begin{lemma}\label{lmm5.9}
Suppose $(M,N)$ is a marked three manifold with $N\neq\emptyset$, and $N^{'}$ is obtained from $N$ by adding one extra marking $e$. Then there exists an $S_n(M,N,1)$-algebra homomorphism
\begin{align*}
 \imath:S_n(M,N,1)\otimes O(SLn)&\rightarrow S_n(M,N^{'},1)\\
 1\otimes x_{i,j}&\mapsto (A S^{[\alpha]})_{i,j}.
\end{align*}
\end{lemma}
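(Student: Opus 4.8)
\textbf{Proof proposal for Lemma \ref{lmm5.9}.} The plan is to exhibit $\imath$ explicitly by sending the generators $x_{i,j}$ of $O(SLn)$ to the matrix entries of $AS^{[\alpha]}$ and then checking that this is compatible with the single defining relation $\det(X)=1$ of $O(SLn)$. Concretely, since $S_n(M,N,1)\otimes O(SLn)$ is, as an $S_n(M,N,1)$-algebra, the free commutative $S_n(M,N,1)$-algebra on the symbols $x_{i,j}$ modulo $\det(X)-1$, to produce an $S_n(M,N,1)$-algebra homomorphism it suffices to assign to each $x_{i,j}$ an element of $S_n(M,N^{'},1)$ and verify two things: first, that $S_n(M,N^{'},1)$ is commutative and receives $S_n(M,N,1)$ via the adding-marking map $l_{ad}^{e}$ (so that the universal property of the polynomial $S_n(M,N,1)$-algebra applies), and second, that the images of the $x_{i,j}$ satisfy $\det$ of their matrix equals $1$.

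First I would observe that $v=1$, so by Corollary \ref{cccc3.2} all the algebras $S_n(M,N,1)$ and $S_n(M,N^{'},1)$ are commutative, and the adding-marking map $l_{ad}^{e}:S_n(M,N,1)\to S_n(M,N^{'},1)$ is an algebra homomorphism making $S_n(M,N^{'},1)$ an $S_n(M,N,1)$-algebra. I fix, as in Remark \ref{rem5.8}, a framed oriented boundary arc $\alpha$ connecting a component of $N$ to the new marking $e$, with $h(\tilde\alpha)=0$, so that the matrix $S^{[\alpha]}$ with entries $S^{[\alpha]}_{i,j}=\alpha_{i,j}\in S_n(M,N^{'},1)$ is well defined by Remark \ref{rre5.1}. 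Then I define $\imath$ on generators by $1\otimes x_{i,j}\mapsto (AS^{[\alpha]})_{i,j}$ and extend as an $S_n(M,N,1)$-algebra map. The only thing to verify for well-definedness is that $\det(AS^{[\alpha]})=1$ in $S_n(M,N^{'},1)$; this is immediate from Proposition \ref{prop5.2}(b), which gives $\det(AS^{[\eta]})=1$ for every $[\eta]\in\pi_1(M,N)$, applied to $[\eta]=[\alpha]$ (note $A=S^{[o]}/d_n$ has determinant $1$ and the $[\alpha]$-arc here lives in $(M,N^{'})$ but the same relations \eqref{wzh.five}, \eqref{wzh.seven} used in Proposition \ref{prop5.2} hold verbatim). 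Hence the assignment factors through $\det(X)-1$ and $\imath$ is a well-defined $S_n(M,N,1)$-algebra homomorphism.

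I do not expect a serious obstacle here: this lemma is purely the construction of $\imath$, and everything needed — commutativity at $v=1$, the matrix identities for the $S^{[\alpha]}$, and the determinant normalization — has already been established in Proposition \ref{prop5.2} and Corollary \ref{cccc3.2}. The one point requiring a little care is making sure the universal property is invoked correctly: namely that $S_n(M,N,1)\otimes O(SLn)$ really is the pushout/free object described in the paragraph preceding the lemma, so that specifying images of $x_{i,j}$ subject to the single relation determines the map uniquely; this is exactly the presentation $S_n(M,N,1)[x_{i,j}]/(\det(X)=1)$ recorded just above. The genuinely substantive work — showing $\imath$ is an \emph{isomorphism}, i.e. that $S_n(M,N^{'},1)\simeq S_n(M,N,1)\otimes O(SLn)$ — is deferred to later statements (it will use Lemma \ref{lmm5.8} for surjectivity and presumably a dimension/flatness or inverse-map argument for injectivity), and is not part of this lemma.
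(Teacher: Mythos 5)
Your proof is correct and takes essentially the same approach as the paper: the paper's proof is a one-line observation that $S_n(M,N',1)$ is a commutative $S_n(M,N,1)$-algebra and $\det(AS^{[\alpha]})=1$, which is exactly the content of your argument, just stated more tersely. Your extra care in spelling out the universal property and citing Proposition \ref{prop5.2}(b) is a harmless elaboration of the same reasoning.
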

\begin{proof}
Since  $S_n(M,N^{'},1)$ is a commutative $S_n(M,N,1)$-algebra and det$(AS^{[\alpha]}) = 1\in S_n(M,N^{'},1)$, then
$\imath$ is a well-defined $S_n(M,N,1)$-algebra homomorphism. 
\end{proof}

Next we try to define an $S_n(M,N,1)$-algebra homormorphism $$\jmath:S_n(M,N^{'},1)\rightarrow 
S_n(M,N,1)\otimes O(SLn).$$

Let $l$ be an stated $n$-web in $(M,N^{'})$, and $s_{l}$ be the state of $l$. If $l\cap e =\emptyset$, define 
$\jmath(l) = l\otimes 1\in S_n(M,N,1)\otimes O(SLn)$.

 If $l\cap e\neq \emptyset$, we 
suppose $|l\cap e| = m$, then we label the ends of $l$ on $e$ from $1$ to $m$. We use $E_k$ to denote the end of $l$ at $e$ labeled by number $k, 1\leq k\leq m.$
Define
\begin{align*}
\begin{split}
 f_k&= \left \{
 \begin{array}{ll}
     -1,                    & \text{if }E_k\text{ points towards }e,\\
     1,     & \text{if }E_k\text{ points out of }e,\\
 \end{array}
 \right.\\
 g_k&= \left \{
 \begin{array}{ll}
     Id\in S_n,                    & \text{if }E_k\text{ points towards }e,\\
     \delta\in S_n,     & \text{if }E_k\text{ points out of }e,\\
 \end{array}
 \right.\\
 h_k (i,j)&= \left \{
 \begin{array}{ll}
     i,j,                    & \text{if }E_k\text{ points towards }e,\\
     j,i,     & \text{if }E_k\text{ points out of }e,\\
 \end{array}
 \right.
 \end{split}
 \end{align*}
where $\delta(\lambda) = \bar{\lambda},1\leq \lambda\leq n, 1\leq i,j\leq n, 1\leq k\leq m.$

We can connect $E_k$ with $\alpha^{-1}$ or $\alpha$ by the following way:
Suppose $E_k$ points towards $e$. First we isotope $\alpha^{-1}$ by moving $\alpha^{-1}(0)$ along $e$ to meet the end $E_k$. Then we isotope $l,  \alpha^{-1}$ nearby their endpoints at $e$ such that they are both in good position with respect to $e$.  And then we  connect $E_k$ with $\alpha^{-1}$. When $E_k$ points out of $e$, we can use the way to connect $E_k$ with $\alpha$.


Then we try to define an element $l(\alpha_{j_1}^{f_1},\alpha_{j_2}^{f_2},\dots, \alpha_{j_m}^{f_m})\in S_n(M,N,1)$ by the following way: For each $1\leq k\leq m$ we connect $E_k$ with $\alpha^{f_k}$, and give state
$j_k$ to the other end of $\alpha^{f_k}$ that is not used to connect $E_k$. During the process of connecting each $E_k$ and $\alpha^{f_k}$, we can isotope $\alpha^{f_k}$ such that 
$l(\alpha_{j_1}^{f_1},\alpha_{j_2}^{f_2},\dots, \alpha_{j_m}^{f_m})$ does not intersect itself. After connecting each   $E_k$ and $\alpha^{f_k}$, we can isotope the parts nearby the connecting points  such that $l(\alpha_{j_1}^{f_1},\alpha_{j_2}^{f_2},\dots, \alpha_{j_m}^{f_m})$ only intersects $\partial M$ at it's endpoints. Then
  $l(\alpha_{j_1}^{f_1},\alpha_{j_2}^{f_2},\dots, \alpha_{j_m}^{f_m})\in S_n(M,N,1)$. Obviously 
$l(\alpha_{j_1}^{f_1},\alpha_{j_2}^{f_2},\dots, \alpha_{j_m}^{f_m})$ is a well-defined element in $S_n(M,N,1)$. 
We define 
$$\jmath(l) = \sum_{1\leq j_1,\dots,j_m\leq n}c_{g_1(j_1)}\dots
c_{g_m(j_m)}\; l(\alpha_{j_1}^{f_1},\dots, \alpha_{j_m}^{f_m})\otimes
\mu(\alpha^{-f_1}_{h_1(i_1,\overline{j_1})})\dots
\mu(\alpha^{-f_m}_{h_m(i_m,\overline{j_m})})$$
where $\mu(\alpha_{i,j}) = d_n (-1)^{i+1} x_{\bar{i},j}, \mu(\alpha^{-1}_{i,j}) = d_n (-1)^{i+1} x^{-1}_{\bar{i},j}$, $i_k = s_l(E_k)$, $1\leq k\leq m,$ $c_{t} = (-1)^{n-t}, 1\leq t\leq n$.

Note that if  $l_1$ and $l_2$ are isotopic to each other, we have $$l_1(\alpha_{j_1}^{f_1},\alpha_{j_2}^{f_2},\dots, \alpha_{j_m}^{f_m})= l_2(\alpha_{j_1}^{f_1},\alpha_{j_2}^{f_2},\dots, \alpha_{j_m}^{f_m})\in S_n(M,N,1)$$ where the labelings of endpoints of 
$l_i$, $i=1,2$, on $e$ are preserved by isotopy. Then $\jmath$ respects isotopy classes, that is, $\jmath$
is defined on the  isotopy classes of  stated $n$-webs. For any two stated $n$-webs $l_1,l_2$, we isotope $l_t,t=1,2,$ such that $l_1\cap l_2 =\emptyset$, then we have $\jmath(l_1\cup l_2) = \jmath(l_1)\jmath(l_2)$.

\begin{lemma}\label{lmm5.10}
Suppose $(M,N)$ is a marked three manifold with $N\neq\emptyset$, and $N^{'}$ is obtained from $N$ by adding one extra marking $e$. Then $\jmath:S_n(M,N^{'},1)\rightarrow 
S_n(M,N,1)\otimes O(SLn)$ is a well-defined 
$S_n(M,N,1)$-algebra homormorphism.
\end{lemma}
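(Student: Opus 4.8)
The plan is to show that $\jmath$ descends to the quotient of the free module that defines $S_n(M,N^{'},1)$; granting this, the fact that $\jmath$ is an $S_n(M,N,1)$-algebra homomorphism will be immediate from the three properties already recorded above the statement, namely that $\jmath$ respects isotopy, that $\jmath(l_1\cup l_2)=\jmath(l_1)\jmath(l_2)$ for disjoint $l_1,l_2$, and that $\jmath(l)=l\otimes 1$ whenever $l\cap e=\emptyset$: a web in $(M,N)$ viewed inside $(M,N^{'})$ is then sent to its image in the first tensor factor, and multiplication on both sides is disjoint union, which is commutative by Corollary \ref{cccc3.2}. So everything reduces to checking that $\jmath$ kills each of the defining relations \eqref{w.cross}--\eqref{wzh.eight} of $S_n(M,N^{'},1)$.

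First I would dispose of the relations whose supporting ball $B$ can be isotoped off a fixed collar of $e$, the collar being chosen disjoint from $N$ and from the path $\alpha$: this covers \eqref{w.cross}, \eqref{w.twist}, \eqref{w.unknot}, \eqref{wzh.four}, and all instances of \eqref{wzh.five}--\eqref{wzh.eight} taking place along a component of $N$. For these, the step ``connect each end $E_k$ of $l$ on $e$ to a copy of $\alpha^{f_k}$'' can be performed inside $M\setminus B$ and affects neither the states at $e$ nor the $O(SLn)$ tensor factor; hence $\jmath(\mathrm{LHS}-\mathrm{RHS})$ equals the image of $\mathrm{LHS}-\mathrm{RHS}$ in $S_n(M,N,1)$ tensored with a fixed element of $O(SLn)$, and this vanishes because $\mathrm{LHS}-\mathrm{RHS}$ is a defining relation of $S_n(M,N,1)$.

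The substantive work is the instances of \eqref{wzh.five}, \eqref{wzh.six}, \eqref{wzh.seven}, \eqref{wzh.eight} occurring along $e$ itself, and here the guiding idea is that $\jmath$ amounts to splitting off a bigon neighbourhood of $e$ and re-gluing one of its edges along $\alpha$. Under the identification $S_n(\fB,1)\cong O(SLn)$ of Theorems \ref{Hopf} and \ref{t.Hopf} at $v=1$, the assignments $\mu(\alpha_{i,j})=d_n(-1)^{i+1}x_{\bar i,j}$ and $\mu(\alpha^{-1}_{i,j})=d_n(-1)^{i+1}x^{-1}_{\bar i,j}$ correspond to the bigon generators up to the $A$-twist, and I would check, using the commutativity of $S_n(M,N,1)$ and of $O(SLn)$, Proposition \ref{prop5.2}, and the identities $AS^{[\alpha]}AS^{[\alpha^{-1}]}=I$ and $\det(S^{[\alpha]})=1$ from Remark \ref{rem5.8}, that: the cup and cap relations \eqref{wzh.six} and \eqref{wzh.seven} along $e$ go over to the corresponding cup/cap identities valid in $S_n(\fB,1)\cong O(SLn)$; the height exchange \eqref{wzh.eight} along $e$ goes over to the $\cR$-matrix relation of $O(SLn)$, which at $q=1$ merely says that two orderings of parallel strands with swapped states agree; and \eqref{wzh.five} along $e$ goes over to the quantum-determinant / antipode identity at $q=1$, i.e.\ the $q=1$ specialisation of Lemma \ref{mmm2.4} together with $\det X=1$. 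I would also note that $\jmath(l)$ is independent of the chosen labelling $E_1,\dots,E_m$ of the ends of $l$ on $e$, which is automatic since both target factors are commutative.

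The hard part will be the last paragraph: matching up the constants $c_i$, $d_n$, the entries $A_{i,j}=(-1)^{i+1}\delta_{\bar i,j}$, and the twist in the definition of $\mu$ so that relations \eqref{wzh.five}, \eqref{wzh.six}, \eqref{wzh.seven} along $e$ are reproduced exactly on the $O(SLn)$ side. Relation \eqref{wzh.five} along $e$ looks the most delicate, as it simultaneously involves $\det X=1$, the $A$-twist, and the recombination of several connecting arcs $\alpha^{f_k}$ into a single $S^{[\,\cdot\,]}$-matrix, for which Proposition \ref{prop5.2}(a) and the good-position and framing conventions of Remark \ref{rre5.1} are essential; this is where the computation is least routine. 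Once the bigon identification is in place the remaining relations along $e$ should follow the pattern of the quantum-case arguments and of the verifications in Subsections \ref{sub4.2}--\ref{sub4.3}, and can be handled by the same bookkeeping techniques.
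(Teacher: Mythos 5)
Your proposal is correct and follows essentially the same route as the paper's proof: after noting that well-definedness reduces to checking the defining relations (2)--(9), you dispose of the relations supported away from $e$ by factoring through the corresponding relation in $S_n(M,N,1)$, and you identify the nontrivial cases as relations (6)--(8) along $e$ itself, reducing them to $O(SLn)$ identities (the determinant relation $\det X=1$ for the sink/source relation, and the cup/cap and height-exchange identities at $q=1$), then observe that the algebra-homomorphism and $S_n(M,N,1)$-linearity properties are immediate from the product formula and $\jmath(l_{ad}(\alpha))=\alpha\otimes1$. The paper does exactly these verifications, just by explicit determinant/sign bookkeeping rather than by first invoking the bigon isomorphism $S_n(\fB,1)\cong O(SLn)$ as a conceptual frame; the conceptual framing you propose is a helpful heuristic but does not change the computation that must ultimately be carried out, and you correctly flag the determinant case (relation (6) at $e$) as the place where that bookkeeping is least routine.
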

\begin{proof}
From the above discussion, it suffices to show $\jmath$ preserves  relations 
(\ref{w.cross})-(\ref{wzh.eight}) for well-definedness. We use $l$ (respectively $l^{'}$) to denote the left handside (respectively right handside) of "=" in these relations.

It obvious that $\jmath$ preserves relations (\ref{w.cross})-(\ref{wzh.four}).

It is obvious that $\jmath$ preserves relations (\ref{wzh.five})-(\ref{wzh.eight}) if the boundary component in the picture is not $e$. Then we suppose the boundary component in all these pictures is $e$.

Then we look at relation (\ref{wzh.five}).  We only prove the case where the white dot represents the arrow going from left to right, that is, all the arrows point towards $e$.
We choose a labeling for endpoints of $l$ on $e$.
From bottom to top, we label the endpoints in  the right picture from $m+1$ to $m+n$. The other endpoints of $l^{'}$ not in the picture are labeled in the same way as $l$.

Then we have
\begin{align*}
&\jmath(l^{'}) = \sum_{\sigma\in S_n}(-1)^{\ell(\sigma)}
\sum_{\substack{1\leq j_1,\dots,j_m\leq n\\1\leq k_1,\dots,k_n\leq n}}c_{g_1(j_1)}\dots
c_{g_m(j_m)} c_{k_1}\dots c_{k_n}\\ 
&l^{'}(\alpha_{j_1}^{f_1},\dots, \alpha_{j_m}^{f_m},\alpha^{-1}_{k_1},\dots,\alpha^{-1}_{k_n}))\otimes
\mu(\alpha^{-f_1}_{h_1(i_1,\overline{j_1})}, \dots
\mu(\alpha^{-f_m}_{h_m(i_m,\overline{j_m})}) x_{\sigma(1),\overline{k_1}}\dots x_{\sigma(n),\overline{k_n}}\\
&= 
\sum_{\substack{1\leq j_1,\dots,j_m\leq n\\1\leq k_1,\dots,k_n\leq n}}c_{g_1(j_1)}\dots
c_{g_m(j_m)} c_{k_1}\dots c_{k_n}\\ 
&l^{'}(\alpha_{j_1}^{f_1},\dots, \alpha_{j_m}^{f_m},\alpha^{-1}_{k_1},\dots,\alpha^{-1}_{k_n}))\otimes
\mu(\alpha^{-f_1}_{h_1(i_1,\overline{j_1})}, \dots
\mu(\alpha^{-f_m}_{h_m(i_m,\overline{j_m})})
\sum_{\sigma\in S_n}(-1)^{\ell(\sigma)} x_{\sigma(1),\overline{k_1}}\dots x_{\sigma(n),\overline{k_n}}\\
&= 
\sum_{\substack{1\leq j_1,\dots,j_m\leq n\\1\leq k_1,\dots,k_n\leq n}}c_{g_1(j_1)}\dots
c_{g_m(j_m)} c_{k_1}\dots c_{k_n}\\ 
&l^{'}(\alpha_{j_1}^{f_1},\dots, \alpha_{j_m}^{f_m},\alpha^{-1}_{k_1},\dots,\alpha^{-1}_{k_n}))\otimes
\mu(\alpha^{-f_1}_{h_1(i_1,\overline{j_1})}, \dots
\mu(\alpha^{-f_m}_{h_m(i_m,\overline{j_m})})\text{det}
\begin{pmatrix}
x_{1,\overline{k_1}}&\dots&x_{1,\overline{k_n}}\\
\vdots& &\vdots\\
x_{n,\overline{k_1}}&\dots&x_{n,\overline{k_n}}\\
\end{pmatrix}\\
&= (-1)^{\frac{n(n-1)}{2}}
\sum_{\substack{1\leq j_1,\dots,j_m\leq n\\\tau\in S_n}}c_{g_1(j_1)}\dots
c_{g_m(j_m)} \\ 
&l^{'}(\alpha_{j_1}^{f_1},\dots, \alpha_{j_m}^{f_m},\alpha^{-1}_{\tau(1)},\dots,\alpha^{-1}_{\tau(n)}))\otimes
\mu(\alpha^{-f_1}_{h_1(i_1,\overline{j_1})}, \dots
\mu(\alpha^{-f_m}_{h_m(i_m,\overline{j_m})})\text{det}
\begin{pmatrix}
x_{1,\overline{\tau(1)}}&\dots&x_{1,\overline{\tau(n)}}\\
\vdots& &\vdots\\
x_{n,\overline{\tau(1)}}&\dots&x_{n,\overline{\tau(n)}}\\
\end{pmatrix}\\
&= 
\sum_{\substack{1\leq j_1,\dots,j_m\leq n\\\tau\in S_n}} (-1)^{\ell(\tau)} c_{g_1(j_1)}\dots
c_{g_m(j_m)} \\
&l^{'}(\alpha_{j_1}^{f_1},\dots, \alpha_{j_m}^{f_m},\alpha^{-1}_{\tau(1)},\dots,\alpha^{-1}_{\tau(n)}))\otimes
\mu(\alpha^{-f_1}_{h_1(i_1,\overline{j_1})}, \dots
\mu(\alpha^{-f_m}_{h_m(i_m,\overline{j_m})})\\
&= 
\sum_{\substack{1\leq j_1,\dots,j_m\leq n}} c_{g_1(j_1)}\dots
c_{g_m(j_m)} 
l(\alpha_{j_1}^{f_1},\dots, \alpha_{j_m}^{f_m})\otimes
\mu(\alpha^{-f_1}_{h_1(i_1,\overline{j_1})}, \dots
\mu(\alpha^{-f_m}_{h_m(i_m,\overline{j_m})}) = \jmath(l).\\
\end{align*}

To show $\jmath$ preserves relation (\ref{wzh.six}). Here we only prove $\jmath$
preserves 
$\raisebox{-.20in}{
\begin{tikzpicture}
\tikzset{->-/.style=
{decoration={markings,mark=at position #1 with
{\arrow{latex}}},postaction={decorate}}}
\filldraw[draw=white,fill=gray!20] (-0.7,-0.7) rectangle (0,0.7);
\draw [line width =1.5pt,decoration={markings, mark=at position 1 with {\arrow{>}}},postaction={decorate}](0,0.7)--(0,-0.7);
\draw [color = black, line width =1pt] (0 ,0.3) arc (90:270:0.5 and 0.3);
\node [right]  at(0,0.3) {$i$};
\node [right] at(0,-0.3){$j$};
\draw [line width =1pt,decoration={markings, mark=at position 0.5 with {\arrow{>}}},postaction={decorate}](-0.5,0.02)--(-0.5,-0.02);
\end{tikzpicture}}   = \delta_{\bar j,i }\,  (-1)^{n-i}$. We label the top endpoint by $1$ and the other one by 2.  
Then we  have
\begin{align*}
\jmath( \raisebox{-.20in}{
\begin{tikzpicture}
\tikzset{->-/.style=
{decoration={markings,mark=at position #1 with
{\arrow{latex}}},postaction={decorate}}}
\filldraw[draw=white,fill=gray!20] (-0.7,-0.7) rectangle (0,0.7);
\draw [line width =1.5pt,decoration={markings, mark=at position 1 with {\arrow{>}}},postaction={decorate}](0,0.7)--(0,-0.7);
\draw [color = black, line width =1pt] (0 ,0.3) arc (90:270:0.5 and 0.3);
\node [right]  at(0,0.3) {$i$};
\node [right] at(0,-0.3){$j$};
\draw [line width =1pt,decoration={markings, mark=at position 0.5 with {\arrow{>}}},postaction={decorate}](-0.5,0.02)--(-0.5,-0.02);
\end{tikzpicture}})&=\sum_{1\leq j_1,j_2\leq n}(-1)^{j+\overline{j_1}}c_{\overline{j_1}}c_{j_2} l(\alpha_{j_1},\alpha_{j_2}^{-1})\otimes x^{-1}_{j_1,i} x_{\overline{j},\overline{j_2}}\\
&=\sum_{1\leq j_1,j_2\leq n}(-1)^{j+\overline{j_1}}c_{\overline{j_1}}c_{j_2}
 \raisebox{-.20in}{
\begin{tikzpicture}
\tikzset{->-/.style=
{decoration={markings,mark=at position #1 with
{\arrow{latex}}},postaction={decorate}}}
\filldraw[draw=white,fill=gray!20] (-0.7,-0.7) rectangle (0,0.7);
\draw [line width =1.5pt,decoration={markings, mark=at position 1 with {\arrow{>}}},postaction={decorate}](0,0.7)--(0,-0.7);
\draw [color = black, line width =1pt] (0 ,0.3) arc (90:270:0.5 and 0.3);
\node [right]  at(0,0.3) {$j_1$};
\node [right] at(0,-0.3){$j_2$};
\draw [line width =1pt,decoration={markings, mark=at position 0.5 with {\arrow{>}}},postaction={decorate}](-0.5,0.02)--(-0.5,-0.02);
\end{tikzpicture}}
\otimes x^{-1}_{j_1,i} x_{\overline{j},\overline{j_2}}\\
&=(-1)^{j+1}\sum_{1\leq j_1\leq n}1\otimes x^{-1}_{j_1,i} x_{\overline{j},j_1}
=(-1)^{j+1}\delta_{\bar{j},i}1\otimes 1\\&= \delta_{\bar j,i }\,  (-1)^{n-i} 1\otimes 1.\\
\end{align*}

Next we look at relation (\ref{wzh.seven}). Here we only prove the case where the white dot represents an arrow going from right to left. We choose a labeling for endpoints of $l$ on $e$.
We label the top (respectively bottom) endpoint in the right picture by $m+1$ (respectively $m+2$). The other endpoints of $l^{'}$ not in the picture are labeled in the same way as $l$. Then we have 
\begin{align*}
\jmath(l^{'}) &= \sum_{1\leq i\leq n}(-1)^{i+1}
\sum_{\substack{1\leq j_1,\dots,j_m\leq n\\1\leq k_1,k_2\leq n}}c_{g_1(j_1)}\dots
c_{g_m(j_m)} c_{\overline{k_1}} c_{k_2} (-1)^{\bar{i}+\overline{k_1}}\\ 
&l^{'}(\alpha_{j_1}^{f_1},\dots, \alpha_{j_m}^{f_m},\alpha_{k_1},\alpha^{-1}_{k_2}))\otimes
\mu(\alpha^{-f_1}_{h_1(i_1,\overline{j_1})}) \dots
\mu(\alpha^{-f_m}_{h_m(i_m,\overline{j_m})}) x_{i,\overline{k_2}} x^{-1}_{k_1,i}\\
& = 
\sum_{\substack{1\leq j_1,\dots,j_m\leq n\\1\leq k_1,k_2\leq n}}c_{g_1(j_1)}\dots
c_{g_m(j_m)}  c_{k_2}\\ 
&l^{'}(\alpha_{j_1}^{f_1},\dots, \alpha_{j_m}^{f_m},\alpha_{k_1},\alpha^{-1}_{k_2}))\otimes
\mu(\alpha^{-f_1}_{h_1(i_1,\overline{j_1})}) \dots
\mu(\alpha^{-f_m}_{h_m(i_m,\overline{j_m})}) \sum_{1\leq i\leq n} x_{i,\overline{k_2}} x^{-1}_{k_1,i}\\
& = 
\sum_{\substack{1\leq j_1,\dots,j_m\leq n\\1\leq k_1\leq n}}c_{g_1(j_1)}\dots
c_{g_m(j_m)}  (-1)^{k_1+1}\\
& l^{'}(\alpha_{j_1}^{f_1},\dots, \alpha_{j_m}^{f_m},\alpha_{k_1},\alpha^{-1}_{\overline{k_1}}))\otimes
\mu(\alpha^{-f_1}_{h_1(i_1,\overline{j_1})}) \dots
\mu(\alpha^{-f_m}_{h_m(i_m,\overline{j_m})}) \\
& = 
\sum_{\substack{1\leq j_1,\dots,j_m\leq n}}c_{g_1(j_1)}\dots
c_{g_m(j_m)} 
 l^{'}(\alpha_{j_1}^{f_1},\dots, \alpha_{j_m}^{f_m})\otimes
\mu(\alpha^{-f_1}_{h_1(i_1,\overline{j_1})}) \dots
\mu(\alpha^{-f_m}_{h_m(i_m,\overline{j_m})}) 
\\
& =\jmath(l).
\end{align*}

It is obvious that $\jmath$ preserves relation (\ref{wzh.eight}).

Then $\jmath$ is well-defined. Trivially it is an algebra homomorphism. For any $\alpha\in S_n(M,N,1)$, we have $\jmath(l_{ad}(\alpha)) = \alpha\otimes 1$, thus $\jmath$ is an $S_n(M,N,1)$-algebra homomorphism.


\end{proof}

\begin{theorem}\label{thm5.11}
Suppose $(M,N)$ is a marked three manifold with $N\neq\emptyset$, and $N^{'}$ is obtained from $N$ by adding one extra marking. Then there exist $S_n(M,N,1)$-algebra homomorphisms $\imath: S_n(M,N,1)\otimes
O(SLn)\rightarrow S_n(M,N^{'},1)$ and $\jmath:S_n(M,N^{'},1)\rightarrow 
S_n(M,N,1)\otimes O(SLn)$ such that $\jmath\circ\imath =Id_{S_n(M,N,1)\otimes
O(SLn)} , \imath\circ\jmath = Id_{S_n(M,N^{'},1)}$. Especially $S_n(M,N^{'},1)\simeq
S_n(M,N,1)\otimes O(SLn)$.
\end{theorem}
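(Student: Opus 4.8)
The plan is to prove Theorem \ref{thm5.11} by showing that the two $S_n(M,N,1)$-algebra homomorphisms $\imath$ and $\jmath$, already constructed in Lemmas \ref{lmm5.9} and \ref{lmm5.10}, are mutually inverse. Since $\imath$ is an $S_n(M,N,1)$-algebra map and $S_n(M,N,1)\otimes O(SLn)$ is generated as an $S_n(M,N,1)$-algebra by the $x_{i,j}$, it suffices to check $\jmath\circ\imath$ fixes each $1\otimes x_{i,j}$. Likewise, by Lemma \ref{lmm5.8}, $S_n(M,N^{'},1)$ is generated as an $S_n(M,N,1)$-algebra by the arcs $\alpha_{i,j}=(S^{[\alpha]})_{i,j}$ (where $\alpha$ is the fixed framed boundary arc joining $N$ to the new marking $e$ with $h(\tilde\alpha)=0$), so it suffices to check $\imath\circ\jmath$ fixes each $\alpha_{i,j}$. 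Both checks reduce to finite computations with the defining formulas of $\imath$ and $\jmath$ together with Proposition \ref{prop5.2} (which gives $AS^{[o]}=I$, $AS^{[\alpha]}AS^{[\alpha^{-1}]}=I$, and multiplicativity) and the identity $A^2=d_nI$, $\det A=1$.

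First I would compute $\imath\circ\jmath$ on $\alpha_{i,j}$. The $n$-web $\alpha$, viewed in $(M,N^{'})$, meets $e$ in exactly one endpoint $E_1$ pointing out of $e$, so in the definition of $\jmath$ we have $m=1$, $f_1=1$, $g_1=\delta$, $h_1(i_1,j_1)=(j_1,i_1)$, and $i_1=i$ (the state at the $e$-endpoint is $i$). Connecting $E_1$ with $\alpha$ gives back a boundary arc $\alpha*\alpha$-type configuration landing in $(M,N)$; carefully, $\alpha(\alpha^1_{j_1})$ is the arc obtained by attaching a copy of $\alpha$ to the $e$-end of $\alpha$, which after isotopy is the constant path at $N$ with state data determined by $j_1$, contributing $S^{[\alpha^{-1}*\alpha]}=S^{[o]}$ and hence a Kronecker delta. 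Tracking the scalars $c_{\bar j_1}$ and the $\mu(\alpha^{-1}_{h_1(i,\bar j_1)})=d_n(-1)^{\bar j_1+1}x^{-1}_{j_1,i}$ factors, the sum over $j_1$ collapses via $\sum_{j_1}(\text{delta})\,x^{-1}_{j_1,i}$ to something proportional to $x_{\bar i,j}$, and then applying $\imath$ (which sends $x$ to $AS^{[\alpha]}$) returns $(AS^{[\alpha]})_{i,j}=\alpha_{i,j}$ after the sign bookkeeping cancels. The reverse composite $\jmath\circ\imath$ on $x_{i,j}$ is the same calculation read in the other direction: $\imath(1\otimes x_{i,j})=\alpha_{i,j}\in S_n(M,N^{'},1)$, and then $\jmath(\alpha_{i,j})$ is exactly the sum just analyzed, which equals $1\otimes x_{i,j}$.

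The main obstacle I anticipate is the sign and framing bookkeeping: keeping precise track of the factors $d_n^{h(\tilde\alpha)}$, $(-1)^{i+1}$, $c_t=(-1)^{n-t}$, the bar operation $i\mapsto\bar i=n+1-i$, and the matrix $A_{i,j}=(-1)^{i+1}\delta_{\bar i,j}$ through the attachment operation defining $\jmath$. In particular one must verify that the copies of $\alpha$ glued in via $\jmath$ really do reconstitute, after isotopy in $(M,N)$, the trivial morphism $[o]$ of $\pi_1(M,N)$ with the framing normalized so that $h=0$, so that Proposition \ref{prop5.2}(c) applies and produces the Kronecker delta with the correct coefficient $d_n$. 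A secondary point is that when $l=\alpha$ has its endpoint in $e$ pointing \emph{out} of $e$, one uses $\alpha$ (not $\alpha^{-1}$) for the attachment, so the $x^{-1}$ versus $x$ and the $\mu$-formula for $\alpha$ versus $\alpha^{-1}$ must be matched up correctly; the consistency of these two conventions is what ultimately forces the two composites to be identities rather than merely scalar multiples of them. Once these routine but delicate verifications are done, the equalities $\jmath\circ\imath=\mathrm{Id}$ and $\imath\circ\jmath=\mathrm{Id}$ follow, hence $S_n(M,N^{'},1)\simeq S_n(M,N,1)\otimes O(SLn)$ as $S_n(M,N,1)$-algebras.
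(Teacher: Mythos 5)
Your overall strategy is exactly the paper's: use Lemma~\ref{lmm5.8} to reduce to checking $\jmath\circ\imath$ on the generators $1\otimes x_{i,j}$ and $\imath\circ\jmath$ on the arcs $\alpha_{i,j}$, and observe that both computations collapse via $S^{[o]}=d_n A$ from Proposition~\ref{prop5.2}(c). However, there is a concrete error in how you invoke the formula for $\jmath$. Since $\alpha$ runs from $\alpha(0)\in N$ to $\alpha(1)\in e$, the endpoint $E_1=\alpha(1)$ has velocity vector pointing \emph{towards} $e$, not out of it. Hence $f_1=-1$, $g_1=\mathrm{Id}$, $h_1(a,b)=(a,b)$, and the attachment is made with $\alpha^{-1}$, not $\alpha$. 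Your version ($f_1=1$, $g_1=\delta$, $h_1(a,b)=(b,a)$, attaching $\alpha$) would glue two arc-ends both oriented into $e$, which is not a valid oriented arc, and the scalar factors $c_{\bar j_1}$ and $\mu(\alpha^{-1}_{h_1(\cdot,\cdot)})$ you wrote down are the ones belonging to the wrong case. With the correct conventions,
\[
\jmath(\alpha_{i,j})=\sum_{j_1}c_{j_1}\,S^{[o]}_{j_1,j}\otimes d_n(-1)^{i+1}x_{\bar i,\bar j_1}=(-1)^{\bar i+1}\,1\otimes x_{\bar i,j},
\]
whence $\imath(\jmath(\alpha_{i,j}))=(-1)^{\bar i+1}(AS^{[\alpha]})_{\bar i,j}=\alpha_{i,j}$, and similarly for $\jmath\circ\imath$. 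Your intuition that the attached arc represents $[o]$ and produces a Kronecker delta is correct; the gap is purely the misidentification of $E_1$'s orientation, which drives the signs and the choice of $\alpha$ versus $\alpha^{-1}$ in the gluing, and which you flagged as a potential issue without actually resolving it.
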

\begin{proof}
The existence of $\imath$ and $\jmath$ are given by Lemmas \ref{lmm5.9} and \ref{lmm5.10}.

Let $i,j\in\mathbb{J}$. Then we have 
\begin{align*}
\jmath(\imath(1\otimes x_{i,j})) &= (-1)^{i+1}\jmath(\alpha_{\bar{i},j})=(-1)^{i+1}\sum_{1\leq k\leq n}C_kd_n (-1)^{\bar{i}+1} \alpha(\alpha^{-1}_{k})\otimes x_{i,\bar{k}}\\
&= \sum_{1\leq k\leq n}(-1)^{n-k}
\raisebox{-.20in}{
\begin{tikzpicture}
\tikzset{->-/.style=
{decoration={markings,mark=at position #1 with
{\arrow{latex}}},postaction={decorate}}}
\filldraw[draw=white,fill=gray!20] (-0.7,-0.7) rectangle (0,0.7);
\draw [line width =1.5pt,decoration={markings, mark=at position 1 with {\arrow{>}}},postaction={decorate}](0,-0.7)--(0,0.7);
\draw [color = black, line width =1pt] (0 ,0.3) arc (90:270:0.5 and 0.3);
\node [right]  at(0,0.3) {$k$};
\node [right] at(0,-0.3){$j$};
\draw [line width =1pt,decoration={markings, mark=at position 0.5 with {\arrow{<}}},postaction={decorate}](-0.5,0.02)--(-0.5,-0.02);
\end{tikzpicture}}
\otimes x_{i,\bar{k}} = 1\otimes x_{i,j}.
\end{align*}
Since $1\otimes x_{i,j}$ are $S_n(M,N,1)$-algebra generators, we have $\jmath\circ\imath =Id_{S_n(M,N,1)\otimes
O(SLn)}$.

We also have 
$$\imath(\jmath(\alpha_{i,j})) = (-1)^{\bar{i}+1}\imath(1\otimes x_{\bar{i},j}) = \alpha_{i,j}.$$
From Lemma \ref{lmm5.8}, we get $\imath\circ\jmath = Id_{S_n(M,N^{'},1)}$.
\end{proof}

\begin{theorem}\label{thm5.13}
Let $(M,N)$ be a marked three manifold. If $N=\emptyset$, we have $S_n(M,N,1)\simeq G_n(M)$.
If $N\neq \emptyset$, we have $S_n(M,N,1)\simeq \Gamma_n(M)\otimes O(SLn)^{\otimes(\sharp N-1)}$.
\end{theorem}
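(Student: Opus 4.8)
The plan is to obtain Theorem \ref{thm5.13} as a direct consequence of the structural isomorphisms already established, by induction on $\sharp N$. Throughout I would take $M$ to be connected: this is the standing assumption of the present section for the case $N\neq\emptyset$, and when $N=\emptyset$ the disjoint-union identity $S_n(M_1\sqcup M_2,\emptyset,1)=S_n(M_1,\emptyset,1)\otimes S_n(M_2,\emptyset,1)$ together with the definition of $G_n$ on a topological space as the tensor product of the $G_n$ of its path components reduces the claim to the connected case (where, as usual, everything is only well-defined up to isomorphism, which is all that is asserted).

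If $N=\emptyset$, the statement is exactly what is recalled in Subsection \ref{sub4.1}: Sikora's isomorphism $S_n(M;\mathbb{C},1)\simeq G_n(M)$ \cite{sikora2005skein} composed with the L\^e--Sikora identification $S_n(M,\emptyset,1)\simeq S_n(M;\mathbb{C},1)$ \cite{le2021stated} gives $S_n(M,\emptyset,1)\simeq G_n(M)$.

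Now suppose $N\neq\emptyset$ and set $k=\sharp N$; I would induct on $k$. For the base case $k=1$, Theorem \ref{thm5.5} gives $S_n(M,N,1)\simeq\Gamma_n(M)$, which is $\Gamma_n(M)\otimes O(SLn)^{\otimes 0}$ under the convention that the empty tensor product is the ground field. For $k\geq 2$, choose a component $e$ of $N$ and put $N'=N\setminus e$, so that $\sharp N'=k-1\geq 1$; since any two components of $N$ have disjoint closures we get $cl(e)\cap cl(N')=\emptyset$, so that $N$ is obtained from the nonempty marking set $N'$ by adding the single extra marking $e$. Theorem \ref{thm5.11} then gives $S_n(M,N,1)\simeq S_n(M,N',1)\otimes O(SLn)$, and by the inductive hypothesis $S_n(M,N',1)\simeq\Gamma_n(M)\otimes O(SLn)^{\otimes(k-2)}$, hence $S_n(M,N,1)\simeq\Gamma_n(M)\otimes O(SLn)^{\otimes(k-1)}$, which completes the induction.

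All the substance of the argument lies in Theorems \ref{thm5.5} and \ref{thm5.11} and in Subsection \ref{sub4.1}; the only points that need checking are that deleting one component of $N$ again yields an admissible marked three manifold with a nonempty marking set (so that Theorem \ref{thm5.11} applies at each step), and that one is content with isomorphisms defined only up to permuting the $O(SLn)$ tensor factors. I therefore do not expect a real obstacle beyond this bookkeeping.
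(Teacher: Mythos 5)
Your proposal is correct and follows essentially the same route as the paper: the paper's own proof is the one-line citation ``Subsection \ref{sub4.1}, Theorems \ref{thm5.5} and \ref{thm5.11},'' and your argument is precisely the induction on $\sharp N$ that those references implicitly carry out. The bookkeeping you flag (deleting a component of $N$ leaves a valid nonempty marking set, and isomorphisms are taken up to permuting tensor factors) is indeed all that needs checking.
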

\begin{proof}
Subsection \ref{sub4.1}, Theorems \ref{thm5.5} and \ref{thm5.11}.
\end{proof}

\begin{corollary}\label{cli}
Let $(M,N)$ be a marked three manifold with $N\neq \emptyset$. Suppose $\pi_1(M)$ is a free group generated by $m$ elements. Then we have 
 $S_n(M,N,1)\simeq  O(SLn)^{\otimes(m + \sharp N-1)}$.
\end{corollary}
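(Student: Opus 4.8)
The plan is to deduce Corollary \ref{cli} as an immediate consequence of Theorem \ref{thm5.13} together with a computation of $\Gamma_n(M)$ when $\pi_1(M)$ is free on $m$ generators. By Theorem \ref{thm5.13}, since $N\neq\emptyset$ we already have $S_n(M,N,1)\simeq \Gamma_n(M)\otimes O(SLn)^{\otimes(\sharp N-1)}$, so it suffices to establish the isomorphism $\Gamma_n(M)\simeq O(SLn)^{\otimes m}$ whenever $\pi_1(M)$ is the free group $F_m$ on generators $x_1,\dots,x_m$.

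The key step is therefore purely algebraic: to identify $\Gamma_n(F_m)$. Recall from Definition \ref{df3.4} that $\Gamma_n(X)$ is generated by the entries $[\alpha]_{i,j}$ of matrices $Q_{[\alpha]}$ for $[\alpha]\in\pi_1(X)$, modulo the relations $Q_{[\alpha]}Q_{[\beta]}=Q_{[\alpha*\beta]}$, $\det(Q_{[\alpha]})=1$, and $Q_{[o]}=I$. When $\pi_1(X)=F_m$ is free, I would argue that $\Gamma_n(F_m)$ is freely generated, as a commutative algebra subject only to the unit-determinant relations, by the entries of the $m$ matrices $Q_{x_1},\dots,Q_{x_m}$. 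Indeed, every element of $F_m$ is a word in the $x_k$ and their inverses, so the multiplicativity relation $Q_{[\alpha]}Q_{[\beta]}=Q_{[\alpha*\beta]}$ expresses every $Q_{[\alpha]}$ as a product of the $Q_{x_k}$ and $Q_{x_k}^{-1}=A^{-1}\cdots$ — note $Q_{x_k}^{-1}$ is well defined once $\det Q_{x_k}=1$ — and there are no further relations among the $x_k$ to impose, precisely because $F_m$ is free. Concretely, this gives a surjection $O(SLn)^{\otimes m}\to\Gamma_n(F_m)$ sending the $k$-th tensor factor's generator matrix $X^{(k)}$ to $Q_{x_k}$; its inverse sends $[\alpha]_{i,j}$ to the corresponding entry of the word $X^{(w)}$ in the matrices $X^{(k)}$. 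The map is well defined because $O(SLn)^{\otimes m}$ is the coordinate ring of $SL(n,\mathbb{C})^m = \mathrm{Hom}(F_m, SL(n,\mathbb{C}))$, and $\Gamma_n(F_m)$ receives exactly the relations defining that representation scheme.

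The main obstacle I anticipate is the bookkeeping needed to check that the two maps between $O(SLn)^{\otimes m}$ and $\Gamma_n(F_m)$ are mutually inverse algebra homomorphisms, and in particular that the assignment $X^{(k)}\mapsto Q_{x_k}$ respects \emph{all} the defining relations of $\Gamma_n$ (not just those visible on generators) — this is where freeness of $\pi_1(M)$ is essential, since for a non-free group the relator words would force additional polynomial relations among the matrix entries and the tensor product of copies of $O(SLn)$ would be too big. Once this identification $\Gamma_n(M)\simeq O(SLn)^{\otimes m}$ is in hand, the corollary follows by substituting into Theorem \ref{thm5.13}:
\[
S_n(M,N,1)\simeq \Gamma_n(M)\otimes O(SLn)^{\otimes(\sharp N-1)}\simeq O(SLn)^{\otimes m}\otimes O(SLn)^{\otimes(\sharp N-1)} = O(SLn)^{\otimes(m+\sharp N-1)}.
\]
I would also remark that, as in the earlier discussion of $\Gamma_n(X)$ for disconnected $X$, the isomorphism depends on an ordering of the free generators and of the components of $N$, so it is only canonical up to reordering the tensor factors; this does not affect the statement.
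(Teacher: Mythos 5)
Your proposal is correct and follows exactly the route the paper intends: the corollary is placed immediately after Theorem \ref{thm5.13} precisely because it is an instance of that theorem combined with the standard identification $\Gamma_n(F_m)\simeq O(SLn)^{\otimes m}$, which you justify (as one should) via the universal/representability property of $\Gamma_n$ — both algebras represent the functor $A\mapsto\mathrm{Hom}(F_m,SL(n,A))$, and freeness is what makes the $m$ tensor factors impose no further relations.
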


The second conclusion in Theorem 7.13 in \cite{le2021stated} indicates the classical limit for essentially bordered pb surfaces, which coincides with Corollary \ref{cli}.


\begin{corollary}\label{Cor5.13}
Suppose $(M,N)$ is a marked three manifold, and $N^{'}$ is obtained from $N$ by adding one extra marking.
Then the adding mark map $l_{ad}:S_n(M,N,1)\rightarrow S_n(M,N^{'},1)$ is injective.
\end{corollary}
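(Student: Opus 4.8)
The plan is to reduce to the connected case and then read off injectivity from the structural isomorphism of Theorem \ref{thm5.11}. First I would invoke the usual decomposition $S_n(M,N,1) = S_n(M_1,N_1,1)\otimes\cdots\otimes S_n(M_k,N_k,1)$ over the connected components $(M_i,N_i)$ of $(M,N)$; the extra marking $e$ lies on $\partial M$, hence on $\partial M_{i_0}$ for exactly one index $i_0$, and the adding-marking map $l_{ad}$ is the tensor product of the identity on all factors $i\neq i_0$ with the adding-marking map for $(M_{i_0},N_{i_0})$. Since a tensor product of injective linear maps of $\mathbb{C}$-vector spaces is injective, it suffices to treat the component $(M_{i_0},N_{i_0})$; so from now on I may assume $M$ is connected. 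There are then two subcases depending on whether $N=\emptyset$.

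If $N=\emptyset$, then $N'=e$ consists of a single component, and Theorem \ref{thm5.13} (together with Subsection \ref{sub4.1}) gives $S_n(M,\emptyset,1)\simeq G_n(M)$ and $S_n(M,N',1)\simeq \Gamma_n(M)$. The adding-marking map is then identified with the inclusion $G_n(M)\hookrightarrow \Gamma_n(M)$ of the subalgebra fixed by the $\pi_1(M)$-action (Definition \ref{df3.4}), which is injective by construction. If $N\neq\emptyset$, then $N'$ is obtained from $N$ by adding one extra marking, and Theorem \ref{thm5.11} supplies $S_n(M,N,1)$-algebra homomorphisms $\imath$ and $\jmath$ with $\jmath\circ\imath = \mathrm{Id}$ and $\imath\circ\jmath=\mathrm{Id}$, realising $S_n(M,N',1)\simeq S_n(M,N,1)\otimes O(SLn)$; moreover, by the last sentence of the proof of Lemma \ref{lmm5.10}, one has $\jmath\circ l_{ad}^{\,e}(\alpha) = \alpha\otimes 1$ for every $\alpha$, i.e. $\jmath\circ l_{ad}$ is the standard unital inclusion $S_n(M,N,1)\hookrightarrow S_n(M,N,1)\otimes O(SLn)$. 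That inclusion is injective (pick any $\mathbb{C}$-linear splitting of $O(SLn)\to\mathbb{C}$, e.g. the counit, and tensor with $\mathrm{Id}$), hence $l_{ad}$ is injective.

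The argument is essentially immediate once Theorems \ref{thm5.5}, \ref{thm5.11} and \ref{thm5.13} are in hand, so there is no serious obstacle; the one point requiring a word of care is the identification of $l_{ad}$ in terms of $\jmath$ — one must check that the composite $\jmath\circ l_{ad}$ really is $\alpha\mapsto\alpha\otimes 1$ (this is exactly the statement that $\jmath$ is an $S_n(M,N,1)$-algebra homomorphism, recorded at the end of the proof of Lemma \ref{lmm5.10}, since for a web $l\subset M$ disjoint from $e$ one has $\jmath(l)=l\otimes 1$). With that in place, injectivity of $l_{ad}$ follows from injectivity of a left-factor inclusion into a tensor product, which is elementary over a field. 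I would also remark that the corollary in fact shows $l_{ad}$ is a split injection of $S_n(M,N,1)$-modules.
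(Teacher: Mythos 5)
Your proof is correct and follows essentially the same route as the paper: for $N=\emptyset$ it identifies $l_{ad}$ with the inclusion $G_n(M)\hookrightarrow\Gamma_n(M)$ via the isomorphisms of Subsection \ref{sub4.1} and Theorem \ref{thm5.5}, and for $N\neq\emptyset$ it uses $\jmath\circ l_{ad}(\alpha)=\alpha\otimes 1$ together with the isomorphism $\jmath$ from Theorem \ref{thm5.11}. The only cosmetic difference is that you spell out the reduction to the connected case, which the paper handles via its standing convention at the start of Section \ref{subb5}.
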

\begin{proof}
If $N$ is empty. We look at the following diagram:
$$\begin{tikzcd}
S_n(M,\emptyset,1)  \arrow[r, "l_{ad}"]
\arrow[d, "\simeq"]  
&  S_n(M,N^{'},1) \arrow[d, "G"] \\
G_n(M) \arrow[r, "\lambda "] 
&  \Gamma_n(M)\\
\end{tikzcd}$$
where the isomorphism from $S_n(M,\emptyset,1)$ to $G_n(M)$ is the one introduced in Subsection \ref{sub4.1} (the spin structure used for this isomorphism is the restriction of the relative spin structure for $(M,N^{'})$), and $\lambda$ is the embedding. It is easy to check the above diagram is commutative. Then $l_{ad}$ is injective because $G$ is an isomorphism.

If $N$ is not empty. For any $\alpha\in S_n(M,N,1)$, we have $\jmath(l_{ad}(\alpha)) = \alpha\otimes 1$. Then $l_{ad}$ is injective because $\jmath$ is an isomorphism and the map from 
$S_n(M,N,1)$ to $S_n(M,N,1)\otimes O(SLn)$ given by $\alpha\mapsto \alpha\otimes 1$ is injective.
\end{proof}

From Corollary \ref{Cor5.13} and the adding marking map, we can regard $S_n(M,N,1)$
as a subalgebra of $S_n(M,N^{'},1)$.

\subsection{Ker$\Phi$ = $\sqrt{0}$}

Suppose $N$ has only one component.
Then we define an alegbra isomorphism $H:\Gamma_n(M)\rightarrow \Gamma_n(M)$ and 
a surjective algebra homormorphism $\tau:\Gamma_n(M)\rightarrow R_n(M,N)$.
Let $[\alpha]$ be an element in  $\pi_1(M,N)$ and 
$i,j$ be two integers between $1$ and $n$. Define $H([\alpha]_{i,j})
= [\alpha]_{\bar{i},\bar{j}}$, it is easy to show $H$ is a well-defined algebra isomorphism. For any $\rho\in\tilde{\chi}_n(M,N)$, define  $\tau([\alpha]_{i,j})(\rho) = [\rho(\tilde{\alpha})]_{i,j}$
where $\tilde{\alpha}\in \pi_1(UM, \tilde{N})$ is a lift for $\alpha$ such 
that $h(\tilde{\alpha}) = 0$. From the proof of Proposition \ref{prop3.6}, we know the definition of $\tau$ is independent of the choice of the lift for $\alpha$. It is also obvious to show $\tau$ is a well-defined surjective algebra homomorphism. Especially from Proposition \ref{prop3.6} and definitions for $\Gamma_n(M), R_n(M,N)$, we have Ker$\tau = \sqrt{0}_{\Gamma_n(M)}.$

\begin{lemma}\label{lmm5.15}
Let $(M,N)$ be marked three manifold with $N$ consisting of one open oriented interval. Then we have the following commutative diagram:
$$\begin{tikzcd}
\Gamma_n(M)  \arrow[r, "F"]
\arrow[d, "H"]  
&  S_n(M,N,1) \arrow[d, "\Phi"] \\
\Gamma_n(M) \arrow[r, "\tau "] 
&  R_n(M,N)\\
\end{tikzcd}.$$ Especially Ker\,$\Phi=\sqrt{0}_{S_n(M,N,1)}.$
\end{lemma}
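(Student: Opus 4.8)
The plan is to check the square commutes on the algebra generators $[\alpha]_{i,j}$ of $\Gamma_n(M)$, and then to deduce the kernel statement formally from the facts that $F$ and $H$ are isomorphisms and that $\ker\tau=\sqrt{0}_{\Gamma_n(M)}$. Since $F,H,\tau,\Phi$ are all algebra homomorphisms and $\Gamma_n(M)$ is generated as an algebra by the elements $[\alpha]_{i,j}$ with $[\alpha]\in\pi_1(M,N)$ and $1\le i,j\le n$, it is enough to verify $\Phi(F([\alpha]_{i,j}))=\tau(H([\alpha]_{i,j}))$ on each such generator.

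First I would unwind the left side. By Lemma \ref{lmm5.3}, $F([\alpha]_{i,j})=(AS^{[\alpha]})_{i,j}$; since $A_{i,k}=(-1)^{i+1}\delta_{\bar i,k}$ this equals $(-1)^{i+1}S^{[\alpha]}_{\bar i,j}=(-1)^{i+1}\hat\alpha_{\bar i,j}$, where $\hat\alpha$ is the good framed oriented boundary arc representing $[\alpha]$ with $h(\widetilde{\hat\alpha})=0$ from Remark \ref{rre5.1}, stated by $s(\hat\alpha(0))=j$ and $s(\hat\alpha(1))=\bar i$. Applying $\Phi$ (Theorem \ref{thm3.11}) to this single stated boundary arc gives, for $\tilde\rho\in\tilde\chi_n(M,N)$, $\Phi(\hat\alpha_{\bar i,j})(\tilde\rho)=tr_{\hat\alpha_{\bar i,j}}(\tilde\rho)=[A\,\tilde\rho(\widetilde{\hat\alpha})]_{\,\overline{\bar i},\,\bar j}=[A\,\tilde\rho(\widetilde{\hat\alpha})]_{i,\bar j}$, and expanding the $A$ once more gives $(-1)^{i+1}[\tilde\rho(\widetilde{\hat\alpha})]_{\bar i,\bar j}$; the two signs cancel, so $\Phi(F([\alpha]_{i,j}))(\tilde\rho)=[\tilde\rho(\widetilde{\hat\alpha})]_{\bar i,\bar j}$. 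On the other side, $\tau(H([\alpha]_{i,j}))=\tau([\alpha]_{\bar i,\bar j})$, and by definition $\tau([\alpha]_{\bar i,\bar j})(\tilde\rho)=[\tilde\rho(\tilde\alpha)]_{\bar i,\bar j}$ for any lift $\tilde\alpha\in\pi_1(UM,\tilde N)$ of $[\alpha]$ with $h(\tilde\alpha)=0$; since the lift $\widetilde{\hat\alpha}$ already satisfies $h(\widetilde{\hat\alpha})=0$ and $\tau$ is independent of the chosen lift (the content of the proof of Proposition \ref{prop3.6}), I may take $\tilde\alpha=\widetilde{\hat\alpha}$, and both sides equal $[\tilde\rho(\widetilde{\hat\alpha})]_{\bar i,\bar j}$. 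Hence the square commutes.

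With commutativity in hand, and $F$ (Theorem \ref{thm5.5}) and $H$ both algebra isomorphisms, we get $\Phi=\tau\circ H\circ F^{-1}$, so $\ker\Phi=F\big(H^{-1}(\ker\tau)\big)=F\big(H^{-1}(\sqrt{0}_{\Gamma_n(M)})\big)$; an algebra isomorphism carries the nilradical onto the nilradical, so this equals $F(\sqrt{0}_{\Gamma_n(M)})=\sqrt{0}_{S_n(M,N,1)}$. I expect the only real obstacle to be the bookkeeping in the middle step: keeping the $A$-matrix signs $(-1)^{i+1}$ and the bar-involutions $i\mapsto\bar i$ straight through the definitions of $F$, $S^{[\alpha]}$, $tr_\alpha$, $H$ and $\tau$, and confirming that the framing normalization $h(\widetilde{\hat\alpha})=0$ built into $S^{[\alpha]}$ is precisely the one used in the definition of $\tau$. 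Once the conventions of Section \ref{subb3.4} and Remark \ref{rre5.1} are aligned, each individual equality is immediate.
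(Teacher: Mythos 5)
Your proposal is correct and follows essentially the same route as the paper's proof: checking commutativity on the algebra generators $[\alpha]_{i,j}$ via the same index and sign computations, identifying both sides with $[\tilde\rho(\widetilde{\hat\alpha})]_{\bar i,\bar j}$, and then deducing $\ker\Phi = \sqrt{0}$ from the isomorphism properties of $F$, $H$ and the known kernel of $\tau$. The only difference is that you spell out the final kernel argument ($\Phi=\tau\circ H\circ F^{-1}$ carries nilradical to nilradical) a bit more explicitly than the paper, which just states it.
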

\begin{proof}
For any $[\alpha]\in\pi_1(M,N),1\leq i,j\leq n$, we know $F([\alpha]_{i,j}) = (-1)^{i+1} \hat{\alpha}_{\bar{i},j}$
where $\hat{\alpha}$ is a framed oriented boundary arc such that $h(\widetilde{\hat{\alpha}})
= 0$ and $[\hat{\alpha}] = [\alpha]\in\pi_1(M,N)$. Then for any $\rho\in\tilde{\chi}_n(M,N)$, we have
$$\Phi(F([\alpha]_{i,j}))(\rho) =(-1)^{i+1} \Phi(\hat{\alpha}_{\bar{i},j})(\rho) =
 (-1)^{i+1} [A\rho(\widetilde{\hat{\alpha}})]_{i,\bar{j}} = [\rho(\widetilde{\hat{\alpha}})]_{\bar{i},\bar{j}}.$$
Since $\widetilde{\hat{\alpha}}\in \pi_1(UM,\tilde{N})$ is a lift for $\alpha$ and $h(\widetilde{\hat{\alpha}})=0$,
we have 
$$\tau(H([\alpha]_{i,j}))(\rho) = \tau([\alpha]_{\bar{i},\bar{j}})(\rho)
= [\rho(\widetilde{\hat{\alpha}})]_{\bar{i},\bar{j}}.$$
Thus the diagram commutes.

Since both $F$ and $H$ are isomorphisms and Ker$\tau=\sqrt{0}_{\Gamma_n(M)}$, we get Ker$\Phi=\sqrt{0}_{S_n(M,N,1)}$.

\end{proof}

\begin{lemma}\label{lmm5.16}
Suppose $(M,N)$ is a marked three manifold with $N\neq\emptyset$, and $N^{'}$ is obtained from $N$ by adding one extra marking. 
Then Ker\,$\Phi^{(M,N^{'})}$ is the ideal generated by Ker\,$\Phi^{(M,N)}$ (here we regard $S_n(M,N,1)$ as a subalgebra of $S_n(M,N^{'},1)$). 
\end{lemma}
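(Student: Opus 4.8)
The plan is to leverage the isomorphism $S_n(M,N^{'},1)\simeq S_n(M,N,1)\otimes O(SLn)$ from Theorem \ref{thm5.11} together with the corresponding description of the coordinate rings. First I would recall that, since $M$ is connected (the general case reducing via Lemma \ref{6688} and disjoint-union compatibility), $\tilde\chi_n(M,N^{'})\simeq \tilde\chi_n(M,N)\times SL(n,\mathbb C)$, so that $R_n(M,N^{'})\simeq R_n(M,N)\otimes O(SLn)$ as algebras. The key point is that this identification is compatible with $\Phi$: I would check that under the isomorphisms $\imath,\jmath$ of Theorem \ref{thm5.11} and the factorization $R_n(M,N^{'})\simeq R_n(M,N)\otimes O(SLn)$, the map $\Phi^{(M,N^{'})}$ becomes $\Phi^{(M,N)}\otimes \mathrm{id}_{O(SLn)}$ — this uses that $\imath$ sends $1\otimes x_{i,j}$ to $(AS^{[\alpha]})_{i,j}$ where $\alpha$ is the added arc, and $\Phi$ of that entry is exactly the coordinate function $x_{i,j}$ on the extra $SL(n,\mathbb C)$ factor (this is essentially the computation in Remark \ref{rre5.1} and Subsection \ref{sub4.3}).

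Given that compatibility, the statement reduces to a purely algebraic fact: if $f:B\to C$ is a surjective algebra homomorphism of commutative algebras and $D$ is any commutative algebra, then $\ker(f\otimes\mathrm{id}_D)$ equals the ideal of $B\otimes D$ generated by $\ker f$. I would prove this directly: the ideal generated by $\ker f$ in $B\otimes D$ is exactly $(\ker f)\otimes D$ (since $D$ is free, hence flat, as a $\mathbb C$-module, and $\ker f\otimes D \hookrightarrow B\otimes D$), and tensoring the exact sequence $0\to\ker f\to B\to C\to 0$ with $D$ over $\mathbb C$ gives exactness of $0\to\ker f\otimes D\to B\otimes D\to C\otimes D\to 0$; thus $\ker(f\otimes\mathrm{id}_D)=(\ker f)\otimes D$, which is the ideal generated by $\ker f$. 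This is really just an instance of Lemma \ref{8866} applied to $0\to\ker f\to B\to C\to 0$ and $0\to 0\to D\to D\to 0$.

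Putting the pieces together: with $B=S_n(M,N,1)$, $C=R_n(M,N)$, $f=\Phi^{(M,N)}$, and $D=O(SLn)$, I would conclude $\ker\Phi^{(M,N^{'})}$ is carried by $\jmath$ to $(\ker\Phi^{(M,N)})\otimes O(SLn)$, which is the ideal generated by $\ker\Phi^{(M,N)}$ inside $S_n(M,N,1)\otimes O(SLn)\cong S_n(M,N^{'},1)$; under this isomorphism $S_n(M,N,1)$ is identified with its image under $l_{ad}$ (Corollary \ref{Cor5.13}), so the claim follows as stated. The main obstacle I anticipate is not the algebra but the bookkeeping in verifying that $\Phi^{(M,N^{'})}$ genuinely corresponds to $\Phi^{(M,N)}\otimes\mathrm{id}$ under $\imath,\jmath$ and the splitting $R_n(M,N^{'})\simeq R_n(M,N)\otimes O(SLn)$ — one has to track the arc $\alpha$ used to define $\imath$, the relative spin structure extension (the Remark following Definition of relative spin structure), and the matrix $A$-twist in the definition of $tr_\alpha$, making sure the ``extra'' $SL(n,\mathbb C)$ coordinate really is clean. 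Once that diagram is seen to commute, the rest is the flat base-change argument above.
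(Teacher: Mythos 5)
Your proposal follows the paper's proof essentially verbatim: transfer $\Phi^{(M,N^{'})}$ across the isomorphisms $S_n(M,N^{'},1)\simeq S_n(M,N,1)\otimes O(SLn)$ and $R_n(M,N^{'})\simeq R_n(M,N)\otimes O(SLn)$, then compute the kernel by the tensor-exactness argument of Lemma \ref{8866}. The one imprecision --- which you correctly flagged as the main bookkeeping concern --- is that the resulting square commutes with $\Phi^{(M,N)}\otimes f$ (where $f\colon O(SLn)\to O(SLn)$ is the automorphism $x_{i,j}\mapsto x_{\bar i,\bar j}$ coming from the $A$-twist in $tr_\alpha$) rather than $\Phi^{(M,N)}\otimes\mathrm{id}$, but since $f$ is an isomorphism the kernels agree, exactly as the paper observes.
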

\begin{proof}
Here we  use the notations in Remark \ref{rem5.8}.

From Proposition \ref{prop3.6} and Lemma 8.1 in \cite{CL2022stated}, we know there is an algebra isomorphism 
$h: R_n(M,N)\otimes O(SLn)\rightarrow R_n(M,N^{'})$ defined by
$$h(r\otimes x_{i,j})(\rho) = r(\rho|_{\pi_1(UM,\tilde{N})})[\rho(\tilde{\alpha})]_{i,j}$$
where $r\in R_n(M,N), 1\leq i,j\leq n,$ $\rho\in \tilde{\chi}_n(M,N^{'})$ and 
$\rho|_{\pi_1(UM,\tilde{N})} $ is the restriction of $\rho$ on $\pi_1(UM,\tilde{N})$.
We  have another algebra isomorphism $f:O(SLn)\rightarrow O(SLn)$ given by $x_{i,j}\rightarrow x_{\bar{i},\bar{j}}$.

Then we want to show the following diagram is commutative:
$$\begin{tikzcd}
S_n(M,N,1)\otimes O(SLn)  \arrow[r, "\imath"]
\arrow[d, "\Phi^{(M,N)}\otimes f"]  
&  S_n(M,N^{'},1) \arrow[d, "\Phi^{(M,N^{'})}"] \\
R_n(M,N)\otimes O(SLn) \arrow[r, "h"] 
&  R_n(M,N^{'})\\
\end{tikzcd}.$$
Let $\rho$ be element in $\tilde{\chi}_n(M,N^{'})$, $i,j$ be two integers between $1$ and $n$,
$\beta_{k,t}$ be a stated framed oriented boundary arc in $(M,N)$. We have 
$$
\Phi^{(M,N^{'})}(\imath(\beta_{k,t}\otimes 1))(\rho) = \Phi^{(M,N^{'})}(\beta_{k,t})(\rho)
= [A\rho(\tilde{\beta})]_{\bar{k},\bar{t}}\;,$$
and
\begin{align*}
(h\circ (\Phi^{(M,N)}\otimes f)) (\beta_{k,t}\otimes 1)(\rho)
&= h(\Phi^{(M,N)}(\beta_{k,t})\otimes 1)(\rho)  = \Phi^{(M,N)}(\beta_{k,t}) (\rho|_{\pi_1(UM,\tilde{N})})\\
&= [A \rho|_{\pi_1(UM,\tilde{N})} (\tilde{\beta})]_{\bar{k},\bar{t}} = [A\rho(\tilde{\beta})]_{\bar{k},\bar{t}}\, .
\end{align*}
We also have 
$$
\Phi^{(M,N^{'})}(\imath(1\otimes x_{i,j}))(\rho) = (-1)^{i+1} \Phi^{(M,N^{'})}(\alpha_{\bar{i},j})(\rho)
= (-1)^{i+1} [A \rho(\tilde{\alpha})]_{i,\bar{j}} =  [ \rho(\tilde{\alpha})]_{\bar{i},\bar{j}}\;,
$$
and 
$$(h\circ (\Phi^{(M,N)}\otimes f)) (1\otimes x_{i,j})(\rho) =
h(1\otimes x_{\bar{i},\bar{j}})(\rho) = [\rho(\tilde{\alpha})]_{\bar{i},\bar{j}}\, .$$
Thus the above diagram is commutative because $\beta_{k,t}\otimes 1, 1\otimes x_{i,j}$ generate
$S_n(M,N,1)\otimes O(SLn)$ as an algebra and all the maps in the diagram are algebra homomorphisms.

We have Ker$(\Phi^{(M,N)}\otimes f) =$Ker($\Phi^{(M,N)}\otimes Id_{O(SLn)}$)
$=(\text{Ker}\,\Phi^{(M,N)})\otimes O(SLn)$, where $(\text{Ker}\,\Phi^{(M,N)})\otimes O(SLn)$ is an ideal generated by $(\text{Ker}\,\Phi^{(M,N)})\otimes1$. Then Ker\,$\Phi^{(M,N^{'})}$is the ideal generated by
Ker\,$\Phi^{(M,N)}$ since $\imath((\text{Ker}\,\Phi^{(M,N)})\otimes1) =$Ker\,$\Phi^{(M,N)}$.

\end{proof}

\begin{lemma}\label{lmm5.17}
Let $(M,N)$ be a marked three manifold with $N\neq \emptyset$. We have (a) Ker\,$\Phi^{(M,N)}
=\sqrt{0}_{S_n(M,N,1)}$, and (b) $\sqrt{0}_{S_n(M,N,1)}$ is the ideal generated by
$\sqrt{0}_{S_n(M,\{e\},1)}$ where $e$ is a component of $N$ (here we regard $S_n(M,\{e\},1)$ as a subalgebra of $S_n(M,N,1)$).
\end{lemma}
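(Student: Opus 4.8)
The plan is to prove both statements simultaneously by induction on $\sharp N$, using the single-marking case as the base and the adding-marking machinery as the inductive step. For the base case $\sharp N = 1$, statement (a) is exactly Lemma \ref{lmm5.15}, which gives the commutative square $\tau \circ H = \Phi \circ F$ with $F,H$ isomorphisms and $\mathrm{Ker}\,\tau = \sqrt{0}_{\Gamma_n(M)}$, hence $\mathrm{Ker}\,\Phi^{(M,N)} = \sqrt{0}_{S_n(M,N,1)}$; statement (b) is vacuous when $\sharp N = 1$ since then $e = N$ and $S_n(M,\{e\},1) = S_n(M,N,1)$. For the inductive step, write $N = N' \cup \{f\}$ where $N'$ has one fewer component and contains the chosen component $e$; then $S_n(M,N,1)$ is obtained from $S_n(M,N',1)$ by adding the marking $f$, so by Corollary \ref{Cor5.13} we regard $S_n(M,N',1)$ as a subalgebra of $S_n(M,N,1)$.

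First I would prove part (a) in the inductive step. By Lemma \ref{lmm5.16}, $\mathrm{Ker}\,\Phi^{(M,N)}$ is the ideal of $S_n(M,N,1)$ generated by $\mathrm{Ker}\,\Phi^{(M,N')}$. By the inductive hypothesis $\mathrm{Ker}\,\Phi^{(M,N')} = \sqrt{0}_{S_n(M,N',1)}$, so $\mathrm{Ker}\,\Phi^{(M,N)}$ is the ideal generated by $\sqrt{0}_{S_n(M,N',1)}$ inside $S_n(M,N,1)$. Now I invoke the isomorphism $S_n(M,N,1) \simeq S_n(M,N',1) \otimes O(SLn)$ from Theorem \ref{thm5.11}: under this identification $S_n(M,N',1)$ is the first tensor factor, and the ideal it generates in $A \otimes O(SLn)$ coming from $\sqrt{0}_A$ is precisely $\sqrt{0}_A \otimes O(SLn)$, which equals $\sqrt{0}_{A \otimes O(SLn)}$ because $O(SLn)$ is reduced (it is the coordinate ring of the affine variety $SL(n,\mathbb{C})$, hence has no nonzero nilpotents, and a tensor product of a ring with a reduced ring over a field has nilradical $\sqrt{0}_A \otimes O(SLn)$). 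Therefore $\mathrm{Ker}\,\Phi^{(M,N)} = \sqrt{0}_{S_n(M,N,1)}$, proving (a).

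Part (b) then follows by iterating: by the same reasoning, $\sqrt{0}_{S_n(M,N,1)}$ is the ideal generated by $\sqrt{0}_{S_n(M,N',1)}$, and by the inductive hypothesis the latter is the ideal generated by $\sqrt{0}_{S_n(M,\{e\},1)}$; an ideal generated by (an ideal generated by $X$) is the ideal generated by $X$, so $\sqrt{0}_{S_n(M,N,1)}$ is the ideal generated by $\sqrt{0}_{S_n(M,\{e\},1)}$. One should also note (a) and (b) are consistent with $N = \emptyset$ being excluded here, and with Lemma \ref{6688} for the disconnected case, so the statement for general marked three manifolds reduces to the connected case via that lemma.

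The main obstacle I anticipate is the precise bookkeeping of nilradicals under the tensor-product identification: one needs that for a commutative $\mathbb{C}$-algebra $A$ and the finitely generated reduced $\mathbb{C}$-algebra $O(SLn)$, the nilradical of $A \otimes_{\mathbb{C}} O(SLn)$ is $\sqrt{0}_A \otimes O(SLn)$. This is true because $O(SLn)$ is geometrically reduced over $\mathbb{C}$ (indeed smooth, or simply: $SL(n)$ is a variety over the algebraically closed field $\mathbb{C}$), so tensoring the exact sequence $0 \to \sqrt{0}_A \to A \to A_{\mathrm{red}} \to 0$ with the flat module $O(SLn)$ exhibits $(A\otimes O(SLn))/(\sqrt{0}_A \otimes O(SLn)) \simeq A_{\mathrm{red}} \otimes O(SLn)$, and this last ring is reduced. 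This is the only point that uses more than formal manipulation, and it should be cited or stated as a short lemma rather than belabored.
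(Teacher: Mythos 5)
Your proof is correct and follows the same inductive framework as the paper (induction on $\sharp N$, base case via Lemma \ref{lmm5.15}, inductive step via Lemma \ref{lmm5.16}), but the inductive step for part (a) is carried out differently. The paper establishes the equality $\mathrm{Ker}\,\Phi^{(M,N')} = \sqrt{0}_{S_n(M,N',1)}$ by two elementary one-line observations: the ideal generated by $\sqrt{0}_{S_n(M,N,1)}$ consists of nilpotents (since in a commutative ring a product of any element with a nilpotent is nilpotent, and a finite sum of nilpotents is nilpotent), giving $\mathrm{Ker}\,\Phi^{(M,N')} \subseteq \sqrt{0}$; while $\sqrt{0} \subseteq \mathrm{Ker}\,\Phi^{(M,N')}$ follows because the target $R_n(M,N')$ is a reduced coordinate ring. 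You instead pass to the explicit isomorphism $S_n(M,N',1) \simeq S_n(M,N,1) \otimes O(SLn)$ from Theorem \ref{thm5.11} and compute the nilradical of the tensor product directly, using flatness of $O(SLn)$ over $\mathbb{C}$ and the fact that tensoring a reduced $\mathbb{C}$-algebra with the geometrically reduced $O(SLn)$ stays reduced. This yields the equality in one stroke without re-invoking reducedness of $R_n(M,N')$, and it is arguably more transparent about where the nilradical sits under the structural decomposition $\Gamma_n(M) \otimes O(SLn)^{\otimes(\sharp N -1)}$; the cost is the mildly nontrivial commutative-algebra input (reduced $\otimes$ geometrically reduced $=$ reduced), which you rightly flag and justify. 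The paper's version is shorter and entirely elementary. Your treatment of part (b) by transitivity of ``ideal generated by'' along the inclusion chain is correct and essentially identical to the paper's, which iterates through the nested markings $N_{(1)} \subset \cdots \subset N_{(m)}$.
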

\begin{proof}
 
For any marked three manifold $(M_1,N_1)$ with $N_1\neq \emptyset$, suppose $N_1^{'}$ is obtained from $N_1$ by adding one extra marking, then Ker\,$\Phi^{(M_1,N_1^{'})} = (\text{Ker\,}\Phi^{(M_1,N_1)})$ from Lemma \ref{lmm5.16}. Thus if Ker\,$\Phi^{(M_1,N_1)} =\sqrt{0}_{S_n(M_1,N_1,1)}$, then 
$\text{Ker\,}\Phi^{(M_1,N_1^{'})} = (\text{Ker\,}\Phi^{(M_1,N_1)})\subset \sqrt{0}_{S_n(M_1,N_1^{'},1)}$. We also have $\sqrt{0}_{S_n(M_1,N_1^{'},1)}\subset \text{Ker\,}\Phi^{(M_1,N_1^{'})}$ since the coordinate ring has no nonzero nilponents. Then $\text{Ker\,}\Phi^{(M_1,N_1^{'})}=\sqrt{0}_{S_n(M_1,N_1^{'},1)}$. Thus we have 
Ker\,$\Phi^{(M_1,N_1)} =\sqrt{0}_{S_n(M_1,N_1,1)}$ implies $\text{Ker\,}\Phi^{(M_1,N_1^{'})}=\sqrt{0}_{S_n(M_1,N_1^{'},1)}$. Combine with the fact that (a) is true if $N$ consists of only one oriented open interval (Lemma \ref{lmm5.15}), then we get (a) is true for general marked three manifold $(M,N)$ with $N\neq \emptyset$.

If $N$ consists of one component, clearly (b) holds. If $\sharp N>1$, suppose $Com(N)=\{e_1,e_2,\dots,e_m\}$.
For any $1\leq i\leq m$, define $N_{(i)}=e_1\cup\dots\cup e_i$. Then we have 
$$S_n(M,N_{(1)},1)\subset S_n(M,N_{(2)},1)\subset,\dots,\subset S_n(M,N_{(m)},1).$$
Since Ker\,$\Phi^{(M,N_{(i+1)})}$ is an ideal of $S_n(M,N_{(i+1)},1)$ generated by
Ker\,$\Phi^{(M,N_{(i)})}$, then we have 
$$\text{Ker}\,\Phi^{(M,N)} = \text{Ker\,}\Phi^{(M,N_{(m)})}$$
is an ideal of $S_n(M,N,1)$
generated by Ker\,$\Phi_{(M,N_{(1)})}$, which actually is $\text{Ker\,}\Phi_{(M,\{e_1\})}$. From (a), we know 
Ker\,$\Phi^{(M,N)}= \sqrt{0}_{S_n(M,N,1)}$ and Ker\,$\Phi^{(M,\{e_1\})}= \sqrt{0}_{S_n(M,\{e_1\},1)}$. Since we can label any component of $N$ as $e_1$, then (b) is true.
\end{proof}

\begin{theorem}
For any marked three manifold $(M,N)$,
 we have Ker\,$\Phi^{(M,N)}
=\sqrt{0}_{S_n(M,N,1)}$. 
\end{theorem}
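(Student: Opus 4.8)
The final statement asserts that $\mathrm{Ker}\,\Phi^{(M,N)} = \sqrt{0}_{S_n(M,N,1)}$ for \emph{every} marked three manifold $(M,N)$, with no connectedness or nonemptiness hypothesis. The plan is to assemble this from the three cases already treated. First I would invoke Lemma \ref{6688}: if $(M,N)$ is a disjoint union of marked three manifolds $(M_i,N_i)$ and the kernel statement holds for each piece, then it holds for the union. Since any marked three manifold is a finite disjoint union of connected ones, an easy induction on the number of components reduces the problem to the case where $M$ is connected.

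For connected $(M,N)$ there are two subcases. If $N = \emptyset$, this was handled in Subsection \ref{sub4.1}: the composite $S_n(M,\emptyset,1)\simeq S_n(M;\mathbb{C},1)\simeq G_n(M)\xrightarrow{\cY_h} R_n(M,\emptyset)$ is $\Phi$, and $\mathrm{Ker}\,\cY_h = \sqrt{0}$ by Remark \ref{rem3.14}/Remark \ref{rem01}, so $\mathrm{Ker}\,\Phi = \sqrt{0}$. If $N \neq \emptyset$, this is precisely part (a) of Lemma \ref{lmm5.17}. So the entire proof is one sentence of bookkeeping: reduce to connected via Lemma \ref{6688}, then split into $N=\emptyset$ (Subsection \ref{sub4.1}) and $N\neq\emptyset$ (Lemma \ref{lmm5.17}(a)).

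The only point that requires a moment's care is checking the hypothesis of Lemma \ref{6688} is actually met at each stage of the induction — that is, that when we peel off a connected component, the kernel statement is already known for it. But this is immediate, since every connected marked three manifold falls under one of the two subcases above, both of which are established. I do not anticipate any genuine obstacle here; the substantive work (the classical-limit identification $S_n(M,N,1)\simeq \Gamma_n(M)\otimes O(SLn)^{\otimes(\sharp N-1)}$ in Theorem \ref{thm5.13}, the commuting square of Lemma \ref{lmm5.15}, and the ideal-propagation argument of Lemmas \ref{lmm5.16}–\ref{lmm5.17}) has all been done. Thus the proof reads: \emph{By Lemma \ref{6688} and induction on the number of connected components of $M$, we may assume $M$ is connected; then the claim is Subsection \ref{sub4.1} when $N = \emptyset$ and Lemma \ref{lmm5.17}(a) when $N \neq \emptyset$.}
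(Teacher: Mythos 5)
Your proposal matches the paper's proof exactly: the paper establishes Lemma \ref{6688} precisely for the reduction to connected $M$ (stated at the head of Section \ref{subb5}), and then its proof of this theorem simply cites Subsection \ref{sub4.1} for $N=\emptyset$ and Lemma \ref{lmm5.17}(a) for $N\neq\emptyset$. Your write-up is just a more explicit rendering of the same argument.
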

\begin{proof}
Subsection \ref{sub4.1} and (a) in Lemma \ref{lmm5.17}.
\end{proof}

\section{Splitting map and adding marking map}
In this section we will mainly discuss the splitting map and the add marking map for general $v$. We will show the Kernal of the splitting map is equal to the Kernal of the adding marking map. This conclusion helps us to prove the injectivity for splitting map for a  large family of marked three manifolds. Also we will show the splitting map is  injective for any marked three manifold when $v=1$.

\subsection{Gluing the thickening of the ideal triangle to marked three manifolds}\label{sbbbbbbb6.1}
Let $\Delta$ denote the marked three manifold in the following picture (normally we use $\Delta$ to denote comultiplication, only in this section we use $\Delta$ to denote the marked three manifold in the following picture):
$$
\raisebox{-.35in}{
\begin{tikzpicture}
\tikzset{->-/.style=
{decoration={markings,mark=at position #1 with
{\arrow{latex}}},postaction={decorate}}}
\draw[line width = 1pt] (0,0) rectangle (4.8, 2);
\draw [dashed] (0,0)--(2.4,2.4);
\draw [dashed] (2.4,2.4)--(2.4,4.4);
\draw [dashed] (2.4,2.4)--(4.8,0);
\draw [line width = 1pt] (0,2)--(2.4,4.4);
\draw [line width = 1pt] (2.4,4.4)--(4.8,2);
\draw [color=red, line width = 1pt] (1.2,1.2)--(1.2,1.9);
\draw [color=red, line width = 1pt] (1.2,2.1)--(1.2,3.2);
\draw [color=red,->, line width = 1pt] (1.2,2.1)--(1.2,2.8);
\draw [color=red, line width = 1pt] (3.6,1.2)--(3.6,1.9);
\draw [color=red, line width = 1pt] (3.6,2.1)--(3.6,3.2);
\draw [color=red,->, line width = 1pt] (3.6,2.1)--(3.6,2.8);
\draw [color=red, line width = 1pt] (2.6,0)--(2.6,2);
\draw [color=red,->, line width = 1pt] (2.6,0)--(2.6,1.6);
\node[right] at(1.2,2.5){$e_1$};
\node[left] at(3.6,2.5){$e_2$};
\node[right] at(2.6,1.3){$e_3$};
\end{tikzpicture}}
$$

Recall that $\fT$ denotes
the standard ideal triangle. Then $\Delta$, after removing the three vertical edges, is isomorphic to $\fT\times [-1,1]$. 

For each $i=1,2,3,$ let $D_{i}$ be an embedded disk in $\partial \Delta$ such that $cl(e_i)\subset int(D_i)$ and there is no intersection among these three disks. From now on, when we draw $\Delta$, we may omit all the black lines, that is, we only draw three red arrows. And we only draw involved markings and stated $n$-webs when we try to draw stated $n$-webs in marked three manifolds.

 Let $(M,N)$ be any marked three manifold with $\sharp N\geq 2$. Suppose $e_1^{'},e_2^{'}$ are two components of $N$. For each $i=1,2$, let $D_{i}^{'}$ be an embedded disk on the boundary of $M$ such that the intersection between the closure of $N$ and $D_i$ is the closure of $e_i$ and the closure of $e_i$ is contained in the interior of $D_i$ and $D_1\cap D_2 =\emptyset$. For each $i=1,2$, let $\phi_{i} : D_i^{'} \rightarrow D_i$ be a diffeomorphism such that $\phi_i(e_{i}^{'}) = e_i$ and $\phi_i$ preserves the orientations of $e_i$ and $e_i^{'}$. We set 
$$M_{e_1^{'}\Delta e_2^{'}} = (M\cup \Delta)/(\phi_i(x) = x, x\in D_i^{'},i=1,2),\;
 N_{e_1^{'}\Delta e_2^{'}} = (N-(e_1^{'}\cup e_2^{'}))\cup e_3.$$
Then $(M_{e_1^{'}\Delta e_2^{'}},N_{e_1^{'}\Delta e_2^{'}})$ is a marked three manifold. We use 
$(M,N)_{e_1^{'}\Delta e_2^{'}}$ to denote  this marked three manifold.

 Then there is a linear map
$QF_{e_1^{'},e_2^{'}} : S_n(M,N,v)\rightarrow S_n((M,N)_{e_1^{'}\Delta e_2^{'}},v)$.
L{\^e} and Sikora  introduced this map when $(M,N)$ is the thickening of a punctured bordered surface
\cite{le2021stated}. The definition here for $QF_{e_1^{'},e_2^{'}}$ is similar with the one defined in \cite{le2021stated} for punctured bordered surface. We use $l$ to denote the obvious embedding from $M$ to 
$M_{e_1^{'}\Delta e_2^{'}}$.  For any stated $n$-web $\alpha \in (M,N)$, we extend the ends of $l(\alpha)$ on each $e_i,i=1,2$, to $e_3$ such that the framing of extended parts contained in $\Delta$
is given by the positive direction of $[-1,1]$ and all the ends on $e_3$ extended from $e_1$  are higher than all the ends extended from $e_2$. To be precise, see the following picture:
$$
\raisebox{-.60in}{
\begin{tikzpicture}
\tikzset{->-/.style=
{decoration={markings,mark=at position #1 with
{\arrow{latex}}},postaction={decorate}}}
\draw [color=red, line width = 1pt] (0,0)--(0,2);
\draw [color=red,->, line width = 1pt] (0,0)--(0,1);
\draw [color=red, line width = 1pt] (1.2,-2)--(1.2,0);
\draw [color=red,->, line width = 1pt] (1.2,-2)--(1.2,-1);
\draw [color=red, line width = 1pt] (2.4,0)--(2.4,2);
\draw [color=red,->, line width = 1pt] (2.4,0)--(2.4,1);
\draw [color=blue, line width = 1pt] (-1,1.2)--(0,1.2);
\draw [color=blue, line width = 1pt] (-1,1.8)--(0,1.8);
\draw [color=blue, line width = 1pt] (2.4,0.2)--(3.4,0.2);
\draw [color=blue, line width = 1pt] (2.4,0.8)--(3.4,0.8);
\node[left] at(0,1.6){$\vdots$};
\node[right] at(2.4,0.6){$\vdots$};
\node[right] at(0,1.2){\small $i_1$};
\node[right] at(0,1.8){\small $i_{k_1}$};
\node[left] at(2.4,0.2){\small $j_1$};
\node[left] at(2.4,0.8){\small $j_{k_2}$};
\end{tikzpicture}}
\longrightarrow
\raisebox{-.60in}{
\begin{tikzpicture}
\tikzset{->-/.style=
{decoration={markings,mark=at position #1 with
{\arrow{latex}}},postaction={decorate}}}
\draw [color=red, line width = 1pt] (0,0)--(0,2);
\draw [color=red,->, line width = 1pt] (0,0)--(0,1);
\draw [color=red, line width = 1pt] (1.2,-2)--(1.2,0);
\draw [color=red,->, line width = 1pt] (1.2,-2)--(1.2,-1);
\draw [color=red, line width = 1pt] (2.4,0)--(2.4,2);
\draw [color=red,->, line width = 1pt] (2.4,0)--(2.4,1);
\draw [color=blue, line width = 1pt] (-1,1.2)--(0,1.2);
\draw [color=blue, line width = 1pt] (-1,1.8)--(0,1.8);
\draw [color=blue, line width = 1pt] (2.4,0.2)--(3.4,0.2);
\draw [color=blue, line width = 1pt] (2.4,0.8)--(3.4,0.8);
\draw [color=blue, line width = 1pt] (1.2,-0.8)--(0,1.2);
\draw [color=blue, line width = 1pt] (1.2,-0.2)--(0,1.8);
\draw [color=blue, line width = 1pt] (1.2, -1.8)--(2.4,0.2);
\draw [color=blue, line width = 1pt] (1.2,-1.2)--(2.4,0.8);
\node[left] at(0,1.6){$\vdots$};
\node[right] at(2.4,0.6){$\vdots$};
\node[right] at(1.2,-0.8){\small $i_1$};
\node[right] at(1.2,-0.2){\small $i_{k_1}$};
\node[left] at(1.2, -1.8){\small $j_1$};
\node[left] at(1.2,-1.2){\small $j_{k_2}$};
\end{tikzpicture}}
$$
where the blue lines are parts of stated $n$-webs with arbitray orientations and the framing in the picture is given by the red arrows.
It is easy to show $QF_{e_1^{'},e_2^{'}}$ is a well-defined linear map. To simplify notation, we normally omit the subscript for $QF_{e_1^{'},e_2^{'}}$ when there is no confusion.

Let $\beta$ be a path in $\partial \Delta$ connecting $e_1$ and $e_2$, see the following picture (the green dashed line is $\beta$):
$$
\raisebox{-.35in}{
\begin{tikzpicture}
\tikzset{->-/.style=
{decoration={markings,mark=at position #1 with
{\arrow{latex}}},postaction={decorate}}}
\draw[line width = 1pt] (0,0) rectangle (4.8, 2);
\draw [dashed] (0,0)--(2.4,2.4);
\draw [dashed] (2.4,2.4)--(2.4,4.4);
\draw [dashed] (2.4,2.4)--(4.8,0);
\draw [dashed,color=green] (3.2,1.6)--(1.2,1.2);
\draw [dashed,->,color=green] (3.2,1.6)--(1.6,1.28);
\draw [dashed,color=green] (3.6,3.2)--(3.2,1.6);
\draw [line width = 1pt] (0,2)--(2.4,4.4);
\draw [line width = 1pt] (2.4,4.4)--(4.8,2);
\draw [color=red, line width = 1pt] (1.2,1.2)--(1.2,1.9);
\draw [color=red, line width = 1pt] (1.2,2.1)--(1.2,3.2);
\draw [color=red,->, line width = 1pt] (1.2,2.1)--(1.2,2.8);
\draw [color=red, line width = 1pt] (3.6,1.2)--(3.6,1.9);
\draw [color=red, line width = 1pt] (3.6,2.1)--(3.6,3.2);
\draw [color=red,->, line width = 1pt] (3.6,2.1)--(3.6,2.8);
\draw [color=red, line width = 1pt] (2.6,0)--(2.6,2);
\draw [color=red,->, line width = 1pt] (2.6,0)--(2.6,1.6);
\node[right] at(1.2,2.5){$e_1$};
\node[left] at(3.6,2.5){$e_2$};
\node[right] at(2.6,1.3){$e_3$};
\end{tikzpicture}}
$$
 Let $\Delta^{'}$ be obtained from $\Delta$ by connecting $e_1,e_2$ using $\beta$.
Then $\Delta^{'}$ (after removing some unimportant parts on the boundary) is isomorphic to the thickening of the bigon. Then there is a counit $\epsilon$ for 
$S_n(\Delta^{'},v)$ \cite{le2021stated}. We use $l_{\Delta}$ to denote the map from 
$S_n(\Delta,v)$ to $S_n(\Delta^{'},v)$ induced by the embedding $\Delta\rightarrow \Delta^{'}$. 
We use $\epsilon_{\Delta}$ to denote $\epsilon \circ l_{\Delta}$.
Define 
$$Cut =(\epsilon_{\Delta}\otimes Id_{S_n(M,N,q)})\circ \Theta_{(D_1,e_1)}\circ \Theta_{(D_2,e_2)}
:S_n((M,N)_{e_1^{'}\Delta e_2^{'}},v)\rightarrow S_n(M,N,v) .$$

\begin{proposition}
With the above notations, we have $Cut$ and $QF$ are inverse to each other.
\end{proposition}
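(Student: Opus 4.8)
The plan is to show that the two linear maps $Cut$ and $QF$ are mutually inverse by checking both composites are the identity on a generating set. Since $QF$ and $Cut$ are defined at the level of stated $n$-webs and are $\mathbb{C}$-linear, it suffices to evaluate $Cut\circ QF$ on an arbitrary stated $n$-web $\alpha$ in $(M,N)$, and $QF\circ Cut$ on an arbitrary stated $n$-web in $(M,N)_{e_1'\Delta e_2'}$. I would first analyze $Cut\circ QF$, which is the more conceptual direction: given $\alpha$, the map $QF$ extends all the ends of $\alpha$ on $e_1'$ and $e_2'$ into the glued triangle $\Delta$, landing them on $e_3$ with the $e_1'$-ends stacked above the $e_2'$-ends. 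Then $Cut$ splits along $(D_2,e_2)$ and $(D_1,e_1)$, which is exactly the inverse operation of the gluing: by the definition of the splitting map $\Theta$ as a sum over states of the cut diagram, composing with $\Theta_{(D_1,e_1)}\circ\Theta_{(D_2,e_2)}$ recovers $l(\alpha)$ sitting inside $M$ together with a leftover collection of parallel stated arcs in $\Delta'$ (the bigon) joining $e_1$ to $e_2$ through $\beta$. Applying $\epsilon_\Delta$ to that leftover piece should kill all the nontrivial contributions and leave exactly $\alpha$; this uses the explicit formula for the counit $\epsilon$ on $S_n(\fB,v)$ (Theorem \ref{Hopf}, Theorem \ref{t.Hopf}) applied to $a^i_j \mapsto \delta_{i,j}$ and the fact that the relevant leftover tangle in the bigon is a union of trivial arcs $a^{i_k}_{j_\ell}$ (or their orientation reversals $b_{i,j}$), so the counit enforces the Kronecker deltas that re-identify the endpoint states across the cut. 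I would make this precise by choosing $\alpha$ in good position with respect to $e_1',e_2',e_3$, tracking the states, and invoking the compatibility of $\Theta$ with isotopy.

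Next I would handle $QF\circ Cut$. Here I take a stated $n$-web $\gamma$ in $(M,N)_{e_1'\Delta e_2'}$, put it in good position so that it meets the gluing disks $D_1,D_2$ transversally and only along $e_1,e_2$, and so that inside $\Delta$ it consists only of arcs running between the three markings $e_1,e_2,e_3$ (using relation \eqref{wzh.five} to eliminate any sinks/sources inside $\Delta$ and the triangle relations to bring everything to a standard form). Then $\Theta_{(D_2,e_2)}\circ\Theta_{(D_1,e_1)}$ separates $\gamma$ into a part in $M$ and a part in $\Delta$ (after cutting, $\Delta$ with the two new markings), and $\epsilon_\Delta$ collapses the $\Delta$-part down through the bigon. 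The claim is that re-applying $QF$ — re-inserting $\Delta$ and extending the resulting $M$-ends back out to $e_3$ — reconstructs $\gamma$. The key input is that the portion of $\gamma$ inside $\Delta$ is, up to isotopy and the defining relations of $S_n$, determined by how its ends are distributed on $e_1,e_2,e_3$ together with their states and heights, and that the sandwich $QF\circ(\epsilon_\Delta\otimes\mathrm{Id})\circ\Theta\circ\Theta$ is the identity on such standard pieces. This is essentially the statement that $S_n(\Delta,v)$, as a bicomodule, is "generated by the markings" and that $QF$ followed by $Cut$ is a co-unit-style contraction; I would reduce it to a finite check on the standard triangle webs.

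The main obstacle I anticipate is the bookkeeping of heights and orientations at the markings: the definition of $QF$ stacks the $e_1'$-ends strictly above the $e_2'$-ends on $e_3$, and each extension arc inside $\Delta$ carries a definite framing and a definite height ordering, so when I split along $D_1$ and $D_2$ I must verify that the resulting heights on the two copies $D_i^{(1)},D_i^{(2)}$ of each disk, together with the counit's action, reassemble to give back precisely the original configuration with the correct sign/coefficient data (the constants $c_i$, $t$, $a$ appearing in relations \eqref{wzh.five}–\eqref{wzh.eight}). A related subtlety is that $Cut$ involves $\Theta_{(D_1,e_1)}\circ\Theta_{(D_2,e_2)}$ in a fixed order; I should note that by the commutativity of splitting maps the order is immaterial, but the interaction with the height convention on $e_3$ still needs care. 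I expect that once a standard form for webs-in-$\Delta$ is fixed and the counit formula is applied term by term, everything reduces to the identities $\epsilon(a^i_j)=\delta_{i,j}$ and the crossing/height-exchange relations, and the two composites visibly become identities; writing out one representative case in full and invoking linearity and the relations for the rest will complete the argument.

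I would also remark that this proposition is the $n$-web analogue of the bigon-coaction picture used by L\^e and Sikora for punctured bordered surfaces, and that the essentially bordered pb surface case (Lemma \ref{llmm2.2}, Lemma \ref{exact}) can be recovered from it; mentioning this parallel helps justify why the standard-form reduction inside $\Delta\cong\fT\times[-1,1]$ is legitimate, since the skein theory of $\fT\times[-1,1]$ is exactly $S_n(\fT,v)$ with its $S_n(\fB,v)$-bicomodule structure.
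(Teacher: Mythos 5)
Your overall strategy matches the paper: verify $Cut\circ QF=\mathrm{Id}$ and $QF\circ Cut=\mathrm{Id}$ separately on stated $n$-webs, and in each case reduce to a diagrammatic computation in $\Delta$ followed by the counit identity $\epsilon(a^i_j)=\delta_{i,j}$. Your treatment of $Cut\circ QF$ is essentially identical to the paper's: $QF$ pushes the ends of a web on $e_1',e_2'$ into $\Delta$ out to $e_3$, the two splittings introduce a sum over states $\vec a,\vec b$ on the newly created boundary, and $\epsilon_\Delta$ applied to the leftover bigon piece enforces $\vec a=\vec u$, $\vec b=\vec v$, recovering the original element. That part is correct.

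The $QF\circ Cut$ direction has a genuine gap. You propose to isotope $\gamma$ so that inside $\Delta$ it ``consists only of arcs running between the three markings,'' and you say you would use relation \eqref{wzh.five} to eliminate sinks and sources inside $\Delta$. But sinks and sources are not the obstruction: they can simply be isotoped out of $\Delta$ into the $M$-part. The real obstruction is \emph{through arcs}: after isotoping $\gamma$ so that $\gamma\cap D_i\subset e_i$, the piece $\gamma\cap\Delta$ can contain arcs that run directly from $e_1$ to $e_2$ without touching $e_3$. These do not lie in the image of $QF$, so $QF\circ Cut$ cannot possibly be the identity on them unless they are first rewritten. The paper handles this by applying relation \eqref{wzh.seven} near $e_3$ to break each such arc into a sum of pairs of arcs that do terminate on $e_3$; once the through arcs are removed, $\gamma$ becomes a linear combination of webs whose $\Delta$-parts consist only of arcs to $e_3$, and for those the counit computation closes as you describe. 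Your proposal asserts the ``key input'' (that $\gamma\cap\Delta$ is determined up to relations by its endpoint data) without proving it; that assertion is precisely what \eqref{wzh.seven} buys, and \eqref{wzh.five} is not the right relation for it. You should replace the appeal to \eqref{wzh.five} with the through-arc elimination via \eqref{wzh.seven} to make the argument complete.
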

\begin{proof}
Let 
$
\raisebox{-.20in}{
\begin{tikzpicture}
\tikzset{->-/.style=
{decoration={markings,mark=at position #1 with
{\arrow{latex}}},postaction={decorate}}}
\draw[color = red, line width=1pt] (0,0)--(0,1);
\draw[color = red, ->,line width=1pt] (0,0)--(0,0.5);
\draw[color = red, line width=1pt] (2,0)--(2,1);
\draw[color = red, ->,line width=1pt] (2,0)--(2,0.5);
\draw[color = blue, line width=2pt] (-0.7,0.8)--(0,0.8);
\draw[color = blue,line width=2pt] (2,0.2)--(2.7,0.2);
\node[right] at(0,0.8){\small $\vec{u}$};
\node[left] at(2,0.2){\small $\vec{v}$};
\end{tikzpicture}}
$ be any stated $n$-web in $(M,N)$, where the thick blue line represents some arbitrarily oriented parallel framed lines with framing given by red arrows
and $\vec{u},\vec{v}$ are sequences of ordered states, which indicate the states of the parallel framed lines.
Then we have
\begin{align*}
&Cut\circ QF (
\raisebox{-.20in}{
\begin{tikzpicture}
\tikzset{->-/.style=
{decoration={markings,mark=at position #1 with
{\arrow{latex}}},postaction={decorate}}}
\draw[color = red, line width=1pt] (0,0)--(0,1);
\draw[color = red, ->,line width=1pt] (0,0)--(0,0.5);
\draw[color = red, line width=1pt] (2,0)--(2,1);
\draw[color = red, ->,line width=1pt] (2,0)--(2,0.5);
\draw[color = blue, line width=2pt] (-0.7,0.8)--(0,0.8);
\draw[color = blue,line width=2pt] (2,0.2)--(2.7,0.2);
\node[right] at(0,0.8){\small $\vec{u}$};
\node[left] at(2,0.2){\small $\vec{v}$};
\end{tikzpicture}})
=
Cut(
\raisebox{-.35in}{
\begin{tikzpicture}
\tikzset{->-/.style=
{decoration={markings,mark=at position #1 with
{\arrow{latex}}},postaction={decorate}}}
\draw[color = red, line width=1pt] (0,0)--(0,1);
\draw[color = red, ->,line width=1pt] (0,0)--(0,0.5);
\draw[color = red, line width=1pt] (2,0)--(2,1);
\draw[color = red, ->,line width=1pt] (2,0)--(2,0.5);
\draw[color = red, line width=1pt] (1,-1)--(1,0);
\draw[color = red, ->,line width=1pt] (1,-1)--(1,-0.5);
\draw[color = blue, line width=2pt] (-0.7,0.8)--(0,0.8);
\draw[color = blue,line width=2pt] (2,0.2)--(2.7,0.2);
\draw[color = blue,line width=2pt] (0,0.8)--(1,-0.2);
\draw[color = blue,line width=2pt] (2,0.2)--(1, -0.8);
\node[right] at(1,-0.2){\small $\vec{u}$};
\node[left] at(1, -0.8){\small $\vec{v}$};
\end{tikzpicture}})\\
=&(\epsilon_{\Delta}\otimes Id)(\sum_{\vec{a},\vec{b}}
\raisebox{-.35in}{
\begin{tikzpicture}
\tikzset{->-/.style=
{decoration={markings,mark=at position #1 with
{\arrow{latex}}},postaction={decorate}}}
\draw[color = red, line width=1pt] (0,0)--(0,1);
\draw[color = red, ->,line width=1pt] (0,0)--(0,0.5);
\draw[color = red, line width=1pt] (2,0)--(2,1);
\draw[color = red, ->,line width=1pt] (2,0)--(2,0.5);
\draw[color = red, line width=1pt] (1,-1)--(1,0);
\draw[color = red, ->,line width=1pt] (1,-1)--(1,-0.5);
\draw[color = blue,line width=2pt] (0,0.8)--(1,-0.2);
\draw[color = blue,line width=2pt] (2,0.2)--(1, -0.8);
\node[right] at(1,-0.2){\small $\vec{u}$};
\node[left] at(1, -0.8){\small $\vec{v}$};
\node[right] at(2,0.2){\small $\vec{b}$};
\node[left] at(0,0.8){\small $\vec{a}$};
\end{tikzpicture}}
\otimes
\raisebox{-.20in}{
\begin{tikzpicture}
\tikzset{->-/.style=
{decoration={markings,mark=at position #1 with
{\arrow{latex}}},postaction={decorate}}}
\draw[color = red, line width=1pt] (0,0)--(0,1);
\draw[color = red, ->,line width=1pt] (0,0)--(0,0.5);
\draw[color = red, line width=1pt] (2,0)--(2,1);
\draw[color = red, ->,line width=1pt] (2,0)--(2,0.5);
\draw[color = blue, line width=2pt] (-0.7,0.8)--(0,0.8);
\draw[color = blue,line width=2pt] (2,0.2)--(2.7,0.2);
\node[right] at(0,0.8){\small $\vec{a}$};
\node[left] at(2,0.2){\small $\vec{b}$};
\end{tikzpicture}}
)\\
=&(\epsilon\otimes Id)(\sum_{\vec{a},\vec{b}}
\raisebox{-.20in}{
\begin{tikzpicture}
\tikzset{->-/.style=
{decoration={markings,mark=at position #1 with
{\arrow{latex}}},postaction={decorate}}}
\draw[color = red, line width=1pt] (0,0)--(0,1);
\draw[color = red, ->,line width=1pt] (0,0)--(0,0.5);
\draw[color = red, line width=1pt] (2,0)--(2,1);
\draw[color = red, ->,line width=1pt] (2,0)--(2,0.5);
\draw[color = blue, line width=2pt] (0,0.8)--(2,0.8);
\draw[color = blue,line width=2pt] (0,0.2)--(2,0.2);
\node[right] at(2,0.2){\small $\vec{v}$};
\node[left] at(0,0.2){\small $\vec{b}$};
\node[right] at(2,0.8){\small $\vec{u}$};
\node[left] at(0,0.8){\small $\vec{a}$};
\end{tikzpicture}}
\otimes
\raisebox{-.20in}{
\begin{tikzpicture}
\tikzset{->-/.style=
{decoration={markings,mark=at position #1 with
{\arrow{latex}}},postaction={decorate}}}
\draw[color = red, line width=1pt] (0,0)--(0,1);
\draw[color = red, ->,line width=1pt] (0,0)--(0,0.5);
\draw[color = red, line width=1pt] (2,0)--(2,1);
\draw[color = red, ->,line width=1pt] (2,0)--(2,0.5);
\draw[color = blue, line width=2pt] (-0.7,0.8)--(0,0.8);
\draw[color = blue,line width=2pt] (2,0.2)--(2.7,0.2);
\node[right] at(0,0.8){\small $\vec{a}$};
\node[left] at(2,0.2){\small $\vec{b}$};
\end{tikzpicture}}
)\\
=&\raisebox{-.20in}{
\begin{tikzpicture}
\tikzset{->-/.style=
{decoration={markings,mark=at position #1 with
{\arrow{latex}}},postaction={decorate}}}
\draw[color = red, line width=1pt] (0,0)--(0,1);
\draw[color = red, ->,line width=1pt] (0,0)--(0,0.5);
\draw[color = red, line width=1pt] (2,0)--(2,1);
\draw[color = red, ->,line width=1pt] (2,0)--(2,0.5);
\draw[color = blue, line width=2pt] (-0.7,0.8)--(0,0.8);
\draw[color = blue,line width=2pt] (2,0.2)--(2.7,0.2);
\node[right] at(0,0.8){\small $\vec{u}$};
\node[left] at(2,0.2){\small $\vec{v}$};
\end{tikzpicture}}.
\end{align*}

For any stated $n$-web $\alpha$ in $(M,N)_{e_1^{'}\Delta e_2^{'}}$, we can isotope $\alpha$ such that $\alpha\cap \Delta$ looks like the following picture:
$$
\raisebox{-.35in}{
\begin{tikzpicture}
\tikzset{->-/.style=
{decoration={markings,mark=at position #1 with
{\arrow{latex}}},postaction={decorate}}}
\draw[color = red, line width=1pt] (0,0)--(0,1);
\draw[color = red, ->,line width=1pt] (0,0)--(0,0.5);
\draw[color = red, line width=1pt] (2,0)--(2,1);
\draw[color = red, ->,line width=1pt] (2,0)--(2,0.5);
\draw[color = red, line width=1pt] (1,-1)--(1,0);
\draw[color = red, ->,line width=1pt] (1,-1)--(1,-0.5);
\draw[color = blue,line width=2pt] (0,0.8)--(1,-0.2);
\draw[color = blue,line width=2pt] (2,0.2)--(1, -0.8);
\node[right] at(1,-0.2){\small $\vec{u}$};
\node[left] at(1, -0.8){\small $\vec{v}$};
\draw[color = blue,line width=2pt] (0.65 ,1.3) arc (225:315:0.5);
\end{tikzpicture}}.
$$
Then we can use relation (\ref{wzh.seven})  to kill all the framed lines in the most above thick blue arc one by one. Then $\alpha = \sum_{t}k_t \alpha_{t}$ where, for each $t$, $\alpha_t\cap \Delta$ looks like:
$$
\raisebox{-.35in}{
\begin{tikzpicture}
\tikzset{->-/.style=
{decoration={markings,mark=at position #1 with
{\arrow{latex}}},postaction={decorate}}}
\draw[color = red, line width=1pt] (0,0)--(0,1);
\draw[color = red, ->,line width=1pt] (0,0)--(0,0.5);
\draw[color = red, line width=1pt] (2,0)--(2,1);
\draw[color = red, ->,line width=1pt] (2,0)--(2,0.5);
\draw[color = red, line width=1pt] (1,-1)--(1,0);
\draw[color = red, ->,line width=1pt] (1,-1)--(1,-0.5);
\draw[color = blue,line width=2pt] (0,0.8)--(1,-0.2);
\draw[color = blue,line width=2pt] (2,0.2)--(1, -0.8);
\node[right] at(1,-0.2){\small $\vec{a}$};
\node[left] at(1, -0.8){\small $\vec{b}$};
\end{tikzpicture}}.
$$
We have 
\begin{align*}
&QF\circ Cut(\raisebox{-.35in}{
\begin{tikzpicture}
\tikzset{->-/.style=
{decoration={markings,mark=at position #1 with
{\arrow{latex}}},postaction={decorate}}}
\draw[color = red, line width=1pt] (0,0)--(0,1);
\draw[color = red, ->,line width=1pt] (0,0)--(0,0.5);
\draw[color = red, line width=1pt] (2,0)--(2,1);
\draw[color = red, ->,line width=1pt] (2,0)--(2,0.5);
\draw[color = red, line width=1pt] (1,-1)--(1,0);
\draw[color = red, ->,line width=1pt] (1,-1)--(1,-0.5);
\draw[color = blue, line width=2pt] (-0.7,0.8)--(0,0.8);
\draw[color = blue,line width=2pt] (2,0.2)--(2.7,0.2);
\draw[color = blue,line width=2pt] (0,0.8)--(1,-0.2);
\draw[color = blue,line width=2pt] (2,0.2)--(1, -0.8);
\node[right] at(1,-0.2){\small $\vec{a}$};
\node[left] at(1, -0.8){\small $\vec{b}$};
\end{tikzpicture}})\\
=&
QF\circ (\epsilon_{\Delta}\otimes Id)(\sum_{\vec{c},\vec{d}}
\raisebox{-.35in}{
\begin{tikzpicture}
\tikzset{->-/.style=
{decoration={markings,mark=at position #1 with
{\arrow{latex}}},postaction={decorate}}}
\draw[color = red, line width=1pt] (0,0)--(0,1);
\draw[color = red, ->,line width=1pt] (0,0)--(0,0.5);
\draw[color = red, line width=1pt] (2,0)--(2,1);
\draw[color = red, ->,line width=1pt] (2,0)--(2,0.5);
\draw[color = red, line width=1pt] (1,-1)--(1,0);
\draw[color = red, ->,line width=1pt] (1,-1)--(1,-0.5);
\draw[color = blue,line width=2pt] (0,0.8)--(1,-0.2);
\draw[color = blue,line width=2pt] (2,0.2)--(1, -0.8);
\node[left] at(0,0.8){\small $\vec{c}$};
\node[right] at(2, 0.2){\small $\vec{d}$};
\node[right] at(1,-0.2){\small $\vec{a}$};
\node[left] at(1, -0.8){\small $\vec{b}$};
\end{tikzpicture}}\otimes
\raisebox{-.20in}{
\begin{tikzpicture}
\tikzset{->-/.style=
{decoration={markings,mark=at position #1 with
{\arrow{latex}}},postaction={decorate}}}
\draw[color = red, line width=1pt] (0,0)--(0,1);
\draw[color = red, ->,line width=1pt] (0,0)--(0,0.5);
\draw[color = red, line width=1pt] (2,0)--(2,1);
\draw[color = red, ->,line width=1pt] (2,0)--(2,0.5);
\draw[color = blue, line width=2pt] (-0.7,0.8)--(0,0.8);
\draw[color = blue,line width=2pt] (2,0.2)--(2.7,0.2);
\node[left] at(2,0.2){\small $\vec{d}$};
\node[right] at(0, 0.8){\small $\vec{c}$};
\end{tikzpicture}})\\
=&
QF\circ (\epsilon\otimes Id)( \sum_{\vec{c},\vec{d}}
\raisebox{-.20in}{
\begin{tikzpicture}
\tikzset{->-/.style=
{decoration={markings,mark=at position #1 with
{\arrow{latex}}},postaction={decorate}}}
\draw[color = red, line width=1pt] (0,0)--(0,1);
\draw[color = red, ->,line width=1pt] (0,0)--(0,0.5);
\draw[color = red, line width=1pt] (2,0)--(2,1);
\draw[color = red, ->,line width=1pt] (2,0)--(2,0.5);
\draw[color = blue, line width=2pt] (0,0.2)--(2,0.2);
\draw[color = blue,line width=2pt] (0,0.8)--(2,0.8);
\node[right] at(2,0.2){\small $\vec{b}$};
\node[left] at(0, 0.8){\small $\vec{c}$};
\node[right] at(2,0.8){\small $\vec{a}$};
\node[left] at(0,0.2){\small $\vec{d}$};
\end{tikzpicture}}\otimes
\raisebox{-.20in}{
\begin{tikzpicture}
\tikzset{->-/.style=
{decoration={markings,mark=at position #1 with
{\arrow{latex}}},postaction={decorate}}}
\draw[color = red, line width=1pt] (0,0)--(0,1);
\draw[color = red, ->,line width=1pt] (0,0)--(0,0.5);
\draw[color = red, line width=1pt] (2,0)--(2,1);
\draw[color = red, ->,line width=1pt] (2,0)--(2,0.5);
\draw[color = blue, line width=2pt] (-0.7,0.8)--(0,0.8);
\draw[color = blue,line width=2pt] (2,0.2)--(2.7,0.2);
\node[left] at(2,0.2){\small $\vec{d}$};
\node[right] at(0, 0.8){\small $\vec{c}$};
\end{tikzpicture}})\\
=&
\sum_{\vec{c},\vec{d}} \epsilon(
\raisebox{-.20in}{
\begin{tikzpicture}
\tikzset{->-/.style=
{decoration={markings,mark=at position #1 with
{\arrow{latex}}},postaction={decorate}}}
\draw[color = red, line width=1pt] (0,0)--(0,1);
\draw[color = red, ->,line width=1pt] (0,0)--(0,0.5);
\draw[color = red, line width=1pt] (2,0)--(2,1);
\draw[color = red, ->,line width=1pt] (2,0)--(2,0.5);
\draw[color = blue, line width=2pt] (0,0.2)--(2,0.2);
\draw[color = blue,line width=2pt] (0,0.8)--(2,0.8);
\node[right] at(2,0.2){\small $\vec{b}$};
\node[left] at(0, 0.8){\small $\vec{c}$};
\node[right] at(2,0.8){\small $\vec{a}$};
\node[left] at(0,0.2){\small $\vec{d}$};
\end{tikzpicture}})\,
QF(
\raisebox{-.20in}{
\begin{tikzpicture}
\tikzset{->-/.style=
{decoration={markings,mark=at position #1 with
{\arrow{latex}}},postaction={decorate}}}
\draw[color = red, line width=1pt] (0,0)--(0,1);
\draw[color = red, ->,line width=1pt] (0,0)--(0,0.5);
\draw[color = red, line width=1pt] (2,0)--(2,1);
\draw[color = red, ->,line width=1pt] (2,0)--(2,0.5);
\draw[color = blue, line width=2pt] (-0.7,0.8)--(0,0.8);
\draw[color = blue,line width=2pt] (2,0.2)--(2.7,0.2);
\node[left] at(2,0.2){\small $\vec{d}$};
\node[right] at(0, 0.8){\small $\vec{c}$};
\end{tikzpicture}})\\
=&
\sum_{\vec{c},\vec{d}} \epsilon(
\raisebox{-.20in}{
\begin{tikzpicture}
\tikzset{->-/.style=
{decoration={markings,mark=at position #1 with
{\arrow{latex}}},postaction={decorate}}}
\draw[color = red, line width=1pt] (0,0)--(0,1);
\draw[color = red, ->,line width=1pt] (0,0)--(0,0.5);
\draw[color = red, line width=1pt] (2,0)--(2,1);
\draw[color = red, ->,line width=1pt] (2,0)--(2,0.5);
\draw[color = blue, line width=2pt] (0,0.2)--(2,0.2);
\draw[color = blue,line width=2pt] (0,0.8)--(2,0.8);
\node[right] at(2,0.2){\small $\vec{b}$};
\node[left] at(0, 0.8){\small $\vec{c}$};
\node[right] at(2,0.8){\small $\vec{a}$};
\node[left] at(0,0.2){\small $\vec{d}$};
\end{tikzpicture}})
\raisebox{-.35in}{
\begin{tikzpicture}
\tikzset{->-/.style=
{decoration={markings,mark=at position #1 with
{\arrow{latex}}},postaction={decorate}}}
\draw[color = red, line width=1pt] (0,0)--(0,1);
\draw[color = red, ->,line width=1pt] (0,0)--(0,0.5);
\draw[color = red, line width=1pt] (2,0)--(2,1);
\draw[color = red, ->,line width=1pt] (2,0)--(2,0.5);
\draw[color = red, line width=1pt] (1,-1)--(1,0);
\draw[color = red, ->,line width=1pt] (1,-1)--(1,-0.5);
\draw[color = blue, line width=2pt] (-0.7,0.8)--(0,0.8);
\draw[color = blue,line width=2pt] (2,0.2)--(2.7,0.2);
\draw[color = blue,line width=2pt] (0,0.8)--(1,-0.2);
\draw[color = blue,line width=2pt] (2,0.2)--(1, -0.8);
\node[right] at(1,-0.2){\small $\vec{c}$};
\node[left] at(1, -0.8){\small $\vec{d}$};
\end{tikzpicture}}\\
=&
\raisebox{-.35in}{
\begin{tikzpicture}
\tikzset{->-/.style=
{decoration={markings,mark=at position #1 with
{\arrow{latex}}},postaction={decorate}}}
\draw[color = red, line width=1pt] (0,0)--(0,1);
\draw[color = red, ->,line width=1pt] (0,0)--(0,0.5);
\draw[color = red, line width=1pt] (2,0)--(2,1);
\draw[color = red, ->,line width=1pt] (2,0)--(2,0.5);
\draw[color = red, line width=1pt] (1,-1)--(1,0);
\draw[color = red, ->,line width=1pt] (1,-1)--(1,-0.5);
\draw[color = blue, line width=2pt] (-0.7,0.8)--(0,0.8);
\draw[color = blue,line width=2pt] (2,0.2)--(2.7,0.2);
\draw[color = blue,line width=2pt] (0,0.8)--(1,-0.2);
\draw[color = blue,line width=2pt] (2,0.2)--(1, -0.8);
\node[right] at(1,-0.2){\small $\vec{a}$};
\node[left] at(1, -0.8){\small $\vec{b}$};
\end{tikzpicture}} .
\end{align*}
Thus we have 
$$(QF\circ Cut)(\alpha) = \sum_{t}k_t (QF\circ Cut)(\alpha_{t}) = \sum_{t}k_t \alpha_{t} =\alpha.$$

\end{proof}

\subsection{The connection between the splitting map and the adding marking map}\label{sbbbbb6.2}
In this subsection, we will show the splitting and the adding marking map are related by some isomorphisms, which implies they have the same Kernal.

\begin{lemma}(\cite{le2021stated})
Let $(M,N)$ be a marked three manifold, and $e$ be a component of $N$. Then there is a linear isomorphism
$h_{e}: S_n(M,N,v)\rightarrow S_n(M,N,v)$ given by

$$
h_{e}\left(
\raisebox{-.40in}{

\begin{tikzpicture}
\tikzset{->-/.style=

{decoration={markings,mark=at position #1 with

{\arrow{latex}}},postaction={decorate}}}

\filldraw[draw=white,fill=gray!20] (-1.5,0) rectangle (0, 2.5);
\draw [line width =1.5pt,decoration={markings, mark=at position 1 with {\arrow{>}}},postaction={decorate}](0,2.5)--(0,0);
\draw[line width =1pt] (-1.5,0.5)--(0,0.5);
\draw[line width =1pt] (-1.5,1)--(0,1);
\draw[line width =1pt] (-1.5,2)--(0,2);
\node [left] at(0,1.5) {$\vdots$};
\node [right] at(0,0.5) {$i_1$};
\node [right] at(0,1) {$i_2$};
\node [right] at(0,2) {$i_k$};
\end{tikzpicture}
}\right)= \left(\frac 1{ \prod_{j=1}^k c_{{i_j}}}\right) \cdot
\raisebox{-.40in}{

\begin{tikzpicture}
\tikzset{->-/.style=

{decoration={markings,mark=at position #1 with

{\arrow{latex}}},postaction={decorate}}}

\filldraw[draw=white,fill=gray!20] (-1.5,0) rectangle (0, 2.5);
\draw [line width =1pt] (-1,0.2) rectangle (-0.5, 2.3);
\draw [line width =1.5pt,decoration={markings, mark=at position 1 with {\arrow{>}}},postaction={decorate}](0,2.5)--(0,0);
\draw[line width =1pt] (-1.5,0.5)--(-1,0.5);
\draw[line width =1pt] (-0.5,0.5)--(0,0.5);
\draw[line width =1pt] (-1.5,1)--(-1,1);
\draw[line width =1pt] (-0.5,1.5)--(0,1.5);
\draw[line width =1pt] (-1.5,2)--(-1,2);
\draw[line width =1pt] (-0.5,2)--(0,2);
\node [left] at(0,1) {$\vdots$};
\node [left] at(-1,1.5) {$\vdots$};
\node [right] at(0,0.5) {$\overline{i_k}$};
\node [right] at(0,1.5) {$\overline{i_2}$};
\node [right] at(0,2) {$\overline{i_1}$};
\node  at(-0.75,1.25) {$\bar{H}$};
\end{tikzpicture}
}
$$
where $\bar H$ denotes the negative half-twist  and the  thick line with an arrow is part of the marking $e$. Note that
the orientations of the horizontal lines are arbitrary.
\end{lemma}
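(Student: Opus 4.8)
The statement asserts that the ``height reversal'' map $h_e$ defined on the diagrammatic generators of $S_n(M,N,v)$ by the boxed formula is a well-defined linear isomorphism. The plan is to reduce the entire statement to the corresponding statement about the bigon, which is already available to us. More precisely, pick a small disk $D_e$ in $\partial M$ meeting the closure of $N$ exactly along the closure of $e$, with $cl(e)\subset\mathrm{int}(D_e)$, and use it to split off a thickened bigon: writing $\fB_e$ for a thickened bigon glued to $(M,N)$ along one of its boundary edges at $e$, we have by the quasi-Frobenius / cutting constructions of Subsection \ref{sbbbbbbb6.1} (applied with a bigon in place of the triangle $\Delta$, which amounts to the ordinary splitting map along $(D_e,e)$ followed by a counit) an isomorphism identifying $S_n(M,N,v)$ with an appropriate coinvariant/tensor construction over $S_n(\fB,v)$. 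The map $h_e$ is then nothing but the action, on the bigon tensor factor, of the linear map on $S_n(\fB,v)$ that sends the generator $a^{i_1}_{\;} \otimes\cdots$ to $\prod_j c_{i_j}^{-1}$ times the diagram obtained by applying the negative half-twist $\bar H$ and relabelling states $i_j\mapsto\overline{i_j}$.

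The key steps, in order, would be: (1) Recall from \cite{le2021stated} (and the discussion around Theorem \ref{Hopf} and Theorem \ref{t.Hopf}) the explicit description of the negative half-twist $\bar H$ acting on $S_n(\fB,v)$, together with the behaviour of the cups/caps (relations \eqref{wzh.six}, \eqref{wzh.seven}) that produce the constants $c_i$; this is exactly the data needed to see that the right-hand side of the boxed formula is well-defined on a single boundary edge, independently of how the horizontal strands are oriented and independently of isotopy, because $\bar H$ and the relabelling intertwine all the skein relations \eqref{w.cross}--\eqref{wzh.eight} localized near $e$. (2) Check that $h_e$ is compatible with the local relations of $S_n(M,N,v)$ taking place at the marking $e$ itself (height exchange of two nearby endpoints on $e$, relations \eqref{wzh.six}--\eqref{wzh.eight}): here one uses that the half-twist $\bar H$ is braided-natural and that the normalization $\prod_j c_{i_j}^{-1}$ is precisely tuned so that a cap $\raisebox{-.05in}{$\delta_{\bar j,i}c_i$}$ on the reversed side cancels the inserted factors, as in relation \eqref{wzh.six}; relations away from $e$ are untouched by $h_e$ and so are automatically preserved. (3) Construct the inverse explicitly: since the positive half-twist $H$ is inverse to $\bar H$ on $S_n(\fB,v)$ and $\overline{\overline{i_j}}=i_j$, the map $h_e'$ defined by the analogous formula with $\bar H$ replaced by $H$ and with normalization $\prod_j c_{\overline{i_j}}^{-1}$ satisfies $h_e'\circ h_e=\mathrm{id}$ and $h_e\circ h_e'=\mathrm{id}$, using $c_i c_{\bar i}=t$ (equation \eqref{e.prodc}) and the half-twist relation \eqref{w.twist} to absorb the framing anomaly $t$. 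Linearity is immediate from the definition on generators.

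I expect the main obstacle to be Step (2): verifying that the constants $c_i$ and the half-twist are matched correctly so that $h_e$ respects \emph{all} the skein relations that can be performed near the edge $e$ — in particular the boundary crossing relation \eqref{wzh.eight} and the cap/cup relations \eqref{wzh.six}, \eqref{wzh.seven}, whose right-hand sides involve the $c_i$ explicitly. This is precisely the kind of bookkeeping of $q$-powers and signs carried out in \cite{le2021stated}, and the cleanest route is to avoid recomputing it by phrasing $h_e$ intrinsically as ``act by the known self-map of the Hopf algebra $S_n(\fB,v)$ induced by the negative half-twist, in the bigon factor split off at $e$,'' so that well-definedness over $(M,N)$ follows formally once it is known over the bigon. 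If one prefers to stay diagrammatic, the routine (but lengthy) check is: push any local picture near $e$ through the box, apply $\bar H$ and the relabelling, and verify equality using \eqref{w.twist}, \eqref{wzh.six}, \eqref{wzh.seven}, \eqref{wzh.eight} together with $c_i c_{\bar i}=t$; since this is stated as a lemma from \cite{le2021stated}, I would cite it and include only the short reduction to the bigon together with the explicit inverse.
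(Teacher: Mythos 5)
The paper's own proof is a one-line citation: it identifies $h_e$ with the map $htw_{\beta}^{-1}$ from Proposition 4.11 of \cite{le2021stated} and stops there, since the lemma is explicitly attributed to that reference. Your proposal instead sketches an independent proof by splitting a thickened bigon off at $e$, transporting the half-twist action on $S_n(\fB,v)\simeq O_q(SLn)$ across that splitting, and exhibiting an explicit inverse with the positive half-twist. That is a legitimate route and close in spirit to how L{\^e}--Sikora establish the result, but it is genuinely more work than the paper's approach, which simply defers to the cited source. The practical trade-off: the citation gives a short, airtight proof with no bookkeeping; your sketch gives a self-contained argument whose soundness hinges on step (2) (checking $h_e$ is compatible with relations \eqref{wzh.six}--\eqref{wzh.eight} near $e$, with the $c_i$ normalization exactly cancelling the cap/cup constants), which you correctly flag as the nontrivial part and would in practice end up re-deriving the content of Proposition 4.11 in \cite{le2021stated}. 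A small caution on step (3): since $\overline{\overline{i_j}}=i_j$ and $\prod_j(c_{i_j}c_{\overline{i_j}})=t^{k}$, applying your $h_e$ twice inserts $\bar H^2$ (a full negative twist) and produces a factor $t^{-k}$, so $h_e\circ h_e$ is a scalar times the identity but not the identity itself; the inverse must therefore be the formula with $H$ (and the conjugate normalization), as you propose, rather than $h_e$ being an involution — worth stating explicitly to avoid confusion.
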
  
\begin{proof}
The isomorphism $h_e$ here is $htw_{\beta}^{-1}$ in Proposition 4.11 in \cite{le2021stated}.
\end{proof}

Let $(M,N)$ be a marked three maniold. Suppose $D$ is a properly embedded disk and $\beta$ is an oriented open interval contained in $D$. We know there is a linear map
$\Theta_{(D,\beta)}:S_n(M,N,v)\rightarrow S_n(\text{Cut}_{(D,\beta)}(M,N),v)$.  $\text{Cut}_{(D,\beta)}(M,N)$ has two markings labeled by $\beta_1,\beta_2$ as in Figure \ref{fig:1}. Then 
$(M,N^{'})\simeq(\text{Cut}_{(D,\beta)}(M,N))_{\beta_1\Delta \beta_2}$ where $N^{'}$ is obtained from $N$ by adding one extra marking. 
We use $\varphi$ to denote this obvious isomorphism, and use $\varphi_{*}$ to denote the isomorphism 
from $S_n((M,N^{'}),v)$ to $S_n((\text{Cut}_{(D,\beta)})_{\beta_1\Delta \beta_2},v)$ induced by $\varphi$.
Recall that there is linear map $l_{ad}: S_n(M,N,v)\rightarrow S_n(M,N^{'},v)$ induced by the embedding $(M,N)\rightarrow (M,N^{'})$.

\begin{proposition}\label{prop6.3}
With the above notations, we have 
$$QF\circ h_{\beta_2}\circ \Theta_{(D,\beta)} = \varphi_{*}\circ l_{ad}.$$ Especially Ker$\,\Theta_{(D,\beta)}
=$Ker$\,l_{ad}$.
\end{proposition}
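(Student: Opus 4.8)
The plan is to verify the identity $QF\circ h_{\beta_2}\circ \Theta_{(D,\beta)} = \varphi_{*}\circ l_{ad}$ by tracking a stated $n$-web $\alpha$ in $(M,N)$ through both sides, working locally near the disk $D$ and its lift into the glued ideal triangle $\Delta$. Since all maps involved are linear, it suffices to check the equality on an arbitrary stated $n$-web $\alpha$; and by an isotopy we may assume $\alpha$ is transverse to $D$ with $\alpha\cap D\subset\beta$ and the framing along $\alpha\cap\beta$ equal to the velocity vector of $\beta$, so that locally near each intersection point $\alpha$ crosses $\beta$ by one strand with some orientation and some state.

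First I would compute the right-hand side: $l_{ad}(\alpha)$ is just $\alpha$ re-interpreted in $(M,N')$ where the extra marking $e$ is placed near $D$ but disjoint from $\alpha$, and then $\varphi_*$ carries this to the corresponding web in $(\mathrm{Cut}_{(D,\beta)}(M,N))_{\beta_1\Delta\beta_2}$; under $\varphi$, the web $\alpha$ becomes the image of $\mathrm{Cut}$'d strands pushed through $\Delta$ — with the strands coming from $\beta_1$ lying above those from $\beta_2$ on the new marking $e_3$, exactly matching the combinatorics in the definition of $QF$. Next I would compute the left-hand side: $\Theta_{(D,\beta)}(\alpha)$ is the sum over all states of the intersection points of the cut web in $\mathrm{Cut}_{(D,\beta)}(M,N)$; then $h_{\beta_2}$ applies a negative half-twist box with bar-states on the strands ending on $\beta_2$, contributing the factor $\prod c_{i_j}^{-1}$ where the $i_j$ are the states on $\beta_2$; then $QF$ extends all these ends into $\Delta$ and across to $e_3$, with the $\beta_1$-ends placed above the $\beta_2$-ends. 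The key step is to recognize that the definition of $QF$ (as stated after the figure in Subsection \ref{sbbbbbbb6.1}) already builds in precisely the "cap–half-twist" data coming from relation (\ref{wzh.seven}) together with the coefficients $c_{\bar i}^{-1}$ (and the half-twist that $h_{\beta_2}$ supplies), so that the composite $QF\circ h_{\beta_2}\circ\Theta_{(D,\beta)}$ reassembles the strands in $\Delta$ into exactly the configuration produced by $\varphi_*\circ l_{ad}$. In effect, the state sum from $\Theta$ followed by $h_{\beta_2}$ and the connecting-up in $QF$ implements the partial trace that turns "two copies $\beta_1,\beta_2$ glued through $\Delta$" back into "one strand passing through", which is the content of $l_{ad}$ after $\varphi$.

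The main obstacle I anticipate is bookkeeping: getting the framings, heights, and state conventions to line up exactly, since both $h_{\beta_2}$ and $QF$ involve half-twists and ordering conventions, and one must be careful that the negative half-twist in $h_{\beta_2}$ cancels against the implicit positive half-twist (or re-routing of strands) appearing in the definition of $QF$ relative to the naive splitting. A clean way to organize this is to first verify the identity for a single strand crossing $\beta$ once (the "building block"), using relations (\ref{wzh.six}), (\ref{wzh.seven}) and the bigon counit $\epsilon$, and then observe that for a web crossing $\beta$ several times the general case follows because $\Theta_{(D,\beta)}$, $h_{\beta_2}$, and $QF$ are all compatible with taking disjoint unions / stacking of the local pieces near $\beta$, so the factors multiply.

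Finally, for the last sentence "\emph{Especially} $\mathrm{Ker}\,\Theta_{(D,\beta)}=\mathrm{Ker}\,l_{ad}$": once the displayed identity is established, this is immediate. Indeed $QF$ and $h_{\beta_2}$ are linear isomorphisms (the former by the preceding Proposition that $QF$ and $Cut$ are mutually inverse, the latter by the cited $htw$-result of \cite{le2021stated}), and $\varphi_*$ is an isomorphism; so from $QF\circ h_{\beta_2}\circ\Theta_{(D,\beta)}=\varphi_*\circ l_{ad}$ we get $\mathrm{Ker}\,\Theta_{(D,\beta)}=\mathrm{Ker}(QF\circ h_{\beta_2}\circ\Theta_{(D,\beta)})=\mathrm{Ker}(\varphi_*\circ l_{ad})=\mathrm{Ker}\,l_{ad}$, using that precomposing or postcomposing with an isomorphism does not change the kernel.
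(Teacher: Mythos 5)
Your proposal is correct and follows essentially the same route as the paper: the paper also verifies the identity on a local picture near $\beta$ by applying $\Theta_{(D,\beta)}$ (state sum), then $h_{\beta_2}$ (producing the $\prod_j c_{i_j}^{-1}$ factor and the $\bar H$-box with barred states), then $QF$ (extending strands through $\Delta$ to $e_3$), and reassembling via relation~\eqref{wzh.seven}; the ``especially'' conclusion is obtained exactly as you say, by postcomposing with the isomorphisms $h_{\beta_2}$, $QF$, and $\varphi_*$. The only minor differences are cosmetic: the paper performs the computation directly on a $k$-strand local picture rather than reducing to a single strand and multiplying, and it does not need to invoke the bigon counit $\epsilon$ or relation~\eqref{wzh.six}.
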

\begin{proof} We have
\begin{align*}
&QF\circ h_{\beta_2}\circ \Theta_{(D,\beta)}(
\raisebox{-.20in}{
\begin{tikzpicture}
\tikzset{->-/.style=
{decoration={markings,mark=at position #1 with
{\arrow{latex}}},postaction={decorate}}}
\draw[color = red, line width=1pt] (0,0)--(0,1);
\draw[color = red, ->,line width=1pt] (0,0)--(0,0.5);
\draw[color = blue, line width=1pt] (-1,0.9)--(1,0.9);
\draw[color = blue, line width=1pt] (-1,0.7)--(1,0.7);
\draw[color = blue, line width=1pt] (-1,0.1)--(1,0.1);
\node[left] at(0, 0.5){\small $\vdots$};
\end{tikzpicture}})
=\sum_{1\leq i_1,\dots,i_k\leq n}
QF\circ h_{\beta_2}(
\raisebox{-.20in}{
\begin{tikzpicture}
\tikzset{->-/.style=
{decoration={markings,mark=at position #1 with
{\arrow{latex}}},postaction={decorate}}}
\draw[color = red, line width=1pt] (0,0)--(0,1);
\draw[color = red, ->,line width=1pt] (0,0)--(0,0.5);
\draw[color = blue, line width=1pt] (-1,0.9)--(0,0.9);
\draw[color = blue, line width=1pt] (-1,0.7)--(0,0.7);
\draw[color = blue, line width=1pt] (-1,0.1)--(0,0.1);
\draw[color = red, line width=1pt] (2,0)--(2,1);
\draw[color = red, ->,line width=1pt] (2,0)--(2,0.5);
\draw[color = blue, line width=1pt] (2,0.9)--(3,0.9);
\draw[color = blue, line width=1pt] (2,0.7)--(3,0.7);
\draw[color = blue, line width=1pt] (2,0.1)--(3,0.1);
\node[left] at(0, 0.5){\small $\vdots$};
\node[right] at(2, 0.5){\small $\vdots$};
\node[right] at(0, 1){\small $i_k$};
\node[right] at(0, 0.6){\small $i_{k-1}$};
\node[right] at(0, 0.1){\small $i_1$};
\node[left] at(2, 1){\small $i_k$};
\node[left] at(2, 0.6){\small $i_{k-1}$};
\node[left] at(2, 0.1){\small $i_1$};
\end{tikzpicture}})\\
=& \sum_{1\leq i_1,\dots,i_k\leq n} (\Pi_{j=1}^{k}c_{i_j}^{-1})
QF(
\raisebox{-.35in}{
\begin{tikzpicture}
\tikzset{->-/.style=
{decoration={markings,mark=at position #1 with
{\arrow{latex}}},postaction={decorate}}}
\draw[color = red, line width=1pt] (0,0)--(0,2);
\draw[color = red, ->,line width=1pt] (0,0)--(0,1);
\draw[color = blue, line width=1pt] (-1,1.9)--(0,1.9);
\draw[color = blue, line width=1pt] (-1,1.7)--(0,1.7);
\draw[color = blue, line width=1pt] (-1,1.1)--(0,1.1);
\draw[color = red, line width=1pt] (2,0)--(2,2);
\draw[color = red, ->,line width=1pt] (2,0)--(2,1);
\draw[color = blue, line width=1pt] (2.8,0.1)--(3,0.1);
\draw[color = blue, line width=1pt] (2.8,0.7)--(3,0.7);
\draw[color = blue, line width=1pt] (2.8,0.9)--(3,0.9);
\draw[color = blue, line width=1pt] (2,0.9)--(2.4,0.9);
\draw[color = blue, line width=1pt] (2,0.3)--(2.4,0.3);
\draw[color = blue, line width=1pt] (2,0.1)--(2.4,0.1);
\draw[color = blue, line width=1pt]  (2.4,0) rectangle (2.8,1);
\node[left] at(0, 1.5){\small $\vdots$};
\node[right] at(2, 0.7){\small $\vdots$};
\node[right] at(0, 2){\small $i_k$};
\node[right] at(0, 1.6){\small $i_{k-1}$};
\node[right] at(0, 1.1){\small $i_1$};
\node[left] at(2, 0.9){\small $\overline{i_1}$};
\node[left] at(2, 0.4){\small $\overline{i_{k-1}}$};
\node[left] at(2, 0){\small $\overline{i_k}$};
\node at(2.6, 0.5){\small $\bar{H}$};
\end{tikzpicture}})\\
=&
\sum_{1\leq i_1,\dots,i_k\leq n} (\Pi_{j=1}^{k}c_{i_j}^{-1})
\raisebox{-.60in}{
\begin{tikzpicture}
\tikzset{->-/.style=
{decoration={markings,mark=at position #1 with
{\arrow{latex}}},postaction={decorate}}}
\draw[color = red, line width=1pt] (0,0)--(0,2);
\draw[color = red, ->,line width=1pt] (0,0)--(0,1);
\draw[color = red, line width=1pt] (1,-1)--(1,1);
\draw[color = red, ->,line width=1pt] (1,-1)--(1,0);
\draw[color = blue, line width=1pt] (-1,1.9)--(0,1.9);
\draw[color = blue, line width=1pt] (-1,1.7)--(0,1.7);
\draw[color = blue, line width=1pt] (-1,1.1)--(0,1.1);
\draw[color = blue, line width=1pt] (0,1.9)--(1,0.9);
\draw[color = blue, line width=1pt] (1,0.7)--(0,1.7);
\draw[color = blue, line width=1pt] (1,0.1)--(0,1.1);
\draw[color = red, line width=1pt] (2,0)--(2,2);
\draw[color = red, ->,line width=1pt] (2,0)--(2,1);
\draw[color = blue, line width=1pt] (2.8,0.1)--(3,0.1);
\draw[color = blue, line width=1pt] (2.8,0.7)--(3,0.7);
\draw[color = blue, line width=1pt] (2.8,0.9)--(3,0.9);
\draw[color = blue, line width=1pt] (2,0.9)--(2.4,0.9);
\draw[color = blue, line width=1pt] (2,0.3)--(2.4,0.3);
\draw[color = blue, line width=1pt] (2,0.1)--(2.4,0.1);
\draw[color = blue, line width=1pt] (2,0.9)--(1,-0.1);
\draw[color = blue, line width=1pt] (2,0.3)--(1,-0.7);
\draw[color = blue, line width=1pt] (2,0.1)--(1,-0.9);
\draw[color = blue, line width=1pt]  (2.4,0) rectangle (2.8,1);
\node[left] at(0, 1.5){\small $\vdots$};
\node[right] at(2, 0.5){\small $\vdots$};
\node[right] at(1,1){\small $i_k$};
\node[right] at(1, 0.6){\small $i_{k-1}$};
\node[right] at(1, 0.1){\small $i_1$};
\node[left] at(1,-0.1){\small $\overline{i_1}$};
\node[left] at(1,-0.6){\small $\overline{i_{k-1}}$};
\node[left] at(1,-1){\small $\overline{i_k}$};
\node at(2.6, 0.5){\small $\bar{H}$};
\end{tikzpicture}}\\
=&
\raisebox{-.20in}{
\begin{tikzpicture}
\tikzset{->-/.style=
{decoration={markings,mark=at position #1 with
{\arrow{latex}}},postaction={decorate}}}
\draw[color = red, line width=1pt] (0,0)--(0,1);
\draw[color = red, ->,line width=1pt] (0,0)--(0,0.5);
\draw[color = blue, line width=1pt] (-1,0.9)--(-0.085,0.9);
\draw[color = blue, line width=1pt] (-1,0.7)--(-0.085,0.7);
\draw[color = blue, line width=1pt] (-1,0.1)--(-0.085,0.1);
\draw[color = blue, line width=1pt] (0.085,0.9)--(1,0.9);
\draw[color = blue, line width=1pt] (0.085,0.7)--(1,0.7);
\draw[color = blue, line width=1pt] (0.085,0.1)--(1,0.1);
\node[left] at(0, 0.5){\small $\vdots$};
\end{tikzpicture}}
\end{align*}
where the last equality is from relation (\ref{wzh.seven}).
\end{proof}

\subsection{Injectivity for splitting map and adding marking map}
The main goal of this subsection is to show injectivity for splitting map and adding marking map for a large family of marked three manifolds. Thanks to Proposition \ref{prop6.3}, we know the splitting map is injective if and only if the corresponding adding marking map is injective.

\begin{lemma}\label{lmm6.4}
Let $(M,N)$ be a marked three manifold. For each $i=1,2$, suppose $N_i$ is obtained from $N$ by adding one extra marking $e_i$. If $e_1,e_2$ belong to the same component of $\partial M$, then there exists an isomorphism $f:(M,N_1)\rightarrow (M,N_2)$ such that $f_{*}\circ l_{ad}^{e_1} = l_{ad}^{e_2}$. Especially Ker\,$l_{ad}^{e_1} = $
Ker\,$l_{ad}^{e_2}$.
\end{lemma}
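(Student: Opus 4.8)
The plan is to reduce the statement to a purely topological fact about the boundary of $M$, namely that if two disjoint embedded oriented open intervals $e_1, e_2$ lie in the same component $Y$ of $\partial M$ (and each is disjoint from the closure of $N$), then there is an ambient isotopy of $M$ fixing $N$ pointwise and carrying $e_1$ to $e_2$. Once this self-diffeomorphism $f:(M,N_1)\to(M,N_2)$ is produced, the rest is formal: $f$ restricts to the identity on $(M,N)$ up to isotopy (it fixes $N$ and is isotopic to the identity as a map of $M$), so the diagram
\[
\begin{tikzcd}
S_n(M,N,v) \arrow[r,"l_{ad}^{e_1}"] \arrow[rd,"l_{ad}^{e_2}"'] & S_n(M,N_1,v) \arrow[d,"f_*"]\\
& S_n(M,N_2,v)
\end{tikzcd}
\]
commutes by functoriality of $S_n$ under embeddings of marked three manifolds, and since $f_*$ is a linear isomorphism, $\mathrm{Ker}\, l_{ad}^{e_1} = \mathrm{Ker}\, l_{ad}^{e_2}$ follows immediately.

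For the topological input, the first step is to choose a point $p_i$ in the interior of $e_i$ and an orientation-respecting parametrization of a small bicollar neighborhood of $e_i$ in $\partial M$, so that each $e_i$ is identified with the core interval of a standard rectangle $(0,1)\times(-\varepsilon,\varepsilon)$ in $Y$. Because $Y$ is a connected surface (possibly with boundary and possibly non-compact) and $cl(e_1), cl(e_2)$ are disjoint from $cl(N)$ and from each other, I would pick an embedded arc in $Y\setminus cl(N)$ from $p_1$ to $p_2$, thicken it, and use the standard fact that any two embedded oriented intervals in a connected surface are related by an isotopy of the surface supported away from a prescribed closed set (here $cl(N)$). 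This gives an isotopy $g_t: Y\to Y$ with $g_0 = \mathrm{id}$, $g_1(e_1)=e_2$ respecting orientations, and $g_t$ fixing $cl(N)$ pointwise for all $t$. The second step is to extend $g_t$ to an ambient isotopy $G_t: M\to M$ fixing $N$: collar the boundary, $\partial M\times[0,1)\hookrightarrow M$, run $g_t$ on the $\partial M$ slices, damp it to the identity as the collar parameter increases, and extend by the identity on the rest of $M$; standard collar-uniqueness arguments show this can be done so that $G_t$ preserves orientation and fixes $N$. Set $f = G_1$.

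The main obstacle, I expect, is bookkeeping about orientations and the marking data rather than anything deep: one must ensure that $f$ not only sends the underlying interval $e_1$ to $e_2$ but matches their chosen orientations (so that $f$ is genuinely an isomorphism of marked three manifolds in the sense of the paper's Functoriality subsection), and that $f$ fixes every component of $N$ together with its orientation, not merely setwise. Handling the case where $Y$ is non-compact or has boundary requires a little care in choosing the connecting arc and the support of the isotopy, but the hypothesis that $cl(e_i)\cap cl(N)=\emptyset$ gives enough room. A secondary point worth stating explicitly is why $f_*\circ l_{ad}^{e_1} = l_{ad}^{e_2}$: both sides are induced by embeddings $(M,N)\hookrightarrow (M,N_2)$, namely the inclusion for the right-hand side and $f$ composed with the inclusion $(M,N)\hookrightarrow(M,N_1)$ for the left-hand side, and these two embeddings are isotopic through embeddings of marked three manifolds (because $f$ is isotopic to $\mathrm{id}_M$ rel $N$), so they induce the same map on skein modules. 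I would close with the one-line consequence $\mathrm{Ker}\, l_{ad}^{e_1} = \mathrm{Ker}\, l_{ad}^{e_2}$.
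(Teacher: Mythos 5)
Your proof is correct and takes essentially the same route as the paper: the paper also connects $e_1$ to $e_2$ by an arc $\beta$ in $\partial M$ avoiding $cl(N)$ (chosen to run from the terminal endpoint of $cl(e_1)$ to the initial endpoint of $cl(e_2)$, which handles the orientation bookkeeping), takes a regular neighborhood $U$ of $e_2 * \beta * e_1$ with $U \cap cl(N)=\emptyset$, and defines $f$ to be the "drag $e_1$ along $\beta$ to $e_2$" diffeomorphism supported in $U$. Your collar-extension construction is simply a more explicit way of producing this same $f$, and your observation that $f\circ\iota_1$ and $\iota_2$ are isotopic embeddings $(M,N)\hookrightarrow(M,N_2)$ (so induce the same map by functoriality) fills in the step the paper dispatches with "Clearly."
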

\begin{proof}
Since $e_1,e_2$ belong to the same component of $\partial M$, we can use a path $\beta$ in $\partial M$ to connect $e_1$ and $e_2$ such that $\beta(0)=(cl(e_1))(1),\beta(1) = (cl(e_2))(0)$ and $\beta\cap cl(N\cup e_1\cup e_2) =\{\beta(0), \beta(1)\}$. Then $e_2*\beta*e_1$ is a path in $\partial M$. Let $U$ be a regular open neighborhood of $e_2*\beta*e_1$ such that $U$ restracts to $e_2*\beta*e_1$ and $U\cap\text{cl}(N)=\emptyset$.
Then there exists an isomorphism $f:(M,N_1)\rightarrow (M,N_2)$ such that $f$ is identity on $M-U$ and $f(e_1) = e_2$ (actually $f$ can be the isomorphism that drags $e_1$ to $e_2$ along  $\beta$). Clearly we have $f_{*}\circ l_{ad}^{e_1} = l_{ad}^{e_2}$.

\end{proof}

\begin{definition}\label{df6.5}
Let $(M,N)$ be a marked three manifold, and $V$ be a component of $\partial M$. Let $N^{'}$ be obtained from $N$ by adding one extra marking $e$ with $e\subset V$. Define 
$$\text{Ker\,}((M,N),V,v) =\text{Ker\,}l_{ad}^{e}.$$
Lemma \ref{lmm6.4} shows $\text{Ker\,}((M,N),V,v)$ is independent of the choice of $e$.
\end{definition}

\begin{corollary}\label{cor6.6}
Let $(M,N)$ be a marked three manifold, and $V$ be a component of $\partial M$.  Then 
$\text{Ker\,}((M,N),V,1) =0.$
\end{corollary}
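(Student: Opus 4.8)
\textbf{Proof proposal for Corollary \ref{cor6.6}.}
The plan is to reduce the claim $\text{Ker}\,((M,N),V,1)=0$ to the injectivity of the adding marking map at $v=1$, and then to deduce that injectivity from the classical-limit results already established in Section \ref{subb5}. By Definition \ref{df6.5}, $\text{Ker}\,((M,N),V,1)=\text{Ker}\,l_{ad}^{e}$ for any extra marking $e\subset V$, so it suffices to show $l_{ad}^{e}:S_n(M,N,1)\rightarrow S_n(M,N^{'},1)$ is injective. This is exactly the content of Corollary \ref{Cor5.13}, so in principle the corollary follows in one line; the point of stating it separately here is to phrase it in the language of $\text{Ker}\,((M,N),V,v)$ that Section 6 needs.

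First I would recall, as in the proof of Corollary \ref{Cor5.13}, the two cases. If $N=\emptyset$, the commutative square relating $l_{ad}^{e}$ to the embedding $\lambda:G_n(M)\hookrightarrow \Gamma_n(M)$ via the isomorphisms $S_n(M,\emptyset,1)\simeq G_n(M)$ (from Subsection \ref{sub4.1}) and $G:S_n(M,N^{'},1)\to\Gamma_n(M)$ (from Theorem \ref{thm5.5}, using the restriction of the relative spin structure on $(M,N^{'})$) shows $l_{ad}^{e}$ is injective because $\lambda$ is injective and $G$ is an isomorphism. If $N\neq\emptyset$, then for any $\alpha\in S_n(M,N,1)$ the map $\jmath$ of Lemma \ref{lmm5.10} satisfies $\jmath(l_{ad}^{e}(\alpha))=\alpha\otimes 1$; since $\jmath$ is an isomorphism (Theorem \ref{thm5.11}) and $\alpha\mapsto\alpha\otimes 1$ is injective into $S_n(M,N,1)\otimes O(SLn)$, again $l_{ad}^{e}$ is injective. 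In both cases $\text{Ker}\,l_{ad}^{e}=0$, hence $\text{Ker}\,((M,N),V,1)=0$.

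One should take care that Definition \ref{df6.5} and Lemma \ref{lmm6.4} only produce a canonical $\text{Ker}\,((M,N),V,v)$ when $M$ is connected — or at least when $V$ is a boundary component of a fixed component of $M$ — but this is harmless: if $(M,N)$ is disconnected, adding a marking to $V$ only affects the component of $M$ containing $V$, so $l_{ad}^{e}$ is $(\,\cdot\,)\otimes l_{ad}^{e}|_{\text{that component}}$ tensored with the identity on the other factors, and injectivity is preserved under tensoring a short exact sequence by a free (here, polynomial/coordinate-ring) module. So the connected case genuinely suffices.

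I do not anticipate a serious obstacle here: the corollary is a packaging of Corollary \ref{Cor5.13}, and all the real work — building $F,G,\imath,\jmath$ and checking they are mutually inverse — has already been done in Theorems \ref{thm5.5} and \ref{thm5.11}. The only mild subtlety is bookkeeping around which relative spin structure is used in each isomorphism and checking the square in the $N=\emptyset$ case commutes on generators, both of which are routine and were already addressed in the proof of Corollary \ref{Cor5.13}; so the proof of Corollary \ref{cor6.6} can simply cite that corollary together with Definition \ref{df6.5}.
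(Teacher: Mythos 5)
Your proof is correct and uses the same route as the paper: Corollary \ref{cor6.6} is exactly Definition \ref{df6.5} combined with Corollary \ref{Cor5.13}, and the paper's own proof is precisely that one-line citation. The extra material you include (re-deriving Corollary \ref{Cor5.13} and the disconnected-case remark) is accurate but not needed beyond the citation.
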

\begin{proof}
Corollary \ref{Cor5.13}, Definition \ref{df6.5}.
\end{proof}

\begin{theorem}\label{thm6.7}
Let $(M,N)$ be a marked three manifold. Let $D$ be a properly embedded disk with an oriented open interval $\beta\subset int(D)$. Suppose $\partial D$ is contained in the component $V$ of $\partial M$. Then we have  $\text{Ker\,}((M,N),V,v) = Ker\,\Theta_{(D,\beta)}$. Especially $\text{Ker\,}((M,N),V,v) = 0$ if and only if $\Theta_{(D,\beta)}$ is injective.
\end{theorem}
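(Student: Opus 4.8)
\textbf{Proof plan for Theorem \ref{thm6.7}.}
The plan is to reduce the statement to Proposition \ref{prop6.3} together with Lemma \ref{lmm6.4}. Recall that Proposition \ref{prop6.3} gives the identity $QF\circ h_{\beta_2}\circ \Theta_{(D,\beta)} = \varphi_{*}\circ l_{ad}$, where $l_{ad}:S_n(M,N,v)\rightarrow S_n(M,N^{'},v)$ is the adding marking map for some marking $e_0$ created from splitting along $(D,\beta)$, and where $QF$, $h_{\beta_2}$, $\varphi_{*}$ are all linear isomorphisms (the first by the Proposition preceding \ref{prop6.3}, the second by the cited $htw$ isomorphism from \cite{le2021stated}, the third being induced by a diffeomorphism of marked three manifolds). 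Since composing a linear map on either side with an isomorphism does not change its kernel, this immediately yields $\text{Ker}\,\Theta_{(D,\beta)} = \text{Ker}\,l_{ad}^{e_0}$.

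The remaining point is to identify $\text{Ker}\,l_{ad}^{e_0}$ with $\text{Ker\,}((M,N),V,v)$ as defined in Definition \ref{df6.5}. First I would check that the marking $e_0$ created when we split along $(D,\beta)$ and then re-glue via $QF$ lies in the component $V$ of $\partial M$ that contains $\partial D$: indeed the marking $\beta$ lives in $\operatorname{int}(D)$, so after cutting and after the gluing construction of Subsection \ref{sbbbbbbb6.1} the resulting extra marking sits in a collar of $\partial D$, hence in $V$. Then Lemma \ref{lmm6.4} applies: for any two markings $e_0, e$ both contained in $V$ we have $\text{Ker}\,l_{ad}^{e_0} = \text{Ker}\,l_{ad}^{e}$, and by Definition \ref{df6.5} the latter is exactly $\text{Ker\,}((M,N),V,v)$. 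Combining the two equalities gives $\text{Ker\,}((M,N),V,v) = \text{Ker}\,\Theta_{(D,\beta)}$, and the ``especially'' clause follows since a linear map is injective if and only if its kernel is zero.

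I do not anticipate a serious obstacle here; the theorem is essentially a bookkeeping corollary of Proposition \ref{prop6.3}. The one place that needs a little care is the topological claim that the new marking produced by the splitting-plus-$QF$ construction can be isotoped into $V$, and more precisely that it can be chosen to be any prescribed marking $e\subset V$ up to the ambient isomorphism of Lemma \ref{lmm6.4}; this is where one should be explicit that $\partial D\subset V$ is used. I would also remark that when $N=\emptyset$ the statement still makes sense and is covered, since $\partial D\subset V$ forces $V$ to be a nonempty component of $\partial M$ and the adding marking map is defined regardless of whether $N$ is empty. Finally, I would note in passing that Corollary \ref{cor6.6} then gives $\text{Ker}\,\Theta_{(D,\beta)}=0$ when $v=1$, recovering the injectivity of the splitting map at the classical limit for every marked three manifold.
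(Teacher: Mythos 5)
Your proposal is correct and follows the same approach as the paper, which simply cites Proposition \ref{prop6.3} as the entire proof. You have usefully filled in the step the paper leaves implicit: that the extra marking $e_0$ produced by the splitting-plus-$QF$ construction lies in the component $V$ containing $\partial D$, so that Lemma \ref{lmm6.4} and Definition \ref{df6.5} identify $\text{Ker}\,l_{ad}^{e_0}$ with $\text{Ker}((M,N),V,v)$.
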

\begin{proof}
Proposition \ref{prop6.3}. 
\end{proof}

\begin{corollary}
Let $(M,N)$ be  a marked three manifold. Let $D$ be a properly embedded disk with an oriented open interval $\beta\subset int( D)$. When $v=1$, we have $\Theta_{(D,\beta)}$ is always injective.
\end{corollary}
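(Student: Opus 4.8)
The plan is to reduce the statement to the already-established fact that the adding marking map is injective when $v=1$ (Corollary \ref{Cor5.13}, repackaged as Corollary \ref{cor6.6}). The only subtlety is that Theorem \ref{thm6.7} is stated for a disk $D$ whose \emph{boundary} lies in a component $V$ of $\partial M$, whereas the splitting operation $\Theta_{(D,\beta)}$ is defined for a \emph{properly embedded} disk $D$ (i.e. $\partial D\subset\partial M$, $\mathrm{int}(D)\subset\mathrm{int}(M)$) with $\beta\subset\mathrm{int}(D)$ an oriented open interval disjoint from the closure of $N$. So first I would note that for any such properly embedded disk $D$, its boundary circle $\partial D$ is contained in a single component of $\partial M$; call it $V$. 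This is immediate since $\partial D$ is connected.

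Next I would invoke Theorem \ref{thm6.7}: with $D$, $\beta$, $V$ as above, $\mathrm{Ker}\,\Theta_{(D,\beta)} = \mathrm{Ker}((M,N),V,v)$, the kernel of the adding marking map $l_{ad}^{e}$ for any marking $e\subset V$ (Definition \ref{df6.5}; well-definedness is Lemma \ref{lmm6.4}). Specializing to $v=1$ and applying Corollary \ref{cor6.6} — which says $\mathrm{Ker}((M,N),V,1) = 0$, itself a consequence of Corollary \ref{Cor5.13} (injectivity of $l_{ad}:S_n(M,N,1)\to S_n(M,N',1)$, proved via the isomorphisms $\jmath$ or $G$) — we conclude $\mathrm{Ker}\,\Theta_{(D,\beta)} = 0$. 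Hence $\Theta_{(D,\beta)}$ is injective.

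I do not anticipate a genuine obstacle here: the statement is essentially a corollary assembled from Theorem \ref{thm6.7} and Corollary \ref{cor6.6}, both of which are in hand. The one point to be careful about is bookkeeping in the hypotheses — namely checking that the hypothesis of Theorem \ref{thm6.7} ("$\partial D$ contained in a component $V$ of $\partial M$") is automatically met for a properly embedded disk, and that the interval $\beta$ required there ($\beta\subset\mathrm{int}(D)$) matches the data of the splitting map. Both are trivially satisfied, so the proof is a one-line appeal. The short proof "Theorem \ref{thm6.7} and Corollary \ref{cor6.6}" (with, if desired, the remark that $\partial D$ is connected hence lies in one boundary component) is all that is needed.
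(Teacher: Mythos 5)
Your proof is correct and follows exactly the paper's route: the paper's proof of this corollary is literally the two citations ``Corollary \ref{cor6.6}, Theorem \ref{thm6.7}.'' Your added remark that $\partial D$ is connected and hence lies in a single component of $\partial M$ is a sensible bookkeeping observation, left implicit in the paper.
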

\begin{proof}
Corollary \ref{cor6.6}, Theorem \ref{thm6.7}.
\end{proof}

\begin{corollary}
Conjecture 7.12 in \cite{le2021stated} holds when $v=1$.
\end{corollary}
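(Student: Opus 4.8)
The plan is to trace through the chain of results that has just been assembled and identify Conjecture 7.12 of \cite{le2021stated} with the injectivity statement already proved. Conjecture 7.12 asserts that the splitting map $\Theta_c$ is injective whenever $(M,N)$ is the thickening of a punctured bordered surface and $c$ is an ideal arc in the interior of the surface. So first I would recall that splitting a pb surface $\Sigma$ along an interior ideal arc $c$ corresponds, after passing to thickenings, to splitting the marked three manifold $(M,N) = \Sigma \times [-1,1]$ along $(D,\beta)$, where $D = c \times [-1,1]$ is a properly embedded disk and $\beta = c \times \{0\}$ (or any oriented open interval in the interior of $D$). This is exactly the setup of the splitting map $\Theta_{(D,\beta)}$ studied in this section, and the two maps agree under the identification of $S_n(\Sigma,v)$ with $S_n(M,N,v)$.

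Next I would invoke the immediately preceding Corollary, which states that $\Theta_{(D,\beta)}$ is always injective when $v=1$, for \emph{any} marked three manifold $(M,N)$ and any properly embedded disk $D$ with oriented open interval $\beta \subset \operatorname{int}(D)$. Since the thickening of a punctured bordered surface is a particular marked three manifold, and the disk $D = c \times [-1,1]$ with $\beta = c \times \{0\}$ is a particular properly embedded disk, the Corollary applies verbatim. Therefore $\Theta_{(D,\beta)}$, hence $\Theta_c$ on $S_n(\Sigma,1)$, is injective. That is precisely the content of Conjecture 7.12 in the case $v=1$, so the statement follows.

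The only genuine content beyond citing the previous Corollary is the bookkeeping that the surface-level splitting map $\Theta_c : S_n(\Sigma,v) \to S_n(\operatorname{Cut}_c \Sigma, v)$ of Subsection~\ref{sub27} coincides with the three-manifold splitting map $\Theta_{(D,\beta)}$ under the thickening identification. This is built into the definitions: both are defined by the same local formula $\Theta(\alpha) = \sum_{s} \alpha(h,s)$ summing over states of the intersection points with the cutting arc/disk, and the thickening of $\operatorname{Cut}_c \Sigma$ is canonically $\operatorname{Cut}_{(D,\beta)}(M,N)$ up to the choice of the interval $\beta$ in $D$, which does not affect the map up to isomorphism (as noted when $\operatorname{Cut}_{(D,\alpha)}$ was defined). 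So no real work is needed there.

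I expect the ``main obstacle'' to be essentially nonexistent: this Corollary is a one-line deduction. If anything, the only subtlety worth a sentence is making sure that $\beta$ can be taken in the interior of $D$ (which it can, since $c$ is an interior ideal arc of $\Sigma$, so $D = c \times [-1,1]$ meets $\partial M$ only in the thickened endpoints of $c$, and we may push $\beta$ off them), and that $\partial D$ lies in a single component of $\partial M$ so that Theorem~\ref{thm6.7} and its Corollary genuinely apply. Thus the proof is simply: \emph{Corollary~\ref{cor6.6}, Theorem~\ref{thm6.7}, and the identification of $\Theta_c$ with $\Theta_{(D,\beta)}$ give injectivity of $\Theta_c$ when $v=1$, which is Conjecture 7.12 in \cite{le2021stated} for $v=1$.}
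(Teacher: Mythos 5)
Your argument is correct and matches the paper's own (implicit) reasoning: the paper leaves this corollary without a written proof precisely because it is the specialization of the immediately preceding corollary (injectivity of $\Theta_{(D,\beta)}$ for all marked three manifolds at $v=1$, itself from Corollary~\ref{cor6.6} and Theorem~\ref{thm6.7}) to the thickening of a pb surface. Your extra paragraph checking that $\Theta_c$ agrees with $\Theta_{(D,\beta)}$ under the thickening identification is accurate bookkeeping and does not diverge from the paper's route.
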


\begin{theorem}\label{thm6.9}
Let  $(M,N)$ be a marked three manifold, and $V$ be a component of $\partial M$. Assume $V\cap N\neq \emptyset$.

 (a) Ker\,$((M,N),V,v)=0$.

(b) Let $N^{'}$ be obtained from $N$ by adding one extra marking in $V$, then $$S_n(M,N^{'},v)\simeq S_n(M,N,v)\otimes O_q(SLn).$$
\end{theorem}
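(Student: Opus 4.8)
The plan is to reduce Theorem \ref{thm6.9} to the two ingredients already developed in the paper: the isomorphism $QF$ (together with its inverse $Cut$) relating $S_n(M,N^{'},v)$ to the skein module obtained by splitting, and the identification of a boundary arc lying on the marking $V\cap N$ as an invertible element giving rise to an $O_q(SLn)$-factor. Concretely, let $e'$ be a component of $N$ contained in $V$. First I would observe that adding a new marking $e\subset V$ and then quasi-filling the two markings $e,e'$ of $(M,N^{'})$ by a triangle $\Delta$ produces a marked three manifold isomorphic to $(M,N)$ again (one of the three new markings of $\Delta$ replaces the pair $e,e'$, while the other surviving marking can be identified with $e'$); this is exactly the $QF/Cut$ construction of Subsection \ref{sbbbbbbb6.1}. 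Since $QF$ and $Cut$ are mutually inverse, this already shows $S_n(M,N^{'},v)$ is isomorphic, as a vector space and compatibly with $l_{ad}$, to a module built out of $S_n(M,N,v)$.

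Second, I would make the $O_q(SLn)$-factor explicit. Because $V\cap N\neq\emptyset$ there is a framed oriented boundary arc $\alpha$ with $\alpha(0)\in e'$, $\alpha(1)\in e$, i.e.\ running between the old marking and the new one along $V$; as in Remark \ref{rem5.8} (now for general $v$) the matrix $S^{[\alpha]}=(\alpha_{i,j})$ is invertible in $S_n(M,N^{'},v)$, with inverse given (up to the matrix $A$) by the reversed arc $\alpha^{-1}$, using relations \eqref{wzh.six} and \eqref{wzh.seven}. This should let me mimic the proof of Theorem \ref{thm5.11}: define $\imath:S_n(M,N,v)\otimes O_q(SLn)\to S_n(M,N^{'},v)$ by $\imath(1\otimes u_{i,j})=\alpha_{i,j}$ (checking the $O_q(SLn)$-relations \eqref{R} hold because $\alpha$ is a single boundary arc and the height-exchange relations for parallel copies of one arc are precisely the $R$-matrix relations, as in Theorem \ref{t.Hopf}), and define $\jmath$ in the other direction by cutting off the portion of any $n$-web lying near $e$ and recording it via $Cut$ together with the bigon coaction, exactly paralleling the map $\jmath$ built in Lemma \ref{lmm5.10} but keeping $q$ generic. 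Then $\jmath\circ\imath=\mathrm{Id}$ and $\imath\circ\jmath=\mathrm{Id}$ follow by the same two computations as in Theorem \ref{thm5.11}, using invertibility of $S^{[\alpha]}$ and the counit relation. This gives part (b), and part (a) is then immediate: $l_{ad}=\jmath^{-1}\circ(\,\cdot\,\otimes 1)$ up to the isomorphism, and $x\mapsto x\otimes 1$ is injective into $S_n(M,N,v)\otimes O_q(SLn)$, so $\mathrm{Ker}\,l_{ad}^{e}=0$, which by Definition \ref{df6.5} is $\mathrm{Ker}\,((M,N),V,v)=0$.

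The main obstacle I anticipate is establishing that the maps $\imath$ and $\jmath$ are well defined for \emph{general} $v$, not just $v=1$. In the $v=1$ case one could exploit commutativity of $S_n(M,N,1)$ and the algebra-homomorphism property of $l_{ad}$; here neither is available, so $\jmath$ must be checked to respect every defining relation \eqref{w.cross}--\eqref{wzh.eight} of the stated skein module by hand, with all the $q$-powers and constants $c_i$, $t$, $a$ tracked carefully — precisely the kind of computation done in the proof of Lemma \ref{lmm5.10}, but without the simplification $q=1$. The cleanest route is probably to \emph{avoid} rebuilding $\jmath$ from scratch and instead deduce well-definedness formally: use the already-established isomorphism $QF:S_n(M,N,v)\xrightarrow{\sim} S_n((M,N)_{e'\Delta e},v)\cong S_n(M,N^{'},v)$ composed with $Cut$, together with the known Hopf-algebra structure on $S_n(\fB,v)$ from Theorem \ref{Hopf} and the coaction formalism of Subsection \ref{sub27}, so that the $O_q(SLn)$-factor appears as $S_n(\Delta^{'},v)\cong S_n(\fB,v)\cong O_q(SLn)$ (Theorem \ref{t.Hopf}) essentially for free. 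Under that approach the only genuinely new verification is that the arc $\alpha$ corresponds to the standard generator matrix under $g_{big}$, which is a short check. I would present the argument in the second form, citing Theorems \ref{thm5.11}, \ref{t.Hopf} and the $QF$/$Cut$ proposition, and relegating the relation-checking to a remark that it proceeds verbatim as in Lemma \ref{lmm5.10} with $q$ kept generic.
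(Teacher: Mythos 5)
Your first route contains a topological error. You assert that gluing $\Delta$ onto $(M,N')$ along the two markings $e,e'\subset V$ produces a marked three-manifold isomorphic to $(M,N)$. Since $\Delta$ is a solid ball and the two gluing disks $D_1',D_2'$ both lie on the \emph{same} boundary component $V$ of $M$, this gluing is a $1$-handle attachment; it raises the genus of $V$ by one, and the resulting manifold is not $M$. The isomorphism $(M,N')\simeq(\text{Cut}_{(D,\beta)}(M,N))_{\beta_1\Delta\beta_2}$ in Proposition~\ref{prop6.3} works precisely because there $\beta_1,\beta_2$ are the two copies of a disk $D$ created by cutting, so that gluing $\Delta$ back merely fills the removed collar; in your setup there is no cut to undo. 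The map you invoke in your third paragraph, $QF:S_n(M,N,v)\to S_n((M,N)_{e'\Delta e},v)$, is moreover not defined: $QF_{e',e}$ requires both $e'$ and $e$ to be components of $N$, and $e$ is the \emph{new} marking, not in $N$. Even if one reads this as $QF$ on $(M,N')$, the target is the wrong manifold for the reason above.

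The idea the proposal is missing is the paper's ball trick. Take the disjoint union $(M,N)\sqcup B$ where $B$ is a solid ball with two boundary markings $b,b'$, so that $S_n((M,N)\sqcup B,v)=S_n(M,N,v)\otimes S_n(B,v)=S_n(M,N,v)\otimes O_q(SLn)$. Choosing $a\subset V\cap N$ and applying $QF_{a,b}$ glues $\Delta$ along disks lying in \emph{different} components of the disconnected manifold, giving a boundary connected sum of $M$ with the two balls $B$ and $\Delta$, which is again $M$; the surviving markings are $N$ together with $b'$ and the new $e_3$, i.e.\ exactly $(M,N')$. Since $QF$ is an isomorphism, (b) is immediate, and (a) follows from the commuting square $QF\circ L_*=J_*\circ l_{ad}$, where $L_*(x)=x\otimes 1$ is clearly injective. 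Your second paragraph's explicit $\imath,\jmath$ approach is reasonable in spirit, but as you yourself note $S_n(M,N,v)$ carries no algebra structure for general $v$, so one cannot simply ``keep $q$ generic'' in the computation of Lemma~\ref{lmm5.10}; the ball trick is what replaces that argument and supplies the $O_q(SLn)$ factor structurally rather than by generators and relations.
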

\begin{proof}
(a)
Let $B$ be the three dimensional solid ball with two markings on it's boundary. We label one marking of $B$ as $b$. It is well-known that
$S_n(B,v)= O_q(SLn)$. Since $V\cap N\neq \emptyset$, we can choose one component $a$ of $N$ such that $a\subset V$. Then $((M,N)\cup B)_{a\Delta b}\simeq (M,N^{''})$ where $(M,N)\cup B$ is taking the disjoint union and $N^{''}$ is obtained from $N$ by adding one extra marking in $V$. We use $J$ to denote the isomorphism  from $(M,N^{''})$ to $((M,N)\cup B)_{a\Delta b}$, use 
$L$ to denote the embedding from $(M,N)$ to $(M,N)\cup B$. It is easy to show we have the following commutative diagram:
$$\begin{tikzcd}
S_n(M,N,v)  \arrow[r, "L_{*}"]
\arrow[d, "l_{ad}"]  
&  S_n((M,N)\cup B,v)  \arrow[d, "QF"] \\
 S_n(M,N^{''},v)  \arrow[r, "J_{*}"] 
&  S_n(((M,N)\cup B)_{a\Delta b},v)\\
\end{tikzcd}.$$
Then $l_{ad}$ is injective since both $QF$ and $J_{*}$ are isomorphisms and $L_{*}$ is injective.
Thus $$\text{Ker\,}((M,N),V,v) = \text{Ker\,}l_{ad} =0.$$

(b) We have $((M,N)\cup B)_{a\Delta b}\simeq (M,N^{''})\simeq (M,N^{'})$, thus
$$S_n(M,N^{'},v)\simeq S_n(((M,N)\cup B)_{a\Delta b},v)\simeq S_n(M,N,v)\otimes O_q(SLn).$$
\end{proof}


\begin{corollary}\label{cor6.10}
Let $(M,N)$ be a marked three manifold. Let $D$ be a properly embedded disk with an oriented open interval $\beta\subset int( D)$. Suppose $\partial D$ is contained in the compotent $V$ of $\partial M$, and 
$N\cap V\neq \emptyset$. Then $\Theta_{(D,\beta)}$ is injective.
\end{corollary}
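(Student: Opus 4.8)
The plan is to deduce Corollary \ref{cor6.10} directly from Theorem \ref{thm6.7} together with Theorem \ref{thm6.9}(a). Theorem \ref{thm6.7} already tells us that for a properly embedded disk $D$ with oriented open interval $\beta\subset int(D)$ and $\partial D\subset V$, we have $\text{Ker\,}\Theta_{(D,\beta)} = \text{Ker\,}((M,N),V,v)$, so in particular $\Theta_{(D,\beta)}$ is injective if and only if $\text{Ker\,}((M,N),V,v) = 0$. Thus the whole task reduces to showing that $\text{Ker\,}((M,N),V,v) = 0$ under the extra hypothesis $N\cap V\neq\emptyset$, which is exactly the content of Theorem \ref{thm6.9}(a).

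So the proof is essentially a two-line citation: first invoke Theorem \ref{thm6.9}(a) with the hypothesis $N\cap V\neq\emptyset$ to conclude $\text{Ker\,}((M,N),V,v) = 0$; then invoke Theorem \ref{thm6.7} (equivalently, its ``especially'' clause) to conclude that $\Theta_{(D,\beta)}$ is injective. One small point I would double-check is the mild discrepancy in hypotheses: the Theorem \ref{thm6.7} statement as recalled in the excerpt's introduction writes $\beta\subset D$ while the body writes $\beta\subset int(D)$; since Corollary \ref{cor6.10} itself assumes $\beta\subset int(D)$, this causes no trouble and no extra argument is needed.

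There is no real obstacle here — the corollary is a packaging of the two preceding theorems. The only thing worth stating carefully in the writeup is the logical chain, so that a reader sees immediately which hypothesis feeds which theorem. Concretely I would write:

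\begin{proof}
Since $N\cap V\neq\emptyset$, Theorem \ref{thm6.9}(a) gives $\text{Ker\,}((M,N),V,v) = 0$. As $\partial D$ is contained in $V$ and $\beta\subset int(D)$, Theorem \ref{thm6.7} yields $\text{Ker\,}\Theta_{(D,\beta)} = \text{Ker\,}((M,N),V,v) = 0$, so $\Theta_{(D,\beta)}$ is injective.
\end{proof}
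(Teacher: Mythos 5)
Your proposal matches the paper's proof exactly: the paper's proof of Corollary \ref{cor6.10} simply cites Theorems \ref{thm6.7} and \ref{thm6.9}, and your writeup spells out the same two-step chain (use Theorem \ref{thm6.9}(a) to get $\text{Ker\,}((M,N),V,v)=0$, then Theorem \ref{thm6.7} to transfer this to $\text{Ker\,}\Theta_{(D,\beta)}$). No gap, no difference in approach.
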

\begin{proof}
Theorems \ref{thm6.7} and \ref{thm6.9}.
\end{proof}

\begin{corollary}\label{cor6.11}
Let $(M,N)$ be a marked three manifold. Suppose every component of $\partial M$ contains at least one marking. Then the splitting map
 $\Theta$ is always injective.
\end{corollary}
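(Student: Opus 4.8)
The plan is to reduce the general splitting map to a finite composition of elementary splittings along disks, and then invoke Corollary \ref{cor6.10} for each elementary piece. Concretely, suppose $D$ is a properly embedded disk in $(M,N)$ with an oriented open interval $\beta\subset \mathrm{int}(D)$, and $\Theta=\Theta_{(D,\beta)}$. The boundary circle $\partial D$ lies in some component $V$ of $\partial M$; by hypothesis $V\cap N\neq\emptyset$. But that is exactly the situation of Corollary \ref{cor6.10}, which gives injectivity of $\Theta_{(D,\beta)}$ directly. So for a single splitting disk the statement is immediate from Corollary \ref{cor6.10}, and the only content of Corollary \ref{cor6.11} beyond Corollary \ref{cor6.10} is that it drops the hypothesis ``$N\cap V\neq\emptyset$ for the specific $V$ containing $\partial D$'' and replaces it with the global hypothesis ``every component of $\partial M$ contains at least one marking,'' which of course implies the former. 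Hence the proof is literally one line: \emph{apply Corollary \ref{cor6.10}, noting that the component $V$ of $\partial M$ containing $\partial D$ satisfies $N\cap V\neq\emptyset$ by assumption.}

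More carefully, I would phrase it as follows. Let $D\subset M$ be a properly embedded disk disjoint from the closure of $N$, with an oriented open interval $\beta\subset \mathrm{int}(D)$, so that $\Theta=\Theta_{(D,\beta)}:S_n(M,N,v)\to S_n(\mathrm{Cut}_{(D,\beta)}(M,N),v)$ is the splitting map. Since $\partial D\subset\partial M$, it lies in a single connected component $V$ of $\partial M$. The hypothesis that every component of $\partial M$ contains at least one marking gives $N\cap V\neq\emptyset$. Therefore the hypotheses of Corollary \ref{cor6.10} are met, and that corollary yields that $\Theta_{(D,\beta)}$ is injective.

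The main ``obstacle,'' such as it is, is bookkeeping rather than mathematics: one should make sure that the statement of Corollary \ref{cor6.11} is really about a single splitting disk (which it is, matching Corollary \ref{cor6.10}), and not secretly about iterated splittings or about $\mathrm{Cut}_{\cE}$ along a whole triangulation — in the latter case one would additionally need the commutativity of splitting maps (already recorded in the excerpt) to decompose the total splitting into elementary ones and check at each stage that the relevant boundary component still meets the marking set. But as stated, no such argument is needed. I would therefore simply write:

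\begin{proof}
The boundary circle $\partial D$ is contained in a single component $V$ of $\partial M$. By hypothesis every component of $\partial M$ contains at least one marking, so $N\cap V\neq\emptyset$. The claim now follows from Corollary \ref{cor6.10}.
\end{proof}
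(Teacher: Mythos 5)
Your proof is correct and is essentially the same as the paper's, which simply cites Corollary \ref{cor6.10}. You have merely written out the one-sentence bookkeeping step (that the component $V$ of $\partial M$ containing $\partial D$ meets $N$ under the global hypothesis) that the paper leaves implicit.
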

\begin{proof}
Corollary \ref{cor6.10}.
\end{proof}

\begin{rem}
Corollary \ref{cor6.11}  proves Proposition \ref{inj}.
\end{rem}


\def \End {End(S_n(M,N,v))}

\def \cF {\mathcal{F}}

\def \tF {\tilde{\cF}}

\section{Frobenius homomorphism for $SL(n)$}\label{subb7}

The main goal of this subsection is to construct the Frobenius homomorphism  $$\cF :S_n(M,N,1)\rightarrow S_n(M,N,v)$$ when $v$ is a primitive $m$-th root of unit with $m$ being coprime with $2n$ and every component of $M$ contains at least one marking. 

We already know the  generators for algebra $S_n(M,N,1)$ and relations for these generators. It seems like we can define the Frobenius homomorphism on these generators and then check all the relations. But $S_n(M,N,v)$ does not have algebra structure unless $(M,N)$ is the thickening of a pb surface.

We use $\End$ to denote the set of  linear maps from $S_n(M,N,v)$ to $S_n(M,N,v)$, then $\End$ has a natural algebra structure given by combination of maps. Then we  define an algebra homomorphism $\tF: S_n(M,N,1)\rightarrow \End$ by defining $\tF$ on the generators of $S_n(M,N,1)$. We  show the definition of $\tF$ is indenpendent of the choice of generators of $S_n(M,N,1)$. Then we define $\cF$ to be the combination between $\tF: S_n(M,N,1)\rightarrow \End$ and an 
 obvious linear map from $\End$ to $S_n(M,N,v)$, defined by sending $f\in\End$ to $f(\emptyset)$ ($\emptyset$ is the empty stated  $n$-web).  Furthermore $\tF$ gives an $S_n(M,N,1)$-module structure to $S_n(M,N,v)$.


We will show $\cF$ is commutative with the splitting map $\Theta$. Furthermore if $(M,N)$ is the thickening of an essentially bordered pb surface, we have $\cF$ is an injective algebra homomorphism, and Im$\cF$ lives in the center.

Let $\alpha$ be a (stated) framed oriented boundary arc or a framed oriented knot in $(M,N)$,
we use $\alpha^{(m)}$ denote the disjoint union of $m$ parallel copies of $\alpha$ (taken
in the direction of the framing).  We require $\alpha^{(m)}$ lives in a small enough open tubular  neighborhood of $\alpha$. From now on, when we say $m$ parallel copies of an arc or a knot, we always mean taking the disjoin union of $m$ parallel copies in the framing direction (also the $m$ parallel copies live in a small enough open tubular  neighborhood).
We use 
$

}$$
when $j<i$.
\end{lemma}
\begin{proof}
A small tubular open neighborhood of $\alpha$ in $M$ is isomorphic
to the thickening of the bigon, with $\alpha$ being identified with the core of
the bigon. Then utilizing functoriality we may assume, without loss of generality, that $(M, N )$
is the thickening of the  bigon, $\alpha$ is the core of the bigon. From Theorem \ref{t.Hopf} and the definition of $O_q(SLn)$, the Lemma is obviously true for the thickening of the bigon.

Another proof can be using relation (\ref{wzh.eight}) and Lemma \ref{lmmm7.1}.
\end{proof}

\begin{lemma}\label{lmmm7.5}
In $S_n(M,N,v)$, we have 
$$
\raisebox{-.30in}{
$ is part of  $m$ parallel copies of a stated framed oriented boundary arc and the other single oriented line (the one stated by $i$) is not part of these $m$ parallel copies.
\end{lemma}
\begin{proof}
The proof uses the same technique as Lemma \ref{lmmm7.5}. From Proposition \ref{prop3.1}, it is trivial if $j\neq\bar{i}$. Suppose  $j=\bar{i}$, then we can prove this case by using mathmatical induction on $m$. Proposition \ref{prop3.1} guarantees the initial step. Then we can use same technique as Lemma \ref{lmmm7.5} to prove the inductive step.
\end{proof}

\begin{rem}\label{remm7.8}
 We can get a parallel equation as equation \eqref{eqqq16} by reversing all the orientations of $n$-webs in it.
\end{rem}

\begin{corollary}\label{corr7.6}
If $v$ is a primitive
 $m$-th root of unity with $m$ and $2n$ being coprime with each other, we have 
$$
\raisebox{-.30in}{
\begin{tikzpicture}
\tikzset{->-/.style=
{decoration={markings,mark=at position #1 with
{\arrow{latex}}},postaction={decorate}}}
\draw [line width =1.5pt,decoration={markings, mark=at position 0.91 with {\arrow{>}}},postaction={decorate}] (0,1)--(0,-1);
\draw [color = black, line width =1pt](-1,0.5) --(-0,-0.5);
\draw [color = black, line width =1pt](-1,-0.5) --(-0.6,-0.1);
\draw [color = black, line width =1pt](-0.4,0.1) --(0,0.5);
\draw [color = black, line width =1pt](-1.5,-0.5) --(-1,-0.5);
\draw [color = black, line width =1pt](-1.5,0.5) --(-1.4,0.5);
\node [right]at(0,0.5) {\small $i$};
\node [right]at(0,-0.5) {\small $j$};
\node at(-1.2,0.5) {\small $m$};
\draw[color=black] (-1.2,0.5) circle (0.2);
\end{tikzpicture}}= 
\raisebox{-.30in}{
\begin{tikzpicture}
\tikzset{->-/.style=
{decoration={markings,mark=at position #1 with
{\arrow{latex}}},postaction={decorate}}}
\draw [line width =1.5pt,decoration={markings, mark=at position 0.91 with {\arrow{>}}},postaction={decorate}] (0,1)--(0,-1);
\draw [color = black, line width =1pt](-1,-0.5) --(0,-0.5);
\draw [color = black, line width =1pt](-1,0.5) --(-0.7,0.5);
\draw [color = black, line width =1pt](-0.3,0.5) --(0,0.5);
\node [right]at(0,0.5) {\small $j$};
\node [right]at(0,-0.5) {\small $i$};
\node at(-0.5,0.5) {\small $m$};
\draw[color=black] (-0.5,0.5) circle (0.2);
\end{tikzpicture}}
$$
where  
$\begin{tikzpicture}
\tikzset{->-/.style=
{decoration={markings,mark=at position #1 with
{\arrow{latex}}},postaction={decorate}}}
\draw [color = black, line width =1pt](0.8,0) --(1.2,0);
\draw [color = black, line width =1pt](0,0) --(0.4,0);
%
\node at(0.6,0) {\small $m$};
\draw[color=black] (0.6,0) circle(0.2);
\end{tikzpicture}$ is part of  $m$ parallel copies of a stated framed oriented boundary arc and the other single  line (the one stated by $i$) is not part of these $m$ parallel copies. Note that there are four possibilities for us to give orientations for the stated $n$-webs shown in the local picture.
\end{corollary}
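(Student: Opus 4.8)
The plan is to derive Corollary \ref{corr7.6} directly from Lemma \ref{lmmm7.7} together with Remark \ref{remm7.8}, by showing that under the hypothesis "$v$ is a primitive $m$-th root of unity with $m$ coprime to $2n$" the two correction terms in the $j=\bar i$ case of equation \eqref{eqqq16} vanish. Specifically, the scalar coefficients that appear there are $q^{\frac{m(1-n)}{n}}(1-q^2)(1+q^2+\dots+q^{2(m-1)})c_i$ multiplying the first (two-strand) $n$-web, and $q^{\frac{m(1-n)}{n}}$ multiplying the surviving transposed diagram. The first thing I would do is record that $q = v^{2n}$, so $q$ is a primitive $m$-th root of unity (since $\gcd(2n,m)=1$, raising to the $2n$-th power permutes the primitive $m$-th roots); hence $q^m = 1$, $q\neq 1$ (as $m>1$ — note $m$ coprime to $2n$ with $n\geq 1$ forces $m$ odd and $m\neq 1$ in the nontrivial cases, but even $m=1$ is harmless since then all copies collapse), and therefore $1+q^2+q^4+\dots+q^{2(m-1)} = \frac{q^{2m}-1}{q^2-1} = 0$ because $q^{2m}=1$ while $q^2\neq 1$ (here $q^2\neq 1$ uses that $q$ is a primitive $m$-th root with $m$ odd, so the order of $q^2$ is also $m>1$). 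This kills the entire first correction term, leaving exactly $q^{\frac{m(1-n)}{n}}$ times the transposed diagram.

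Next I would simplify the prefactor $q^{\frac{m(1-n)}{n}}$ in the surviving term. Writing $q^{1/n} = v^{2}$, we get $q^{\frac{m(1-n)}{n}} = (q^{1/n})^{m(1-n)} = v^{2m(1-n)}$. Since $v^{2n} = q$ has order $m$, $v$ has order $2nm$ (or a divisor thereof), but the key point is simply that $q^{m/n}$ must be a root of unity satisfying $(q^{m/n})^n = q^m = 1$; combined with the $j\neq\bar i$ branch of \eqref{eqqq16}, which already tells us the height-exchange braiding produces a clean factor $q^{m/n}$, consistency forces $q^{m/n}$ itself to be trivial in the relevant sense. More directly: Corollary \ref{corr7.3} (for framed oriented boundary arcs over a bigon, or using the half-twist relation) already establishes that when $q^{m/n} = \pm 1$ the $m$ parallel copies slide freely past a transverse strand; and the hypothesis that $m$ is coprime to $2n$ guarantees $q^{m/n}$ is an $n$-th root of unity that is compatible with $q^{m}=1$, so one checks $q^{m/n}=1$ outright (or invokes Corollary \ref{corr7.3} after verifying $q^{m/n}=\pm1$). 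Thus the coefficient $q^{\frac{m(1-n)}{n}}$ equals $1$, and the $j=\bar i$ case of \eqref{eqqq16} reduces to exactly the transposed diagram with no scalar, which is the claimed identity in this case.

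Finally I would dispatch the $j\neq\bar i$ case of the corollary, which is even easier: Lemma \ref{lmmm7.7} gives $q^{\frac{m}{n}}$ times the transposed diagram, and by the same computation $q^{m/n}=1$, so the identity holds. Then I would note that the four orientation possibilities mentioned in the statement are handled by: (i) the two orientations covered literally by \eqref{eqqq16}, and (ii) the other two obtained by Remark \ref{remm7.8} (reverse all orientations of the $n$-webs in \eqref{eqqq16}), to which the identical scalar cancellations apply verbatim since the scalars $q^{m/n}$, $q^{m(1-n)/n}$, and the geometric sum do not change under orientation reversal. This completes the proof.

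The main obstacle I anticipate is the clean justification that $q^{m/n} = 1$ (not merely $q^m=1$): one must argue carefully that $q^{1/n}=v^2$ is a root of unity whose $m$-th power, namely $q^{m/n}=v^{2m}$, is actually $1$. This is where the coprimality of $m$ with $2n$ (as opposed to just with $n$) does real work — it ensures there is an integer $k$ with $2nk\equiv 1\pmod m$, so $q^{m/n} = v^{2m} = (v^{2nk})^{?}$-type manipulations pin down the value; alternatively one defers entirely to Corollary \ref{corr7.3} by first checking $q^{m/n}\in\{1,-1\}$. I would make sure to state explicitly which route is taken (I lean toward the direct arithmetic argument with $v^{2n}=q$ primitive of order $m$), since the rest of the argument is just substituting $\sum_{\ell=0}^{m-1} q^{2\ell} = 0$ into Lemma \ref{lmmm7.7}.
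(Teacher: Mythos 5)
There is a genuine gap in the orientation case-count. You attribute "the two orientations covered literally by \eqref{eqqq16}" to Lemma \ref{lmmm7.7}, but that equation carries a single fixed orientation configuration (one strand pointing toward the marking, the other pointing away); the two-case split in \eqref{eqqq16} is by the state condition $j\neq\bar{\imath}$ versus $j=\bar{\imath}$, not by orientation. Remark \ref{remm7.8} then gives exactly one more orientation case (the reversal). That is only two of the four. The remaining two cases — where both the $m$-strand and the single strand have the same orientation (both toward the marking or both away from it) — are handled by Lemma \ref{lmmm7.5}, which your proposal never invokes. The paper's own proof cites Lemmas \ref{lmmm7.5} and \ref{lmmm7.7} together with Remark \ref{remm7.8} precisely for this reason. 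The fix is straightforward but necessary: after killing the geometric sum $1+q^2+\dots+q^{2(m-1)}=0$ and observing $q^{\pm m/n}=1$, you must also apply those cancellations to the $j\geq i$ and $j<i$ branches of Lemma \ref{lmmm7.5}, where the scalar $(q^{-1/n+\delta_{ij}})^m$ collapses to $1$ for the same reason.

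Separately, your derivation that $q^{m/n}=1$ is correct but takes an unnecessary detour (appealing to consistency with the $j\neq\bar{\imath}$ branch, then to Corollary \ref{corr7.3}, then suggesting arithmetic with $2nk\equiv 1\pmod m$). Since the paper fixes $v=q^{\frac{1}{2n}}$ and the hypothesis is that $v$ is a primitive $m$-th root of unity, the direct computation is simply $q^{m/n}=(q^{1/n})^m=(v^2)^m=(v^m)^2=1$, with $q^m=(v^{2n})^m=1$ likewise. The coprimality of $m$ with $2n$ is used only to ensure that $q$ and $q^2$ are still primitive $m$-th roots of unity (so the geometric sum vanishes and $m$ is odd), not to establish $q^{m/n}=1$ per se. Stating the short version would make the argument cleaner and avoid the appearance of circularity.
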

\begin{proof}
From the assumption, we have $q^m = (q^{\frac{1}{n}})^m = 1$ and $q^2$ is a primitive $m$-th root of unity. Then the Corollary \ref{corr7.6} comes from Lemmas \ref{lmmm7.5}, \ref{lmmm7.7} and Remark \ref{remm7.8}.
\end{proof}

{\bf Conventions:} In the following of this section, we always assume, unless especially specified,
 $v$ is a primitive $m$-th root of unity with $m$ and $2n$ being coprime with each other, all the marked three manifolds involved have at least one marking at every component, $h$ is a  relative spin structure for $(M,N)$.

Recall that the stated skein algebra of bigon $\fB$ has  a Hopf algebra structure, and $S_n(\fB,v)$ is isomorphic to $O_q(SLn)$ as a Hopf algebra. We use $\Delta,\epsilon$ to denote it's coproduct and counit respectively. And for any $i,j\in\mathbb{J}$, $b_{i,j} =\raisebox{-.20in}{

\begin{tikzpicture}
\tikzset{->-/.style=

{decoration={markings,mark=at position #1 with

{\arrow{latex}}},postaction={decorate}}}

\filldraw[draw=white,fill=gray!20] (0,0) rectangle (1, 1);
\draw [line width =1pt,decoration={markings, mark=at position 0.5 with {\arrow{<}}},postaction={decorate}](0,0.5)--(1,0.5);
\draw[line width =1pt] (0,0)--(0,1);
\draw[line width =1pt] (1,0)--(1,1);
\node [left] at(0,0.5) {$i$};
\node [right] at(1,0.5) {$j$};
\end{tikzpicture}
}$.

\begin{lemma}\label{llll7.14}
In $S_n(\fB,v)$, we have 
\begin{align*}
&\sum_{\sigma\in S_n} (-1)^{\ell(\sigma)} (b_{1,\sigma(1)})^{(m)}(b_{2,\sigma(2)})^{(m)}\dots (b_{n,\sigma(n)})^{(m)}\\=&
\sum_{\sigma\in S_n} (-1)^{\ell(\sigma)} (b_{\sigma(1),1}^{1})^{(m)}(b_{\sigma(2),2})^{(m)}\dots (b_{\sigma(n),n})^{(m)} = 1.
\end{align*}
\end{lemma}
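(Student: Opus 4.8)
\textbf{Proof proposal for Lemma \ref{llll7.14}.}

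The plan is to transport the statement to $O_q(SLn)$ via the Hopf algebra isomorphism $f_{big}: O_q(SLn)\rightarrow S_n(\fB,v)$ of Theorem \ref{t.Hopf}, use Lemma \ref{mmm2.4}(a), and then translate back. First I would observe that under the identification $f_{big}(u_{i,j}) = b_{i,j}$, the key point is to understand what the $m$ parallel copies $(b_{i,j})^{(m)}$ correspond to on the $O_q(SLn)$ side. In the thickening of the bigon, taking $m$ parallel copies of the core arc $b_{i,j}$ in the framing direction is exactly stacking $m$ copies of the arc, i.e. multiplying in $S_n(\fB,v)$ — but the intermediate states must be summed over. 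Concretely, I would first establish the identity
$$
(b_{i,j})^{(m)} = \sum_{1\leq k_1,\dots,k_{m-1}\leq n} b_{i,k_1}\, b_{k_1,k_2}\cdots b_{k_{m-1},j}
$$
in $S_n(\fB,v)$, which follows from the definition of the product in $S_n(\fB,v)$ (stacking) together with the coproduct formula and the fact that resolving the stacked arcs forces the sum over intermediate states (this is the skein-theoretic meaning of $\Delta^{(m-1)}$). Equivalently, under $f_{big}$, $(b_{i,j})^{(m)}$ corresponds to the image of $u_{i,j}$ under the iterated coproduct followed by multiplication, which is precisely the $(i,j)$-entry of the matrix power, i.e. $f_{big}^{-1}((b_{i,j})^{(m)})$ equals the element obtained by the same recipe applied to $u_{i,j}$.

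The cleaner route, which I would actually follow, is to recognize that $(b_{i,j})^{(m)}$ corresponds under $f_{big}^{-1}$ to $(u_{i,j})^{m}$ computed not in $O_q(SLn)$ directly but in the appropriate sense — here I would appeal to the standard fact (used for $n=2,3$ in the cited works and provable the same way) that the $m$-th parallel copy of a boundary arc in the bigon is sent by $f_{big}^{-1}$ to $(u_{i,j})^{m}$, because the relevant skein relations among parallel copies (Corollaries \ref{corr7.3} and \ref{corr7.6}) match exactly the relations satisfied by $m$-th powers in $O_q(SLn)$ when $q^m=1$ and $q^2$ is a primitive $m$-th root of unity. Granting this, the left-hand side of the displayed equation in Lemma \ref{llll7.14} is $f_{big}$ applied to
$$
\sum_{\sigma\in S_n}(-1)^{\ell(\sigma)}(u_{1,\sigma(1)})^{m}(u_{2,\sigma(2)})^{m}\cdots(u_{n,\sigma(n)})^{m},
$$
and the right-hand side is $f_{big}$ applied to $\sum_{\sigma\in S_n}(-1)^{\ell(\sigma)}(u_{\sigma(1),1})^{m}\cdots(u_{\sigma(n),n})^{m}$. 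By Lemma \ref{mmm2.4}(a), both equal $1$ in $O_q(SLn)$ (using that $q$ is a primitive $m$-th root of unity with $m$ odd, which holds since $m$ is coprime with $2n$, hence odd), so applying the algebra isomorphism $f_{big}$ gives the claim.

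The main obstacle is justifying the identification $f_{big}^{-1}((b_{i,j})^{(m)}) = (u_{i,j})^{m}$ — i.e. that taking $m$ parallel framed copies of the boundary arc in the bigon really corresponds to the $m$-th power of the generator in $O_q(SLn)$, not merely to the $(m-1)$-fold coproduct-then-multiply of $u_{i,j}$ (these differ a priori, since the $m$ parallel strands can cross and reorder via the $R$-matrix relations, equations \eqref{w.cross} and \eqref{wzh.eight}). I expect this to be handled by invoking the height-exchange/crossing relations for parallel copies proved earlier in this very section (Lemmas \ref{lmmm7.2}, \ref{llmm7.3}, \ref{lmmm7.5}, \ref{lmmm7.7} and especially Corollaries \ref{corr7.3} and \ref{corr7.6}), which under the specialization $q^m=1$ collapse all the correction terms and show that the $m$ parallel strands behave like a single "thickened" strand whose algebra is governed by $m$-th powers; combined with Lemma \ref{mmm2.4}(b) (the coproduct of $(u_{i,j})^m$ splits diagonally), this pins down $(b_{i,j})^{(m)}$ as $f_{big}((u_{i,j})^m)$. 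Once that bookkeeping is in place, the lemma is immediate from Lemma \ref{mmm2.4}(a).
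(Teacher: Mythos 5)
Your overall route matches the paper's: transport the claim to $O_q(SLn)$ via the Hopf algebra isomorphism $f_{big}$ of Theorem \ref{t.Hopf} and invoke Lemma \ref{mmm2.4}(a). But you make the decisive step, the identification $f_{big}^{-1}\bigl((b_{i,j})^{(m)}\bigr) = (u_{i,j})^m$, harder than it is, and your first attempt at it is wrong. The tentative formula $(b_{i,j})^{(m)} = \sum_{k_1,\dots,k_{m-1}} b_{i,k_1} b_{k_1,k_2}\cdots b_{k_{m-1},j}$ does not hold: stacking $m$ copies of $b_{i,j}$ produces no shared endpoints and hence no sum over intermediate states, and what you wrote is $f_{big}$ applied to the $(i,j)$-entry of $\buu^m$, a different element of $O_q(SLn)$ than $(u_{i,j})^m$. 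The correct identification is simply $(b_{i,j})^{(m)} = (b_{i,j})^m$ as elements of $S_n(\fB,v)$, and this holds for any $q$ directly from the definitions: the core arc $b_{i,j}$ of the bigon has framing perpendicular to the surface, so pushing it off $m$ times in the framing direction produces exactly the stated $n$-web obtained by stacking $m$ disjoint parallel copies in $\fB\times[-1,1]$, which is by definition the $m$-fold product in $S_n(\fB,v)$. No crossings arise, so relations \eqref{w.cross}, \eqref{wzh.eight} and the root-of-unity Corollaries \ref{corr7.3} and \ref{corr7.6} are irrelevant here; those are needed for parallel copies of boundary arcs in general marked three manifolds where the framing can twist, not for the vertically framed bigon core. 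Once $(b_{i,j})^{(m)} = f_{big}((u_{i,j})^m)$ is in hand, applying the algebra isomorphism $f_{big}$ to Lemma \ref{mmm2.4}(a) finishes, as you conclude.
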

\begin{proof}
Theorem \ref{t.Hopf}, Lemma \ref{mmm2.4}.
\end{proof}

\begin{lemma}\label{llmm7.14}
In $S_n(\fB,v)$, we have 
$$\Delta((b_{i,j})^{(m)} )= \sum_{1\leq k\leq n} (b_{i,k})^{(m)}\otimes (b_{k,j})^{(m)}.$$
\end{lemma}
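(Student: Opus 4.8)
\textbf{Proof plan for Lemma \ref{llmm7.14}.}

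The statement to prove is that in $S_n(\fB,v)$ the $m$-th Frobenius power $(b_{i,j})^{(m)}$ satisfies $\Delta((b_{i,j})^{(m)}) = \sum_{1\leq k\leq n}(b_{i,k})^{(m)}\otimes(b_{k,j})^{(m)}$. The plan is to transport the corresponding statement about $O_q(SLn)$, namely Lemma \ref{mmm2.4}(b), across the Hopf algebra isomorphism of Theorem \ref{t.Hopf}. First I would recall that $f_{big}: O_q(SLn)\to S_n(\fB,v)$ defined by $f_{big}(u_{i,j}) = b_{i,j}$ is a Hopf algebra isomorphism; in particular it intertwines the two comultiplications, so $\Delta(f_{big}(x)) = (f_{big}\otimes f_{big})(\Delta(x))$ for all $x\in O_q(SLn)$.

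The one genuine point that needs checking is that $f_{big}$ carries the algebraic $m$-th power $(u_{i,j})^m$ to the geometric $m$-th parallel copy $(b_{i,j})^{(m)}$. This is where I would be careful: inside $S_n(\fB,v)$, the product of $n$-webs is stacking in the $[-1,1]$ direction, and $b_{i,j}^{\,m}$ (the $m$-fold algebra product) is $m$ copies of the arc $b_{i,j}$ stacked along the thickening direction. Since $\fB$ is the thickening of the bigon and the framing direction agrees with the stacking direction, these $m$ stacked copies are isotopic to $m$ parallel copies taken along the framing, i.e. $(b_{i,j})^{(m)}$; so $f_{big}((u_{i,j})^m) = (f_{big}(u_{i,j}))^m = b_{i,j}^{\,m} = (b_{i,j})^{(m)}$ in $S_n(\fB,v)$. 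I would state this identification explicitly, perhaps as a short preliminary remark, since it is used again in Lemma \ref{llll7.14} and in the sequel.

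Granting that identification, the proof is a one-line computation. Apply $\Delta$ and use that $f_{big}$ is a coalgebra map together with Lemma \ref{mmm2.4}(b) (which applies because $q = v^{2n}$ is a primitive $m$-th root of unity with $m$ odd, as $m$ is coprime to $2n$):
\[
\Delta\bigl((b_{i,j})^{(m)}\bigr) = \Delta\bigl(f_{big}((u_{i,j})^m)\bigr) = (f_{big}\otimes f_{big})\bigl(\Delta((u_{i,j})^m)\bigr) = (f_{big}\otimes f_{big})\Bigl(\sum_{1\leq k\leq n}(u_{i,k})^m\otimes(u_{k,j})^m\Bigr),
\]
which equals $\sum_{1\leq k\leq n}(b_{i,k})^{(m)}\otimes(b_{k,j})^{(m)}$.

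The main obstacle is essentially bookkeeping rather than depth: one must be sure that the hypothesis ``$m$ coprime with $2n$'' delivers exactly the hypothesis needed in Lemma \ref{mmm2.4} (that $q$ is a primitive $m$-th root of unity with $m$ odd), which follows since $q = v^{2n}$ has the same order $m$ as $v$ when $\gcd(m,2n)=1$, and $m$ odd is immediate from $\gcd(m,2)=1$. After that, the only care needed is the geometric-versus-algebraic power identification described above, which I would treat as a lemma or remark once and reuse.
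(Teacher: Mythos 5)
Your proposal is correct and follows exactly the same route as the paper, whose proof is simply the citation of Theorem~\ref{t.Hopf} and Lemma~\ref{mmm2.4}; you have merely unpacked the two implicit steps (the identification $b_{i,j}^{\,m}=(b_{i,j})^{(m)}$ in the thickening of the bigon, and the verification that $q=v^{2n}$ is a primitive $m$-th root of unity with $m$ odd) that the paper takes for granted.
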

\begin{proof}
Theorem \ref{t.Hopf},  Lemma \ref{mmm2.4}.
\end{proof}

\begin{definition}
Let  $l$ be a stated $n$-web consisting of stated framed oriented boundary arcs. Suppose $l = \cup_{\alpha}\alpha$ where each $\alpha$ is a stated framed oriented booundary arc. Define $l^{(m)} = \cup_{\alpha}\alpha^{(m)}$.
\end{definition}

\begin{corollary}\label{corr7.15}
Suppose $D$ is a  properly embedded disk in a marked 3-manifold $(M, N )$ with an  oriented open interval $\beta\subset D$. Let $\alpha$ be a stated $n$-web consisting of stated
framed oriented boundary arcs. Suppose $\alpha$ is $(D, \beta)$-transverse and  intersects $\beta$ in exactly one point.
 For any state $k$, let $\alpha_k$, which is a stated $n$-web in $\text{Cut}_{(D, \beta)}(M,N)$,  be the lift of $\alpha$ with both newly created boundary points having state $k$. Then $\Theta(\alpha^{(m)}) = \sum_{1\leq k\leq n}(\alpha_k)^{(m)}$.
\end{corollary}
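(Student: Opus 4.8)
The plan is to reduce Corollary \ref{corr7.15} to the already-established behaviour of $\Theta$ on single stated arcs together with the comultiplication formula for $m$-th powers of bigon generators (Lemma \ref{llmm7.14}). First I would set up the local picture: since $\alpha$ meets $\beta$ transversely in a single point, a small bicollar of $D$ meeting $\alpha$ looks like a bigon $\fB$ with the core identified with the portion of $\alpha$ crossing $\beta$, and $\Theta$ restricted to this bicollar is, by the very definition of the splitting map in the thickened-bigon case, the map $g \mapsto \sum_{k} (\text{left half stated } k) \otimes (\text{right half stated } k)$; equivalently it is the map dual to comultiplication $\Delta$ under the Hopf algebra identification of Theorem \ref{t.Hopf}. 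Replacing $\alpha$ by $\alpha^{(m)}$ means replacing the bigon generator $b_{i,j}$ (where $i,j$ are the states on the two boundary halves of $\alpha$ near $\beta$, but here $\alpha$ is a single arc so there is really just one generator being crossed) by its $m$-parallel-copy $(b_{i,j})^{(m)}$.

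Next I would invoke Lemma \ref{llmm7.14}, which says exactly $\Delta\big((b_{i,j})^{(m)}\big) = \sum_{1\le k\le n} (b_{i,k})^{(m)} \otimes (b_{k,j})^{(m)}$. Translating this back through the splitting map: the $m$ parallel strands of $\alpha$ crossing $\beta$ split, after applying $\Theta$, into a sum over a single state $k \in \mathbb{J}$ of the configuration in which all $m$ newly created boundary points on the $D_1$-side carry state $k$ and all $m$ on the $D_2$-side carry state $k$ — that is, precisely $(\alpha_k)^{(m)}$ in the notation of the statement. The point is that the diagonal term structure of Lemma \ref{llmm7.14} (a single summation index $k$ rather than $n$ independent indices, one per strand) is what collapses the a priori $n^m$ terms of a naive strand-by-strand splitting into the $n$ terms $\sum_k (\alpha_k)^{(m)}$. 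Away from $\beta$ the stated $n$-web $\alpha^{(m)}$ is carried isometrically by the projection $\pr$, so there is nothing further to check there.

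The main obstacle I expect is making the "local bigon" reduction rigorous without circularity: I must be careful that the definition of $\Theta$ being used in Corollary \ref{corr7.15} for a general marked three manifold is genuinely the L\^e--Sikora splitting map, and that cutting along $(D,\beta)$ near the single intersection point with $\alpha$ really does factor through the bigon picture. This is handled by functoriality (the embedding of a tubular neighborhood of the relevant piece of $\alpha$ into $(M,N)$ is an embedding of marked three manifolds, and $\Theta$ is natural with respect to such embeddings, exactly as $QF$ and the other structural maps were shown to be in Section \ref{sbbbbb6.2}) together with the compatibility of $\Theta$ with the bigon coproduct $\Delta$, which is the defining property recorded via Theorem \ref{t.Hopf}. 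Once that naturality square is in place, the computation is immediate from Lemma \ref{llmm7.14}, so the proof is genuinely short: cite Theorem \ref{t.Hopf} and Lemma \ref{llmm7.14}.

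\begin{proof}
A small tubular open neighborhood of the arc of $\alpha$ crossing $\beta$ is isomorphic, as a marked three manifold, to the thickening of the bigon $\fB$, with the crossing arc identified with the core of the bigon and with $(D,\beta)$ corresponding to the cutting of $\fB$ into two copies of itself. By functoriality of the splitting map and by the definition of $\Theta$ on the thickened bigon in terms of the coproduct of $S_n(\fB,v)$ (Theorem \ref{t.Hopf}), the effect of $\Theta$ on $\alpha^{(m)}$ is determined by $\Delta$ applied to $(b_{i,j})^{(m)}$, where $i,j$ are the states induced on the core of $\fB$. By Lemma \ref{llmm7.14},
$$\Delta\big((b_{i,j})^{(m)}\big) = \sum_{1\le k\le n} (b_{i,k})^{(m)}\otimes (b_{k,j})^{(m)},$$
so the $m$ parallel copies of $\alpha$, after splitting, contribute a single summation over $k\in\mathbb{J}$ in which all newly created boundary points on both sides receive the state $k$. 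Since $\pr$ carries $\alpha^{(m)}$ isometrically away from $\beta$, this sum is exactly $\sum_{1\le k\le n}(\alpha_k)^{(m)}$. Hence $\Theta(\alpha^{(m)}) = \sum_{1\le k\le n}(\alpha_k)^{(m)}$.
\end{proof}
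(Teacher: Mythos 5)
Your proof is correct and takes essentially the same approach as the paper: reduce to a tubular neighborhood identified with the thickened bigon, invoke functoriality of the splitting map, and apply Lemma \ref{llmm7.14}. The only cosmetic difference is that the paper takes a neighborhood of $\alpha\cup D$ (so that $D$ lies entirely inside, identified with the thickening of the vertical ideal arc) rather than of $\alpha$ alone; both versions go through.
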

\begin{proof}
We use the same trick used in Lemma 4.2 in \cite{bloomquist2020chebyshev}.
We can assume $\alpha$ has one component.
 An open small tubular neighborhood of $\alpha\cup D$ in $M$ is isomorphic  to the thickening of a bigon such that $\alpha$ is the core of  the bigon and $D$ is the thickening of an ideal arc connecting the two ideal points of the bigon. This completes the proof because of functoriality and Lemma \ref{llmm7.14}.
\end{proof}

\begin{corollary}\label{corr7.16}
Suppose $D$ is a  properly embedded disk in a marked 3-manifold $(M, N )$ with an  oriented open interval $\beta\subset D$. Let $\alpha$   be a stated $n$-web consisting of stated
framed oriented boundary arcs.
 Suppose $\alpha$ is $(D, \beta)$-transverse, and $\alpha\cap \beta\neq\emptyset$.  For any map $s:\beta\cap \alpha\rightarrow\{1,2,\dots,n\}$, let $\alpha_s$, which is a stated $n$-web in $\text{Cut}_{(D, \beta)}(M,N)$, be the lift of $\alpha$ such that for every $P\in \beta\cap\alpha$ the two newly created boundary points corresponding to $P$ both  have state $s(P)$. Then 
\begin{equation}\label{eee}
\Theta(\alpha^{(m)}) = \sum_{s:\beta\cap \alpha\rightarrow\{1,2,\dots,n\}}(\alpha_s)^{(m)}.
\end{equation}
 Note that 
$
\Theta(\alpha) = \sum_{s:\beta\cap \alpha\rightarrow\{1,2,\dots,n\}}\alpha_s.
$
\end{corollary}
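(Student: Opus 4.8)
The plan is to reduce the general statement to the one-intersection-point case handled in Corollary~\ref{corr7.15}, exploiting the commutativity of splitting maps. First I would observe that both sides of \eqref{eee} behave well under iterating the splitting along $(D,\beta)$ point by point: since $\alpha$ is $(D,\beta)$-transverse and $\alpha\cap\beta$ is a finite set of points $P_1,\dots,P_r$, I can factor the splitting map $\Theta=\Theta_{(D,\beta)}$ as a composition of $r$ elementary splitting maps, each of which cuts along a small disk meeting $\alpha$ in exactly one of the $P_i$ (this uses that splitting maps commute and that cutting along $(D,\beta)$ is, up to isomorphism, the same as performing $r$ successive cuts along parallel sub-disks, each intersecting $\alpha$ once). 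Concretely, choose pairwise disjoint sub-disks $D_1,\dots,D_r\subset D$ with $P_i\in\beta\cap D_i$, so that $\alpha$ meets $\beta\cap D_i$ only in $P_i$; then $\Theta = \Theta_{(D_r,\beta_r)}\circ\cdots\circ\Theta_{(D_1,\beta_1)}$ after identifying the resulting marked three-manifold with $\text{Cut}_{(D,\beta)}(M,N)$.

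Next I would apply Corollary~\ref{corr7.15} at each stage. After the first cut $\Theta_{(D_1,\beta_1)}$, an open tubular neighbourhood of $\alpha\cup D_1$ is a thickened bigon with $\alpha$ the core and $D_1$ the thickening of an ideal arc, so Corollary~\ref{corr7.15} gives $\Theta_{(D_1,\beta_1)}(\alpha^{(m)})=\sum_{1\leq k_1\leq n}(\alpha_{k_1})^{(m)}$, where $\alpha_{k_1}$ is the lift with both new boundary points stated $k_1$; crucially $\alpha_{k_1}$ is again a stated $n$-web consisting of stated framed oriented boundary arcs, and it is still $(D_2,\beta_2)$-transverse with $\alpha_{k_1}\cap\beta_2$ a single point. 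Iterating, applying Corollary~\ref{corr7.15} once for each remaining $D_i$, I obtain
\begin{equation*}
\Theta(\alpha^{(m)})=\sum_{1\leq k_1,\dots,k_r\leq n}\big(\alpha_{k_1,\dots,k_r}\big)^{(m)},
\end{equation*}
where $\alpha_{k_1,\dots,k_r}$ is the lift of $\alpha$ to $\text{Cut}_{(D,\beta)}(M,N)$ whose two new boundary points coming from $P_i$ are both stated $k_i$. A tuple $(k_1,\dots,k_r)$ is exactly the data of a map $s:\beta\cap\alpha\to\{1,\dots,n\}$, and $\alpha_{k_1,\dots,k_r}=\alpha_s$, which gives \eqref{eee}. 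The last assertion, $\Theta(\alpha)=\sum_s\alpha_s$, is simply the definition of the splitting map recalled in Subsection~\ref{sub27} applied to the diagram $\alpha$ (with trivial height choices since $\alpha$ is already transverse and the states are being summed over).

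The main obstacle I anticipate is the careful bookkeeping in the reduction step: one must verify that the iterated elementary cuts genuinely reassemble to $\text{Cut}_{(D,\beta)}(M,N)$ and that, at each intermediate stage, the lift $\alpha_{k_1,\dots,k_i}$ remains a stated $n$-web made of stated framed oriented boundary arcs that is transverse to the next sub-disk in a single point — so that Corollary~\ref{corr7.15} applies verbatim rather than its more general companion. This is essentially the trick used in Lemma~4.2 of \cite{bloomquist2020chebyshev}, and since the $m$ parallel copies live in an arbitrarily small tubular neighbourhood of $\alpha$, the local-bigon identification used for Corollary~\ref{corr7.15} is unaffected by the presence of the other intersection points once they have been separated into disjoint sub-disks. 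Everything else is formal: the sum over $s$ splits as a product of sums over the individual $k_i$, matching the iterated application of Corollary~\ref{corr7.15}.
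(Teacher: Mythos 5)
Your high-level idea — reduce the multi-intersection case to repeated applications of Corollary~\ref{corr7.15} — is correct, and is indeed the idea behind the paper's proof, but the specific reduction you describe does not work as written. You propose to choose pairwise disjoint sub-disks $D_1,\dots,D_r\subset D$, each capturing one intersection point $P_i$, and then factor $\Theta_{(D,\beta)}$ as $\Theta_{(D_r,\beta_r)}\circ\cdots\circ\Theta_{(D_1,\beta_1)}$. There are two obstructions. First, a strict sub-disk $D_i\subsetneq D$ has $\partial D_i$ in the interior of $M$ (since the interior of the properly embedded disk $D$ lies in $\mathrm{int}(M)$), so $D_i$ is not a properly embedded disk and $\Theta_{(D_i,\beta_i)}$ is not defined. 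Second, and more importantly, even if one could make sense of such cuts, cutting out collar neighborhoods of disjoint sub-disks that do not cover $D$ does not produce $\mathrm{Cut}_{(D,\beta)}(M,N)$ — the part of $D$ between the $D_i$ is never severed, so the resulting manifold is genuinely different. You flag this as ``bookkeeping to verify,'' but it is not a bookkeeping issue: the claimed identity $\Theta=\Theta_{(D_r,\beta_r)}\circ\cdots\circ\Theta_{(D_1,\beta_1)}$ is false for the objects you have constructed.

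The paper circumvents this by first applying functoriality to pass from $(M,N)$ to a tubular neighborhood $U$ of $\alpha\cup D$, which is the thickening of a pb surface $\Sigma$; the disk $D$ becomes an ideal arc $c$. Then, instead of taking disjoint sub-arcs of $c$, one punctures $c$ at a finite set $\cV$ of points separating the intersection points, so that $c\setminus\cV=\cup_i c_i$ is a decomposition of $c$ into ideal arcs of $\hat\Sigma=\Sigma\setminus\cV$, each meeting $\alpha$ exactly once. Cutting $\hat\Sigma$ along all the $c_i$ and cutting $\Sigma$ along $c$ are still \emph{not} the same operation, and this is precisely what Lemma~\ref{llmm2.2} is for: it provides the commuting square $\iota_*,\ (\iota_h)_*,\ \Theta_c$ that transports the one-intersection computation from $\hat\Sigma$ back to $\Sigma$. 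Your proposal is missing this mechanism entirely. To repair it you would need, at minimum, to replace the ``disjoint sub-disks of $D$'' by the 2D puncturing trick and invoke Lemma~\ref{llmm2.2} (or prove an analogous commuting square), at which point you would have recovered the paper's argument.
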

\begin{proof}
Here we use the technique used in page 24 in \cite{bloomquist2020chebyshev}.

Let $U$ be  an open small  tubular neighborhood of $\alpha\cup D$. Then we have $U$ is the thickening of a pb surface. Because of functoriality, we only need to prove equation (\ref{eee}) when $M=U$. Thus we can suppose that $\alpha$ is a simple diagram on  a pb surface  $\Sigma$ consisting of
 stated  arcs, with $c$ an ideal arc, and
$\alpha$ is transversal to $c$. Then equation (\ref{eee})  becomes
\begin{equation}
\Theta_{c}(\alpha^{(m)}) =  \sum_{s:c\cap \alpha\rightarrow\{1,2,\dots,n\}}(\alpha(h,s))^{(m)}
\label{eq.m3}
\end{equation}
where  $h$ be a linear order on the set $\alpha \cap c$ and $\alpha(h,s)$ is defined in Subsection \ref{sub27}.

 If $|\alpha\cap c|=1$, then equation (\ref{eq.m3}) follows exactly from Corollary \ref{corr7.15}.

\def\al{\alpha}
Now suppose $|\alpha\cap c|>1$.
Let $\cV$ be a finite subset of $c$ and  $c\setminus \cV = \cup_{i=1}^k c_i$ such that each $c_i$ intersects $\alpha$ at exactly one point.
 Let $\hat{\Sigma} = \Sigma \setminus \cV$.  Then we are ready to use Lemma \ref{llmm2.2}. Let $\Sigma'$ be the result of splitting $\Sigma$ along $c$, and $\hat{\Sigma}^{'}$ be the result of splitting $\hat{\Sigma}$ along all $c_i$.
The linear order $h$ on $\alpha\cap c$ induces a linear order on $\{c_i\mid 1\leq i\leq k\}$, which is also denoted as $h$.
Let $\hat{\alpha}\in S_n(\hat{\Sigma},v)$ be the element defined by the same $\alpha$, but considered as an element of $S_n(\hat{\Sigma},v)$. Then clearly 
$
\iota_*(\hat{\alpha}) = \al
$,
where $\iota_*: S_n(\hat{\Sigma},v) \rightarrow  S_n(\Sigma,v)$ is the induced homomorphism.
Since $\hat{\alpha}$ intersects each $c_i$ in exactly one point, from Corollary \ref{corr7.15} we have 
$$\Theta_{c}(\hat{\alpha}^{(m)}) = \sum_{s:c\cap \alpha\rightarrow\{1,2,\dots,n\}}(\hat{\alpha}_s)^{(m)},$$
where $\hat{\alpha}_s$ is the lift of $\hat{\alpha}$ such that for every $P\in c\cap\alpha$ the two newly created boundary points corresponding to $P$ both  have state $s(P)$.

 Then we have 
\begin{align*}
&\Theta_c(\alpha) = \Theta_c(\iota_{*}(\hat{\alpha})) =(\iota_h)_{*}( \Theta_c(\hat{\alpha}))\\
= &(\iota_h)_{*}(\sum_{s:c\cap \alpha\rightarrow\{1,2,\dots,n\}}(\hat{\alpha}_s)^{(m)})\\
=& \sum_{s:u\cap \alpha\rightarrow\{1,2,\dots,n\}}(\alpha(h,s))^{(m)}
\end{align*}
where $(\iota_h)_{*}$ is the induced homomorphism, see Lemma \ref{llmm2.2}.

\end{proof}

\def\mp{\raisebox{-30pt}{\incl{2.5 cm}{monogon1}}}
\def\mg{\raisebox{-30pt}{\incl{2.5 cm}{monogon2}}}
\def\bp{\raisebox{-30pt}{\incl{2.5 cm}{bigon1}}}
\def\bg{\raisebox{-30pt}{\incl{2.5 cm}{bigon2}}}

\begin{lemma}\label{lllll7.18}
We use $P_{1,2}$ to denote the once punctured bigon.
Let $\alpha$ (respectivly $\alpha^{'}$) be the framed oriented boundary arc in  the top left (respectively top right) picture in Figure \ref{fig:2}. Let $1\leq i,j\leq n$.
Then
in $S_n(P_{1,2},v)$, we have 
$$ (\alpha_{i,j})^{(m)} = (\alpha^{'}_{i,j})^{(m)}.$$
\end{lemma}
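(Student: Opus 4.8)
The plan is to prove $(\alpha_{i,j})^{(m)} = (\alpha'_{i,j})^{(m)}$ in $S_n(P_{1,2},v)$ by reducing the statement about two isotopic-up-to-a-puncture arcs to an identity we already control, namely the commutation relations for $m$ parallel copies passing over a marking that were established in Corollaries \ref{corr7.3} and \ref{corr7.6}. The once punctured bigon $P_{1,2}$ has two boundary edges together with the puncture in the interior; the arcs $\alpha$ and $\alpha'$ differ precisely by how they wind around the puncture, so the difference between $\alpha$ and $\alpha'$ can be exhibited as pushing a strand across the puncture, or equivalently as sliding a strand past the marking obtained after a splitting. First I would set up an explicit model: realize $P_{1,2}$ as a bigon with one interior ideal point and connect that interior point to the boundary by an ideal arc $c$, so that cutting along $c$ turns $P_{1,2}$ into a triangle (or a bigon with an extra boundary edge), and under this cut $\alpha^{(m)}$ and $(\alpha')^{(m)}$ have the same image except for the ordering/position of strands at the newly created marking.

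Next I would apply Corollary \ref{corr7.16} to compute $\Theta_c(\alpha^{(m)})$ and $\Theta_c((\alpha')^{(m)})$. Since $v$ is a primitive $m$-th root of unity with $m$ coprime to $2n$, we have $q^m = 1$, and the key point is that for $m$ parallel copies the state-sum lifts produced by $\Theta_c$ are \emph{symmetric} in the relevant sense: by Corollary \ref{corr7.16}, $\Theta_c(\alpha^{(m)}) = \sum_s (\alpha_s)^{(m)}$ where each $\alpha_s$ has equal states on the two newly created endpoints corresponding to each intersection point. The two arcs $\alpha$ and $\alpha'$, after cutting, give diagrams that differ only by a sequence of height exchanges and crossing exchanges of an $m$-fold cabled strand past other strands attached to the same marking; Corollaries \ref{corr7.3} and \ref{corr7.6} say precisely that such exchanges are trivial (the correction terms all vanish, and the phase factors $q^{\pm m/n}$ become $1$) when $v$ is a primitive $m$-th root of unity coprime to $2n$. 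Hence $\Theta_c(\alpha^{(m)}) = \Theta_c((\alpha')^{(m)})$ in $S_n(\text{Cut}_c(P_{1,2}),v)$.

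Finally I would invoke the injectivity of the splitting map to conclude. The cut surface $\text{Cut}_c(P_{1,2})$ is (after discarding unimportant boundary parts) an essentially bordered pb surface — concretely a triangle or a bigon with an added edge — so Proposition \ref{inj} (equivalently Corollary \ref{cor6.10}) gives that $\Theta_c$ is injective. Combined with the equality $\Theta_c(\alpha_{i,j}^{(m)}) = \Theta_c((\alpha'_{i,j})^{(m)})$ just obtained, this yields $(\alpha_{i,j})^{(m)} = (\alpha'_{i,j})^{(m)}$ in $S_n(P_{1,2},v)$, as desired.

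The main obstacle I anticipate is the bookkeeping in the middle step: making precise, in terms of the explicit diagrams in Figure \ref{fig:2}, exactly which sequence of crossing- and height-exchange moves of the cabled strand past the marking converts the cut picture of $\alpha^{(m)}$ into that of $(\alpha')^{(m)}$, and verifying that every such move is covered by Corollary \ref{corr7.3} or Corollary \ref{corr7.6} (in particular checking the orientation cases, which is why the "four possibilities" clauses in those corollaries matter). One must also be careful that the states on the endpoints \emph{not} shown in those local pictures agree on both sides, which is guaranteed by working with the state-sum lifts $\alpha_s$ where corresponding endpoints carry equal states; this is exactly the hypothesis under which Lemmas \ref{lmmm7.2}, \ref{llmm7.3}, \ref{lmmm7.5}, \ref{lmmm7.7} were stated. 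Once the reduction to these local moves is set up cleanly, the rest is immediate.
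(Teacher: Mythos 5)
Your proposal matches the paper's proof essentially step for step: cut $P_{1,2}$ along the (red) ideal arc of Figure~\ref{fig:2}, compute the two images under $\Theta$ via Corollary~\ref{corr7.16}, show the state-sum lifts agree term by term via the root-of-unity exchange identity, and conclude by injectivity of $\Theta$ (Proposition~\ref{inj}). One small overstatement: for $P_{1,2}$ only Corollary~\ref{corr7.3} is actually invoked (the crossing-type exchange away from the marking); Corollary~\ref{corr7.6} is what the paper uses instead in the analogous Lemma~\ref{llmmm7.19} for the once punctured monogon $P_{1,1}$, so you do not need both here.
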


\begin{proof}
As in Figure \ref{fig:2}, we use the red ideal arc to cut $P_{1,2}$. For any state $v$, we use $\alpha_v$ to denote the lift of $\alpha_{i,j}$ with the newly created two endpoints stated by $v$, similarly we can define $(\alpha^{'})_{v}$. From Corollary \ref{corr7.16}, we know 
$$\Theta((\alpha_{i,j})^{(m)}) = \sum_{1\leq v\leq n} (\alpha_v)^{(m)},\; \Theta((\alpha^{'}_{i,j})^{(m)}) = \sum_{1\leq v\leq n} ((\alpha^{'})_v)^{(m)}.$$
From Corollary  \ref{corr7.3}, we know $(\alpha_v)^{(m)}=((\alpha^{'})_v)^{(m)}$. Thus we have $\Theta(\alpha^{(m)}) = \Theta((\alpha^{'})^{(m)})$. This completes the proof by the injectivity of $\Theta$, see Proposition \ref{inj}.
\end{proof}

\begin{lemma}\label{llmmm7.19}
We use $P_{1,1}$ to denote the once punctured monogon.
Let $\beta$ (respectivly $\beta^{'}$) be the framed oriented boundary arc in  the bottom left (respectively bottom right) picture in Figure \ref{fig:2}. Let $1\leq i,j\leq n$.
Then
in $S_n(P_{1,1},v)$, we have 
$$ (\beta_{i,j})^{(m)} = (\beta^{'}_{i,j})^{(m)}.$$
\end{lemma}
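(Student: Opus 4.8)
The plan is to mimic the proof of Lemma \ref{lllll7.18}, but now cutting the once punctured monogon $P_{1,1}$ along an ideal arc that turns it into a once punctured bigon (or into a bigon together with a disk carrying the puncture), so that the statement is reduced, via the injectivity of the splitting map, to an equality of lifted stated $n$-webs that we already control. First I would fix a properly embedded ideal arc $c$ in $P_{1,1}$, positioned exactly as the red arc in the bottom pictures of Figure \ref{fig:2}, chosen so that $\beta$ and $\beta'$ are each $(c)$-transverse and meet $c$ in exactly one point, and so that $\text{Cut}_c(P_{1,1})$ is (up to unimportant parts of the boundary) the thickening of a once punctured bigon $P_{1,2}$ — the same surface appearing in Lemma \ref{lllll7.18}. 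The point of the specific placement of $c$ is that, after cutting, $\beta$ and $\beta'$ lift to the two boundary arcs $\alpha$ and $\alpha'$ of that earlier lemma (with the puncture on the appropriate side), so the isotopy relating them downstairs becomes, upstairs, exactly the one already handled.

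The key steps, in order, are: (1) Apply Corollary \ref{corr7.16} to both $(\beta_{i,j})^{(m)}$ and $(\beta'_{i,j})^{(m)}$ along $(c)$: since each arc meets $c$ in a single point, we get
$$\Theta((\beta_{i,j})^{(m)}) = \sum_{1\leq v\leq n}(\beta_v)^{(m)},\qquad \Theta((\beta'_{i,j})^{(m)}) = \sum_{1\leq v\leq n}((\beta')_v)^{(m)},$$
where $\beta_v$ (resp.\ $(\beta')_v$) denotes the lift with both newly created endpoints stated by $v$. (2) Identify $\text{Cut}_c(P_{1,1})$ with the thickening of $P_{1,2}$ and observe that under this identification $\beta_v$ and $(\beta')_v$ are carried to $\alpha_{i',j'}^{(m)}$-type and $(\alpha'_{i',j'})^{(m)}$-type configurations, i.e.\ to webs differing by the isotopy of Lemma \ref{lllll7.18} (possibly after also invoking Corollary \ref{corr7.3} for the local move near $c$, exactly as in that lemma's proof); hence $(\beta_v)^{(m)} = ((\beta')_v)^{(m)}$ for every $v$. (3) Sum over $v$ to conclude $\Theta((\beta_{i,j})^{(m)}) = \Theta((\beta'_{i,j})^{(m)})$. (4) Invoke injectivity of $\Theta$ — here Proposition \ref{inj}, applicable because $P_{1,2}$ (equivalently $\text{Cut}_c(P_{1,1})$) is an essentially bordered pb surface — to deduce $(\beta_{i,j})^{(m)} = (\beta'_{i,j})^{(m)}$ in $S_n(P_{1,1},v)$.

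The main obstacle I anticipate is step (2): unlike the bigon case, $P_{1,1}$ has a single boundary circle carrying the puncture, so one must be careful that the ideal arc $c$ is chosen so that (a) cutting along it genuinely yields an essentially bordered surface for which Proposition \ref{inj} applies, and (b) the two lifts $\beta_v$ and $(\beta')_v$ land in the position where either Lemma \ref{lllll7.18} applies verbatim, or only the local relations already established (Corollaries \ref{corr7.3}, \ref{corr7.6}) are needed to bridge them. In particular the puncture of $P_{1,1}$ must end up on the correct side of $c$ after cutting; depending on the picture it may be cleanest to first reduce to $P_{1,2}$ and then simply cite Lemma \ref{lllll7.18}, rather than re-running the whole argument. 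Once the surface bookkeeping is set up correctly, the rest is a direct transcription of the proof of Lemma \ref{lllll7.18}.
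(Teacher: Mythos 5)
Your overall strategy — cut along the red ideal arc, decompose via Corollary~\ref{corr7.16}, apply a local ``$m$-parallel'' relation to identify the lifts, then conclude by injectivity of $\Theta$ (Proposition~\ref{inj}) — is exactly the paper's. The gap is in which local relation you invoke in step~(2). You lean on Corollary~\ref{corr7.3} (or on invoking Lemma~\ref{lllll7.18} verbatim after identifying the cut surface with $P_{1,2}$), but the paper's proof is explicit that the only change from Lemma~\ref{lllll7.18} is to replace Corollary~\ref{corr7.3} by Corollary~\ref{corr7.6}. The distinction is not cosmetic: Corollary~\ref{corr7.3} is the $m$-parallel crossing change \emph{away} from any marking, while Corollary~\ref{corr7.6} is the $m$-parallel crossing/height exchange \emph{at} a marking. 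In $P_{1,2}$ the arc $\alpha$ has its two endpoints on two distinct boundary edges, so after cutting, $\alpha_v$ and $(\alpha')_v$ differ by an interior crossing, handled by Corollary~\ref{corr7.3}. In $P_{1,1}$ both endpoints of $\beta$ lie on the one boundary edge, so after cutting, the move relating $\beta_v$ and $(\beta')_v$ takes place at that marking and requires Corollary~\ref{corr7.6}. You do raise Corollary~\ref{corr7.6} as a possibility in your ``obstacles'' discussion, but you never commit to it, and your main step (2) commits to the wrong relation.

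The alternative route you float — first reduce to $P_{1,2}$ and then simply cite the statement of Lemma~\ref{lllll7.18} — does not go through as stated. The lifts $\beta_v$ carry the original two stated endpoints of $\beta$ \emph{plus} the two newly created endpoints stated $v$, so they are four-endpoint configurations and not of the form $\alpha_{i',j'}$ appearing in Lemma~\ref{lllll7.18}. What you can do is follow the \emph{proof} of Lemma~\ref{lllll7.18} (not its statement) verbatim and substitute Corollary~\ref{corr7.6} at the single place Corollary~\ref{corr7.3} is used; that is precisely what the paper does. With that substitution your plan is correct.
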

\begin{proof}
The proof is the same with Lemma \ref{lllll7.18}. The only difference is that we will use Corollary \ref{corr7.6}, instead of  Corollary  \ref{corr7.3}.
\end{proof}

\begin{figure}[H]
\centering
\includegraphics[scale=0.4]{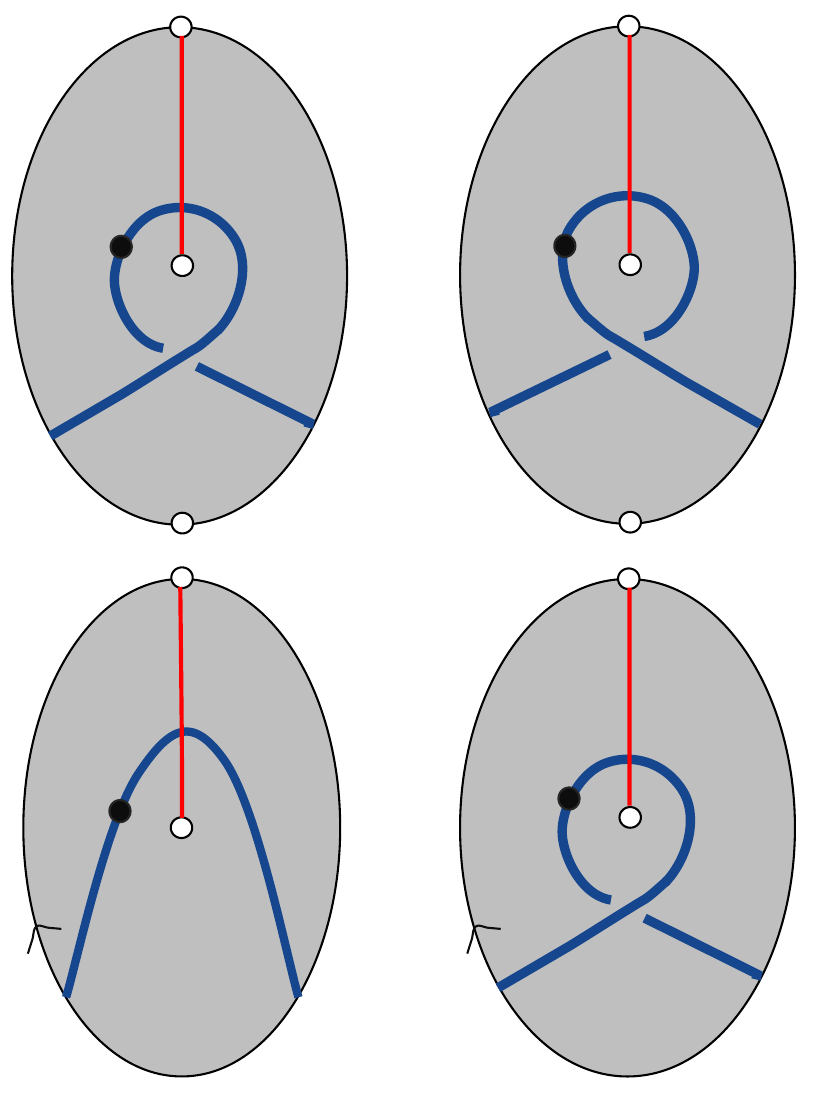}
\caption{Blue lines represent framed oriented boundary arcs, where the framing is the one pointing towards readers and the orientation is indicated by the black dot. Red lines are the cutting ideal arcs.}
\label{fig:2}
\end{figure}

\begin{corollary}\label{ccc7.20}
 In $S_n(M,N,v)$, we have 
\begin{align}
\raisebox{-.10in}{
\begin{tikzpicture}
\tikzset{->-/.style=
{decoration={markings,mark=at position #1 with
{\arrow{latex}}},postaction={decorate}}}
\draw [color = black, line width =1pt](0,0) --(-0.7,0);
\draw [color = black, line width =1pt](0,0) --(0.4,0);
\draw[color = black, line width =1pt](0.8,0) --(1.3,0);
\draw[color = black, line width =1pt](0,0.1) --(0,0.3);
\draw[color = black, line width =1pt](0,0.7) --(0,1.2);
\draw [color = black, line width =1pt](0,-0.5) --(0,-0.1);
\node at(0.6,0) {\small $m$};
\draw[color=black] (0.6,0) circle(0.2);
\node at(0,0.5) {\small $m$};
\draw[color=black] (0,0.5) circle(0.2);
\end{tikzpicture}}
=
\raisebox{-.10in}{
\begin{tikzpicture}
\tikzset{->-/.style=
{decoration={markings,mark=at position #1 with
{\arrow{latex}}},postaction={decorate}}}
\draw [color = black, line width =1pt](-0.1,0) --(-0.7,0);
\draw [color = black, line width =1pt](0.1,0) --(0.4,0);
\draw[color = black, line width =1pt](0.8,0) --(1.3,0);
\draw[color = black, line width =1pt](0,0) --(0,0.3);
\draw [color = black, line width =1pt](0,-0.5) --(0,0);
\draw[color = black, line width =1pt](0,0.7) --(0,1.2);
\node at(0,0.5) {\small $m$};
\draw[color=black] (0,0.5) circle(0.2);
\node at(0.6,0) {\small $m$};
\draw[color=black] (0.6,0) circle(0.2);
\end{tikzpicture}}
\end{align}
where on the left (right) hand side of the equality the two
$\begin{tikzpicture}
\tikzset{->-/.style=
{decoration={markings,mark=at position #1 with
{\arrow{latex}}},postaction={decorate}}}
\draw [color = black, line width =1pt](0,0) --(-0.7,0);
\draw [color = black, line width =1pt](0,0) --(0.4,0);
\draw[color = black, line width =1pt](0.8,0) --(1.3,0);
\node at(0.6,0) {\small $m$};
\draw[color=black] (0.6,0) circle(0.2);
\end{tikzpicture}$ are parts of the same $m$ parallel copies of a stated framed oriented boundary arc.
\end{corollary}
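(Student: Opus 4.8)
The plan is to reduce Corollary \ref{ccc7.20} to the local computation already packaged in Lemma \ref{lmmm7.5} (the case $j\geq i$ part, i.e.\ the braid/height-exchange relation for parallel copies). The picture on the left of the claimed equality is a local piece of $l^{(m)}$ in which one strand of the $m$ parallel copies of a boundary arc $\alpha$ approaches the marking, then moves back down while the other $m-1$ strands stay put: in other words it depicts $\alpha^{(m)}$ drawn so that the top strand comes down over the bottom $m-1$ strands, whereas the right-hand picture is $\alpha^{(m)}$ drawn in the ``standard'' order near the marking. So the assertion is just that these two diagrams represent the same element of $S_n(M,N,v)$; both sides are honestly $\alpha^{(m)}$ up to the internal height reordering of the parallel copies, and the content of the corollary is that this reordering costs nothing when $v$ is a primitive $m$-th root of unity with $\gcd(m,2n)=1$.

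First I would set up functoriality exactly as in the proofs of Lemmas \ref{lllll7.18} and \ref{llmmm7.19}: an open tubular neighborhood $U$ of $\alpha$ together with the small disk carrying the crossing region is isomorphic to the thickening of a bigon (or, if we want to keep track of the other $m-1$ strands, to the thickening of a bigon with the parallel copies running through it), so it suffices to prove the identity in $S_n(\fB,v)$. There the generators are the $b_{i,j}=a^i_j$ and the $m$-fold parallels $(b_{i,j})^{(m)}$, and the whole point of Section \ref{subb7} up to this corollary is to show the parallels behave like genuine ``$m$-th power'' variables: Lemma \ref{llll7.14} gives the quantum-determinant relation for $(b_{i,j})^{(m)}$ and Lemma \ref{llmm7.14} gives their coproduct, while Corollaries \ref{corr7.3} and \ref{corr7.6} give the crossing/height relations for $m$ parallel copies passing a single strand. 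So the right tool is Corollary \ref{corr7.3} (for the unoriented-marking crossing) or, depending on how the strand is oriented relative to the marking, Lemma \ref{lmmm7.5} at $q^{m/n}=1$ and $q^m=q^{-m}$: under the hypothesis $v^{2n}=q$ primitive $m$-th root with $\gcd(m,2n)=1$ we have $q^m=1$ and $q^{2}$ a primitive $m$-th root of unity, so $1+q^2+\dots+q^{2(m-1)}=0$, which kills precisely the correction terms that would otherwise appear when one drags the $m$ parallel copies past the extra strand.

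The key steps, in order: (1) reduce to the bigon by functoriality and identify both sides of the displayed equation as the same $\alpha^{(m)}$ drawn with two different height orderings of its parallel copies near the marking; (2) realize the transition between the two orderings as a sequence of elementary moves, each of which slides one parallel copy past a neighboring strand next to the marking, i.e.\ an instance of the local relation in Lemma \ref{lmmm7.5}/Corollary \ref{corr7.3}; (3) observe that each such move picks up only a power of $q^{m/n}=1$ together with a factor $(q-q^{-1})(1+q^2+\dots+q^{2(m-1)})$ times a lower term, and that this factor vanishes because $q^2$ is a primitive $m$-th root of unity; (4) conclude that the two diagrams are equal in $S_n(\fB,v)$, hence by functoriality in $S_n(M,N,v)$. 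One should also note, as in Lemma \ref{llmmm7.19}, that there is a parallel computation for the reversed orientation of the $n$-web, using Lemma \ref{lmmm7.7} and Remark \ref{remm7.8} in place of Lemma \ref{lmmm7.5}, so all orientation cases are covered.

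I expect the main obstacle to be purely bookkeeping rather than conceptual: keeping straight which of the four orientation patterns (strand in/out of the marking, parallel copies in/out) one is in, and confirming in each case that the relevant correction factor really is the geometric sum $1+q^2+\cdots+q^{2(m-1)}$ so that it dies. A secondary subtlety is making sure the isotopy that carries the local picture of $l^{(m)}$ into the thickening of a bigon genuinely respects the framing and the height order of the parallel strands, so that applying Lemma \ref{lmmm7.5} is legitimate; but this is exactly the kind of reduction already used in Lemmas \ref{lllll7.18}--\ref{llmmm7.19} and in Corollaries \ref{corr7.15}--\ref{corr7.16}, so it should go through verbatim. Once those points are checked the corollary follows immediately, so I would present it as a short proof citing Lemmas \ref{lmmm7.5}, \ref{lmmm7.7}, Corollaries \ref{corr7.3}, \ref{corr7.6} and functoriality.
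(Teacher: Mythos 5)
Your proposal misreads the local picture in Corollary~\ref{ccc7.20}. The two circled-$m$ pieces there are parts of the \emph{same} $m$ parallel copies of an arc $\alpha$ crossing each other far from any marking; no marking appears in the diagram. It is therefore an $m$-bundle-crosses-$m$-bundle self-crossing change, not (as you describe) one strand of the bundle moving past a marking while the others stay put, and not a $1$-versus-$(m-1)$ height exchange. This misreading drives the rest of the argument off course.

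Because both crossing pieces are parts of the same $m$ parallel copies, the hypotheses of Corollary~\ref{corr7.3} and Lemma~\ref{lmmm7.5} are explicitly \emph{not} met: those statements require the single strand that passes the $m$-bundle to be ``not a part of these $m$ parallel copies.'' Applying them to a self-crossing is exactly the case they exclude, and for good reason --- the proof of Lemma~\ref{lmmm7.2} resolves the crossing by peeling off individual strands and simplifying the resulting caps, which presupposes that the passing strand is independent of the bundle. Likewise, reducing by functoriality to the thickened bigon $\fB$ cannot work: the core arc of a bigon never self-crosses, so the local model that actually carries a self-crossing of the $m$-bundle is the thickened once-punctured bigon $P_{1,2}$ (or $P_{1,1}$ for the oppositely-oriented case), not $\fB$. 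The paper's proof cites exactly Lemma~\ref{lllll7.18}: there the self-crossing is separated by cutting along an ideal arc (Corollary~\ref{corr7.16}), which converts it into a crossing between a strand and a \emph{disjoint} $m$-bundle in the cut surface, so that Corollary~\ref{corr7.3} legitimately applies there; injectivity of the splitting map (Proposition~\ref{inj}) then pulls the identity back. You mention Lemmas~\ref{lllll7.18}--\ref{llmmm7.19} only for the ``functoriality setup,'' but they are in fact the key lemmas; quoting Lemma~\ref{lllll7.18} directly, together with functoriality, is the whole proof. Replacing it with a direct appeal to Corollary~\ref{corr7.3}/Lemma~\ref{lmmm7.5} leaves a genuine gap.
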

\begin{proof}
The Corollary can be easily proved by using functoriality and Lemma
\ref{lllll7.18}.
\end{proof}

\begin{corollary}\label{ccc7.21}
In $S_n(M,N,v)$, we have 
$$
\raisebox{-.30in}{
\begin{tikzpicture}
\tikzset{->-/.style=
{decoration={markings,mark=at position #1 with
{\arrow{latex}}},postaction={decorate}}}
\draw [line width =1.5pt,decoration={markings, mark=at position 0.91 with {\arrow{>}}},postaction={decorate}] (0,1)--(0,-1);
\draw [color = black, line width =1pt](-1,0.5) --(-0,-0.5);
\draw [color = black, line width =1pt](-1,-0.5) --(-0.6,-0.1);
\draw [color = black, line width =1pt](-0.4,0.1) --(0,0.5);
\draw [color = black, line width =1pt](-1.5,-0.5) --(-1.4,-0.5);
\draw [color = black, line width =1pt](-1.5,0.5) --(-1.4,0.5);
\node [right]at(0,0.5) {\small $i$};
\node [right]at(0,-0.5) {\small $j$};
\node at(-1.2,0.5) {\small $m$};
\draw[color=black] (-1.2,0.5) circle (0.2);
\node at(-1.2,-0.5) {\small $m$};
\draw[color=black] (-1.2,-0.5) circle (0.2);
\end{tikzpicture}}= 
\raisebox{-.30in}{
\begin{tikzpicture}
\tikzset{->-/.style=
{decoration={markings,mark=at position #1 with
{\arrow{latex}}},postaction={decorate}}}
\draw [line width =1.5pt,decoration={markings, mark=at position 0.91 with {\arrow{>}}},postaction={decorate}] (0,1)--(0,-1);
\draw [color = black, line width =1pt](-1,-0.5) --(-0.7,-0.5);
\draw [color = black, line width =1pt](-0.3,-0.5) --(0,-0.5);
\draw [color = black, line width =1pt](-1,0.5) --(-0.7,0.5);
\draw [color = black, line width =1pt](-0.3,0.5) --(0,0.5);
\node [right]at(0,0.5) {\small $j$};
\node [right]at(0,-0.5) {\small $i$};
\node at(-0.5,0.5) {\small $m$};
\draw[color=black] (-0.5,0.5) circle (0.2);
\node at(-0.5,-0.5) {\small $m$};
\draw[color=black] (-0.5,-0.5) circle (0.2);
\end{tikzpicture}}
$$
where on the left (right) hand side of the equality the two
$\begin{tikzpicture}
\tikzset{->-/.style=
{decoration={markings,mark=at position #1 with
{\arrow{latex}}},postaction={decorate}}}
\draw [color = black, line width =1pt](0,0) --(-0.7,0);
\draw [color = black, line width =1pt](0,0) --(0.4,0);
\draw[color = black, line width =1pt](0.8,0) --(1.3,0);
\node at(0.6,0) {\small $m$};
\draw[color=black] (0.6,0) circle(0.2);
\end{tikzpicture}$ are parts of the same $m$ parallel copies of a stated framed oriented boundary arc.
\end{corollary}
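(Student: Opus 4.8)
The plan is to reduce the claimed identity to a purely local statement in the thickening of the once-punctured monogon, where it becomes Lemma \ref{llmmm7.19}, in exact parallel to the way Corollary \ref{ccc7.20} was deduced from Lemma \ref{lllll7.18}.

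First I would take a small open tubular neighborhood $U$ in $M$ of the union of the boundary arc $\alpha$ underlying the pictured $m$ parallel copies together with the segment of the marking component of $N$ that the two ends of the local picture approach. Since the two pictured strands cross exactly once near the marking and both terminate on that single component of $N$, the neighborhood $U$ is isomorphic, as a marked three manifold, to the thickening of the once-punctured monogon $P_{1,1}$, with $\alpha$ identified with a boundary arc of $P_{1,1}$ that wraps around the puncture and with the left-hand (respectively right-hand) side of the displayed picture corresponding to the arc $\beta$ (respectively $\beta'$) of the bottom row of Figure \ref{fig:2}. By functoriality of $S_n(\cdot,v)$ it then suffices to prove the equality in $S_n(P_{1,1},v)$.

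In $S_n(P_{1,1},v)$ the two sides of the claimed identity are, for the states $i,j$ induced by the local picture, exactly $(\beta_{i,j})^{(m)}$ and $(\beta'_{i,j})^{(m)}$, so Lemma \ref{llmmm7.19} finishes the argument. The finitely many ways of orienting the two strands in the local picture are all handled in the same manner: either directly by Lemma \ref{llmmm7.19}, or, when the orientations do not literally match the configuration of Lemma \ref{llmmm7.19}, after applying the orientation-reversing automorphism of Corollary \ref{c.orient-rev} (together with Remark \ref{remm7.8}) to reduce to the case treated there.

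The one point that needs care is the topological identification in the second paragraph, namely checking that $U$ really is the thickening of $P_{1,1}$ and that the over/under information of the crossing and the heights of the two endpoints on the marking correspond correctly to $\beta$ versus $\beta'$ in Figure \ref{fig:2}; once this is set up the rest is immediate. One could instead avoid the functoriality argument altogether by expanding both sides using relation (\ref{wzh.eight}), Lemma \ref{lmmm7.7}, and Corollary \ref{corr7.6}, but that computation is longer and less transparent, so I would present the functoriality proof.
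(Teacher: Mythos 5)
Your proof follows the same approach the paper uses: the paper's proof of Corollary~\ref{ccc7.21} is exactly ``use functoriality and Lemma~\ref{llmmm7.19},'' which is what you do, and your paragraph spelling out the identification of a tubular neighborhood of $\alpha$ plus a marking segment with the thickening of the once-punctured monogon $P_{1,1}$ is the correct elaboration of that one-line argument, in direct parallel with how Corollary~\ref{ccc7.20} is deduced from Lemma~\ref{lllll7.18}.
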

\begin{proof}
The Corollary can be easily proved by using functoriality and Lemma
\ref{llmmm7.19}.
\end{proof}

\begin{lemma}\label{lllm7.17}

In $S_n(M,N,v)$, we have 
\begin{equation*}
\raisebox{-.20in}{
\begin{tikzpicture}
\tikzset{->-/.style=
{decoration={markings,mark=at position #1 with
{\arrow{latex}}},postaction={decorate}}}
\draw [line width =1.5pt,decoration={markings, mark=at position 0.91 with {\arrow{>}}},postaction={decorate}] (0,1)--(0,0);
\draw [color = black, line width =1pt] (-1,0.5)--(-0.7,0.5);
\draw [color = black, line width =1pt] (-1.4,0.5)--(-1.7,0.5);
\draw [color = black, line width =1pt](-0,0.5)--(-0.3,0.5);
\node at(-0.5,0.5) {\small $m$};
\node [right] at(-0,0.5) {\small $k$};
\draw[color=black] (-0.5,0.5) circle (0.2);
\draw (-1.4,0.2) rectangle (-1,0.8);
\node  at(-1.2,0.5) {\small $\bar{H}$};
\end{tikzpicture}}
= 
\raisebox{-.20in}{
\begin{tikzpicture}
\tikzset{->-/.style=
{decoration={markings,mark=at position #1 with
{\arrow{latex}}},postaction={decorate}}}
\draw [line width =1.5pt,decoration={markings, mark=at position 0.91 with {\arrow{>}}},postaction={decorate}] (0,1)--(0,0);
\draw [color = black, line width =1pt] (-1,0.5)--(-0.7,0.5);
\draw [color = black, line width =1pt](-0,0.5)--(-0.3,0.5);
\node at(-0.5,0.5) {\small $m$};
\node [right] at(-0,0.5) {\small $k$};
\draw[color=black] (-0.5,0.5) circle (0.2);
\end{tikzpicture}}=
\raisebox{-.20in}{
\begin{tikzpicture}
\tikzset{->-/.style=
{decoration={markings,mark=at position #1 with
{\arrow{latex}}},postaction={decorate}}}
\draw [line width =1.5pt,decoration={markings, mark=at position 0.91 with {\arrow{>}}},postaction={decorate}] (0,1)--(0,0);
\draw [color = black, line width =1pt] (-1,0.5)--(-0.7,0.5);
\draw [color = black, line width =1pt] (-1.4,0.5)--(-1.7,0.5);
\draw [color = black, line width =1pt](-0,0.5)--(-0.3,0.5);
\node at(-0.5,0.5) {\small $m$};
\node [right] at(-0,0.5) {\small $k$};
\draw[color=black] (-0.5,0.5) circle (0.2);
\draw (-1.4,0.2) rectangle (-1,0.8);
\node  at(-1.2,0.5) {\small $H$};
\end{tikzpicture}}
\end{equation*}
where $H,\bar{H}$ are postive half twist and negative half twist respectively, and
 the parts not shown in the local picture can be arbitrary, that is, 
$\begin{tikzpicture}
\tikzset{->-/.style=
{decoration={markings,mark=at position #1 with
{\arrow{latex}}},postaction={decorate}}}
\draw [color = black, line width =1pt](0.8,0) --(1.2,0);
\draw [color = black, line width =1pt](0,0) --(0.4,0);
%
\node at(0.6,0) {\small $m$};
\draw[color=black] (0.6,0) circle(0.2);
\end{tikzpicture}$ may not be part of  $m$ parallel copies of some stated framed oriented boundary arc. 
\end{lemma}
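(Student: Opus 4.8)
The statement asserts that placing an $m$-fold parallel bundle of boundary arcs against a portion of the marking and inserting (or removing, or reversing) a half-twist $H$ or $\bar H$ near the boundary has no effect in $S_n(M,N,v)$ under our standing root-of-unity hypothesis. The natural strategy is to reduce everything, via functoriality, to a local computation inside the thickening of a bigon (or a small punctured surface), exactly as was done in Lemma \ref{lllll7.18} and Lemma \ref{llmmm7.19}. First I would observe that a small tubular neighborhood of the arc bundle together with the relevant piece of marking is isomorphic to the thickening of a bigon (the $m$ parallel strands running from one boundary edge to the other, with the half-twist inserted near one edge), so by functoriality it suffices to prove the three expressions agree in $S_n(\fB,v)$. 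There the arcs are the generators $b_{i,j}$ of the Hopf algebra $S_n(\fB,v)\simeq O_q(SLn)$.

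In the bigon the half-twist $H$ (resp.\ $\bar H$) acting on a single strand is, up to the explicit constant recorded in the statement of $h_e$, the operation $b_{i,j}\mapsto (\text{scalar})\, b_{\bar i,\bar j}$ together with the order-reversal of a stack of strands; concretely, $\bar H$ applied to a single strand stated by $k$ and fed into the marking produces the strand stated by $\bar k$ with a factor $c_k^{-1}$, from the defining formula for $h_e$ quoted just before this lemma. So the key step is to show that applying this transformation to an $m$-fold parallel bundle of a single arc returns the same element of $S_n(\fB,v)$. This is precisely where the root-of-unity hypothesis enters: the scalars $c_k^{-1}$ assemble, after using the height/crossing exchange relations (\ref{w.cross}), (\ref{wzh.eight}) to move the strands past each other and then Corollary \ref{corr7.3} and Corollary \ref{corr7.6} to collapse the $m$-fold bundles past a transverse strand, into a product of $m$-th powers of $q$-type quantities, each of which is $1$ because $q^m=q^{m/n}=1$ and $q^2$ is a primitive $m$-th root of unity (this is the same arithmetic already used in Lemma \ref{llll7.14} and Lemma \ref{llmm7.14} via Lemma \ref{mmm2.4}). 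In effect, $(b_{i,j})^{(m)}$ behaves under the half-twist exactly like the central element $(u_{\bar i,\bar j})^m$ of $O_q(SLn)$ behaves under the antipode-type symmetry, and the twofold application of $H$ then $\bar H$ is the identity because $H\bar H$ is isotopic to the trivial tangle.

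Carrying this out, the ordered steps I expect are: (1) cite functoriality to localize to $S_n(\fB,v)$; (2) unwind the definition of $h_e$ (the displayed formula preceding this lemma) to identify the effect of $\bar H$ and $H$ on a bundle of strands entering the marking, isolating the scalar $\prod c_{i_j}^{-1}$ and the order reversal; (3) use Corollary \ref{ccc7.20} and Corollary \ref{ccc7.21} (together with Corollary \ref{corr7.3}, Corollary \ref{corr7.6}) to push the $m$-fold bundle through, absorbing the order reversal and showing the accumulated scalar is an $m$-th power of a root of unity, hence $1$; (4) conclude $\bar H$-insertion is the identity, and similarly for $H$; the middle expression with no box is then the common value, and the equality of the $H$ and $\bar H$ versions follows since $H\bar H^{-1}$, er, since inserting $H$ and inserting $\bar H$ both equal the boxless diagram. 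The main obstacle will be step (3): one must be careful that the order-reversal produced by the half-twist interacts correctly with the $q$-weights coming from the exchange relations (\ref{wzh.eight}) when the $m$ strands are permuted past one another, so that the surviving factors really do organize into the clean $m$-th powers $(q^{\text{something}})^m = 1$ rather than leaving a residual $q$-power; handling the four possible orientation conventions uniformly (via Remark \ref{remm7.8}) is a bookkeeping nuisance but not a genuine difficulty.
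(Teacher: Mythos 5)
Your reduction to the thickened bigon by functoriality is the load-bearing step, and it does not go through. The lemma is stated for a circled-$m$ bundle whose unseen continuation is arbitrary: the $m$ strands entering the marking stated $k$ need not be $m$ parallel copies of any single stated framed oriented boundary arc; away from the marking they may diverge, merge, double back, or terminate on $n$-valent vertices. So a tubular neighborhood of the visible ribbon plus a collar of the marking is a half-open cylinder with only one marked face and the $m$ strands exiting through the free face --- it is not a thickened bigon, and there is no embedding of the thickened bigon into $(M,N)$ carrying $m$ core-parallel arcs onto the bundle. This also disqualifies the tools in your step (3): Corollaries \ref{corr7.3}, \ref{corr7.6}, \ref{ccc7.20}, \ref{ccc7.21} all carry the standing hypothesis that the circled-$m$ bundle is part of $m$ parallel copies of one stated boundary arc, which is precisely the hypothesis this lemma drops, and in any case they govern a bundle passing a transverse strand or self-crossing at an interior point, not an internal half-twist among the $m$ strands of the bundle. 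Step (2) is also off target: $h_e$ is an isomorphism that simultaneously inserts $\bar H$, reverses and complements the boundary states, and multiplies by $\prod_j c_{i_j}^{-1}$; it tells you nothing about $\bar H$ alone with unchanged states, which is what the lemma asserts.

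The paper's proof is instead a short local calculation that never looks outside the ball, which is exactly why the arbitrary continuation is harmless. Pushing the crossings of $\bar H$ (or $H$) one at a time to the marking and resolving them via equation (51) of \cite{le2021stated} produces, at each step, a scalar multiple of the uncrossed picture plus correction terms carrying an $n$-valent vertex with two adjacent ends having the same state $k$; Lemma \ref{lmmm7.1} kills every one of these corrections. The surviving scalar is a power of $q^{1/n}=v^{2}$ with exponent a multiple of $\binom{m}{2}=m(m-1)/2$, and since $m$ is odd and $v$ is a primitive $m$-th root of unity this equals $1$. That is the whole argument; no detour through the bigon is needed, and for the generality actually claimed none is available.
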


\begin{proof}
This Lemma can be easily proved by using
equation (51) in \cite{le2021stated} and Lemma \ref{lmmm7.1}.
\end{proof}

\begin{lemma}\label{mmm7.23}

In $S_n(M,N,v)$, we have 
\begin{equation}\label{86}
\sum_{1\leq k\leq n}(-1)^{k+1}
\raisebox{-.30in}{
\begin{tikzpicture}
\tikzset{->-/.style=
{decoration={markings,mark=at position #1 with
{\arrow{latex}}},postaction={decorate}}}
\draw [line width =1.5pt,decoration={markings, mark=at position 0.91 with {\arrow{>}}},postaction={decorate}] (0,-1)--(0,1);
\draw [line width =1pt,decoration={markings, mark=at position 0.5 with {\arrow{>}}},postaction={decorate}] (-1.3,-0.5)--(-0.7,-0.5);
\draw [color = black, line width =1pt](-0,-0.5)--(-0.3,-0.5);
\node at(-0.5,-0.5) {\small $m$};
\node [right] at(-0,-0.5) {\small $\bar{k}$};
\draw[color=black] (-0.5,-0.5) circle (0.2);
\draw [line width =1pt,decoration={markings, mark=at position 0.5 with {\arrow{<}}},postaction={decorate}] (-1.3,0.5)--(-0.7,0.5);
\draw [color = black, line width =1pt](-0,0.5)--(-0.3,0.5);
\node at(-0.5,0.5) {\small $m$};
\node [right] at(-0,0.5) {\small $k$};
\draw[color=black] (-0.5,0.5) circle (0.2);
\end{tikzpicture}}
= 
\raisebox{-.30in}{
\begin{tikzpicture}
\tikzset{->-/.style=
{decoration={markings,mark=at position #1 with
{\arrow{latex}}},postaction={decorate}}}
\draw [line width =1.5pt,decoration={markings, mark=at position 0.91 with {\arrow{>}}},postaction={decorate}] (0.7,-1)--(0.7,1);
\draw [line width =1pt,decoration={markings, mark=at position 0.5 with {\arrow{>}}},postaction={decorate}] (-1.3,-0.5)--(-0.7,-0.5);
\draw [color = black, line width =1pt](-0,-0.5)--(-0.3,-0.5);
\node at(-0.5,-0.5) {\small $m$};
\draw[color=black] (-0.5,-0.5) circle (0.2);
\draw [color = black, line width =1pt] (-1.3,0.5)--(-0.7,0.5);
\draw [color = black, line width =1pt](-0,0.5)--(-0.7,0.5);
\draw [color = black, line width =1pt] (0 ,-0.5) arc (-90:90:0.5);
\end{tikzpicture}}
\end{equation}
where  all three
$\begin{tikzpicture}
\tikzset{->-/.style=
{decoration={markings,mark=at position #1 with
{\arrow{latex}}},postaction={decorate}}}
\draw [color = black, line width =1pt](0.8,0) --(1.2,0);
\draw [color = black, line width =1pt](0,0) --(0.4,0);
%
\node at(0.6,0) {\small $m$};
\draw[color=black] (0.6,0) circle(0.2);
\end{tikzpicture}$ are parts of  $m$ parallel copies of some stated framed oriented boundary arc. 
\end{lemma}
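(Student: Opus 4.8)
The plan is to prove this identity by the standard ``reduce to a bigon / punctured monogon and use functoriality'' technique that has been used repeatedly in this section (Lemmas~\ref{lmmm7.5}, \ref{lmmm7.7}, and Corollaries~\ref{ccc7.20}, \ref{ccc7.21}). First I would observe that a small open tubular neighborhood $U$ of the union of the $m$ parallel copies together with the arc of the marking shown in the picture and the cap/cup pattern is isomorphic to the thickening of a once punctured monogon $P_{1,1}$ (or a bigon, depending on how many of the three arc-ends emanating from the $\circled{m}$ boxes actually close up inside $U$); in the local picture the left two boxes are joined by a cap and the whole configuration lies over a disk with one puncture created by the marking $e$. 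By functoriality it then suffices to prove equation~\eqref{86} inside $S_n(U,v)$, i.e.\ in $S_n(P_{1,1},v)$.

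Next, both sides of \eqref{86} are images under the map $(\cdot)^{(m)}$ of genuine stated $n$-webs (sums thereof) in $S_n(P_{1,1},v)$: the right-hand side is $\alpha^{(m)}$ for a single stated framed oriented boundary arc $\alpha$ (the arc that runs once around the puncture and caps off), while the left-hand side is $\sum_k(-1)^{k+1}(\beta_k)^{(m)}$ where $\beta_k$ is the arc with two ends stated $k$ and $\bar k$ before the cap is resolved. The key input is relation~\eqref{wzh.seven}, which at $v=1$ (or in any $S_n$) asserts exactly $\sum_k (c_{\bar k})^{-1}$ of the two-strand arc equals the capped arc; more precisely, the combination $\sum_k(-1)^{k+1}$ of the two nested arcs stated $k,\bar k$ equals the single capped arc. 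I would show that after taking $m$ parallel copies the same identity survives: apply the splitting map $\Theta_c$ along the red ideal arc $c$ that cuts $P_{1,1}$ into (essentially) a bigon, use Corollary~\ref{corr7.16} to push $\Theta_c$ through the $m$-th power on both sides, invoke Corollary~\ref{corr7.6} (and Lemma~\ref{lllm7.17} to deal with the half-twist that the cutting introduces) to rewrite each summand, and then recognize that the resulting two sums in $S_n(\mathrm{Cut}_c P_{1,1},v)$ coincide termwise — the left side's sum over $k$ collapses by the orthogonality relation \eqref{wzh.six}, exactly as in the $m=1$ case. Finally, injectivity of $\Theta_c$ (Proposition~\ref{inj}) gives the desired equality upstairs.

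The main obstacle I anticipate is bookkeeping the framing/half-twist data correctly: when one splits along $c$ the newly created endpoints acquire heights, and the cap in the picture, together with the marking $e$, forces a half-twist $\bar H$ (or $H$) into the lifted diagram; Lemma~\ref{lllm7.17} says such a half-twist acts trivially on $m$ parallel copies sitting next to the marking, but one has to make sure the configuration really is of that form — i.e.\ that the $m$ strands being capped are indeed $m$ parallel copies lying in a neighborhood of the boundary, not merely $m$ arbitrary parallel strands. This is where the hypothesis ``all three $\circled{m}$'s are parts of $m$ parallel copies of \emph{some} stated framed oriented boundary arc'' is used, and it is the point requiring the most care. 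A secondary technical point is verifying that the sign $(-1)^{k+1}$ and the constants $c_i,c_{\bar i}$ combine correctly after passing to $m$-th powers; since at the relevant root of unity $q^{m}=1$ the constants $c_i$ satisfy $c_i^{m}$-type simplifications (as already exploited in Lemma~\ref{llll7.14} via Lemma~\ref{mmm2.4}), I expect the coefficients to reduce to the same $(-1)^{k+1}$ appearing on the left, but this should be checked against equations (51) and (54) of \cite{le2021stated} as in the earlier lemmas. Once these framing and coefficient issues are settled, the argument is a routine application of splitting, Corollaries~\ref{corr7.6} and~\ref{corr7.16}, Lemma~\ref{lllm7.17}, relation~\eqref{wzh.six}, and the injectivity of $\Theta$.
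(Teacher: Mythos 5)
Your approach shares the same key ingredients as the paper's proof — Corollary \ref{corr7.16} for pushing the splitting through $m$-th powers, Lemma \ref{lllm7.17} for the half-twist on $m$ parallel copies, and relation \eqref{wzh.six} for the orthogonality — but the scaffolding is different, and the paper's is both cleaner and more robust. The paper does \emph{not} reduce to $P_{1,1}$ and does \emph{not} invoke injectivity of the splitting map. Instead, after the functoriality reduction (assume no extra components), it applies the right coaction $\Delta_\beta$ (cutting out a bigon along the marking $\beta$ shown in \eqref{86}) to the right-hand side, uses Corollary \ref{corr7.16} to write the result as a sum over states $u,v$ of a left-hand-side-like picture tensored with a bigon picture containing the $m$-parallel cap, then invokes $(\mathrm{Id}\otimes\varepsilon)\circ\Delta_\beta=\mathrm{Id}$ and computes the counit of the bigon piece directly: by Lemma \ref{lllm7.17} the half-twist evaporates, and relation \eqref{wzh.six} gives $\varepsilon=\delta_{u,\bar v}(c_v)^m$, with $(c_v)^m=(-1)^{\bar v+1}$ at the given root of unity — which is exactly the sign in the left-hand side of \eqref{86}.

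Two concrete concerns about your version. First, the reduction to $P_{1,1}$ is not a valid opening move here: the boxed-$m$ pieces in \eqref{86} lie on $m$ parallel copies of boundary arcs whose global course in $(M,N)$ is unconstrained, so there is no reason a tubular neighborhood of the local configuration (cap plus two arc ends near $\beta$) should be the thickening of $P_{1,1}$ — and you yourself flag this uncertainty. Unlike Lemmas \ref{lllll7.18} and \ref{llmmm7.19}, which are statements \emph{about} particular webs in $P_{1,2}$ and $P_{1,1}$, Lemma \ref{mmm7.23} is a local relation that must hold for arcs embedded arbitrarily, so the argument has to be genuinely local at the marking, which is what the coaction buys you. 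Second, your claimed ``termwise collapse'' after splitting is asserted but not carried out; that step is precisely the counit computation sketched above, and it is the crux of the lemma. So: right ingredients, but the $P_{1,1}$ reduction is a genuine gap, and the coaction-plus-counit route of the paper sidesteps both that gap and the unnecessary appeal to injectivity of $\Theta$.
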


\begin{proof}
Because of functoriality, we can assume besides the components that are shown in equation \eqref{86}, the stated $n$-webs have  no other components.
We use $\beta$ to denote the marking shown in equation (\ref{86}). Then there is a right coaction 
$\Delta_{\beta}:S_n(M,N,v)\rightarrow S_n(M,N,v)\otimes O_q(SLn)$, see Subsection 7.1 in \cite{le2021stated}. The coaction $\Delta_{\beta}$ is actually defined by cutting out a bigon. From Corollary \ref{corr7.16}, we have 
$$
\Delta_{\beta}(\raisebox{-.30in}{
\begin{tikzpicture}
\tikzset{->-/.style=
{decoration={markings,mark=at position #1 with
{\arrow{latex}}},postaction={decorate}}}
\draw [line width =1.5pt,decoration={markings, mark=at position 0.91 with {\arrow{>}}},postaction={decorate}] (0.7,-1)--(0.7,1);
\draw [line width =1pt,decoration={markings, mark=at position 0.5 with {\arrow{>}}},postaction={decorate}] (-1.3,-0.5)--(-0.7,-0.5);
\draw [color = black, line width =1pt](-0,-0.5)--(-0.3,-0.5);
\node at(-0.5,-0.5) {\small $m$};
\draw[color=black] (-0.5,-0.5) circle (0.2);
\draw [color = black, line width =1pt] (-1.3,0.5)--(-0.7,0.5);
\draw [color = black, line width =1pt](-0,0.5)--(-0.7,0.5);
\draw [color = black, line width =1pt] (0 ,-0.5) arc (-90:90:0.5);
\end{tikzpicture}}) =
\sum_{1\leq u,v\leq n}
\raisebox{-.30in}{
\begin{tikzpicture}
\tikzset{->-/.style=
{decoration={markings,mark=at position #1 with
{\arrow{latex}}},postaction={decorate}}}
\draw [line width =1.5pt,decoration={markings, mark=at position 0.91 with {\arrow{>}}},postaction={decorate}] (0,-1)--(0,1);
\draw [line width =1pt,decoration={markings, mark=at position 0.5 with {\arrow{>}}},postaction={decorate}] (-1.3,-0.5)--(-0.7,-0.5);
\draw [color = black, line width =1pt](-0,-0.5)--(-0.3,-0.5);
\node at(-0.5,-0.5) {\small $m$};
\node [right] at(-0,-0.5) {\small $v$};
\draw[color=black] (-0.5,-0.5) circle (0.2);
\draw [line width =1pt,decoration={markings, mark=at position 0.5 with {\arrow{<}}},postaction={decorate}] (-1.3,0.5)--(-0.7,0.5);
\draw [color = black, line width =1pt](-0,0.5)--(-0.3,0.5);
\node at(-0.5,0.5) {\small $m$};
\node [right] at(-0,0.5) {\small $u$};
\draw[color=black] (-0.5,0.5) circle (0.2);
\end{tikzpicture}}\otimes
\raisebox{-.30in}{
\begin{tikzpicture}
\tikzset{->-/.style=
{decoration={markings,mark=at position #1 with
{\arrow{latex}}},postaction={decorate}}}
\draw [line width =1.5pt,decoration={markings, mark=at position 0.91 with {\arrow{>}}},postaction={decorate}] (-1.3,-1)--(-1.3,1);
\draw [line width =1.5pt,decoration={markings, mark=at position 0.91 with {\arrow{>}}},postaction={decorate}] (0.7,-1)--(0.7,1);
\draw [line width =1pt,decoration={markings, mark=at position 0.5 with {\arrow{>}}},postaction={decorate}] (-1.3,-0.5)--(-0.7,-0.5);
\draw [color = black, line width =1pt](-0,-0.5)--(-0.3,-0.5);
\node at(-0.5,-0.5) {\small $m$};
\draw[color=black] (-0.5,-0.5) circle (0.2);
\draw [color = black, line width =1pt] (-1.3,0.5)--(-0.7,0.5);
\draw [color = black, line width =1pt](-0,0.5)--(-0.7,0.5);
\node [left] at(-1.3,0.5) {\small $u$};
\node [left] at(-1.3,-0.5) {\small $v$};
\draw [color = black, line width =1pt] (0 ,-0.5) arc (-90:90:0.5);
\end{tikzpicture}}.
$$
Thus we have 
\begin{align*}&
\raisebox{-.30in}{
\begin{tikzpicture}
\tikzset{->-/.style=
{decoration={markings,mark=at position #1 with
{\arrow{latex}}},postaction={decorate}}}
\draw [line width =1.5pt,decoration={markings, mark=at position 0.91 with {\arrow{>}}},postaction={decorate}] (0.7,-1)--(0.7,1);
\draw [line width =1pt,decoration={markings, mark=at position 0.5 with {\arrow{>}}},postaction={decorate}] (-1.3,-0.5)--(-0.7,-0.5);
\draw [color = black, line width =1pt](-0,-0.5)--(-0.3,-0.5);
\node at(-0.5,-0.5) {\small $m$};
\draw[color=black] (-0.5,-0.5) circle (0.2);
\draw [color = black, line width =1pt] (-1.3,0.5)--(-0.7,0.5);
\draw [color = black, line width =1pt](-0,0.5)--(-0.7,0.5);
\draw [color = black, line width =1pt] (0 ,-0.5) arc (-90:90:0.5);
\end{tikzpicture}}
=
\sum_{1\leq u,v\leq n}
\varepsilon(\raisebox{-.30in}{
\begin{tikzpicture}
\tikzset{->-/.style=
{decoration={markings,mark=at position #1 with
{\arrow{latex}}},postaction={decorate}}}
\draw [line width =1.5pt,decoration={markings, mark=at position 0.91 with {\arrow{>}}},postaction={decorate}] (-1.3,-1)--(-1.3,1);
\draw [line width =1.5pt,decoration={markings, mark=at position 0.91 with {\arrow{>}}},postaction={decorate}] (0.7,-1)--(0.7,1);
\draw [line width =1pt,decoration={markings, mark=at position 0.5 with {\arrow{>}}},postaction={decorate}] (-1.3,-0.5)--(-0.7,-0.5);
\draw [color = black, line width =1pt](-0,-0.5)--(-0.3,-0.5);
\node at(-0.5,-0.5) {\small $m$};
\draw[color=black] (-0.5,-0.5) circle (0.2);
\draw [color = black, line width =1pt] (-1.3,0.5)--(-0.7,0.5);
\draw [color = black, line width =1pt](-0,0.5)--(-0.7,0.5);
\node [left] at(-1.3,0.5) {\small $u$};
\node [left] at(-1.3,-0.5) {\small $v$};
\draw [color = black, line width =1pt] (0 ,-0.5) arc (-90:90:0.5);
\end{tikzpicture}})\,
\raisebox{-.30in}{
\begin{tikzpicture}
\tikzset{->-/.style=
{decoration={markings,mark=at position #1 with
{\arrow{latex}}},postaction={decorate}}}
\draw [line width =1.5pt,decoration={markings, mark=at position 0.91 with {\arrow{>}}},postaction={decorate}] (0,-1)--(0,1);
\draw [line width =1pt,decoration={markings, mark=at position 0.5 with {\arrow{>}}},postaction={decorate}] (-1.3,-0.5)--(-0.7,-0.5);
\draw [color = black, line width =1pt](-0,-0.5)--(-0.3,-0.5);
\node at(-0.5,-0.5) {\small $m$};
\node [right] at(-0,-0.5) {\small $v$};
\draw[color=black] (-0.5,-0.5) circle (0.2);
\draw [line width =1pt,decoration={markings, mark=at position 0.5 with {\arrow{<}}},postaction={decorate}] (-1.3,0.5)--(-0.7,0.5);
\draw [color = black, line width =1pt](-0,0.5)--(-0.3,0.5);
\node at(-0.5,0.5) {\small $m$};
\node [right] at(-0,0.5) {\small $u$};
\draw[color=black] (-0.5,0.5) circle (0.2);
\end{tikzpicture}}\\
=&
\sum_{1\leq u,v\leq n} \delta_{u,\bar{v}} (c_v)^{m}
\,
\raisebox{-.30in}{
\begin{tikzpicture}
\tikzset{->-/.style=
{decoration={markings,mark=at position #1 with
{\arrow{latex}}},postaction={decorate}}}
\draw [line width =1.5pt,decoration={markings, mark=at position 0.91 with {\arrow{>}}},postaction={decorate}] (0,-1)--(0,1);
\draw [line width =1pt,decoration={markings, mark=at position 0.5 with {\arrow{>}}},postaction={decorate}] (-1.3,-0.5)--(-0.7,-0.5);
\draw [color = black, line width =1pt](-0,-0.5)--(-0.3,-0.5);
\node at(-0.5,-0.5) {\small $m$};
\node [right] at(-0,-0.5) {\small $v$};
\draw[color=black] (-0.5,-0.5) circle (0.2);
\draw [line width =1pt,decoration={markings, mark=at position 0.5 with {\arrow{<}}},postaction={decorate}] (-1.3,0.5)--(-0.7,0.5);
\draw [color = black, line width =1pt](-0,0.5)--(-0.3,0.5);
\node at(-0.5,0.5) {\small $m$};
\node [right] at(-0,0.5) {\small $u$};
\draw[color=black] (-0.5,0.5) circle (0.2);
\end{tikzpicture}}=
\sum_{1\leq v\leq n} (-1)^{\bar{v}+1}
\,
\raisebox{-.30in}{
\begin{tikzpicture}
\tikzset{->-/.style=
{decoration={markings,mark=at position #1 with
{\arrow{latex}}},postaction={decorate}}}
\draw [line width =1.5pt,decoration={markings, mark=at position 0.91 with {\arrow{>}}},postaction={decorate}] (0,-1)--(0,1);
\draw [line width =1pt,decoration={markings, mark=at position 0.5 with {\arrow{>}}},postaction={decorate}] (-1.3,-0.5)--(-0.7,-0.5);
\draw [color = black, line width =1pt](-0,-0.5)--(-0.3,-0.5);
\node at(-0.5,-0.5) {\small $m$};
\node [right] at(-0,-0.5) {\small $v$};
\draw[color=black] (-0.5,-0.5) circle (0.2);
\draw [line width =1pt,decoration={markings, mark=at position 0.5 with {\arrow{<}}},postaction={decorate}] (-1.3,0.5)--(-0.7,0.5);
\draw [color = black, line width =1pt](-0,0.5)--(-0.3,0.5);
\node at(-0.5,0.5) {\small $m$};
\node [right] at(-0,0.5) {\small $\bar{v}$};
\draw[color=black] (-0.5,0.5) circle (0.2);
\end{tikzpicture}}
\end{align*}
where the second equality is because of   Lemma \ref{lllm7.17} and relation (\ref{wzh.six}).
\end{proof}

\begin{lemma}\label{lll7.20}

In $S_n(M,N,v)$, we have 
\begin{equation*}
\raisebox{-.10in}{
\begin{tikzpicture}
\tikzset{->-/.style=
{decoration={markings,mark=at position #1 with
{\arrow{latex}}},postaction={decorate}}}
\draw [color = black, line width =1pt](-1,0)--(-0.25,0);
\draw [color = black, line width =1pt](0,0)--(0.6,0);
\draw [color = black, line width =1pt](1,0)--(1.5,0);
\node at(0.8,0) {\small $m$};
\draw[color=black] (0.8,0) circle (0.2);
\draw [color = black, line width =1pt] (0.166 ,0.08) arc (-37:270:0.2);
\end{tikzpicture}}=
\raisebox{-.10in}{
\begin{tikzpicture}
\tikzset{->-/.style=
{decoration={markings,mark=at position #1 with
{\arrow{latex}}},postaction={decorate}}}
\draw [color = black, line width =1pt](-1,0)--(0,0);
\draw [color = black, line width =1pt](0.23,0)--(0.6,0);
\draw [color = black, line width =1pt](1,0)--(1.5,0);
\node at(0.8,0) {\small $m$};
\draw[color=black] (0.8,0) circle (0.2);
\draw [color = black, line width =1pt] (0 ,0) arc (-90:215:0.2);
\end{tikzpicture}}
= d_n
\raisebox{-.10in}{
\begin{tikzpicture}
\tikzset{->-/.style=
{decoration={markings,mark=at position #1 with
{\arrow{latex}}},postaction={decorate}}}
\draw [color = black, line width =1pt](-0.3,0)--(0.5,0);
\draw [color = black, line width =1pt](0.23,0)--(0.6,0);
\draw [color = black, line width =1pt](1,0)--(1.5,0);
\node at(0.8,0) {\small $m$};
\draw[color=black] (0.8,0) circle (0.2);
\end{tikzpicture}}
\end{equation*}
where  
$\begin{tikzpicture}
\tikzset{->-/.style=
{decoration={markings,mark=at position #1 with
{\arrow{latex}}},postaction={decorate}}}
\draw [color = black, line width =1pt](0.8,0) --(1.2,0);
\draw [color = black, line width =1pt](0,0) --(0.4,0);
%
\node at(0.6,0) {\small $m$};
\draw[color=black] (0.6,0) circle(0.2);
\end{tikzpicture}$ is part of  $m$ parallel copies of some stated framed oriented boundary arc. 
\end{lemma}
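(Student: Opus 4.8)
The statement to prove is Lemma \ref{lll7.20}: in $S_n(M,N,v)$, a kink (positive or negative curl) on $m$ parallel copies of a stated framed oriented boundary arc can be absorbed at the cost of a factor $d_n = (-1)^{n-1}$.

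The plan is to reduce to the bigon as usual and then compute. First I would invoke functoriality (as in Lemma \ref{lllm7.17}, Corollary \ref{ccc7.20}, etc.) to assume that a small tubular neighborhood of the relevant arc together with the kink is the thickening of a bigon, with the arc identified with the core; then $S_n(M,N,v)$ can be replaced by $S_n(\fB,v) \cong O_q(SLn)$ via Theorem \ref{t.Hopf}. Under this identification the kinked $m$-parallel strand corresponds to applying the full-twist operator to $(b_{i,j})^{(m)}$, and by relation \eqref{w.twist} a single strand gets multiplied by $t$ under a positive curl. So the core computation is: under the full twist, the $m$ parallel copies pick up a factor that I must show equals $t^{?}$ times a power of $q$ that collapses to $d_n$.

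The key steps, in order, would be: (1) set up the bigon reduction and express the kink as the composition of a half-twist $H$ (or $\bar H$) with itself, using that $H^2$ is the full twist; (2) use Lemma \ref{lllm7.17}, which already shows that a half-twist $H$ or $\bar H$ acts trivially on $m$ parallel copies of a stated framed oriented boundary arc when there is a marking present — that gives the first two equalities in the statement immediately (the picture with the $\bar H$ box equals the one without, equals the one with $H$); (3) for the last equality, where the kink has no marking to absorb it into, compute the effect of the full positive (or negative) curl on $(b_{i,j})^{(m)}$ directly. For one strand, relation \eqref{w.twist} gives the scalar $t = (-1)^{n-1}q^{(n^2-1)/n}$; but when we take $m$ parallel copies the $q$-part must be tracked carefully since the framing changes are along the framing direction and the strands are mutually unlinked. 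Using relation \eqref{w.cross} or the height-exchange relation \eqref{wzh.eight} to disentangle the $m$ strands passing through the curl, together with $q^m = 1$ (from the running convention that $v$ is a primitive $m$-th root of unity with $m$ coprime to $2n$, so $q = v^{2n}$ has $q^m=1$ and more precisely $q^{1/n} = v^2$ with $(q^{1/n})^m=1$), the accumulated power of $q$ should reduce to $1$, leaving exactly $((-1)^{n-1})^m$. Since $m$ is coprime to $2n$, $m$ is odd, so $((-1)^{n-1})^m = (-1)^{n-1} = d_n$.

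The main obstacle I anticipate is the bookkeeping in step (3): carefully counting the power of $q$ produced when the $m$ parallel strands slide through the curl, making sure the self-crossings and mutual crossings are handled via the correct relations (\eqref{w.cross} for interior crossings, \eqref{w.twist} for the framing twist), and confirming that modulo $q^m = 1$ everything except the sign cancels. A clean way to sidestep much of this is to observe that in $O_q(SLn)$ the element $(b_{i,j})^{(m)}$ — equivalently $(u_{i,j})^m$ — is central (Lemma \ref{mmm2.4}(c)) and behaves like a grouplike-type element under the relevant structure maps (Lemmas \ref{llll7.14}, \ref{llmm7.14}), so the full twist acts on it by the scalar it acts by on the "classical" strand, namely $t^{n/2}/\text{something}$ collapsing to $d_n$; I would phrase the final computation using \eqref{e.t} and \eqref{e.prodc} together with $q^m=1$ to extract precisely $d_n$. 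Given that Lemma \ref{lllm7.17} does most of the work for the first two equalities, the proof should ultimately be short, citing equation (51) of \cite{le2021stated}, Lemma \ref{lmmm7.1}, relation \eqref{w.twist}, and the primitivity of $v$.
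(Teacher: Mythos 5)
You have the right raw material---Lemma \ref{lllm7.17} for absorbing a half-twist into a marking, relation \eqref{w.twist} for the scalar $t$, and the root-of-unity identity $t^m=d_n$ (your arithmetic is correct: $q^{m/n}=1$ and $m$ odd give $t^{\pm m}=(-1)^{(n-1)m}=d_n$). But the geometric decomposition that drives the proof is misstated, and as a result your steps (1)--(2) do not fit Lemma \ref{lll7.20} as phrased.

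A Reidemeister-I curl is the ribbon twist, not a braid half-twist; it is not $H$, $\bar H$, or $H^2$. The cabling identity you need is: a positive curl on the $m$-parallel cable is isotopic to the full positive twist $F=H^2$ of the $m$ strands among themselves \emph{composed with a positive curl on each of the $m$ individual strands}. Relation \eqref{w.twist} then turns the $m$ per-strand curls into the scalar $t^m$; sliding $F$ along the cable to a marking and applying Lemma \ref{lllm7.17} kills the full twist; and $t^{\pm m}=d_n$ finishes. That is the paper's short proof. Your step (1) drops the $m$ per-strand curls (so you would get no factor $t^m$ at all), and your step (2) misreads Lemma \ref{lll7.20}: its first two pictures are a positive and a negative curl, not $\bar H$ and $H$ boxes, so Lemma \ref{lllm7.17} does not give the first two equalities ``immediately''---the three-way equality follows only once the cabling identity and $t^{\pm m}=d_n$ are in place. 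Your step (3) as written (disentangling the strands through the curl via \eqref{w.cross}/\eqref{wzh.eight}) would be a painful brute-force accounting with no clean endpoint, and the centrality/``$t^{n/2}$'' shortcut conflates the ribbon twist with the quantum-determinant constant and is not a proof. Stating the cabling identity correctly is the missing idea.
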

\begin{proof}
Since a positive kink for the $m$ parallel copies of some framed line is isotopic to combining a full positive twist and giving a positive kink for each parallel framed line, then we have 
$$
\raisebox{-.10in}{
\begin{tikzpicture}
\tikzset{->-/.style=
{decoration={markings,mark=at position #1 with
{\arrow{latex}}},postaction={decorate}}}
\draw [color = black, line width =1pt](-1,0)--(-0.25,0);
\draw [color = black, line width =1pt](0,0)--(0.6,0);
\draw [color = black, line width =1pt](1,0)--(1.5,0);
\node at(0.8,0) {\small $m$};
\draw[color=black] (0.8,0) circle (0.2);
\draw [color = black, line width =1pt] (0.166 ,0.08) arc (-37:270:0.2);
\end{tikzpicture}} =
t^{m}
\raisebox{-.10in}{
\begin{tikzpicture}
\tikzset{->-/.style=
{decoration={markings,mark=at position #1 with
{\arrow{latex}}},postaction={decorate}}}
\draw [color = black, line width =1pt](-1,0)--(-0.2,0);
\draw [color = black, line width =1pt](0.2,0)--(0.6,0);
\draw [color = black, line width =1pt](1,0)--(1.5,0);
\node at(0.8,0) {\small $m$};
\node at(0,0) {\small $F$};
\draw[color=black] (0.8,0) circle (0.2);
\draw (-0.2,-0.3) rectangle (0.2,0.3);
\end{tikzpicture}}=
d_n
\raisebox{-.10in}{
\begin{tikzpicture}
\tikzset{->-/.style=
{decoration={markings,mark=at position #1 with
{\arrow{latex}}},postaction={decorate}}}
\draw [color = black, line width =1pt](-0.3,0)--(0.5,0);
\draw [color = black, line width =1pt](0.23,0)--(0.6,0);
\draw [color = black, line width =1pt](1,0)--(1.5,0);
\node at(0.8,0) {\small $m$};
\draw[color=black] (0.8,0) circle (0.2);
\end{tikzpicture}}
$$
where the first equality is from relation (\ref{w.twist}) and the second equality is from Lemma \ref{lllm7.17}.

Similarly we can show $\raisebox{-.10in}{
\begin{tikzpicture}
\tikzset{->-/.style=
{decoration={markings,mark=at position #1 with
{\arrow{latex}}},postaction={decorate}}}
\draw [color = black, line width =1pt](-1,0)--(0,0);
\draw [color = black, line width =1pt](0.23,0)--(0.6,0);
\draw [color = black, line width =1pt](1,0)--(1.5,0);
\node at(0.8,0) {\small $m$};
\draw[color=black] (0.8,0) circle (0.2);
\draw [color = black, line width =1pt] (0 ,0) arc (-90:215:0.2);
\end{tikzpicture}}
= d_n
\raisebox{-.10in}{
\begin{tikzpicture}
\tikzset{->-/.style=
{decoration={markings,mark=at position #1 with
{\arrow{latex}}},postaction={decorate}}}
\draw [color = black, line width =1pt](-0.3,0)--(0.5,0);
\draw [color = black, line width =1pt](0.23,0)--(0.6,0);
\draw [color = black, line width =1pt](1,0)--(1.5,0);
\node at(0.8,0) {\small $m$};
\draw[color=black] (0.8,0) circle (0.2);
\end{tikzpicture}}$\,.
\end{proof}

\def \SM {S_n(M,N,v)}

\def \End {\text{End}(\SM)}
\def \Endm {\text{End}(\SM)^{(m)}}

\subsection{An action of $S_n(M,N,1)$  on $S_n(M,N,v)$}\label{subb7.2}

Let $(M,N)$ be a marked three manifold. Recall that
for $l = \cup_{\alpha}\alpha$, where each $\alpha$ is a stated framed oriented boundary arc in $(M,N)$, we define $l^{(m)}$ to be $ \cup_{\alpha}\alpha^{(m)}$.

\begin{lemma}\label{lmmm7.10}
Let  $l$ be a stated $n$-web consisting of stated framed oriented boundary arcs. Let $T_1,T_2$ be any two isopotic stated $n$-webs such that $T_1\cap l = T_2\cap l =\emptyset$. Then we have 
$$l^{(m)}\cup T_1 = l^{(m)}\cup T_2\in S_n(M,N,v).$$
\end{lemma}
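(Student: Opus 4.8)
The statement asserts that taking $m$ parallel copies of each stated framed oriented boundary arc in $l$ produces a stated $n$-web that is ``transparent'' to any disjoint isotopic perturbation $T_1 \leadsto T_2$. The plan is to reduce, via a standard isotopy argument, to the case where $T_2$ is obtained from $T_1$ by a single elementary move that passes one strand of $T_1$ across one strand of $l^{(m)}$, and then to show that such a crossing move is an identity in $S_n(M,N,v)$ using the local relations established earlier in this section.

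First I would decompose the isotopy from $T_1$ to $T_2$ (keeping $l$ fixed, and keeping $T_i$ disjoint from $l$ throughout except at finitely many generic moments) into a finite sequence of local moves. Since $l^{(m)}$ and the $T_i$ are disjoint, the only nontrivial moves to analyze are those in which a strand of $T_1$ crosses one of the $m$-fold parallel bundles making up $l^{(m)}$; the moves internal to the complement of a neighborhood of $l^{(m)}$ are already identities by the definition of $S_n(M,N,v)$. So it suffices to prove: if $T_2$ differs from $T_1$ by pushing a single arc of $T_1$ (with arbitrary orientation and states) through a neighborhood of one bundle of $m$ parallel copies of an arc component $\alpha$ of $l$, then $l^{(m)}\cup T_1 = l^{(m)}\cup T_2$.

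Second, I would localize using functoriality: a small tubular neighborhood of the relevant piece of $\alpha^{(m)}$ together with the crossing strand is the image of an embedding of a model marked three manifold, so by functoriality it is enough to check the crossing identity in that model. There the strand of $T_1$ crosses the bundle $\alpha^{(m)}$ either entirely on one side or, after an isotopy, one parallel copy at a time; sliding one parallel copy of $\alpha$ past one strand is governed by the skein relations, and iterating, the whole bundle slides past. The algebraic content is exactly the ``$m$ parallel copies commute with a transverse strand'' statements already proved: when the crossing is with an interior strand one uses relation \eqref{w.cross}, and when it occurs near the marking $e$ one uses Corollaries \ref{corr7.3} and \ref{corr7.6} (together with Lemmas \ref{lmmm7.2}, \ref{llmm7.3}, \ref{lmmm7.5}, \ref{lmmm7.7}), which say precisely that the half-twist/over-under ambiguity for $m$ parallel copies disappears when $v$ is a primitive $m$-th root of unity with $m$ coprime to $2n$. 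The relevant $q^{\pm m/n}$ and $q^{m}-q^{-m}$ coefficients all collapse to $1$ and $0$ respectively under this hypothesis, so the two sides of each elementary crossing move agree exactly.

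The main obstacle I anticipate is bookkeeping rather than conceptual: making the reduction to elementary moves precise (ensuring that during the isotopy $T_1 \leadsto T_2$ the only interactions with $l$ are the controlled crossings above, and that each such crossing can be isotoped into one of the local models to which the earlier lemmas apply), and then checking that \emph{all} orientation/state configurations of the crossing strand are covered by the cases in Corollaries \ref{corr7.3} and \ref{corr7.6}. Once that case analysis is assembled, the identity $l^{(m)}\cup T_1 = l^{(m)}\cup T_2$ follows by composing the elementary equalities along the chosen decomposition of the isotopy.
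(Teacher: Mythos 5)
Your proposal is correct and follows the same route as the paper, which simply invokes Corollaries~\ref{corr7.3} and~\ref{corr7.6}: decompose the isotopy into elementary moves, localize by functoriality, and observe that the transparency of $m$ parallel copies under a transverse strand (in the interior) and under height-exchange at a marking collapse to identities when $v$ is a primitive $m$-th root of unity with $m$ coprime to $2n$. One small correction in your case split: the interior-crossing move is handled by Corollary~\ref{corr7.3} itself (relation~\eqref{w.cross} alone is not sufficient — it is the content of Lemmas~\ref{lmmm7.2} and~\ref{llmm7.3} feeding into that corollary that makes the $q^{\pm m/n}$ and $q^m-q^{-m}$ coefficients disappear), while Corollary~\ref{corr7.6} is what governs the height exchanges near the marking; as stated you assign relation~\eqref{w.cross} to the interior case and both corollaries to the marking case, which is a misattribution even though all the needed ingredients are cited.
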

\begin{proof}
Corollaries \ref{corr7.3}, \ref{corr7.6}.
\end{proof}

For any stated $n$-web $l$ consisting of stated framed oriented booundary arcs,
 we try define a linear map $F_{l}:\SM\rightarrow \SM$. For any stated $n$-web $\alpha$, first we isotope $\alpha$ such that $\alpha\cap l =\emptyset$, then define $F_l(\alpha) = l^{(m)}\cup\alpha\in\SM.$ From Lemma \ref{lmmm7.10}, we know $F_l(\alpha)$ is independent of how we isotope $\alpha$. Thus $F_l$ is well-defined on the set of isotopy classes of stated $n$-webs. Since all the skein relations, used to define stated $SL(n)$-skein module are local, $F_l$ preserves all these relations. Thus $F_l$ is a linear map from $\SM$ to $\SM$.
We regard empty $n$-web as a stated $n$-web consisting of stated framed oriented boundary arcs, then 
$F_{\emptyset} = Id_{\SM}$.

 Recall that
 we use 
$\End$ to denote the set of linear maps from $\SM$ to itself, and $\End$ has an obvious algebra structure. Then $F_l\in\End$ for stated $n$-web $l$ consisting of stated framed oriented boundary arcs. 

Let $\alpha,\beta$ be any two stated $n$-webs both consisting of stated  framed oriented boundary arcs, then we 
have $F_{\alpha} F_{\beta} = F_{\beta} F_{\alpha} = F_{\alpha\cup\beta}$ from Corollaries \ref{corr7.3}, \ref{corr7.6}, where $\alpha\cup\beta$ is the disjoint union. Let $\Endm$ be a subvector space of $\End$ linearly spanned by $F_l$ for all stated $n$-webs $l$ consisting of  stated framed oriented booundary arcs. From the above discussion, we know $\Endm$ is a commutative subalgebra of $\End$.

\begin{lemma}\label{mmmm7.25}
Let $\alpha$ be any framed oriented boundary arc in $(M,N)$. In $\Endm$ we have
\begin{align*}
&\sum_{\sigma\in S_n} (-1)^{\ell(\sigma)} F_{\alpha_{1,\sigma(1)}}F_{\alpha_{2,\sigma(2)}}\dots F_{ \alpha_{n,\sigma(n)}}\\=&
\sum_{\sigma\in S_n} (-1)^{\ell(\sigma)} F_{\alpha_{\sigma(1),1}}F_{\alpha_{\sigma(2),2}}\dots F_{\alpha_{\sigma(n),n})}= 1.
\end{align*}
\end{lemma}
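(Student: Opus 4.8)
\textbf{Proof proposal for Lemma \ref{mmmm7.25}.}

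The plan is to reduce the identity to the bigon case, where it becomes Lemma \ref{llll7.14}, via functoriality. First I would observe that all the operators $F_{\alpha_{i,j}}$ appearing here are built from parallel copies of the \emph{same} underlying framed oriented boundary arc $\alpha$ (only the states at the two endpoints vary), and that by the definition of $\Endm$ and the commutativity established just above, the left-hand side equals $F_{l}$ where $l = \sum_{\sigma\in S_n}(-1)^{\ell(\sigma)} (\alpha_{1,\sigma(1)})^{(m)}\cup(\alpha_{2,\sigma(2)})^{(m)}\cup\dots\cup(\alpha_{n,\sigma(n)})^{(m)}$, understood as a linear combination of stated $n$-webs in a small tubular neighborhood of $\alpha$. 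So it suffices to prove that this particular element of $S_n(M,N,v)$, supported near $\alpha$, acts as the identity on every stated $n$-web $T$ disjoint from it; equivalently, by Lemma \ref{lmmm7.10}, that the element $F_{l}(\emptyset) = l$ equals the empty $n$-web in $S_n(M,N,v)$.

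Next I would invoke functoriality: a small open tubular neighborhood $U$ of $\alpha$ in $M$ is isomorphic, as a marked three manifold, to the thickening of the bigon $\fB$, with $\alpha$ identified with the core of the bigon and its two endpoints landing on the two boundary edges $e_l, e_r$. Under the isomorphism $S_n(U,v)\simeq S_n(\fB,v)$ and the identification $S_n(\fB,v)\simeq O_q(SLn)$ of Theorem \ref{t.Hopf}, the stated arc $\alpha_{i,j}$ corresponds to $b_{i,j}$. Hence the combination $\sum_{\sigma\in S_n}(-1)^{\ell(\sigma)}(b_{1,\sigma(1)})^{(m)}(b_{2,\sigma(2)})^{(m)}\dots(b_{n,\sigma(n)})^{(m)}$ is exactly the left-hand side of Lemma \ref{llll7.14}, which states it equals $1$, i.e. the empty $n$-web in $S_n(\fB,v)$. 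Pushing this identity forward along the embedding $U\hookrightarrow M$ gives $l = \emptyset$ in $S_n(M,N,v)$, hence $F_{l} = F_{\emptyset} = Id_{\SM}$. The second equality (with the permutation acting on the first index) follows in the same way from the second half of Lemma \ref{llll7.14}.

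I expect the main subtlety to be a bookkeeping one rather than a conceptual one: namely checking carefully that stacking the operators $F_{\alpha_{i,\sigma(i)}}$ in $\Endm$ really does correspond, inside the bigon neighborhood, to the \emph{product} $(b_{i,\sigma(i)})^{(m)}$ in the algebra $S_n(\fB,v)$ with the correct height order, and that the $m$ parallel copies of each arc interleave correctly when several operators are composed. This is where the commutativity relations $F_\alpha F_\beta = F_\beta F_\alpha = F_{\alpha\cup\beta}$ (Corollaries \ref{corr7.3}, \ref{corr7.6}) do the work: they ensure that the order of composition and the precise relative heights of the various bundles of $m$ parallel copies do not matter, so that the composite operator is indeed $F$ applied to the disjoint union of all the $(\alpha_{i,\sigma(i)})^{(m)}$, which in the bigon model is the ordered product appearing in Lemma \ref{llll7.14}. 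Once that identification is in place the result is immediate.
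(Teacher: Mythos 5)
Your argument is essentially the same as the paper's: reduce via functoriality to a tubular neighborhood of $\alpha$, identified with the thickened bigon, and then invoke Lemma \ref{llll7.14}. The only point worth tightening is the reduction step: the claim that it ``suffices'' to show $l = \emptyset$ in $S_n(M,N,v)$ overstates a logical equivalence --- $l = \emptyset$ as skein elements does not by itself yield the operator identity $F_l = \mathrm{Id}$, since the chain of skein relations taking $l$ to $\emptyset$ could a priori interact with a disjoint $\beta$; the paper instead fixes a stated $n$-web $\beta$ first, isotopes everything into a bigon neighborhood $U$ disjoint from $\beta$, and applies Lemma \ref{llll7.14} locally inside $U$ (which is exactly what your bigon computation delivers), so your proof is correct once this step is phrased that way.
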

\begin{proof}
Let $\beta$ be a stated $n$-web that has no intersection with $\alpha$.
A small tubular open neighborhood $U$ of $\alpha$ in $M$ is isomorphic
to the thickening of the bigon, with $\alpha$ being identified with the core of
the bigon and $U\cap \beta=\emptyset$. We isotope $\alpha_{i,j}$ inside $U$ such that there is no intersection among them. Then
\begin{align*}
&\sum_{\sigma\in S_n} (-1)^{\ell(\sigma)} F_{\alpha_{1,\sigma(1)}}F_{\alpha_{2,\sigma(2)}}\dots F_{ \alpha_{n,\sigma(n)}}(\beta)\\=&
\sum_{\sigma\in S_n} (-1)^{\ell(\sigma)} (\alpha_{1,\sigma(1)})^{(m)}\cup(\alpha_{2,\sigma(2)})^{(m)}\cup\dots\cup (\alpha_{n,\sigma(n)})^{(m)}\cup\beta= \beta.
\end{align*}
where the last equality is because of functoriality and
 Lemma \ref{llll7.14}. 
Thus we have $$\sum_{\sigma\in S_n} (-1)^{\ell(\sigma)} F_{\alpha_{1,\sigma(1)}}F_{\alpha_{2,\sigma(2)}}\dots F_{ \alpha_{n,\sigma(n)}}= 1.$$ Similarly we can prove 
$
\sum_{\sigma\in S_n} (-1)^{\ell(\sigma)} F_{\alpha_{\sigma(1),1}}F_{\alpha_{\sigma(2),2}}\dots F_{\alpha_{\sigma(n),n})}= 1$.
\end{proof}

\begin{lemma}\label{llll7.25}
For any two stated  framed oriented  boundary arcs $\alpha_1,\alpha_2$, suppose $s(\alpha_1(0))
= s(\alpha_2(0))$, $s(\alpha_1(1))
= s(\alpha_2(1))$, and $[\alpha_1] = [\alpha_2]\in
\pi_1(M,N)$. 

(a) If $h(\widetilde{\alpha_1}) = h(\widetilde{\alpha_2}) $, then $F_{\alpha_1} =F_{\alpha_2} \in \Endm.$

(b) If $h(\widetilde{\alpha_1}) \neq h(\widetilde{\alpha_2}) $, then $F_{\alpha_1} = d_n F_{\alpha_2} \in \Endm.$
\end{lemma}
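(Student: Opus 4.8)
The plan is to reduce the statement to the analogous facts about single boundary arcs that were already established for $S_n(M,N,1)$, transported through the endomorphism algebra $\Endm$. First I would recall from Remark \ref{rre5.1} that in $S_n(M,N,1)$ two stated framed oriented boundary arcs $\alpha_1,\alpha_2$ with matching endpoint data and $[\alpha_1]=[\alpha_2]\in\pi_1(M,N)$ satisfy $\alpha_1=\alpha_2$ whenever $h(\widetilde{\alpha_1})=h(\widetilde{\alpha_2})$, and more generally $(\alpha_i)_{k,t}=d_n^{h(\widetilde{\alpha_i})}S^{[\alpha_i]}_{k,t}$, so the two arcs differ only by the scalar $d_n^{h(\widetilde{\alpha_1})-h(\widetilde{\alpha_2})}$. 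The key point is that this deduction used only relations \eqref{w.cross}, \eqref{w.twist}, \eqref{wzh.eight} together with the commutativity of disjoint union (Corollary \ref{cccc3.2}), i.e. facts which hold \emph{locally} near the arcs. What I need is the $m$-parallel-copy version of each of those ingredients valid in $S_n(M,N,v)$, and these are exactly Corollaries \ref{corr7.3}, \ref{corr7.6} (the parallel-copy crossing and height-exchange relations), Lemma \ref{mmmm7.25}'s building blocks, and Lemma \ref{lll7.20} (the parallel-copy twist relation, giving the $t^m=d_n$ factor).

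The concrete steps are as follows. First I would show that isotoping $\alpha$ across a crossing or through a full twist does not change $F_\alpha$: since $F_\alpha(\beta)=\alpha^{(m)}\cup\beta$ and the parallel-copy relations of Corollaries \ref{corr7.3}, \ref{corr7.6} are local, any isotopy of $\alpha$ (rel endpoints, allowing it to pass over/under other strands of $\beta$ or itself) yields the same element of $\End$, and similarly Lemma \ref{lll7.20} shows a positive or negative kink of $\alpha^{(m)}$ equals $d_n$ times the un-kinked $\alpha^{(m)}$. Then, given $\alpha_1,\alpha_2$ as in the hypothesis, I pick generic representatives with only transverse self-intersections and argue that $[\alpha_1]=[\alpha_2]$ in $\pi_1(M,N)$ together with the matching endpoint states lets me pass from $\alpha_1^{(m)}$ to $\alpha_2^{(m)}$ through a sequence of such moves; the framings differ by an integral number of full twists, each contributing a factor $t^m=d_n$ by Lemma \ref{lll7.20}, and the parity of that number is recorded by $h(\widetilde{\alpha_1})-h(\widetilde{\alpha_2})\pmod 2$ since $h$ sends $\vartheta$ to $1$. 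So $F_{\alpha_1}=d_n^{h(\widetilde{\alpha_1})-h(\widetilde{\alpha_2})}F_{\alpha_2}$, which is precisely (a) when the exponent is $0$ (noting $d_n^2=1$) and (b) when it is odd.

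The main obstacle I anticipate is bookkeeping the framing/spin-structure factor rigorously: I must make sure the ``number of full twists separating $\alpha_1$ from $\alpha_2$'' is genuinely well-defined modulo $2$ and that it is computed by $h(\widetilde{\alpha_1})-h(\widetilde{\alpha_2})$. The clean way is to mimic the $v=1$ argument in Remark \ref{rre5.1}: since any two framed representatives of the same class in $\pi_1(M,N)$ with prescribed endpoint framings (the velocity vectors of $N$) are related by adding full twists, and since $h$ is defined on $H_1(UM,\tilde N)$ with $h(\vartheta_Y)=1$, the class $h(\widetilde{\alpha_1})-h(\widetilde{\alpha_2})\in\mathbb{Z}_2$ is exactly this twist-count mod $2$; each full twist multiplies $\alpha^{(m)}$ by $t^m$, and $t^m=((-1)^{n-1})^m q^{m\frac{n^2-1}{n}}$, which equals $(-1)^{n-1}=d_n$ because $q^m=(v^{2n})^{m/?}$— more precisely $q^{m/n}=1$ under the standing hypothesis on $m$, so $q^{m(n^2-1)/n}=(q^{m/n})^{n^2-1}=1$ and the $m$ is coprime to $2$, forcing $(-1)^{(n-1)m}=(-1)^{n-1}$. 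I would present this computation of $t^m=d_n$ explicitly since it is short, and otherwise refer to the local-relations machinery already assembled in Lemmas \ref{lmmm7.1}--\ref{lll7.20} for the topological moves.
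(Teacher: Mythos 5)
Your proposal is correct and follows essentially the same approach as the paper: invoke the standard fact that homotopic embeddings of a compact graph differ by crossing changes and isotopy, use the parallel-copy local relations (Corollaries \ref{corr7.3}, \ref{corr7.6}, \ref{ccc7.20}, \ref{ccc7.21}) to transport $(\alpha_1)^{(m)}\cup\beta$ to $(\alpha_3)^{(m)}\cup\beta$ where $\alpha_3$ differs from $\alpha_2$ only by kinks, and then count kinks mod $2$ via $h$ while applying Lemma \ref{lll7.20} for the $d_n$ factor. Your side computation that $t^m = d_n$ is correct but redundant, as it is already established inside the proof of Lemma \ref{lll7.20}, which you can simply cite.
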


\begin{proof}
Here we  use a  standard fact that two embeddings of a compact graph in $M$ are homotopic if and only if  one can be obtained from the other by crossing changes and   isotopy \cite{skeingroup}.
Let $\beta$ be a stated $n$-web that has no intersection with $\alpha_1\cup\alpha_2$.
 From the above standard fact and Corollaries \ref{corr7.3}, \ref{corr7.6}, \ref{ccc7.20}, \ref{ccc7.21}, we know  $(\alpha_1)^{(m)}\cup\beta =(\alpha_3)^{(m)}\cup\beta$ where $\alpha_3$ is exactly $\alpha_2$ but with different framing and $h(\widetilde{\alpha_1}) = h(\widetilde{\alpha_3})$. Thus $\alpha_3$ can be obtained from $\alpha_2$ by adding a number of kinks, we suppose this number is $\lambda$. Then we have $(\alpha_1)^{(m)}\cup\beta=(\alpha_3)^{(m)}\cup\beta = d_n^{\lambda} (\alpha_2)^{(m)}\cup\beta$ from Lemma \ref{lll7.20}.

If $h(\widetilde{\alpha_1}) = h(\widetilde{\alpha_2}) $, then  $h(\widetilde{\alpha_2}) = h(\widetilde{\alpha_3}) $. Thus we have $\lambda$ is even, then 
$$F_{\alpha_1}(\beta) =  (\alpha_1)^{(m)}\cup\beta= d_n^{\lambda} (\alpha_2)^{(m)} \cup\beta=(\alpha_2)^{(m)} \cup\beta = F_{\alpha_2}(\beta).$$

If $h(\widetilde{\alpha_1}) \neq h(\widetilde{\alpha_2}) $, then  $h(\widetilde{\alpha_2}) \neq h(\widetilde{\alpha_3}) $. Thus we have $\lambda$ is odd, then 
$$F_{\alpha_1}(\beta) =  (\alpha_1)^{(m)}\cup\beta= d_n^{\lambda} (\alpha_2)^{(m)} \cup\beta=d_n(\alpha_2)^{(m)} \cup\beta =d_n F_{\alpha_2}(\beta).$$
\end{proof}

\begin{rem}
An element $[\alpha] \in \pi_1(M,N)$ and $1\leq i,j\leq n$ uniquely determine an element in $\Endm$ in the following way: We choose a good representative $\alpha$ such that $\alpha(1)$ is higher than $\alpha(0)$ if $\alpha(0)$ and $\alpha(1)$ belong to the same component of $N$,  $\alpha$ only intersects $\partial M$ at it's endpoints, and $\alpha$ does not intersect itself. Then we give a framing to $\alpha$ respecting $N$, that is, the framing at endpoints are given by the velocity vectors of $N$. We denote this framed oriented boundary arc as $\hat{\alpha}$. We choose the framing such that $h(\widetilde{\hat{\alpha}}) = 0$, then we obtain an element 
$F_{\hat{\alpha}_{i,j}}\in\Endm$.
Suppose we choose a different good representative $\alpha^{'}$. We have $[\hat{\alpha}] = [\hat{\alpha^{'}}]\in \pi_1(M,N)$ and $h(\widetilde{\hat{\alpha^{'}}}) = h(\widetilde{\hat{\alpha}}) = 0$. Then $F_{\hat{\alpha}_{i,j}}
=F_{\hat{\alpha{'}}_{i,j}}
\in\Endm$ because of   Lemma \ref{llll7.25}.

We use $S^{[\alpha],m}_{i,j}$ to denote $F_{\hat{\alpha}_{i,j}}$, and use $S^{[\alpha],m}$ to denote an $n$ by $n$  matrix in $\Endm$ such that $(S^{[\alpha],m})_{i,j} = S^{[\alpha],m}_{i,j},1\leq i,j\leq n$.

Then for any stated framed oriented  boundary arc $\alpha_{i,j}$, we have $$F_{\alpha_{i,j}} =
d_n^{h(\tilde{\alpha})} S^{[\alpha],m}_{i,j}\in \Endm.$$
\end{rem}

Here we recall the definition for matrix $A$, $$A_{i,j} = (-1)^{i+1}\delta_{\overline{i},j}, 1\leq i,j\leq n.$$
And we have det$ A =1$ and $A^2 = d_n I$.

For any complex number $k\in\mathbb{C}$, we will use $k$ to denote $kId_{\SM}\in\End$ for simplicity.

\begin{proposition}\label{prop7.28}
(a) For any two elements $[\alpha],[\beta]\in \pi_1(M,N)$, if $[\beta][\alpha]$ makes sense, then 
$A S^{[\beta*\alpha],m} = A S^{[\beta],m} A S^{[\alpha],m}$.

(b) For any $[\eta]\in \pi_1(M,N)$, we have 
det$(S^{[\eta],m}) = 1$. Especially det$(A S^{[\eta],m}) = 1$.

(c) Suppose $[o]\in \pi_1(M,N)$ is the identity morphism for an object, then $S^{[o],m} = d_n A$.
Especially $A S^{[o],m} = I$.
\end{proposition}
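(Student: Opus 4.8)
\textbf{Proof strategy for Proposition \ref{prop7.28}.} The plan is to mirror the proof of Proposition \ref{prop5.2} almost verbatim, replacing each skein relation used there by its ``$m$-parallel-copy'' counterpart established in Section \ref{subb7}. The three assertions are genuinely just the three defining relations of $\Gamma_n(M)$ pulled back through the assignment $[\alpha]_{i,j}\mapsto (AS^{[\alpha],m})_{i,j}$, so the whole point is to check that the commutative algebra $\Endm$ carries the same local identities that $S_n(M,N,1)$ did. By the Remark immediately preceding the Proposition, $S^{[\alpha],m}_{i,j}=F_{\hat\alpha_{i,j}}$ where $\hat\alpha$ is a good representative with $h(\widetilde{\hat\alpha})=0$, and all the $F$'s commute in $\Endm$ by Corollaries \ref{corr7.3}, \ref{corr7.6}; this commutativity is what makes the matrix manipulations below legitimate.

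\textbf{Part (c): the trivial loop.} First I would treat (c), since it is used in (a). Take the constant good representative $[o]$; the corresponding $\hat{o}_{i,j}$ is, up to an unimportant part of $\partial M$, exactly the stated $n$-web appearing on the left of relation \eqref{wzh.six} with a trivial turn-back, living in a small tubular neighborhood of a point of $N$. Taking $m$ parallel copies and applying relation \eqref{wzh.six} to each of the $m$ strands in turn (equivalently, invoking Corollary \ref{ccc7.21} together with relation \eqref{wzh.six}), one gets $F_{\hat{o}_{i,j}}=d_n A_{i,j}\,\mathrm{Id}_{\SM}$, i.e. $S^{[o],m}=d_nA$. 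Then $AS^{[o],m}=d_nA^2=d_n\cdot d_n I=I$, using $A^2=d_nI$ and $d_n^2=1$.

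\textbf{Part (b): the determinant.} For (b), expand $\det(S^{[\eta],m})=\sum_{\sigma\in S_n}(-1)^{\ell(\sigma)}S^{[\eta],m}_{1,\sigma(1)}\cdots S^{[\eta],m}_{n,\sigma(n)}$ in $\Endm$; since $\Endm$ is commutative the product is unambiguous, and since each $S^{[\eta],m}_{i,j}=F_{\hat\eta_{i,j}}$ with $F_{\hat\eta_{i,\sigma(i)}}F_{\hat\eta_{i',\sigma(i')}}=F_{\hat\eta_{i,\sigma(i)}\cup\hat\eta_{i',\sigma(i')}}$, the sum equals $F_{l}$ where $l=\bigcup_i \hat\eta_{i,\sigma(i)}$ summed with signs — precisely the $m$-parallel-copy of the configuration from equation (54) of \cite{le2021stated}, realized via relation \eqref{wzh.five}. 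Here I would run exactly the computation from the proof of Proposition \ref{prop5.2}(b), but applied to $F_{(\cdot)}$ of the $m$-fold parallel webs: the key new input is Lemma \ref{llll7.14} (together with functoriality to localize to a bigon neighborhood of $\eta$), which says that the signed sum of the $m$-parallel products of bigon generators is $1$; composing everything with an arbitrary stated $n$-web $\beta$ disjoint from $\eta$ gives $\det(S^{[\eta],m})(\beta)=\beta$, hence $\det(S^{[\eta],m})=\mathrm{Id}_{\SM}=1$. Then $\det(AS^{[\eta],m})=\det A\cdot\det S^{[\eta],m}=1$.

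\textbf{Part (a): multiplicativity.} For (a), the identity to prove unwinds to $(S^{[\beta],m}AS^{[\alpha],m})_{i,j}=S^{[\beta*\alpha],m}_{i,j}$, i.e. $\sum_{k}(-1)^{k+1}F_{\hat\beta_{i,k}}F_{\hat\alpha_{\bar k,j}}=F_{\widehat{\beta*\alpha}_{\,i,j}}$. Choosing good representatives of $[\alpha],[\beta]$ that meet only at the shared endpoint and concatenating, $\hat\beta_{i,k}\cup\hat\alpha_{\bar k,j}$ is (isotopic to) $\widehat{\beta*\alpha}_{\,i,j}$ with an internal turn-back summed over $k$ with the state $\bar k$ — exactly the left side of relation \eqref{wzh.seven} inserted along the concatenation point. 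The new ingredient replacing the bare use of \eqref{wzh.seven} in Proposition \ref{prop5.2}(a) is Lemma \ref{mmm7.23}, which is the $m$-parallel-copy version of precisely this ``sum over $\bar k$ of two caps collapses to a single strand'' move; applying it (after localizing to a neighborhood of $\beta*\alpha$ by functoriality) and keeping track of the framing/height normalization $h=0$ — which is harmless by Lemma \ref{llll7.25}(a) since both representatives are taken with $h(\widetilde{\hat\alpha})=h(\widetilde{\hat\beta})=0$ and the concatenation then also has $h=0$ — yields the claim. Multiplying both sides of $(S^{[\beta],m}AS^{[\alpha],m})=S^{[\beta*\alpha],m}$ on the left by $A$ gives $AS^{[\beta*\alpha],m}=AS^{[\beta],m}AS^{[\alpha],m}$.

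\textbf{Main obstacle.} The only real subtlety — and where I would spend the care — is the bookkeeping of framings, heights, and powers of $d_n$ when concatenating good representatives in part (a): one must be sure that $\widehat{\beta*\alpha}$ (the good representative of the composite) really has $h(\widetilde{\cdot})=0$ and differs from $\hat\beta*\hat\alpha$ only by isotopy, not by an odd number of kinks, so that no stray factor of $d_n$ appears. This is handled exactly as in the proof of Proposition \ref{prop5.2} using Lemma \ref{llll7.25}(a) and Lemma \ref{lll7.20}, but it is the step most prone to sign errors. Everything else is a mechanical transcription of Proposition \ref{prop5.2}'s proof with ``skein relation in $S_n(M,N,1)$'' systematically upgraded to ``$m$-parallel-copy relation in $\Endm$'' (Lemma \ref{llll7.14}, Lemma \ref{mmm7.23}, Corollaries \ref{ccc7.20}, \ref{ccc7.21}).
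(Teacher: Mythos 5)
Your argument is correct and follows the paper's proof essentially verbatim: (a) is Lemma \ref{mmm7.23}, (b) is Lemma \ref{mmmm7.25} (whose proof is exactly the localize-to-a-bigon-and-invoke-Lemma~\ref{llll7.14} argument you unfold), and (c) is the direct computation of the $m$-parallel turnback giving $\delta_{\bar i,j}(c_i^{-1})^m = d_n A_{i,j}$. The framing bookkeeping you flag in (a) is real but is already absorbed into the definition of $S^{[\alpha],m}$ together with additivity of $h$ under concatenation (so the concatenated good representative automatically has $h=0$ and Lemma \ref{llll7.25}(a) applies), exactly as in Proposition \ref{prop5.2}(a).
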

\begin{proof}

(a) We have 
$$(S^{[\beta],m} A S^{[\alpha],m})_{i,j} = \sum_{1\leq k\leq n}(-1)^{k+1} S^{[\beta],m}_{i,k} S^{[\alpha],m}_{\bar{k},j} = S_{i,j}^{[\beta*\alpha],m} = (S^{[\beta*\alpha],m})_{i,j}$$
where the second equality is from Lemma \ref{mmm7.23}.
Thus we have $A S^{[\beta*\alpha],m} = A S^{[\beta],m} A S^{[\alpha],m}$.

(b)It is from Lemma \ref{mmmm7.25}.

(c)For $1\leq i,j\leq n$, we have 
$$S^{[o],m}_{i,j} = (\raisebox{-.20in}{
\begin{tikzpicture}
\tikzset{->-/.style=
{decoration={markings,mark=at position #1 with
{\arrow{latex}}},postaction={decorate}}}
\filldraw[draw=white,fill=gray!20] (-0.7,-0.7) rectangle (0,0.7);
\draw [line width =1.5pt,decoration={markings, mark=at position 1 with {\arrow{>}}},postaction={decorate}](0,-0.7)--(0,0.7);
\draw [color = black, line width =1pt] (0 ,0.3) arc (90:270:0.5 and 0.3);
\node [right]  at(0,0.3) {$i$};
\node [right] at(0,-0.3){$j$};
\draw [line width =1pt,decoration={markings, mark=at position 0.5 with {\arrow{<}}},postaction={decorate}](-0.5,0.02)--(-0.5,-0.02);
\end{tikzpicture}})^{(m)} = \delta_{\bar{i},j}(c_i^{-1})^{m}= d_n A_{i,j}.$$
Thus $ S^{[o],m} = d_n A$. 

\end{proof}

Suppose $M$ is connected, and the components of $N$ consist of $e_0,e_1,\dots,e_{k-1}$ where $k$ is a positive integer.
If $k\geq 2$,
for each $1\leq t\leq k-1$, let $\alpha_t$ be a path connecting $e_0$ and $e_t$ with $\alpha_t(0)\in e_0$ and $\alpha_t(1)\in e_t$. We use $[o]$ to denote the identity element in $\pi_1(M,e_0)$. 
From Section \ref{subb5}, we know $S_n(M,N,1)$ is a commutative algebra generated by $$S^{[\alpha]}_{i,j},\;[\alpha]\in \pi_1(M,e_0)
\cup \{[\alpha_1],\dots,[\alpha_{k-1}]\}, 1\leq i,j\leq n,$$ subject to the relations
\begin{equation}\label{eqqq}
\begin{split}
det(S^{[\alpha]}) = 1&\text{ for all } [\alpha] \in \pi_1(M,e_0)
\cup \{[\alpha_1],\dots,[\alpha_{k-1}]\}, \;A S^{[o]} = I, \\
&AS^{[\beta]}A S^{[\eta]} =A S^{[\beta*\eta]}\text{ for all } [\beta],[\eta]\in\pi_1(M,e_0).
\end{split}
\end{equation}
Note that if $k=1$, the set $\{[\alpha_1],\dots,[\alpha_{k-1}]\}$ is empty.

Then we define an algebra homomorphism $\tilde{ \cF}:S_n(M,N,1)\rightarrow \Endm$ by defining $\tilde{\cF}$
 on the above generators, and then check all the relations in (\ref{eqqq}).

\begin{theorem}\label{thhh7.29}
There exists an algebra homomorphism 
\begin{align*}
\tilde{ \cF}:S_n(M,N,1)&\rightarrow \Endm\\
S^{[\alpha]}_{i,j}&\mapsto S^{[\alpha],m}_{i,j}
\end{align*}
where $[\alpha]\in \pi_1(M,e_0)
\cup \{[\alpha_1],\dots,[\alpha_{k-1}]\}, 1\leq i,j\leq n$.
\end{theorem}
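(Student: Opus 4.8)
The plan is to show that the assignment $S^{[\alpha]}_{i,j} \mapsto S^{[\alpha],m}_{i,j}$ respects all the defining relations of $S_n(M,N,1)$ listed in \eqref{eqqq}, so that it extends uniquely to an algebra homomorphism $\tilde{\cF}: S_n(M,N,1) \to \Endm$. Since $S_n(M,N,1)$ is the commutative algebra on the generators $S^{[\alpha]}_{i,j}$ for $[\alpha]\in \pi_1(M,e_0)\cup\{[\alpha_1],\dots,[\alpha_{k-1}]\}$ subject only to those relations (as recalled from Section \ref{subb5}), and since $\Endm$ is commutative, the only thing to verify is that the matrices $S^{[\alpha],m} = (S^{[\alpha],m}_{i,j})$ over $\Endm$ satisfy the same identities. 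But this is exactly the content of Proposition \ref{prop7.28}: part (c) gives $AS^{[o],m} = I$, part (b) gives $\det(S^{[\eta],m}) = 1$ for every $[\eta]$, and part (a) gives $AS^{[\beta],m}AS^{[\eta],m} = AS^{[\beta*\eta],m}$ whenever the composition makes sense, in particular for $[\beta],[\eta]\in\pi_1(M,e_0)$.

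Concretely, I would first invoke the commutativity of $\Endm$, established in Subsection \ref{subb7.2} via Corollaries \ref{corr7.3} and \ref{corr7.6} (the maps $F_l$ pairwise commute since disjoint-union products of $m$-parallel-copy webs are order-independent). Next I would observe that the generating set and relations for $S_n(M,N,1)$ in the connected case are precisely those in \eqref{eqqq}, using Theorem \ref{thm5.13} and the discussion around Remark \ref{rre5.1} and Proposition \ref{prop5.2}. Then, matching relation by relation: the determinant condition $\det(S^{[\alpha]}) = 1$ maps to $\det(S^{[\alpha],m}) = 1$, which is Proposition \ref{prop7.28}(b); the condition $AS^{[o]} = I$ maps to $AS^{[o],m} = I$, which is Proposition \ref{prop7.28}(c); and the multiplicativity $AS^{[\beta]}AS^{[\eta]} = AS^{[\beta*\eta]}$ maps to its $m$-analogue, which is Proposition \ref{prop7.28}(a). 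Because every defining relation is sent to a relation that holds in $\Endm$, the universal property of the presentation yields the desired algebra homomorphism, and uniqueness is automatic since the $S^{[\alpha]}_{i,j}$ generate.

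Since the excerpt ends exactly at the statement of Theorem \ref{thhh7.29} and all the hard analytic work (Lemmas \ref{lmmm7.1}--\ref{lll7.20} and Proposition \ref{prop7.28}) is already in place, the proof itself is short: it is essentially a citation of Proposition \ref{prop7.28} together with the presentation of $S_n(M,N,1)$. The only genuine subtlety I would flag is that one should make sure the relations are being matched against the \emph{correct} presentation — that is, that $S_n(M,N,1)$ really is generated by $S^{[\alpha]}_{i,j}$ with $[\alpha]$ ranging over $\pi_1(M,e_0)$ together with the chosen connecting paths $[\alpha_1],\dots,[\alpha_{k-1}]$ and with only the relations \eqref{eqqq}. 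This is where Section \ref{subb5} (in particular Theorems \ref{thm5.5}, \ref{thm5.11}, \ref{thm5.13}) does the heavy lifting, and I would cite it explicitly rather than re-deriving it. So the main ``obstacle'' is really just bookkeeping: correctly identifying the generators-and-relations description so that Proposition \ref{prop7.28}(a)--(c) lines up slot-for-slot with \eqref{eqqq}, after which the homomorphism drops out of the universal property with no further computation.
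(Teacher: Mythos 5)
Your proposal matches the paper's proof essentially verbatim: the paper also cites the commutativity of $\Endm$ together with Proposition~\ref{prop7.28}(a)--(c) to verify the defining relations in \eqref{eqqq}, and concludes by the universal property of the presentation. Your added emphasis on carefully identifying the presentation from Section~\ref{subb5} is a reasonable bookkeeping point but not a deviation from the paper's argument.
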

\begin{proof}
We know $\Endm$ is a commutative algebra. And Proposition
\ref{prop7.28} shows $\tilde{\cF}$ preserves all the relations in (\ref{eqqq}). Thus $\tilde{\cF}$ is a well-defined algebra homomorphism.
\end{proof}
Note that we have $\tilde{\cF}(S^{[\alpha]}) = S^{[\alpha],m}$ for all $[\alpha]\in \pi_1(M,e_0)
\cup \{[\alpha_1],\dots,[\alpha_{k-1}]\}$. We will show this is true for any $[\alpha]\in \pi_1(M,N)$.

The construction for $\tilde{\cF}$ in Theorem \ref{thhh7.29} depends on the choice of the generators for algebra $S_n(M,N,1)$. We will show $\tilde{\cF}$ is independent of the choice of these generators. Actually, we will show a stronger result in the following Theorem. 

\begin{theorem}\label{t888}
For any stated $n$-web $l$ consisting of stated framed oriented boundary arcs, we have $\tilde{\cF}(l) = F_{l}$. Especially $\tilde{\cF}$ is independent of the choice of the generators for algebra $S_n(M,N,1)$, and $\tilde{\cF}$ is surjective.
\end{theorem}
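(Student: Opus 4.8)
## Proof plan for Theorem \ref{t888}

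The plan is to reduce the identity $\tilde{\cF}(l)=F_l$ to the already-established matching of the two maps on the chosen algebra generators, using the multiplicativity of both sides. First I would recall that $\tilde{\cF}$ is an algebra homomorphism $S_n(M,N,1)\to\Endm$ by Theorem \ref{thhh7.29}, and that $\Endm$ is a commutative algebra in which $F_\alpha F_\beta = F_{\alpha\cup\beta}$ for any two stated $n$-webs $\alpha,\beta$ consisting of stated framed oriented boundary arcs (this is the consequence of Corollaries \ref{corr7.3} and \ref{corr7.6} recorded just before the statement). Since multiplication in $S_n(M,N,1)$ is disjoint union (Corollary \ref{cccc3.2}), it suffices to prove $\tilde{\cF}(\alpha_{i,j}) = F_{\alpha_{i,j}}$ for every single stated framed oriented boundary arc $\alpha_{i,j}$: the general $l=\cup_\beta\beta$ then follows by writing $l$ as a product of its components and applying homomorphy of $\tilde{\cF}$ together with $F_{\cup_\beta\beta}=\prod_\beta F_\beta$.

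Next I would handle a single stated framed oriented boundary arc $\alpha_{i,j}$. By Remark \ref{rre5.1}, in $S_n(M,N,1)$ we have $\alpha_{i,j}=d_n^{h(\tilde\alpha)}S^{[\alpha]}_{i,j}$, and by the parallel Remark preceding Proposition \ref{prop7.28} we have $F_{\alpha_{i,j}}=d_n^{h(\tilde\alpha)}S^{[\alpha],m}_{i,j}$ in $\Endm$ (this is precisely Lemma \ref{llll7.25}). So it is enough to show $\tilde{\cF}(S^{[\alpha]}_{i,j})=S^{[\alpha],m}_{i,j}$ for \emph{every} $[\alpha]\in\pi_1(M,N)$, not just for the distinguished generating set $\pi_1(M,e_0)\cup\{[\alpha_1],\dots,[\alpha_{k-1}]\}$ on which this holds by definition of $\tilde{\cF}$. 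Any morphism $[\alpha]$ in the groupoid $\pi_1(M,N)$ from the object $e_s$ to the object $e_t$ can be written, using the connecting paths $\alpha_s,\alpha_t$, as $[\alpha]=[\alpha_t * \gamma * \alpha_s^{-1}]$ for a suitable loop $[\gamma]\in\pi_1(M,e_0)$ (with the convention $\alpha_0=[o]$). Applying Proposition \ref{prop5.2}(a)--(c) on the $S_n(M,N,1)$ side gives $AS^{[\alpha]}=AS^{[\alpha_t]}AS^{[\gamma]}AS^{[\alpha_s^{-1}]}$, where $AS^{[\alpha_s^{-1}]}=(AS^{[\alpha_s]})^{-1}$ by (b)--(c); likewise $AS^{[\alpha],m}=AS^{[\alpha_t],m}AS^{[\gamma],m}(AS^{[\alpha_s],m})^{-1}$ on the $\Endm$ side by Proposition \ref{prop7.28}. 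Since $\tilde{\cF}$ is an algebra homomorphism sending entrywise $AS^{[\alpha_t]}\mapsto AS^{[\alpha_t],m}$, $AS^{[\gamma]}\mapsto AS^{[\gamma],m}$, and $AS^{[\alpha_s]}\mapsto AS^{[\alpha_s],m}$ (the last because $\det S^{[\alpha_s]}=1$ forces the inverse matrices to correspond too), it maps $AS^{[\alpha]}$ entrywise to $AS^{[\alpha],m}$, hence $\tilde{\cF}(S^{[\alpha]}_{i,j})=S^{[\alpha],m}_{i,j}$ as required.

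Once $\tilde{\cF}(S^{[\alpha]}_{i,j})=S^{[\alpha],m}_{i,j}$ is known for all $[\alpha]\in\pi_1(M,N)$, the identity $\tilde{\cF}(\alpha_{i,j})=F_{\alpha_{i,j}}$ follows from the two Remarks quoted above, and then $\tilde{\cF}(l)=F_l$ follows for all $l$ by multiplicativity. The two asserted consequences are then immediate: independence of the choice of generators holds because $F_l$ is manifestly defined without reference to any generating set, so $\tilde{\cF}$ agrees with the generator-free map $l\mapsto F_l$ on all of $S_n(M,N,1)$; and surjectivity holds because $\Endm$ is by its very definition (the subalgebra of $\End$) linearly spanned by the $F_l$, each of which now lies in the image of $\tilde{\cF}$. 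I expect the one genuinely substantive point to be the passage from the distinguished generators to arbitrary groupoid elements $[\alpha]\in\pi_1(M,N)$ — i.e. verifying that the groupoid relations used to present $S_n(M,N,1)$ in \eqref{eqqq} really do let one rewrite every $AS^{[\alpha]}$ in terms of $AS^{[\gamma]}$ ($[\gamma]\in\pi_1(M,e_0)$) and the connecting matrices $AS^{[\alpha_s]}$, and that $\tilde{\cF}$ respects matrix inverses; everything else is bookkeeping with Propositions \ref{prop5.2} and \ref{prop7.28} and the Remarks around them.
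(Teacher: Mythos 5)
Your proposal is correct and follows essentially the same route as the paper's own proof: reduce $\tilde{\cF}(l)=F_l$ to a single stated arc $\alpha_{i,j}$ by multiplicativity and the identity $F_\alpha F_\beta=F_{\alpha\cup\beta}$, then to $\tilde{\cF}(S^{[\alpha]})=S^{[\alpha],m}$ via Remark \ref{rre5.1} and the parallel remark before Proposition \ref{prop7.28}, and finally extend from the distinguished generators to all of $\pi_1(M,N)$ by writing $[\alpha]=[\alpha_v*\gamma*\alpha_u^{-1}]$ with $[\gamma]\in\pi_1(M,e_0)$ and using that $\tilde{\cF}$ respects the matrix inverses $AS^{[\alpha_t^{-1}]}=(AS^{[\alpha_t]})^{-1}$. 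The paper carries out the steps in the opposite order (starting from $S^{[\alpha]}$ and building up to $l$) and argues inverse-compatibility by a left-and-right-inverse cancellation rather than determinant $=1$, but these are cosmetic differences.
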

\begin{proof}
First we show for any $[\alpha]\in \pi_1(M,N)$ and $1\leq i,j\leq n$, we have $\tilde{\cF}(S^{[\alpha]}_{i,j}) = S^{[\alpha],m}_{i,j}$, that is, to show $\tilde{\cF}(S^{[\alpha]}) = S^{[\alpha],m}$. 
 This is clearly true if $k=1$, since $\pi_1(M,N) = \pi_1(M,e_0)$ when $k=1$.

Suppose $k\geq 2$. 
For any $1\leq t\leq k-1$, we have 
\begin{align*}
I &= \tilde{\cF}(AS^{[\alpha_t]} AS^{[\alpha_t^{-1}]}) = \tilde{\cF}(AS^{[\alpha_t^{-1}]} AS^{[\alpha_t]})
=A\tilde{\cF}(S^{[\alpha_t]}) A \tilde{\cF}(S^{[\alpha_t^{-1}]}) =A \tilde{\cF}(S^{[\alpha_t^{-1}]}) A\tilde{\cF}( S^{[\alpha_t]})\\
I &= AS^{[\alpha_t],m} AS^{[\alpha_t^{-1}],m} = AS^{[\alpha_t^{-1}],m} AS^{[\alpha_t],m}
=A\tilde{\cF}(S^{[\alpha_t]}) A S^{[\alpha_t^{-1}],m}=A S^{[\alpha_t^{-1}],m} A\tilde{\cF}( S^{[\alpha_t]}).
 \end{align*}
Thus we get $AS^{[\alpha_t^{-1}],m} = A \tilde{\cF}(S^{[\alpha_t^{-1}]})$, which implies 
$S^{[\alpha_t^{-1}],m} =  \tilde{\cF}(S^{[\alpha_t^{-1}]})$.

For any $[\alpha]\in \pi_1(M,N)$,  suppose $\alpha(0)\in e_{u}, \alpha(1)\in e_{v}$. 
If $u=v=0$,  it is obvious that $\tilde{\cF}(S^{[\alpha]}) = S^{[\alpha],m}$.
If $u\neq 0, v\neq 0,$, then we have $[\alpha_v^{-1}*\alpha *\alpha_u]\in \pi_1(M,e_0)$. Then we have 
\begin{align*}
&A\tilde{\cF}(S^{[\alpha]}) = \tilde{\cF}(AS^{[\alpha]})= \tilde{\cF}(AS^{[\alpha_v]} AS^{[\alpha_v^{-1}*\alpha *\alpha_u]} AS^{[\alpha_u^{-1}]})\\
&=A \tilde{\cF}(S^{[\alpha_v]}) A \tilde{\cF}(S^{[\alpha_v^{-1}*\alpha *\alpha_u]}) A \tilde{\cF}(S^{[\alpha_u^{-1}]})\\
&=AS^{[\alpha_v],m} AS^{[\alpha_v^{-1}*\alpha *\alpha_u],m} AS^{[\alpha_u^{-1}],m} =  AS^{[\alpha],m}.
\end{align*}
Thus we have $\hat{\cF}(S^{[\alpha]}) = S^{[\alpha],m}$. Similar we can prove $\tilde{\cF}(S^{[\alpha]}) = S^{[\alpha],m}$ when $u=0,v\neq 0$ or $u\neq 0,v= 0$.

For any stated framed oriented boundary arc $\alpha_{i,j}$, we have $\alpha_{i,j} =d_n^{h(\tilde{\alpha})} S^{[\alpha]}_{i,j}$ in $S_n(M,N,1)$. Then 
$$\tilde{\cF}(\alpha_{i,j}) =d_n^{h(\tilde{\alpha})} \tilde{\cF}(S^{[\alpha]}_{i,j})
= d_n^{h(\tilde{\alpha})} S^{[\alpha],m}_{i,j} = F_{\alpha_{i,j}}.$$

Let $l=\cup_{1\leq i\leq k }   \alpha_i$ where each $\alpha_i$ is a stated framed oriented boundary arc. Then 
$$\tilde{\cF}(l) = \tilde{\cF}(\alpha_1\cdots\alpha_k)
= \tilde{\cF}(\alpha_1)\cdots\tilde{\cF}(\alpha_k)
= F_{\alpha_1}\cdots F_{\alpha_k} = F_{\cup_{1\leq i\leq k}    \alpha_i}= F_l.$$

The surjectivity of $\tilde{\cF}$ is obvious.
\end{proof}

\def \SMO {S_n(M,N,1)}
Since $\tilde{\cF}:S_n(M,N,1)\rightarrow \Endm$ is an algebra homomorphism, this gives an action of 
$S_n(M,N,1)$ on $\SM$, defined by for any $\alpha\in\SMO,\beta\in\SM$, $\alpha\cdot\beta = \tilde{\cF}(\alpha)(\beta)$.

\subsection{Construction for Frobenius map}\label{newF}
In this section, we will define the Frobenius map $\cF:\SMO\rightarrow \SM.$
%

\begin{theorem}\label{thmm7.31}
For any marked three manifold $(M,N)$,
there exists a unique linear map $\cF:S_n(M,N,1)\rightarrow S_n(M,N,v)$ such that $\cF(l) = l^{(m)}$
for any stated $n$-web $l$ consisting of stated framed oriented boundary arcs.
\end{theorem}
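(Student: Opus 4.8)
The plan is to reduce the existence of $\cF$ on $\SMO$ to the action $\tilde{\cF}:\SMO\rightarrow\Endm$ already built in Theorem \ref{t888}, by evaluating everything at the empty $n$-web. Concretely, I would define $\cF(x) = \tilde{\cF}(x)(\emptyset)$ for $x\in\SMO$, where $\emptyset\in\SM$ is the empty stated $n$-web; this is manifestly a linear map since $\tilde{\cF}$ is linear and evaluation at $\emptyset$ is linear. The content is then twofold: first, that this $\cF$ satisfies the required formula $\cF(l) = l^{(m)}$ for every stated $n$-web $l$ consisting of stated framed oriented boundary arcs; second, uniqueness.

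For the formula, I would invoke Theorem \ref{t888}, which gives $\tilde{\cF}(l) = F_l$ for any such $l$. Unwinding the definition of $F_l$ from Subsection \ref{subb7.2}: $F_l(\emptyset)$ is obtained by isotoping $\emptyset$ off $l$ (trivial, since it is empty) and taking the disjoint union $l^{(m)}\cup\emptyset = l^{(m)}\in\SM$. Hence $\cF(l) = \tilde{\cF}(l)(\emptyset) = F_l(\emptyset) = l^{(m)}$, which is exactly the asserted identity. One small point to be careful about is that $l^{(m)}$ as an element of $\SM$ is well-defined (independent of the chosen parallel-copy representatives), but this is precisely Lemma \ref{lmmm7.10} (with $T_1 = T_2 = \emptyset$), so there is nothing new to prove.

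For uniqueness, the key input is that $\SMO$ is linearly spanned by stated $n$-webs consisting of stated framed oriented boundary arcs. This follows from relation \eqref{wzh.five}, which lets one kill all sinks and sources of an arbitrary stated $n$-web, together with the observation that after this process and after using \eqref{wzh.seven} to absorb any framed oriented knots into boundary arcs (or, more directly, since $\SMO$ is generated as an algebra by stated framed oriented boundary arcs as recorded in Section \ref{subb5} and in the presentation \eqref{eqqq}, and products of such arcs are again stated $n$-webs consisting of such arcs up to isotopy), the span statement holds. Any linear map agreeing with $\cF$ on a spanning set is equal to $\cF$, giving uniqueness.

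The main obstacle I anticipate is not conceptual but bookkeeping: one must make sure the spanning claim for $\SMO$ is genuinely by \emph{stated $n$-webs consisting of stated framed oriented boundary arcs} (not merely by arbitrary stated $n$-webs), which requires a clean appeal to the sink/source-killing relation \eqref{wzh.five} and to the fact that in the classical limit there are no knots to worry about after this reduction — or alternatively, one notes knots may remain but can be handled since $l^{(m)}$ is defined there too via the conventions of Subsection \ref{subb7.2}; I would follow the route through the algebra generators of $\SMO$, which is the cleanest. The later assertions of the paper (that $\cF$ is an algebra homomorphism when $(M,N)$ is a thickened pb surface, transparency of the image, injectivity, commutation with $\Theta$) are \emph{not} part of this statement and would be proved afterward using Lemmas \ref{llll7.14}, \ref{llmm7.14}, Corollaries \ref{corr7.15}, \ref{corr7.16}, and the module structure from Subsection \ref{subb7.2}.
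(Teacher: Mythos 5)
Your proposal is correct and matches the paper's proof essentially verbatim: the paper defines $\cF = T\circ\tF$ where $T:\Endm\rightarrow\SM$ is evaluation at the empty $n$-web, and obtains uniqueness from the fact that (because every component of $M$ carries a marking, so knots can be traded for boundary arcs via relation \eqref{wzh.seven} after sinks/sources are killed via \eqref{wzh.five}) $S_n(M,N,1)$ is linearly spanned by stated $n$-webs consisting of stated framed oriented boundary arcs. Your observation that the well-definedness of $l^{(m)}$ is already Lemma \ref{lmmm7.10} is correct and is implicitly used the same way in the paper.
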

\begin{proof}
Since we assume every component of $M$ contains at least one marking, $\SM$ is a linear span by stated $n$-webs  consisting of stated framed oriented boundary arcs, thus the uniqueness is obvious.

To construct $\cF$, we 
 can suppose $M$ is connected. We use $T$ to denote the linear map from $\Endm$ to $\SM$, defined by 
$T(f) = f(\emptyset)$ where $f\in\Endm$ and $\emptyset$
represents the empty stated $n$-web. 
Define $\cF = T\circ\tilde{\cF}$, then $\cF$ is a linear map from $\SMO$ to $\SM$ such that 
$\cF(l) = l^{(m)}$
for any stated $n$-web $l$ consisting of stated framed oriented boundary arcs.

\end{proof}

\begin{rem}
Let $(M,N)$ be a marked three manifold, and $x$ be an element in $ \SM$. Then the proof for Theorem 
\ref{thmm7.31} shows there exists a unique linear map $\cF_x :\SMO\rightarrow \SM$ such that 
$\cF_x(l) = \tF(l)(x)$
for any $l\in\SMO$. Then $\cF = \cF_{\emptyset}$
where $\emptyset$ is the emptyset stated $n$-web.
\end{rem}

\def \tF {\tilde{\cF}}
\def \hF {\hat{\cF}}

For any two elements $\alpha,\beta\in \SMO$, we know $\tF(\alpha)\in \Endm$ and 
$\cF(\beta)\in \SM$, then $\tF(\alpha)(\cF(\beta))\in\SM.$ It easy to check we have 
$\tF(\alpha)(\cF(\beta))=\tF(\beta)(\cF(\alpha))\in\SM.$  Actually if we suppose 
$\alpha = \sum_{1\leq i\leq u}k_i\alpha_i,\beta =\sum_{1\leq j\leq v}t_j \beta_j$
where $\alpha_i,\beta_j$ consist of stated framed oriented boundary arcs and $\alpha_i\cap\beta_j=\emptyset$, then we have 
\begin{equation}\label{mul}
\tF(\alpha)(\cF(\beta))=\tF(\beta)(\cF(\alpha))=\sum_{1\leq i\leq u,1\leq j\leq v}
k_i t_j (\alpha_i)^{(m)}\cup(\beta_j)^{(m)} \in\SM.
\end{equation}

\def \SMm {S_n(M,N,v)^{(m)}}

We use $\SMm$ to denote $\text{Im}\cF$, then there is a surjective map $$\hF:\SMO\rightarrow\SMm$$
induced by $\cF$.
\begin{theorem}\label{structure}
For any marked three manifold $(M,N)$, there is a commutative algebra structure for $\SMm$, which makes 
$\hF:\SMO\rightarrow \SMm$ a surjective algebra homomorphism.
\end{theorem}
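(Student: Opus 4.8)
The plan is to transport the commutative algebra structure of $\SMO$ along the surjection $\hF$, using the observation that $\cF$ of a product is computed by the action of $\tF$. Concretely, writing $\cF = T\circ\tF$ with $T(f)=f(\emptyset)$ as in the proof of Theorem \ref{thmm7.31}, and using that $\tF$ is an algebra homomorphism (Theorem \ref{t888}), for all $\alpha,\beta\in\SMO$ one has
$$\cF(\alpha\beta) \;=\; \tF(\alpha\beta)(\emptyset) \;=\; \tF(\alpha)\bigl(\tF(\beta)(\emptyset)\bigr) \;=\; \tF(\alpha)\bigl(\cF(\beta)\bigr),$$
and by equation \eqref{mul} this in turn equals $\tF(\beta)(\cF(\alpha))$. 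Thus $\cF(\alpha\beta)$ is symmetric in $\alpha,\beta$ and, crucially, depends on $\alpha$ only through $\cF(\alpha)$ and on $\beta$ only through $\cF(\beta)$.

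From this I would first deduce that $\ker\hF=\ker\cF$ is an ideal of $\SMO$: if $\cF(\alpha)=\cF(\alpha')$ then for any $\beta\in\SMO$ we get $\cF(\alpha\beta)=\tF(\beta)(\cF(\alpha))=\tF(\beta)(\cF(\alpha'))=\cF(\alpha'\beta)$. Consequently the rule $\hF(\alpha)\cdot\hF(\beta):=\cF(\alpha\beta)$ is a well-defined bilinear operation on $\SMm=\mathrm{Im}\,\cF$, and by construction $\hF$ intertwines it with the product of $\SMO$, so $\hF$ is a surjective algebra homomorphism. Commutativity of $\cdot$ follows from commutativity of $\SMO$ (equivalently, from the symmetry just observed), associativity from associativity of $\SMO$ together with surjectivity of $\hF$, and the element $\hF(\emptyset)=\cF(\emptyset)=\emptyset$ serves as the unit. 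For disconnected $(M,N)$ one reduces to the connected case, since $\SMO$, $\SM$, and $\cF$ all factor as (tensor) products over the connected components.

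I do not anticipate a genuine obstacle: the single point that needs care is the well-definedness of the product, i.e.\ that $\ker\cF$ is an ideal, and this is exactly the symmetric formula $\cF(\alpha\beta)=\tF(\alpha)(\cF(\beta))=\tF(\beta)(\cF(\alpha))$ recorded in \eqref{mul}. Everything else is a formal verification of the quotient-algebra axioms. The only mild subtlety worth double-checking is that $\SMO$ really is spanned by stated $n$-webs consisting of stated framed oriented boundary arcs (so that $\cF$ and the identity above apply verbatim), which holds because every component of $M$ carries a marking.
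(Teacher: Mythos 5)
Your proof is correct and takes essentially the same route as the paper: the paper defines $xy := \tF(\alpha)(\cF(\beta))$ for $x=\cF(\alpha),\ y=\cF(\beta)$ and verifies well-definedness and commutativity directly via the symmetric formula \eqref{mul}, which is the same content as your observation that $\cF(\alpha\beta)=\tF(\alpha)(\cF(\beta))=\tF(\beta)(\cF(\alpha))$ (so $\ker\cF$ is an ideal and the structure descends). Your phrasing via $\cF = T\circ\tF$ and $\tF$ being an algebra homomorphism is a slightly cleaner way to see the key identity, but the argument is the same.
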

\begin{proof}
For any two elements $x,y\in \SMm$, suppose $x=\cF(\alpha), y=\cF(\beta)$ where $\alpha,\beta\in \SMO$, then define $xy = \tF(\alpha)(\cF(\beta))\in\SM$. We need to check this is a well-defined multiplication, that is, to check $xy$ is independent of the choice of $\alpha,\beta$. Suppose we also have 
$x=\cF(\alpha_1), y=\cF(\beta_1)$ where $\alpha_1,\beta_1\in \SMO$.  Then 
$$\tF(\alpha)(\cF(\beta)) = \tF(\alpha)(\cF(\beta_1)) = \tF(\beta_1)(\cF(\alpha)) =
\tF(\beta_1)(\cF(\alpha_1)) = \tF(\alpha_1)(\cF(\beta_1)),$$
which shows the well-defineness of this multiplication. We also have 
$$xy = \tF(\alpha)(\cF(\beta))  = \tF(\beta)(\cF(\alpha))= yx,$$ which shows this multiplication is commutative.
From equation \eqref{mul}, we can easily show $\hF$ is a surjective algebra homomorphism.
\end{proof}

\subsection{Compatibility between Frobenius homomorphism and splitting map}

\begin{theorem}\label{com}
Suppose $D$ is a disk properly embedded into a marked 3-manifold $(M, N )$ and $D$  contains an oriented open interval $\beta$. Let $(M^{'},N^{'})$ be the result of splitting $(M, N )$ along $(D, \beta)$.
Then we have the following commutative diagram:
$$\begin{tikzcd}
S_n(M,N,1)  \arrow[r, "\Theta"]
\arrow[d, "\cF"]  
&  S_n(M^{'},N^{'},1)  \arrow[d, "\cF"] \\
 S_n(M,N,v)  \arrow[r, "\Theta"] 
&  S_n(M^{'},N^{'},v)\\
\end{tikzcd}.$$
\end{theorem}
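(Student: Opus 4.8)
The plan is to reduce the statement to a check on the generators of $S_n(M,N,1)$ and then to a purely local computation, exactly in the spirit of how $\Theta$ and $\cF$ were both built. First I would observe that since every component of $M$ has at least one marking, $S_n(M,N,1)$ is spanned (in fact generated as an algebra) by stated $n$-webs $l$ consisting of stated framed oriented boundary arcs, so by linearity it suffices to prove $\cF(\Theta(l)) = \Theta(\cF(l))$ for such an $l$. By the definition of $\cF$ in Theorem \ref{thmm7.31}, $\cF(l) = l^{(m)}$, so the right-hand side is $\Theta(l^{(m)})$. On the left, $\Theta(l)$ is computed by summing over all states $s$ of the new intersection points the lifts $l_s$ of $l$, each of which is again a stated $n$-web consisting of stated framed oriented boundary arcs in $(M',N')$; hence $\cF(\Theta(l)) = \sum_s (l_s)^{(m)}$. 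So the theorem reduces precisely to the identity
\[
\Theta\bigl(l^{(m)}\bigr) \;=\; \sum_{s}\, (l_s)^{(m)},
\]
which is exactly the content of Corollary \ref{corr7.16} (applied with $\alpha = l$; note $l^{(m)}$ as defined in Definition just before Corollary \ref{corr7.15} is the disjoint union of the $\alpha^{(m)}$ over the components $\alpha$ of $l$, and the splitting map is multiplicative, so the multi-component case follows from the single-arc case already handled there).

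Concretely, the key steps in order are: (1) reduce to $l$ a stated $n$-web of stated framed oriented boundary arcs, using that such webs span $S_n(M,N,1)$; (2) rewrite $\Theta(\cF(l)) = \Theta(l^{(m)})$ and $\cF(\Theta(l)) = \sum_s \cF(l_s) = \sum_s (l_s)^{(m)}$, using that each $l_s$ is again a union of stated framed oriented boundary arcs so $\cF$ applies to it verbatim; (3) invoke Corollary \ref{corr7.16} to conclude $\Theta(l^{(m)}) = \sum_s (l_s)^{(m)}$. One should be a little careful that the disk $D$ here is required to be disjoint from the closure of $N$ (the splitting setup of Section 2.3), whereas Corollary \ref{corr7.16} is phrased with $\beta \subset D$ an oriented open interval and $\alpha$ transverse to $\beta$; isotoping $l$ so that $l \cap D \subset \beta$ and $l$ is $(D,\beta)$-transverse is harmless and is exactly the hypothesis under which $\Theta_{(D,\beta)}$ is defined, so there is nothing extra to prove there.

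I do not expect a serious obstacle, since the genuinely hard work is already packaged in Corollary \ref{corr7.16}, whose proof in turn rests on functoriality plus the Hopf-algebraic identity $\Delta((b_{i,j})^{(m)}) = \sum_k (b_{i,k})^{(m)} \otimes (b_{k,j})^{(m)}$ of Lemma \ref{llmm7.14}. The only point requiring a moment's care is the bookkeeping in step (2): one must check that the naming of the new boundary points and their heights under $\Theta$ is compatible between $l$ and $l^{(m)}$, i.e. that the lift $(l^{(m)})_{s}$ appearing in $\Theta(l^{(m)})$ — where now $s$ assigns a state to each of the $m$-fold copies of each intersection point — is precisely $(l_s)^{(m)}$ when $s$ is constant on each packet of $m$ parallel strands, and that Corollary \ref{corr7.16} indeed delivers only those constant-on-packets terms. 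This is exactly what Corollary \ref{corr7.16} asserts, so the argument closes. If one wanted to avoid citing Corollary \ref{corr7.16} as a black box, the alternative is to run the same functoriality reduction to the thickening of a bigon and apply Lemma \ref{llmm7.14} directly, but that merely reproves the corollary and I would not do it here.
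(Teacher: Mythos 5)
Your proposal is correct and matches the paper's proof essentially verbatim: reduce to stated $n$-webs consisting of stated framed oriented boundary arcs (which span $S_n(M,N,1)$), rewrite both sides via $\cF(l)=l^{(m)}$, and invoke Corollary \ref{corr7.16}. The additional bookkeeping you flag in step (2) is sound and is indeed exactly what that corollary packages.
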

\begin{proof}
We just need to check $\cF( \Theta(\alpha)) = \Theta(\cF(\alpha))$ for any stated $n$-web $\alpha$ consisting of stated
 framed oriented boundary arcs, since $S_n(M,N,1)$ is linearly spanned by all these $n$-webs. And Corollary \ref{corr7.16} can prove this. 

\end{proof}

Theorem \ref{com} also shows the splitting map restricts to a map  
$$\Theta|_{S_n(M,N,v)^{(m)}}: S_n(M,N,v)^{(m)} \rightarrow S_n(M^{'},N^{'},v)^{(m)},$$
and $\Theta|_{S_n(M,N,v)^{(m)}}\circ\hat{\cF} = \hat{\cF}\circ \Theta$.
It is also easy to show $\Theta|_{S_n(M,N,v)^{(m)}}$ is an algebra homomorphism. 

\subsection{Center of $S_n(\Sigma,v)$ and
injectivity of Frobenius homomorphism}

When $(M,N)$ is the thickening of an essentially bordered pb surface $\Sigma$, obviously we have $S_n(\Sigma,v)^{(m)}$ is a subalgebra of $S_n(\Sigma,v)$.
The algebra structure for $S_n(\Sigma,v)^{(m)}$ inherited from $S_n(\Sigma,v)$ is the same with the algebra structure for $S_n(\Sigma,v)^{(m)}$ defined in Theorem \ref{structure}.
 Then $\cF :S_n(\Sigma,1)\rightarrow S_n(\Sigma,v)$ becomes an algebra homomorphism.

\begin{theorem}\label{Fpb}
Let $\Sigma$ be an essentially bordered pb surface, there exists a unique algebra homomorphism $\cF :S_n(\Sigma,1)\rightarrow S_n(\Sigma,v)$ such that $Im\cF$ lives in the center of $S_n(\Sigma,v)$ and 
$\cF(\alpha) = \alpha^{(m)}$ for any stated framed oriented boundary arc $\alpha$.
\end{theorem}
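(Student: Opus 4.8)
The plan is to deduce Theorem \ref{Fpb} from the already-constructed linear map $\cF$ of Theorem \ref{thmm7.31}, the algebra-homomorphism package $\tilde{\cF}\colon S_n(\Sigma,1)\to \Endm$ of Theorems \ref{thhh7.29} and \ref{t888}, and the commutativity-with-splitting statement of Theorem \ref{com}. Uniqueness is immediate: since $\Sigma$ is essentially bordered, $S_n(\Sigma,1)$ is linearly spanned by stated $n$-webs consisting of stated framed oriented boundary arcs (there are no closed components to worry about once every boundary component carries a marking), so any algebra homomorphism agreeing with $\alpha\mapsto\alpha^{(m)}$ on such arcs is determined. For existence of the homomorphism property, first I would observe that when $(M,N)=\Sigma\times[-1,1]$ is the thickening of $\Sigma$, the subspace $S_n(\Sigma,v)^{(m)}=\mathrm{Im}\,\cF$ is literally a subalgebra of $S_n(\Sigma,v)$ under stacking, and that the intrinsic multiplication on $\mathrm{Im}\,\cF$ defined in Theorem \ref{structure} coincides with the stacking product: for stated $n$-webs $\alpha,\beta$ consisting of boundary arcs, $\tilde{\cF}(\alpha)(\cF(\beta))=\alpha^{(m)}\cup\beta^{(m)}$ by definition of the $F_l$ action, and after isotoping $\alpha$ above $\beta$ in the thickening this disjoint union is exactly the stacking product $\alpha^{(m)}\beta^{(m)}$ (here one uses Corollaries \ref{corr7.3}, \ref{corr7.6} together with \ref{ccc7.20}, \ref{ccc7.21} to move copies freely). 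Hence $\hat{\cF}$ of Theorem \ref{structure} becomes an algebra homomorphism $S_n(\Sigma,1)\to S_n(\Sigma,v)$ in the genuine sense, i.e.\ $\cF$ is an algebra homomorphism.

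Next I would establish that $\mathrm{Im}\,\cF$ is central, i.e.\ $\alpha^{(m)}$ is transparent (commutes with every stated $n$-web it meets) for every stated framed oriented boundary arc $\alpha$. The key step here is a local transparency statement: one must show that $m$ parallel copies of a framed oriented boundary arc can be slid through any strand of another $n$-web without change. This is precisely the content of Corollaries \ref{corr7.3} and \ref{corr7.6} — they say that the $m$-fold copy passes transparently through a single transverse strand at a generic crossing. Using these together with Lemma \ref{lmmm7.5}, Lemma \ref{lmmm7.7} and the kink/twist normalizations of Lemmas \ref{lll7.20} and \ref{lllm7.17}, any crossing between $\alpha^{(m)}$ and another strand can be resolved so that $\alpha^{(m)}$ may be pushed across. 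Then for any two stated $n$-webs $x,y$ in the thickening with $y=\beta^{(m)}$ for a boundary arc $\beta$, transparency lets one isotope $\beta^{(m)}$ from above $x$ to below $x$, giving $\beta^{(m)}x=x\beta^{(m)}$ in $S_n(\Sigma,v)$. Since such $\beta^{(m)}$ linearly span $\mathrm{Im}\,\cF$ and arbitrary products of them again lie in $\mathrm{Im}\,\cF$ (as $\cF$ is an algebra homomorphism), $\mathrm{Im}\,\cF$ is contained in the center $C(S_n(\Sigma,v))$.

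Finally, the statement $\cF(\alpha)=\alpha^{(m)}$ for stated framed oriented boundary arcs $\alpha$ holds by construction in Theorem \ref{thmm7.31}. So the only genuine work is the two compatibility checks: (i) that the intrinsic product of Theorem \ref{structure} restricted to the thickening case is the stacking product, and (ii) the transparency/centrality argument. I expect (ii) — verifying transparency of $\alpha^{(m)}$ through an arbitrary strand crossing, in full generality across all four orientation patterns and including the twist-corrections — to be the main obstacle, though all the needed local lemmas (\ref{lmmm7.5}, \ref{lmmm7.7}, \ref{lll7.20}, \ref{lllm7.17}, \ref{corr7.3}, \ref{corr7.6}) are already in place, so the argument is essentially an assembly. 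The injectivity claim of the final displayed Theorem in the introduction (that $\cF$ is injective on an essentially bordered pb surface) is not part of this statement and would be handled separately, presumably via the exact sequence of Lemma \ref{exact} together with Lemma \ref{llmm7.14} showing $\cF$ intertwines the coactions; I would not attempt it here.
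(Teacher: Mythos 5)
Your proposal is correct and matches the paper's approach exactly: the paper's proof is the terse citation ``Lemma \ref{lmmm7.10}, Theorem \ref{thmm7.31}'', which is precisely your two-step decomposition (transparency of $l^{(m)}$ via Corollaries \ref{corr7.3}/\ref{corr7.6}, packaged as Lemma \ref{lmmm7.10}, giving centrality; and the linear map plus compatibility of the intrinsic product of Theorem \ref{structure} with stacking giving the algebra-homomorphism property). One minor remark: the kink/twist normalization Lemmas \ref{lll7.20} and \ref{lllm7.17} are not actually needed for the transparency/centrality step, since $\alpha^{(m)}$ is held fixed with its framing and only the other web is isotoped through it --- Corollaries \ref{corr7.3} and \ref{corr7.6} are exactly what the proof of Lemma \ref{lmmm7.10} invokes, and nothing more.
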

\begin{proof}
Lemma \ref{lmmm7.10},
Theorem \ref{thmm7.31}.
\end{proof}

Then we are trying to show $\cF$ in Theorem \ref{Fpb} is an embedding. 

\begin{lemma}\label{w}\cite{PW}
There exists a Hopf algebra homomorphism:
\begin{align*}
F_n : O(SLn) &\rightarrow O_q(SLn)\\
x_{i,j}&\mapsto (u_{i,j})^{m}.
\end{align*}
\end{lemma}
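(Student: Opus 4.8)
The statement to prove is Lemma~\ref{w}: that there is a Hopf algebra homomorphism $F_n : O(SLn) \to O_q(SLn)$ sending $x_{i,j} \mapsto (u_{i,j})^{m}$, under the standing assumption that $q$ is a primitive $m$-th root of unity with $m$ coprime to $2n$ (so in particular $m$ is odd and Lemma~\ref{mmm2.4} applies). The plan is to verify that the prescribed assignment respects the defining relations of $O(SLn)$, then check it intertwines the Hopf structures.

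First I would recall that $O(SLn) = \mathbb{C}[x_{i,j}]/(\det(X)-1)$ is commutative, while $O_q(SLn) = O_q(M(n))/(\det_q\buu - 1)$. So to get an algebra homomorphism $O(SLn)\to O_q(SLn)$ it suffices to show: (i) the elements $(u_{i,j})^m$ pairwise commute in $O_q(SLn)$, and (ii) they satisfy $\det$ of the matrix $((u_{i,j})^m)_{i,j}$ equals $1$. For (i), part (c) of Lemma~\ref{mmm2.4} says each $(u_{i,j})^m$ is central in $O_q(SLn)$, so they certainly commute with one another; hence the assignment extends to an algebra map from the free commutative polynomial algebra $\mathbb{C}[x_{i,j}]$ into $O_q(SLn)$. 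For (ii), part (a) of Lemma~\ref{mmm2.4} gives precisely $\sum_{\sigma\in S_n}(-1)^{\ell(\sigma)}(u_{1,\sigma(1)})^m\cdots(u_{n,\sigma(n)})^m = 1$, which is $\det\big(((u_{i,j})^m)_{i,j}\big) = 1$ (the ordinary, commutative determinant, legitimate because the entries commute). Therefore the polynomial $\det(X)-1$ maps to $0$, and $F_n$ descends to a well-defined algebra homomorphism $O(SLn)\to O_q(SLn)$.

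Next I would check compatibility with the Hopf structures. On $O(SLn)$ the coproduct is $\Delta(x_{i,j}) = \sum_k x_{i,k}\otimes x_{k,j}$ and counit $\epsilon(x_{i,j}) = \delta_{i,j}$; on $O_q(SLn)$ the corresponding formulas on the $u_{i,j}$ are as given in the excerpt. Part (b) of Lemma~\ref{mmm2.4} states $\Delta\big((u_{i,j})^m\big) = \sum_k (u_{i,k})^m\otimes(u_{k,j})^m$, which is exactly $\Delta(F_n(x_{i,j})) = (F_n\otimes F_n)(\Delta(x_{i,j}))$; since both sides are algebra maps and the $x_{i,j}$ generate $O(SLn)$, comultiplication is respected. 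For the counit, $(\epsilon\otimes\mathrm{id})\circ\Delta = \mathrm{id}$ forces $\epsilon\big((u_{i,j})^m\big) = \epsilon(u_{i,j})^m = \delta_{i,j}^m = \delta_{i,j} = \epsilon_{O(SLn)}(x_{i,j})$, so the counit is intertwined as well. A bialgebra map between Hopf algebras automatically commutes with the antipodes, so nothing more needs checking; alternatively one can verify $S\big((u_{i,j})^m\big) = \big(S(u_{i,j})\big)^m$ directly using that $(u_{i,j})^m$ is central and the formula $S(u_{i,j}) = (-q)^{i-j}\det_q(\buu^{j,i})$ together with Lemma~\ref{mmm2.4}.

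I do not expect a genuine obstacle here: the lemma is essentially a repackaging of Lemma~\ref{mmm2.4} of Parshall–Wang, with the three parts (a), (b), (c) designed to supply exactly the determinant relation, the comultiplication identity, and the centrality needed for commutativity. The only point requiring a word of care is making sure that the commutative determinant used to interpret ``$\det(X) = 1$'' in the target is legitimate — which it is, precisely because of centrality in part (c) — and that we are invoking the standing hypothesis ($m$ coprime to $2n$, hence $m$ odd) so that Lemma~\ref{mmm2.4} is applicable. So the proof is short: extend to the polynomial algebra using centrality, kill the quantum determinant relation using part (a), and check the coalgebra axioms using parts (b) and the multiplicativity of $\epsilon$.
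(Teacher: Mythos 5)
Your proof is correct. The paper does not supply its own proof here (the lemma is cited directly to \cite{PW}), so there is no argument in the text to compare against; however, your proof is exactly the natural one given the ingredients the paper has set up, and it uses each of the three parts of Lemma~\ref{mmm2.4} for precisely what it was included to deliver: part~(c) to get pairwise commutativity (so the assignment extends from the free commutative polynomial ring), part~(a) to kill the relation $\det(X)-1$, and part~(b) to verify compatibility with the coproduct. Your handling of the remaining Hopf axioms is also fine: the counit condition $\epsilon((u_{i,j})^m)=\delta_{i,j}^m=\delta_{i,j}$ follows directly from $\epsilon$ being an algebra map (the detour through $(\epsilon\otimes\mathrm{id})\circ\Delta=\mathrm{id}$ is not needed but harmless), and the observation that a bialgebra map between Hopf algebras automatically intertwines antipodes correctly disposes of the last point. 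One small thing worth making explicit is the translation of hypotheses: the section's standing convention is that $v$ is a primitive $m$-th root of unity with $\gcd(m,2n)=1$, and since $q=v^{2n}$, $q$ is also a primitive $m$-th root of unity with $m$ odd, which is the form in which Lemma~\ref{mmm2.4} is stated; you gesture at this but it is cleanest to state it explicitly.
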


Define the monoid
\begin{equation*}\label{eq.Gamma}
\Gamma = \Mat_n(\BN)/ (\Id).
\end{equation*}
Here $\Mat_n(\BN)=\BN^{n\times n}$ is an additive monoid, and $(\Id)$ is the submonoid generated by the identity matrix. Two matrices $m,m'\in \Mat_n(\BN)$ determine the same element in $\Gamma$ if and only if $m-m'= k \Id$ for $k\in \BZ$. Each $m\in \Gamma$ has a unique lift $\hat m\in \Mat_n(\BN)$ 
 such that $\min_{i} \hat m_{ii}=0$. 

\begin{proposition}(\cite{gavarini2007pbw,leY}) \label{r.basisOq}
For any linear order $d_{ord}$ 
on  $\mathbb{J}^2$, the set
\begin{equation}\label{eq.Bord}
B^{d_{ord}}:=\{b(m):= \prod_{(i,j)\in \mathbb{J}^2} (u_{i,j})^{\hat m _{ij}} \mid m \in \Gamma = \Mat_n(\BN)/(\Id)\},
\end{equation}
where the product is taken with respect to the order $d_{ord}$, is a basis of $O_q(SLn)$. 
\end{proposition}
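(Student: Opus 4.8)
The plan is to bootstrap from the (essentially classical) PBW-type description of $O_q(M(n))$ and then control the quotient by the central element $\det_q\buu-1$; this is essentially the route of \cite{gavarini2007pbw}, with \cite{leY} supplying the skein-theoretic packaging.

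First I would record that, for any linear order on $\mathbb{J}^2$, the ordered monomials $\prod_{(i,j)}(u_{i,j})^{m_{ij}}$ with $(m_{ij})\in\Mat_n(\BN)$ form a $\BC$-basis of $O_q(M(n))$. For the lexicographic order this is just the presentation of $O_q(M(n))$ as an iterated Ore extension (so in particular $O_q(M(n))$ is a domain); for a general order it follows from Bergman's diamond lemma, the overlap ambiguities of the FRT relations $(\buu\otimes\buu)\cR=\cR(\buu\otimes\buu)$ being resolvable, with the $\BZ^n\times\BZ^n$-grading recording the row and column multidegrees of a monomial (preserved by the relations) guaranteeing termination of the rewriting. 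In particular any two monomial bases coming from two orders are related, within each fixed multidegree, by a unitriangular transition matrix.

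Next I would pass to $O_q(SLn)$ via the total-degree filtration ($\deg u_{i,j}=1$), under which $\det_q\buu$ is homogeneous of degree $n$ and, being central and nonzero in the domain $O_q(M(n))$, acts injectively by multiplication. A short computation shows that for $a\in O_q(M(n))$ of top degree $D$ the element $a(\det_q\buu-1)=a\,\det_q\buu-a$ has top-degree component $a_D\,\det_q\buu\neq0$, so $\mathrm{gr}$ of the ideal $(\det_q\buu-1)$ is the homogeneous ideal $(\det_q\buu)$ and hence $\mathrm{gr}\,O_q(SLn)\cong O_q(M(n))/(\det_q\buu)$. By the standard filtered-to-graded lifting principle it then suffices to prove that the images of the monomials $b(m)$ with $\min_i m_{ii}=0$ form a basis of $O_q(M(n))/(\det_q\buu)$; equivalently, that $\{\,b(m):\min_i m_{ii}=0\,\}\cup\{\,\det_q\buu\cdot b(e):e\in\Mat_n(\BN)\,\}$ is again a basis of $O_q(M(n))$, which makes sense because the two index sets are disjoint and their union is all of $\Mat_n(\BN)$ (via $e\mapsto e+\Id$, using injectivity of multiplication by $\det_q\buu$ to see the second set is linearly independent). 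Independence of the order is then immediate from the unitriangularity noted above, and the canonical lift $\hat m$ identifies the surviving basis with $\{b(m):m\in\Gamma\}$.

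The heart of the argument — and the step I expect to be the main obstacle — is the last point: one must choose the linear order on $\mathbb{J}^2$ so that, after normal-ordering $\det_q\buu\cdot b(e)=\sum_{\sigma\in S_n}(-q)^{\ell(\sigma)}\,u_{1,\sigma(1)}\cdots u_{n,\sigma(n)}\,b(e)$, the leading monomial (for a term order refining total degree) is exactly $b(e+\Id)$, coming from the $\sigma=\mathrm{id}$ summand $u_{1,1}\cdots u_{n,n}\,b(e)$, while every other summand and every reordering correction produced by the FRT relations is a strictly smaller monomial of the same multidegree. Granting this unitriangularity, the transition matrix between $\{b(e):e\in\Mat_n(\BN)\}$ and $\{b(m):\min_i m_{ii}=0\}\cup\{\det_q\buu\cdot b(e)\}$ is invertible, and the proposition follows. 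The combinatorial bookkeeping needed to pin down such an order and verify the strict-decrease claim for the non-identity permutation terms is routine but not short; it is carried out in \cite{gavarini2007pbw,leY}, which is why in the body we simply invoke those references.
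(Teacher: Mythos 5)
The paper cites Proposition \ref{r.basisOq} from \cite{gavarini2007pbw,leY} without proof, so there is no in-paper argument to compare against. Your reconstruction — PBW basis of $O_q(M(n))$ for any order via the diamond lemma, passage to the associated graded $O_q(M(n))/(\det_q\buu)$ under the total-degree filtration using that $\det_q\buu$ is central and a non-zero-divisor, and then a leading-term/unitriangularity argument reducing $\det_q\buu\cdot b(e)$ to $b(e+\Id)$ modulo lower terms — is the standard route of \cite{gavarini2007pbw} and is sound in outline, with the deferral of the combinatorial leading-term verification to the references matching what the paper itself does.
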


\def\BR{\mathbb R}

\begin{lemma}
$F_n$ in Lemma \ref{w} is an embedding.
\end{lemma}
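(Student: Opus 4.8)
The plan is to show that $F_n \colon O(SLn) \to O_q(SLn)$ sends a basis of $O(SLn)$ to a linearly independent set, hence is injective. The natural basis to use on the source side is the classical ($q=1$) analogue of the basis $B^{d_{ord}}$ from Proposition \ref{r.basisOq}: fix a linear order $d_{ord}$ on $\mathbb{J}^2$ and consider
$$\{\, x(m) := \prod_{(i,j)\in \mathbb{J}^2} (x_{i,j})^{\hat m_{ij}} \mid m \in \Gamma = \Mat_n(\BN)/(\Id)\,\},$$
where the product is taken in the order $d_{ord}$ and $\hat m$ is the canonical lift with $\min_i \hat m_{ii} = 0$. The same PBW-type argument that proves Proposition \ref{r.basisOq} applies verbatim at $q=1$ (indeed $O(SLn)$ is the commutative polynomial ring modulo $\det = 1$, so this is even more elementary), so this set is a basis of $O(SLn)$.

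The key computation is that $F_n$ maps this basis almost bijectively onto part of the basis $B^{d_{ord}}$ of $O_q(SLn)$, up to a scalar. First I would observe that by Lemma \ref{mmm2.4}(c), each $(u_{i,j})^m$ is central in $O_q(SLn)$, so the ordered product $\prod_{(i,j)} ((u_{i,j})^m)^{\hat m_{ij}} = \prod_{(i,j)} (u_{i,j})^{m \hat m_{ij}}$ is independent of the order and equals $b(m\hat m)$ in the notation of Proposition \ref{r.basisOq}, where $m \hat m$ denotes the matrix with entries $m \cdot \hat m_{ij}$. Thus $F_n(x(m)) = b(m \hat m)$ (one must check the lift: $\min_i (m\hat m)_{ii} = m \min_i \hat m_{ii} = 0$, so $m\hat m$ is already in canonical form as an element of $\Mat_n(\BN)$). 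Next, the assignment $m \mapsto [m\hat m]$ from $\Gamma$ to $\Gamma$ is injective: if $m\hat m - m'\hat{m'} = k\,\Id$ in $\Mat_n(\BZ)$, then looking at the diagonal entry where $\hat m_{ii} = 0$ forces $k \le 0$ after dividing by $m$, and symmetrically $k \ge 0$, so $k = 0$ and $\hat m = \hat{m'}$, i.e.\ $m = m'$ in $\Gamma$. Therefore $F_n$ carries the basis $\{x(m)\}$ injectively into the basis $B^{d_{ord}}$ of $O_q(SLn)$, which immediately gives that $F_n$ is injective.

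The main obstacle — really the only point requiring care — is the bookkeeping around the monoid $\Gamma = \Mat_n(\BN)/(\Id)$ and the canonical lifts: one must be sure that $F_n$ applied to the canonical lift of $m$ produces exactly the canonical lift of the image element, and that no collapsing occurs when two different $\hat m$ scale to matrices differing by a multiple of $\Id$. The centrality from Lemma \ref{mmm2.4}(c) is what makes the ordered-product issue disappear, and Lemma \ref{mmm2.4}(a),(b) guarantee $F_n$ is a well-defined Hopf algebra map (already granted by Lemma \ref{w}); so after those are invoked, the argument reduces to the elementary injectivity of $m \mapsto m\hat m$ on $\Gamma$ sketched above. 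I would also remark that this argument is essentially the $SL(n)$ Frobenius-embedding statement of Parshall–Wang \cite{PW}, so one could alternatively just cite it, but giving the basis argument makes the excerpt self-contained.
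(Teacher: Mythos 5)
Your proposal is correct and takes essentially the same approach as the paper, whose entire proof is the one-line remark that, by Proposition~\ref{r.basisOq}, $F_n$ maps the basis of $O(SLn)$ injectively into the basis of $O_q(SLn)$. You have simply supplied the bookkeeping (centrality of $(u_{i,j})^m$, canonical lifts, injectivity of $m \mapsto [m\hat m]$ on $\Gamma$) that the paper leaves to the reader.
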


\begin{proof}
Using Proposition \ref{r.basisOq}, it is easy to check $F_n$ maps  the basis of $O(SLn)$ injectively to the basis of $O_q(SLn)$.
\end{proof}

\begin{lemma}\label{lll7.37}
$\cF:S_n(\fB,1)\rightarrow S_n(\fB,v)$ is injective.
\end{lemma}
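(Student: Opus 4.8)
The plan is to reduce the statement $\cF : S_n(\fB,1) \to S_n(\fB,v)$ being injective to the already-established injectivity of the Hopf algebra homomorphism $F_n : O(SLn) \to O_q(SLn)$ of Lemma \ref{w}. The key is that $\cF$ on the bigon is, up to the identifications furnished by Theorem \ref{t.Hopf} (the isomorphism $g_{big}: O_q(SLn) \to S_n(\fB,v)$, or equivalently $f_{big}$ of Theorem \ref{t.Hopf}) and its classical analogue (the isomorphism $S_n(\fB,1) \simeq O(SLn)$ obtained by setting $q = 1$), essentially the map $F_n$. Concretely, first I would recall that $S_n(\fB,1)$ is generated as an algebra by the $a^i_j$ (equivalently the $b_{i,j} = \cev{a}^i_j$), and that under the isomorphism with $O(SLn)$ the generator $x_{i,j}$ corresponds to $b_{i,j}$ (or $a^i_j$). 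Then I would compute $\cF(b_{i,j})$: by Theorem \ref{thmm7.31} (or Theorem \ref{Fpb}), $\cF(b_{i,j}) = (b_{i,j})^{(m)}$, the $m$-fold parallel copy.

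The central identification is then that $(b_{i,j})^{(m)} \in S_n(\fB,v)$ corresponds, under $g_{big}$ (or $f_{big}$), to $(u_{i,j})^m \in O_q(SLn)$. This is exactly the content needed to match $\cF$ with $F_n$: $F_n(x_{i,j}) = (u_{i,j})^m$. So the diagram
\begin{equation*}
\begin{tikzcd}
O(SLn) \arrow[r, "F_n"] \arrow[d, "\simeq"] & O_q(SLn) \arrow[d, "f_{big}"] \\
S_n(\fB,1) \arrow[r, "\cF"] & S_n(\fB,v)
\end{tikzcd}
\end{equation*}
commutes on the algebra generators $x_{i,j}$, and since all four maps are algebra homomorphisms (the two vertical ones being isomorphisms, $F_n$ an algebra map by Lemma \ref{w}, and $\cF$ an algebra homomorphism on the bigon by Theorem \ref{Fpb} together with the identification of the two algebra structures on $S_n(\fB,v)^{(m)}$), the diagram commutes. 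Then injectivity of $\cF$ follows immediately from injectivity of $F_n$ (the preceding lemma) and invertibility of the vertical maps.

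The main obstacle I expect is verifying that $(b_{i,j})^{(m)}$ really does correspond to $(u_{i,j})^m$ — that is, that taking $m$ parallel framed copies of the arc $b_{i,j}$ in the thickened bigon agrees, under the Hopf isomorphism, with raising the generator $u_{i,j}$ to the $m$-th power in $O_q(SLn)$. The subtlety is that $m$ parallel copies are a priori a product of $m$ distinct arcs stacked in the framing direction, and one must check the framing/stacking order matches the multiplication order in $S_n(\fB,v)$ used to define $g_{big}$; this is where one invokes the local relations of Section 7.1 (Corollaries \ref{corr7.3}, \ref{corr7.6}, \ref{ccc7.20}, \ref{ccc7.21}, and Lemma \ref{llmm7.14}, which gives precisely $\Delta((b_{i,j})^{(m)}) = \sum_k (b_{i,k})^{(m)} \otimes (b_{k,j})^{(m)}$, mirroring Lemma \ref{mmm2.4}(b)). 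Once that identification is in hand the rest is formal. I would also remark that an alternative, more self-contained route avoids the explicit comparison with $F_n$: use Proposition \ref{r.basisOq} to write down a PBW-type basis of $S_n(\fB,1) \simeq O(SLn)$ and check directly that $\cF$ sends it to linearly independent elements of $S_n(\fB,v)$, but this duplicates the work already packaged in the previous lemma, so the diagram-chase is cleaner.
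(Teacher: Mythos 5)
Your proposal is correct and takes essentially the same route as the paper: transfer the question via the Hopf algebra isomorphism $f_{big}$ to the injectivity of $F_n:O(SLn)\to O_q(SLn)$, by checking the commuting square on the generators $b_{i,j}$. The only thing to note is that the ``obstacle'' you flag — matching $(b_{i,j})^{(m)}$ with $(u_{i,j})^m$ — is essentially automatic in the bigon, since the product in $S_n(\fB,v)$ is stacking in the $[-1,1]$ direction and the framing of $b_{i,j}$ is vertical, so the $m$-fold parallel copy in the framing direction is by definition the $m$-fold product $(b_{i,j})^m = f_{big}((u_{i,j})^m)$; no appeal to the coproduct formula or the corollaries of Section 7.1 is needed here.
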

\begin{proof}

For any $i,j\in\mathbb{J}$, it is easy to check 
$$\cF(f_{big}(x_{i,j})) = \cF(b_{i,j}) = (b_{i,j})^{(m)},\;
  f_{big}(F_n(x_{i,j})) = f_{big}((u_{i,j})^{m}) = (b_{i,j})^{m} =  (b_{i,j})^{(m)} .$$
Then we have $\cF\circ f_{big} = f_{big}\circ F_n$ since all the maps involved are algebra homomorphims.
 Thus $\cF$ is injective because $f_{big}$ is an ismorphism and $F_n$ is injective.
\end{proof}

The proof in the Lemma \ref{lll7.37} actually shows $\cF:S_n(\fB,1)\rightarrow S_n(\fB,v)$ is an injective Hopf algebra homomorphism.

Recall that $\fT$ denotes the standard ideal triangle.

We know there is a linear isomorphism $QF:S_n(\fB,v)\otimes S_n(\fB,v)\rightarrow S_n(\fT,v)$, see Example 7.9 in \cite{le2021stated}. Note that $QF$ is not an algebra homomorphism unless $v=1$.

\begin{equation}\label{QFF}
\begin{tikzcd}
\raisebox{-.30in}{
\begin{tikzpicture}
\tikzset{->-/.style=
{decoration={markings,mark=at position #1 with
{\arrow{latex}}},postaction={decorate}}}
%
\draw [line width =1pt,decoration={markings, mark=at position 0.8 with {\arrow{>}}},postaction={decorate},shift ={(0.2 ,0)}] (1.5,0)--(0,2);
\draw [line width =1pt,decoration={markings, mark=at position 0.8 with {\arrow{>}}},postaction={decorate}] (0,2)--(-1.5,0);
\draw [line width =1pt,decoration={markings, mark=at position 0.8 with {\arrow{>}}},postaction={decorate},shift ={(1 ,0.6)}] (1.5,0)--(0,2);
\draw [line width =1pt,decoration={markings, mark=at position 0.8 with {\arrow{>}}},postaction={decorate},shift ={(-0.8 ,0.6)}] (0,2)--(-1.5,0);
\draw [color = blue, line width =2pt,shift ={(0.2 ,0)}] (0.75,1)--(1.55,1.6);
\draw [color = blue, line width =2pt] (-0.75,1)--(-1.55,1.6);
\end{tikzpicture}}  \arrow[r, "QF"]
& \raisebox{-.30in}{
\begin{tikzpicture}
\tikzset{->-/.style=
{decoration={markings,mark=at position #1 with
{\arrow{latex}}},postaction={decorate}}}
%
\draw [line width =1pt,decoration={markings, mark=at position 0.8 with {\arrow{>}}},postaction={decorate}] (1.5,0)--(0,2);
\draw [line width =1pt,decoration={markings, mark=at position 0.8 with {\arrow{>}}},postaction={decorate}] (0,2)--(-1.5,0);
\draw [line width =1pt,decoration={markings, mark=at position 0.8 with {\arrow{>}}},postaction={decorate}] (1.5,0)--(-1.5,0);
\filldraw[draw=black!80,fill=white!20] (0 ,2) circle (0.1);
\filldraw[draw=black!80,fill=white!20] (-1.5 ,0) circle (0.1);
\filldraw[draw=black!80,fill=white!20] (1.5 ,0) circle (0.1);
\draw [line width =1pt,decoration={markings, mark=at position 0.8 with {\arrow{>}}},postaction={decorate},shift ={(0.8 ,0.6)}] (1.5,0)--(0,2);
\draw [line width =1pt,decoration={markings, mark=at position 0.8 with {\arrow{>}}},postaction={decorate},shift ={(-0.8 ,0.6)}] (0,2)--(-1.5,0);
\draw [color = blue, line width =2pt] (0.75,1)--(1.55,1.6);
\draw [color = blue, line width =2pt] (-0.75,1)--(-1.55,1.6);
\draw [color = blue, line width =2pt] (0.75 ,1) arc (137:180:1 and 1.5);
\draw [color = blue, line width =2pt] (-0.49 ,0) arc (0:43:1 and 1.5);
\end{tikzpicture}} \\
\end{tikzcd}.
\end{equation}

\begin{lemma}\label{lll7.38}
$\cF:S_n(\fT,1)\rightarrow S_n(\fT,v)$ is injective.
\end{lemma}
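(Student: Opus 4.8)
The plan is to reduce the statement for the ideal triangle $\fT$ to the already-established injectivity for the bigon $\fB$ (Lemma \ref{lll7.37}) via the quantum Fourier transform $QF:S_n(\fB,v)\otimes S_n(\fB,v)\to S_n(\fT,v)$ of equation \eqref{QFF}. Since $\fT$ is obtained from two bigons by the gluing construction $QF_{e_1',e_2'}$ of Subsection \ref{sbbbbbbb6.1}, and $QF$ is a linear isomorphism, it suffices to understand how $\cF$ interacts with $QF$. The key point is that $QF$ is built out of the splitting map $\Theta$ and the counit $\epsilon$ (indeed $Cut$ is its inverse), both of which are compatible with $\cF$: the splitting map by Theorem \ref{com} (via Corollary \ref{corr7.16}), and the counit by Lemma \ref{lll7.37}, which shows $\cF$ is a Hopf algebra homomorphism on the bigon and hence intertwines counits.

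First I would set up the commutative square relating $\cF$ on $S_n(\fB,1)\otimes S_n(\fB,1)$, $\cF$ on $S_n(\fT,1)$, and $\cF$ on $S_n(\fT,v)$ through $QF$. Concretely, for a stated $n$-web $\alpha$ in the thickening of $\fB\sqcup\fB$ consisting of stated framed oriented boundary arcs, one checks $\cF(QF(\alpha)) = QF^{(v)}$ applied appropriately to $(\cF\otimes\cF)(\alpha)$, where the relation $QF\circ Cut = Id$ and the compatibility of $\cF$ with $\Theta$ and $\epsilon$ are used; more precisely one uses $Cut = (\epsilon_\Delta\otimes Id)\circ\Theta\circ\Theta$ together with Corollary \ref{corr7.16} (which gives $\Theta(\alpha^{(m)})=\sum_s(\alpha_s)^{(m)}$) and the fact, following from Lemma \ref{lll7.37}, that $\cF$ commutes with $\epsilon$ on the bigon. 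Running this through, one obtains that the square
\[
\begin{tikzcd}
S_n(\fB,1)\otimes S_n(\fB,1) \arrow[r, "QF"] \arrow[d, "\cF\otimes\cF"] & S_n(\fT,1) \arrow[d, "\cF"]\\
S_n(\fB,v)\otimes S_n(\fB,v) \arrow[r, "QF"] & S_n(\fT,v)
\end{tikzcd}
\]
commutes. Since the horizontal arrows are isomorphisms and the left vertical arrow $\cF\otimes\cF$ is injective by Lemma \ref{lll7.37} (injectivity of a tensor product of injective maps over $\mathbb{C}$), the right vertical arrow $\cF:S_n(\fT,1)\to S_n(\fT,v)$ is injective.

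I expect the main obstacle to be verifying the commutativity of the square carefully, i.e. checking that $\cF$ genuinely intertwines $QF$ with $QF$ rather than with some twisted variant. The subtlety is that $QF$ is \emph{not} an algebra homomorphism for general $v$, only a linear isomorphism, so one cannot simply chase generators as algebra elements; instead one must argue on the level of stated $n$-webs consisting of boundary arcs, using the explicit descriptions of $\Theta$ on $m$-fold parallel copies (Corollary \ref{corr7.16}) and of $\epsilon$ on $m$-fold parallel copies of bigon elements (which comes from Lemma \ref{llll7.14} and Lemma \ref{llmm7.14}, i.e. $(b_{i,j})^{(m)}$ behaving like the image of $x_{i,j}$ under the classical-to-quantum Frobenius $F_n$). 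Once this bookkeeping is in place — essentially that $\cF$ commutes with the structural maps $\Theta$ and $\epsilon$ out of which $QF$ and its inverse $Cut$ are assembled — the injectivity is immediate from Lemma \ref{lll7.37}. An alternative, equivalent route would be to use $Cut$ directly: from $Cut\circ QF = Id$ and the compatibility of $\cF$ with both $\Theta$ and $\epsilon$, one gets that $\cF$ restricted to $S_n(\fT,v)^{(m)}$ has a one-sided inverse coming from $Cut$ on the bigon side, which again forces injectivity.
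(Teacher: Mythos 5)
Your proof is correct and takes essentially the same approach as the paper: both reduce to the commutative square involving $QF:S_n(\fB,\cdot)\otimes S_n(\fB,\cdot)\to S_n(\fT,\cdot)$ and then invoke injectivity of $\cF$ on the bigon (Lemma \ref{lll7.37}) together with $QF$ being a linear isomorphism. The only stylistic difference is in how the commutativity of the square is justified: the paper argues directly that the two operations visibly commute on the basis of stated parallel arcs ($QF$ extends arcs to the bottom edge, $\cF$ takes $m$ parallel copies, and these manipulations are independent), whereas you route the verification through the decomposition $Cut=(\epsilon_\Delta\otimes\mathrm{Id})\circ\Theta\circ\Theta$ and the compatibility of $\cF$ with $\Theta$ (Corollary \ref{corr7.16}) and with $\epsilon$ (via $\cF$ being a Hopf map on $\fB$); this is a valid but somewhat more elaborate justification of the same square.
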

\begin{proof}
Look at the following diagram:
\begin{equation}\label{comm}
\begin{tikzcd}
S_n(\fB,1)\otimes S_n(\fB,1)  \arrow[r, "\cF\otimes \cF"]
\arrow[d, "QF"]  
& S_n(\fB,v)\otimes S_n(\fB,v)  \arrow[d, "QF"] \\
 S_n(\fT,1)  \arrow[r, "\cF"] 
&  S_n(\fT,v)\\
\end{tikzcd}.
\end{equation}
As a vector space, $S_n(\fB,1)$ is generated by parallel  stated arcs  that are all  parallel to the core of $\fB$
(these  arcs may have different orientations and states). $QF$ acts on these arcs by extending them to the bottom edge, see equation (\ref{QFF}). $\cF$ acts on these arcs by taking $m$ parallel copies of each arc. Clearly these two actions commute with each other. Thus the diagram in (\ref{comm}) is commutative.
Then $\cF:S_n(\fT,1)\rightarrow S_n(\fT,v)$ is injective since $QF$ is isomorphism and $\cF:S_n(\fB,1)\rightarrow S_n(\fB,v)$ is injective, see Lemma
\ref{lll7.37}.
\end{proof}

\begin{equation}\label{888}
\begin{tikzcd}
S_n(\Sigma,1) \arrow[r, "\Theta",tail]\arrow[d,dashrightarrow,"F_{\cE}"]
 & \otimes_{\fT\in tri(\cE)} S_n(\fT,1)
\arrow[r,"T_{\cE}" ]\arrow[d, "\otimes_{\fT\in tri(\cE)}\cF"]   &( \otimes_{\fT\in tri(\cE)} S_n(\fT,v))\otimes (\otimes_{e\in Int(\cE)}S_n(\fB,1))
 \arrow[d, "(\otimes_{\fT\in tri(\cE)}\cF)\otimes (\otimes_{e\in Int(\cE)} \cF)"] \\
S_n(\Sigma,v) \arrow[r, "\Theta",tail] & \otimes_{\fT\in tri(\cE)} S_n(\fT,v)
\arrow[r,"T_{\cE}" ] &( \otimes_{\fT\in tri(\cE)} S_n(\fT,v))\otimes (\otimes_{e\in Int(\cE)}S_n(\fB,v))
\end{tikzcd}
\end{equation}
Please refer to Lemma \ref{exact} for the definition of $T_{\cE}$.

\begin{theorem}($n=2$ \cite{korinman2019classical}, $n=3$ \cite{higgins})\label{Injec}
Let $\Sigma$ be an essentially bordered pb surface, and $\cE$ be an ideal triangulation of $\Sigma$. Then we have 

(a) the right square in (\ref{888}) is commutative,

(b) there exists a unique algebra homomorphism $F_{\cE}:S_n(\Sigma,1)\rightarrow S_n(\Sigma,v)$ such that the left square is also commutative,

(c) $F_{\cE} = \cF$, especially we have $\cF$ is injective and $F_{\cE}$ is independent of the triangulation $\cE$.

\end{theorem}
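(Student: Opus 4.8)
The plan is to run a diagram chase in (\ref{888}), using the exactness statement of Lemma \ref{exact} to identify $S_n(\Sigma,v)$ with $\ker T_{\cE}\subset \otimes_{\fT\in tri(\cE)} S_n(\fT,v)$ via the injective map $\Theta$, and then transport $\cF$ across the diagram. Throughout I will use that $S_n(\Sigma,1)$ and each $S_n(\fT,1)$ are linearly spanned by stated $n$-webs consisting of stated framed oriented boundary arcs, so that all the relevant identities need only be checked on such webs.

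First I would establish (a). The coactions ${}_e\Delta$ and $\Delta_e$ defining $T_{\cE}$ are obtained by splitting off a bigon along the internal edge $e$ (subsection 7.1 of \cite{le2021stated}), so Corollary \ref{corr7.16} applies and shows that $\cF$ intertwines these splitting operations with the corresponding ones after passing to $m$-th parallel copies, the bigon tensor factor being exactly governed by Lemma \ref{llmm7.14}, i.e. $\Delta((b_{i,j})^{(m)})=\sum_{k}(b_{i,k})^{(m)}\otimes(b_{k,j})^{(m)}$. The flip $fl$ only permutes tensor factors and commutes trivially with the various $\cF$'s. Hence both $\Delta_{Int(\cE)}\circ(\otimes_{\fT}\cF)$ and $fl\circ{}_{Int(\cE)}\Delta\circ(\otimes_{\fT}\cF)$ equal $\big((\otimes_{\fT}\cF)\otimes(\otimes_{e}\cF)\big)$ composed with the corresponding $v=1$ coactions; subtracting yields commutativity of the right square.

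For (b), take $x\in S_n(\Sigma,1)$. Lemma \ref{exact} at parameter $1$ gives $T_{\cE}(\Theta(x))=0$, and applying part (a) we get $T_{\cE}\big((\otimes_{\fT}\cF)(\Theta(x))\big)=0$, so $(\otimes_{\fT}\cF)(\Theta(x))\in\ker T_{\cE}=\Theta(S_n(\Sigma,v))$ by Lemma \ref{exact} at parameter $v$. Since $\Theta$ is injective (Proposition \ref{inj}), there is a unique $F_{\cE}(x)\in S_n(\Sigma,v)$ with $\Theta(F_{\cE}(x))=(\otimes_{\fT}\cF)(\Theta(x))$; this defines $F_{\cE}$ as a linear map making the left square commute, and injectivity of $\Theta$ forces uniqueness. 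That $F_{\cE}$ is an algebra homomorphism follows because $\Theta$ and $\otimes_{\fT}\cF$ are algebra homomorphisms — here $\cF:S_n(\fT,1)\to S_n(\fT,v)$ is an algebra homomorphism by Theorem \ref{Fpb}, $\fT$ being an essentially bordered pb surface — so $\Theta(F_{\cE}(xy))=\Theta(F_{\cE}(x))\Theta(F_{\cE}(y))=\Theta(F_{\cE}(x)F_{\cE}(y))$ and injectivity of $\Theta$ concludes.

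Finally, for (c): Theorem \ref{com} says the splitting map along each internal edge commutes with $\cF$, and composing these over $Int(\cE)$ gives $\Theta\circ\cF=(\otimes_{\fT}\cF)\circ\Theta$ at the $\Sigma$ level, which is exactly the defining relation of $F_{\cE}$; injectivity of $\Theta$ then gives $F_{\cE}=\cF$. Since $\cF$ was constructed (Theorems \ref{thmm7.31}, \ref{Fpb}) without reference to any triangulation, $F_{\cE}$ is triangulation independent. For injectivity of $\cF$, each $\cF:S_n(\fT,1)\to S_n(\fT,v)$ is injective by Lemma \ref{lll7.38}, hence so is the $\mathbb{C}$-tensor product $\otimes_{\fT}\cF$; combined with injectivity of $\Theta$ on $S_n(\Sigma,1)$, the composite $(\otimes_{\fT}\cF)\circ\Theta=\Theta\circ\cF$ is injective, so $\cF$ is injective. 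The step I expect to be the real obstacle is part (a): one must be careful that ${}_e\Delta$ and $\Delta_e$ are genuinely instances of the bigon splitting map covered by Corollary \ref{corr7.16} under the identification $S_n(\fB,v)\cong O_q(SLn)$, and that the height-ordering conventions built into $T_{\cE}$ match those implicit in the definition of $l^{(m)}$; once that is pinned down, the rest is formal.
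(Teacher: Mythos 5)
Your proposal is correct and follows essentially the same route as the paper: part (a) is, in the paper, a one-line invocation of Theorem \ref{com} (which itself rests on Corollary \ref{corr7.16}), while you re-derive that compatibility directly; parts (b) and (c) use the identical exactness-plus-uniqueness diagram chase and the same injectivity argument via Proposition \ref{inj} and Lemma \ref{lll7.38}. The only genuine addition you make beyond the paper's text is the explicit observation that $\cF$ on each $S_n(\fT,\cdot)$ is an algebra homomorphism (via Theorem \ref{Fpb}), which justifies that $F_{\cE}$ is multiplicative — a point the paper leaves implicit in part (b).
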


\begin{proof}
(a) Theorem \ref{com}.

(b) From Lemma \ref{exact}, we know the two rows are exact. Then there exists a unique $F_{\cE}$ such that the left square is also commutative because the right square is commuative and the two rows are exact.

(c) From Theorem \ref{com}, we know $\cF:S_n(\Sigma,1)\rightarrow S_n(\Sigma,v)$ also makes the left square commutative. Then from the uniqueness in (b), we know $F_{\cE}=\cF$, which implies $F_{\cE}$ is  independent of $\cE$. Since both $\Theta$ and 
$\otimes_{\fT\in tri(\cE)}\cF$ are injective (Proposition \ref{inj}, Lemma \ref{lll7.38}), then $F_{\cE}=\cF$ is also injective. 
\end{proof}

\begin{corollary}
Let $\Sigma$ be an essentially bordered pb surface. We have
$$\hat{\cF}:S_n(\Sigma,1)\rightarrow S_n(\Sigma,v)^{(m)}$$
is an isomorphism.
\end{corollary}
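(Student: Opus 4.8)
The plan is to combine two facts already proved in the excerpt: that $\cF:S_n(\Sigma,1)\to S_n(\Sigma,v)$ is injective (Theorem \ref{Injec}(c)), and that by definition $S_n(\Sigma,v)^{(m)}=\mathrm{Im}\,\cF$. Since $\hat{\cF}:S_n(\Sigma,1)\to S_n(\Sigma,v)^{(m)}$ is by construction the map induced by $\cF$ onto its image (see the paragraph preceding Theorem \ref{structure}), it is automatically surjective; injectivity of $\hat{\cF}$ is equivalent to injectivity of $\cF$, which is Theorem \ref{Injec}(c). So as sets/vector spaces $\hat{\cF}$ is a bijection.

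The only genuine content to check is that $\hat{\cF}$ is an \emph{algebra} isomorphism, i.e.\ that it is an algebra homomorphism whose inverse is also an algebra homomorphism. For the first part I would invoke Theorem \ref{structure}, which already states that $\hat{\cF}:S_n(M,N,1)\to S_n(M,N,v)^{(m)}$ is a surjective algebra homomorphism for any marked three manifold, in particular for $(M,N)$ the thickening of $\Sigma$; one should also note, as the excerpt does just before Theorem \ref{Fpb}, that when $\Sigma$ is essentially bordered the product on $S_n(\Sigma,v)^{(m)}$ defined in Theorem \ref{structure} agrees with the subalgebra structure inherited from $S_n(\Sigma,v)$, so there is no ambiguity about which algebra structure we mean. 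Then, since $\hat{\cF}$ is a bijective algebra homomorphism of $\mathbb{C}$-algebras, its set-theoretic inverse is automatically a $\mathbb{C}$-algebra homomorphism (this is a standard general fact: if $f$ is a bijective ring homomorphism then $f^{-1}(xy)=f^{-1}(f(f^{-1}(x))f(f^{-1}(y)))=f^{-1}(f(f^{-1}(x)f^{-1}(y)))=f^{-1}(x)f^{-1}(y)$, and similarly for addition and the unit). Hence $\hat{\cF}$ is an isomorphism of algebras.

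So the proof is essentially a one-line assembly: $\hat{\cF}$ is surjective by the definition of $S_n(\Sigma,v)^{(m)}=\mathrm{Im}\,\cF$, injective by Theorem \ref{Injec}(c), and an algebra homomorphism by Theorem \ref{structure} (with the identification of the two algebra structures on the image noted above), therefore an algebra isomorphism. I do not anticipate any real obstacle here; the work has all been done in establishing injectivity of $\cF$ via the exact sequence diagram \eqref{888} and the triangle/bigon cases (Lemmas \ref{lll7.37}, \ref{lll7.38}). The only point requiring a word of care is making explicit that $\hat{\cF}$ lands in, and surjects onto, exactly $S_n(\Sigma,v)^{(m)}$ with its intended algebra structure, which is immediate from the definitions recalled just before this corollary.
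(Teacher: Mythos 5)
Your proof is correct and follows essentially the same route as the paper's (injectivity from Theorem \ref{Injec}, surjectivity from the definition of $S_n(\Sigma,v)^{(m)}$ as the image). The extra remarks about the two algebra structures coinciding and the inverse of a bijective homomorphism being a homomorphism are harmless but not needed beyond what the paper states.
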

\begin{proof}
From Theorem \ref{Injec}, we know $\hat{\cF}$ is injective. We also have $\hat{\cF}$ is  surjective. 
\end{proof}

Let $\Sigma$ be a pb surface.
For each interior puncture $p$, we have two peripheral loops with vertical framing going around $p$, which are denoted as $a_p,b_p$. Clearly both $a_p$ and $b_p$ live in the center of $S_n(\Sigma,v)$.

\begin{definition}
Suppose $\Sigma$ is an essentially bordered pb surface. Let
$Z_n(\Sigma)$ be the subalgebra of $S_n(\Sigma,v)$ generated by $S_n(\Sigma,v)^{(m)}$ and $a_p,b_p$ for all inner punctures $p$. 
\end{definition}

\begin{corollary}
Suppose $\Sigma$ is an essentially bordered pb surface. Then
$Z_n(\Sigma)$ lives in the center of $S_n(\Sigma,v)$. And $Z_n(\Sigma)$ is a finitely generated commutative algebra.
\end{corollary}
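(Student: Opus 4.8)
The plan is to split the statement into its two assertions and dispatch each by assembling results already in hand. For the centrality claim, I would first recall that by Theorem \ref{Fpb} the image $S_n(\Sigma,v)^{(m)} = \mathrm{Im}\,\cF$ lies in the center of $S_n(\Sigma,v)$, and that each peripheral loop $a_p$, $b_p$ around an interior puncture $p$ is central (this was observed immediately before the definition of $Z_n(\Sigma)$). Since the center $C(S_n(\Sigma,v))$ of any associative algebra is a subalgebra, it contains the subalgebra generated by any collection of its elements; applying this to the generating set consisting of $S_n(\Sigma,v)^{(m)}$ together with all $a_p$, $b_p$ gives $Z_n(\Sigma) \subseteq C(S_n(\Sigma,v))$. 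In particular any two elements of $Z_n(\Sigma)$ commute, so $Z_n(\Sigma)$ is a commutative algebra.

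For the finite generation claim, the key input is that the classical limit $S_n(\Sigma,1)$ is a finitely generated $\mathbb{C}$-algebra. First I would note that the fundamental group of an essentially bordered pb surface is a free group of finite rank (the compact surface with nonempty boundary deformation retracts onto a finite wedge of circles, and deleting finitely many interior punctures only adds finitely many free generators), so by Corollary \ref{cli} applied to the thickening of $\Sigma$ we have $S_n(\Sigma,1) \simeq O(SLn)^{\otimes r}$ for some finite $r$; since $O(SLn) = \mathbb{C}[x_{i,j}]/(\det X - 1)$ is generated by the $n^2$ elements $x_{i,j}$, the tensor power, and hence $S_n(\Sigma,1)$, is finitely generated. (If one prefers to argue through $\Gamma_n(M)$ directly: when $\pi_1(M)$ is generated by $g_1,\dots,g_s$, the relations $Q_{[\alpha]}Q_{[\beta]} = Q_{[\alpha*\beta]}$ together with $\det Q_{g_i}=1$ — which makes $Q_{g_i}^{-1}$ polynomial in the entries of $Q_{g_i}$ via the adjugate — show that $\Gamma_n(M)$ is generated by the $sn^2$ entries of $Q_{g_1},\dots,Q_{g_s}$, and then one invokes Theorem \ref{thm5.13}.) Next I would use the Corollary stating that $\hat{\cF}: S_n(\Sigma,1) \to S_n(\Sigma,v)^{(m)}$ is an isomorphism to transport finite generation to $S_n(\Sigma,v)^{(m)}$. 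Finally, since $\Sigma = \overline{\Sigma}\setminus U$ with $U$ finite there are only finitely many interior punctures, so $Z_n(\Sigma)$ is generated over $\mathbb{C}$ by a finite generating set of $S_n(\Sigma,v)^{(m)}$ together with the finitely many elements $a_p$, $b_p$; hence $Z_n(\Sigma)$ is a finitely generated commutative algebra.

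I do not expect a genuine obstacle: the argument is essentially a bookkeeping assembly of Theorem \ref{Fpb}, the isomorphism $\hat{\cF}$, and the classical-limit computation of Corollary \ref{cli}. The one point that deserves an explicit (but routine) verification is the finite generation of the classical limit when phrased via $\Gamma_n(M)$, namely the use of the adjugate formula to see that $Q_{g_i}^{-1}$ has polynomial entries; and one should state cleanly the elementary fact that a subalgebra generated by central elements is both central and commutative.
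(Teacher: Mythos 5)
Your proof is correct and follows essentially the same route as the paper: centrality from Theorem \ref{Fpb} plus the observation that $a_p,b_p$ are central, and finite generation by transporting finite generation of $S_n(\Sigma,1)$ through the isomorphism $\hat{\cF}$ and then adjoining the finitely many $a_p,b_p$. The paper simply asserts the finite generation of $S_n(\Sigma,1)$ without spelling it out, whereas you supply the justification via Corollary \ref{cli} (or the $\Gamma_n$-presentation), but the underlying argument is the same.
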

\begin{proof}
Note that $S_n(\Sigma,v)^{(m)}= Im\hat{\cF} = Im \cF$.  Then
from Theorem \ref{Fpb}, we know $Z_n(\Sigma)$ lives in the center of $S_n(\Sigma,v)$. 
Since $S_n(\Sigma,1)$ is finitely generated as an algebra and $\Sigma$ has finitely many interior punctures, then  
$Z_n(\Sigma)$ is a finitely generated commutative algebra.
\end{proof}

\subsection{Classical shadow}
\begin{definition}(\cite{JS})
A commutative algebra is called affine, if it does not contain nonzero nilpotents and it is finitely generated as an algebra.

\end{definition}

Recall that,
for any commutative algebra $A$, we use MaxSpec$(A)$ to denote the set of maximal ideals of $A$. Then MaxSpec$(A)$ is an affine algebraic variety and $A$ is the coordinate ring of MaxSpec$(A)$, if $A$ is affine \cite{JS}.

\begin{proposition}\label{fp}
Suppose $\Sigma$ is an essentially bordered pb surface. Then both $S_n(\Sigma,v)^{(m)}$ and
$Z_n(\Sigma)$ are  affine algebras.
\end{proposition}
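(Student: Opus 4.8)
The plan is to handle the two algebras separately, in each case reducing the problem to a reduced, finitely generated algebra that is already understood in the paper. Recall that a commutative algebra is \emph{affine} when it is finitely generated as an algebra and has no nonzero nilpotents, so in each case I must check those two conditions.

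Consider first $S_n(\Sigma,v)^{(m)}$. The corollary immediately following Theorem \ref{Injec} provides an algebra isomorphism $\hat{\cF}\colon S_n(\Sigma,1)\to S_n(\Sigma,v)^{(m)}$, so it suffices to prove that $S_n(\Sigma,1)$ is affine. Since $\Sigma$ is an essentially bordered pb surface, its thickening $(M,N)$ has $M$ homotopy equivalent to a finite wedge of circles; hence $\pi_1(M)$ is free, say of rank $r$, and Corollary \ref{cli} gives $S_n(\Sigma,1)\simeq O(SLn)^{\otimes(r+\sharp N-1)}$. Now $O(SLn)$ is the coordinate ring of the smooth irreducible affine variety $SL(n,\mathbb{C})$, hence a finitely generated integral domain, and a tensor product over the algebraically closed field $\mathbb{C}$ of finitely generated domains is again a finitely generated domain; in particular $S_n(\Sigma,1)$ is reduced and finitely generated, i.e. affine. (Equivalently, reducedness forces $\mathrm{Ker}\,\Phi=\sqrt{0}=0$, so $\Phi$ identifies $S_n(\Sigma,1)$ with the coordinate ring $R_n(M,N)$.) Transporting the algebra structure along $\hat{\cF}$, we conclude that $S_n(\Sigma,v)^{(m)}$ is affine.

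For $Z_n(\Sigma)$, the corollary stating that $Z_n(\Sigma)$ is a finitely generated commutative algebra already supplies two of the three required properties, so everything comes down to showing $Z_n(\Sigma)$ has no nonzero nilpotents. Write $R:=S_n(\Sigma,v)^{(m)}$, which is reduced by the previous paragraph, and recall $Z_n(\Sigma)=R[\,a_p,b_p : p\ \text{an inner puncture of }\Sigma\,]$, the $a_p,b_p$ being central and pairwise commuting. I would adjoin the peripheral loops one puncture at a time. Localising near a fixed inner puncture $p$ to a small annular neighbourhood $A_p\subset\Sigma$ and invoking functoriality of $\cF$ and of $A_p\hookrightarrow\Sigma$, one is reduced to the subalgebra of $S_n(A_p,v)$ generated by $\cF(S_n(A_p,1))$ and by $a_p,b_p$; here $S_n(A_p,v)$ is the commutative polynomial algebra on the loops coloured by the fundamental weights, with $a_p$ the loop coloured by the standard representation and $b_p$ the loop with the opposite orientation. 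The point to verify is that $a_p$ (and likewise $b_p$) is integral over $\cF(S_n(A_p,1))$ and satisfies there a monic polynomial that is \emph{separable}, in the sense that its image in every residue field of $R$ has distinct roots, so that $R[a_p]$, and eventually $Z_n(\Sigma)$, is reduced. Injectivity of $\cF$ on the annulus (by the same bigon–functoriality argument as Lemma \ref{lll7.37}) and reducedness of $R$ are the other inputs.

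The main obstacle is precisely this separability statement for the peripheral loops. In the $SL_2$ case it amounts to the identity $T_m(a_p)\in\cF(\text{centre})$ together with the fact that $T_m(t)-w$ has distinct roots in characteristic zero (see \cite{unicity}); for $SL_n$ one needs the analogue comparing the elementary symmetric functions of the eigenvalues of the monodromy around $p$ with their $m$-th powers, the latter being what the Frobenius image records. Once this is pinned down — using that $m$ is coprime to $2n$ and that we are in characteristic zero, so that the relevant "$m$-th power of eigenvalues" map is étale on a dense open set — the remainder of the argument is formal bookkeeping with functoriality, the identification $R\cong S_n(\Sigma,1)$, and the centrality of the $a_p,b_p$.
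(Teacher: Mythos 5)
Your argument for $S_n(\Sigma,v)^{(m)}$ is correct but takes a substantially longer route than necessary: you pass through the classical limit $S_n(\Sigma,1)\simeq O(SLn)^{\otimes(r+\sharp N-1)}$ and invoke that a tensor product of finitely generated domains over $\mathbb{C}$ is again a domain, whence reduced. The paper instead uses a single fact that also disposes of the second half of the statement: by L{\^e}--Yu \cite{leY}, $S_n(\Sigma,v)$ itself is a domain whenever $\Sigma$ is an essentially bordered pb surface. Any subalgebra of a domain is automatically reduced, so both $S_n(\Sigma,v)^{(m)}$ and $Z_n(\Sigma)$ have no nonzero nilpotents, and combined with the finite generation already established this gives affineness in two lines.

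Your treatment of $Z_n(\Sigma)$ contains a genuine gap, which you acknowledge yourself: the separability statement for the peripheral loops $a_p,b_p$ over $\cF(S_n(A_p,1))$ is never proved, and the proposed \'etaleness of the eigenvalue-power map on a dense open set, and its propagation of reducedness through the adjunctions $R\mapsto R[a_p,b_p]$, is nontrivial and not filled in. More to the point, the entire adjunction-by-adjunction analysis is superfluous once you observe that $Z_n(\Sigma)$ is a subalgebra of $S_n(\Sigma,v)$, which is a domain. Reducedness of a subring of a domain requires no integrality or separability considerations whatsoever. You should cite the domain property of $S_n(\Sigma,v)$ from \cite{leY} directly; it short-circuits both halves of your proof.
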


\begin{proof}
We already know both $S_n(\Sigma,v)^{(m)}$ and
$Z_n(\Sigma)$ are finitely generated commutative algebra. Since $S_n(\Sigma,v)$ is a domain \cite{leY}, then both $S_n(\Sigma,v)^{(m)}$ and
$Z(\Sigma)$  have no nonzero nilpotents.

\end{proof}

\begin{rem}\label{rem7.48}
Let $A$ be an algebra, and $Z$ be an affine subalgebra of the center of $A$. We use $\text{Irrep}_{A}$ to denote the set of finite dimensional irreducible  representations considered up to isomorphism (that is, two irreducible representations are considered the same if they are isomorphic).
We can omit the subscript for $\text{Irrep}_{A}$, when there is no confusion with $A$.
Then there is a map $\mathcal{X} :\text{Irrep}\rightarrow \text{MaxSpec}(Z)$ defined as following:
Let $\rho:A\rightarrow End(V)$ be a finite dimensional irreducible representation of $A$.
Since $Z$ is contained in the center of $A$, for every $x\in Z$ there exists a complex number $r_{\rho}(x)$ such that $\rho(x)= r_{\rho}(x)Id_V$. We get an algebra homomorphism 
$r_{\rho}:Z\rightarrow \mathbb{C}$. Then the irreducible representation $\rho$ uniquely determines a
point Ker$ (r_{\rho})$ in MaxSpec($Z$). We define $\mathcal{X}(\rho) = $Ker$ (r_{\rho})$.
\end{rem}

From  Proposition \ref{fp}, we know $Z_n(\Sigma)$ is an affine subalgebra of the center of $S_n(\Sigma,v)$, and
 MaxSpec($Z_n(\Sigma)$) is an affine algebraic variety. Then there is a map 
$$\mathcal{X} :\text{Irrep}\rightarrow \text{MaxSpec}(Z_n(\Sigma)).$$

In next section, we will show the center of $S_n(\Sigma,v)$  is actually affine. And the corresponding map $\mathcal{X}$ is surjective and $\mathcal{X}$ is injective on the preimage of a Zariski open  dense subset. 



\subsection{Frobenius homomorphism for reduced stated $SL(n)$-skein algebra}
The reduced stated $SL(n)$-skein algebra $\overline{S}_n(\Sigma,v)$ is defined in subsection 7.1 in \cite{leY} for any pb surface $\Sigma$, $$\overline{S}_n(\Sigma,v) = S_n(\Sigma,v)/I^{\text{bad}},$$ where 
$I^{\text{bad}}$ is the two sided ideal of $S_n(\Sigma,v)$ generated by all bad arcs.
 We use $\overline{S}_n(\Sigma,v)^{(m)}$ to denote the image of $S_n(\Sigma,v)^{(m)}$ under the projection from $S_n(\Sigma,v)$ to $\overline{S}_n(\Sigma,v)$.
 
For any essentially bordered pb surface $\Sigma$,
clearly $\cF:S_n(\Sigma,1)\rightarrow S_n(\Sigma,v)$ sends the bad arc to $m$ parallel copies of the bad arc. Then 
$\cF$ induces an algebra homomorphism $\overline{\cF}:
\overline{S}_n(\Sigma,1) \rightarrow \overline{S}_n(\Sigma,v) $, and Im$\overline{\cF}
= \overline{S}_n(\Sigma,v)^{(m)}$ lives in the center of 
$\overline{S}_n(\Sigma,v)$.

\def\cB{\mathcal B}

\section{Unicity Theorem for stated $SL(n)$-skein algebras}\label{sss888}

For any algebra $A$, we will use $C(A)$ to denote it's center.

An algebra $A$ is called {\bf prime} if all $a,b\in A$ satisfy the following condition: if $arb = 0$ for  all $r\in A$ then $a = 0$ or $b = 0$. Clearly if $A$ is a domain, then $A$ is prime.
An algebra $A$ is  {\bf almost Azumaya} if there is a nonzero element $c\in C(A)$ such that $A_{c}$ is an Azumaya algebra, where
$A_c$ is the localization of $A$ by $c$ \cite{unicity,korinman2021unicity}.
An algebra $A$ is called {\bf affine almost Azumaya}
if $A$ satisfies the following conditions \cite{korinman2021unicity}: 

(1) $A$ is finitely generated as an algebra

 (2) A is prime,

 (3) A is finitely generated as a module over its center. 

\begin{theorem}(\cite{unicity})
If $A$ is a prime algebra that is finitely generated as a module over its  center then $A$ is almost Azumaya. Especially, affine almost Azumaya algebra is  almost Azumaya.
\end{theorem}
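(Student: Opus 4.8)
The plan is to carry out the standard "spreading out" argument, essentially reproducing the proof in \cite{unicity} (compare also \cite{korinman2021unicity}). Write $Z = C(A)$. Since $A$ is prime, $Z$ is an integral domain: if $z_1 z_2 = 0$ with $z_1, z_2 \in Z$ nonzero then $z_1 A z_2 = A z_1 z_2 = 0$, contradicting primeness. Let $F$ be the field of fractions of $Z$. The first step is to base change to the generic point. Localization at the central multiplicative set $Z \setminus \{0\}$ preserves primeness and, because $A$ is module-finite over $Z$, commutes with taking centers; hence $A_F := A \otimes_Z F$ is a prime $F$-algebra, finite dimensional over $F$, therefore Artinian. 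A prime Artinian ring is simple Artinian, so $A_F \cong M_k(D)$ for a division $F$-algebra $D$, and its center is $C(A)\otimes_Z F = F$. Thus $D$ is a central division algebra over $F$ and $A_F$ is a central simple $F$-algebra, i.e. an Azumaya $F$-algebra.

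Second, I would descend this Azumaya property from $F$ to a localization $Z_c$ of $Z$. Since $A$ is a finite $Z$-module over a domain, generic freeness gives a nonzero $c_1 \in Z$ such that $A_{c_1}$ is free of finite rank $r$ over $Z_{c_1}$, whence $\mathrm{End}_{Z_{c_1}}(A_{c_1}) \cong M_r(Z_{c_1})$. Consider the multiplication map $\mu \colon A \otimes_Z A^{\mathrm{op}} \to \mathrm{End}_Z(A)$, $a \otimes b \mapsto (x \mapsto axb)$. After inverting $c_1$, the source $A_{c_1} \otimes_{Z_{c_1}} A_{c_1}^{\mathrm{op}}$ and the target $M_r(Z_{c_1})$ are both free $Z_{c_1}$-modules, of rank $r^2$, and after $\otimes_{Z_{c_1}} F$ the map $\mu$ becomes the canonical isomorphism witnessing that $A_F$ is Azumaya. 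So $\mu_{c_1}$ is represented by an $r^2 \times r^2$ matrix over $Z_{c_1}$ whose determinant $d$ is nonzero in $Z_{c_1} \subseteq F$; writing $d = z_0 / c_1^{N}$ with $z_0 \in Z$ nonzero and setting $c = c_1 z_0$, the map $\mu_c$ becomes an isomorphism. Combined with the inclusion $Z_c \hookrightarrow A_c$ and the freeness (hence projectivity) of $A_c$ over $Z_c$, this is exactly the assertion that $A_c$ is an Azumaya $Z_c$-algebra, so $A$ is almost Azumaya.

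The "especially" clause is then immediate: by the definition of affine almost Azumaya recalled just above the theorem, such an algebra is in particular prime and finitely generated as a module over its center, so the first part of the statement applies verbatim and yields that it is almost Azumaya.

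The step I expect to be the main obstacle is the bookkeeping in the second paragraph: one must verify carefully that forming centers, forming $\mathrm{End}$, and the multiplication map $\mu$ are all compatible with the successive localizations, and that the principle "a map of finitely generated modules over a domain that becomes an isomorphism over the fraction field becomes an isomorphism after inverting a single element" is applied to modules that are genuinely finitely generated (this is what the generic-freeness reduction secures). Everything here is routine commutative algebra once that reduction is in place, and for a clean write-up I would simply invoke the corresponding statement from \cite{unicity}.
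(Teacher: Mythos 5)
Your argument is correct, and it is the standard route taken in the cited reference \cite{unicity} (the paper itself states this theorem only by citation, without reproducing a proof): first show $Z=C(A)$ is a domain, localize at $Z\setminus\{0\}$ so that a Posner-type argument gives a central simple algebra $A\otimes_Z\operatorname{Frac}(Z)$, then spread out via generic freeness and the determinant of the multiplication map $\mu$ to get $A_c$ Azumaya over $Z_c$. The only point you leave implicit is that primeness of $A$ forces $A$ to be $Z$-torsion-free, which is what makes $A\hookrightarrow A\otimes_Z\operatorname{Frac}(Z)$ and the primeness of the localization go through, but that is a one-line observation and the proof is sound.
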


\begin{lemma}\label{lmlm8.2}
If $A$ is an affine almost Azumaya algebra, then $C(A)$ is an affine algebra.
\end{lemma}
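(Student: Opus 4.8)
The plan is to show that if $A$ is affine almost Azumaya, then $C(A)$ is finitely generated as an algebra and contains no nonzero nilpotents. The second property is immediate: $A$ is prime, hence in particular $C(A)$ has no zero divisors (if $c_1c_2 = 0$ in $C(A)$ with $c_1 \neq 0$, then $c_1 r c_2 = r c_1 c_2 = 0$ for all $r \in A$, forcing $c_2 = 0$); a domain certainly has no nonzero nilpotents. So the whole content is the affineness, i.e. finite generation of $C(A)$ as a $\mathbb{C}$-algebra.

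For the finite generation, the key input is condition (3) in the definition of affine almost Azumaya: $A$ is finitely generated as a module over $C(A)$. Combined with condition (1), that $A$ is finitely generated as an algebra over $\mathbb{C}$, this is exactly the setup of the Artin--Tate lemma (the noncommutative/commutative hybrid version): if $R \subseteq S \subseteq A$ with $R$ a Noetherian commutative ring, $A$ a finitely generated $R$-algebra, and $A$ a finitely generated $S$-module, then $S$ is a finitely generated $R$-algebra. Here I would take $R = \mathbb{C}$, $S = C(A)$, and $A = A$. Since $\mathbb{C}$ is Noetherian, $A$ is a finitely generated $\mathbb{C}$-algebra by (1), and $A$ is a finitely generated $C(A)$-module by (3), the Artin--Tate lemma yields that $C(A)$ is a finitely generated $\mathbb{C}$-algebra. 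One subtlety to address: the usual statement of Artin--Tate assumes $S$ lies in the center of $A$ (or that everything is commutative) so that the module and algebra structures interact correctly — this holds here since $S = C(A)$ is by definition central, so the standard proof (pick algebra generators $a_1, \ldots, a_k$ of $A$ over $\mathbb{C}$ and module generators $b_1, \ldots, b_\ell$ of $A$ over $C(A)$, write $a_i b_j = \sum_t c_{ijt} b_t$ and $b_i b_j = \sum_t d_{ijt} b_t$ with $c_{ijt}, d_{ijt} \in C(A)$, let $C_0$ be the $\mathbb{C}$-subalgebra of $C(A)$ generated by all these finitely many coefficients, observe $A$ is a finitely generated $C_0$-module, $C_0$ is Noetherian, hence $C(A) \subseteq A$ is a finitely generated $C_0$-module, hence a finitely generated $\mathbb{C}$-algebra) goes through verbatim.

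Putting the two pieces together: $C(A)$ is a finitely generated $\mathbb{C}$-algebra with no nonzero nilpotents, which is precisely the definition of an affine algebra given in Definition~8.4 (the ``affine'' definition cited from \cite{JS}). This completes the proof.

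I do not expect a serious obstacle here; the only thing requiring care is invoking the Artin--Tate lemma in the mildly noncommutative setting, i.e. making sure the centrality of $C(A)$ is used to justify that the structure constants $c_{ijt}, d_{ijt}$ can be taken central and that $A$ becomes a module over the Noetherian ring $C_0$. If one prefers to avoid quoting a noncommutative Artin--Tate statement, the self-contained argument sketched above is short enough to include directly. The primeness-to-domain step for $C(A)$ is entirely routine.
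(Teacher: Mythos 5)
Your proof is correct and takes essentially the same route as the paper: the paper cites Lemma 2.1 of \cite{unicity} for the finite-generation step (which is precisely the Artin--Tate argument you spell out) and uses primeness to get that $C(A)$ is a domain, as you do. Your version is just more self-contained; nothing is missing.
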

\begin{proof}
From Lemma 2.1 in \cite{unicity}, we know (1) and (3) in the definition of   affine almost Azumaya algebra indicate 
$C(A)$ is a finitely generated algebra. And (2) indicates $C(A)$ is a domain.
\end{proof}

Thus MaxSpec$(C(A))$ is an irreducible affine algebraic variety if $A$ is an affine almost Azumaya algebra.  

Suppose $A$ is an algebra, and $M$ is a finitely generated $A$-module. We define the dimension of $M$ over $A$ to be the minimum number of generators of $M$ over $A$.

When $A$ is an affine almost Azumaya algebra,
we use $\widetilde{C(A)}$ to denote  the field of fractions of $C(A)$, and use $\widetilde{A}$ to denote the vector space $A\otimes_{C(A)} \widetilde{C(A)}$ over $\widetilde{C(A)}$. Then we define the rank of $A$ over $C(A)$ to be the dimension of $\widetilde{A}$ over $\widetilde{C(A)}$. Obviously we have the rank of $A$ over $C(A)$ is less than or equal to the dimension of $A$ over $C(A)$.

\begin{theorem}\label{tmmm8.3}(\cite{unicity,korinman2021unicity})
If $A$ is an affine almost Azumaya  algebra of rank $K$ and of dimension $r$ over its center $C(A)$, then:

(a) any irreducible representation of $A$ has dimension at most the square root of $K$; 

 (b) The map $ \mathcal{X} :\text{Irrep}\rightarrow  \text{MaxSpec}(C(A))$, defined in Remark \ref{rem7.48}, is surjective, and the cardinality of $\mathcal{X}^{-1}(v)$ is not more than $r$ for any $v\in \text{MaxSpec}(C(A))$;

(c) there exists a Zariski open dense subset $U \subset \text{MaxSpec}(C(A))$ such that for any two irreducible representations $V_1$, $V_2$ of $A$ with $\mathcal{X}(V_1) = \mathcal{X}(V_2)\in U$, then $V_1$ and $V_2$ are isomorphic and have  dimension the square root of $K$. Moreover any representation sending $C(A)$ to scalar operators and  whose induced character lies in $U$ is semi-simple.
\end{theorem}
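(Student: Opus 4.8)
The plan is to obtain (a)--(c) from the structure theory of prime PI rings combined with the Azumaya property guaranteed by the hypotheses. By \cite{unicity} an affine almost Azumaya algebra is almost Azumaya, so I would begin by fixing a nonzero $c\in C(A)$ for which the localization $A_c$ is an Azumaya algebra over $C(A)_c$. Since $A$ is prime, $C(A)$ is a domain (if $a,b\in C(A)$ and $ab=0$ then $aAb=Aab=0$, so $a=0$ or $b=0$), and by Lemma~\ref{lmlm8.2} it is affine, so $\mathrm{MaxSpec}(C(A))$ is an irreducible affine variety with function field $\widetilde{C(A)}$. Base changing to $\widetilde{C(A)}$, the algebra $\widetilde A=A\otimes_{C(A)}\widetilde{C(A)}=A_c\otimes_{C(A)_c}\widetilde{C(A)}$ is Azumaya, hence central simple, over the field $\widetilde{C(A)}$; by Wedderburn $\widetilde A\cong M_d(\widetilde{C(A)})$ with $d^2=\dim_{\widetilde{C(A)}}\widetilde A=K$, so $d=\sqrt K$ is an integer, equal to the PI-degree of $A$.

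For (a): a finite-dimensional irreducible representation $\rho\colon A\to\mathrm{End}(V)$ sends $C(A)$ to scalars, hence factors through the finite-dimensional $\mathbb{C}$-algebra $A/\mathfrak m A$, where $\mathfrak m=\mathcal X(V)$; the image $\rho(A)$ is a simple $\mathbb{C}$-algebra of PI-degree at most $d$, so $\dim_{\mathbb{C}}V\le d=\sqrt K$, which is the standard Kaplansky--Posner bound. For (b): for any $\mathfrak m\in\mathrm{MaxSpec}(C(A))$, finiteness of $A$ over $C(A)$ together with Nakayama give $A/\mathfrak m A\ne 0$; it is spanned over $\mathbb{C}=C(A)/\mathfrak m$ by at most $r$ elements, its simple modules are exactly the irreducible $A$-representations with central character $\mathfrak m$, and it has at least one and at most $\dim_{\mathbb{C}}(A/\mathfrak m A)\le r$ of them, yielding both surjectivity of $\mathcal X$ and the cardinality bound.

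For (c): set $U=\{\mathfrak m\in\mathrm{MaxSpec}(C(A)):c\notin\mathfrak m\}$, a principal open subset, nonempty because $c\ne 0$ and $C(A)$ is a domain, hence Zariski dense. For $\mathfrak m\in U$ the canonical map $A/\mathfrak m A\to A_c/\mathfrak m A_c$ is an isomorphism, and since $A_c$ is Azumaya over $C(A)_c$ with residue field $\mathbb{C}$ at $\mathfrak m$, the fibre $A_c/\mathfrak m A_c$ is a central simple $\mathbb{C}$-algebra, i.e.\ $\cong M_d(\mathbb{C})$ with $d=\sqrt K$. Hence $A/\mathfrak m A$ has a unique simple module, of dimension $d$, giving $V_1\cong V_2$ with $\dim V_i=\sqrt K$; and any representation sending $C(A)$ to scalar operators with character in $U$ factors through $M_d(\mathbb{C})$, which is semisimple, so the representation is semisimple. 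The main obstacle I expect is the commutative-algebra bookkeeping around localization and base change: proving carefully that $\widetilde A$ is central simple with $\dim_{\widetilde{C(A)}}\widetilde A=K$, and that the fibres over the principal open locus $U$ are genuine matrix algebras $M_{\sqrt K}(\mathbb{C})$ — this is essentially Artin's characterization of Azumaya algebras through their fibres, together with flatness of $A_c$ over $C(A)_c$. Once these are in place, the representation-theoretic deductions in (a), (b) and (c) are formal.
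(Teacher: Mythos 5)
The paper does not prove this theorem; it cites it from \cite{unicity,korinman2021unicity}, so there is no internal proof to compare against. Your argument is the standard Azumaya-locus proof and is essentially correct in structure: localize at $c$ to get an Azumaya algebra $A_c$ over $C(A)_c$, pass to the function field to get a central simple algebra whose dimension $K$ fixes the PI-degree $d=\sqrt K$, use the PI bound for (a), use Nakayama plus finite generation for (b), and read off the fibres over the principal open $U=D(c)$ for (c).

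One genuine misstatement, however, is the claim that ``by Wedderburn $\widetilde A\cong M_d(\widetilde{C(A)})$.'' Wedderburn's theorem only gives $\widetilde A\cong M_n(D)$ for a central division algebra $D$ over $\widetilde{C(A)}$; the Brauer class of $\widetilde A$ need not be trivial, since function fields of complex varieties of dimension $\ge 2$ have nontrivial Brauer groups. Fortunately you never actually use the split form: what your argument needs is (i) that $K=\dim_{\widetilde{C(A)}}\widetilde A$ is a perfect square and the PI-degree of $A$ is $\sqrt K$, which follows from $\widetilde A$ being central simple of dimension $K$ because $[D:\widetilde{C(A)}]$ is itself a square and the PI-degree of $M_n(D)$ is $n\sqrt{[D:\widetilde{C(A)}]}$; and (ii) that for $\mathfrak m\in U$ the fibre $A_c/\mathfrak m A_c$ is a full matrix algebra $M_d(\mathbb{C})$, which holds not because $\widetilde A$ splits but because $\mathrm{Br}(\mathbb{C})=0$. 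If you replace the Wedderburn sentence with $\widetilde A\cong M_n(D)$ and note $K=(n\deg D)^2$, the rest of the proof is sound as written.
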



Frohman, Kania-Bartoszynska, and L{\^e} proved $S_2(\Sigma,v)$ is an affine almost Azumaya algebra when $v$ is a root of unity and $\partial\Sigma =\emptyset$ \cite{unicity}. Korinman proved $S_2(\Sigma,v)$ and 
$\overline{S}_2(\Sigma,v)$ are both affine almost Azumaya  when $v$ is a root of unity of odd order  \cite{korinman2021unicity}.

\subsection{More on $S_n(M,N,1)$-module structure for $S_n(M,N,v)$ and 
Unicity Theorem for $SL(n)$}\label{suuu8.1}
In this subsection, we always assume every component of the marked three manifold contains at least one marking, 
 the surface is  essentially bordered pb surface, and $v$ is a root of unity of  order $m$ with $m$ and $2n$ being coprime with each other.
The main goal of this subsection is to show $S_n(\Sigma,v)$ is affine almost Azumaya.

From Subsection \ref{subb7.2}, we know $\SMO$ acts on $\SM$.
This module structure is defined by 
$l\cdot \alpha = l^{(m)}\cup \alpha$
where $l$ is a stated $n$-web consisting of stated framed oriented boundary arcs, $\alpha$ is any stated $n$-web in $S_n(M,N,v)$, and $l\cap\alpha =\emptyset$. 

Let $f:(M_1,N_1)\rightarrow (M_2,N_2)$ be an embedding for marked three manifolds. We know $f$ induces a linear map $f_{*}:S_n(M_1,N_1,v)\rightarrow S_n(M_2,N_2,v)$ and an algebra homomorphism $$f_*: S_n(M_1,N_1,1)\rightarrow S_n(M_2,N_2,1).$$ Then we  have the following Lemma.

\begin{lemma}\label{mmm8.4}
The above linear map $f_{*}:S_n(M_1,N_1,v)\rightarrow S_n(M_2,N_2,v)$ respects the module structures in a sense that 
$$f_*(x\cdot y) = f_*(x)\cdot f_*(y)$$
where $x\in S_n(M_1.N_1,1),y\in S_n(M_1,N_1,v)$.
\end{lemma}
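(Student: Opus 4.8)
The plan is to reduce the claimed identity to the generators of the module on each side. Since every component of $M_1$ contains at least one marking, $S_n(M_1,N_1,1)$ is linearly spanned by stated $n$-webs consisting of stated framed oriented boundary arcs, while $S_n(M_1,N_1,v)$ is linearly spanned by (isotopy classes of) stated $n$-webs. Both sides of the asserted equality are bilinear in $x$ and $y$: the left side because $f_*$ and the action are linear; the right side because $f_*\colon S_n(M_1,N_1,1)\to S_n(M_2,N_2,1)$ is an algebra homomorphism, $f_*$ on $S_n(M_1,N_1,v)$ is linear, and the action is bilinear. Hence it suffices to treat the case where $x=l$ is a stated $n$-web consisting of stated framed oriented boundary arcs and $y=\alpha$ is an arbitrary stated $n$-web.

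First I would put $l$ in a fixed generic position and isotope $\alpha$ so that $\alpha\cap l=\emptyset$ in $M_1$; by Lemma \ref{lmmm7.10} the class $l\cdot\alpha=l^{(m)}\cup\alpha$ does not depend on this isotopy. Applying $f_*$, which sends a stated $n$-web to its image under the embedding $f$, produces the stated $n$-web $f(l^{(m)})\cup f(\alpha)$ in $(M_2,N_2)$. Because $f$ is orientation preserving and carries $N_1$ to $N_2$ respecting orientations (hence respecting the velocity-vector framings at endpoints), it maps a small tubular neighborhood of $l$ to a tubular neighborhood of $f(l)$, so $f(l^{(m)})$ is isotopic to $f(l)^{(m)}$, and $f(l)$ is again a stated $n$-web consisting of stated framed oriented boundary arcs; moreover $f(l)\cap f(\alpha)=\emptyset$ by injectivity of $f$. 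Therefore
\begin{equation*}
f_*(l\cdot\alpha)\;=\;f_*\bigl(l^{(m)}\cup\alpha\bigr)\;=\;f(l)^{(m)}\cup f(\alpha)\;=\;f_*(l)\cdot f_*(\alpha),
\end{equation*}
the last equality being the definition of the $S_n(M_2,N_2,1)$-action on $S_n(M_2,N_2,v)$ together with the facts that $f_*(l)$ is represented by $f(l)$ and $f_*(\alpha)$ by $f(\alpha)$.

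The argument is essentially bookkeeping built on functoriality and Lemma \ref{lmmm7.10}, and I do not expect a serious obstacle. The one step deserving care is the compatibility of $f$ with the two ingredients defining the action: taking $m$ parallel copies of an arc along its framing, and forming a disjoint union inside a tubular neighborhood. Both are preserved because a morphism of marked three manifolds preserves the orientation of the ambient manifold and the oriented marking together with its framing data; making this precise (that parallel copies go to parallel copies up to isotopy, and that $f_*$ restricts to an algebra homomorphism on the classical limit sending boundary-arc webs to boundary-arc webs) is the only place the proof uses more than formal manipulation.
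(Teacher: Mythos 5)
Your proposal is correct and follows the same route as the paper: reduce to the case where $x$ is a stated $n$-web of boundary arcs and $y$ is a stated $n$-web disjoint from it, then observe that $f_*$ commutes with taking $m$ parallel copies along the framing and with disjoint union, yielding $f_*(l^{(m)}\cup\alpha)=f(l)^{(m)}\cup f(\alpha)$. The paper's proof is just the final three-term computation; your write-up adds the bilinearity reduction and the tubular-neighborhood justification for $f(l^{(m)})\simeq f(l)^{(m)}$, which is exactly the content left implicit there.
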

\begin{proof}
Let 
$\alpha$ be a stated $n$-web consisting of stated framed oriented boundary arcs, $\beta$ be any stated $n$-web in $S_n(M,N,v)$, and $\alpha\cap\beta =\emptyset$. Then we have 
$$f_*(\alpha\cdot\beta) = f_*(\alpha^{(m)}\cup\beta
) = (f_*(\alpha))^{(m)}\cup f_*(\beta) = f_*(\alpha)\cdot f_*(\beta).$$
\end{proof}

\begin{lemma}\label{lmlm8.5}
Let $f:(M_1,N_1)\rightarrow (M_2,N_2)$ be an embedding for marked three manifolds. Suppose the linear map $f_*: S_n(M_1,N_1,v)\rightarrow S_n(M_2,N_2,v)$ is surjective, and
 $S_n(M_1,N_1,v)$ is finitely generated as an $S_n(M_1,N_1,1)$-module. Then 
 $S_n(M_2,N_2,v)$ is finitely generated as an $S_n(M_2,N_2,1)$-module, and the dimension of 
$S_n(M_2,N_2,v)$  as an $S_n(M_2,N_2,1)$-module is not more than the dimension of 
$S_n(M_1,N_1,v)$  as an $S_n(M_1,N_1,1)$-module.
\end{lemma}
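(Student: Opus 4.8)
The plan is to push a finite generating set forward along $f_*$. First I would invoke Lemma \ref{mmm8.4}, which is exactly the structural input needed: it says that $f_*\colon S_n(M_1,N_1,v)\to S_n(M_2,N_2,v)$ is a module homomorphism lying over the algebra homomorphism $f_*\colon S_n(M_1,N_1,1)\to S_n(M_2,N_2,1)$ from the functoriality discussion, i.e. $f_*(a\cdot y)=f_*(a)\cdot f_*(y)$ for all $a\in S_n(M_1,N_1,1)$ and $y\in S_n(M_1,N_1,v)$. No other ingredient beyond the hypothesized surjectivity of $f_*$ is required.

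Next I would fix a finite set $y_1,\dots,y_r\in S_n(M_1,N_1,v)$ generating $S_n(M_1,N_1,v)$ as an $S_n(M_1,N_1,1)$-module, chosen with $r$ equal to the dimension of that module. The claim is that $f_*(y_1),\dots,f_*(y_r)$ generates $S_n(M_2,N_2,v)$ as an $S_n(M_2,N_2,1)$-module. To see this, take an arbitrary $z\in S_n(M_2,N_2,v)$; by surjectivity of $f_*$ there is $y\in S_n(M_1,N_1,v)$ with $f_*(y)=z$, and writing $y=\sum_{i=1}^{r} a_i\cdot y_i$ with $a_i\in S_n(M_1,N_1,1)$ and applying Lemma \ref{mmm8.4} yields $z=f_*(y)=\sum_{i=1}^{r} f_*(a_i)\cdot f_*(y_i)$, which is an $S_n(M_2,N_2,1)$-linear combination of the $f_*(y_i)$ since each $f_*(a_i)$ lies in $S_n(M_2,N_2,1)$.

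Consequently $S_n(M_2,N_2,v)$ is finitely generated over $S_n(M_2,N_2,1)$ by at most $r$ elements, so its dimension over $S_n(M_2,N_2,1)$ is at most $r$, the dimension of $S_n(M_1,N_1,v)$ over $S_n(M_1,N_1,1)$, as desired. I do not expect any real obstacle in this lemma: once Lemma \ref{mmm8.4} is available the argument is a short diagram chase. The only point worth verifying carefully is that the induced map on the classical ($v=1$) skein modules really is an algebra homomorphism, so that the scalars $f_*(a_i)$ genuinely act through $S_n(M_2,N_2,1)$; this is recorded in the functoriality subsection, and together with Lemma \ref{mmm8.4} it closes the argument.
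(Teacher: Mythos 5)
Your proof is correct and takes essentially the same route as the paper's: push a finite generating set forward along $f_*$, using Lemma \ref{mmm8.4} for compatibility of the module structures and surjectivity of $f_*$ to cover $S_n(M_2,N_2,v)$. You merely spell out the lift-and-expand step that the paper leaves implicit.
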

\begin{proof}
Suppose $S_n(M_1,N_1,v)$ is  generated by $x_1,x_2,\dots, x_{t}$ as an $S_n(M_1,N_1,1)$-module.
From Lemma \ref{mmm8.4}, we get $S_n(M_2,N_2,v)$ is  generated by $$f_*(x_1),f_*(x_2),\dots,f_*( x_{t})$$ as an $S_n(M_2,N_2,1)$-module because  $f_*: S_n(M_1,N_1,v)\rightarrow S_n(M_2,N_2,v)$ is surjective.
\end{proof}

\begin{rem}\label{new}
For an essentially bordered pb surface $\Sigma$, we know $\hF:S_n(\Sigma,1)\rightarrow S_n(\Sigma,v)^{(m)}$ is an isomorphism, and $S_n(\Sigma,v)^{(m)}$ lives in the center of $S_n(\Sigma,v)$. Thus $S_n(\Sigma,v)$ has an
$S_n(\Sigma,v)^{(m)}$-module structure given by multiplication. $S_n(\Sigma,v)$ also has an $S_n(\Sigma,1)$-module structure.
 Then the identity map $Id: S_n(\Sigma,v)\rightarrow S_n(\Sigma,v)$ preserves the above two module structures, in a sense that, for any $x\in S_n(\Sigma,1),y\in S_n(\Sigma,v)$ we have $x\cdot y = \hF(x)y$.
\end{rem}

\begin{lemma}\label{lmlm8.6}
$S_n(\cB,v)$ is finitely generated as an $S_n(\cB,1)$-module. And the dimension of $S_n(\cB,v)$  as an $S_n(\cB,1)$-module is at most $m^{n^2} - (m-1)^{n} m^{n^2-n}$.
\end{lemma}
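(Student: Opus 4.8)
The plan is to transport everything to $O_q(SLn)$. Since $\cB$ is the bigon, Theorem~\ref{t.Hopf} gives $S_n(\cB,v)\cong O_q(SLn)$ and $S_n(\cB,1)\cong O(SLn)$, and the computation in the proof of Lemma~\ref{lll7.37} shows that under these identifications $\cF$ becomes the Frobenius embedding $F_n\colon O(SLn)\hookrightarrow O_q(SLn)$, $x_{i,j}\mapsto (u_{i,j})^m$; in particular $S_n(\cB,v)^{(m)}=\mathrm{Im}\,\cF$ corresponds to the subalgebra $Z\subseteq O_q(SLn)$ generated by $\{(u_{i,j})^m\}_{i,j\in\mathbb{J}}$. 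By Lemma~\ref{mmm2.4}(c) each $(u_{i,j})^m$ is central in $O_q(SLn)$ (here $q=v^{2n}$ is again a primitive $m$-th root of unity and $m$ is odd, as $\gcd(m,2n)=1$), so $Z$ is central. Because $\hat{\cF}\colon S_n(\cB,1)\to S_n(\cB,v)^{(m)}$ is an isomorphism and $S_n(\cB,v)^{(m)}$ is central (Theorem~\ref{Fpb}), Remark~\ref{new} identifies the $S_n(\cB,1)$-module $S_n(\cB,v)$ with $O_q(SLn)$ viewed as a module over $Z$ by left multiplication. Hence it suffices to exhibit a generating set of $O_q(SLn)$ over $Z$ of cardinality at most $m^{n^2}-(m-1)^n m^{n^2-n}$.

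For this I would invoke the PBW basis of Proposition~\ref{r.basisOq}. Fix a linear order on $\mathbb{J}^2$; then $\{\,b(\mu)=\prod_{(i,j)\in\mathbb{J}^2}(u_{i,j})^{\hat\mu_{ij}}\mid \mu\in\Gamma=\Mat_n(\BN)/(\Id)\,\}$ is a basis of $O_q(SLn)$, where $\hat\mu$ is the distinguished lift with $\min_i\hat\mu_{ii}=0$. For each $\mu$, apply the division algorithm entrywise, $\hat\mu_{ij}=m\,c_{ij}+r_{ij}$ with $0\le r_{ij}\le m-1$, and split $(u_{i,j})^{\hat\mu_{ij}}=\big((u_{i,j})^m\big)^{c_{ij}}(u_{i,j})^{r_{ij}}$. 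Since each $(u_{i,j})^m$ is central, the high powers can be pulled out of the ordered product, giving
\[
b(\mu)=\Big(\textstyle\prod_{(i,j)}\big((u_{i,j})^m\big)^{c_{ij}}\Big)\cdot\Big(\textstyle\prod_{(i,j)}(u_{i,j})^{r_{ij}}\Big),
\]
with the first factor in $Z$ and the second an ordered monomial all of whose exponents lie in $\{0,\dots,m-1\}$. Moreover $\min_i\hat\mu_{ii}=0$ forces $\hat\mu_{i_0i_0}=0$, hence $r_{i_0i_0}=0$, for some $i_0$. Consequently $O_q(SLn)=\sum_{r}Z\cdot\prod_{(i,j)}(u_{i,j})^{r_{ij}}$, the sum taken over all matrices $r=(r_{ij})$ with $0\le r_{ij}\le m-1$ having at least one zero on the diagonal.

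It then remains to count such $r$: among the $m^{n^2}$ matrices with entries in $\{0,\dots,m-1\}$, those with \emph{no} zero on the diagonal have each of the $n$ diagonal entries in $\{1,\dots,m-1\}$ and each of the remaining $n^2-n$ entries arbitrary, i.e.\ $(m-1)^n m^{n^2-n}$ of them. So the generating set just produced has at most $m^{n^2}-(m-1)^n m^{n^2-n}$ elements, which yields both finite generation and the stated bound on the dimension of $S_n(\cB,v)$ over $S_n(\cB,1)$.

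I expect the only delicate points to be bookkeeping ones: checking that $q=v^{2n}$ has odd order $m$ so that Lemma~\ref{mmm2.4} applies (this is precisely what makes the $(u_{i,j})^m$ central and licenses moving them past the fixed-order product), and verifying that after the division algorithm the distinguished-lift convention of Proposition~\ref{r.basisOq} translates into exactly the condition ``$r$ has a vanishing diagonal entry'' rather than something weaker or stronger. Neither should present a genuine difficulty.
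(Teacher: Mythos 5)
Your proposal is correct and follows essentially the same route as the paper: both use the PBW basis of Proposition~\ref{r.basisOq} (transported through $f_{big}$ or $\cev{\,\cdot\,}$), reduce exponents modulo $m$, and count the ordered monomials with entries in $\{0,\dots,m-1\}$ having a zero diagonal entry, obtaining the same bound $m^{n^2}-(m-1)^n m^{n^2-n}$. You simply make explicit two steps the paper leaves implicit — the entrywise division algorithm and the use of centrality of $(u_{i,j})^m$ to extract the $Z$-factor from the ordered product — which is a reasonable amount of detail to add but not a different argument.
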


\begin{proof}
Let $d_{ord}$ be a linear order on set $\mathbb{J}^{2}$.
From Proposition \ref{r.basisOq} and Theorem \ref{t.Hopf},
we know the set
$$\{\prod_{(i,j)\in \mathbb{J}^2} (b_{i,j})^{\hat m _{ij}} \mid m \in \Gamma = \Mat_n(\BN)/(\Id)\}$$
is a basis for $S_n(\cB,v)$, where the product is taken with respect to the order $d_{ord}$, and 
$$\{\prod_{(i,j)\in \mathbb{J}^2} (b_{i,j})^{m \hat m _{ij}} \mid m \in \Gamma = \Mat_n(\BN)/(\Id)\}$$
is a basis for $S_n(\cB,v)^{(m)}$. Clearly $S_n(\cB,v)$ is generated by the set
\begin{equation}\label{q28}
\{\prod_{(i,j)\in \mathbb{J}^2} (b_{i,j})^{ m _{ij}} \mid m_{ij}\in\mathbb{Z},0\leq m_{i,j}\leq m-1,\exists\, i\in\mathbb{J}\text{ s.t. } m_{ii} = 0\}
\end{equation} 
as an $S_n(\cB,v)^{(m)}$-module, where the product is taken with respect to the order $d_{ord}$. And the cardinality of the set in (\ref{q28}) is $m^{n^2} - (m-1)^{n} m^{n^2-n}$. 

Then Remark \ref{new} indicates Lemma \ref{lmlm8.6}.

\end{proof}

Note that when $n>1$, the set in (\ref{q28}) is not independent. 
 We will use $<n,m>$ to denote the integer $m^{n^2} - (m-1)^{n} m^{n^2-n}$.

\begin{lemma}\label{lmlm8.7}
Suppose $\Sigma$ is the disjoint union of  $k$ bigons, $k\geq 1$. Then $S_n(\Sigma,v)$ is finitely generated as an $S_n(\Sigma,1)$-module. And the dimension of $S_n(\Sigma,v)$  as an $S_n(\Sigma,1)$-module is at most $(<n,m>)^{k}$.
\end{lemma}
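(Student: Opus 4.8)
\textbf{Proof proposal for Lemma \ref{lmlm8.7}.}

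The plan is to reduce the case of $k$ disjoint bigons to the single-bigon case (Lemma \ref{lmlm8.6}) by exploiting the multiplicativity of both the stated $SL(n)$-skein module and the module structure under disjoint unions. First I would write $\Sigma = \fB_1 \sqcup \fB_2 \sqcup \dots \sqcup \fB_k$, where each $\fB_t$ is a copy of the bigon. The thickening of $\Sigma$ is then the disjoint union of the thickenings of the $\fB_t$, so by the tensor-product property recalled in the Introduction we have $S_n(\Sigma,v) = S_n(\fB_1,v)\otimes \dots \otimes S_n(\fB_k,v)$ and likewise $S_n(\Sigma,1) = S_n(\fB_1,1)\otimes \dots \otimes S_n(\fB_k,1)$, both as algebras. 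Moreover, the $S_n(\Sigma,1)$-module structure on $S_n(\Sigma,v)$ constructed in Subsection \ref{subb7.2} is defined by disjoint union of webs (taking $m$ parallel copies of the arc-web from the first factor); since a stated $n$-web in a disjoint union is itself a disjoint union of stated $n$-webs, one in each component, this module structure is precisely the tensor product of the $k$ module structures $S_n(\fB_t,1)\curvearrowright S_n(\fB_t,v)$.

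Second, I would invoke Lemma \ref{lmlm8.6}, which gives that each $S_n(\fB_t,v)$ is generated as an $S_n(\fB_t,1)$-module by a set of cardinality at most $<n,m>$. If $\{x^{(t)}_1,\dots,x^{(t)}_{r_t}\}$ with $r_t \le\, <n,m>$ generates $S_n(\fB_t,v)$ over $S_n(\fB_t,1)$, then the set of tensor products $\{x^{(1)}_{i_1}\otimes \dots \otimes x^{(k)}_{i_k}\}$ generates the tensor product module $S_n(\Sigma,v)$ over $S_n(\Sigma,1)$: indeed, any element of $S_n(\Sigma,v)$ is a $\mathbb{C}$-linear combination of pure tensors $y_1\otimes\dots\otimes y_k$, each $y_t = \sum_i a^{(t)}_i \cdot x^{(t)}_i$ with $a^{(t)}_i\in S_n(\fB_t,1)$, and expanding the tensor product of these expressions and using that $(a_1\otimes\dots\otimes a_k)\cdot(x_1\otimes\dots\otimes x_k) = (a_1\cdot x_1)\otimes\dots\otimes(a_k\cdot x_k)$ exhibits $y_1\otimes\dots\otimes y_k$ as an $S_n(\Sigma,1)$-combination of the proposed generating set. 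Hence $S_n(\Sigma,v)$ is finitely generated over $S_n(\Sigma,1)$ with at most $\prod_{t=1}^k r_t \le (<n,m>)^k$ generators, which is the claimed bound.

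The main thing to be careful about — and the only genuine obstacle — is verifying that the module structure on the tensor product factors as a tensor product of module structures; this requires checking that taking $m$ parallel copies and disjoint union in $S_n(\Sigma,v)$ is compatible componentwise, which follows since the $m$-parallel-copy operation $\alpha\mapsto\alpha^{(m)}$ and the cup product $\cup$ are both local and respect the decomposition into connected components of $\Sigma\times[-1,1]$. I would also note for completeness the degenerate bookkeeping point that the bound $<n,m> = m^{n^2} - (m-1)^n m^{n^2-n}$ is $\ge 1$, so the $k=1$ case recovers Lemma \ref{lmlm8.6} and the induction (or the direct product argument above) is consistent. No delicate estimate is needed beyond multiplying the single-component bounds.
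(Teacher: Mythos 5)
Your proof is correct and matches the paper's approach: the paper's proof consists solely of a citation to Lemma \ref{lmlm8.6}, leaving the tensor-product-over-disjoint-union bookkeeping implicit, and your write-up supplies exactly the standard argument the author had in mind. Your care in verifying that the $S_n(\Sigma,1)$-module structure from Subsection \ref{subb7.2} factors componentwise (via locality of $\alpha\mapsto\alpha^{(m)}$ and $\cup$) is the one non-trivial check, and you handle it correctly.
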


\begin{proof}
Lemma \ref{lmlm8.6}.
\end{proof}

Suppose $\Sigma$ is an essentially bordered pb  surface. Let  $B = \{b_1,\dots,b_r \}$ be the collection of properly embedded
disjoint compact oriented arcs in $\Sigma.$ We say $B$ is {\bf saturated} if we have (1) after   cutting $\Sigma$ along $B$, every component of the cutting surface contains exactly one ideal point (2) $B$ is maximal under condition (1).

Let $U(b_1),\dots,U(b_r)$ be a collection of disjoint open tubular neighborhoods of $b_1,\dots,b_r$,  respectively. Each $U(b_i)$ is diffeomorphic with $b_i \times(0,1)$ (the diffeomorphism is orientation preserving)  and we require that $(\partial b_i) \times (0,1) \subset\partial\Sigma$. Set $U(B) = \cup_{1\leq i\leq r}U(b_i)$. Note that each $U(b_i)$ is naturally a bigon, then $U(B)$ is the disjoint union of bigons. From \ref{key} to \ref{keyth}, we will always assume $\Sigma$ is an essentially bordered pb surface.

\begin{theorem}(\cite{le2021stated})\label{key}
Assume  $B = \{b_1,\dots,b_r \}$ is a  saturated system of arcs of $\Sigma$.

 (1) We have $r = r(\Sigma) \triangleq \sharp (\partial \Sigma) - E(\Sigma)$, where $\sharp( \partial \Sigma)$ is the number of boundary components  of $\Sigma$ and $E(\Sigma)$ denotes the Euler characteristics of $\Sigma$.

 (2) The embedding $U(B)\rightarrow \Sigma$ with negative b-orderings for all boundary edges $b$ of $\Sigma$,  induces a linear isomorphism.
\end{theorem}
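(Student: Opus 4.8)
The final statement is Theorem~\ref{key}, which asserts two things about a saturated system of arcs $B = \{b_1,\dots,b_r\}$ in an essentially bordered pb surface $\Sigma$: first, the combinatorial identity $r = r(\Sigma) = \sharp(\partial\Sigma) - E(\Sigma)$; and second, that the height-ordered embedding $U(B)\hookrightarrow\Sigma$ (with negative $b$-orderings on all boundary edges) induces a linear isomorphism on stated $SL(n)$-skein algebras. Since this theorem is attributed to \cite{le2021stated}, the plan is to reconstruct the argument of L\^e and Sikora rather than to produce anything new, so I will sketch the structure.

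For part (1), the plan is a pure Euler-characteristic count. Cutting $\Sigma$ along the $r$ disjoint properly embedded arcs in $B$ produces a surface $\Sigma_B$ with $E(\Sigma_B) = E(\Sigma) + r$ (each arc cut raises Euler characteristic by $1$). By the saturation condition, every component of $\Sigma_B$ is a disk with exactly one ideal point on its boundary, hence a monogon-type piece with Euler characteristic $1$; so $E(\Sigma_B)$ equals the number of components of $\Sigma_B$, which in turn equals the number of ideal points, i.e. the number of boundary components $\sharp(\partial\Sigma)$ (here one uses that $\Sigma$ is essentially bordered so every boundary component carries at least one puncture, and a saturated cut separates the surface into one piece per ideal point). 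Combining, $\sharp(\partial\Sigma) = E(\Sigma) + r$, which gives $r = \sharp(\partial\Sigma) - E(\Sigma) = r(\Sigma)$. Maximality of $B$ is exactly what forces each cut piece to be a single once-punctured disk, so this needs a short argument that no further non-boundary-parallel arc can be added.

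For part (2), the plan is to use the splitting/gluing machinery already set up in the excerpt. Cutting $\Sigma$ along all the arcs $b_i$ and recording that $U(B)$ is a disjoint union of bigons while $\Sigma_B = \mathrm{Cut}_B(\Sigma)$ is a disjoint union of once-punctured disks (each of which has a trivial skein algebra, $S_n \cong \mathbb{C}$ after removing unimportant boundary, since a once-punctured monogon's stated skein algebra is $\mathbb{C}$), one wants to show the composite
\[
S_n(U(B),v)\ \longrightarrow\ S_n(\Sigma,v)
\]
is an isomorphism. The first step is to invoke Lemma~\ref{llmm2.2} (commutativity of splitting maps) together with Proposition~\ref{inj} (injectivity of the splitting map for essentially bordered pb surfaces) to get an injection $S_n(\Sigma,v)\hookrightarrow S_n(\Sigma_B, v)\otimes \bigotimes_i S_n(\fB,v)$, identifying $S_n(\Sigma,v)$ with the equalizer of the coaction maps as in Lemma~\ref{exact}. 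The second step is to show the image of the embedding $S_n(U(B),v)\to S_n(\Sigma,v)$ is all of $S_n(\Sigma,v)$: every stated $n$-web in $\Sigma$ can be isotoped off the once-punctured-disk pieces of $\Sigma_B$ and into $U(B)$ (using relations \eqref{wzh.four}, \eqref{wzh.five} to kill sinks/sources and push arcs across, exactly the kind of move used in the $QF/Cut$ discussion of Subsection~\ref{sbbbbbbb6.1}), so the map is surjective; injectivity then follows from the splitting-map injectivity applied to the $b_i$. The negative $b$-ordering choice is what makes the height-ordered embedding well-defined and compatible with the chosen splitting conventions.

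I expect the main obstacle to be part (2), specifically the surjectivity/identification step: carefully justifying that the height-ordered embedding $U(B)\hookrightarrow\Sigma$ is not just injective but hits everything, which requires a normal-form argument for stated $n$-webs relative to the arc system $B$ (push all web material out of the once-punctured disks and into the bigon neighborhoods, resolving crossings and vertices via the defining skein relations). The combinatorial part~(1) and the injectivity half of part~(2) are routine given the tools already in the excerpt; the bookkeeping of orderings and the normal-form reduction is where the real work lies, and this is precisely the content of the cited result in \cite{le2021stated}, so in the paper it suffices to cite it and record the statement for later use.
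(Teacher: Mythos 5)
The paper itself offers no proof of this theorem --- it is simply cited from \cite{le2021stated} --- so there is no ``paper's approach'' to compare yours against; what I can do is point out two concrete problems with your reconstruction.

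Your sketch of part~(1) contains a real error. You assert that after cutting along $B$ ``every component of $\Sigma_B$ is a disk with exactly one ideal point on its boundary, hence a monogon-type piece with Euler characteristic~$1$,'' and that the number of components equals $\sharp(\partial\Sigma)$. Neither claim is true once $\Sigma$ has interior punctures, which the definition of a pb surface allows. Take the once-punctured monogon $P_{1,1}$: here $\sharp(\partial\Sigma)=1$, $E(\Sigma)=0$, so $r(\Sigma)=1$, and cutting along the single saturated arc produces \emph{two} pieces --- a monogon with the boundary ideal point ($E=1$) and a once-punctured disk containing the interior ideal point ($E=0$). So the number of pieces is the total number of ideal points (boundary plus interior), not $\sharp(\partial\Sigma)$, and not every piece has $E=1$. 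Your equation $E(\Sigma_B)=\sharp(\partial\Sigma)$ still turns out to be correct, but only because the interior-puncture pieces have $E=0$ and contribute nothing; your stated argument (``number of pieces $\cdot 1 = \sharp(\partial\Sigma)$'') would give the wrong count whenever $u_i>0$. The fix is straightforward: with $u_b$ boundary and $u_i$ interior ideal points, the pieces split into $u_b$ monogons with $E=1$ and $u_i$ once-punctured disks with $E=0$, so $E(\Sigma)+r=E(\Sigma_B)=u_b=\sharp(\partial\Sigma)$.

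Part~(2) of your sketch has a related issue. You claim the complementary pieces ``each have trivial skein algebra, $S_n\cong\mathbb{C}$,'' but a once-punctured disk (the piece around an interior ideal point) has a nontrivial stated skein algebra --- peripheral loops around the puncture are nonzero. More to the point, the pieces of $\Sigma\setminus U(B)$ are not what you are cutting out by the splitting map along ideal arcs, and invoking Lemma~\ref{exact} (which is about ideal triangulations) does not directly apply to a cut along the arcs $b_i$, whose endpoints lie on boundary edges rather than at ideal points. The actual proof in \cite{le2021stated} does show that stated $n$-webs can be isotoped into $U(B)$ and identifies the resulting linear isomorphism, but it does not go through the triangulation exact sequence; you would need a direct normal-form argument relative to the arc system $B$ (as you correctly anticipate in your final paragraph), plus a well-definedness check for the negative $b$-orderings. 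As it stands, your sketch mixes the $b_i$-cutting and ideal-triangulation machineries in a way that does not quite fit together.
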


\begin{proposition}\label{finite}
We have 

(a) $S_n(\Sigma,v)$ is finitely generated as an
$S_n(\Sigma,v)^{(m)}$-module, and the dimension of $S_n(\Sigma,v)$ over $S_n(\Sigma,v)^{(m)}$ is at most $(<n,m>)^{r(\Sigma)}$.

(b) $\overline{S}_n(\Sigma,v)$ is finitely generated as an
$\overline{S}_n(\Sigma,v)^{(m)}$-module, and the dimension of $\overline{S}_n(\Sigma,v)$ over $\overline{S}_n(\Sigma,v)^{(m)}$ is at most $(<n,m>)^{r(\Sigma)}$.
\end{proposition}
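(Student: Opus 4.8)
The plan is to reduce the global statement to the local model computed in Lemma~\ref{lmlm8.7} by means of a saturated system of arcs. First I would invoke Theorem~\ref{key}: choose a saturated system $B = \{b_1,\dots,b_r\}$ of arcs for $\Sigma$, with $r = r(\Sigma) = \sharp(\partial\Sigma) - E(\Sigma)$. Then $U(B)$ is a disjoint union of $r$ bigons, and the inclusion $U(B) \hookrightarrow \Sigma$ (with the prescribed negative $b$-orderings on all boundary edges) induces a linear \emph{isomorphism} $f_*\colon S_n(U(B),v) \xrightarrow{\ \sim\ } S_n(\Sigma,v)$. In particular $f_*$ is surjective, which is the hypothesis needed to apply Lemma~\ref{lmlm8.5}.

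Next I would assemble the pieces for part (a). By Lemma~\ref{lmlm8.7} applied to the disjoint union of $r$ bigons, $S_n(U(B),v)$ is finitely generated as an $S_n(U(B),1)$-module with dimension at most $(\langle n,m\rangle)^{r}$. Since $f_*$ is surjective and compatible with the module structures (Lemma~\ref{mmm8.4}), Lemma~\ref{lmlm8.5} gives that $S_n(\Sigma,v)$ is finitely generated as an $S_n(\Sigma,1)$-module of dimension at most $(\langle n,m\rangle)^{r(\Sigma)}$. Finally I would translate this from ``$S_n(\Sigma,1)$-module'' to ``$S_n(\Sigma,v)^{(m)}$-module'': by the Corollary following Theorem~\ref{Injec}, $\hat{\cF}\colon S_n(\Sigma,1)\to S_n(\Sigma,v)^{(m)}$ is an algebra isomorphism, and by Remark~\ref{new} the $S_n(\Sigma,1)$-action on $S_n(\Sigma,v)$ is precisely the $S_n(\Sigma,v)^{(m)}$-module structure (given by multiplication) transported along $\hat{\cF}$. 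Hence the two module structures have the same generating sets, and the bound on the dimension of $S_n(\Sigma,v)$ over $S_n(\Sigma,v)^{(m)}$ is the same integer $(\langle n,m\rangle)^{r(\Sigma)}$.

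For part (b) I would run the same argument through the reduced quotient. The projection $S_n(\Sigma,v)\twoheadrightarrow \overline{S}_n(\Sigma,v)$ sends $S_n(\Sigma,v)^{(m)}$ onto $\overline{S}_n(\Sigma,v)^{(m)}$, and it sends a generating family of $S_n(\Sigma,v)$ over $S_n(\Sigma,v)^{(m)}$ to a generating family of $\overline{S}_n(\Sigma,v)$ over $\overline{S}_n(\Sigma,v)^{(m)}$; since $\overline{\cF}$ makes $\overline{S}_n(\Sigma,v)^{(m)}$ central, this is again a module over a central affine subalgebra with the same cardinality of generators, so the dimension bound $(\langle n,m\rangle)^{r(\Sigma)}$ persists. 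Alternatively one can observe that the saturated-system isomorphism of Theorem~\ref{key} descends to the reduced skein algebras (the bad-arc ideal is generated locally and is respected by $f_*$), and then apply Lemma~\ref{lmlm8.5} to the reduced version of Lemma~\ref{lmlm8.7}; either route works.

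I expect the main obstacle to be purely bookkeeping rather than conceptual: one must make sure that the height/boundary-ordering conventions in the isomorphism of Theorem~\ref{key} are exactly the ones under which $f_*$ intertwines the $v$-skein map with the classical-limit algebra homomorphism $f_*\colon S_n(U(B),1)\to S_n(\Sigma,1)$, so that Lemma~\ref{mmm8.4} applies verbatim; and for part (b) one must check that passing to the reduced algebra does not interfere with the module-finiteness, i.e.\ that the images of the local generators still span. Both are routine given the functoriality statements already established (Lemmas~\ref{mmm8.4}, \ref{lmlm8.5} and Remark~\ref{new}), so the proof is essentially a chain of citations once the saturated system is fixed.
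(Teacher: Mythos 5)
Your proposal is correct and follows essentially the same route as the paper's proof: invoke the saturated-arc isomorphism of Theorem~\ref{key}, apply the bigon computation (Lemma~\ref{lmlm8.7}) to $U(B)$, transfer finite generation along the surjection via Lemma~\ref{lmlm8.5} (with Lemma~\ref{mmm8.4} guaranteeing module compatibility), and translate between the $S_n(\Sigma,1)$-action and the $S_n(\Sigma,v)^{(m)}$-action via Remark~\ref{new}; part (b) is then obtained by passing generators through the quotient. The paper leaves all of this implicit in a one-line citation of the same lemmas, so your writeup is a correct and somewhat more detailed rendering of the same argument.
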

\begin{proof}
From 
Lemmas \ref{lmlm8.5}, \ref{lmlm8.7}, and Theorem \ref{key}, and Remark \ref{new}, we can easily prove (a). Trivially (a) indicates (b).
\end{proof}

\begin{lemma}\label{lmlm8.10}
We have  $S_n(\Sigma,v)$
(respectively $\overline{S}_n(\Sigma,v)$) is finitely generated as a module over $C(S_n(\Sigma,v))$
(respectively $C(\overline{S}_n(\Sigma,v))$). And the dimension of $S_n(\Sigma,v)$
(respectively $\overline{S}_n(\Sigma,v)$) over $C(S_n(\Sigma,v))$
(respectively $C(\overline{S}_n(\Sigma,v))$)
 is not more than $(<n,m>)^{r(\Sigma)}$.
\end{lemma}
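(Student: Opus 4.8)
The statement to prove is Lemma~\ref{lmlm8.10}: that $S_n(\Sigma,v)$ (respectively $\overline{S}_n(\Sigma,v)$) is finitely generated as a module over its center, with the dimension bounded by $(<n,m>)^{r(\Sigma)}$. The plan is to run everything through Proposition~\ref{finite}. That proposition already says $S_n(\Sigma,v)$ is finitely generated as a module over the subalgebra $S_n(\Sigma,v)^{(m)}$, with the dimension over $S_n(\Sigma,v)^{(m)}$ at most $(<n,m>)^{r(\Sigma)}$. The key observation is that $S_n(\Sigma,v)^{(m)}$ is contained in the center $C(S_n(\Sigma,v))$; this is exactly the content of Theorem~\ref{Fpb} (the image of $\cF$ lives in the center), combined with the identification $S_n(\Sigma,v)^{(m)} = \mathrm{Im}\,\cF$.

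First I would record the chain of subalgebras $S_n(\Sigma,v)^{(m)} \subseteq C(S_n(\Sigma,v)) \subseteq S_n(\Sigma,v)$. A generating set of size $(<n,m>)^{r(\Sigma)}$ for $S_n(\Sigma,v)$ as an $S_n(\Sigma,v)^{(m)}$-module is, a fortiori, a generating set of the same (or smaller) cardinality for $S_n(\Sigma,v)$ as a module over the larger ring $C(S_n(\Sigma,v))$. Hence $S_n(\Sigma,v)$ is finitely generated over its center and the dimension bound $(<n,m>)^{r(\Sigma)}$ is inherited. For the reduced case one repeats the argument with the same inclusion $\overline{S}_n(\Sigma,v)^{(m)} \subseteq C(\overline{S}_n(\Sigma,v))$, which follows from the remark at the end of Subsection 7.10 that $\mathrm{Im}\,\overline{\cF} = \overline{S}_n(\Sigma,v)^{(m)}$ lies in the center of $\overline{S}_n(\Sigma,v)$, together with part (b) of Proposition~\ref{finite}.

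There is essentially no hard step here — the lemma is a formal consequence of results already established, the only subtlety being to make sure the containment $S_n(\Sigma,v)^{(m)} \subseteq C(S_n(\Sigma,v))$ is cited correctly (Theorem~\ref{Fpb}) and that "dimension" is interpreted consistently (minimum number of module generators, as defined just before Theorem~\ref{tmmm8.3}), so that passing to a larger coefficient ring can only decrease it. If anything merits a sentence of care, it is that $<n,m>$ denotes the integer $m^{n^2}-(m-1)^n m^{n^2-n}$ introduced after Lemma~\ref{lmlm8.6}, and $r(\Sigma) = \sharp(\partial\Sigma) - E(\Sigma)$ from Theorem~\ref{key}. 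The proof is therefore a one-line deduction: Proposition~\ref{finite} together with Theorem~\ref{Fpb} (and its reduced analogue).
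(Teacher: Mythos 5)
Your proof is correct and takes the same route as the paper: the paper's proof is the one-liner that $S_n(\Sigma,v)^{(m)}\subset C(S_n(\Sigma,v))$ (and likewise in the reduced case), so Proposition~\ref{finite} gives the result. You have simply spelled out the implicit step that a generating set over a subring remains a generating set over a larger ring, which matches the intent exactly.
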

\begin{proof}
Since $S_n(\Sigma,v)^{(m)}\subset C(S_n(\Sigma,v))$ and 
$\overline{S}_n(\Sigma,v)^{(m)}\subset C(\overline{S}_n(\Sigma,v))$, then Proposition \ref{finite} indicates the Lemma.
\end{proof}


\begin{theorem}\label{tmmm8.11}
 We have  $S_n(\Sigma,v)$ is affine almost Azumaya. Thus it is also almost Azumaya.
\end{theorem}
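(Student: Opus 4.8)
The plan is to verify directly the three defining conditions of an affine almost Azumaya algebra for $A=S_n(\Sigma,v)$, since the necessary ingredients have essentially all been assembled above, and then to conclude by the result of \cite{unicity} recalled at the beginning of Section~\ref{sss888} (a prime algebra which is finitely generated as a module over its center is almost Azumaya).

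Primeness, condition (2), is immediate: by \cite{leY} the algebra $S_n(\Sigma,v)$ is a domain, hence prime. Finite generation as a module over the center, condition (3), is exactly Lemma~\ref{lmlm8.10}, with the explicit bound $(<n,m>)^{r(\Sigma)}$ on the number of generators. It remains to check condition (1), that $S_n(\Sigma,v)$ is finitely generated as an algebra. For this I would argue in two steps. First, the central subalgebra $S_n(\Sigma,v)^{(m)}$ is isomorphic, via $\hat{\cF}$ (the corollary following Theorem~\ref{Injec}), to $S_n(\Sigma,1)$; and by Theorem~\ref{thm5.13}, using that $\Sigma$ is essentially bordered so that $\pi_1(\Sigma)$ is free of finite rank, $S_n(\Sigma,1)$ is a finite tensor power of $O(SLn)$ (equivalently, apply Corollary~\ref{cli}), hence a finitely generated commutative algebra. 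Second, Proposition~\ref{finite}(a) shows $S_n(\Sigma,v)$ is finitely generated as a module over $S_n(\Sigma,v)^{(m)}$. A module-finite extension of a finitely generated algebra is again finitely generated as an algebra, so condition (1) holds.

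Having verified (1), (2) and (3), $S_n(\Sigma,v)$ is affine almost Azumaya by definition, and since every affine almost Azumaya algebra is almost Azumaya (the result of \cite{unicity} stated at the start of Section~\ref{sss888}), the last sentence follows as well. I do not anticipate a genuine obstacle, as all the substantive work has been done in the earlier lemmas; the only point requiring a little care is the finite generation of $S_n(\Sigma,v)$ as an algebra, which is not quoted verbatim earlier and must be routed through the isomorphism $S_n(\Sigma,v)^{(m)}\cong S_n(\Sigma,1)$ together with Proposition~\ref{finite}(a), rather than cited directly.
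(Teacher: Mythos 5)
Your proof is correct and reaches the same conclusion, but your treatment of condition (1) — finite generation of $S_n(\Sigma,v)$ as an algebra — takes a genuinely different route from the paper's. The paper simply cites Theorem 6.1 of \cite{leY}, which asserts directly that $S_n(\Sigma,v)$ is a domain and is finitely generated as an algebra; the paper then combines this citation with Lemma~\ref{lmlm8.10} and is done. You instead derive finite generation internally: you use the isomorphism $\hat{\cF}\colon S_n(\Sigma,1)\xrightarrow{\sim}S_n(\Sigma,v)^{(m)}$, Corollary~\ref{cli} to see that $S_n(\Sigma,1)$ is a finite tensor power of $O(SLn)$ and hence finitely generated, and Proposition~\ref{finite}(a) for module-finiteness of $S_n(\Sigma,v)$ over the central subalgebra $S_n(\Sigma,v)^{(m)}$, then invoke the standard lemma that a module-finite extension of a finitely generated algebra is finitely generated. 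This works; the one subtle point (which you elide) is that the noncommutative version of that lemma requires the subalgebra to be central, so that an algebra generating set can be assembled from generators of $S_n(\Sigma,v)^{(m)}$ together with module generators of $S_n(\Sigma,v)$ — and centrality is guaranteed here by Theorem~\ref{Fpb}. What your route buys is a self-contained derivation that avoids leaning on \cite{leY} for finite generation (you still lean on it for the domain property); what the paper's route buys is brevity. Both are valid.
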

\begin{proof}
Since $\Sigma$ be an essentially bordered pb surface, then $S_n(\Sigma,v)$ is a domain and is a finitely generated algebra, Theorem 6.1 in \cite{leY}. Combine with Lemma \ref{lmlm8.10}, we get $S_n(\Sigma,v)$ is affine almost Azumaya.
\end{proof}

\begin{corollary}\label{ooo}
 We have  $C(S_n(\Sigma,v))$ is affine.
\end{corollary}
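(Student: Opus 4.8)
The plan is to derive Corollary \ref{ooo} as an immediate consequence of the structural results already in place, namely Theorem \ref{tmmm8.11} together with Lemma \ref{lmlm8.2}. First I would recall that Theorem \ref{tmmm8.11} asserts that for an essentially bordered pb surface $\Sigma$ and $v$ a primitive $m$-th root of unity with $m$ coprime to $2n$, the algebra $S_n(\Sigma,v)$ is affine almost Azumaya; this is the substantive input, and it in turn rests on $S_n(\Sigma,v)$ being a finitely generated domain (Theorem 6.1 in \cite{leY}) and on Lemma \ref{lmlm8.10}, which bounds the dimension of $S_n(\Sigma,v)$ as a module over its center. Then I would simply invoke Lemma \ref{lmlm8.2} with $A=S_n(\Sigma,v)$: since $A$ is affine almost Azumaya, conditions (1) and (3) of the definition force $C(A)$ to be finitely generated as an algebra (via Lemma 2.1 in \cite{unicity}), while primeness of $A$ forces $C(A)$ to be a domain, hence in particular to contain no nonzero nilpotents. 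Combining these two facts gives exactly that $C(S_n(\Sigma,v))$ is affine.

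There is essentially no obstacle at this stage: the corollary is a one-line deduction, and all the genuine difficulty has been absorbed into the earlier steps — establishing the Frobenius homomorphism $\cF$ and its injectivity (Theorem \ref{Injec}), identifying $S_n(\Sigma,v)^{(m)}$ with $S_n(\Sigma,1)$ and showing it lies in the center, and the finite-generation bounds of Proposition \ref{finite} and Lemma \ref{lmlm8.10}. The only thing to be careful about is that the hypotheses of Theorem \ref{tmmm8.11} (essentially bordered pb surface; $v$ a primitive $m$-th root of unity with $\gcd(m,2n)=1$) are exactly those under which Corollary \ref{ooo} is stated, so no extra case analysis is needed. Accordingly I would present the proof as: "Theorem \ref{tmmm8.11} and Lemma \ref{lmlm8.2}," possibly adding one clarifying sentence spelling out that affineness of $C(S_n(\Sigma,v))$ means it is a finitely generated commutative algebra without nonzero nilpotents, so that $\mathrm{MaxSpec}(C(S_n(\Sigma,v)))$ is the irreducible affine variety appearing in the Unicity Theorem.
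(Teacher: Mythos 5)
Your proposal matches the paper's proof exactly: the paper's argument is literally the citation of Theorem \ref{tmmm8.11} and Lemma \ref{lmlm8.2}, which is precisely the deduction you spell out. Your extra remarks correctly identify where the substantive work lives (in the earlier results establishing that $S_n(\Sigma,v)$ is affine almost Azumaya), and no case analysis is needed.
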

\begin{proof}
Lemma \ref{lmlm8.2}, and Theorem \ref{tmmm8.11}.
\end{proof}

Then we are ready to state the Unicity Theorem for stated $SL(n)$-skein algebra when the surface is an essentailly bordered pb surface.

\begin{theorem}\label{keyth}
 Suppose the rank of $S_n(\Sigma,v)$ over $C(S_n(\Sigma,v))$ is $K$ and the dimension of $S_n(\Sigma,v)$ over $C(S_n(\Sigma,v))$ is $r$, then we have 

(a) $K\leq r\leq (<n,m>)^{r(\Sigma)}$;

(b) any irreducible representation of $S_n(\Sigma,v)$ has dimension at most the square root of $K$;

 (c) the map $ \mathcal{X} :\text{Irrep}\rightarrow  \text{MaxSpec}(C(S_n(\Sigma,v)))$, defined in Remark \ref{rem7.48}, is surjective, and the cardinality of $\mathcal{X}^{-1}(v)$ is not more than $r$ for any $v\in \text{MaxSpec}(C(S_n(\Sigma,v)))$;

(d) there exists a Zariski open dense subset $U \subset \text{MaxSpec}(C(S_n(\Sigma,v)))$ such that for any two irreducible representations $V_1$, $V_2$ of $S_n(\Sigma,v)$ with $\mathcal{X}(V_1) = \mathcal{X}(V_2)\in U$, then $V_1$ and $V_2$ are isomorphic and have  dimension the square root of $K$. Moreover any representation sending $C(S_n(\Sigma,v))$ to scalar operators and  whose induced character lies in $U$ is semi-simple.

\end{theorem}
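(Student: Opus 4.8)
The plan is to obtain Theorem \ref{keyth} as an immediate application of the abstract Unicity Theorem for affine almost Azumaya algebras, Theorem \ref{tmmm8.3}, to the algebra $A \triangleq S_n(\Sigma,v)$. The single hypothesis to verify is that $A$ is affine almost Azumaya, and this is precisely the content of Theorem \ref{tmmm8.11} (which in turn rests on $S_n(\Sigma,v)$ being a domain and finitely generated as an algebra, together with Lemma \ref{lmlm8.10} saying it is finitely generated as a module over its center $C(S_n(\Sigma,v))$). Once this is in place, letting $K$ be the rank of $A$ over $C(A)$ and $r$ its dimension over $C(A)$, Theorem \ref{tmmm8.3} yields verbatim the conclusions (b), (c), and (d): the bound $\dim V \le \sqrt{K}$ for every irreducible representation $V$, the surjectivity of $\mathcal{X}\colon \text{Irrep}\to \text{MaxSpec}(C(S_n(\Sigma,v)))$ (the map defined in Remark \ref{rem7.48}) together with the fiber bound $|\mathcal{X}^{-1}(v)|\le r$, and the existence of the Zariski open dense locus $U$ over which the fiber consists of a single representation of dimension $\sqrt{K}$, with the asserted semisimplicity consequence.

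For part (a) I would argue in two steps. The inequality $K \le r$ is the general comparison between the rank and the dimension of an algebra over its center recorded in the excerpt just before Theorem \ref{tmmm8.3}: a generating set of $A$ over $C(A)$ remains a spanning set of $\widetilde{A}=A\otimes_{C(A)}\widetilde{C(A)}$ over $\widetilde{C(A)}$, so the rank cannot exceed the dimension. The inequality $r \le (<n,m>)^{r(\Sigma)}$ is exactly Lemma \ref{lmlm8.10}, whose bound comes, via Proposition \ref{finite} and Remark \ref{new}, from decomposing $\Sigma$ along a saturated system of arcs into a disjoint union of $r(\Sigma)$ bigons (Theorem \ref{key}), combined with the estimate $<n,m>$ for the dimension of $S_n(\fB,v)$ over $S_n(\fB,v)^{(m)}$ in Lemma \ref{lmlm8.6} and the behaviour of module-finiteness under the surjective embedding-induced maps in Lemma \ref{lmlm8.5}. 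Chaining the two gives $K \le r \le (<n,m>)^{r(\Sigma)}$, which is (a).

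At this final stage there is essentially no genuine obstacle: the theorem is a clean assembly of results already established, since all the substantial work — constructing the Frobenius homomorphism $\cF$ (Theorem \ref{thmm7.31}), proving its injectivity and centrality in the pb-surface case (Theorems \ref{Fpb} and \ref{Injec}), identifying $S_n(\Sigma,v)^{(m)}$ as an affine subalgebra of the center (Proposition \ref{fp}, Corollary \ref{ooo}), and establishing module-finiteness over the center (Lemma \ref{lmlm8.10}) — has been carried out earlier. The one point requiring care is bookkeeping: $K$ and $r$ in the statement must be computed relative to the \emph{full} center $C(S_n(\Sigma,v))$ rather than the a priori smaller subalgebras $S_n(\Sigma,v)^{(m)}$ or $Z_n(\Sigma)$, and since Lemma \ref{lmlm8.10} already bounds the dimension over $C(S_n(\Sigma,v))$ this is consistent; one should simply note that $Z_n(\Sigma)\subseteq C(S_n(\Sigma,v))$ so the bound is unaffected. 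I would also observe that the identical argument, using $\overline{S}_n(\Sigma,v)$ in place of $S_n(\Sigma,v)$ and the reduced bounds of Proposition \ref{finite}(b), gives the corresponding Unicity statement for the reduced stated $SL(n)$-skein algebra.
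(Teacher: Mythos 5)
Your proposal is correct and follows exactly the paper's route: the paper's proof is a one-line citation of Lemma \ref{lmlm8.10}, Theorem \ref{tmmm8.3}, and Theorem \ref{tmmm8.11}, which is precisely the assembly you carry out. Your expansion of part (a) (the general inequality rank $\le$ dimension plus the bound from Lemma \ref{lmlm8.10}) and your bookkeeping remark that these quantities are taken over the full center $C(S_n(\Sigma,v))$ are both consistent with the text surrounding Theorem \ref{tmmm8.3} and Lemma \ref{lmlm8.10}.
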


\begin{proof}
Lemma \ref{lmlm8.10}, Theorems \ref{tmmm8.3} and \ref{tmmm8.11}.
\end{proof}

The rank $K$ in Theorem \ref{keyth} is very important to understand the representation theory for stated $SL(n)$-skein algebra. Unfortunately, it is very hard to calculate it. Frohman, Kania-Bartoszynska, L{\^e} precisely calculated this rank $K$
 in \cite{frohman2021dimension} when $n=2$ and $\partial \Sigma =\emptyset$.

For $\overline{S}_n(\Sigma,v)$, now we are not clear whether $\overline{S}_n(\Sigma,v)$ is a domain (or prime) for general essentially  bordered pb surface. In Theorem 8.1 in \cite{leY}, L{\^e} and Yu proved $\overline{S}_n(\Sigma,v)$ is a domain when $\Sigma$ is a polygon. Thus we have the corresponding statements as Theorem \ref{tmmm8.11}, Corollary \ref{ooo}, and Theorem \ref{keyth} for 
reduced stated $SL(n)$-skein algebra when $\Sigma$ is a polygon. For general essentially bordered pb surface, we can get $C(\overline{S}_n(\Sigma,v))$ is a finitely generated algebra because $\overline{S}_n(\Sigma,v)$ is a finitely  generated algebra and finitely generated as a module over it's center.

\subsection{Further steps}

In this subsection, we still assume $v$ is a root of unity of order $m$ with $m$ and $2n$ being coprime with each other. But there is no restriction for marked three manifolds and surfaces.

The key step to prove Unicity theorem is to prove $S_n(\Sigma,v)$ is finitely generated as an $S_n(\Sigma,1)^{(m)}$-module.
It is reasonable to conjecture $\SM$ has a finitely generated  $\SMO$-module structure for any marked three manifold $(M,N)$.

A {\bf marked surface} is a pair $(\Sigma, P)$ where $\Sigma$ is a compact oriented surface and $P$ is a finite subset of $\partial \Sigma$. If every component of $\Sigma$ intersects $P$, we call $(\Sigma,P)$ an {\bf essentially marked surface}. 

For a marked surface $(\Sigma,P)$, we can define a marked three manifold $(M,N)$ with $M=\Sigma\times[-1,1]$ and 
$N=\cup_{x\in P} \{x\}\times [-1,1]$ and the orientation  of $N$ is given by the positive direction of $[-1,1]$.
And we call $(M,N)$ the thickening of $(\Sigma,P)$.
Then define $S_n(\Sigma, P,v) = \SM$, similarly as the pb surface, $S_n(\Sigma, P,v)$ has an algebra structure given by stacking the stated $n$-webs. Obviously a marked surface has the same skein theory with a certain pb surface.

For a marked three manifold $(M,N)$, Costantino and and L{\^e} defined the so called strict subsurface in
\cite{CL2022TQFT}. Here we recall their definition. A {\bf strict subsurface} $\Sigma$ of $(M,N)$ is a proper  embedding $ \Sigma\rightarrow M$ of a compact surface (so that $\partial \Sigma\subset \partial M$), $\Sigma$ is traversal to $N$ and  every connected component of $\Sigma$ intersects $N$.
Let $U(\Sigma)$ be a small open collar  neighborhood of $\Sigma$. $U(\Sigma)$ is 
diffeomorphic with $\Sigma\times (-1,1)$ (by an orientation preserving diffeomorphism) and we require that $\partial \Sigma \times (-1,1)\subset\partial M$. Define the slit $Sl_{\Sigma}(M,N)$ to be $(M^{'}, N^{'})$, where $M^{'} = M -U(\Sigma)$ and $N^{'} = N-cl(U(\Sigma))$. Clearly $(M^{'}, N^{'})$ is well-defined up to isomorphism. If  $(M^{'},N^{'})$ is isomorphic 
 with the thickening of some marked surface, we call $\Sigma$ a
{\bf good strict subsurface} of $(M,N)$.

\begin{proposition}\label{pp8.14}
Let $(M,N)$ be a compact marked three manifold such that every component of $M$ contains at least one marking. Suppose 
$(M,N)$ contains a good strict subsurface $\Sigma$. Then $S_n(M,N,v)$ is finitely generated as an
$S_n(M,N,1)$-module.
\end{proposition}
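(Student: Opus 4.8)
The plan is to reduce to the case of an essentially bordered pb surface --- where finiteness over the classical limit is already available from Proposition \ref{finite} --- and then to transport it along the slit construction. First I would unwind the hypothesis: since $\Sigma$ is a \emph{good} strict subsurface, the slit $(M',N'):=Sl_\Sigma(M,N)$ is isomorphic to the thickening of a marked surface, hence has the same skein theory as the thickening of a pb surface $\Sigma'$, which (using that every component of $M$, hence of $M'$, carries a marking) we take to be essentially bordered. Then Proposition \ref{finite} gives that $S_n(M',N',v)\cong S_n(\Sigma',v)$ is a finitely generated module over its Frobenius image $S_n(\Sigma',v)^{(m)}$, and by Remark \ref{new} this is the same as being finitely generated over $S_n(M',N',1)$ through the action $\tilde{\cF}$.

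Second, I would make precise how $(M,N)$ lies over $(M',N')$. The manifold $(M,N)$ is recovered from $(M',N')$ by gluing the thickened subsurface $\Sigma\times[-1,1]$ back along the two copies of $\Sigma$ in $\partial M'$; fixing an ideal triangulation of $\Sigma$ presents this regluing as a finite sequence of one-handle attachments along markings (each an instance of the $QF$/$Cut$ isomorphism of Section \ref{sbbbbbbb6.1}) followed by the capping off of the residual boundary surface. On skein modules this exhibits $S_n(M,N,v)$, via an injective linear slit map $\Theta_\Sigma$ assembled from iterated disk-splittings $\Theta_{(D,\beta)}$ (each injective by Corollaries \ref{cor6.10} and \ref{cor6.11}, since the disks arising at every stage have boundary in a marked component --- the original marking hypothesis together with the markings created by the splittings guarantee this), as the subalgebra $B\subseteq S_n(M',N',v)$ of elements balanced with respect to the $S_n(\fB,v)$-coactions along the gluing arcs, i.e. the kernel appearing in the exact sequence of Lemma \ref{exact} applied along the triangulation. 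Since $\cF$ commutes with $\Theta$ (Theorem \ref{com}) and, by the argument of Corollary \ref{corr7.16}, with the $QF$ gluings, the slit map $\Theta_\Sigma$ intertwines $\cF_{(M,N)}$ and $\cF_{(M',N')}$ with the corresponding map for $S_n(\cdot,1)$; in particular, under the identification $S_n(M,N,v)\cong B$, the Frobenius image $S_n(M,N,v)^{(m)}$ corresponds to $B_0:=B\cap S_n(M',N',v)^{(m)}$, and $\hat{\cF}$ identifies $S_n(M,N,1)$ with $B_0$.

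It then remains to show that the balanced subalgebra $B$ is finitely generated as a module over $B_0$, granted that the ambient $A:=S_n(M',N',v)$ is finitely generated over $A_0:=S_n(M',N',v)^{(m)}$; combined with the previous step this says exactly that $S_n(M,N,v)$ is finitely generated over $S_n(M,N,1)$. This is the main obstacle. The idea for it is to work in the triangulated model of Lemma \ref{exact}: there $B=\ker T$ inside a finite tensor product of triangle algebras $S_n(\fT,v)$, each of which is finitely generated over its Frobenius image with the explicit PBW-type bases (Theorem \ref{t.Hopf}, Proposition \ref{r.basisOq}, Lemma \ref{lmlm8.6}) and whose defining coaction equations have coefficients that are central after applying Frobenius (Lemmas \ref{mmm2.4} and \ref{w}); since $A_0$ is a finitely generated commutative $\mathbb{C}$-algebra, hence Noetherian, $A$ is a Noetherian $A_0$-module and $B$ a finitely generated $A_0$-submodule, and one lifts a finite $A_0$-generating set of $B$ to an explicit $B_0$-generating set using the Frobenius homomorphism $F_n\colon O(SLn)\to O_q(SLn)$ of Lemma \ref{w} and its injectivity (Lemma \ref{lll7.37}). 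I expect the delicate point to be precisely this last matching --- proving that the balanced (coinvariant) part does not lose module-finiteness when one descends from $A_0$ to the smaller $B_0$ --- where the cotensor-product formalism for the slit of Costantino and L\^e \cite{CL2022TQFT} is essential.
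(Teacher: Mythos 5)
Your first step — reducing to $(M',N')=Sl_\Sigma(M,N)$, noting it is the thickening of an essentially marked surface, and invoking Proposition~\ref{finite} to get finite generation of $S_n(M',N',v)$ over $S_n(M',N',1)$ — matches the paper. But after that you go the wrong direction and create for yourself a hard problem that the paper never has to face.

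The key observation you miss is Lemma~\ref{lmlm8.5}: if an embedding $f:(M_1,N_1)\to (M_2,N_2)$ induces a \emph{surjective} linear map $f_*$ on skein modules, then finite generation of $S_n(M_1,N_1,v)$ over $S_n(M_1,N_1,1)$ transports forward to $S_n(M_2,N_2,v)$ over $S_n(M_2,N_2,1)$. Here one uses the \emph{embedding} $f:(M',N')\hookrightarrow (M,N)$, not a splitting map going the other way. Surjectivity of $f_*$ is easy: given a stated $n$-web $\alpha$ in $(M,N)$ transverse to $\Sigma$ with no sink or source on $\Sigma$, drag each intersection point of $\alpha\cap\Sigma$ toward $\Sigma\cap N$ and resolve it with relation~(\ref{wzh.seven}); this writes $\alpha$ as a linear combination of webs disjoint from $\Sigma$, each of which can be isotoped into $M'$. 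With this the proposition follows in two short steps, and no cotensor/coinvariant analysis is needed.

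Your proposal instead tries to present $S_n(M,N,v)$ as a ``balanced'' subspace $B\subseteq S_n(M',N',v)$ via an imagined slit map $\Theta_\Sigma$ and then prove that $B$ is module-finite over $B_0=B\cap S_n(M',N',v)^{(m)}$. That step is where the argument breaks down, and you acknowledge as much. Two concrete problems: (i) there is no $\Theta_\Sigma: S_n(M,N,v)\to S_n(Sl_\Sigma(M,N),v)$ in the paper's framework — $\Theta_{(D,\beta)}$ is defined only for cutting along disks, and the slit along a surface is a different operation — so the map you are leaning on isn't available, nor is Lemma~\ref{exact}, which applies to triangulations of a pb surface, not to slits of a general compact $3$-manifold; (ii) even granting a cotensor description of $B$ inside $A=S_n(M',N',v)$, the Noetherian argument only gives that $B$ is module-finite over $A_0$, and descending to module-finiteness over the strictly smaller $B_0$ is exactly the ``delicate matching'' you leave unresolved. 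None of this is needed: the embedding direction plus Lemma~\ref{lmlm8.5} resolves the proposition cleanly.
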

\begin{proof}
We still use $(M^{'}, N^{'})$ to denote $Sl_{\Sigma}(M,N)$.
Since every component of $M$ containes at least one marking, we have every component of $M^{'}$ also contains at least one marking. Then we have $(M^{'},N^{'})$  is isomorphic to the thickening of some essentially marked  surface. From Lemma \ref{lmlm8.5}
and (a) in Proposition \ref{finite}, we know $S_n(M^{'},N^{'},v)$ is a finitely generated $S_n(M^{'},N^{'},1)$-module. 

Let $f$ denote the embedding from $(M^{'},N^{'})$ to $(M,N)$. From Lemma \ref{lmlm8.5}, it suffices to show 
the linear map $f_{*}:S_n(M^{'},N^{'},v)\rightarrow S_n(M,N,v)$ is surjective. Let $\alpha$ be any stated $n$-web, we can isotope $\alpha$ such that $\alpha$ is transverse to $\Sigma$,  and no sink or source of $\alpha$ is contained in $\Sigma$. Then we use a technique used in Theorem 5.1 in \cite{CL2022TQFT} to kill all the points in $\alpha\cap \Sigma$. Roughly speaking, we first drag each point in $\alpha\cap \Sigma$ close to $\Sigma\cap N$, then use relation (\ref{wzh.seven}) to kill this point. Thus we get $\alpha = \sum_{1\leq i\leq t}k_i \alpha_i$, where $\alpha_i\cap \Sigma =\emptyset$ for all $1\leq i\leq t$. Then, for each $1\leq i\leq t$, we can isotope $\alpha_i$ such that $\alpha_i\subset M^{'}$. We still use $\alpha_i$ to denote the element in $S_n(M^{'},N^{'},v)$ determined by the stated $n$-web $\alpha_i$ in $(M^{'}, N^{'})$. Then $f_*(\sum_{1\leq i\leq t}k_i \alpha_i)  = \alpha$. Thus $f_*$ is surjective.
\end{proof}



\begin{conjecture}\label{conj8.14}
For  any marked three manifold $(M,N)$, there is a subvector space $S_n(M,N,v)^{(m)}$ such that $S_n(M,N,v)^{(m)}$ has a commutative algebra structure  and any stated $n$-web in $S_n(M,N,v)^{(m)}$ is transparent. And we have the followings.

(a) There exists a surjective algebra homomorphism $\hat{\cF}: S_n(M,N,1)\rightarrow S_n(M,N,v)^{(m)}$.

(b) $\hat{F}$ is an isomorphism and $S_n(M,N,v)^{(m)}$ is contained in the center of $S_n(\Sigma,v)$ when $(M,N)$ is the thickening of a pb surface $\Sigma$.

(c) The  map $\cF$ is commutative with the splitting map, where $\cF$ is the combination of $\hat{\cF}$ and the embedding from $S_n(M,N,v)^{(m)}$ to $S_n(M,N,v)$.

(d) $S_n(M,N,v)$ has  an $S_n(M,N,1)$-module structure. 

(e) Under the module structure in (d), $S_n(M,N,v)$ is a finitely generated   $S_n(M,N,1)$-module if $M$ is compact.
\end{conjecture}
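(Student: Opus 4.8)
The plan is to view Conjecture~\ref{conj8.14} as the common roof over two strands already developed in the paper: the Frobenius map together with the module structure it induces, and the finite-generation statement proved here only for marked three manifolds admitting a good strict subsurface. Parts (a)--(d) are, for marked three manifolds every component of which carries a marking, essentially contained in Sections~\ref{subb7.2} and~\ref{newF}: $\hF$ is the surjection $l\mapsto l^{(m)}$ onto $\SMm=\mathrm{Im}\,\cF$ equipped with the algebra structure of Theorem~\ref{structure}, $\tF$ furnishes the action $x\cdot y=\tF(x)(y)$ of $\SMO$ on $\SM$, (b) combines Theorem~\ref{Fpb} with Theorem~\ref{Injec}, and (c) is Theorem~\ref{com}. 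So the first task is to drop the marking hypothesis. On components of $M$ meeting $N$ nothing changes; for a closed component $M_0$ one repeats the construction of $\tF$ directly on $S_n(M_0,\emptyset,1)\simeq G_n(M_0)$, using the action on $S_n(M_0,\emptyset,v)$ given by $\gamma\mapsto(\beta\mapsto\gamma^{(m)}\cup\beta)$ for framed oriented knots $\gamma$. The only new skein-theoretic input needed is the ``knot'' analogue of Lemmas~\ref{lmmm7.1}--\ref{lll7.20} and Corollaries~\ref{corr7.3}, \ref{corr7.6}, \ref{ccc7.20}: when $v$ is a primitive $m$-th root of unity with $m$ coprime with $2n$, $m$ parallel copies of a framed oriented knot are transparent and a kink costs $d_n$. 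These are local statements whose proofs are identical to the boundary-arc case once relation~(\ref{wzh.five}) is replaced by~(\ref{wzh.four}); granting them, $F_\gamma$ depends only on the free homotopy class of $\gamma$ up to the sign $d_n$, the endomorphisms $F_\gamma$ obey the defining relations of $G_n(M_0)$, and $\tF$ is well defined. This yields (a) and (d) in full generality, while (c) is Corollary~\ref{corr7.16} extended to closed curves.

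Next I would establish (b) for an arbitrary pb surface $\Sigma$, where the new feature relative to the essentially bordered case is interior punctures, in particular closed $\overline{\Sigma}$, for which $N=\emptyset$. Centrality of $\SMm$ there is Lemma~\ref{lmmm7.10} extended to diagrams with closed components, which again follows from the transparency lemmas. Injectivity of $\cF$ should be obtained by the cut-and-glue scheme of Theorem~\ref{Injec} for essentially bordered $\Sigma$, and for closed $\overline{\Sigma}$ by first cutting off a disk containing a puncture to reduce to the bordered situation and then tracking the central loops $a_p,b_p$; this is precisely the higher-rank analogue of the Bonahon--Wong Chebyshev--Frobenius for closed surface skein algebras, and I expect it to require the same amount of extra work as in rank one. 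The isomorphism statement in $\hF$ then follows formally as in the corollary after Theorem~\ref{Injec}.

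The substantive part is (e). The strategy is to reduce it to Proposition~\ref{pp8.14} by showing that every compact $(M,N)$ with a marking on each component of $M$ admits a good strict subsurface, possibly after iterating the slit operation finitely many times (a mild extension of Proposition~\ref{pp8.14}). Concretely: choose a handle decomposition of $M$ relative to $\partial M$; the union of the $0$- and $1$-handles is a handlebody, more generally a compression body, and the corresponding splitting surface $\Sigma$ can be isotoped transverse to $N$ with every component of $\Sigma$ meeting $N$, using that each component of $M$ carries a marking. After slitting, each resulting piece is built from products $\Sigma'\times[-1,1]$, i.e.\ thickenings of marked surfaces, so finitely many slits suffice; Proposition~\ref{pp8.14} together with Lemma~\ref{lmlm8.5} then bounds the number of $\SMO$-module generators of $\SM$ by a product of factors of the form $<n,m>$ coming from the genera of the $\Sigma'$. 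For components of $M$ disjoint from $N$, and in particular when $N=\emptyset$, no good strict subsurface exists, so (e) there is the $SL(n)$ analogue of the finiteness of $S_2(M,\emptyset,v)$ over $S_2(M,\emptyset,1)$; \textbf{this is the main obstacle}, and I expect it to need a genuinely new argument, for instance a Heegaard-splitting filtration of $S_n(M,\emptyset,v)$ by the $G_n(M)$-submodules spanned by webs of bounded complexity, together with a Noetherianity input for $G_n(M)$.
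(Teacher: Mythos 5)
The statement you are proving is Conjecture~\ref{conj8.14}, and the paper does not prove it. It is stated as an open conjecture, and the final paragraph of Section~\ref{sss888} explicitly defers a proof of it (for $n=2$, and of the finiteness claim in part~(e) for general $n$ under a marking hypothesis) to a separate forthcoming paper. So there is no ``paper proof'' to compare your proposal against; what can be assessed is whether your sketch closes the conjecture, and it does not, for reasons you partly flag yourself.

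Two of your reductions are asserted rather than carried out, and both hide real difficulties. First, for a closed component $M_0$ (so $N\cap M_0=\emptyset$) you claim the transparency lemmas of Section~\ref{subb7} ``are local statements whose proofs are identical to the boundary-arc case once relation~(\ref{wzh.five}) is replaced by~(\ref{wzh.four}).'' That is not immediate: the proofs of Lemmas~\ref{lmmm7.2}, \ref{lmmm7.5}, \ref{lmmm7.7}, \ref{mmm7.23} and Corollaries~\ref{corr7.15}, \ref{corr7.16} lean on the marking (good position, height exchange, the coaction $\Delta_\beta$, and functorial reduction to $O_q(SLn)$ via $S_n(\fB,v)$), none of which is available when $N=\emptyset$. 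Replacing~(\ref{wzh.five}) by~(\ref{wzh.four}) changes the combinatorics near sinks and sources and does not reproduce the bigon-based argument. Second, in~(b) you propose to prove injectivity of $\cF$ for a closed $\overline{\Sigma}$ by cutting off a disk around a puncture and tracking $a_p,b_p$. But Lemma~\ref{exact} and hence Theorem~\ref{Injec} require an essentially bordered pb surface with an ideal triangulation; a closed surface has none, and the reduction you describe is precisely the (open, in the $n>2$ literature) higher-rank Chebyshev--Frobenius statement for closed surfaces, not a corollary of what is in the paper.

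For~(e), your handle-decomposition argument is a plausible heuristic but does not establish that every compact $(M,N)$ with a marking on each component admits a finite sequence of slits ending in thickened marked surfaces; Proposition~\ref{pp8.14} only covers a single good strict subsurface, and you would need to formalize both the iteration and the transversality-to-$N$ claim for each stage. You correctly identify the $N=\emptyset$ case of~(e) as the main obstacle; this is exactly why the statement remains a conjecture. In short, your proposal is a reasonable program consistent with the paper's perspective, but it leaves unproved the same points that the paper itself leaves open, so it does not constitute a proof.
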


 Bloomquist and L{\^e}
defined $\cF:S_2(M,N,1)\rightarrow S_2(M,N,v)$ for all marked three manifolds \cite{bloomquist2020chebyshev}. They also 
proved transparency for the image of $\cF$. Base on their results, we can define $S_2(M,N,v)^{(m)}$ to be Im$\cF$. Similarly as in Subsections \ref{subb7.2}, \ref{newF}, we can define the action of $S_2(M,N,1)$ on $S_2(M,N,v)$ and the multiplication for  $S_2(M,N,v)^{(m)}$. 

%
%
%
%
%
%

We will  prove Conjecture  \ref{conj8.14}  for $n=2$, and  $S_n(M,N,v)$ is a finitely generated $S_n(M,N,1)$-module when $M$ is compact and every component of $M$ contains at least one marking  in an upcoming paper \cite{wang}. 

\begin{rem}
Detcherry, Kalfagianni and Sikora 
used the 
the reduced skein module   to calculate the dimension of $SL(2)$-skein module of some closed three manifolds \cite{detcherry2023kauffman}. For any character 
$\rho \in Hom(\pi_1(M),SL(2,\mathbb{C}))/\simeq$,
the reduced skein module is defined by sending the image of the Frobenius homomorphism
 to a scalar determined by this character $\rho$.
 When $(M,N)$ is a marked three manifold such that every component of $M$ contains at least one marking,
$S_n(M,N,v)$ has an $S_n(M,N,1)$-module structure.
 Here we can define the {\bf reduced stated $SL(n)$-skein module} using this $S_n(M,N,1)$-module structure. 

For any point $I\in\text{MaxSpec}(S_n(M,N,1))$, we define 
$$S_n(M,N,v)_{I}  \triangleq S_n(M,N,v)/I\cdot S_n(M,N,v)$$
where $I\cdot S_n(M,N,v)$ is the linear span of $x\cdot y$ with $x\in I$ and $y\in S_n(M,N,v)$. Clearly if
$S_n(M,N,v)$ is a finitely generated $S_n(M,N,1)$-module, we have $S_n(M,N,v)_{I}$ is a finite dimensional vector space. 
So when $M$ is compact and every component of $M$ contains at least one marking,  we have  $S_n(M,N,v)_{I}$ is a finite dimensional vector space for any $I\in\text{MaxSpec}(S_n(M,N,1))$; and when  $M$ is compact, we have
$S_2(M,N,v)_{I}$ is a finite dimensional vector space for any $I\in\text{MaxSpec}(S_2(M,N,1))$ \cite{wang}.

Note that from Sections \ref{sec3} and \ref{subb5}, we know there is a one to one correspondence  between $\tilde{\chi}_n(M,N)$ and 
$\text{MaxSpec}(S_n(M,N,1))$.
\end{rem}

\section{Generalized marked three manifold}

Costantino and L{\^e} defined the generalized marked three manifold in \cite{CL2022TQFT}, in which they allow $N$ contains oriented closed circles.

\subsection{Cutting out the closure of a small open interval from $N$}
Let $(M,N)$ be a generalized marked three manifold with $N\neq \emptyset$. Suppose $U$ is a small open interval contained in $e$ such that $cl(U)\subset e$, where $e$ is a component of $N$. Let $N^{'} = (N-e)\cup e^{'}$ where $e^{'} = e-cl(U)$.
Let $l_{U}: S_n(M,N^{'},v)\rightarrow S_n(M,N,v)$ be the linear map induced by the embedding $(M,N^{'})\rightarrow (M,N)$. Clearly $l_U$ is surjective, and is an algebra homomorphism when $v=1$.

\begin{proposition}\label{pro7.1}
The  above map $l_U$ induces an isomorphism 
$$\bar{l}_U:S_n(M,N^{'},v)/\simeq\; \rightarrow S_n(M,N,v),$$
 where $\simeq$ is the equivalence relation  given by  the following picture:

\begin{align}\label{eq10}
\raisebox{-.35in}{
\begin{tikzpicture}
\tikzset{->-/.style=
{decoration={markings,mark=at position #1 with
{\arrow{latex}}},postaction={decorate}}}
\filldraw[draw=white,fill=gray!20] (-0.3,0) rectangle (2, 2);
\draw[line width =1.5pt,decoration={markings, mark=at position 1.0 with {\arrow{>}}},postaction={decorate}](2,0) --(2,0.5);
\draw [color = black, line width =1.5pt](2,0.5) --(2,0.9);
\draw[line width =1.5pt,decoration={markings, mark=at position 1.0 with {\arrow{>}}},postaction={decorate}](2,1.1) --(2,1.75);
\draw [color = black, line width =1.5pt](2,1.75) --(2,2);
\node[right] at(2,0.6) {\small $i$};
\filldraw[draw=black!80,fill=white!20] (1,0.8) circle(0.1);
\draw[color = black, line width =1pt] (0,1)--(0.9,0.82);
\draw[color = black, line width =1pt] (1.1,0.78)--(2,0.6);
\end{tikzpicture}}
\simeq
\raisebox{-.35in}{
\begin{tikzpicture}
\tikzset{->-/.style=
{decoration={markings,mark=at position #1 with
{\arrow{latex}}},postaction={decorate}}}
\filldraw[draw=white,fill=gray!20] (-0.3,0) rectangle (2, 2);
\draw[line width =1.5pt,decoration={markings, mark=at position 1.0 with {\arrow{>}}},postaction={decorate}](2,0) --(2,0.5);
\draw [color = black, line width =1.5pt](2,0.5) --(2,0.9);
\draw[line width =1.5pt,decoration={markings, mark=at position 1.0 with {\arrow{>}}},postaction={decorate}](2,1.1) --(2,1.75);
\draw [color = black, line width =1.5pt](2,1.75) --(2,2);
\node[right] at(2,1.4) {\small $i$};
\filldraw[draw=black!80,fill=white!20] (1,1.2) circle(0.1);
\draw[color = black, line width =1pt] (0,1)--(0.9,1.18);
\draw[color = black, line width =1pt] (1.1,1.22)--(2,1.4);
\end{tikzpicture}}\;.
\end{align}
The missing part between two arrows is $cl(U)$.

\end{proposition}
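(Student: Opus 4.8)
The plan is to show that $l_U$ descends to the quotient and that the resulting map $\bar l_U$ is invertible by exhibiting an explicit inverse. First, the relation $\simeq$ is killed by $l_U$: the two stated $n$-webs in \eqref{eq10} become isotopic in $(M,N)$, because there $cl(U)$ is no longer excised from $e$, so the single endpoint can simply be slid along $e$ from one side of the (now filled-in) segment to the other. Hence $l_U$ factors as a surjective linear map $\bar l_U\colon S_n(M,N',v)/{\simeq}\ \to\ S_n(M,N,v)$, surjectivity being inherited from that of $l_U$, which in turn holds because every stated $n$-web in $(M,N)$ can be isotoped to have no endpoint on $cl(U)$, hence lifts to $(M,N')$.

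It remains to produce a two-sided inverse. I would define a linear map $\psi\colon S_n(M,N,v)\to S_n(M,N',v)/{\simeq}$ as follows. Given a stated $n$-web $\beta$ in $(M,N)$, isotope it so that no endpoint of $\beta$ lies on $cl(U)$; then each endpoint of $\beta$ on $e$ lies strictly on one side of $cl(U)$, and sending those below $cl(U)$ to the corresponding component of $e'=e\setminus cl(U)$ and those above to the other component produces a stated $n$-web $\hat\beta$ in $(M,N')$. Put $\psi(\beta)=[\hat\beta]$. The heart of the argument is the well-definedness of $\psi$, which I would break into two claims: (i) $[\hat\beta]$ depends only on the isotopy class of $\beta$ in $(M,N)$ (in particular not on the isotopy chosen to push $\beta$ off $cl(U)$), and (ii) $\psi$ respects the defining relations of $S_n(M,N,v)$.

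For (ii), observe that the skein relations \eqref{w.cross}--\eqref{wzh.eight} are supported inside a ball (together, for \eqref{wzh.five}--\eqref{wzh.eight}, with a short sub-arc of a single marking); after an isotopy in $(M,N)$ this support can always be taken disjoint from $cl(U)$ — for instance, a boundary cap or hook along $e$ is slid away from $cl(U)$ before the relation is applied — so the identical relation holds in $(M,N')$ and $\psi$ respects it. For (i), take two representatives $\beta_0,\beta_1$ of the same isotopy class, both avoiding $cl(U)$, and a generic isotopy between them. Since the endpoints of the web are confined to the $1$-manifold $N$ and cannot pass one another, the isotopy decomposes into (a) sub-isotopies during which $cl(U)$ is avoided throughout, which take place inside $(M,N')$ and hence induce genuine equalities in $S_n(M,N',v)$, and (b) finitely many elementary moves, in each of which the endpoint of the web currently closest to $cl(U)$ on its side is pushed across $cl(U)$; in a neighbourhood of $cl(U)$ such a move is exactly the picture \eqref{eq10}, with all remaining strands untouched, hence becomes an identity in $S_n(M,N',v)/{\simeq}$. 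Thus $[\hat\beta_0]=[\hat\beta_1]$, and $\psi$ is well defined. Finally $\psi\circ\bar l_U$ and $\bar l_U\circ\psi$ are the respective identities, as one checks directly on stated $n$-webs, so $\bar l_U$ is the desired isomorphism.

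The step I expect to be the main obstacle is (i): making the general-position argument precise so that each passage of the web through $cl(U)$ is genuinely localised to a tubular neighbourhood of $cl(U)$ meeting the web in a single strand, matching \eqref{eq10} on the nose. The subtlety is that when several endpoints cluster near $cl(U)$ they must cross one at a time in the forced order, since none can overtake another on $e$, so the decomposition into moves of type (a) and (b) must be organised accordingly; fortunately every such crossing is an instance of \eqref{eq10} and every intermediate rearrangement happens inside $(M,N')$, so no lower-order correction terms ever arise.
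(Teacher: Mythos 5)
Your proposal is correct and follows essentially the same route as the paper's proof: both construct the inverse map by isotoping a stated $n$-web in $(M,N)$ off $cl(U)$ and then reading it as a class in $S_n(M,N',v)/{\simeq}$, with well-definedness coming from the observation that any two such push-offs, or any isotopy in $(M,N)$ between representatives avoiding $cl(U)$, differ only by finitely many passages of a single endpoint across $cl(U)$, each an instance of \eqref{eq10}. The paper dispatches the well-definedness with "clearly" and "it is easy to check"; your general-position decomposition into moves of type (a) and (b), with the remark that endpoints on the 1-manifold $N$ cannot overtake one another and so cross $cl(U)$ one at a time, is exactly the argument the paper is implicitly relying on.
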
\label{prop7.1}
\begin{proof}
Clearly $l_U$ induces a linear map $\bar{l}_U: S_n(M,N^{'},v)/\simeq\; \rightarrow S_n(M,N,v)$.
For a stated $n$-web $\alpha$ in $(M,N^{'})$, we use cls$(\alpha)$ to denote the element in $S_n(M,N^{'},v)/\simeq$ determined by $\alpha$. Let $\beta$ be any stated $n$-web in $(M,N)$. We can isotope $\beta$ such that $cl(U)\cap  \beta = \emptyset$, then we define $j_U(\beta) =\text{cls}(\beta)\in S_n(M,N^{'},v)/\simeq$.
We have $j_U(\beta)$ is independent of  how we isotope $\beta$ because of relation (\ref{eq10}).
 If $\beta$ and $\beta^{'}$ are isotopic stated $n$-webs in $(M,N)$, clearly we have $j_U(\beta)
=j_U(\beta^{'})$ because of relation (\ref{eq10}). Trivially $j_U$ preserves the defining skein relations for $S_n(M,N,v)$. Thus $j_U$ is a well-defined linear map from $S_n(M,N,v)$ to $S_n(M,N^{'},v)/\simeq$. It is easy to check $l_U$ and $j_U$ are inverse to each other.
\end{proof}

\subsection{Classical limit for stated $SL(n)$-skein module for
 generalized marked three manifolds} In this subsection, we will try to find out the classical limit for generalized marked three manifolds by using results in Section \ref{subb5} and Proposition \ref{pro7.1}. In this subsection, we will  assume the three manifold 
is connected (the corresponding results can be easily  generalized to general  three manifolds).
\begin{rem}\label{rem7.2}
Let $(M,N)$ be a generalized marked three manifold. Suppose $N$ contains $k$ ($k\geq 1$) closed oriented circles, which are denoted as $e_0,e_1,\dots,e_{k-1}$. We denote other oriented open intervals in $N$, if any, as $e_k,\dots,e_{m-1}$. For each $0\leq i\leq k-1$, we pick a small open interval $U_i$ contained in $e_i$ such that $cl(U_i)\subset e_i$, and set $e_i^{'} = e_i- cl(U_i)$.  
Set $e_i^{'} = e_i$ for $k\leq i\leq m-1$.
Let
$N^{'} =\{e_0^{'},e_1^{'},\dots, e_{m-1}^{'}\}$, then $(M,N^{'})$ is a circle free marked three manifold. We choose a relative spin structure $h$ for $(M,N^{'})$. For each $1\leq i\leq m-1$,
 let $\alpha_i$ be a path connecting $e_0^{'}$ and $e_i^{'}$ such that $\alpha_i(0)\in e_0^{'}$ and $\alpha_i(1)\in e_i^{'}$.  We still use $l_U$ to denote the algebra homomorphism from $S_n(M,N^{'},1)$ to $S_n(M,N,1)$ induced by the embedding from $(M,N^{'})$ to $(M,N).$
\end{rem}

 We know there is an isomorphism $L$ from $\Gamma_n(M)\otimes
O(SLn)^{\otimes(m-1)}$ to $S_n(M,N^{'},1)$. For any element $y\in \Gamma_n(M)$, we will use
$y_{\otimes}$ to denote  $y\otimes 1\otimes\dots\otimes 1\in  \Gamma_n(M)\otimes
O(SLn)^{\otimes(m-1)}$. For any $1\leq i,j\leq n, 1\leq t\leq m-1,$ we use $x_{i,j}^{t}$ to denote 
$1\otimes1\otimes\dots \otimes x_{i,j}\otimes \dots\otimes 1\in \Gamma_n(M)\otimes O(SLn)^{\otimes(m-1)}$ where $x_{i,j}$ is in the $t$-th tensor factor for $O(SLn)^{\otimes(m-1)}$. Then the isomorphism $L$ is given by:
$$([\alpha]_{i,j})_{\otimes}\rightarrow (A S^{[\alpha]})_{i,j}\text{\;and\;} x^{t}_{i,j}\rightarrow (A S^{[\alpha_t]})_{i,j}$$
where $[\alpha]\in \pi_1(M,e^{'}_0),1\leq i,j\leq n, 1\leq t\leq m-1.$ For each $0\leq t\leq m-1$, 
set $X_t = (x^{t}_{i,j})_{n\times n}$. For each element $[\alpha]\in\pi_1(M,e_0^{'})$, set 
$ Q_{[\alpha],\otimes} = (([\alpha]_{i,j})_{\otimes})_{n\times n}$. Then $L(X_t) = A S^{[\alpha_t]}, 
L(Q_{[\alpha],\otimes}) = A S^{[\alpha]}$.

As in Subsection \ref{sss3.3}, any component $e\in N$ can be lifted to $\tilde{e}\subset UM$. Note that when $e$ is an oriented closed circle, we have $\tilde{e}$ is also an oriented closed circle in $UM$, thus $\tilde{e}$ is an element in $H_1(UM)$.

\begin{definition}\label{df7.3}
Let $(M,N)$ be a generalized marked three manifold, and $h_{s}$ be a spin structure for $M$. Suppose $N$ contains $k$ oriented closed circles. 

If $k=0$, we define $\Gamma_n(M,N) = \Gamma_n(M)$.

 If $k\geq1$,   we denote all the oriented closed circles in $N$  as $e_0,\dots, e_{k-1}$. For each $e_t$, we  use a path $\beta_t$ ($\beta_t(0)$ is the base point for $\pi_1(M)$ and $\beta_t(1)\in e_t$) to connect the base point for $\pi_1(M)$ and $e_t$ to obtain an element in $\pi_1(M)$, which we denote it as $[e_t^{\beta_t}]$.
Define $\Gamma_n(M,N) = \Gamma_n(M)/(D)$ where
$D = \{(Q_{[e_t^{\beta_t}]} - d_n^{h_s(\widetilde{e_t})}I)_{i,j}\mid 0\leq t\leq k-1, 1\leq i,j\leq n\}$ and $(D)$ is the ideal of $\Gamma_n(M)$ generated by $D$. For any element $x\in \Gamma_n(M)$, we use $\bar{x}$ to denote
$x+(D)\in \Gamma_n(M,N)$.
\end{definition}

Note that the definition of $\Gamma_n(M,N)$ is independent of the choice of $\beta_t,0\leq t\leq k-1$.
Suppose for each 
$0\leq t\leq k-1$, we make another choice $\gamma_t$. Then relation $Q_{[e_t^{\beta_t}]} = d^{h_s(\widetilde{e_t})} I$ becomes $Q_{[e_t^{\gamma_t}]} = d^{h_s(\widetilde{e_t})} I$. Since $Q_{[e_t^{\beta_t}]}$  and $Q_{[e_t^{\gamma_t}]}$ are conjugate to each other, then relation $Q_{[e_t^{\beta_t}]} = d^{h_s(\widetilde{e_t})} I$ is the same with relation $Q_{[e_t^{\gamma_t}]} = d^{h_s(\widetilde{e_t})} I$. 

Note that we do not distinguish between $\pi_1(M)$ and $\pi_1(M,e_0)$ where $e_0$ is an embedded open interval in $\partial M$.
The definition for $\Gamma_n(M,N)$ is related to the spin structure $h_s$ for $M$.
Here we make a convention that the spin structure used for the definition of $\Gamma_n(M,N)$ is obtained by restricting the relative spin structure when the relative spin structure is given.

\begin{lemma}\label{lmm7.4}
With all the conventions and notations in Remark \ref{rem7.2}, we have 
$$\begin{tikzcd}
\Gamma_n(M)\otimes
O(SLn)^{\otimes(m-1)} \arrow[r, "L"] & S_n(M,N^{'},1)\arrow[r, "l_U"] & S_n(M,N,1)
\end{tikzcd}$$
induces a surjective algebra homomorphism $\overline{L}:\Gamma_n(M,N)\otimes
O(SLn)^{\otimes(m-1)} \rightarrow S_n(M,N,1)$. Here we regard $\pi_1(M)$ as $\pi_1(M,e_0^{'})$.
\end{lemma}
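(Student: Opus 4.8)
The claim is that the composition $l_U \circ L : \Gamma_n(M) \otimes O(SLn)^{\otimes(m-1)} \to S_n(M,N,1)$ descends to a surjective algebra homomorphism $\overline{L}$ on the quotient $\Gamma_n(M,N) \otimes O(SLn)^{\otimes(m-1)}$, where $\Gamma_n(M,N) = \Gamma_n(M)/(D)$ and $D$ consists of the entries of $Q_{[e_t^{\beta_t}]} - d_n^{h_s(\widetilde{e_t})}I$ for each closed circle $e_t$ in $N$. Since $l_U$ is a surjective algebra homomorphism (when $v=1$) and $L$ is an isomorphism of algebras, the composition $l_U \circ L$ is already a surjective algebra homomorphism; so the only thing to prove is that $(D) \otimes O(SLn)^{\otimes(m-1)}$ — more precisely the ideal generated by $(D)_{\otimes}$, i.e. $\{ (Q_{[e_t^{\beta_t}]} - d_n^{h_s(\widetilde{e_t})}I)_{i,j}\}_{\otimes}$ — lies in the kernel of $l_U \circ L$. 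Once that is established, the universal property of quotients hands us $\overline{L}$ immediately, and surjectivity is inherited from $l_U \circ L$.

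First I would set up notation carefully: fix $t$ with $0 \le t \le k-1$, and consider the circle $e_t$, from which we removed $cl(U_t)$ to form $e_t'$. The path $\alpha_t$ (from Remark \ref{rem7.2}) connects $e_0'$ to $e_t'$; traversing $\alpha_t$, then going around the (now-open) interval $e_t'$ in the direction of its orientation, then returning along $\alpha_t^{-1}$, gives an element of $\pi_1(M,e_0')$ that, after identifying $\pi_1(M,e_0')$ with $\pi_1(M)$ via $\alpha_t$, is precisely $[e_t^{\beta_t}]$ (up to the usual conjugation, which does not affect the ideal $(D)$ as noted after Definition \ref{df7.3}). The key computation is then: apply $l_U$ to the stated framed oriented boundary arc in $(M,N')$ running along $\alpha_t * e_t' * \alpha_t^{-1}$; in $(M,N)$ this arc has both endpoints re-glued across $cl(U_t)$ to form a closed loop, and relation \eqref{eq10} (the defining relation of the quotient in Proposition \ref{pro7.1}) together with relations \eqref{w.twist}, \eqref{wzh.six}, \eqref{wzh.seven} lets me identify the resulting skein element. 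Concretely, I expect that under $l_U \circ L$ the matrix $A S^{[\alpha_t * e_t' * \alpha_t^{-1}]}$ maps to the "closing up" of the arc, which by the twist relation \eqref{w.twist} and the framing bookkeeping encoded in $\tilde{e_t} \in H_1(UM)$ evaluates to $d_n^{h_s(\widetilde{e_t})} I$ — exactly matching the relation we quotiented by. This is the step that makes the kernel statement work.

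The main obstacle will be the framing/spin bookkeeping in that key computation: tracking how the lift $\widetilde{e_t} \in H_1(UM)$ of the closed circle, and the value $h_s(\widetilde{e_t}) \in \mathbb{Z}_2$ of the spin structure on it, interact with the sign $d_n = (-1)^{n-1}$ coming from the twist relation \eqref{w.twist} and with the matrix $A$ ($A^2 = d_n I$, $\det A = 1$) appearing throughout the identifications $S^{[\cdot]} \leftrightarrow A S^{[\cdot]}$. One has to be careful that closing up the open arc $\alpha_t * e_t' * \alpha_t^{-1}$ into the circle $e_t$ introduces exactly the power of $d_n$ predicted by $h_s(\widetilde{e_t})$, using Remark \ref{rre5.1} (or its analogue) for how a chosen framing and the relative spin structure convert $S^{[e_t^{\beta_t}]}$ into the geometric closed loop, and using relations \eqref{wzh.six}, \eqref{wzh.seven} to contract the two cap/cup pieces created at $cl(U_t)$ against each other (producing a factor $\prod c_i \cdot c_{\bar i}^{-1}$-type contribution that telescopes to $d_n$ via \eqref{e.prodc}). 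Everything else — surjectivity, multiplicativity, well-definedness of the quotient map — is formal. I would organize the write-up as: (i) recall $l_U \circ L$ is a surjective algebra homomorphism; (ii) the lemma reduces to showing $(D)_{\otimes} \subseteq \ker(l_U \circ L)$; (iii) the framing computation identifying the image of $Q_{[e_t^{\beta_t}]}$ with $d_n^{h_s(\widetilde{e_t})} I$; (iv) conclude via the universal property.
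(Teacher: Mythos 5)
Your high-level plan is correct and matches the paper: show the ideal $(D)_\otimes$ lies in $\ker(l_U\circ L)$, then invoke the universal property; surjectivity is formal. But the mechanism you propose for step (iii) — the key identity $A\,l_U(S^{[e_t^{\beta_t}]}) = d_n^{h_s(\widetilde{e_t})}I$ — does not work as described, and this is where the real content of the lemma lives.

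The issue is geometric. The equivalence relation \eqref{eq10} only lets you slide an \emph{endpoint} of a web across the gap $cl(U_t)$. But the stated arc underlying $l_U(S^{[e_t^{\beta_t}]}_{i,j})$ has both of its endpoints on $e_0'$; it merely passes near $e_t$ in the middle. So there is nothing to slide, no cap or cup is created at $cl(U_t)$, and relations \eqref{wzh.six}/\eqref{wzh.seven} (which destroy endpoints on a marking) never become applicable. Likewise the twist relation \eqref{w.twist} is about framing kinks, and no kink is forced here; the factor $d_n^{h_s(\widetilde{e_t})}$ does not come out of a $\prod c_i c_{\bar i}^{-1}$ telescope. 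What the paper actually does is multiply on the left by the invertible matrix $A\,l_U(S^{[\beta_t]})$. Using $AS^{[\beta*\alpha]} = AS^{[\beta]}AS^{[\alpha]}$ this converts the question about the loop $[e_t^{\beta_t}] = [\beta_t^{-1}*e_t*\beta_t]$ into a question about the arc $[e_t*\beta_t]$, which \emph{does} have one endpoint on $e_t'$. Now \eqref{eq10} applies: that endpoint can be slid all the way around the circle $e_t$, eating the $e_t$ factor and leaving $\beta_t$, and the spin/framing bookkeeping for the two arcs (in the sense of Remark~\ref{rre5.1}) produces exactly $l_U(S^{[e_t*\beta_t]}) = d_n^{h(\widetilde{e_t})}\,l_U(S^{[\beta_t]})$. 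Cancelling the invertible $l_U(S^{[\beta_t]})$ then yields the desired identity. Without this reduction to a one-endpoint arc, I do not see how to make your direct computation go through.
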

\begin{proof}
We have the exact sequence: 
$$\begin{tikzcd}
(D) \arrow[r, tail] & \Gamma_n(M)\arrow[r,two heads] & \Gamma_n(M,N) 
\end{tikzcd}$$
where $D,(D)$ are defined in Definition \ref{df7.3} (the arrow with two heads means the corresponding map is surjective). After using functor $-\otimes O(SLn)^{\otimes(m-1)}$ acting on the above exact sequence, we get the following new exact sequence:
$$\begin{tikzcd}
(D)\otimes O(SLn)^{\otimes(m-1)} \arrow[r, tail] & \Gamma_n(M)\otimes O(SLn)^{\otimes(m-1)}\arrow[r,two heads] & \Gamma_n(M,N)\otimes O(SLn)^{\otimes(m-1)}
\end{tikzcd}.$$
Note that $(D)\otimes O(SLn)^{\otimes(m-1)}$ is the ideal of 
$\Gamma_n(M)\otimes O(SLn)^{\otimes(m-1)}$ generated by $d_{\otimes},d\in D$.
Thus to show $l_U\circ L$ induces $\overline{L}$, it suffices to show $l_U(L(d_{\otimes})) = 0$ for all $d\in D$.

Let $i,j$ be any two integers between $1$ and $n$, and $t$ be an integer between $0$ and $k-1$. Then we have 
$$l_U(L(([e_t^{\beta_t}]_{i,j})_{\otimes}))= (-1)^{i+1}l_U(S^{[e_t^{\beta_t}]}_{\bar{i},j}).$$
Thus we need to show $(-1)^{i+1}l_U(S^{[e_t^{\beta_t}]}_{\bar{i},j}) = d_n^{h(\widetilde{e_t})} \delta_{i,j}$, that is, to show $A\,l_U(S^{[e_t^{\beta_t}]}) =  d_n^{h(\widetilde{e_t})} I$. From the definition of $[e_t^{\beta_t}]$, we know $[e_t^{\beta_t}] = [\beta_t^{-1}*e_t*\beta_t]$. Then we have 
$$A\,l_U(S^{[\beta_t]} )A\,l_U(S^{[e_t^{\beta_t}]}) = l_U(A S^{[\beta_t]} A 
S^{[\beta_t^{-1}*e_t*\beta_t]}) = l_U(A S^{[e_t*\beta_t]}) =A\, l_U(S^{[e_t*\beta_t]}).$$
Note that in $S_n(M,N,1)$, we have $l_U(S^{[e_t*\beta_t]}) = d_n^{h(\widetilde{e_t})} l_U(S^{[\beta_t]})$. Then we get $$l_U(S^{[\beta_t]} )A\,l_U(S^{[e_t^{\beta_t}]}) = l_U(S^{[e_t*\beta_t]}) = d_n^{h(\widetilde{e_t})} l_U(S^{[\beta_t]}).$$
Then we have $A\,l_U(S^{[e_t^{\beta_t}]})= d_n^{h(\widetilde{e_t})} I$ because 
$l_U(S^{[\beta_t]} )$ is invertible.

The above discussion shows $l_U\circ L$ induces $\overline{L}$. We have $\overline{L}$ is a surjective algebra homomorphism since $l_U\circ L$ is  a surjective algebra homomorphism.
\end{proof}

Note that for any $x\in \Gamma_n(M), y\in 
O(SLn)^{\otimes(m-1)}$, we have $\overline{L}(\bar{x}\otimes y) =l_U(L(x\otimes y))$. 
We use $\pi$ to denote the projection from $\Gamma_n(M)\otimes
O(SLn)^{\otimes(m-1)}$ to $\Gamma_n(M,N)\otimes
O(SLn)^{\otimes(m-1)}$. Then $\overline{L}\circ \pi = l_U\circ L$.

\begin{lemma}\label{lmm7.5}
With all the conventions and notations in Remark \ref{rem7.2}, we have 
$$\begin{tikzcd}
S_n(M,N^{'},1)
 \arrow[r, "L^{-1}"] & \Gamma_n(M)\otimes
O(SLn)^{\otimes(m-1)} \arrow[r, "\pi"] & \Gamma_n(M,N)\otimes
O(SLn)^{\otimes(m-1)}
\end{tikzcd}$$
induces a surjective algebra homomorphism $\overline{L^{-1}}:S_n(M,N,1) \rightarrow \Gamma_n(M,N)\otimes
O(SLn)^{\otimes(m-1)}$. Here we regard $\pi_1(M)$ as $\pi_1(M,e_0^{'})$.
\end{lemma}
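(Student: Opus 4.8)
The plan is to obtain $\overline{L^{-1}}$ as the map induced on a quotient by the surjection $l_U$. Recall from Proposition \ref{pro7.1} that, for $v=1$, $l_U : S_n(M,N^{'},1)\to S_n(M,N,1)$ is a surjective algebra homomorphism whose kernel is the ideal $K$ generated by the differences $\alpha-\beta$ over all pairs of stated $n$-webs related by the sliding move of equation \eqref{eq10}. Since $L^{-1}$ is an isomorphism and $\pi$ is a quotient projection, the composition $\pi\circ L^{-1}$ is a surjective algebra homomorphism; so it suffices to prove $K\subseteq\ker(\pi\circ L^{-1})$. Once this is done, $\pi\circ L^{-1}$ factors uniquely through $l_U$ as $\pi\circ L^{-1}=\overline{L^{-1}}\circ l_U$ with $\overline{L^{-1}}$ an algebra homomorphism, and $\overline{L^{-1}}$ is surjective because $\pi\circ L^{-1}$ is; this $\overline{L^{-1}}$ is exactly the asserted map.

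To check $K\subseteq\ker(\pi\circ L^{-1})$ it is enough to treat one generator, i.e.\ a pair $(\alpha,\beta)$ of stated $n$-webs agreeing away from one of the removed circle arcs $cl(U_t)$ and differing there as in equation \eqref{eq10}: in $\alpha$ the relevant arc ends on $e_t^{'}$ just below the gap with state $i$, in $\beta$ the corresponding arc ends just above the gap with the same state $i$. Using Remark \ref{rre5.1} and Lemma \ref{lmm5.3}, write $L^{-1}(\alpha)$ and $L^{-1}(\beta)$ as the same polynomial in matrix entries $S^{[\eta]}_{k,l}$, except for the single factor carried by that arc: this amounts to replacing the path-class $[\alpha_{\mathrm{arc}}]\in\pi_1(M,N^{'})$ (ending near the lower end of the gap) by $[\ell]*[\alpha_{\mathrm{arc}}]$, where $[\ell]$ is the morphism of the object $e_t^{'}$ that runs once around the circle $e_t$ through the cut-out arc, and by adjusting the $d_n$-valued spin prefactor according to the change of the lift of the arc to $UM$. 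By Proposition \ref{prop5.2}(a), $A\,S^{[\ell*\alpha_{\mathrm{arc}}]}=A\,S^{[\ell]}\,A\,S^{[\alpha_{\mathrm{arc}}]}$, and $A\,S^{[\ell]}$ is a conjugate of $Q_{[e_t^{\beta_t}]}=A\,S^{[e_t^{\beta_t}]}$. Applying $\pi$ sends $Q_{[e_t^{\beta_t}]}$, hence any conjugate of it, to the scalar matrix $d_n^{h_s(\widetilde{e_t})}I$ by the very definition of $\Gamma_n(M,N)$ (Definition \ref{df7.3}); since $h_s$ is the restriction of $h$ to $H_1(UM)$, this scalar $d_n^{h(\widetilde{e_t})}$ is cancelled by the change $d_n^{h(\widetilde{e_t})}$ in the spin prefactor (using $d_n^{2}=1$), so $\pi(L^{-1}(\alpha))=\pi(L^{-1}(\beta))$. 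This is the $v=1$ counterpart of the computation $A\,l_U(S^{[e_t^{\beta_t}]})=d_n^{h(\widetilde{e_t})}I$ carried out in the proof of Lemma \ref{lmm7.4}, which can be reused almost verbatim.

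With $K\subseteq\ker(\pi\circ L^{-1})$ established, $\overline{L^{-1}}$ exists and is a surjective algebra homomorphism; the identification of $\pi_1(M)$ with $\pi_1(M,e_0^{'})$ is harmless because $e_0^{'}$ is contractible. I expect the main obstacle to be precisely the bookkeeping in the second paragraph: pinning down which loop around $e_t$ the move \eqref{eq10} produces and with which orientation, together with the exact power of $d_n$ coming from the change of framing and the relative spin structure, so that the scalar produced by Definition \ref{df7.3} matches rather than, say, its inverse or a wrong parity. Everything else is formal once this single cancellation is confirmed.
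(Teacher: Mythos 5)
Your proposal is correct and matches the paper's approach: both reduce via Proposition \ref{pro7.1} to checking that $\pi\circ L^{-1}$ kills the sliding relation \eqref{eq10}, restrict to a single stated arc, observe the arc class changes by right-multiplication by a conjugate of $[e_t^{\beta_t}]$ (so $\pi$ maps its $Q$-matrix to $d_n^{h(\widetilde{e_t})}I$), and cancel against the change $h(\widetilde{\alpha^{'}})=h(\tilde{\alpha})+h(\widetilde{e_t})$ in the spin prefactor. The bookkeeping you flag as the remaining obstacle is exactly what the paper writes out explicitly (decomposing $S^{[\alpha]}$ through $X_t$, $Q_{[\alpha_t^{-1}*\alpha*\alpha_{t_1}],\otimes}$, $X_{t_1}^{-1}$ in the case $t\neq 0$, $t_1\neq 0$), and it does go through as you expect.
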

\begin{proof}
From Proposition \ref{pro7.1}, it suffices to show $\pi\circ L^{-1}$ preserves the equivalence relation (\ref{eq10}) for every $U_t$, $0\leq t\leq k-1$. Let $\alpha$ be any stated $n$-web for $(M,N^{'})$. Suppose there exists $0\leq t\leq k-1$ such that nearby $U_t$ $\alpha$ looks like the left picture in the equivalence relation (\ref{eq10}). Let $\alpha^{'}$ be the same stated $n$-web as $\alpha$ except nearby  $U_t$ $\alpha^{'}$ looks like the right picture in equivalence relation (\ref{eq10}). Then we want to show $\pi(L^{-1}(\alpha)) = \pi(L^{-1}(\alpha^{'}))$.

We can use the same way  
 to kill all the sinks and sources in $\alpha$ and $\alpha^{'}$. Then the resulting two stated $n$-webs only differ on a single stated arc. Since $\pi\circ L^{-1}$ is an algebra homomorphism, we can just assume $\alpha$ is a stated framed oriented boundary arc. Without loss of generality, we assume the white dot in equivalence relation (\ref{eq10}) represents an arrow pointing from left to right, that is, pointing towards the boundary. It is easy to show $h(\widetilde{\alpha^{'}}) = h(\tilde{\alpha})+h(\widetilde{e_t})$. Suppose $s(\alpha(0)) = s(\alpha^{'}(0)) = j$. Then we have $\alpha = d_n^{ h(\tilde{\alpha})} S^{[\alpha]}_{i,j}$ and 
$\alpha^{'} = d_n^{ h(\widetilde{\alpha^{'}}) }S^{[\alpha^{'}]}_{i,j}$.

Suppose $\alpha(0)\in e^{'}_{t_1}$ where $0\leq t_1\leq m-1$.
We have four cases to consider: (1) $t=t_1=0,$ $t=0$ and $t_1\neq 0$, $t\neq 0$ and $t_1=0$, $t\neq 0$ and $t_1\neq 0$.

 Here we only prove the case when $t\neq 0$ and $t_1\neq 0$. Then we have
\begin{align*} 
\pi(L^{-1}(S^{[\alpha]})) &=  A^{-1}\pi(L^{-1}(AS^{[\alpha_t][\alpha_t^{-1}*\alpha*\alpha_{t_1}][\alpha_{t_1}^{-1}]} )) = A^{-1}\pi(L^{-1}(AS^{[\alpha_t]} AS^{[\alpha_t^{-1}*\alpha*\alpha_{t_1}]} AS^{[\alpha_{t_1}^{-1}]} )) \\
&= A^{-1} \pi( X_t\, Q_{[\alpha_t^{-1}*\alpha *\alpha_{t_1}],\otimes} \,X_{t_1}^{-1})
=  A^{-1} \pi( X_t)\pi( Q_{[\alpha_t^{-1}*\alpha *\alpha_{t_1}],\otimes}) \pi( X_{t_1}^{-1}).
\end{align*}
Similarly we have 
$$\pi(L^{-1}(S^{[\alpha^{'}]}))
=  A^{-1} \pi( X_t)\pi( Q_{[\alpha_t^{-1}*\alpha^{'} *\alpha_{t_1}],\otimes})  \pi(X_{t_1}^{-1}).$$
We have 
$$Q_{[\alpha_t^{-1}*\alpha^{'} *\alpha_{t_1}],\otimes}
= Q_{[\alpha_t^{-1}*\alpha *\alpha_{t_1}],\otimes}
Q_{[\alpha_{t_1}^{-1}*\alpha^{-1}*\alpha^{'} *\alpha_{t_1}],\otimes}$$
where $[\alpha_{t_1}^{-1}*\alpha^{-1}*\alpha^{'} *\alpha_{t_1}] = 
[(\alpha^{'} *\alpha_{t_1})^{-1}*e_t*\alpha^{'} *\alpha_{t_1}]$. Thus we have 
$$\pi(Q_{[\alpha_t^{-1}*\alpha^{'} *\alpha_{t_1}],\otimes})=
\pi( Q_{[\alpha_t^{-1}*\alpha *\alpha_{t_1}],\otimes}) \pi(Q_{[\alpha_{t_1}^{-1}*\alpha^{-1}*\alpha^{'} *\alpha_{t_1}],\otimes}) = d_n^{h(\widetilde{e_t})}\pi( Q_{[\alpha_t^{-1}*\alpha *\alpha_{t_1}],\otimes}).$$
Then we have 
\begin{align*}
\pi(L^{-1}(\alpha^{'}))& = d_n^{ h(\widetilde{\alpha^{'}}) } \pi L^{-1}(S^{[\alpha^{'}]}_{i,j})
= d_n^{  h(\tilde{\alpha})+h(\widetilde{e_t})} [A^{-1} \pi (X_t)\pi( Q_{[\alpha_t^{-1}*\alpha^{'} *\alpha_{t_1}],\otimes})\pi( X_{t_1}^{-1})]_{i,j}\\
& = d_n^{  h(\tilde{\alpha})+h(\widetilde{e_t})} d_n^{h(\widetilde{e_t})} [A^{-1} \pi( X_t)\pi( Q_{[\alpha_t^{-1}*\alpha *\alpha_{t_1}],\otimes})\pi( X_{t_1}^{-1})]_{i,j}\\&=
d_n^{  h(\tilde{\alpha})} [A^{-1} \pi( X_t)\pi( Q_{[\alpha_t^{-1}*\alpha *\alpha_{t_1}],\otimes}) \pi(X_{t_1}^{-1})]_{i,j}= \pi(L^{-1}(\alpha)).
\end{align*}

\end{proof}

For any stated $n$-web $\alpha$ in $S_n(M,N,1)$, we can isotope $\alpha$ such that $cl(U_t)\cap \alpha = \emptyset$ for all
$0\leq t\leq k-1$. Then $\alpha$ is also a stated $n$-web $\alpha$ in $S_n(M,N^{'},1)$, we still use $\alpha$ to this element in  $S_n(M,N^{'},1)$. Then $\overline{L^{-1}}(\alpha) = \pi (L^{-1}(\alpha))$, that is, we have $\overline{L^{-1}}\circ l_U
= \pi\circ L^{-1}$.

\begin{lemma}\label{lmm7.6}
The algebra homomorphism $\overline{L}$ obtained in Lemma \ref{lmm7.4} and 
the algebra homomorphism $\overline{L^{-1}}$ obtained in Lemma \ref{lmm7.5} are inverse to each other. Especially for any generalized marked three manifold $(M,N)$ with $N$ containing at least one closed oriented circle, we have $\Gamma_n(M,N)\otimes
O(SLn)^{\otimes(\sharp N-1)} \simeq S_n(M,N,1)$.
\end{lemma}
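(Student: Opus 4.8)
The plan is to deduce the statement from Lemmas \ref{lmm7.4} and \ref{lmm7.5} by a short diagram chase that uses only surjectivity of $l_U$ and of the projection $\pi$. First I would dispose of the bookkeeping: cutting out the closures $cl(U_t)$ does not change the number of components of $N$, so $\sharp N' = \sharp N = m$ and therefore $O(SLn)^{\otimes(m-1)} = O(SLn)^{\otimes(\sharp N - 1)}$; there is nothing to reconcile between the two tensor powers. Also, the hypothesis that $N$ contains at least one closed oriented circle gives $k \geq 1$, so $\Gamma_n(M,N)$ is the genuine quotient of $\Gamma_n(M)$ from Definition \ref{df7.3} and both $\overline{L}$ and $\overline{L^{-1}}$ are the algebra homomorphisms produced in Lemmas \ref{lmm7.4} and \ref{lmm7.5}.

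Next I would assemble the three defining identities. By Theorem \ref{thm5.13} the map $L$ is an algebra isomorphism; the remark following Lemma \ref{lmm7.4} gives $\overline{L} \circ \pi = l_U \circ L$, and the remark following Lemma \ref{lmm7.5} gives $\overline{L^{-1}} \circ l_U = \pi \circ L^{-1}$ (the latter first on stated $n$-webs isotoped off every $cl(U_t)$, and such $n$-webs span $S_n(M,N,1)$, hence on all of it). Both $l_U$ and $\pi$ are surjective: $l_U$ by Proposition \ref{pro7.1} (or directly, being induced by the embedding $(M,N') \hookrightarrow (M,N)$), and $\pi$ as a quotient map. Then
\[
\overline{L} \circ \overline{L^{-1}} \circ l_U = \overline{L} \circ \pi \circ L^{-1} = l_U \circ L \circ L^{-1} = l_U,
\]
so $\overline{L} \circ \overline{L^{-1}}$ agrees with the identity on the image of $l_U$, which is all of $S_n(M,N,1)$; hence $\overline{L} \circ \overline{L^{-1}} = \mathrm{Id}_{S_n(M,N,1)}$. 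Symmetrically,
\[
\overline{L^{-1}} \circ \overline{L} \circ \pi = \overline{L^{-1}} \circ l_U \circ L = \pi \circ L^{-1} \circ L = \pi,
\]
and surjectivity of $\pi$ forces $\overline{L^{-1}} \circ \overline{L} = \mathrm{Id}$ on $\Gamma_n(M,N) \otimes O(SLn)^{\otimes(m-1)}$. Thus $\overline{L}$ and $\overline{L^{-1}}$ are mutually inverse algebra isomorphisms, which is exactly the claimed isomorphism $\Gamma_n(M,N) \otimes O(SLn)^{\otimes(\sharp N - 1)} \simeq S_n(M,N,1)$.

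I do not expect a genuine obstacle at this stage: all the substantive work has already been done, namely checking that $\overline{L}$ kills the relations defining $\Gamma_n(M,N)$ (Lemma \ref{lmm7.4}) and that $\overline{L^{-1}}$ respects the equivalence relation from Proposition \ref{pro7.1} (Lemma \ref{lmm7.5}). The only point requiring a moment of care is that the two compatibility identities are applied on the correct domains and that surjectivity of $l_U$ and of $\pi$ is genuinely invoked, so that ``agrees with the identity on a surjective image'' upgrades to ``equals the identity''. I would also note in passing that the $k = 0$ case falls outside this lemma, being already covered by Theorem \ref{thm5.13}, so no separate argument is needed there.
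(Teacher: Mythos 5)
Your proposal is correct and follows essentially the same route as the paper: the paper verifies the two composites on generators (stated $n$-webs isotoped off $\bigcup_t cl(U_t)$ for one direction, elements $\bar{x}\otimes y = \pi(x\otimes y)$ for the other) using exactly the identities $\overline{L}\circ\pi = l_U\circ L$ and $\overline{L^{-1}}\circ l_U = \pi\circ L^{-1}$, which is the element-wise version of your diagram chase with surjectivity of $l_U$ and $\pi$.
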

\begin{proof}
For any stated $n$-web $\alpha$ in $S_n(M,N,1)$, we isotope $\alpha$ such that $cl(U_t)\cap \alpha = \emptyset$ for all
$0\leq t\leq k-1$. Then 
$$\overline{L}(\overline{L^{-1}}(\alpha)) = \overline{L} (\pi (L^{-1}(\alpha)))=
(l_U\circ L)(L^{-1}(\alpha)) = l_U(\alpha) = \alpha .$$

For any $x\in \Gamma_n(M), y\in 
O(SLn)^{\otimes(m-1)}$, we have 
$$\overline{L^{-1}}(\overline{L}(\bar{x}\otimes y)) =
\overline{L^{-1}}(l_U(L(x\otimes y))) = (\pi\circ L^{-1}) (L(x\otimes y))
= \pi(x\otimes y) =\bar{x}\otimes y.$$ 
\end{proof}

\begin{theorem}
Let $(M,N)$ be a generalized marked three manifold with $N\neq \emptyset$.
Then $S_n(M,N,1)\simeq \Gamma_n(M,N)\otimes O(SLn)^{\otimes(\sharp N - 1)}$.
\end{theorem}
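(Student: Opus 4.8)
The plan is to deduce this from the circle-free classification (Theorem~\ref{thm5.13}) together with Lemma~\ref{lmm7.6}, by splitting into cases according to whether $N$ contains a closed oriented circle. As throughout the paper, I would first reduce to the case that $M$ is connected: writing $(M,N)$ as a disjoint union of its connected pieces, $S_n(M,N,1)$ is the tensor product of the corresponding skein modules, and both $\Gamma_n(M,N)$ and the tensor exponent behave additively under disjoint union (up to the usual reordering ambiguity of the tensor factors), so the connected statement yields the general one. Hence assume $M$ is connected.

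\textbf{Case 1: $N$ contains no closed oriented circle.} Then $(M,N)$ is an ordinary marked three manifold in the sense of Section~\ref{sec3}, and $k=0$ in Definition~\ref{df7.3}, so by definition $\Gamma_n(M,N)=\Gamma_n(M)$. Theorem~\ref{thm5.13} gives $S_n(M,N,1)\simeq \Gamma_n(M)\otimes O(SLn)^{\otimes(\sharp N-1)}$, which is exactly the desired isomorphism.

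\textbf{Case 2: $N$ contains at least one closed oriented circle.} This is precisely the content of Lemma~\ref{lmm7.6}: one slits each circle $e_0,\dots,e_{k-1}$ of $N$ along the closure of a small interval $U_t\subset e_t$ to obtain the circle-free marked manifold $(M,N')$ with $\sharp N'=\sharp N$, so that Theorem~\ref{thm5.13} applied to $(M,N')$ produces the isomorphism $L\colon \Gamma_n(M)\otimes O(SLn)^{\otimes(\sharp N-1)}\to S_n(M,N',1)$. Composing with the quotient map $\pi$ of Definition~\ref{df7.3} on the $\Gamma_n$-factor and with the presentation $\bar l_U$ of $S_n(M,N,1)$ as a quotient of $S_n(M,N',1)$ from Proposition~\ref{pro7.1}, Lemmas~\ref{lmm7.4} and~\ref{lmm7.5} give mutually inverse algebra homomorphisms $\overline{L}$ and $\overline{L^{-1}}$, hence $S_n(M,N,1)\simeq \Gamma_n(M,N)\otimes O(SLn)^{\otimes(\sharp N-1)}$.

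There is no genuinely new difficulty here; the proof is an assembly of results already established, so the only thing requiring care is the bookkeeping. One must confirm that the number of $O(SLn)$-factors is $\sharp N-1$ in both cases: in Case~2 this holds because slitting the circles does not change $\sharp N$, so the exponent coming from Theorem~\ref{thm5.13} applied to $(M,N')$ is $\sharp N'-1=\sharp N-1$, and the quotient $\pi$ only modifies the $\Gamma_n$-factor. One must also confirm that the spin-structure conventions are consistent: the relative spin structure chosen on $(M,N')$ restricts to the spin structure $h_s$ on $M$ used in Definition~\ref{df7.3}, which is exactly the convention fixed immediately after that definition, and this matching is what makes the relations $Q_{[e_t^{\beta_t}]}=d_n^{h_s(\widetilde{e_t})}I$ defining $D$ agree with the skein identities $l_U(S^{[e_t*\beta_t]})=d_n^{h(\widetilde{e_t})}l_U(S^{[\beta_t]})$ exploited in the proof of Lemma~\ref{lmm7.4}. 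With these checks the statement follows at once.
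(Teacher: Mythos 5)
Your proof is correct and takes essentially the same approach as the paper: both split into the circle-free case (handled directly by Theorem~\ref{thm5.13} since $\Gamma_n(M,N)=\Gamma_n(M)$) and the case with at least one circle (handled by Lemma~\ref{lmm7.6}). The extra bookkeeping you supply about the $O(SLn)$ exponent and the spin-structure conventions is sound and matches the conventions fixed in the paper.
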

\begin{proof}
If $N$ is circle free, then $\Gamma_n(M,N) = \Gamma_n(M)$. From Theorem \ref{thm5.13}, we have 
$S_n(M,N,1)\simeq \Gamma_n(M,N)\otimes O(SLn)^{\otimes(\sharp N - 1)}$.

If $N$ containes at least one oriented closed circle, then Lemma \ref{lmm7.6} shows 
$S_n(M,N,1)\simeq \Gamma_n(M,N)\otimes O(SLn)^{\otimes(\sharp N - 1)}$.
\end{proof}

For generalized marked three manifold $(M,N)$, we can also define the corresponding adding marking map. Suppose $N_{ad}=N\cup e$ where $e$ is an oriented open interval or an oriented closed circle such that there is no intersection between the closure of $N$ and the closure of $e$. We also call $N_{ad}$ is obtained from $N$ by adding one extra marking. The linear map from $S_n(M,N,v)$ to $S_n(M,N_{ad},v)$ induced by the embedding $(M,N)\rightarrow (M,N_{ad})$ is also denoted as $l_{ad}$. Clearly when $v=1$, we have $l_{ad}$ is an algebra homomorphism.

\begin{corollary}
Let $(M,N)$ be a generalized marked three manifold. Suppose $N_{ad}$ is obtained from $N$ by adding one extra oriented open interval. Then $l_{ad} :S_n(M,N,1)\rightarrow S_n(M,N_{ad},1) $ is injective.

\end{corollary}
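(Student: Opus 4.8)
The plan is to reduce to the circle-free case already established in Corollary \ref{Cor5.13}. If $N$ is circle free there is nothing to do, so assume $N$ contains at least one oriented closed circle, say $e_0,\dots,e_{k-1}$ with $k\geq 1$. Following Remark \ref{rem7.2}, I would first choose for each $i$ a small open interval $U_i\subset e_i$ with $cl(U_i)\subset e_i$, set $e_i'=e_i-cl(U_i)$, and let $N'$ be obtained from $N$ by replacing each $e_i$ by $e_i'$; then $(M,N')$ is a circle-free marked three manifold and $N'\neq\emptyset$. Since $cl(e)\cap cl(N)=\emptyset$ and $cl(N')\subseteq cl(N)$, the set $N_{ad}':=N'\cup e$ is obtained from $N'$ by adding the extra open-interval marking $e$, and $(M,N_{ad}')$ is again circle free. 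By Proposition \ref{pro7.1} the maps $l_U$ induce isomorphisms $\bar l_U\colon S_n(M,N',1)/\!\simeq\ \xrightarrow{\ \sim\ }S_n(M,N,1)$ and $\bar l_U'\colon S_n(M,N_{ad}',1)/\!\simeq\ \xrightarrow{\ \sim\ }S_n(M,N_{ad},1)$, where $\simeq$ is generated by the local move \eqref{eq10} performed at the $U_i$; write $V\subseteq S_n(M,N',1)$ and $V'\subseteq S_n(M,N_{ad}',1)$ for the corresponding linear spans of those relations.

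Next I would record that the square
$$\begin{tikzcd}
S_n(M,N',1)\arrow[r,"l_{ad}"]\arrow[d,"l_U"'] & S_n(M,N_{ad}',1)\arrow[d,"l_U'"]\\
S_n(M,N,1)\arrow[r,"l_{ad}"] & S_n(M,N_{ad},1)
\end{tikzcd}$$
commutes (all four maps are induced by inclusions) and that $l_{ad}(V)\subseteq V'$, because the move \eqref{eq10} at a $U_i$ takes place near a circle of $N$ and does not touch $e$. Hence the top $l_{ad}$ descends to a map $\overline{l_{ad}}\colon S_n(M,N',1)/\!\simeq\ \to S_n(M,N_{ad}',1)/\!\simeq$, the induced square of quotients commutes, and, transporting along the vertical isomorphisms $\bar l_U,\bar l_U'$, the bottom $l_{ad}$ is injective if and only if $\overline{l_{ad}}$ is. Unwinding the definition of $\overline{l_{ad}}$, this amounts to the implication: if $x\in S_n(M,N',1)$ and $l_{ad}(x)\in V'$, then $x\in V$.

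To prove that implication I would invoke Theorem \ref{thm5.11} (applicable since $(M,N')$ is circle free and $N'\neq\emptyset$), which supplies the isomorphism $\jmath\colon S_n(M,N_{ad}',1)\xrightarrow{\ \sim\ }S_n(M,N',1)\otimes O(SLn)$ with $\jmath\circ l_{ad}=(\,\cdot\,)\otimes 1$. The crucial point, which I expect to be the main obstacle and which deserves a careful argument straight from the explicit definition of $\jmath$ in Lemma \ref{lmm5.10}, is that $\jmath(V')\subseteq V\otimes O(SLn)$: choosing the connecting path and the connection region used in the definition of $\jmath$ disjoint from a neighbourhood of each $U_i$, applying $\jmath$ to the two webs appearing in a generator $\alpha-\alpha'$ of $V'$ produces $\sum_j c_j(\widetilde\alpha_j-\widetilde\alpha_j')\otimes b_j$, where each $\widetilde\alpha_j-\widetilde\alpha_j'$ is again a generator of $V$ (the two webs differ only by the slide \eqref{eq10} at the relevant $U_i$) and $b_j\in O(SLn)$. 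Granting this, if $l_{ad}(x)\in V'$ then $x\otimes 1=\jmath(l_{ad}(x))\in V\otimes O(SLn)$; fixing a vector-space complement $W$ of $\mathbb{C}\cdot 1$ in $O(SLn)$ and intersecting with $S_n(M,N',1)\otimes(\mathbb{C}\cdot 1)$ forces $x\otimes 1\in V\otimes(\mathbb{C}\cdot 1)$, hence $x\in V$, as required. As an alternative route I would note that the constructions of $\imath$ and $\jmath$ in Lemmas \ref{lmm5.9}--\ref{lmm5.10} and the identities of Theorem \ref{thm5.11} go through verbatim for generalized marked three manifolds --- they use only the local relations \eqref{w.cross}--\eqref{wzh.eight} and respect \eqref{eq10} for the same locality reason --- giving directly $S_n(M,N_{ad},1)\simeq S_n(M,N,1)\otimes O(SLn)$ with $l_{ad}$ the inclusion into a direct summand, from which injectivity is immediate; either way, the geometric content is the locality of \eqref{eq10} relative to the added marking $e$.
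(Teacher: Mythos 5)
Your main argument is correct but takes a genuinely different route from the paper's. The paper applies the already-established structure theorem for generalized marked three manifolds --- Lemma \ref{lmm7.6}: $S_n(M,N,1)\simeq \Gamma_n(M,N)\otimes O(SLn)^{\otimes(\sharp N-1)}$ --- to both $(M,N)$ and $(M,N_{ad})$, observes that $\Gamma_n(M,N_{ad})=\Gamma_n(M,N)$ because the ideal defining $\Gamma_n(M,N)$ only sees the oriented circles of $N$ and adding an open interval does not change them, and then exhibits $l_{ad}$ as the obvious embedding $J$ adding one tensor factor of $O(SLn)$ in a commutative square of isomorphisms. You instead work directly at the level of the quotient presentation from Proposition \ref{pro7.1}, relating both sides to circle-free skein modules modulo the sliding relation \eqref{eq10}, and then invoke the circle-free isomorphism $\jmath$ of Theorem \ref{thm5.11} together with the locality claim $\jmath(V')\subseteq V\otimes O(SLn)$. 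Both are valid; the paper's version is tidier because Lemma \ref{lmm7.6} has already absorbed the quotient bookkeeping into $\Gamma_n(M,N)$, whereas your route re-derives the needed compatibility by hand. The geometric point you flag as the main obstacle --- $\jmath$ respects the relations $V$, $V'$ when the connecting path is chosen away from the $U_i$ --- is exactly the fact behind the paper's ``it is easy to check the diagram commutes,'' so the two proofs are morally equivalent but organized differently.

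Your alternative route is the one I would caution against. The claim that the constructions of Lemmas \ref{lmm5.9}--\ref{lmm5.10} and Theorem \ref{thm5.11} ``go through verbatim'' for generalized marked three manifolds is too optimistic: $\imath$ and $\jmath$ are built from the elements $S^{[\alpha]}$ and hence from $\pi_1(M,N)$ and a relative spin structure on $(M,N)$, and neither of these has been defined when $N$ contains circles --- $\pi_1(M,N)$ as used here (Definition \ref{bf1}) is a groupoid over \emph{contractible} components of $N$, which excludes circles, and $H_1(UM,\tilde N)$ with a circle lift is a different object. The paper deliberately sidesteps this by reducing to the circle-free case first (via Proposition \ref{pro7.1}), which is also what your main route does; so stick with the main route, and drop or substantially rework the ``verbatim'' claim.
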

\begin{proof}
We already proved the injectivity for $l_{ad}$ when $N$ is circle free in Corollary \ref{Cor5.13}.

Then we suppose $N$ contains at least one oriented circle.
When we cut the closure of small open intervals as in Remark \ref{rem7.2}, we choose the same way to cut them for $(M,N)$ and $(M,N_{ad})$,
and the choices for $\alpha_i$ as in Remark \ref{rem7.2} are compatible between $(M,N^{'})$ and $(M,(N_{ad})^{'})$.  The relative spin structure used for $(M,N^{'})$ is the restriction of the relative spin structure used for $(M,(N_{ad})^{'})$.
Since $N_{ad}$ is obtained from $N$ by adding one extra oriented open interval, we have $\Gamma_n(M,N_{ad})
= \Gamma_n(M,N)$. Then it is easy to check we have the following commutative diagram:
$$\begin{tikzcd}
\Gamma_n(M,N)\otimes O(SLn)^{\otimes(\sharp N -1)}  \arrow[r, "\overline{L}_{(M,N)}"]
\arrow[d, "J"]  
&  S_n(M,N,1)  \arrow[d, "l_{ad}"] \\
\Gamma_n(M,N)\otimes O(SLn)^{\otimes(\sharp N )}  \arrow[r, "\overline{L}_{(M,N_{ad})}"] 
&  S_n(M,N_{ad},1)\\
\end{tikzcd}$$
where $J$ is the obvious embedding. Since both $\overline{L}_{(M,N_{ad})}$ and 
$\overline{L}_{(M,N)}$ are isomorphisms, we have $l_{ad}$ is injective.
\end{proof}


%
%
%
%
%
%
%
%

\bibliographystyle{plain}

\bibliography{ref.bib}

\hspace*{\fill} \\

School of Physical and Mathematical Sciences, Nanyang Technological University, 21 Nanyang Link Singapore 637371

$\emph{Email address}$: zhihao003@e.ntu.edu.sg

\end{document}